\newif\ifshow
\newtheorem{theorem}{Theorem}[section]
\newtheorem{lemma}[theorem]{Lemma}
\newtheorem{proposition}[theorem]{Proposition}
\newtheorem{define}{Definition}[section]
\newtheorem{remark}{Remark}[section]
\newcommand{\joinR}{\hspace{-.1em}}
\newcommand{\RomanI}{I}
\newcommand{\RomanII}{\mbox{\RomanI\joinR\RomanI}}
\newcommand{\RomanIII}{\mbox{\RomanI\joinR\RomanII}}
\newcommand{\RomanIV}{\mbox{\RomanI\joinR\RomanV}}
\newcommand{\RomanV}{V}
\DeclareMathOperator*{\Tr}{Tr}
\DeclareMathOperator*{\Id}{Id}
\DeclareMathOperator*{\esssup}{esssup}
\DeclareMathOperator*{\supp}{supp}
\DeclareMathOperator*{\divergence}{div}
\DeclarePairedDelimiter\ceil{\lceil}{\rceil}
\DeclarePairedDelimiter\floor{\lfloor}{\rfloor}
\title[SQG equations forced by random noise]{Surface quasi-geostrophic equations forced by random noise: prescribed energy and \\non-unique Markov selections}
\subjclass[2010]{35Q35, 35A02, 35R60} 
\author[Elliott Walker]{Elliott Walker}  
\address{Texas Tech University, Department of Mathematics and Statistics, Lubbock, TX, 79409-1042, U.S.A.; Phone: 806-834-6112; Fax: 806-742-1112}
\email{ellwalke@ttu.edu} 
\author[Kazuo Yamazaki]{Kazuo Yamazaki}  
\address{University of Nebraska, Lincoln, Department of Mathematics, 243 Avery Hall, PO Box 880130, Lincoln, NE 68588-0130, U.S.A.; Phone: 402-473-3731; Fax: 402-472-8466}
\email{kyamazaki2@unl.edu}
\date{}
\keywords{Convex integration; Fractional Laplacian; Linear multiplicative noise; Non-uniqueness; Surface quasi-geostrophic equations.}
\begin{document}
\maketitle

\begin{abstract} 
We consider the momentum formulation of the two-dimensional surface quasi-geostrophic equations forced by random noise, of both additive and linear multiplicative types. For any prescribed deterministic function under some conditions, we construct solutions to each system whose energy is the fixed function. Consequently, we prove non-uniqueness of almost sure Markov selections of suitable class of weak solutions associated to the momentum surface quasi-geostrophic equations in both cases of noise. 
\end{abstract}

\footnote{The second author is supported by the Simons Foundation (962572, KY)}
\footnote{Kazuo Yamazaki is the corresponding author.}

\setcounter{tocdepth}{3}
\makeatletter
\def\l@section{\@tocline{2}{2pt}{2pc}{}{}}
\def\l@subsection{\@tocline{2}{2pt}{4pc}{}{}}
\def\l@subsubsection{\@tocline{2}{2pt}{6pc}{}{}}
\makeatother
\small
\tableofcontents
\normalsize

\section{Introduction}
\subsection{Physical Motivation} 
Mathematical study of properties such as the well-posedness and uniqueness of solutions for a given system of partial differential equations (PDEs) in hydrodynamics provides insights as to the consistency of a model with the physical reality of fluid flows, and justifies their usage in predictive simulations in a scientific and engineering context. The two-dimensional (2D) surface quasi-geostrophic (SQG) equations \eqref{est 115} model the motion of buoyancy or potential temperature $\theta$ on a horizontal fluid boundary and have been studied since the pioneering work of Charney \cite{C48} in 1948. This system has applications in oceanography, meteorology, and related fields in applied sciences (e.g. \cite{G82,HPGS95, MW06, MGNIP22}). Additionally, it has attracted the attention of mathematicians, largely due to analytical similarities with other important systems such as the Euler and the Navier-Stokes (NS) equations \eqref{est 114}, as shown by Constantin, Majda, and Tabak \cite{CMT94}. 

In 1956, Landau and Lifshitz \cite{LL57} introduced the idea of ``fluctuating hydrodynamics,'' where by including a forcing term with a random component in a hydrodynamic system of PDEs, one can better model the role of both internal and external noise on the physical system (also \cite{N65}). Many works such as \cite{K11} have by now demonstrated, potentially counterintuitively from physics point of view (e.g. \cite{SSO07}), that certain noisy fluctuations can have a regularizing effect on the solution leading an initial value problem to become probabilistically well-posed.

The purpose of this manuscript is to employ convex integration on the momentum SQG equations forced by noise of additive and linear multiplicative types and construct global-in-time probabilistically strong analytically weak solutions with prescribed energy that are non-unique in law. Additionally, we prove non-uniqueness of Markov selections in both cases. 

\subsection{Model Equations and Previous Works}\label{Section 1.2}
We denote $\mathbb{N} \triangleq \{1, 2, \hdots, \}$ while $\mathbb{N}_{0} \triangleq \{0\} \cup \mathbb{N}$. We denote the $j$-th component of any $d$-dimensional vector $x$ by $x_{j}$ for $j \in \{1, \hdots, d\}$. For any square matrix $A$, the $(m,l)$ entry is denoted by superscript $A^{ml}$, and its trace-free part by $\mathring{A}$. The spatial domain of our main interest will be $\mathbb{T}^2\triangleq[-\pi,\pi]^2$, although some of our discussions can be extended to $\mathbb{T}^{d}$ for general $d \in \mathbb{N}$, or even $\mathbb{R}^{d}$. We denote $\partial_{t} \triangleq \frac{\partial}{\partial t}$ and $\partial_{j} \triangleq \frac{\partial}{\partial x_{j}}$ for $j \in \{1, \hdots, d\}$. We recall a fractional Laplacian $\Lambda^{r} \triangleq (-\Delta)^{\frac{r}{2}}$ for any $r \in \mathbb{R}$ as the Fourier operator with its Fourier symbol $\lvert m \rvert^{r}$ so that $\Lambda^{r} f(x) \triangleq \sum_{m\in\mathbb{Z}^{2}} \lvert m \rvert^{r} \hat{f}(m) e^{im\cdot x}$ where $\hat{f}$ denotes the Fourier transform of $f$;  $\check{f}$ will denote the Fourier inverse of $f$. We write $a\wedge b\triangleq\min\{a,b\}$. Finally, alongside the standard function spaces of the H$\ddot{\mathrm{o}}$lder spaces $C^\gamma$ for $\gamma \geq 0$ (see \cite{BCD11} for generalizations via Besov spaces), Lebesgue spaces $L^p$ for $p \in [1,\infty]$, and the homogeneous Sobolev spaces $\dot{H}^s$ for $s \in \mathbb{R}$ with their typical norms, we additionally define $\dot{H}_{\sigma}^{\frac{1}{2}} \triangleq \{ f \in \dot{H}^{\frac{1}{2}} (\mathbb{T}^{2}): \nabla\cdot f = 0, \fint_{\mathbb{T}^{2}} f dx = 0 \}$ where $\fint_{\mathbb{T}^{2}} f dx \triangleq \lvert \mathbb{T}^{2} \rvert^{-1} \int_{\mathbb{T}^{2}} f(x) dx$, as well as $\dot{H}_{\sigma}^{s}$ and $H_{\sigma}^{s}$ for $s \geq 0$ similarly. We also define for each $q\in\mathbb{N}_{0}$ the norms
\begin{subequations}
\begin{align}
&\lVert f \rVert_{C_{t,x}} \triangleq \sup_{s \in [0,t], x \in \mathbb{T}^{2}} \lvert f(s,x) \rvert, \hspace{7mm}  \lVert f \rVert_{C_{t,x}^{n}} \triangleq \sum_{0 \leq j + \lvert \beta \rvert \leq n} \lVert \partial_{t}^{j} D^{\beta} f \rVert_{C_{t,x}}, \\
&||f||_{C_{t,x,q}}\triangleq \sup_{s \in[t_q,t], x\in\mathbb{T}^2}|f(s,x)|, \hspace{3mm} \lVert f \rVert_{C_{t,x,q}^{n}} \triangleq \sum_{0 \leq j + \lvert \beta \rvert \leq n} \lVert \partial_{t}^{j} D^{\beta} f \rVert_{C_{t,x,q}}, 
\end{align}
\end{subequations} 
for $j \in \mathbb{N}_{0}, \beta \in \mathbb{N}_{0}^{2}$, and $t_{q}$ to be defined subsequently in \eqref{define tq}.

For comparison purposes, we first introduce an initial value problem of  
\begin{subequations}\label{est 114} 
\begin{align}
& \partial_{t} u + \divergence (u\otimes u) + \nabla \pi = \eta \Delta u,  \hspace{3mm} \nabla\cdot u = 0,  \hspace{3mm} \text{ for } t > 0,\label{est 114a}  \\
& u(0,x) = u^{\text{in}}(x), \label{est 114b}
\end{align}
\end{subequations} 
where $\eta \geq 0$ represents viscosity coefficient, $u: \mathbb{R}_{+} \times \mathbb{T}^{d} \mapsto \mathbb{R}^{d}$ and $\pi: \mathbb{R}_{+} \times \mathbb{T}^{d} \mapsto \mathbb{R}$ represent the fluid velocity and pressure, respectively. This system \eqref{est 114} represents the Euler equations when $\eta = 0$ whereas the NS equations when $\eta > 0$. 

The initial value problem of the SQG equations is, for $\gamma \in (0,2]$,  
\begin{subequations}\label{est 115} 
\begin{align}
&\partial_{t}\theta + \divergence (u\theta)  + \nu \Lambda^{\gamma }\theta = 0, \hspace{3mm}  u =  \mathcal{R}^{\perp}\theta, \hspace{3mm} \text{ for } t > 0,  \label{est 115a}\\
& \theta(0,x) = \theta^{\text{in}}(x), \label{est 115b} 
\end{align}
\end{subequations} 
where $\mathcal{R}$ is the Riesz transform vector, $(x_{1},x_{2})^\perp \triangleq (-x_{2},x_{1})$, $\nu \geq 0$ the thermal diffusivity coefficient, and $\theta: \mathbb{R}_{+} \times \mathbb{T}^{2} \mapsto \mathbb{R}$ represents potential temperature. The generalized NS equations with $\Delta u$ in \eqref{est 114a} replaced by $- \Lambda^{\gamma} u$ has also been extensively investigated since the pioneering work \cite{L59} by Lions in 1959. The 2D SQG equations \eqref{est 115} with $\gamma = 1$ is also a physically important model for the Ekmann pumping effect in the boundary layer near the surface (e.g. \cite{C02}). We define $v\triangleq \Lambda^{-1}u$ as the potential velocity and transform the SQG equations \eqref{est 115} into a momentum formulation 
\begin{equation}\label{est 116}
\partial_t v + (u\cdot \nabla) v - (\nabla v)^T\cdot u + \nabla p + \Lambda^{\gamma} v = 0, \hspace{3mm} u = \Lambda v, \hspace{3mm}  \nabla\cdot v = 0, 
\end{equation} 
where we have taken $\nu=1$ for simplicity. 

Next, to model the effects of random noise as previously discussed, we consider a stochastic force added to the system. The general form of the momentum SQG equations forced by random noise of our interest is 
\begin{equation}\label{general msqg}
dv+[(u \cdot \nabla)v-(\nabla v)^{T}\cdot u + \nabla p + \Lambda^{\gamma}v]dt=G(v)dB,  \hspace{3mm} u = \Lambda v, \hspace{3mm} \nabla\cdot v = 0.
\end{equation} 
There are various types of noise which can be incorporated; the two which are of interest to us here are additive and linear multiplicative types. The momentum SQG equations with additive noise so that $G(v) dB$ in \eqref{general msqg} is replaced by $dB = \sum_{i=1}^{\infty} G_{i} dB_{i}$ where $B$ is a $GG^{\ast}$-Wiener process to be described subsequently (see \eqref{regularity of noise}) is 
\begin{equation}\label{additive msqg}
dv+[(u \cdot \nabla)v-(\nabla v)^{T}\cdot u + \nabla p + \Lambda^{\gamma}v]dt=dB, \hspace{3mm} u = \Lambda v, \hspace{3mm} \nabla\cdot v = 0,
\end{equation} 
while that with linear multiplicative noise so that $G(v) dB$ in \eqref{general msqg} is replaced by $v dB$ where $B$ is a $\mathbb{R}$-valued Wiener process is 
\begin{equation}\label{mult msqg}
dv + [(u \cdot\nabla) v - (\nabla v)^{T} \cdot u + \nabla p + \Lambda^{\gamma} v] dt = v dB, \hspace{3mm} u = \Lambda v, \hspace{3mm}\nabla \cdot v = 0. 
\end{equation} 
 
For the purpose of convenience in the subsequent sections, we recall the following definitions. First, from \cite[p. 198]{CK20}, given a set $\Phi$, we let $\Upsilon \triangleq \{\gamma: [0,\infty) \mapsto \Phi \}$ and then $S \triangleq \{S(\phi)\}_{\phi \in \Phi}$ where $S(\phi) \triangleq \{ \gamma \in \Upsilon: \gamma (0) = \phi \}$. A selection of the family $S$ is a map $u: [0,\infty) \times \Phi \mapsto \Phi$ such that $u(t, \phi) \in S(\phi)$ for every $\phi \in \Phi$; i.e., a selection picks a solution for every initial data $\phi$. We defer the definitions of almost sure (a.s.) Markov families and selections respectively to Definition \ref{Definition 5} and Lemma \ref{Lemma 7.3}. For clarity, we consider the system \eqref{additive msqg} in the following Definition \ref{Definition 1}
\begin{define}\label{Definition 1} 
\begin{enumerate}[wide=0pt]
\indent 
\item Fix any probability space $(\Omega, \mathcal{F}, \mathbf{P})$. A solution $v$ to \eqref{additive msqg} is said to be probabilistically strong if it is adapted to the canonical right-continuous filtration generated by the given Wiener process and augmented by all the negligible sets. It is probabilistically weak otherwise.
\item Uniqueness in law holds if any solution $(v, B)$ and $(\tilde{v}, \tilde{B})$ potentially defined on different filtered probability spaces and starting from the same initial distributions have the same laws $\mathcal{L}(v) = \mathcal{L}(\tilde{v})$. Path-wise uniqueness holds if two solutions $(v, B)$ and $(\tilde{v}, B)$ on same probability space $(\Omega, \mathcal{F}, \mathbf{P})$ and  starting from the same initial data satisfy $v(t) = \tilde{v}(t)$ for all $t$ $\mathbf{P}$-a.s. 
\end{enumerate} 
\end{define} 
\noindent Path-wise uniqueness implies uniqueness in law due to the classical Yamada-Watanabe theorem while the converse implication is generally false (\hspace{1sp}\cite[Example 2.2]{C03}). 

We now describe previous works. First, we focus on the results prior to the recent development of the breakthrough technique of convex integration on the deterministic SQG equations \eqref{est 115}-\eqref{est 116}. The global existence and uniqueness of solutions starting from smooth initial data for \eqref{est 115} has caught much attention. The $L^{\infty} (\mathbb{T}^{2})$-norm of the solution seems to be the best bounded quantity for the solutions $\theta$ to \eqref{est 115} starting from smooth initial data and \eqref{est 115} has the scaling property of $\lambda^{\gamma - 1} \theta (\lambda^{\gamma} t, \lambda x)$ for any $\lambda \in \mathbb{R}_{+}$; consequently, the cases $\gamma \in (0,1), \gamma =1,$ and $\gamma \in (1, 2]$ are respectively considered as the $L^{\infty}(\mathbb{T}^{2})$-supercritical, critical, and subcritical. The subcritical case was treated by Constantin and Wu \cite{CW99}, the critical case required new breakthrough approaches from Kiselev, Nazarov, and Volberg \cite{KNV07} and Caffarelli and Vasseur \cite{CV10}; we also refer to \cite{CV12, K10, KN10}.  In the research direction of weak solutions, Resnick in \cite[Theorem 2.1]{R95} constructed a global weak solution to the SQG equations \eqref{est 114}, even in case $\nu = 0$, starting from initial data $\theta^{\text{in}} \in L^{2}(\mathbb{T}^{2})$. This result was subsequently extended by Marchand \cite[Theorem 1.2]{M08} to $\theta^{\text{in}} \in L^{p} (\mathbb{T}^{2})$ for $p \geq \frac{4}{3}$ or $\theta^{\text{in}} \in \dot{H}^{-\frac{1}{2}}(\mathbb{T}^{2})$. 

Second, we focus on the results on the stochastic SQG equations, which refers to \eqref{est 115} forced by random noise similarly to \eqref{additive msqg}-\eqref{mult msqg}, as well as the stochastic momentum SQG equations \eqref{additive msqg}-\eqref{mult msqg} themselves. In fact, we first recall the important work \cite{FR08} by Flandoli and Romito, in which the global existence of a Leray-Hopf weak solution to the NS equations forced by additive noise was proven; we call their solution Leray-Hopf because it requires a regularity of $L_{T}^{2}\dot{H}_{x}^{1}$ and satisfies the appropriate energy inequality (see \cite[Definition 3.3]{FR08}). Besides, Flandoli and Romito \cite[Theorem 4.1]{FR08} proved the existence of an a.s. Markov selection for the 3D NS equations forced by additive noise. Subsequently, Goldys, R$\ddot{\mathrm{o}}$ckner, and Zhang \cite{GRZ09} extended \cite{FR08} to the general case that particularly includes the linear multiplicative noise (see also \cite{DO06} in such a case of state-dependent noise).

Third, concerning the SQG equations forced by random noise, Zhu in \cite[Definition 4.2.1 and Theorem 4.2.4]{Z12b} in the additive case and \cite[Definition 4.3.1 and Theorem 4.3.2]{Z12b} in a more general multiplicative case defined its solution to require the regularity of $L_{T}^{\infty} L_{x}^{2} \cap L_{T}^{2} \dot{H}_{x}^{\gamma}$ and proved the global existence of an analytically weak and probabilistically weak solution in case $\nu > 0, \gamma > 0$.  Although \cite[Definitions 4.2.1 and 4.3.1]{Z12b} did not include energy inequality, it could have been readily included (see e.g. 4th line from top on \cite[p. 80]{Z12b}) and hence such solutions can be called the Leray-Hopf solution to the stochastic SQG equations. Path-wise uniqueness of such a martingale solution in the subcritical regime $\gamma > 1$ with a multiplicative noise was also proven in \cite[Theorem 4.4.4]{Z12b} under the hypothesis that $\theta^{\text{in}} \in L^{p}(\mathbb{T}^{2})$ for $p > \frac{2}{\gamma -1}$ but remains open in a more general setting. Moreover, the existence of an a.s. Markov selection in case $\nu > 0, \gamma > 0$ was proven for the stochastic SQG equations in \cite[Theorem 4.2.5]{Z12a}; its proof consisted of an application of \cite[Theorem 4.7]{GRZ09} and hence applies to both cases of additive and multiplicative noise. We also refer to \cite[Theorems 4.3.3, 4.3.8, 4.3.10, and 4.4.5]{Z12a} for results respectively concerning strong Feller property, support theorem, the existence of a unique invariant measure, and exponential convergence (see also \cite{RZZ15}); we also refer to \cite{Y22a} on ergodicity and \cite{BNSW20} in which certain multiplicative noise was demonstrated to possess a regularizing effect. Concerning the momentum SQG equations forced by random noise, the global existence of a martingale solution in case the noise is additive was proven therein in the \cite[Section 5.1]{Y23a} by applying \cite[Theorem 4.6]{GRZ09}; it was proven for a class of weak solutions with low spatial regularity (see \cite[Definition 4.1]{Y23a} for the specific definition) but the proof can be extended to the Leray-Hopf level immediately. In fact, the hypotheses of \cite[Theorems 4.6 and 4.7]{GRZ09} are identical and thus the computations in \cite[Section 5.1]{Y23a} also assures the existence of an a.s. Markov family for \eqref{additive msqg} with $\gamma > 0$. Lastly, as we already discussed, the setting of \cite{GRZ09} allows linear multiplicative noise. In conclusion, both \eqref{additive msqg}-\eqref{mult msqg} admit global-in-time analytically and probabilistically weak solutions, as well as a.s. Markov families. 

Next, we discuss the recent developments on the ill-posedness results via the convex integration technique. Onsager's conjecture \cite{O49} in 1949 suggested the threshold regularity of $C_{x}^{\frac{1}{3}}$ that determines the conservation of the kinetic energy by the solution to the Euler equations. Constantin, E, and Titi \cite{CET94}, as well as Eyink \cite{E94} although in a slightly stronger norm than H$\ddot{\mathrm{o}}$lder continuity, established the positive direction that if $u \in C_{x}^{\alpha}$ for $\alpha > \frac{1}{3}$, then the solution conserves the energy as predicted.  Proving the negative direction of the Onsager's conjecture that there exists a solution in $C_{x}^{\alpha}$ for $\alpha < \frac{1}{3}$ that does not conserve energy required a new approach. De Lellis and Sz$\acute{\mathrm{e}}$kelyhidi Jr. initiated a series of important works \cite{DS09, DS10, DS13} adapting the convex integration technique from geometry to the Euler equations. After further developments (e.g. \cite{BDIS15}), Isett \cite{I18} solved the negative direction of the Onsager's conjecture in all spatial dimensions $d \geq 3$, and the case $d = 2$ was recently established by Giri and Radu \cite{GR23}. Such breakthrough results of ill-posedness did not stop with the Euler equations. Introducing intermittency, Buckmaster and Vicol \cite{BV19a} proved non-uniqueness of weak solutions to the 3D NS equations. 

In relevance to our manuscript, we highlight that C$\acute{\mathrm{o}}$rdoba, Faraco, and Gancedo \cite{CFG11} proved non-uniqueness of weak solutions to the incompressible porous media equation; subsequently, Isett and Vicol \cite{IV15} extended such a result to a more general class of active scalars. Interestingly, \cite{IV15} was forced to exclude the SQG equations due to the technical reason arising from the odd Fourier multiplier of its nonlinear term, specifically the Riesz transform $\mathcal{R}^{\bot}$ in \eqref{est 115a}. This was overcome by Buckmaster, Shkoller, and Vicol \cite{BSV19} by introducing the momentum SQG equations \eqref{est 116} (see also \cite{IM21}). Subsequently, Cheng, Kwon, and Li \cite{CKL21} constructed non-trivial stationary weak solutions to the SQG equations \eqref{est 115} by considering the equation solved by the potential temperature $f \triangleq \Lambda^{-1} \theta$, in contrast to the potential velocity $v = \Lambda^{-1} u$. We refer to the following works and references therein on convex integration adapted to other equations: \cite{BCV18, LQ20, LT20} on the NS equations with fractional Laplacian fully within the $L^{2}(\mathbb{T}^{d})$-supercritical regime, \cite{BBV20} on the magnetohydrodynamics (MHD) system, \cite{MS20} on transport equation, and \cite{BV19b} for a survey. 

Next, before we review the recent developments on the convex integration technique applied on stochastic PDEs, we recall that upon any probabilistic Galerkin approximation (e.g. \cite[Appendix A]{FR08}), if one takes mathematical expectation to obtain uniform estimates, then the resulting solution ends up being probabilistically weak. In case the noise is additive, one can consider a linear stochastic PDE with the same fixed noise, e.g.
\begin{equation}\label{Stokes}
dz + [\nabla p_{1} + \Lambda^{\gamma} z] dt = dB, \hspace{3mm} \nabla\cdot z = 0, \hspace{3mm} \text{ for } t > 0, 
\end{equation} 
in case of \eqref{additive msqg}, and work on a random PDE satisfied by the difference between these two equations; however, the solution obtained via this approach as a converging subsequence will depend on the fixed realization and hence not be probabilistically strong. The classical Yamada-Watanabe theorem would imply that path-wise uniqueness, together with the existence of a probabilistically weak solution implies the existence of a  probabilistically strong solution. However, the consensus was that proving path-wise uniqueness for the 3D NS equations forced by random noise seems to be no easier than the deterministic case and hence a significant amount of effort has been devoted to prove uniqueness in law instead (e.g. \cite[pp. 878--879]{DD03}). In conclusion, the existence of probabilistically strong, even if analytically weak, solution for the stochastic PDEs of fluid mechanics in general were open except in case of low spatial dimensions or hyperdiffusion (see \cite[p. 84]{F08}). 

Remarkably, any solution obtained via convex integration technique for stochastic PDEs turns out to be probabilistically strong, as seen already in the first works \cite{BFH20} by Breit, Feireisel, and Hofmanov\'a, as well as \cite{CFF19} by Chiodaroli, Feireisl, and Flandoli, which proved path-wise non-uniqueness of certain stochastic Euler equations. In short, the reason is because the solution obtained via convex integration is a limit of a Cauchy sequence constructed inductively. Subsequently, \cite{HZZ19} proved non-uniqueness in law of the 3D NS equations forced by noise of additive, linear multiplicative, and nonlinear types. Cherny's theorem \cite{C03} implies that uniqueness in law and the existence of probabilistically strong solution lead to path-wise uniqueness; therefore, proving non-uniqueness in law among probabilistically strong solutions, as done in \cite{HZZ19}, has an important consequence that the strategy of proving the global well-posedness of the corresponding system via  Cherny's theorem fails. In relevance to our manuscript, we highlight that following the approach of \cite[Theorems A and B]{BMS21} by Burczak, Modena, and Sz$\acute{\mathrm{e}}$kelyhidi Jr., Hofmanov\'a, Zhu, and Zhu \cite[Theorem 1.5]{HZZ21a} prescribed energy $e(t)$ under some hypothesis and constructed a probabilistically strong solution to the 3D NS equations forced by additive noise with its energy identically equal to $e(t)$ up to a stopping time which implies non-uniqueness in law; we mention that Berkemeier \cite{B23} extended this such a result to the case of linear multiplicative noise. Additionally, Hofmanov\'a, Zhu, and Zhu \cite[Theorem 1.3]{HZZ21a} extended the convex integration solution globally in time and showed the non-uniqueness of the corresponding Markov selection. 

Concerning the application of convex integration technique on the SQG equations and momentum SQG equations forced by random noise, first, Hofmanov\'a, Zhu, and Zhu \cite{HZZ22a} proved non-uniqueness in law of the SQG equations forced by additive white-in-space noise via the convex integration scheme of \cite{CKL21}. Independently, Yamazaki \cite{Y23a} proved the non-uniqueness in law of the momentum SQG equations \eqref{additive msqg} forced by additive white-in-time noise via the convex integration scheme of \cite{BSV19}. Extending \cite{Y23a} to \eqref{mult msqg} with linear multiplicative noise proved to be difficult due to technical difficulties that are elaborated in \cite[Remark 1.3]{Y23b} and summarized here. 
\begin{enumerate}
\item The main nonlinear term of the momentum SQG equations \eqref{est 116} is 
\begin{equation}\label{est 120} 
(u \cdot\nabla) v - (\nabla v)^{T} \cdot u  = u^{\bot} ( \nabla^{\bot} \cdot v), \hspace{3mm} u =\Lambda v 
\end{equation}  
(see \cite[Equation (5)]{Y23a}), which is not of divergence form. First, this implies that applying the inverse divergence operator from Definition \ref{Definition 2} can potentially lead to a complete waste of a derivative in estimates, unless \eqref{est 120} is compactly supported away from the origin in Fourier frequencies. Furthermore, even if $v$ were a shear flow, the second term $(\nabla v)^{T} \cdot \Lambda v$ would not vanish, which created difficulties in the initial inductive step at $q = 0$. 
\item The analysis on  \eqref{mult msqg} typically starts by defining 
\begin{equation}\label{define upsilon}
\Upsilon(t) \triangleq e^{B(t)} \text{ and } y(t) \triangleq \Upsilon^{-1}(t)v(t), 
\end{equation} 
which will lead to a stochastic commutator error of the form 
\begin{equation}\label{est 55e} 
\divergence R_{\text{Com2}} = (\Upsilon - \Upsilon_{l}) [(\Lambda y_{q+1} \cdot \nabla) y_{q+1} - (\nabla y_{q+1})^{T} \cdot \Lambda y_{q+1} ], 
\end{equation} 
where we limit ourselves at the moment to only explain that $\Upsilon_{l}$ is $\Upsilon$ mollified in time (see \eqref{define l}). A standard mollifier estimate can be employed on the difference $\Upsilon - \Upsilon_{l}$ in \eqref{est 55e}; however, the rest of the products are too singular, specifically one derivative worse than the NS equations, and not of divergence form in contrast to $\divergence (u\otimes u)$ in \eqref{est 114a}.  
\end{enumerate} 
Very recently, this was overcome in \cite{Y23b} with multiple new ideas, most of which we incorporate in this manuscript. On the other hand, the building blocks discovered in \cite{CKL21} seem to have special advantages and led to further advances in \cite{BLW23, HLZZ24} both on the SQG equations forced by additive noise. We also refer to the following references for further developments concerning convex integration technique applied to stochastic PDE: \cite{HZZ20} on the Euler equations, \cite{CDZ22, HZZ21b, HZZ22b, RS21, Y20a, Y20c, Y21c} on the NS equations, \cite{Y21a} on the Boussinesq system, \cite{Y21d} on the MHD system, \cite{LZ22} on the power-law model, \cite{KY22} on the transport equation, as well as more recently \cite{BJLZ23, FR24, HLP23, HPZZ23a, HPZZ23b, HZZ23a, LZ23a, LZ24, LZ24b, MS23, P23}.  

\subsection[Main Results]{\for{toc}{Main Results}\except{toc}{Statement of Main Results}}\label{main results}
Our main results are the constructions of global-in-time solutions to the momentum SQG equations forced by random noise of both additive and linear multiplicative types with prescribed energy, and consequently their non-uniqueness in law, as well as non-uniqueness of their Markov selections. Key distinctions in the inductive hypothesis of convex integration from the previous works \cite{Y23a, Y23b} and the current work are \eqref{inductive 3} and \eqref{mult inductive 3}, namely the control on the difference between the $\dot{H}^{\frac{1}{2}}(\mathbb{T}^{2})$-norm of the inductive solution and the prescribed energy. The convex integration scheme employed in \cite{HZZ21a} utilizes intermittency and is $L^{p}$-based while those of \cite{Y23a, Y23b} utilized Beltrami plane waves from \cite{CDS12a} and were $C^{k}$-based; while the latter sometimes has the advantage of obtaining higher regularity of the solution, it also implies less flexibility in estimates at $L^{p}$-level. Because the natural energy for the solution to the momentum SQG equations \eqref{additive msqg} and \eqref{mult msqg} are $\lVert \Lambda^{\frac{1}{2}} \cdot \rVert_{L^{2} (\mathbb{T}^{2})}^{2}$, it was not clear to us at first if we can adapt the convex integration schemes of \cite{Y23a, Y23b} to prescribe energy; this was part of our initial motivation for this work. We now present our first result in the case of additive noise. 

\begin{theorem}\label{Theorem 1.1}	
Suppose that two parameters $\gamma$ and $\sigma$ satisfy the following: 
\begin{equation}\label{gamma hypo}
\gamma \in \left(0, \frac{3}{2} \right)
\end{equation} 
and 
\begin{equation}\label{new}
\sigma \in \left(0, \frac{1}{2} [ \frac{3}{2} - \gamma ] \right]. 
\end{equation}
Fix any probability space $(\Omega,\mathcal{F},\mathbf{P})$. Then for every $GG^*$-Wiener process $B$ on $(\Omega, \mathcal{F}, \mathbf{P})$ such that
\begin{equation}\label{regularity of noise}
\Tr ( (-\Delta)^{\frac{7}{4} + 4 \sigma} GG^{\ast}) < \infty,
\end{equation}
there exist $\iota >0$ and a $\mathbf{P}$-a.s. strictly positive stopping time $\mathfrak{t}$ such that the following holds. Given any energy $e\in C^1_b(\mathbb{R};[\ushort{e},\infty))$ that satisfies
\begin{equation}\label{est 118}
\lVert e\rVert_{C([-2,1])} \leq \bar{e} \hspace{1mm}  \text{ and } \hspace{1mm} \lVert e'\rVert_{C([-2,1])}\leq \tilde{e} \hspace{1mm}  \text{ for some } \hspace{1mm} \bar{e}\geq \ushort{e}>4, \hspace{1mm} \text{ and } \hspace{1mm} \tilde{e}>0, 
\end{equation}
there exists a probabilistically strong, analytically weak solution $v\in C([0,\mathfrak{t}];C^{\frac{1}{2}+\iota}(\mathbb{T}^2)) \cap C^{\frac{1}{3} + \iota} ([0, \mathfrak{t} ]; C(\mathbb{T}^{2}))$ to \eqref{additive msqg} which starts from a deterministic initial data $v^{\text{in}}$, obeys
\begin{equation}\label{est 119}
\esssup_{\omega\in\Omega} \lVert v(\omega)\rVert_{C_{\mathfrak{t}}C_x^{\frac{1}{2}+\iota}} + \esssup_{\omega \in \Omega} \lVert v(\omega) \rVert_{C_{\mathfrak{t}}^{\frac{1}{3} + \iota} C_{x}}<\infty,
\end{equation}
and has its energy satisfy 
\begin{equation}\label{energy} 
\lVert v(t)\rVert^2_{\dot{H}_{x}^{\frac{1}{2}}}=e(t) \hspace{2mm} \forall \hspace{1mm} t \in [0, \mathfrak{t}].
\end{equation}
Additionally, if two such energies $e_1, e_2$ obeying the same bounds $\ushort{e},\bar{e},$ and $\tilde{e}$ in \eqref{est 118} coincide on $[0,t]$, then the corresponding solutions $v_1,v_2$ also coincide on $[0,t\wedge \mathfrak{t}]$, implying non-uniqueness in law for \eqref{additive msqg}.
\end{theorem}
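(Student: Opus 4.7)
The plan is to implement a H\"older-based convex integration scheme for \eqref{additive msqg} in the spirit of \cite{BSV19, Y23a}, augmented by a new inductive hypothesis tracking the $\dot{H}^{\frac{1}{2}}_x$-energy so that it converges to the prescribed function $e(t)$. First I would shift away the noise: define $z$ as the solution to the linear problem \eqref{Stokes} with zero initial data, so that the trace assumption \eqref{regularity of noise} yields $z \in C_t C_x^{\frac{7}{4} + 2\sigma}$ pathwise, well above the target regularity $C_x^{\frac{1}{2} + \iota}$ of $v$. Set $y \triangleq v - z$ and derive the random PDE satisfied by $y$, which carries no stochastic forcing. Define $\mathfrak{t}$ as the first time a suitable pathwise norm of $z$ exceeds a deterministic threshold; since $z(0) = 0$ and $z$ has continuous trajectories, $\mathfrak{t}$ is $\mathbf{P}$-a.s.\ strictly positive. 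All convex integration is then carried out pathwise on the random PDE for $y$ on $[0, \mathfrak{t}]$, with adaptedness of the eventual limit guaranteed because every iterate is built from $z$ via deterministic operations, which is precisely how such schemes yield probabilistically strong solutions.

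The heart of the proof is the iterative construction of pairs $(y_q, \mathring{R}_q)_{q \geq 0}$ solving a Reynolds-momentum-SQG system
\[
\partial_t y_q + (\Lambda(y_q + z) \cdot \nabla)(y_q + z) - (\nabla(y_q + z))^T \cdot \Lambda(y_q + z) + \nabla p_q + \Lambda^{\gamma} y_q = \divergence \mathring{R}_q,
\]
on $[t_q, \mathfrak{t}]$ at spatial frequency $\lambda_q = a^{b^q}$ with stress size $\delta_q \sim \lambda_q^{-2\beta}$, for parameters $a, b, \beta$ calibrated to \eqref{gamma hypo}--\eqref{new}. Alongside the usual $C_{t,x,q}^1$ bound on $y_q$ and $L^1$-bound on $\mathring{R}_q$, the key new hypothesis \eqref{inductive 3} is a quantitative control of the form
\[
\left| e(t) - \lVert y_q(t) + z(t) \rVert_{\dot{H}^{\frac{1}{2}}_x}^2 \right| \lesssim \delta_{q+1}\, \ushort{e}.
\]
At step $q \mapsto q + 1$, I would mollify $(y_q, \mathring{R}_q)$ at scale $\ell$ and construct a perturbation $w_{q+1} = w_{q+1}^{(p)} + w_{q+1}^{(c)} + w_{q+1}^{(t)}$ from the Beltrami-type momentum SQG building blocks of \cite{BSV19} oscillating at frequency $\lambda_{q+1}$. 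The Beltrami amplitudes $a_\xi(t,x)$ are designed via a partition-of-unity argument so that simultaneously (i) the low-frequency part of the self-interaction of $w_{q+1}^{(p)}$, in the sense of \eqref{est 120}, cancels the mollified $\mathring{R}_\ell$, and (ii) the spatial mean of $\lVert \Lambda^{\frac{1}{2}} w_{q+1}(t) \rVert_{L^2}^2$ equals the remaining energy gap $e(t) - \lVert y_\ell(t) + z(t) \rVert_{\dot{H}^{\frac{1}{2}}_x}^2$ up to $\delta_{q+2}$-error, so that \eqref{inductive 3} propagates. The new stress $\mathring{R}_{q+1}$ is then assembled from the linear, oscillation, transport, and $z$-commutator error terms, each estimated via the inverse-divergence operator of Definition \ref{Definition 2} and stationary phase.

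The main obstacle is closing the parameter budget so that the stress-cancellation condition and the energy-matching condition on $a_\xi$ are simultaneously solvable in a H\"older-based (non-intermittent) scheme: the energy gap is a scalar while $\mathring{R}_q$ is a trace-free symmetric $2 \times 2$ matrix, and the coefficient freedom is tight. Moreover, as observed in Section \ref{Section 1.2}, the nonlinearity \eqref{est 120} is not in divergence form, so the interacting Beltrami frequencies must be arranged never to cancel at the zero mode under the inverse divergence; these considerations together dictate both the strict positivity $\ushort{e} > 4$ in \eqref{est 118} and the restriction \eqref{new} on $\sigma$ given \eqref{regularity of noise}. Once the iteration closes, $y_q \to y$ in $C([0,\mathfrak{t}]; C^{\frac{1}{2}+\iota}) \cap C^{\frac{1}{3}+\iota}([0,\mathfrak{t}]; C)$ by geometric summability of the telescoping differences, so $v \triangleq y + z$ is probabilistically strong and obeys \eqref{est 119}, while the identity \eqref{energy} is obtained by sending $q \to \infty$ in the energy-gap inductive bound. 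For the last assertion, if $e_1 \equiv e_2$ on $[0,t]$ then running both iterations with identical initial data and identical auxiliary choices produces $v_1 = v_2$ on $[0, t \wedge \mathfrak{t}]$; conversely, any two distinct energies yield solutions with distinct $\dot{H}^{\frac{1}{2}}_x$-profiles and therefore distinct laws, establishing non-uniqueness in law for \eqref{additive msqg}.
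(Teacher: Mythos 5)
Your sketch captures the overall architecture correctly — noise shift to a linear problem, pathwise convex integration on the random PDE, an additional inductive hypothesis tracking the $\dot{H}^{1/2}_x$-energy, amplitudes designed to simultaneously cancel the stress and match the energy gap — and it also correctly identifies the trace-free-matrix vs.\ scalar-gap tension as a genuine constraint. However there are several substantive discrepancies between your plan and what is actually needed to make the iteration close under the stated hypotheses.

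First, you define $z$ as the solution of \eqref{Stokes}, i.e.\ the linear problem with diffusion $\Lambda^{\gamma} z$. The paper deliberately does \emph{not} do this: it replaces the diffusion operator by $\Lambda^{\frac{3}{2}-2\sigma}z$ (see \eqref{est 42a}), precisely because with $\Tr((-\Delta)^{\frac{7}{4}+4\sigma}GG^*)<\infty$ the process driven by $\Lambda^{\gamma}z$ for small $\gamma$ does not have enough pathwise regularity to control the second commutator error $R_{\text{Com2}}$, which requires roughly $z \in C_T H_x^{5/2+\sigma}$. If you insist on $\Lambda^{\gamma}z$ you would be forced back to a $\gamma$-dependent and much more restrictive trace hypothesis (cf.\ Remark~\ref{Remark 1.2}). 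The extra terms $\Lambda^{\gamma}z - \Lambda^{\frac{3}{2}-2\sigma}z$ that then appear in the random PDE have to be absorbed into the base-step stress and the linear error, which your sketch omits. Relatedly, your claimed pathwise regularity $z \in C_t C_x^{7/4+2\sigma}$ does not follow from the trace assumption in 2D; what one actually gets is $z \in C_T\dot{H}^{5/2+2\sigma}_x$, which embeds only into $C_x^{3/2+2\sigma}$.

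Second, the scaling in your energy-tracking hypothesis is wrong. You propose $\bigl| e(t) - \|y_q + z\|^2_{\dot H^{1/2}}\bigr| \lesssim \delta_{q+1}\ushort{e}$, but the perturbation $w_{q+1}$ is an amplitude of size $\sim \delta_{q+1}^{1/2}$ oscillating at frequency $\lambda_{q+1}$, so its $\dot{H}^{1/2}$-contribution is $\|w_{q+1}\|^2_{\dot H^{1/2}} \sim \lambda_{q+1}\|w_{q+1}\|^2_{L^2} \sim \lambda_{q+1}\delta_{q+1}$, not $\delta_{q+1}$. An inductive hypothesis at scale $\delta_{q+1}$ cannot be propagated: after one step the gap is again of order $\lambda_{q+2}\delta_{q+2}$. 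The paper's \eqref{inductive 3} uses the correct scale $\lambda_{q+1}\delta_{q+1}e(t)$, and moreover imposes a \emph{two-sided} bound $\frac34 \le \cdot \le \frac54$: the lower bound is essential to guarantee $\gamma_q(t)\ge 0$ in \eqref{est 21}, without which the constant $\rho_j$ of \eqref{est 0a} is not large enough to keep the amplitude argument $\Id - \mathring{R}_{q,j}/\rho_j$ inside the domain of the geometric lemma's $\gamma_k$. A one-sided ``$\lesssim$'' estimate loses this.

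Third, you use the full $z$ throughout, whereas the paper truncates to $z_q$ (frequency $\le \lambda_q/4$, see \eqref{define zq}). This truncation is not cosmetic. The frequency-support inductive hypothesis \eqref{inductive 1a} and the disjointness $\supp\hat{y}_l \cap \supp\hat{w}_{q+1} = \emptyset$, $\supp\hat{w}_{q+1}\cap\supp\hat{z}_{q+1}=\emptyset$ (used crucially in \eqref{new 39} to simplify the $\dot H^{1/2}$-energy of $y_{q+1}+z_{q+1}$ into a sum of squares in \eqref{new 42}) all rely on $z_q$ having compactly supported Fourier transform below $\lambda_{q}/2$. Without truncation the cross terms $\langle \Lambda^{1/2}w_{q+1}, \Lambda^{1/2}z\rangle$ do not vanish and the energy identity does not localize.

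Finally, on the obstacle you flag as the main difficulty: the paper's resolution is to make $\rho_j$ a \emph{spatial constant} (see Remark~\ref{Remark 3.1}), so that after summing the geometric lemma over $k$ the contribution $\sum_k a_{k,j}^2 (k^\perp\otimes k^\perp)$ produces the trace-free part matching $\mathring{R}_{q,j}$ while the scalar energy gap $\gamma_l(\tau_{q+1}j)$ rides entirely along the identity-matrix direction and is absorbed by the pressure. With $x$-dependent $\rho_j$ the identity $\sum_{j,k}\int \lambda_{q+1}\tilde w_{q+1,j,k}\cdot\tilde w_{q+1,j,-k}\,dx = 4(2\pi)^2\sum_j\chi_j^2\rho_j$ in \eqref{est 43} would not hold. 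Your sketch does not indicate this, and simply calling it ``tight'' leaves the construction of the amplitudes unspecified at exactly the point the proof depends on.
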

Theorem \ref{Theorem 1.1} already constructed a solution on $[0,\mathfrak{t}]$ and this implies non-uniqueness in law. Theorem \ref{Theorem 1.2} (1) extends the solution from Theorem \ref{Theorem 1.1} globally in time. We recall from Section \ref{Section 1.2} that \eqref{additive msqg} admits a global-in-time analytically weak and probabilistically weak solutions starting from initial data in $\dot{H}_{\sigma}^{\frac{1}{2}}$, and an a.s. Markov selection.  

\begin{theorem}\label{Theorem 1.2} 
\indent
\begin{enumerate}
\item Non-uniqueness in law holds for \eqref{additive msqg} on $[0,\infty)$. 
\item There exists $\mathcal{C} (\xi^{\text{in}}, C_{q})$ for $\xi^{\text{in}} \in \dot{H}_{\sigma}^{\frac{1}{2}}$ and $q \in \mathbb{N}$, a suitable class of weak solutions to \eqref{additive msqg} specifically defined in Definition \ref{Definition 6}, such that Markov selections of $\mathcal{C} (\xi^{\text{in}}, C_{q})$ are not unique. 
\end{enumerate} 
\end{theorem}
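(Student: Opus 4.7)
For part (1), the plan is to extend the local convex integration solutions produced by Theorem \ref{Theorem 1.1} to $[0,\infty)$ by concatenation with globally defined weak solutions. Fix two distinct energies $e_1, e_2 \in C^1_b(\mathbb{R}; [\ushort{e}, \infty))$ sharing the bounds in \eqref{est 118} but disagreeing on some sub-interval of $[0, \mathfrak{t}]$, and let $v_1, v_2$ be the corresponding probabilistically strong solutions on $[0, \mathfrak{t}]$ starting from the same deterministic initial data $v^{\text{in}}$. Because $v_i(\mathfrak{t}) \in C^{1/2+\iota}(\mathbb{T}^2) \subset \dot{H}_\sigma^{1/2}$ a.s.\ by \eqref{est 119}, we can apply the existing global martingale solution theory for \eqref{additive msqg} recalled in Section \ref{Section 1.2} (via \cite{GRZ09} and the argument of \cite[Section 5.1]{Y23a}) to construct, on a possibly enlarged probability space carrying the shifted Wiener process $B(\cdot + \mathfrak{t}) - B(\mathfrak{t})$, a Leray--Hopf martingale solution $\tilde v_i$ on $[\mathfrak{t}, \infty)$ starting from $v_i(\mathfrak{t})$. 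Concatenating $v_i$ with $\tilde v_i$ yields global weak solutions $V_i$ to \eqref{additive msqg} with $\mathcal{L}(V_1) \neq \mathcal{L}(V_2)$, since $V_1$ and $V_2$ already have distinct laws on the restriction time-window $[0, \mathfrak{t}]$ by \eqref{energy} and the last claim of Theorem \ref{Theorem 1.1}.

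For part (2), I will follow the strategy used for the $3$D Navier--Stokes equations in \cite[Theorem 1.3]{HZZ21a} (and inspired by \cite{FR08, GRZ09}). The class $\mathcal{C}(\xi^{\text{in}}, C_q)$ from Definition \ref{Definition 6} will consist of martingale solutions to \eqref{additive msqg} with initial law $\delta_{\xi^{\text{in}}}$ that satisfy appropriate Leray--Hopf type energy inequalities together with a family of super-martingale moment bounds $C_q$, $q \in \mathbb{N}$, controlling higher moments of the energy. The first step is to verify that $\mathcal{C}(\xi^{\text{in}}, C_q)$ is nonempty (using the existing global martingale solution theory for \eqref{additive msqg}) and enjoys the standard stability properties required by the abstract selection principle, namely convexity, closedness under measurable selection, and the disintegration/reconstruction (shift and paste) properties with respect to a countable family of conditioning times. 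The abstract theorem of Krylov, adapted in \cite[Theorem 4.6]{GRZ09} to the stochastic setting, then yields at least one a.s.\ Markov selection $\{P_{\xi^{\text{in}}}\}_{\xi^{\text{in}} \in \dot H^{1/2}_\sigma}$ of $\mathcal{C}(\xi^{\text{in}}, C_q)$.

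To produce two distinct Markov selections, the plan is to exploit Theorem \ref{Theorem 1.1} at the specific deterministic initial data $v^{\text{in}}$ constructed there. For two choices $e_1 \not\equiv e_2$ of the prescribed energy satisfying \eqref{est 118}, part (1) yields two globally defined weak solutions with distinct laws, and I will verify that both belong to $\mathcal{C}(v^{\text{in}}, C_q)$ for a sufficiently large choice of the constants $C_q$: the convex integration portion on $[0, \mathfrak{t}]$ obeys the required moment bounds through the uniform control \eqref{est 119}, while the extension on $[\mathfrak{t}, \infty)$ inherits Leray--Hopf energy inequalities from the martingale solution theory. Applying the Markov selection procedure to these two ``seeds'' separately—by selecting the corresponding law at $v^{\text{in}}$ and completing the selection at all other initial data by the abstract theorem—yields two a.s.\ Markov selections of $\mathcal{C}(\xi^{\text{in}}, C_q)$ whose laws at $v^{\text{in}}$ differ.

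The main obstacle I expect is the second step: verifying that the concatenated solutions from part (1) actually lie inside $\mathcal{C}(v^{\text{in}}, C_q)$, i.e.\ that one can prescribe the constants $C_q$ so large that both the convex integration piece (which is not a priori constructed to be a Leray--Hopf solution at the natural $\dot H^{1/2}$ energy level, but which does satisfy \eqref{energy}) and the Leray--Hopf extension jointly fulfill all the energy-type inequalities built into Definition \ref{Definition 6}. This is essentially a bookkeeping problem but requires careful handling at the gluing time $\mathfrak{t}$ (in particular a one-sided energy inequality across $\mathfrak{t}$), and in the linear multiplicative case it requires an additional argument controlling the exponential factor $\Upsilon$ from \eqref{define upsilon}. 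Once this compatibility is established, the non-uniqueness of the Markov selections follows directly from the non-uniqueness of the two solution laws produced in part (1).
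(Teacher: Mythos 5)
Your overall plan for part (1) is close to the paper's, though the mechanism should be made precise. The paper does not concatenate on an enlarged probability space carrying a shifted Wiener process; instead it works entirely on path space: the convex-integration law $P = \mathcal{L}(v)$ is shown to be a martingale solution on $[0,\tau]$ (Proposition \ref{Proposition 4.4}), and the extension is the measure $P \otimes_\tau R$ built via the regular conditional probability distribution and the reconstruction Lemma \ref{Lemma 4.3} (following \cite{HZZ19, HZZ21a}). Since $\tau$ is a genuine stopping time (not deterministic), one must pick the post-$\tau$ solution measurably in $\omega$, and that is precisely what Lemma \ref{Lemma 4.2} does via the measurable selection $\omega \mapsto Q_\omega$. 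Your phrasing ``a Leray--Hopf martingale solution on $[\mathfrak{t},\infty)$ starting from $v_i(\mathfrak{t})$'' suppresses this measurability issue and also the fact that the extension is only probabilistically weak; the concatenated object is a law on $\Omega_0$, not a process driven by the original noise. These are repairable, but worth flagging.

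Part (2) has a genuine gap. You propose to obtain two Markov selections by ``selecting the corresponding law at $v^{\text{in}}$ and completing the selection at all other initial data by the abstract theorem.'' This does not work: the Markov property couples the selection across time and hence across initial data through conditioning, so you are not free to prescribe the selected measure at a single initial datum and then fill in the rest. A selection forced to equal a pre-chosen $P$ at $v^{\text{in}}$ will in general fail the disintegration property (Definition \ref{Definition 10}(1)) at positive times. The correct mechanism — and the one the paper uses — is the \emph{maximization feature} of the Krylov-type Markov selection theorem (Lemma \ref{Lemma 7.3}(2), from \cite[Theorem 4.19]{HZZ21a}): one may choose the selection so as to maximize $\mathbb{E}^P[\int_0^\infty e^{-\lambda t} f(\xi(t))\,dt]$ among all solutions with a given initial datum. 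Taking $f$ to be a bounded continuous functional separating the two laws $P$ and $Q$ obtained in part (1) (this exists by the argument of \cite[Theorem 12.2.4]{SV97}), one runs the selection once with $f$ and once with $-f$, yielding Markov families $\{P_{\xi^{\text{in}}}^+\}$ and $\{P_{\xi^{\text{in}}}^-\}$ whose values at $v^{\text{in}}$ differ because of \eqref{est 180}/\eqref{est 237}. Without the maximization step there is no mechanism forcing the two constructed selections to be distinct. A small secondary issue: your closing remark about the exponential factor $\Upsilon$ is out of place — Theorem \ref{Theorem 1.2} concerns the additive case only, and $\Upsilon$ appears only in the linear multiplicative setting (Theorems \ref{Theorem 1.3}--\ref{Theorem 1.4}).
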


Next, taking advantage of the new ideas from \cite{Y23b}, we extend both Theorems \ref{Theorem 1.1}-\ref{Theorem 1.2} to \eqref{mult msqg} with linear multiplicative noise, which is of interest due to its known regularizing effect (recall \cite{BNSW20, K11}); in fact, in contrast to Theorem \ref{Theorem 1.1}, the stopping time can be made arbitrarily large with high probability.  

\begin{theorem}\label{Theorem 1.3}
Suppose that \eqref{gamma hypo} holds, that $B$ is a $\mathbb{R}$-valued Wiener process on a fixed probability space $(\Omega,\mathcal{F},\mathbf{P})$.  Let $T>0$ and $\kappa \in (0,1)$ be given. Then there exists $\iota>0$, $L > 1$ sufficiently large,  and a $\mathbf{P}$-a.s. strictly positive stopping time $\mathfrak{t} = \mathfrak{t}(L)$ satisfying 
\begin{equation}\label{est 50} 
\mathbf{P}(\mathfrak{t}\geq T) > \kappa 
\end{equation}
such that the following holds. Given any energy $e\in C_b^1(\mathbb{R};[\ushort{e},\infty))$ that satisfies
\begin{equation}\label{est 118 mult}
\lVert e\rVert_{C([-2,L])} \leq \bar{e} \hspace{1mm}  \text{ and } \hspace{1mm} \lVert e'\rVert_{C([-2,L])}\leq \tilde{e} \hspace{1mm}  \text{ for some } \hspace{1mm} \bar{e}\geq \ushort{e}>4, \hspace{1mm} \text{ and } \hspace{1mm} \tilde{e}>0, 
\end{equation}
there exists a probabilistically strong, analytically weak solution $v\in C([0,\mathfrak{t}];C^{\frac{1}{2}+\iota}(\mathbb{T}^2))$ to \eqref{mult msqg} which starts from a deterministic initial data $v^{\text{in}}$, obeys \eqref{est 119}, and has its energy satisfy \eqref{energy}. Additionally, if two such energies $e_1, e_2$ obeying the same bounds $\ushort{e},\bar{e},$ and $\tilde{e}$ in \eqref{est 118} coincide on $[0,t]$, then the corresponding solutions $v_1,v_2$ also coincide on $[0,t\wedge \mathfrak{t}]$, implying non-uniqueness in law for \eqref{mult msqg}.
\end{theorem}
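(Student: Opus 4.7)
\emph{Setup and reduction to a random PDE.} The plan is to reduce \eqref{mult msqg} to a random PDE via the Doss--Sussmann-type transform \eqref{define upsilon}: set $y(t)\triangleq\Upsilon^{-1}(t) v(t)$ with $\Upsilon(t)=e^{B(t)}$. An application of It\^o's formula, together with \eqref{est 120}, converts \eqref{mult msqg} into the pathwise equation
\begin{equation*}
\partial_{t} y + \Upsilon[(\Lambda y\cdot\nabla) y - (\nabla y)^{T}\cdot \Lambda y] + \nabla p + \Lambda^{\gamma} y + \tfrac{1}{2} y = 0, \quad \nabla\cdot y=0,
\end{equation*}
which is deterministic once a Brownian path is fixed. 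I would define $\mathfrak{t}=\mathfrak{t}(L)$ as the first exit time of $\|B\|_{C([0,t])}$ from a threshold tuned by $L$, so that on $[0,\mathfrak{t}]$ one controls all relevant norms of $\Upsilon$, $\Upsilon^{-1}$ and $\Upsilon-\Upsilon_l$, while continuity of Brownian paths yields $\mathbf{P}(\mathfrak{t}\geq T)\to 1$ as $L\to\infty$, securing \eqref{est 50}. All subsequent convex integration takes place at the level of $y$ on $[0,\mathfrak{t}]$.

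\emph{Inductive convex integration with prescribed energy.} At step $q$ I would carry along a triple $(y_{q}, p_q, \mathring{R}_{q})$ satisfying the Reynolds--SQG system
\begin{equation*}
\partial_{t} y_{q} + \Upsilon_{l}[(\Lambda y_{q}\cdot\nabla) y_{q} - (\nabla y_{q})^{T}\cdot \Lambda y_{q}] + \nabla p_{q} + \Lambda^{\gamma} y_{q} + \tfrac{1}{2} y_{q} = \divergence \mathring{R}_{q},
\end{equation*}
with $\Upsilon_l$ the time mollification (see \eqref{define l}) and the usual inductive bounds on $\|y_q\|_{C^n_{t,x,q}}$, a geometric decay $\|\mathring R_q\|_{L^1}\lesssim \delta_{q+1}$, and—critically—an energy-gap estimate of the form $|\Upsilon^2(t)\|y_q(t)\|_{\dot{H}^{1/2}}^2-e(t)|\leq \tfrac{1}{4}\delta_{q+2}$ on $[t_q,\mathfrak{t}]$; this is the multiplicative analogue of \eqref{mult inductive 3} and is what forces the solution to track $e(t)$ pointwise. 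The perturbation $w_{q+1}=y_{q+1}-y_q$ is assembled from the Beltrami-wave blocks of \cite{BSV19, CDS12a, Y23b}, with amplitudes chosen in the spirit of \cite{BMS21, HZZ21a} so as to simultaneously cancel the old stress $\mathring R_q$ and close the residual $\Upsilon^{-2}(t)e(t)-\|y_q(t)\|^2_{\dot H^{1/2}}$.

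\emph{Error analysis and the stochastic commutator.} The new stress $\mathring R_{q+1}$ decomposes into the standard oscillation, linear, and corrector errors, plus the stochastic commutator
\begin{equation*}
\divergence R_{\mathrm{Com2}} = (\Upsilon - \Upsilon_{l})\bigl[(\Lambda y_{q+1}\cdot\nabla) y_{q+1} - (\nabla y_{q+1})^{T}\cdot \Lambda y_{q+1}\bigr]
\end{equation*}
from \eqref{est 55e}. The difference $\Upsilon-\Upsilon_l$ is handled by a standard H\"older-in-time mollifier estimate, while the non-divergence form and one-derivative loss inherent to \eqref{est 120} require, as in \cite{Y23b}, the frequency-localized inverse divergence operator from Definition \ref{Definition 2} applied to blocks whose Fourier support is bounded away from the origin. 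Choosing the iteration parameters $\lambda_q, \delta_q, \ell$ so that all these errors sit below $\delta_{q+2}$ dictates the constraint $\gamma<3/2$ in \eqref{gamma hypo}; the parameter $\sigma$ plays no role here since the multiplicative noise is $\mathbb R$-valued.

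\emph{Passage to the limit and non-uniqueness.} The sequence $\{y_q\}$ is Cauchy in $C([0,\mathfrak t];C^{1/2+\iota})$ with $\mathring R_q\to0$, so the limit $y$ solves the random PDE on $[0,\mathfrak t]$; setting $v=\Upsilon y$ produces a solution to \eqref{mult msqg} satisfying \eqref{est 119} and \eqref{energy}. Since each $y_q$ is built as a deterministic functional of the Brownian path, the limit inherits adaptedness to the $B$-filtration and is therefore probabilistically strong. The final assertion that two solutions coincide on $[0,t\wedge\mathfrak t]$ whenever $e_1\equiv e_2$ on $[0,t]$ comes from the fact that the convex integration scheme can be restarted from the common past; choosing $e_1\neq e_2$ afterwards then produces two solutions with different laws, yielding non-uniqueness in law. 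The main obstacle I anticipate is balancing (a) the pointwise energy identity \eqref{energy}, (b) the regularity $C^{1/2+\iota}$, and (c) the one-derivative loss in $R_{\mathrm{Com2}}$ within the $C^k$-based Beltrami framework, which is tighter than the $L^p$-intermittent setup of \cite{HZZ21a} and is what drives the thresholds in \eqref{gamma hypo}.
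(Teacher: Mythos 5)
Your high-level roadmap matches the paper's proof of Theorem~\ref{Theorem 1.3}: transform to the random PDE \eqref{est 265} via $y=\Upsilon^{-1}v$, choose a stopping time controlling $\Upsilon$, $\Upsilon^{-1}$, $\Upsilon-\Upsilon_l$, run a $C^k$-based Beltrami convex integration with an energy-tracking inductive hypothesis, decompose $\mathring R_{q+1}$ including the stochastic commutator \eqref{est 55e}, and pass to the limit and then to $v=\Upsilon y$. However, there are two genuine gaps in the parts that are specific to prescribing energy in the multiplicative case.

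First, your energy inductive hypothesis $\lvert\Upsilon^2(t)\lVert y_q(t)\rVert_{\dot H_x^{1/2}}^2-e(t)\rvert\leq\tfrac14\delta_{q+2}$ has the wrong shape. The paper's \eqref{mult inductive 3} is a two-sided sign-definite estimate, $\tfrac34\lambda_{q+1}\delta_{q+1}e(t)\Upsilon^{-2}(t)\leq e(t)\Upsilon^{-2}(t)-\lVert y_q(t)\rVert^2_{\dot H_x^{1/2}}\leq\tfrac54\lambda_{q+1}\delta_{q+1}e(t)\Upsilon^{-2}(t)$. The positivity of the gap is not cosmetic: it yields $\bar\gamma_q\geq 0$ in \eqref{est 198}, hence $\bar\rho_j\geq \varepsilon_\gamma^{-1}\sqrt{l^2+\lVert\mathring R_{q,j}\rVert_{C_x}^2}$ in \eqref{est 59}, which is what keeps the argument of $\gamma_k$ inside $B(\Id,\varepsilon_\gamma)$ so the amplitudes in \eqref{est 58a} are even defined. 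An absolute-value bound permits a negative gap, and the Beltrami perturbation can only add $\dot H^{1/2}$ energy, so it cannot correct an overshoot. Moreover the size must be $\lambda_{q+1}\delta_{q+1}$, not $\delta_{q+2}$: the perturbation contributes $\lVert w_{q+1}\rVert_{\dot H^{1/2}}^2\approx\lambda_{q+1}\lVert w_{q+1}\rVert^2_{C_x}\approx\lambda_{q+1}\delta_{q+1}$, and that is exactly the deficit it must fill.

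Second, and this is precisely what the paper highlights in Remarks~\ref{Remark 5.2} and~\ref{Remark 5.3} as the key new difficulty of the multiplicative case, you leave unspecified the energy-tracking scalar $\bar\gamma_q$ that enters the amplitudes. Naively copying the additive \eqref{est 0b} with $\lVert y_q\rVert^2$ replaced by $\Upsilon^2\lVert y_q\rVert^2$ fails: because each $\bar a_{k,j}$ carries a factor $\Upsilon_l^{-1/2}$, the new $\dot H^{1/2}$ energy in \eqref{est 99} appears with a factor $\Upsilon_l^{-1}(\tau_{q+1}j)$, and the resulting difference $\bar\gamma_q(t)4(2\pi)^2-\Upsilon^2(t)\Upsilon_l^{-1}(t)4(2\pi)^2\sum_j\chi_j^2(t)\bar\gamma_l(\tau_{q+1}j)$ is not a mollifier error that closes. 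The fix in \eqref{est 58c} is to build the factor $\Upsilon^{-2}(t)\Upsilon_l(t)$ into $\bar\gamma_q$ itself, which is what makes the estimate of $\RomanIII_1$ in \eqref{est 107}--\eqref{est 109} work. Relatedly, \eqref{mult inductive 2b} must carry the extra factor $e^{-3L^{1/4}}$ to offset $\Upsilon_l\Upsilon^{-2}$; it is used at \eqref{est 235} when estimating the deterministic contribution of $\bar\rho_j$ to the new energy. Without these two adjustments the energy control \eqref{mult energy goal} would not close, so your sketch as written does not yet constitute a proof.
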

Theorem \ref{Theorem 1.3} already constructed a solution on $[0, \mathfrak{t}]$ where $\mathfrak{t}$ can be made large with high probability, and this implies non-uniqueness in law. Theorem \ref{Theorem 1.4} (1) extends the solution from Theorem \ref{Theorem 1.3} globally in time. Our approach here can be applied to obtain analogue of Theorem \ref{Theorem 1.4} for the 3D NS equations forced by linear multiplicative noise; we choose to not pursue this direction here. Again, we recall from Section \ref{Section 1.2} that \eqref{mult msqg} admits a global-in-time analytically and probabilistically weak solutions starting from initial data in $\dot{H}_{\sigma}^{\frac{1}{2}}$, as well as an a.s. Markov selection.  

\begin{theorem}\label{Theorem 1.4}
\indent
\begin{enumerate}
\item Non-uniqueness in law holds for \eqref{mult msqg} on $[0,\infty)$. Furthermore, for every given $T > 0$, non-uniqueness in law for \eqref{mult msqg} holds on $[0,T]$. 
\item There exists $\mathcal{W} (\xi^{\text{in}}, \theta^{\text{in}}, C_{q})$ for $(\xi^{\text{in}}, \theta^{\text{in}}) \in \dot{H}_{\sigma}^{\frac{1}{2}} \times \mathbb{R}$ and $q \in \mathbb{N}$, a suitable class of weak solutions to \eqref{mult msqg} specifically defined in Definition \ref{Definition 8}, such that Markov selections of $\mathcal{W} (\xi^{\text{in}}, \theta^{\text{in}}, C_{q})$ are not unique. 
\end{enumerate} 
\end{theorem}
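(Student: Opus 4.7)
The plan is to mirror the strategy of Theorem \ref{Theorem 1.2} for the additive case, adapted to the linear multiplicative noise setting, using Theorem \ref{Theorem 1.3} in place of Theorem \ref{Theorem 1.1} as the building block.

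For part (1), I would first produce two probabilistically strong solutions with distinct laws on a common stochastic interval by invoking Theorem \ref{Theorem 1.3} twice with two energy profiles $e_1, e_2 \in C_b^1(\mathbb{R}; [\ushort{e}, \infty))$ satisfying the bounds in \eqref{est 118 mult}, coinciding at $t=0$ but differing on some nontrivial subinterval. Starting from a common deterministic initial datum $v^{\text{in}}$, the resulting solutions $v_1, v_2$ on $[0, \mathfrak{t}]$ have their $\dot{H}^{\frac{1}{2}}$-energies pinned to $e_1, e_2$ respectively by \eqref{energy}, hence distinct laws. To extend globally, I would concatenate each $v_j$ at the stopping time $\mathfrak{t}$ with a global-in-time analytically and probabilistically weak solution, whose existence is recalled in Section \ref{Section 1.2}; since the laws already differ on $[0, \mathfrak{t}]$, non-uniqueness in law on $[0, \infty)$ follows. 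For the sharpened statement on an arbitrary fixed $[0, T]$, I would select the parameter $L$ in Theorem \ref{Theorem 1.3} large enough that \eqref{est 50} yields $\mathbf{P}(\mathfrak{t} \geq T) > \kappa$ with $\kappa$ close to $1$; the two solutions then carry distinct energy profiles on $[0, T]$ with probability at least $\kappa$, which forces distinct laws on $[0,T]$.

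For part (2), the existence of an almost sure Markov selection for the class $\mathcal{W}(\xi^{\text{in}}, \theta^{\text{in}}, C_q)$ is delivered by the abstract framework of Goldys--R\"ockner--Zhang \cite{GRZ09}, whose hypotheses on path regularity, stability under concatenation, and compactness in a suitable topology must be verified for $\mathcal{W}$ along the lines outlined in Section \ref{Section 1.2}. Non-uniqueness is then established by exhibiting two Markov selections that disagree at a deterministic time. Starting from a fixed deterministic datum $(\xi^{\text{in}}, 0)$ and picking two energy profiles $e_1, e_2$ that agree on $[0, s]$ but satisfy $e_1(t^*) \neq e_2(t^*)$ at some $t^* > s$, apply part (1) to produce two globally defined solutions $v_1, v_2$, each belonging to $\mathcal{W}(\xi^{\text{in}}, 0, C_q)$ for suitably chosen constants $C_q$. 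Following the selection procedure of Flandoli--Romito \cite{FR08} refined in \cite{GRZ09}, build two selections by optimizing bounded continuous functionals of $\lVert v(t^*) \rVert_{\dot{H}^{\frac{1}{2}}}^2$ in opposite directions; because $v_1, v_2$ assign different energies at $t^*$ to the same initial state, the two resulting Markov families must be distinct.

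The main obstacle will be verifying that the globally extended, concatenated solutions belong to the class $\mathcal{W}(\xi^{\text{in}}, \theta^{\text{in}}, C_q)$. The convex integration piece from Theorem \ref{Theorem 1.3} supplies only an energy equality on $[0, \mathfrak{t}]$ via \eqref{energy}, whereas $\mathcal{W}$ will require an energy inequality together with uniform-in-$q$ pathwise or moment bounds. A careful gluing argument is needed so that the energy inequality survives across the stitching time $\mathfrak{t}$, and the transformation $v = \Upsilon y = e^B y$ from \eqref{define upsilon} must be controlled so that bounds on $v$ descend from those on $y$ via standard moment estimates on the geometric Brownian factor. A secondary technical point is that the almost sure Markov property in \cite{GRZ09} is formulated via regular conditional distributions, so verifying compatibility of these disintegrations with the multiplicative noise structure across the random time $\mathfrak{t}$ will require a measurability argument analogous to, but more delicate than, the one needed for the additive case in Theorem \ref{Theorem 1.2}.
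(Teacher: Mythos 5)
Your high-level plan mirrors the paper's: use Theorem \ref{Theorem 1.3} to produce two convex-integration solutions with prescribed, distinct energies, extend them to $[0,\infty)$, and then apply the a.s. Markov selection machinery with maximization of a functional in opposite directions. However, the obstacle you flag as ``the main obstacle'' is precisely where the paper's argument has its most significant, and non-obvious, technical content, and you leave it unresolved in a way that matters.

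First, the class $\mathcal{W}$ is not defined by an energy inequality of the form \eqref{est 124} as in the additive case, but by the damped functional $E^q(t) = \lVert \xi(t) \rVert_{\dot{H}_x^{1/2}}^{2q} e^{-C_q t} + 2q \int_0^t e^{-C_q r} \lVert \xi(r) \rVert_{\dot{H}_x^{1/2}}^{2q-2} \lVert \xi(r) \rVert_{\dot{H}_x^{1/2+\iota}}^2 \, dr$ from \eqref{est 125}. This exponential damping is dictated by the It\^o correction for the multiplicative noise, under which $\lVert v \rVert_{\dot{H}_x^{1/2}}^2$ grows exponentially in expectation; correspondingly, one must choose the prescribed energy in the form $e(t) = d_0 e^{d_1 t}$ as in \eqref{est 162} (not the linear $d_0 + d_1 t$ used in the additive case). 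It is only the combination of this choice of $e$ with the damped $E^q$ and the inequalities \eqref{est 165}--\eqref{est 166} on $C_q$ that makes the (M3) supermartingale property verifiable for the convex-integration solution; without identifying the shape of $E^q$, there is no basis for checking that the solution lies in $\mathcal{W}(\xi^{\text{in}}, 0, C_q)$, and the whole selection argument has no class to work with. You point to ``moment estimates on the geometric Brownian factor'' but do not supply the form of $E^q$ that actually neutralizes the exponential growth.

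Second, ``concatenate with a global weak solution at $\mathfrak{t}$'' is not by itself a rigorous construction: the global extension in the paper is $P \otimes_{\tau_L} R$ built via the reconstruction Lemma \ref{Lemma 2.3} and the kernel of Lemma \ref{Lemma 6.2}, and the non-trivial claim, proved as Lemma \ref{Lemma 6.3}, is that (M3) survives this gluing. This requires the decomposition in \eqref{est 275}--\eqref{est 242}, i.e., checking that the glued $E^q$ is integrable and remains an a.s. supermartingale with respect to the canonical filtration, which is not a consequence of energy inequalities on either side of the stopping time alone. Finally, a minor but real point: the separating functional in \eqref{est 180} must be a \emph{bounded} continuous $f$ on $\dot{H}_\sigma^{1/2}$; one cannot optimize $\lVert v(t^*) \rVert_{\dot{H}^{1/2}}^2$ directly since it is unbounded, and the paper instead invokes a separation argument in the spirit of \cite[Theorem 12.2.4]{SV97} after Proposition \ref{Proposition 6.5} has produced two distinct laws.
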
 
 Let us make serval remarks on Theorems \ref{Theorem 1.1}-\ref{Theorem 1.4}. 
\begin{remark}\label{Remark 1.1}
To the best of our knowledge, this is the first construction of the solutions to the SQG or the momentum SQG equations forced by random noise with prescribed energy. In \cite{Y23a, Y23b}, solutions to the momentum SQG equations forced by additive and linear multiplicative noise such that their  energies grow twice faster than the energies of the classical solutions constructed by Galerkin approximation. To prove Theorems \ref{Theorem 1.2} (2) and \ref{Theorem 1.4} (4) concerning non-uniqueness of the corresponding a.s. Markov families, we will need to rely on Theorems \ref{Theorem 1.1} and \ref{Theorem 1.3} that construct solutions with prescribed energy. 
\end{remark} 
\begin{remark}\label{Remark 1.2}
Concerning Theorems \ref{Theorem 1.1}-\ref{Theorem 1.2}, we need $\Tr ( (-\Delta)^{\frac{7}{4} + 4 \sigma} GG^{\ast}) < \infty$ according to \eqref{regularity of noise}. In comparison, the only assumption in \cite{HZZ21a} for the 3D NS equations forced by additive noise was $\Tr (GG^{\ast}) < \infty$. In short, the worst terms come from the stochastic commutator error, specifically $R_{\text{Com2,2}}$ and $R_{\text{Com2,3}}$ in \eqref{est 35} and \eqref{est 34} which will require $z \in C_{T} H_{x}^{\frac{5}{2}+ \sigma}$.

On the other hand, the hypothesis on the additive noise in \cite[Theorem 2.1]{Y23a} was $\Tr ((-\Delta)^{4- \frac{\gamma}{2} + 2 \sigma} GG^{\ast}) < \infty$ with $\gamma \in (0, \frac{3}{2})$. In comparison, in Theorem \ref{Theorem 1.1} we fixed an arbitrary small $\sigma > 0$ and assumed the hypothesis of $\Tr ((-\Delta)^{\frac{7}{4} + 4 \sigma }GG^{\ast}) < \infty$, which is significantly better than $\Tr ((-\Delta)^{4- \frac{\gamma}{2} + 2 \sigma} GG^{\ast}) < \infty$ with $\gamma \in (0, \frac{3}{2})$. Our strategy on this improvement is as follows. First, let us emphasize that in terms of results of non-uniqueness, the higher $\gamma > 0$ is more desirable; second, as we can see from $\Tr ((-\Delta)^{4- \frac{\gamma}{2} + 2 \sigma} GG^{\ast}) < \infty$, the high exponent $\gamma$ allows us to reduce the hypothesis on the noise and still attain higher regularity for the solution $z$ to \eqref{Stokes}. With this in mind, our strategy is to include the diffusive term $\Lambda^{\frac{3}{2} - 2\sigma}z$ rather than the traditional approach of $\Lambda^{\gamma} z$ (cf. \eqref{Stokes} and \eqref{est 42a}); this allows $z$ to gain higher regularity with a relatively weaker hypothesis on the noise (see \eqref{est 41}). Of course, this will create extra terms $\Lambda^{\gamma} z -\Lambda^{\frac{3}{2} - 2 \sigma} z$ in \eqref{randomPDEa}, but it turns out that we are able to handle them via careful estimates. This strategy was inspired by the recent work  \cite{HLZZ24}.  
\end{remark}

\begin{remark}\label{Remark 1.3} 
Theorems \ref{Theorem 1.3}-\ref{Theorem 1.4} in the linear multiplicative case allows one to take any $T > 0$ arbitrarily large and prove non-uniqueness in law for \eqref{mult msqg} over $[0,T]$, providing a second proof of such a result (cf. \cite[Theorem 2.2]{Y23b} for the first proof). On the other hand, Theorems \ref{Theorem 1.1}-\ref{Theorem 1.2} in the additive case, similarly to \cite[Theorem 1.5]{HZZ21a}, shows that upon prescribing energy, such a result seems difficult, although non-uniqueness in law over $[0,T]$ for an arbitrary $T > 0$ in the additive case is actually known due to \cite[Theorem 2.2]{Y23a}; the additive noise therein was smoother than the noise in Theorems \ref{Theorem 1.1}-\ref{Theorem 1.2}, but one should be able to overcome this difference by the same strategy of this manuscript. The technical reason for this difference will be explained in Remark \ref{Remark 5.1} and the proof of Proposition \ref{Proposition 6.5} (1).
\end{remark} 

\begin{remark}\label{Remark 1.4}
The results and proofs in this manuscript have the potential toward the following extensions and generalizations. 
\begin{enumerate}
\item The previous works \cite{Y23a, Y23b} partially considered the generalized momentum SQG equations where $\Lambda^{\gamma} v$ and $\Lambda v$ in \eqref{est 116} were replaced respectively by $\Lambda^{\gamma_{1}} v$ and $\Lambda^{2-\gamma_{2}} v$. Such a generalization is possible. 
\item \cite[Theorem 1.1 and Corollary 1.2]{HZZ21a} had prescribed initial data rather than prescribed energy; we leave this direction for future works. 
\item Via short sketch of the proof, \cite[Corollary 1.4]{HZZ21a} also proved that the deterministic 3D NS equations generates non-unique semiflow solutions as a consequence of \cite[Theorem 1.3]{HZZ21a}. The proof consists of the following steps: 1) observing that convex integration goes through with zero noise, the case in which the stopping time defined in \eqref{additive stopping time} reduces to $\mathfrak{t} = 1$,  2) extending the solution globally in time by the classical Leray-Hopf weak solution, 3) modifying the relaxed energy inequality in the Definition \ref{Definition 6} according to Remark \ref{Remark 4.1}, 4) applying \cite[Theorem 2.5]{CK20} to deduce the existence of a semiflow that holds for almost all time $t$, 5) extending it to hold for all time $t$ by including the energy as an additional variable, and 6) verifying its non-uniqueness almost identically to the proof of Theorem \ref{Theorem 1.4}. We leave this direction for future works. 
\end{enumerate} 
\end{remark} 

We describe other technical difficulties subsequently as they require further notations (see Remarks \ref{Remark 2.1}, \ref{Remark 3.1}, \ref{Remark 3.2}, \ref{Remark 4.1}, \ref{Remark 5.1}, \ref{Remark 5.2},  and \ref{Remark 5.3}). In Section \ref{Preliminaries}, we set up additional parameters and definitions. In Sections \ref{Section 3}-\ref{Section 6} we prove respectively Theorems \ref{Theorem 1.1}-\ref{Theorem 1.4}. In Section \ref{Appendix} Appendix, we include additional useful lemmas and detailed computations for completeness. 

\section{Preliminaries}\label{Preliminaries} 
We write $A \overset{(\cdot)}{\lesssim}_{a,b}B$ to imply the existence of $C(a,b) \geq 0$ such that $A \leq CB$ due to an equation $(\cdot)$ and $A \approx B$ if $A \lesssim B$ and $B \lesssim A$. We define $\sum_{k\leq i\leq l}\triangleq 0$ when $l < k$.

\subsection{Choice of Parameters}\label{choice of parameters}
Certain parameters will be chosen identically in both cases of additive and linear multiplicative noise; we introduce them and their respective bound for convenience and to avoid unnecessary repetition. We take  
\begin{equation}\label{bound beta}
\beta \in \left(\frac{1}{2}, \frac{3}{4}\right)
\end{equation}
and 
\begin{equation}\label{bound a}
a \in 5 \mathbb{N}, \hspace{3mm} a\geq a_0 \geq e^{16}
\end{equation}
where $\beta > \frac{1}{2}$ and $a_{0} \geq e^{16}$ will be subsequently taken sufficiently close to $\frac{1}{2}$ and sufficiently large as needed, respectively. Besides, we fix 
\begin{equation}\label{bound alpha}
\alpha \in \left(1,\frac{3}{2} \right).
\end{equation}
We will later fix a value $b\in\mathbb{N}$ dependent on the type of noise, \eqref{additive bound b} for the additive case and \eqref{mult bound b} for the linear multiplicative case. Based on this we define for $q\in \mathbb{N}$
\begin{equation}\label{define lambda delta}
\lambda_{q}\triangleq a^{b^{q}}, \hspace{3mm}  \delta_{q}\triangleq \lambda_{1}^{2\beta-1}\lambda_{q}^{-2\beta}.
\end{equation} 
The fact that $\lambda_{1}\delta_{1}=1$ will be crucial to our proofs of Propositions \ref{additive step q0} and \ref{mult step q0}. We define
\begin{equation}\label{define l}
l_{q+1}\triangleq\lambda_{q+1}^{-\alpha}.
\end{equation}
We define a start time $t_{q}$ at each step $q\in\mathbb{N}_0$ to be
\begin{equation}\label{define tq}
t_{q}\triangleq -2+\sum_{1\leq \iota \leq q} \delta_{\iota}^{\frac{1}{2}} \overset{\eqref{define lambda delta}}{\in} [-2, 0), 
\end{equation}
where the upper bound can be readily verified considering \eqref{bound beta}, \eqref{bound a}, and that $b \in \mathbb{N}$.  For each step $q$, we work on $t\in[t_q,\mathfrak{t}]$. At step $q+1$ we split $[t_{q+1},\mathfrak{t}]$ into finitely many sub-intervals of size $\tau_{q+1}$, where
\begin{equation}\label{tau}
\tau_{q+1}^{-1} \triangleq l_{q+1}^{-\frac{1}{2}} \lambda_{q+2}^{\frac{1}{2}} \delta_{q+2}^{\frac{1}{2}} \lambda_{q+1}^{\frac{1}{2}} \delta_{q+1}^{-\frac{1}{4}}. 
\end{equation}

\begin{remark}\label{Remark 2.1}
Our choice of $\tau_{q+1}$ is notably different from
\begin{equation*}
\tau_{q+1} =  l_{q+1}^{\frac{1}{2}}\lambda_{q+1}^{-1}\delta_{q+1}^{-\frac{1}{4}}
\end{equation*}
in \cite[Equation (53)]{Y23a} and \cite[Equation (56)]{Y23b}, due to a key difference in our inductive hypotheses (cf. \eqref{inductive 2b} and \cite[Equation (90)]{Y23a}). This parameter is chosen via optimization over the most difficult error terms $R_{T}$ in \eqref{est 121} and $R_{\text{O,approx}}$ in \eqref{additive Oapprox}, which depend on the bounds imposed by these hypotheses.
\end{remark}

Given this, we then define $\mathbf{J}\triangleq \{\floor{t_{q+1}\tau_{q+1}^{-1}},\floor{t_{q+1}\tau_{q+1}^{-1}}+1,\dots,\ceil{\mathfrak{t}\tau_{q+1}^{-1}}\}$, dependent on our choice of $\mathfrak{t}$: for additive noise see \eqref{additive stopping time}, and for linear multiplicative see \eqref{mult stopping time}. We consider $\Gamma_j$ from Lemma \ref{geometric lemma}, and denote by 
\begin{equation}\label{est 122} 
\Gamma_j\triangleq 
\begin{cases}
\Gamma_1 \text{ whenever } j \text{ is odd,} \\
\Gamma_2 \text{ whenever } j \text{ is even}. 
\end{cases}
\end{equation} 
With that in mind, when no confusion arises, we will denote
\begin{align*}
\sum_j\triangleq \sum_{j\in\mathbf{J}}=\sum_{j=\floor{t_{q+1}\tau_{q+1}^{-1}}}^{\ceil{\mathfrak{t}\tau_{q+1}^{-1}}},  \hspace{2mm} \sum_k&\triangleq \sum_{k\in \Gamma_j}~~\text{for \emph{a priori} fixed} ~j\in\mathbf{J}, \hspace{2mm} \mathrm{and~} \hspace{2mm} \sum_{j,k}\triangleq \sum_j\sum_k.
\end{align*}
We define $\chi:\mathbb{R}\to [0,1]$ to be a smooth cutoff function with $\supp \chi\subset (-1,1)$ such that $\chi\equiv 1$ on the interval $(-\frac{1}{4},\frac{1}{4})$ and $\sum_{j\in\mathbb{Z}} \chi^2(t-j)=1$ for all $t\in\mathbb{R}$. Then we define, for $j\in\mathbf{J}$, 
\begin{equation}\label{est 18} 
\chi_j(t)\triangleq \chi(\tau_{q+1}^{-1}t-j) \text{ so that }\sum_{j} \chi_{j}^{2}(t) = \sum_{j} \chi^{2} (\tau_{q+1}^{-1} t - j) = 1. 
\end{equation} 
We will frequently rely on the fact that for any fixed $t$, there are at most 2 nontrivial cutoffs $\chi_j$. 

\subsection{Operators and Probabilistic Elements}\label{Section 2.2}
We denote the Leray projection of mean-zero functions onto the space of divergence-free vector fields by $\mathbb{P}$ and mean-zero projection by $\mathbb{P}_{\neq 0}$. 

\begin{define}\rm{(\hspace{1sp}\cite[Definition 4.1]{BSV19})\label{Definition 2}
Let $f: \mathbb{T}^{2} \mapsto \mathbb{R}^{2}$ be divergence-free and mean-zero. Then component-wise, we define the inverse divergence operator $\mathcal{B}$ by $(\mathcal{B}f)^{ij}\triangleq -\partial_j \Lambda^{-2}f_i - \partial_i \Lambda^{-2} f_j$ for $i, j \in \{1,2\}$, so that we have $f=\divergence{(\mathcal{B}f)}$. For $f$ which is not necessarily mean-zero or divergence-free, we further define $\mathcal{B}f\triangleq \mathcal{B}\mathbb{P}\mathbb{P}_{\neq 0}f$. It follows from this definition that $\mathcal{B}f$ is a symmetric trace-free matrix.}
\end{define}
Lastly, for $k\in \mathbb{S}^{1}$ we define
\begin{equation}\label{est 66}
b_k(\xi)\triangleq ik^{\perp}e^{ik\cdot\xi}, \hspace{3mm} c_k(\xi)\triangleq e^{ik\cdot\xi}, \hspace{3mm} \mathbb{P}_{q+1,k}\triangleq \mathbb{P} P_{\approx k\lambda_{q+1}},
\end{equation}
where $P_{\approx k\lambda_{q+1}}$ is a Fourier operator with a Fourier symbol $\hat{K}_{\approx k\lambda_{q+1}}(\xi)\triangleq \hat{K}_{\approx 1}\left(\frac{\xi}{\lambda_{q+1}}-k\right)$, and $\hat{K}_{\approx 1}$ is a bump function such that $supp(\hat{K}_{\approx 1})\subset B(0,\frac{1}{8})$ and $\hat{K}_{\approx 1}\equiv 1$ on $B(0,\frac{1}{16})$. We point out that $\supp \widehat{ \mathbb{P}_{q+1, k} f} \subset \{\xi: \frac{7}{8} \lambda_{q+1} \leq \lvert \xi \rvert \leq \frac{9}{8} \lambda_{q+} \}$. From \cite[Equations (50)-(51)]{Y23a} we know that we can write $\mathbb{P}_{q+1, k} f = K_{q+1, k} \ast f$ where the kernel $K_{q+1, k}$ satisfies for all $a, b \geq 0$, 
\begin{equation}\label{new 8}
\left\lVert \lvert x \rvert^{b} \nabla_{x}^{a} K_{q+1, k} (x) \right\rVert_{L^{1} (\mathbb{R}^{2})} \leq C(a,b) \lambda_{q+1}^{a-b}, 
\end{equation} 
and we can find a universal constant $C_{1}$ to satisfy 
\begin{equation}\label{est 8}
\lVert\mathbb{P}_{q+1,k}\rVert_{C_x\to C_x}\leq C_{1};
\end{equation}
we emphasize that $C_{1}$ is independent of $q$. Additionally, we define $\tilde{P}_{\approx \lambda_{q+1}}$ to be the Fourier operator with a symbol supported in $\{\xi: \frac{\lambda_{q+1}}{4} \leq \lvert \xi \rvert \leq 4 \lambda_{q+1} \}$ and is identically one on $\{\xi: \frac{3 \lambda_{q+1}}{8} \leq \lvert \xi \rvert \leq 3 \lambda_{q+1}\}$. Finally, we define a large parameter $M_{0}$ that requires (see \eqref{additive M0} and \eqref{mult M0})  
\begin{equation}\label{M0}
M_{0}^{\frac{1}{2}} \geq \frac{4C_1}{\pi}\sup_{k \in \Gamma_{1} \cup \Gamma_{2}}\lVert\gamma_{k}\rVert_{C(B(\Id, \varepsilon_\gamma ))},
\end{equation}
where $C_{1}$ is from \eqref{est 8}, $\Gamma_{1}, \Gamma_{2}$ are from \eqref{est 122}, and $\gamma_{k}, \varepsilon_\gamma$ are from Lemma \ref{geometric lemma}. While this seems similar to the time-varying parameter $M_{0}(t)$ of previous works (e.g. \cite[Equation (78)]{Y23a}), we take it instead as a constant, and indeed, its purpose more closely mirrors the value $C_{0}$ of \cite[Equations (88), (89), and (91)]{Y23a}. 

For any Polish space $H$, we define $\mathcal{B}(H)$ to be the $\sigma$-algebra of Borel sets in $H$. We acknowledge that we denoted the inverse divergence operator by $\mathcal{B}$ in Definition \ref{Definition 2}, but we do not believe that this will cause any confusion. Given any probability measure $P$, $\mathbb{E}^{P}$ denotes a mathematical expectation with respect to (w.r.t.) $P$. We denote $L^{2}(\mathbb{T}^{2})$-inner products by $\langle \cdot, \cdot \rangle$, a duality pairing of $\dot{H}^{-\frac{1}{2}}(\mathbb{T}^{2})-\dot{H}^{\frac{1}{2}}(\mathbb{T}^{2})$ by $\langle \cdot, \cdot \rangle_{\dot{H}_{x}^{-\frac{1}{2}}-\dot{H}_{x}^{\frac{1}{2}}}$, a quadratic variation of $A$ and $B$ by $\langle\langle A, B \rangle \rangle$, as well as $\langle \langle A \rangle \rangle \triangleq \langle \langle A, A \rangle \rangle$. Next, for a general set $X$, we denote the set of all compact subsets of $X$ by $\text{Comp}(X)$. We let  
\begin{equation}\label{est 209}
\Omega_{t} \triangleq C( [t,\infty); (H_{\sigma}^{4})^{\ast} ) \cap L_{\text{loc}}^{\infty} ([t,\infty); \dot{H}_{\sigma}^{\frac{1}{2}}), \hspace{3mm} t \geq 0,  
\end{equation} 
where $(H_{\sigma}^{4})^{\ast}$ denotes the dual of $H_{\sigma}^{4}$. We also denote by $\mathcal{P} (\Omega_{0})$ the set of all probability measures on $(\Omega_{0}, \mathcal{B})$ where $\mathcal{B}$ is the Borel $\sigma$-field of $\Omega_{0}$ from the topology of locally uniform convergence on $\Omega_{0}$. We define the canonical process $\xi: \Omega_{0} \mapsto (H_{\sigma}^{4})^{\ast}$ by $\xi_{t}(\omega) \triangleq \omega(t)$. For general $t \geq 0$ we equip $\Omega_{t}$ with Borel $\sigma$-field 
\begin{equation}\label{est 183} 
\mathcal{B}^{t} \triangleq \sigma \{\xi(s): s \geq t \}, 
\end{equation} 
and additionally define 
\begin{equation}\label{est 184} 
\mathcal{B}_{t}^{0} \triangleq \sigma \{\xi(s): s \leq t \}, \text{ and } \mathcal{B}_{t} \triangleq \cap_{s > t} \mathcal{B}_{s}^{0}, \hspace{3mm} t \geq 0.
\end{equation}  
For any Hilbert space $U$, we denote by $L_{2}(U, \dot{H}_{\sigma}^{s})$ with $s \geq 0$ the space of all Hilbert-Schmidt operators from $U$ to $\dot{H}_{\sigma}^{s}$ with norm $\lVert \cdot \rVert_{L_{2}(U, \dot{H}_{\sigma}^{s})}$ so that in the additive case $G$ is a Hilbert-Schmidt operator from $U$ to $\dot{H}_{\sigma}^{\frac{1}{2}}$. 

Additionally, we assume the existence of another Hilbert space $U_{1}$ such that the embedding $U \subset U_{1}$ is Hilbert-Schmidt and define 
\begin{equation*} 
\bar{\Omega}_{t} \triangleq C([t,\infty); (H_{\sigma}^{4})^{\ast} \times U_{1}) \cap L_{\text{loc}}^{\infty} ([t,\infty); \dot{H}_{\sigma}^{\frac{1}{2}} \times U_{1}). 
\end{equation*}  
We let $\mathcal{P} (\bar{\Omega}_{0})$ represent the set of all probability measures on $(\bar{\Omega}_{0}, \bar{\mathcal{B}})$ where $\bar{\mathcal{B}}$ is the Borel $\sigma$-algebra on $\bar{\Omega}_{0}$. Furthermore, we define the canonical process on $\bar{\Omega}_{0}$ by $(\xi, \theta): \bar{\Omega}_{0} \mapsto (H_{\sigma}^{4})^{\ast} \times U_{1}$ to satisfy $(\xi, \theta)_{t}(\omega) \triangleq \omega(t)$. Finally, for $t \geq 0$ we define 
\begin{equation}\label{est 241}
\bar{\mathcal{B}}^{t} \triangleq \sigma \{ (\xi, \theta)(s): s \geq t \}, \hspace{3mm} \bar{\mathcal{B}}_{t}^{0} \triangleq \sigma \{(\xi, \theta)(s): s \leq t \}, \hspace{3mm} \bar{\mathcal{B}}_{t} \triangleq \cap_{s > t} \bar{\mathcal{B}}_{s}^{0}. 
\end{equation} 
\begin{define}\rm{(\hspace{1sp}\cite[Definition 3.2]{FR08}, \cite[Definition 4.1]{HZZ21a})}\label{Definition 3} 
Let $\vartheta$ be an $(\mathcal{B}_{t})_{t\geq 0}$-adapted (respectively (resp.) $(\bar{\mathcal{B}}_{t})_{t\geq 0}$-adapted) process on $\Omega_{0}$ (resp. $\bar{\Omega}_{0})$. Then we call $\vartheta$ an a.s. $(\mathcal{B}_{t})_{t \geq 0}$-supermartingale ($(\bar{\mathcal{B}}_{t})_{t \geq 0}$-supermartingale) under $P$ provided $\vartheta$ is $P$-integrable and 
\begin{align}
&\mathbb{E}^{P} [ \vartheta_{t} 1_{A} ] \leq \mathbb{E}^{P} [ \vartheta_{s} 1_{A} ] \label{est 123} \\
& \text{ for almost every (a.e.) } s \geq 0, \hspace{1mm} \forall \hspace{1mm} t \geq s, \forall \hspace{1mm} A \in \mathcal{B}_{s} \hspace{1mm} (A \in \bar{\mathcal{B}}_{s}). \nonumber
\end{align}
We define $(\mathcal{B}_{t}^{0})_{t \geq 0}$-supermartingale ($(\bar{\mathcal{B}}_{t}^{0})_{t \geq 0}$-supermartingale) similarly with ``$\mathcal{B}_{t}$'' and ``$\mathcal{B}_{s}$'' replaced resp. by $\mathcal{B}_{t}^{0}$ and $\mathcal{B}_{s}^{0}$ (resp. ``$\bar{\mathcal{B}}_{t}$'' and ``$\bar{\mathcal{B}}_{s}$'' replaced resp. by $\bar{\mathcal{B}}_{t}^{0}$ and $\bar{\mathcal{B}}_{s}^{0}$). The $s$ that satisfies \eqref{est 123} is called a regular time of $\vartheta$ and an exceptional time of $\vartheta$ otherwise. 
\end{define} 

\begin{lemma}\rm{(\hspace{1sp}\cite[Proposition 4.2]{HZZ21a})}\label{Lemma 2.1}
Suppose that $\vartheta$ is an $(\mathcal{B}_{t})_{t \geq 0}$-supermartingale that is $(\mathcal{B}_{t}^{0})_{t\geq 0}$-adapted (resp. $(\bar{\mathcal{B}}_{t})_{t \geq 0}$-supermartingale that is $(\bar{\mathcal{B}}_{t}^{0})_{t\geq 0}$-adapted).  Then $\vartheta$ is an a.s. $(\mathcal{B}_{t}^{0})_{t\geq 0}$-supermartingale (resp. a.s. $(\bar{\mathcal{B}}_{t}^{0})_{t\geq 0}$-supermartingale). 
\end{lemma}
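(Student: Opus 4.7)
The plan is to exploit directly the set-theoretic relationship between the two filtrations, namely $\mathcal{B}_s^0 \subseteq \mathcal{B}_s$ (respectively $\bar{\mathcal{B}}_s^0 \subseteq \bar{\mathcal{B}}_s$) for every $s \geq 0$. First I would verify this containment. Since $s \leq u$ implies $\mathcal{B}_s^0 \subseteq \mathcal{B}_u^0$ by construction of the natural filtration in \eqref{est 184}, every $\mathcal{B}_u^0$ with $u > s$ contains $\mathcal{B}_s^0$. Intersecting over all such $u$ yields
\[
\mathcal{B}_s^0 \subseteq \bigcap_{u > s} \mathcal{B}_u^0 = \mathcal{B}_s,
\]
and the analogous inclusion holds for the barred filtrations in view of \eqref{est 241}.

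Second, I would unpack the hypotheses of Definition \ref{Definition 3}. By assumption $\vartheta$ is $(\mathcal{B}_t^0)_{t\geq 0}$-adapted, and by its status as an a.s.\ $(\mathcal{B}_t)_{t\geq 0}$-supermartingale it is $P$-integrable. Hence $\vartheta$ already satisfies the measurability and integrability requirements to be an a.s.\ $(\mathcal{B}_t^0)_{t\geq 0}$-supermartingale, and only the inequality \eqref{est 123} restricted to sets $A \in \mathcal{B}_s^0$ remains to be checked.

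Third, the $(\mathcal{B}_t)_{t\geq 0}$-supermartingale hypothesis provides a $\mathbf{P}$-null set $N \subset [0, \infty)$ (in the Lebesgue sense) of exceptional times outside of which $\mathbb{E}^P[\vartheta_t 1_A] \leq \mathbb{E}^P[\vartheta_s 1_A]$ holds for every $t \geq s$ and every $A \in \mathcal{B}_s$. For any regular time $s \notin N$ and any $A \in \mathcal{B}_s^0$, the inclusion from the first step gives $A \in \mathcal{B}_s$, so the same inequality applies. Thus the identical set $N$ of exceptional times witnesses the a.s.\ $(\mathcal{B}_t^0)_{t\geq 0}$-supermartingale property, completing the argument. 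The barred case follows \emph{mutatis mutandis} by replacing $\xi$ with $(\xi, \theta)$ and invoking $\bar{\mathcal{B}}_s^0 \subseteq \bar{\mathcal{B}}_s$ in place of $\mathcal{B}_s^0 \subseteq \mathcal{B}_s$.

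There is no real obstacle to this proof: it is purely a verification of definitions once the filtration containment has been observed. The content of the lemma is the assertion that the additional $(\mathcal{B}_t^0)$-adaptedness assumption on $\vartheta$ is exactly the missing ingredient that promotes an a.s.\ supermartingale on the right-continuous filtration to one on the natural filtration, with no further probabilistic work required.
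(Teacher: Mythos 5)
Your argument is correct: the entire content of the lemma is the filtration inclusion $\mathcal{B}_s^0 \subseteq \mathcal{B}_s$ established via \eqref{est 184}, combined with the observation that the additional $(\mathcal{B}_t^0)_{t \geq 0}$-adaptedness hypothesis supplies the measurability required by Definition \ref{Definition 3}, and you carry this out cleanly. The paper does not reproduce a proof — it cites \cite[Proposition 4.2]{HZZ21a} — and the argument there is the same filtration-inclusion argument you give. One small imprecision in your phrasing: the exceptional set $N \subset [0,\infty)$ provided by the a.s.\ supermartingale hypothesis is Lebesgue-null, not $\mathbf{P}$-null; your parenthetical ``(in the Lebesgue sense)'' is the correct qualification, and the initial ``$\mathbf{P}$-null'' should simply be dropped.
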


\begin{lemma}\rm{(\hspace{1sp}\cite[Theorem 4.15]{HZZ21a}, \cite[pp. 1727--1728]{GRZ09}, \cite[p. 413]{FR08})}\label{Lemma 2.2}
Given $P \in \mathcal{P} (\Omega_{0})$ (resp. $P\in \mathcal{P} (\bar{\Omega}_{0})$), there exists a regular conditional probability distribution (r.c.p.d.) $P ( \cdot \lvert \mathcal{B}_{T}^{0})(\omega)$ for $\omega \in \Omega_{0}$  w.r.t. $\mathcal{B}_{T}^{0}$ (resp. $P ( \cdot \lvert \bar{\mathcal{B}}_{T}^{0})(\omega)$ for $\omega \in \bar{\Omega}_{0}$  w.r.t. $\bar{\mathcal{B}}_{T}^{0}$) such that the following holds. 
\begin{enumerate}
\item For every $\omega \in \Omega_{0}, P( \cdot \lvert \mathcal{B}_{T}^{0})(\omega) \in \mathcal{P}(\Omega_{0})$ (resp. $\omega \in \bar{\Omega}_{0}, P( \cdot \lvert \bar{\mathcal{B}}_{T}^{0})(\omega) \in \mathcal{P} (\bar{\Omega}_{0})$). 
\item For every $A \in \mathcal{B}$, the mapping $\omega \mapsto P ( A \lvert \mathcal{B}_{T}^{0})(\omega)$ is $\mathcal{B}_{T}^{0}$-measurable (resp. $A \in \bar{\mathcal{B}}$, the mapping $\omega \mapsto P ( A \lvert \bar{\mathcal{B}}_{T}^{0})(\omega)$ is $\bar{\mathcal{B}}_{T}^{0}$-measurable).
\item There exists a $P$-null set $N \in \mathcal{B}_{T}^{0}$ (resp. $N \in \bar{\mathcal{B}}_{T}^{0}$) such that for any $\omega \notin N$, 
\begin{align*}
&P ( \{ \tilde{\omega}: \xi(s,\tilde{\omega}) = \xi(s,\omega) \hspace{1mm} \forall \hspace{1mm} s \in [0,T] \} \lvert \mathcal{B}_{T}^{0}) (\omega) = 1\\
& \text{(resp. }P ( \{ \tilde{\omega}: (\xi,\theta)(s,\tilde{\omega}) = (\xi,\theta)(s,\omega) \hspace{1mm} \forall \hspace{1mm} s \in [0,T] \} \lvert \bar{\mathcal{B}}_{T}^{0}) (\omega) = 1).
\end{align*}
\item For any $A \in \mathcal{B}_{T}^{0}$ and any $E \in \mathcal{B}$ (resp. $A \in \bar{\mathcal{B}}_{T}^{0}$ and any $E \in \bar{\mathcal{B}}$)
\begin{align*}
P \left(\{\xi \rvert_{[0,T]} \in A, \xi\rvert_{[T,\infty)} \in E \} \right) = \int_{\tilde{\omega} \rvert_{[0,T]} \in A} P (\{ \xi \rvert_{[T,\infty)} \in E \lvert \mathcal{B}_{T}^{0} \}) (\tilde{\omega} ) d P(\tilde{\omega})
\end{align*}
(resp.
\begin{align*}
P \left(\{(\xi,\theta) \rvert_{[0,T]} \in A, (\xi,\theta)\rvert_{[T,\infty)} \in E \} \right) = \int_{\tilde{\omega} \rvert_{[0,T]} \in A} P (\{ (\xi,\theta) \rvert_{[T,\infty)} \in E \lvert \bar{\mathcal{B}}_{T}^{0} \}) (\tilde{\omega}) d P(\tilde{\omega}) ).
\end{align*} 
\end{enumerate} 
\end{lemma}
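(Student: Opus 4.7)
The plan is to reduce the lemma to the classical existence theorem for regular conditional probability distributions on Polish spaces, applied to the canonical path space equipped with the topology of locally uniform convergence. First I would verify that $\Omega_{0}$, viewed as a Borel subset of the Polish space $C([0,\infty);(H_\sigma^4)^\ast)$ (the $L^\infty_{\mathrm{loc}}$-condition defines a countable intersection of Borel sets, since $\dot{H}_\sigma^{1/2} \hookrightarrow (H_\sigma^4)^\ast$ is continuous), carries a standard Borel structure. The classical Stroock--Varadhan disintegration theorem then produces, for any $P \in \mathcal{P}(\Omega_0)$ and any countably generated sub-$\sigma$-algebra such as $\mathcal{B}_T^0$ (generated by evaluations at rational $s \le T$ together with a density argument using continuity into $(H_\sigma^4)^\ast$), a family $\{P(\cdot\mid\mathcal{B}_T^0)(\omega)\}_{\omega\in\Omega_0}$ realizing the conditional expectation. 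Properties (1) and (2) are built into the definition: for each $\omega$ the map $A \mapsto P(A\mid\mathcal{B}_T^0)(\omega)$ is a probability on $(\Omega_0,\mathcal{B})$, and for each $A\in\mathcal{B}$ the map $\omega\mapsto P(A\mid\mathcal{B}_T^0)(\omega)$ is $\mathcal{B}_T^0$-measurable.

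Next I would establish property (3). The key observation is that the set
\[
A(\omega) \triangleq \{\tilde\omega : \xi(s,\tilde\omega) = \xi(s,\omega)\ \forall s \in [0,T]\}
\]
lies in $\mathcal{B}_T^0$ for each $\omega$: by continuity in $(H_\sigma^4)^\ast$, coincidence on a countable dense set of $s \in [0,T]$ forces coincidence on all of $[0,T]$, and each such coincidence condition is a $\mathcal{B}_T^0$-event. The fundamental identity $P(B\mid\mathcal{B}_T^0)(\omega) = 1_B(\omega)$ for $P$-a.e.\ $\omega$ applied to a countable separating family that generates $A(\omega)$ as their intersection, together with a diagonalization to eliminate the exceptional sets, produces a single $P$-null set $N \in \mathcal{B}_T^0$ outside of which $P(A(\omega)\mid\mathcal{B}_T^0)(\omega) = 1$.

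Property (4) is the disintegration identity and follows by a monotone class argument. Starting from the defining relation $P(C \cap D) = \int_D P(C\mid\mathcal{B}_T^0)(\tilde\omega)\, dP(\tilde\omega)$ valid for all $C\in\mathcal{B},\, D\in\mathcal{B}_T^0$, one specializes to $D = \{\tilde\omega\rvert_{[0,T]} \in A\}$ and $C = \{\tilde\omega\rvert_{[T,\infty)} \in E\}$, noting that these events lie in $\mathcal{B}_T^0$ and $\mathcal{B}^T \subseteq \mathcal{B}$ respectively. The $\bar\Omega_0$ case is handled by the identical argument with $\xi$ replaced by the enlarged canonical process $(\xi,\theta)$, using that $U_1$ is itself Polish so that $\bar\Omega_0$ retains a standard Borel structure and $\bar{\mathcal{B}}_T^0$ remains countably generated.

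The one delicate point, and hence the main obstacle, is the Polish-space bookkeeping: $\Omega_0$ is not obviously Polish in its most natural hybrid topology, so one must either work in the larger Polish space $C([0,\infty);(H_\sigma^4)^\ast)$ (treating the $L^\infty_{\mathrm{loc}}\dot H^{1/2}_\sigma$-condition as a measurable restriction that $P$ places full mass on, which is how the cited works \cite{FR08, GRZ09, HZZ21a} proceed) or verify directly that the subspace is Borel in the ambient Polish space. Once that is in place, the rest is a routine transcription of the standard r.c.p.d.\ machinery into the present notation.
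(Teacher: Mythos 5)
The paper does not prove Lemma \ref{Lemma 2.2}; it is imported verbatim from \cite{HZZ21a}, \cite{GRZ09}, and \cite{FR08}, all of which trace back to the Stroock--Varadhan r.c.p.d.\ theorem. Your sketch follows exactly that lineage: work in the ambient Polish space $C([0,\infty);(H_\sigma^4)^\ast)$ (or $C([0,\infty);(H_\sigma^4)^\ast\times U_1)$), observe that $\Omega_0$ (resp.\ $\bar\Omega_0$) is a Borel subset and that $\mathcal{B}_T^0$ (resp.\ $\bar{\mathcal{B}}_T^0$) is countably generated, and invoke the classical disintegration theorem. You correctly flag the one genuine technical point, namely the Polish/standard-Borel bookkeeping, and your treatment of (1), (2), and (4) is exactly what appears in those references.

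One small imprecision in your argument for (3): the phrase ``applied to a countable separating family that generates $A(\omega)$ as their intersection'' is awkward because $A(\omega)$ depends on $\omega$, so there is no fixed countable family whose intersection is $A(\omega)$. The cleaner (and standard) formulation is: choose a countable \emph{generating algebra} $\mathcal{A}$ for $\mathcal{B}_T^0$; for each $B\in\mathcal{A}$ the identity $P(B\mid\mathcal{B}_T^0)(\omega)=1_B(\omega)$ holds off a null set $N_B\in\mathcal{B}_T^0$; set $N=\bigcup_{B\in\mathcal{A}}N_B$. For $\omega\notin N$, the two probability measures $P(\cdot\mid\mathcal{B}_T^0)(\omega)$ and $\delta_\omega$ agree on the algebra $\mathcal{A}$ and hence on all of $\mathcal{B}_T^0$ by uniqueness of extension. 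Since $A(\omega)=\{\tilde\omega:\xi(s,\tilde\omega)=\xi(s,\omega)\ \forall s\in[0,T]\}$ is precisely the $\mathcal{B}_T^0$-atom containing $\omega$ (and hence lies in $\mathcal{B}_T^0$, using continuity into $(H_\sigma^4)^\ast$ to reduce to rational $s$ as you note), and $\omega\in A(\omega)$ trivially, one concludes $P(A(\omega)\mid\mathcal{B}_T^0)(\omega)=\delta_\omega(A(\omega))=1$. With that adjustment the argument is exactly the standard one.
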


The following reconstruction theorem follows immediately from \cite[Theorem 6.1.2]{SV97}. 
\begin{lemma}\rm{(\hspace{1sp}\cite[Theorem 4.16]{HZZ21a}, \cite[Theorem 2.2]{GRZ09}, \cite[Definition 2.3]{FR08})}\label{Lemma 2.3}
Let $\omega \mapsto Q_{\omega}$ be a mapping from $\Omega_{0}$ to $\mathcal{P}(\Omega_{0})$ (resp. $\bar{\Omega}_{0}$ to $\mathcal{P}(\bar{\Omega}_{0})$) such that for any $A \in \mathcal{B}$ (resp. $A \in \bar{\mathcal{B}}$), the mapping $\omega \mapsto Q_{\omega}(A)$ is $\mathcal{B}_{T}^{0}$-measurable (resp. $\bar{\mathcal{B}}_{T}^{0}$-measurable) and for all $\omega \in \Omega_{0}$ (resp. $\omega \in \bar{\Omega}_{0}$) 
\begin{align*}
&Q_{\omega} ( \{ \tilde{\omega} \in \Omega_{0}: \xi(T, \tilde{\omega}) = \xi(T, \omega) \}) = 1 \\
&(\text{resp. } Q_{\omega} ( \{ \tilde{\omega} \in \Omega_{0}: (\xi,\theta)(T, \tilde{\omega}) = (\xi,\theta) (T, \omega) \}) = 1). 
\end{align*}
Then for any $P \in \mathcal{P} (\Omega_{0})$ (resp. $P \in \mathcal{P} (\bar{\Omega}_{0})$), there exists a unique $P \otimes_{T} Q \in \mathcal{P} (\Omega_{0})$ (resp. $P \otimes_{T} Q \in \mathcal{P} (\bar{\Omega}_{0})$) such that 
\begin{align*}
(P \otimes_{T} Q) (A) = P(A) \text{ for all } A \in \mathcal{B}_{T}^{0} \hspace{3mm} \text{(resp. } A \in \bar{\mathcal{B}}_{T}^{0})
\end{align*}
and for $P \otimes_{T}Q$-a.e. $\omega \in \Omega_{0}$ (resp. $\omega \in \bar{\Omega}_{0}$), 
\begin{align*}
\delta_{\omega} \otimes_{T} Q_{\omega} = (P\otimes_{T} Q) (\cdot \lvert \mathcal{B}_{T}^{0})(\omega) \hspace{3mm} (\text{resp. } \delta_{\omega} \otimes_{T} Q_{\omega} = (P\otimes_{T} Q) (\cdot \lvert \bar{\mathcal{B}}_{T}^{0})(\omega)).
\end{align*} 
\end{lemma}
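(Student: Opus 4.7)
The plan is to reduce the claim to the classical Stroock--Varadhan reconstruction theorem, \cite[Theorem 6.1.2]{SV97}, by verifying that the canonical path-space setup at hand fits its hypotheses. I will treat only the unbarred case on $(\Omega_{0}, \mathcal{B})$ with canonical process $\xi$; the barred case on $(\bar{\Omega}_{0}, \bar{\mathcal{B}})$ with $(\xi,\theta)$ is identical after the obvious replacements. First I would note that $\Omega_{0}$ is a standard Borel space as a measurable subset of the Polish space $C([0,\infty); (H_{\sigma}^{4})^{\ast})$, so regular conditional probabilities are guaranteed to exist as in Lemma \ref{Lemma 2.2}.

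Second, I would construct $P \otimes_{T} Q$ explicitly on the $\pi$-system of ``product'' sets $A_{1} \cap A_{2}$ with $A_{1} \in \mathcal{B}_{T}^{0}$ and $A_{2} \in \mathcal{B}^{T}$ by declaring
\begin{equation*}
(P \otimes_{T} Q)(A_{1} \cap A_{2}) \triangleq \int_{A_{1}} Q_{\omega}(A_{2}) \, dP(\omega).
\end{equation*}
The hypothesis that $\omega \mapsto Q_{\omega}(A)$ is $\mathcal{B}_{T}^{0}$-measurable for each $A \in \mathcal{B}$ makes the integrand measurable, and a monotone class argument yields $\sigma$-additivity. Since $\mathcal{B} = \mathcal{B}_{T}^{0} \vee \mathcal{B}^{T}$ and the above collection is intersection-stable, Carath\'eodory extension together with the $\pi$--$\lambda$ theorem produce a unique probability measure on $(\Omega_{0}, \mathcal{B})$ agreeing with the formula. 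Setting $A_{2} = \Omega_{0}$ then yields $(P \otimes_{T} Q)(A_{1}) = P(A_{1})$ for every $A_{1} \in \mathcal{B}_{T}^{0}$, which is the first assertion.

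Third, I would identify the r.c.p.d.\ of $P \otimes_{T} Q$ given $\mathcal{B}_{T}^{0}$ with $\omega \mapsto \delta_{\omega} \otimes_{T} Q_{\omega}$. For $E \in \mathcal{B}^{T}$ and any $A \in \mathcal{B}_{T}^{0}$ one computes
\begin{equation*}
\int_{A} Q_{\omega}(E) \, dP(\omega) = (P \otimes_{T} Q)(A \cap E) = \int_{A} (P \otimes_{T} Q)(E \lvert \mathcal{B}_{T}^{0})(\omega) \, d(P \otimes_{T} Q)(\omega),
\end{equation*}
which forces $(P \otimes_{T} Q)(E \lvert \mathcal{B}_{T}^{0})(\omega) = Q_{\omega}(E)$ for $(P \otimes_{T} Q)$-a.e.\ $\omega$. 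A further monotone class argument upgrades this to all $E \in \mathcal{B}$, using the concentration hypothesis $Q_{\omega}(\{\tilde{\omega}: \xi(T, \tilde{\omega}) = \xi(T, \omega)\}) = 1$ to paste the $\mathcal{B}_{T}^{0}$-measurable ``past'' (which under $\delta_{\omega}$ is literally the path of $\omega$ restricted to $[0,T]$) with the $\mathcal{B}^{T}$-measurable ``future'' consistently at $t = T$. This yields $\delta_{\omega} \otimes_{T} Q_{\omega} = (P \otimes_{T} Q)(\cdot \lvert \mathcal{B}_{T}^{0})(\omega)$ for $(P \otimes_{T} Q)$-a.e.\ $\omega$.

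The only real (and mild) obstacle is this pasting at time $T$: without the hypothesis that $Q_{\omega}$ is supported on paths coinciding with $\omega$ at time $T$, one cannot identify $\delta_{\omega} \otimes_{T} Q_{\omega}$ with the conditional distribution, because the value of $\xi(T,\cdot)$ as seen from the past (under $\delta_{\omega}$) and from the future (under $Q_{\omega}$) would disagree on a positive-measure event, breaking the monotone class extension from $\mathcal{B}^{T}$ to $\mathcal{B}$. Once this compatibility is guaranteed, the construction is the routine Stroock--Varadhan recipe and the lemma follows by direct citation of \cite[Theorem 6.1.2]{SV97}.
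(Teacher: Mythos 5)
Your argument is correct and matches the paper's approach exactly: the paper states that Lemma \ref{Lemma 2.3} ``follows immediately from \cite[Theorem 6.1.2]{SV97}'' and gives no independent proof, and your proposal simply unwinds that reduction. The explicit construction of $P \otimes_T Q$ on product cylinders $A_1 \cap A_2$ with $A_1 \in \mathcal{B}_T^0$, $A_2 \in \mathcal{B}^T$, the monotone-class extension, and the identification of the r.c.p.d.\ with $\delta_\omega \otimes_T Q_\omega$ via the concentration hypothesis at time $T$ are precisely the Stroock--Varadhan mechanics the citation invokes.
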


\begin{define}\rm{(\hspace{1sp}\cite[p. 557]{HZZ21a}, \cite[Definition 2.3]{GRZ09})}\label{Definition 4}
We say $P \in \mathcal{P} (\Omega_{0})$ (resp. $P \in \mathcal{P} (\bar{\Omega}_{0})$) is concentrated on the paths with values in $\dot{H}_{\sigma}^{\frac{1}{2}}$ (resp. $\dot{H}_{\sigma}^{\frac{1}{2}} \times U_{1}$) if there exists $A \in \mathcal{B}$ (resp. $A \in \bar{\mathcal{B}}$) such that 
\begin{align*}
P(A) = 1,  & \hspace{1mm}A \subset \{ \omega \in \Omega_{0}: \omega(t) \in \dot{H}_{\sigma}^{\frac{1}{2}} \text{ for all } t \geq 0 \} \\
&  (\text{resp. } A \subset \{ \omega \in \bar{\Omega}_{0}: \omega(t) \in \dot{H}_{\sigma}^{\frac{1}{2}} \times U_{1} \text{ for all } t \geq 0 \} )
\end{align*}  
and denote the set of all such $P$ by $\mathcal{P}_{\dot{H}_{\sigma}^{\frac{1}{2}}}(\Omega_{0})$ (resp. $\mathcal{P}_{\dot{H}_{\sigma}^{\frac{1}{2}} \times U_{1}}(\bar{\Omega}_{0})$). 
\end{define} 
We see that $\mathcal{B} ( \mathcal{P}_{\dot{H}_{\sigma}^{\frac{1}{2}}}(\Omega_{0}) )= \mathcal{B} ( \mathcal{P} ( \Omega_{0})) \cap \mathcal{P}_{\dot{H}_{\sigma}^{\frac{1}{2}}}(\Omega_{0})$ (resp. $\mathcal{B} ( \mathcal{P}_{\dot{H}_{\sigma}^{\frac{1}{2}} \times U_{1}}(\bar{\Omega}_{0}) )= \mathcal{B} ( \mathcal{P} ( \bar{\Omega}_{0})) \cap \mathcal{P}_{\dot{H}_{\sigma}^{\frac{1}{2}} \times U_{1} }(\bar{\Omega}_{0})$).

\begin{define}\rm{(\hspace{1sp}\cite[Equation (2.1) on p. 1727 and Definition 2.5]{GRZ09}, \cite[Definition 4.17]{HZZ21a})}\label{Definition 5}
We denote the shift operator $\Phi_{t}: \Omega_{0} \mapsto \Omega_{t}$ by $\Phi_{t} (\omega)(s) \triangleq \omega(s-t)$ for $s \geq t$  (resp. $\Phi_{t}: \bar{\Omega}_{0} \mapsto \bar{\Omega}_{t}$ by $\Phi_{t} (\omega)(s) \triangleq \omega(s-t)$ for $s \geq t$).  

A family $\{ P_{\xi^{\text{in}}} \}_{\xi^{\text{in}} \in \dot{H}_{\sigma}^{\frac{1}{2}}}$  (resp.  $\{ P_{\xi^{\text{in}}, \theta^{\text{in}}} \}_{(\xi^{\text{in}}, \theta^{\text{in}}) \in \dot{H}_{\sigma}^{\frac{1}{2}} \times U_{1}}$) of probability measures in $\mathcal{P}_{\dot{H}_{\sigma}^{\frac{1}{2}}} (\Omega_{0})$ (resp. $\mathcal{P}_{\dot{H}_{\sigma}^{\frac{1}{2}} \times U_{1}} (\bar{\Omega}_{0})$) is called an a.s. Markov family provided 
\begin{enumerate}
\item for any $A \in \mathcal{B}$ (resp. $A \in \bar{\mathcal{B}}$), the mapping $\xi^{\text{in}}  \mapsto P_{\xi^{\text{in}} } (A)$ is $\mathcal{B} (\dot{H}_{\sigma}^{\frac{1}{2}})/ \mathcal{B} ([0,1])$-measurable  (resp. $(\xi^{\text{in}}, \theta^{\text{in}}) \mapsto P_{\xi^{\text{in}}, \theta^{\text{in}} } (A)$ is $\mathcal{B} (\dot{H}_{\sigma}^{\frac{1}{2}} \times U_{1})/ \mathcal{B} ([0,1])$-measurable), 
\item for every $\xi^{\text{in}} \in \dot{H}_{\sigma}^{\frac{1}{2}}$ (resp. $(\xi^{\text{in}}, \theta^{\text{in}}) \in \dot{H}_{\sigma}^{\frac{1}{2}} \times U_{1}$), there exists a Lebesgue null set $\mathcal{T} \subset (0,\infty)$ such that for all $T \notin \mathcal{T}$, 
\begin{align*}
&P_{\xi^{\text{in}}} ( \cdot \lvert \mathcal{B}_{T}^{0}) (\omega) = P_{\omega(T)} \circ \Phi_{T}^{-1} \hspace{3mm} \text{ for } P_{\xi^{\text{in}}} \text{-a.e. } \omega \in \Omega_{0} \\
&(\text{resp. } P_{\xi^{\text{in}}, \theta^{\text{in}}} ( \cdot \lvert \bar{\mathcal{B}}_{T}^{0}) (\omega) = P_{\omega(T)} \circ \Phi_{T}^{-1} \hspace{3mm} \text{ for } P_{\xi^{\text{in}}, \theta^{\text{in}}} \text{-a.e. } \omega \in \bar{\Omega}_{0} ). 
\end{align*}
\end{enumerate} 
\end{define}  
We postpone additional preliminaries to Section \ref{Section 7.1}.   

\section{Proof of Theorem \ref{Theorem 1.1}}\label{Section 3}
\subsection{Setup}\label{additive setup}
We define a function $f:\mathbb{N}_0\to\mathbb{R}$ by
\begin{equation}\label{define f}
f(q)\triangleq \frac{\lambda_{q}}{4},
\end{equation}
which will be used in defining $z_q$ in \eqref{define zq}. E.g., \eqref{new1} and \eqref{new 39} require that $f(q) < \frac{\lambda_{q}}{2}$ for which $f(q) = \frac{\lambda_{q}}{4}$ suffices. In contrast, \cite[Equation (81)]{Y23a} required only $\supp \hat{z}_{q} \subset B \left(0, \frac{\lambda_{q}}{4}\right)$, implying that $f(q) \in (0, \frac{\lambda_{q}}{4})$ sufficed therein. The difference in our current manuscript is the additional inductive estimate \eqref{inductive 3} to control the energy. E.g. at \eqref{new2} we want $f(q)^{-1} \ll \lambda_{q+2}^{1-2\beta} \lambda_{1}^{2\beta -1}$ for $\beta > \frac{1}{2}$ sufficiently close to $\frac{1}{2}$; thus, we chose $f(q) = \frac{\lambda_{q}}{4}$ in \eqref{define f} for convenience. 

We now fix $\gamma$ and $\sigma$ according to \eqref{gamma hypo}-\eqref{new}. Then we also fix  $\bar{e}\geq \ushort{e}> 4$, $\tilde{e}>0$, and $e: \mathbb{R} \to [\ushort{e},\infty)$ such that for all $t\in [0,1]$, we have $\lVert e\rVert_{C([-2,1])} \leq \bar{e}$ and $\lVert e'\rVert_{C([-2,1])}\leq \tilde{e}$ according to \eqref{est 118}. We fix $b\in\mathbb{N}$ to be large enough to satisfy
\begin{equation}\label{additive bound b}
b>\frac{6}{\alpha-\frac{1}{2}}.
\end{equation}
Concerning $\beta$ and $a$ in \eqref{bound beta}-\eqref{bound a}, we will need
\begin{equation}\label{est 1} 
a^{b^{q+1} (2\beta -1) (b-1)} \geq \frac{4}{3}
\end{equation}
in \eqref{est 4}; while we will take $\beta > \frac{1}{2}$ sufficiently close to $\frac{1}{2}$, we can increase $a_{0}$ as needed and achieve \eqref{est 1}. Finally, we fix a $GG^*$-Wiener process $B$ satisfying \eqref{regularity of noise} on $(\Omega, \mathcal{F}, \mathbf{P})$ and $(\mathcal{F}_{t})_{t\geq 0}$ as the canonical filtration of $B$ augmented by all the $\mathbf{P}$-negligible sets.  

\subsubsection{\except{toc}{Formation of }Random PDEs and Stopping Time}
\begin{proposition}\label{prop solution z}
Under the hypothesis \eqref{regularity of noise}, the solution $z$ to 
\begin{subequations}\label{est 42} 
\begin{align}
&dz+[\nabla p_1 +  \Lambda^{\frac{3}{2} - 2 \sigma} z]dt=dB, \hspace{3mm} \nabla\cdot z=0, \hspace{3mm} \text{ for } t > 0, \label{est 42a}\\
&z(0)=0,\label{est 42b} 
\end{align}
\end{subequations} 
satisfies, for all $T> 0$, $\delta \in (0, \frac{1}{2})$, and $l \in \mathbb{N}$,  
\begin{equation}\label{est 41} 
\mathbb{E}^{\mathbf{P}} \left[ \lVert z \rVert_{C_{T} \dot{H}_{x}^{\frac{5}{2} + 2 \sigma}}^{l} + \lVert z \rVert_{C_{T}^{\frac{1}{2} - \delta} \dot{H}_{x}^{\frac{7}{4} + 4 \sigma}}^{l} \right] < \infty. 
\end{equation}
\end{proposition}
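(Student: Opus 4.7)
The plan is to apply the Leray projection $\mathbb{P}$ to \eqref{est 42a}, absorb $\nabla p_1$, and recognize $z$ as the stochastic convolution
\begin{equation*}
z(t) = \int_0^t e^{-(t-s)\Lambda^{3/2-2\sigma}} \mathbb{P}\, dB(s),
\end{equation*}
which is a centered $\dot{H}_{\sigma}^{\frac{1}{2}}$-valued Gaussian process. The hypothesis \eqref{regularity of noise} says precisely that $\lVert\Lambda^{7/4+4\sigma}G\rVert_{L_2(U,L^2)}^2 = \Tr((-\Delta)^{7/4+4\sigma}GG^*) < \infty$, so $\Lambda^{7/4+4\sigma}G$ is Hilbert--Schmidt; this is the only regularity input on the noise.

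First I would establish the pointwise-in-$t$ estimate on $\mathbb{E}^{\mathbf{P}}\lVert z(t)\rVert_{\dot H^{5/2+2\sigma}}^2$. Splitting $\Lambda^{5/2+2\sigma}=\Lambda^{3/4-2\sigma}\Lambda^{7/4+4\sigma}$, It\^o isometry gives
\begin{equation*}
\mathbb E^{\mathbf P}\lVert z(t)\rVert_{\dot H^{5/2+2\sigma}}^2 = \int_0^t \bigl\lVert\Lambda^{3/4-2\sigma} e^{-(t-s)\Lambda^{3/2-2\sigma}}\Lambda^{7/4+4\sigma}\mathbb P G\bigr\rVert_{L_2(U,L^2)}^2 ds.
\end{equation*}
The standard Fourier-multiplier bound $\lVert\Lambda^\beta e^{-\tau\Lambda^\alpha}\rVert_{\mathrm{op}}\lesssim \tau^{-\beta/\alpha}$ with $\alpha=3/2-2\sigma,\ \beta=3/4-2\sigma$ produces the singularity $(t-s)^{-(3/2-4\sigma)/(3/2-2\sigma)}$, whose exponent is strictly less than $1$ precisely because $\sigma>0$. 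Combined with $\lVert\Lambda^{7/4+4\sigma}\mathbb P G\rVert_{L_2(U,L^2)}^2 \leq \Tr((-\Delta)^{7/4+4\sigma}GG^*) <\infty$, this yields a bound uniform on $[0,T]$. Gaussian hypercontractivity (equivalence of all moments on the first Wiener chaos) then upgrades this to $\mathbb E\lVert z(t)\rVert_{\dot H^{5/2+2\sigma}}^l<\infty$ for every $l \in \mathbb{N}$, uniformly in $t\in[0,T]$.

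To pass from pointwise to $C_T$ bounds I would invoke the factorization method of Da Prato--Kwapie\'n--Zabczyk: for a small $\nu > 0$,
\begin{equation*}
z(t) = \frac{\sin(\pi\nu)}{\pi}\int_0^t (t-s)^{\nu-1} e^{-(t-s)\Lambda^{3/2-2\sigma}} Y(s)\, ds,\quad Y(s)\triangleq \int_0^s (s-r)^{-\nu} e^{-(s-r)\Lambda^{3/2-2\sigma}}\mathbb P\, dB(r).
\end{equation*}
The same isometry-plus-smoothing argument (with $\beta$ shifted by $\nu$) bounds $\mathbb E\lVert Y(s)\rVert_{\dot H^{5/2+2\sigma}}^{2p}$ uniformly for large $p$, and H\"older--Young in $s$ converts this to a bound on $\lVert z\rVert_{C_T\dot H^{5/2+2\sigma}}^l$ for any $l$. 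For the temporal H\"older norm at the lower regularity $7/4+4\sigma$, one splits
\begin{equation*}
z(t)-z(s) = \int_s^t e^{-(t-r)\Lambda^{3/2-2\sigma}}\mathbb P\, dB(r) + \int_0^s \bigl[e^{-(t-r)\Lambda^{3/2-2\sigma}}-e^{-(s-r)\Lambda^{3/2-2\sigma}}\bigr]\mathbb P\, dB(r),
\end{equation*}
applies It\^o isometry, and uses $\lVert(e^{-\tau\Lambda^{3/2-2\sigma}}-\Id)f\rVert_{L^2}\lesssim \tau^\theta\lVert\Lambda^{(3/2-2\sigma)\theta}f\rVert_{L^2}$ with the same smoothing as above; the spare $3/4-2\sigma$ derivatives between $5/2+2\sigma$ and $7/4+4\sigma$ are converted into almost one power of $|t-s|$, giving $\mathbb E\lVert z(t)-z(s)\rVert_{\dot H^{7/4+4\sigma}}^2\lesssim |t-s|^{1-\epsilon}$. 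Kolmogorov's continuity criterion together with hypercontractivity then delivers the $C_T^{1/2-\delta}\dot H_x^{7/4+4\sigma}$ moment bound for every $l$.

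The main obstacle is essentially bookkeeping: verifying that the semigroup exponent $(3/4-2\sigma)/(3/2-2\sigma) < 1/2$ for every $\sigma>0$. This is the exact mechanism by which the $7/4+4\sigma$ regularity of the noise is lifted to $5/2+2\sigma$ for $z$, with still almost a full $1/2$ power of H\"older regularity in time to spare. This balance is also what forces the choice of diffusive exponent $3/2-2\sigma$ in \eqref{est 42a} rather than the natural $\Lambda^\gamma$, as anticipated in Remark \ref{Remark 1.2}.
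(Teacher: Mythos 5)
Your proposal is correct and follows essentially the same route as the paper: recognize $z$ as the stochastic convolution against $e^{-t\Lambda^{3/2-2\sigma}}$, invoke the Da Prato–Zabczyk factorization, apply It\^o isometry together with the analytic-semigroup smoothing estimate $\lVert\Lambda^{\beta}e^{-\tau\Lambda^{\alpha}}\rVert_{\mathrm{op}}\lesssim\tau^{-\beta/\alpha}$ to trade the spare $\Lambda^{3/4-2\sigma}$ against the noise regularity $\Lambda^{7/4+4\sigma}$, upgrade second moments to arbitrary moments by Gaussian hypercontractivity, and finish the temporal H\"older bound by Kolmogorov. The paper's parametrization $\epsilon=\sigma/(3/2-2\sigma)$, $\alpha\in(0,\epsilon)$ is exactly your singularity bookkeeping for the $C_T\dot H^{5/2+2\sigma}$ bound; for the H\"older-in-time bound the paper reruns the factorization at the lower regularity $\dot H^{7/4+4\sigma}$ and cites \cite[Proposition A.1.1]{DZ92}, whereas you estimate the increment $z(t)-z(s)$ directly via the $(e^{-\tau\Lambda^{\alpha}}-\Id)$ bound before Kolmogorov, but these are standard equivalent routes to the same conclusion.
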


\begin{proof}[Proof of Proposition \ref{prop solution z}]
Due to similarities to previous works (e.g. \cite[Proposition 34]{D13} and \cite[Proposition 3.6]{HZZ19}), we sketch it in Section \ref{Section 7.2} for completeness.
\end{proof}

Now, in the system of interest \eqref{additive msqg}, if we let 
\begin{equation}\label{est 135}
y\triangleq v-z 
\end{equation} 
and $p_{2} \triangleq p-p_1$ where $(z,p_{1})$ solves \eqref{est 42}, we obtain the random PDE
\begin{subequations}\label{additive random PDE}
\begin{align}
&\partial_t y + (\Lambda (y+z)\cdot \nabla)(y+z)-(\nabla (y+z))^{T}\cdot \Lambda (y+z) \nonumber  \\
&\hspace{20mm} + \nabla p_2 + \Lambda^{\gamma} y + \Lambda^{\gamma} z - \Lambda^{\frac{3}{2} - 2 \sigma} z = 0, \hspace{3mm}  \nabla \cdot y = 0, \hspace{3mm} \text{ for } t > 0, \label{randomPDEa}\\
& y(0)=v(0). \label{randomPDEc}
\end{align}
\end{subequations}
So that we may use it on the time interval of consideration, we extend $z$ by $z(t)=z(0)=0$ on $[-4,0]$. For all $q\in\mathbb{N}_0$, with $f$ defined in \eqref{define f}, we define $z_q$ by 
\begin{equation}\label{define zq}
\hat{z}_q(\xi)\triangleq 1_{\{|\xi| \leq f(q)\}}(\xi)\hat{z}(\xi).
\end{equation}
We add an error term $\divergence \mathring{R}_q $, where $\mathring{R}_q$ is a trace-free symmetric matrix called the Reynolds stress, to the right hand side of \eqref{additive random PDE} to obtain for all $q \in \mathbb{N}_{0}$, 
\begin{align}
&\partial_t y_q + (\Lambda (y_q+z_q)\cdot \nabla)(y_q+z_q)-(\nabla (y_q+z_q))^{T}\cdot \Lambda (y_q+z_q) \nonumber \\
& \hspace{25mm} + \nabla p_q + \Lambda^{\gamma} y_q  + \Lambda^{\gamma} z_{q} -\Lambda^{\frac{3}{2} - 2 \sigma} z_{q} = \divergence \mathring{R}_{q}, \hspace{5mm} \nabla \cdot y_q = 0. \label{additive convPDE}
\end{align}

Next, for any $\delta\in (0,\frac{1}{4})$, we take our stopping time to be
\begin{align}\label{additive stopping time}
\mathfrak{t}\triangleq 1 \wedge& \inf{\{t\geq 0 : C_S\lVert z(t)\rVert_{\dot{H}_{x}^{\frac{5}{2} + 2 \sigma}}\geq 1\}}\\
\wedge & \inf{\{t\geq 0:C_S\lVert z\rVert_{C_t^{\frac{1}{2}-2\delta}\dot{H}_{x}^{\frac{7}{4} + 4 \sigma}}\geq 1\}} \nonumber \\
\wedge & \inf \left\{t\geq 0 : C_{0} \left( C_{S} \lVert z (t) \rVert_{\dot{H}_{x}^{\frac{3}{2}+ \frac{\sigma}{2}}}^{2} + 2 \lVert z (t) \rVert_{\dot{H}_{x}^{\frac{3}{2} - \sigma}}^{2} \right) \geq \frac{\varepsilon_\gamma}{C_{S} 32 (2\pi)^{2}} a^{(1-2\beta) b(b-1)} \ushort{e}  \right\},\nonumber
\end{align}
where $C_S \geq 1$ and $C_0$ are defined to satisfy
\begin{subequations}\label{est 92} 
\begin{align} 
&\lVert f\rVert_{L_{x}^\infty}\leq C_S\lVert f\rVert_{\dot{H}_{x}^{1+\sigma}},  \hspace{8mm}  \lVert fg \rVert_{\dot{H}_{x}^{\sigma}} \leq C_{S} \lVert f \rVert_{\dot{H}_{x}^{\frac{1+\sigma}{2}}} \lVert  g\rVert_{\dot{H}_{x}^{\frac{1+\sigma}{2}}},   \label{est 92a}\\
& \lVert \mathcal{B}f\rVert_{C_{x}}\leq C_0\lVert f\rVert_{C_{x}}, \hspace{10mm}  \lVert \mathcal{B} f \rVert_{\dot{H}_{x}^{1+ \sigma}} \leq C_{0} \lVert f \rVert_{\dot{H}_{x}^{\sigma}}, \label{est 92b}
\end{align} 
\end{subequations} 
for every mean-zero function $f,g\in \dot{H}^{1+\sigma}(\mathbb{T}^{2})$; we refer to \cite[Lemma 2.5]{Y14} concerning the second inequality of \eqref{est 92a}. Due to Proposition \ref{prop solution z} we observe that $\mathfrak{t}\in (0,\infty)$ $\mathbf{P}$-a.s.

\subsubsection{Mollification}\label{additive mollification}
We let $\{ \phi_{l} \}_{l > 0}$ and $\{\varphi_{l}\}_{l > 0}$ be families of standard mollifiers with mass one and compact support respectively on $\mathbb{R}^{2}$ and in $(\tau_{q+1}, 2\tau_{q+1}] \subset \mathbb{R}_{+}$. At each step $q$, we take $l\triangleq l_{q+1}$ from \eqref{define l} and mollify as follows: 
\begin{equation}\label{additive mollified} 
y_l \triangleq y_q\ast_x\phi_l\ast_t\varphi_l,  \hspace{3mm} \mathring{R}_l \triangleq \mathring{R}_q\ast_x\phi_l\ast_t\varphi_l,  \hspace{3mm} z_l\triangleq z_q\ast_x\phi_l\ast_t\varphi_l. 
\end{equation} 
Now that we have mollified, we may introduce a special material derivative operator 
\begin{equation}\label{est 56} 
D_{t,q} \triangleq \partial_{t} + ( \Lambda y_{l} + \Lambda z_{l}) \cdot \nabla.
\end{equation}

\subsection{Inductive Hypothesis}
At each step $q$ we require that for all $t\in [t_q,\mathfrak{t}]$, the solution $(y_{q}, \mathring{R}_{q})$ to \eqref{additive convPDE} satisfies 
\begin{subequations}\label{inductive 1}
\begin{align}
&\supp \hat{y}_q \subset B(0,2\lambda_q),\label{inductive 1a}\\
&||y_q||_{C_{t,x,q}}\leq M_{0}^{\frac{1}{2}}\left(1+\sum_{1\leq j\leq q}\delta_j^{\frac{1}{2}} \right)\bar{e}^{\frac{1}{2}}\leq 3M_{0}^{\frac{1}{2}}\bar{e}^{\frac{1}{2}},\label{inductive 1b}\\
&||y_q||_{C_{t,q}C^1_x}+||\Lambda y_q||_{C_{t,x,q}}\leq M_{0}^{\frac{1}{2}}\lambda_q\delta_q^{\frac{1}{2}}\bar{e}^{\frac{1}{2}},\label{inductive 1c}\\
&|| D_{t,q} y_q||_{C_{t,x,q}}\leq M_{0} \lambda_q^2\delta_q\bar{e}\label{inductive 1d}, 
\end{align}
\end{subequations}
\begin{subequations}\label{inductive 2}
\begin{align}
&\supp{\widehat{\mathring{R}}_q}\subset B(0,4\lambda_q),\label{inductive 2a}\\
&||\mathring{R}_q||_{C_{t,x,q}}\leq \frac{\varepsilon_\gamma}{32(2\pi)^2}\lambda_{q+2}\delta_{q+2} e(t); \label{inductive 2b}
\end{align}
\end{subequations}
finally, to control the energy, we impose 
\begin{equation}\label{inductive 3}
\frac{3}{4}\lambda_{q+1}\delta_{q+1}e(t)\leq e(t)-||(y_q+z_q)(t)||^2_{\dot{H}_{x}^{\frac{1}{2}}}\leq \frac{5}{4}\lambda_{q+1}\delta_{q+1}e(t), 
\end{equation}
where $D_{t,q}$ was defined in \eqref{est 56}, and $\varepsilon_\gamma$  in \eqref{inductive 2b} is that defined in Lemma \ref{geometric lemma}.

\subsection{Base Step $q=0$}
\begin{proposition}\label{additive step q0}
Suppose that $b$ satisfies \eqref{additive bound b}.  Let $y_{0} \equiv 0$. Then together with 
\begin{equation}\label{form r0}
\mathring{R}_0=\mathcal{B}\left(\Lambda z_0^\perp (\nabla^\perp\cdot z_0) + \Lambda^{\gamma} z_{0} - \Lambda^{\frac{3}{2} - 2 \sigma} z_{0}\right), 
\end{equation}
the pair $(y_{0}, \mathring{R}_{0})$ solves \eqref{additive convPDE} and satisfies the hypotheses \eqref{inductive 1}-\eqref{inductive 3} at level $q = 0$ on $[t_{0}, \mathfrak{t}]$. Moreover, $y_{0}(t,x)$ and $\mathring{R}_{0}(t,x)$ are both deterministic over $[t_{0}, 0]$. 
\end{proposition}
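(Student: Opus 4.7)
With the choice $y_{0} \equiv 0$, equation \eqref{additive convPDE} at level $q = 0$ collapses (via the identity \eqref{est 120}) to the algebraic relation
\begin{equation*}
\divergence \mathring{R}_{0} = \Lambda z_{0}^{\perp}(\nabla^{\perp} \cdot z_{0}) + \Lambda^{\gamma} z_{0} - \Lambda^{\frac{3}{2} - 2\sigma} z_{0} + \nabla p_{0}.
\end{equation*}
Applying the inverse divergence operator $\mathcal{B}$ from Definition \ref{Definition 2}, and using $\divergence \mathcal{B} f = \mathbb{P} \mathbb{P}_{\neq 0} f$, yields exactly the form \eqref{form r0}, with $p_{0}$ absorbing the residual gradient and zero-mode parts; this is legitimate since each argument of $\mathcal{B}$ is mean-zero (the linear terms directly from $z_{0}$ being mean-zero, the quadratic term by a Fourier parity computation that uses real-valuedness of $z_{0}$), and by Definition \ref{Definition 2} the output is a trace-free symmetric matrix. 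The deterministic property on $[t_{0}, 0]$ is then immediate: the extension $z \equiv 0$ on $[-4, 0]$ forces $z_{0} \equiv 0$ there, so $\mathring{R}_{0}$ vanishes deterministically.

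All bounds in \eqref{inductive 1} hold trivially because $y_{0} \equiv 0$ (the sum in \eqref{inductive 1b} is empty at $q = 0$). For \eqref{inductive 2a}, the choice $f(0) = \lambda_{0}/4$ in \eqref{define f} gives $\supp \hat{z}_{0} \subset B(0, \lambda_{0}/4)$; the quadratic product contributes Fourier support in $B(0, \lambda_{0}/2)$, the linear terms remain in $B(0, \lambda_{0}/4)$, and since $\mathcal{B}$ is a Fourier multiplier, $\supp \widehat{\mathring{R}_{0}} \subset B(0, \lambda_{0}/2) \subset B(0, 4\lambda_{0})$. For \eqref{inductive 3}, using $y_{0} \equiv 0$ together with $\lambda_{1} \delta_{1} = 1$ reduces the claim to showing $\|z_{0}\|_{\dot{H}^{1/2}}^{2} \leq \tfrac{1}{4} e(t)$; since the Fourier support of any mean-zero function on $\mathbb{T}^{2}$ lies in $\mathbb{Z}^{2} \setminus \{0\}$, we have $\|z_{0}\|_{\dot{H}^{1/2}} \leq \|z\|_{\dot{H}^{3/2 - \sigma}}$, and the third line of \eqref{additive stopping time} makes the right-hand side arbitrarily small relative to $\ushort{e}/4$ by smallness of $\varepsilon_{\gamma}$.

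The principal obstacle is verifying \eqref{inductive 2b}. For the quadratic piece I would chain the Sobolev embedding and the second inequality of \eqref{est 92b} to obtain $\|\mathcal{B}(\cdot)\|_{C_{x}} \lesssim C_{S} C_{0} \|\cdot\|_{\dot{H}^{\sigma}}$, then invoke the product estimate in \eqref{est 92a} to conclude $\|\mathcal{B}(\Lambda z_{0}^{\perp}(\nabla^{\perp} \cdot z_{0}))\|_{C_{x}} \leq C_{S}^{2} C_{0} \|z\|_{\dot{H}^{3/2 + \sigma/2}}^{2}$; the factor $C_{0} C_{S} \|z\|_{\dot{H}^{3/2+\sigma/2}}^{2}$ in \eqref{additive stopping time} is calibrated precisely so that this is bounded by $\frac{\varepsilon_{\gamma}}{32(2\pi)^{2}} \lambda_{2} \delta_{2} \ushort{e}$. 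The linear pieces are more delicate: using \eqref{est 92b} I would write $\|\mathcal{B}(\Lambda^{\gamma} z_{0})\|_{C_{x}} \leq C_{S} C_{0} \|z_{0}\|_{\dot{H}^{\sigma + \gamma}}$ and then exploit \eqref{new}, which yields $\sigma + \gamma \leq \tfrac{3}{2} - \sigma$, to upgrade this to a bound by $C_{S} C_{0} \|z\|_{\dot{H}^{3/2-\sigma}}$ (and analogously for the $\Lambda^{3/2-2\sigma} z_{0}$ term, where equality is exact). The third stopping condition then controls this only at the $O(\sqrt{\lambda_{2} \delta_{2} \ushort{e}})$ level whereas the required budget is $O(\lambda_{2} \delta_{2} \ushort{e})$, so I would close the estimate by first fixing $a \geq a_{0}$ and then choosing $\beta > \tfrac{1}{2}$ sufficiently close to $\tfrac{1}{2}$ (permitted by \eqref{bound beta}) so that $\lambda_{2} \delta_{2} = a^{(1-2\beta) b(b-1)}$ stays close to $\lambda_{1} \delta_{1} = 1$, after which the $\ushort{e} > 4$ hypothesis closes the gap.
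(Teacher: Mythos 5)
Your reconstruction follows the paper's route closely: $\mathring R_0$ is obtained by applying $\mathcal B$ to the right-hand side collapsed via \eqref{est 120}, the hypotheses \eqref{inductive 1} are vacuous for $y_0\equiv0$, \eqref{inductive 2a} follows from the frequency support of $z_0$, the deterministic statement follows from the extension $z\equiv0$ on $[-2,0]$, and \eqref{inductive 3} reduces to $\lVert z_0\rVert^2_{\dot H^{1/2}}\le\tfrac14 e(t)$ since $\lambda_1\delta_1=1$. (One small wording issue there: what makes the bound close is not ``smallness of $\varepsilon_\gamma$'' --- $\varepsilon_\gamma$ is a fixed constant from Lemma \ref{geometric lemma} --- but the product $\frac{\varepsilon_\gamma}{2C_0C_S\cdot 32(2\pi)^2}\lambda_2\delta_2<\tfrac14$, driven by the factor $32(2\pi)^2$ and $\lambda_2\delta_2<1$.) You also correctly flagged the delicate point that the paper's terse computation for \eqref{inductive 2b} glosses over.

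However, the way you propose to close that delicate point does not work. You observe that $\mathcal B(\Lambda^\gamma z_0-\Lambda^{3/2-2\sigma}z_0)$ costs $\lesssim C_SC_0\lVert z\rVert_{\dot H^{3/2-\sigma}}$, linear in $z$, while the third constraint of \eqref{additive stopping time} caps $\lVert z\rVert^2_{\dot H^{3/2-\sigma}}$, so that the stopping time hands you only $\lVert z\rVert_{\dot H^{3/2-\sigma}}\lesssim(\lambda_2\delta_2\ushort e)^{1/2}$. Converting that into the required budget $\lesssim\frac{\varepsilon_\gamma}{32(2\pi)^2}\lambda_2\delta_2\, e(t)$ forces $\lambda_2\delta_2\,\ushort e\gtrsim 16(2\pi)^2C_SC_0/\varepsilon_\gamma$. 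Your proposal to enforce this by taking $\beta>\tfrac12$ close to $\tfrac12$ cannot succeed: since $\beta>\tfrac12$ always gives $\lambda_2\delta_2<1$, one has $\lambda_2\delta_2\,\ushort e<\ushort e$, and the hypothesis $\ushort e>4$ is two orders of magnitude below the threshold $\gtrsim 16(2\pi)^2\approx 630$. Moreover increasing $a_0$, which the iteration does require elsewhere, only makes $\lambda_2\delta_2$ smaller and the obstruction worse. What actually closes this step is a stopping condition that controls $\lVert z\rVert_{\dot H^{3/2-\sigma}}$ to the \emph{first} power at the $\lambda_2\delta_2\ushort e$ scale (that first-power control also suffices for \eqref{inductive 3}, where the budget is $O(1)$ and any sub-unit bound on $\lVert z\rVert_{\dot H^{3/2-\sigma}}$ squares to something even smaller); the quadratic exponent in the third line of \eqref{additive stopping time} is tailored to $\mathcal B(\Lambda z_0^\perp(\nabla^\perp\cdot z_0))$, not to the linear piece. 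As written, your closing argument for the linear term of \eqref{inductive 2b} does not go through.
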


\begin{proof}[Proof of Proposition \ref{additive step q0}]
Using the identity \eqref{est 120} makes it clear that $(y_{0},\mathring{R}_{0})$ solves \eqref{additive convPDE} with $p_{0} \equiv 0$. The hypotheses \eqref{inductive 1} are trivially satisfied while \eqref{inductive 2a} follows from \eqref{define zq} and \eqref{define f}. Concerning \eqref{inductive 2b}, we estimate by
\begin{align*} 
||\mathring{R}_0||_{C_{t,x,0}}& \overset{\eqref{form r0} \eqref{est 92} }{\leq}  C_{S} [ \lVert \mathcal{B} ( \Lambda z_{0}^{\bot} ( \nabla^{\bot} \cdot z_{0} ) ) \rVert_{C_{t,0} \dot{H}_{x}^{1+ \sigma}} + C_{S} \lVert \mathcal{B} ( \Lambda^{\gamma} z_{0} -\Lambda^{\frac{3}{2} - 2 \sigma} z_{0} ) \rVert_{C_{t,0} \dot{H}_{x}^{1+ \sigma}} ] \\
& \overset{\eqref{est 92} \eqref{new} \eqref{additive stopping time}}{\leq}  \frac{\varepsilon_\gamma}{32(2\pi)^2} \lambda_{2} \delta_{2} e(t);
\end{align*}
therefore, \eqref{inductive 2b} holds. Now, because $z(t)$ is mean-zero, we can readily verify $\lVert z_0(t)\rVert^2_{\dot{H}^\frac{1}{2}} \leq \frac{1}{4} e(t)$ using \eqref{additive stopping time} and that $e(t) \geq \ushort{e} \geq 4$ due to \eqref{est 118}; consequently, as $\lambda_1\delta_1=1$, \eqref{inductive 3} is satisfied. Finally, because $z \equiv 0$ on $[-2, 0] \overset{\eqref{define tq}}{=} [t_{0}, 0]$, it is clear that $z_{0} \equiv 0$ on $[-2, 0]$ and therefore $\mathring{R}_{0}$ defined by \eqref{form r0} and $y_{0}$ are both deterministic over $[t_{0}, 0]$. 
\end{proof}

\subsection[Main Iteration]{\for{toc}{Main Iteration}\except{toc}{Main Iteration: Step $q\to q+1$}}
\begin{proposition}\label{additive q to qplus1}
Suppose that $b$ satisfies \eqref{additive bound b}. Then there exists a choice of $a_{0}$ in \eqref{bound a} and $\beta$ in \eqref{bound beta} such that the following holds: If $(y_q,\mathring{R}_q)$ is an $(\mathcal{F}_t)_{t\geq 0}$-adapted solution to \eqref{additive convPDE} on $[t_q,\mathfrak{t}]$ satisfying the  inductive hypotheses \eqref{inductive 1}-\eqref{inductive 3} for some $q\in\mathbb{N}_0$, then there exists a solution $(y_{q+1},\mathring{R}_{q+1})$ to \eqref{additive convPDE} which is also $(\mathcal{F}_t)_{t\geq 0}$-adapted satisfying the inductive hypotheses \eqref{inductive 1}-\eqref{inductive 3} and the Cauchy difference
\begin{equation}\label{cauchy difference}
\lVert y_{q+1}(t)-y_q(t)\rVert_{C_x}\leq M_{0}^{\frac{1}{2}} \bar{e}^{\frac{1}{2}}\delta_{q+1}^\frac{1}{2}
\end{equation}
on $[t_{q+1},\mathfrak{t}]$. Moreover, the perturbation defined by 
\begin{equation}\label{est 44}
w_{q+1} \triangleq y_{q+1}-y_l 
\end{equation}
satisfies
\begin{equation}\label{support wqplus1}
\supp \hat{w}_{q+1} \subset \left\{\xi :\frac{\lambda_{q+1}}{2}\leq |\xi|\leq 2\lambda_{q+1}\right\}.
\end{equation}
Finally, if $(y_q,\mathring{R}_q)(t)$ is deterministic on $[t_q,0]$, then so is $(y_{q+1},\mathring{R}_{q+1})$ on $[t_{q+1},0]$.
\end{proposition}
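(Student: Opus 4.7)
The plan is to execute one standard cycle of a Beltrami-wave convex integration scheme adapted to the momentum SQG geometry of \cite{BSV19,Y23a,Y23b}, with the key novelty being an explicit energy pumping term in the amplitude so that the inductive energy gap \eqref{inductive 3} can be driven from level $q$ to level $q+1$. After mollifying via \eqref{additive mollified} (whose errors we collect separately), I would define the principal part of the perturbation as a sum of Beltrami plane waves localized in time by the partition $\chi_{j}$ from \eqref{est 18} and in Fourier by the projectors $\mathbb{P}_{q+1,k}$ from \eqref{est 66}:
\begin{equation*}
w_{q+1}^{(p)}(t,x) \triangleq \sum_{j,k} a_{j,k}(t,x)\,\mathbb{P}_{q+1,k}\bigl(b_{k}(\lambda_{q+1} x)\bigr),
\end{equation*}
where the amplitudes $a_{j,k}$ are obtained from the geometric lemma (Lemma \ref{geometric lemma}) applied to an argument of the schematic form
\begin{equation*}
\mathrm{Id} - \frac{\mathring{R}_{l}}{\rho_{q+1}} + \text{(energy-pumping correction)},
\end{equation*}
with the scalar $\rho_{q+1}(t)$ chosen proportional to $\lambda_{q+2}\delta_{q+2}e(t)$ so that the amplitudes stay in the neighborhood of $\mathrm{Id}$ demanded by Lemma \ref{geometric lemma}. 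The Fourier-support requirement \eqref{support wqplus1} follows from the $\mathbb{P}_{q+1,k}$ projector together with the fact that $a_{j,k}$ is smooth on the scale $l_{q+1}^{-1} \ll \lambda_{q+1}$. The incompressibility corrector $w_{q+1}^{(c)}$ is added in the usual way so that $w_{q+1}\triangleq w_{q+1}^{(p)}+w_{q+1}^{(c)}$ is divergence-free with the same support, and then $y_{q+1}\triangleq y_{l}+w_{q+1}$.

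The new Reynolds stress $\mathring{R}_{q+1}$ is obtained by plugging $y_{q+1}$ into \eqref{additive convPDE} and applying the inverse divergence $\mathcal{B}$ of Definition \ref{Definition 2} to the residual. I expect the decomposition
\begin{equation*}
\mathring{R}_{q+1} = \mathring{R}_{L} + \mathring{R}_{T} + \mathring{R}_{O} + \mathring{R}_{\mathrm{Com1}} + \mathring{R}_{\mathrm{Com2}} + \mathring{R}_{\mathrm{Mol}},
\end{equation*}
where $\mathring{R}_{L}$ is a linear error (including the diffusive term $\Lambda^{\gamma} w_{q+1}$ and the extra term $\Lambda^{\gamma}z - \Lambda^{3/2-2\sigma}z$ applied to $z_{q}-z_{l}$ mentioned in Remark \ref{Remark 1.2}), $\mathring{R}_{T}$ is the transport/material derivative error absorbed into a temporal corrector if needed, $\mathring{R}_{O}$ is the high-frequency oscillation error produced after the geometric lemma cancels $\mathring{R}_{l}$ at low frequency, $\mathring{R}_{\mathrm{Com1}}$ is the mollification commutator in space and time, and $\mathring{R}_{\mathrm{Com2}}$ is the stochastic commutator generated by the interaction of $z-z_{l}$ with the nonlinearity \eqref{est 120}. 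For each piece I would estimate $\lVert\cdot\rVert_{C_{t,x,q+1}}$ using \eqref{est 92b}, \eqref{new 8}, the stationary phase lemma, and the Hölder regularity of $z$ controlled by the stopping time \eqref{additive stopping time}; the parameter choice \eqref{tau} is exactly what balances $R_{T}$ against the oscillation error $R_{O,\mathrm{approx}}$ (Remark \ref{Remark 2.1}), and one verifies that each term is bounded by $\tfrac{1}{32(2\pi)^{2}}\varepsilon_{\gamma}\lambda_{q+3}\delta_{q+3}\,e(t)$ by choosing $a_{0}$ sufficiently large and $\beta$ sufficiently close to $\tfrac12$.

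The bounds \eqref{inductive 1b}-\eqref{inductive 1d} and the Cauchy difference \eqref{cauchy difference} follow from the explicit form of $w_{q+1}$: the geometric lemma yields $\lVert a_{j,k}\rVert_{C_{t,x}} \lesssim (\lambda_{q+2}\delta_{q+2}\bar e)^{1/2}$, combined with \eqref{est 8} and \eqref{M0} this gives the $C^{0}$ bound with a universal constant that is absorbed into the geometric series $1+\sum_{j\leq q+1}\delta_{j}^{1/2}$; the $C^{1}$ and material-derivative bounds come from the fact that derivatives land on $b_{k}(\lambda_{q+1}x)$ producing $\lambda_{q+1}$, while $D_{t,q}$ applied to the amplitude is controlled by $\tau_{q+1}^{-1}$ together with the transport along $\Lambda(y_{l}+z_{l})$. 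The Fourier support \eqref{inductive 1a} at level $q+1$ follows from \eqref{support wqplus1} combined with \eqref{inductive 1a} at level $q$.

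The main obstacle, and the whole reason for introducing the energy-pumping amplitude, is verifying \eqref{inductive 3} at level $q+1$. The computation starts from
\begin{equation*}
\lVert (y_{q+1}+z_{q+1})(t)\rVert_{\dot H^{1/2}}^{2} - \lVert (y_{q}+z_{q})(t)\rVert_{\dot H^{1/2}}^{2} = \lVert w_{q+1}(t)\rVert_{\dot H^{1/2}}^{2} + 2\langle \Lambda^{1/2} w_{q+1},\Lambda^{1/2}(y_{l}+z_{l})\rangle + \text{mol.\ errors} + \text{low-high}_{z},
\end{equation*}
and by the design of the amplitudes and a stationary-phase cancellation between distinct wavenumbers $k\in \Gamma_{j}$, the principal contribution of $\lVert w_{q+1}\rVert_{\dot H^{1/2}}^{2}$ equals exactly $\lambda_{q+1}\delta_{q+1}e(t)-\lambda_{q+2}\delta_{q+2}e(t)$ up to errors smaller than $\tfrac14\lambda_{q+2}\delta_{q+2}e(t)$; this is where the choice $f(q)=\lambda_{q}/4$ in \eqref{define f} and the stopping-time truncation on $\lVert z\rVert_{\dot H^{3/2+\sigma/2}}^{2}$ and $\lVert z\rVert_{\dot H^{3/2-\sigma}}^{2}$ in \eqref{additive stopping time} are used, together with the condition \eqref{est 1} to dominate the derivative $e'(t)$ contribution by $\tilde e \tau_{q+1}$. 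The hard part will be controlling simultaneously this two-sided energy estimate and the Reynolds stress bound \eqref{inductive 2b}, since the same amplitudes must do both jobs; the fact that $\lambda_{q+2}\delta_{q+2}\ll\lambda_{q+1}\delta_{q+1}$ thanks to \eqref{bound beta} provides the required hierarchy, and the parameter window \eqref{additive bound b} gives the room needed to close the estimates. Finally, because $z\equiv 0$ on $[-2,0]$ and all building blocks are constructed from $(y_{q},\mathring{R}_{q},z)$, if $(y_{q},\mathring R_{q})$ is deterministic on $[t_{q},0]$ so is $(y_{q+1},\mathring R_{q+1})$ on $[t_{q+1},0]$.
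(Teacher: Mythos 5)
Your proposal captures the correct overall shape of the argument (Beltrami plane waves of frequency $\lambda_{q+1}$, amplitude fixed by Lemma~\ref{geometric lemma}, energy inflation built into the amplitude, the error decomposition into transport, Nash, linear, oscillation, and two commutators), but it omits the two structural devices that make the $C^k$-based momentum-SQG scheme of \cite{BSV19,Y23a} close. First, the paper composes the phase with the Lagrangian flow: the building block is $b_{k}(\lambda_{q+1}\Phi_{j}(t,x))$ with $\Phi_{j}$ solving $D_{t,q}\Phi_{j}=0$, $\Phi_{j}(\tau_{q+1}j,\cdot)=\mathrm{id}$ (see \eqref{def Phi j}), and likewise the amplitude is built from the transported stress $\mathring{R}_{q,j}$ solving $D_{t,q}\mathring{R}_{q,j}=0$ with data $\mathring{R}_{l}(\tau_{q+1}j)$ (see \eqref{est 16}). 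The whole point is the identity \eqref{est 23}: $D_{t,q}a_{k,j}=0$ and $D_{t,q}b_{k}(\lambda_{q+1}\Phi_{j})=0$, so that after pulling out the $[\,\cdot\,,\mathbb{P}_{q+1,k}]$ commutator the transport error reduces to the harmless $\partial_{t}\chi_{j}$ term \eqref{new 10}. With your static phase $b_{k}(\lambda_{q+1}x)$ and the untransported $\mathring{R}_{l}$ this collapses: $D_{t,q}$ of the building block produces $(\Lambda y_{l}+\Lambda z_{l})\cdot\nabla b_{k}(\lambda_{q+1}x)$ at size $\lambda_{q+1}\lambda_{q}\delta_{q}^{1/2}$, and the ``temporal corrector'' you invoke to repair it belongs to the intermittent $L^{p}$-based schemes (cf.\ \cite{HZZ21a}), not to the Hölder-based scheme actually used here (see the discussion in Section \ref{main results}); importing it would change the entire estimate structure. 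Second, the paper's $\rho_{j}$ in \eqref{est 0a} is a \emph{constant in both $x$ and $t$}, snapshotted at $\tau_{q+1}j$. This is emphasized in Remark~\ref{Remark 3.1}: without it, $D_{t,q}a_{k,j}\neq 0$, and the exact energy computation \eqref{est 43}, $\sum_{j,k}\int \lambda_{q+1}\tilde{w}_{q+1,j,k}\cdot\tilde{w}_{q+1,j,-k}\,dx = 4(2\pi)^{2}\sum_{j}\chi_{j}^{2}\rho_{j}$, which needs the trace of $\Id-\mathring{R}_{q,j}/\rho_{j}$ to pull out $\rho_{j}$ cleanly, would fail. Your $\rho_{q+1}(t)$ being $t$-dependent breaks both.

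There are also several smaller but telling discrepancies. Your stated amplitude size $\lVert a_{j,k}\rVert_{C_{t,x}}\lesssim(\lambda_{q+2}\delta_{q+2}\bar e)^{1/2}$ is off: from \eqref{est 28} one has $|\rho_{j}|\lesssim\lambda_{q+1}\delta_{q+1}\bar e$, and the $\lambda_{q+1}^{-1/2}$ prefactor in \eqref{est 3} gives $\lVert a_{k,j}\rVert\lesssim\delta_{q+1}^{1/2}\bar e^{1/2}$ as in \eqref{est 10}; if $\rho$ were of size $\lambda_{q+2}\delta_{q+2}e$, the perturbation would carry too little $\dot H^{1/2}$ energy to close \eqref{inductive 3}. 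Putting $a_{j,k}$ \emph{outside} $\mathbb{P}_{q+1,k}$ forces you to introduce an incompressibility corrector, but in the paper's convention $\mathbb{P}_{q+1,k}=\mathbb{P}P_{\approx k\lambda_{q+1}}$ is applied to the full product $a_{k,j}b_{k}(\lambda_{q+1}\Phi_{j})$, which is divergence-free automatically and also gives \eqref{support wqplus1} for free. Finally, your energy decomposition keeps the cross term $2\langle\Lambda^{1/2}w_{q+1},\Lambda^{1/2}(y_{l}+z_{q+1})\rangle$ as something to estimate, but \eqref{new 39} shows it vanishes identically because $\supp\hat w_{q+1}$, $\supp\hat y_{l}$, and $\supp\hat z_{q+1}$ are pairwise disjoint — that is precisely the reason for the choice $f(q)=\lambda_{q}/4$ in \eqref{define f}, which you cite but do not exploit. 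The rest — the stress decomposition, the role of $\tau_{q+1}$ in balancing $R_{T}$ against $R_{O,\mathrm{approx}}$, and the determinism on $[t_{q+1},0]$ — is in line with the paper.
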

The proof of Proposition \ref{additive q to qplus1} is split to Propositions \ref{prop additive amplitudes}-\ref{prop additive control energy} that consist of estimates on the perturbation, the decomposed stress, and the control of the energy. 

\subsubsection[Construction of Solution]{Construction of Solution at Step $q+1$}\label{additive construction}
For all $j \in \mathbf{J}=\{ \lfloor t_{q+1} \tau_{q+1}^{-1} \rfloor, \hdots, \lceil \mathfrak{t} \tau_{q+1}^{-1} \rceil \}$, similarly to \cite[Equation (129)]{Y23a}, we define $\Phi_{j}(t,x)$ as a solution to 
\begin{equation}\label{def Phi j}
D_{t,q} \Phi_{j}  = 0, \hspace{10mm} \Phi_{j} ( \tau_{q+1} j, x) = x. 
\end{equation} 
Further, we define $\mathring{R}_{q,j}$ to be the solution to 
\begin{equation}\label{est 16}
D_{t,q} \mathring{R}_{q,j} = 0, \hspace{10mm} \mathring{R}_{q,j} (\tau_{q+1} j, x) = \mathring{R}_{l} (\tau_{q+1} j, x). 
\end{equation} 
We define
\begin{subequations}\label{est 0}
\begin{align}
& \gamma_{q} (t) \triangleq \frac{1}{4(2\pi)^{2}} [ e(t) (1 - \lambda_{q+2} \delta_{q+2} ) - \lVert (y_{q} + z_{q}) (t) \rVert_{\dot{H}_{x}^{\frac{1}{2}}}^{2}],   \hspace{2mm} \gamma_{l} \triangleq \gamma_{q} \ast_{t} \varphi_{l},  \label{est 0b} \\
& \rho_{j} \triangleq \varepsilon_\gamma^{-1} \sqrt{ l^{2} + \lVert \mathring{R}_{q,j} (\tau_{q+1} j) \rVert_{C_{x}}^{2}} + \gamma_{l} (\tau_{q+1} j),  \label{est 0a}
\end{align}
\end{subequations}
and thereby define amplitude functions for all $k\in\Gamma_j$
\begin{equation}\label{est 3}
a_{k,j} (t,x) \triangleq \lambda_{q+1}^{-\frac{1}{2}} \rho_{j}^{\frac{1}{2}} \gamma_{k} \left( \Id - \frac{ \mathring{R}_{q,j} (t,x)}{\rho_{j}} \right).
\end{equation}

\begin{remark}\label{Remark 3.1}
The analogue of $\rho_{j}$ in \cite{HZZ21a} was a function of $x$ and $t$. We needed to make it a constant in both $x$ and $t$ due to technicalities. First, we needed to make it a constant in $x$ in order to be able to compute 
\begin{align*}
\sum_{j,k} \int_{\mathbb{T}^{2}} \lambda_{q+1} \tilde{w}_{q+1, j, k}  \cdot \tilde{w}_{q+1, j, -k} dx = \sum_{j} 4 \chi_{j}^{2} \rho_{j} (2\pi)^{2}  
\end{align*}
in \eqref{est 43}. Moreover, our inductive hypothesis in \eqref{inductive 1c} is at the level of  $C^{1}$ only in $x$, not $C_{t,x}^{1}$, in contrast to \cite{HZZ21a}. This relaxes the need to compute $\lVert a_{k,j} \rVert_{C_{t,x}^{N}}$ in contrast to \cite{HZZ21a};  we only need to estimate $\lVert D^{N} a_{k,j} \rVert_{C_{t,x,q+1}}$ in \eqref{est 19b}. This is preferable for us because such estimates for us will rely on transport-equation estimates from Lemma \ref{transport lemma} which work well for $D^{N}$, not $\partial_{t}^{N}$.  
\end{remark}

We observe that \eqref{est 1} guarantees that 
\begin{equation}\label{est 4}
\frac{3}{4} \lambda_{q+1} \delta_{q+1} \geq \lambda_{q+2} \delta_{q+2} 
\end{equation}
and consequently due to \eqref{inductive 3},  
\begin{equation}\label{est 21}
\gamma_{q}(t) \geq 0. 
\end{equation}
Because $\gamma_{q} \geq 0$, we see that 
\begin{align}\label{est 27}
\gamma_{l} (t) \overset{\eqref{est 0b}}{\geq}  0, 
\end{align}
which has an implication that 
\begin{align}\label{est 9} 
\left\lvert \Id - \left( \Id - \frac{ \mathring{R}_{q,j} (t,x)}{\rho_{j}} \right) \right\rvert = \left\lvert \frac{ \mathring{R}_{q,j} (t,x)}{\rho_{j}} \right\rvert \overset{\eqref{est 2a} \eqref{est 16} \eqref{est 0a}}{\leq} \frac{ \lVert \mathring{R}_{l} (\tau_{q+1} j) \rVert_{C_{x}}}{\varepsilon_\gamma^{-1} \lVert \mathring{R}_{q,j} (\tau_{q+1} j) \rVert_{C_{x}}}  = \varepsilon_\gamma,
\end{align} 
so that $\gamma_{k}\left( \Id - \frac{ \mathring{R}_{q,j} (t,x)}{\rho_{j}} \right)$ within the amplitude functions \eqref{est 3} is well-defined. Next, our perturbation is defined by
\begin{subequations}\label{est 14}
\begin{align}
& w_{q+1} (t,x) \triangleq \sum_{j,k} \chi_{j}(t) \mathbb{P}_{q+1, k} \left( a_{k,j} (t,x) b_{k} (\lambda_{q+1} \Phi_{j} (t,x)) \right),   \label{est 14a}   \\
& \tilde{w}_{q+1, j,k} (t,x) \triangleq \chi_{j} (t)a_{k,j} (t,x) b_{k} ( \lambda_{q+1} \Phi_{j} (t,x)), \label{est 14b} 
\end{align}
\end{subequations}
after which it is useful to also define
\begin{equation}\label{est 14c}
\psi_{q+1, j, k} (t,x) \triangleq \frac{ c_{k }(\lambda_{q+1} \Phi_{j}(t,x))}{c_{k} (\lambda_{q+1} x)} = e^{i\lambda_{q+1} (\Phi_{j} (t,x) - x) \cdot k},
\end{equation}
so that overall we may see that
\begin{subequations}\label{est 67} 
\begin{align}
& w_{q+1} = \sum_{j,k} \mathbb{P}_{q+1, k} \tilde{w}_{q+1, j,k}, \label{est 67a}\\
& b_{k} (\lambda_{q+1} \Phi_{j} (x)) = ik^{\bot} e^{ik \cdot \lambda_{q+1} \Phi_{j} (x)} = b_{k} (\lambda_{q+1} x) \psi_{q+1, j, k} (x). \label{est 67b}
\end{align}
\end{subequations} 
Then we define $y_{q+1}$ by \eqref{est 44}. Now, we wish to decompose the Reynolds stress error term. First, we may define the first commutator error
\begin{subequations}
\begin{equation}\label{Rcom1}
R_{\text{Com1}} \triangleq \mathcal{B} \left[ \Lambda ( y_{l} + z_{l})^{\bot} \nabla^{\bot} \cdot (y_{l} + z_{l}) - [ \Lambda (y_{q} + z_{q})^{\bot} \nabla^{\bot} \cdot (y_{q} + z_{q})] \ast_{x} \phi_{l} \ast_{t} \varphi_{l}  \right] \\
\end{equation}
and mollify the pressure in space and time
\begin{equation}\label{pl}
p_{l} \triangleq p_{q} \ast_{x} \phi_{l} \ast_{t} \varphi_{l}
\end{equation}
\end{subequations}
so that they satisfy the following relationship according to \eqref{additive convPDE}: 
\begin{align}
& \partial_{t} y_{l}+ (\Lambda (y_{l} + z_{l}) \cdot \nabla) (y_{l} + z_{l}) - ( \nabla (y_{l} + z_{l} ))^{T} \cdot \Lambda ( y_{l} + z_{l}) \nonumber \\
& \hspace{20mm} + \nabla p_{l} + \Lambda^{\gamma} y_{l} + \Lambda^{\gamma} z_{l} - \Lambda^{\frac{3}{2} - 2 \sigma} z_{l} = \divergence ( \mathring{R}_{l} + R_{\text{Com1}}). \label{new4}
\end{align}
If we define the pressure $p_{q+1}$ by 
\begin{equation*}
p_{q+1}\triangleq p_{l}+w_{q+1}\cdot\Lambda(y_{l}+z_{q+1}), 
\end{equation*}
then, alongside the first commutator error $R_{\text{Com1}}$ from \eqref{Rcom1}, we can define the transport error, the Nash error, the linear error, the oscillation error, and finally the second commutator error by
\begin{subequations}\label{est 22} 
\begin{align}
\divergence R_{T} \triangleq& \partial_{t} w_{q+1} + (\Lambda y_{l} + \Lambda z_{l}) \cdot \nabla w_{q+1}, \label{est 22a} \\
\divergence R_{N} \triangleq& ( \nabla  \Lambda (y_{l} + z_{q} ))^{T} \cdot w_{q+1} + (\Lambda w_{q+1} \cdot \nabla) y_{l} - (\nabla y_{l})^{T} \cdot \Lambda w_{q+1}, \label{est 22b}\\
\divergence R_{L} \triangleq& \Lambda^{\gamma} w_{q+1}  - \Lambda^{\gamma} z_{l} + \Lambda^{\frac{3}{2} - 2 \sigma} z_{l} + \Lambda^{\gamma} z_{q+1} - \Lambda^{\frac{3}{2} - 2 \sigma} z_{q+1} + \Lambda w_{q+1}^{\bot} (\nabla^{\bot} \cdot z_{l}),\label{est 22c}  \\
\divergence R_{O} \triangleq& \divergence \mathring{R}_{l} + (\Lambda w_{q+1} \cdot \nabla) w_{q+1} - (\nabla w_{q+1})^{T} \cdot \Lambda w_{q+1},  \label{est 22d}\\
\divergence R_{\text{Com2}} \triangleq&  \Lambda y_{q+1} \cdot \nabla (z_{q+1} - z_{q}) + \Lambda (z_{q+1} - z_{q}) \cdot \nabla y_{q+1}+ (\nabla y_{l})^{T} \cdot \Lambda (z_{q} - z_{q+1}) \nonumber \\
& \hspace{4mm} + (\nabla (z_{q} - z_{q+1}))^{T} \cdot \Lambda y_{l} + (\nabla \Lambda (z_{q+1} - z_{q}))^{T} \cdot w_{q+1} \nonumber \\
&\hspace{4mm} -\Lambda w_{q+1}^{\bot} \nabla^{\bot} \cdot (z_{l} - z_{q}) + (\nabla (z_{q} - z_{q+1}))^{T} \cdot \Lambda w_{q+1}   \nonumber \\
&\hspace{4mm} + \Lambda y_{l}^{\bot} \nabla^{\bot} \cdot (z_{q} - z_{l}) + \Lambda (z_{q} - z_{l}) \cdot \nabla w_{q+1} + \Lambda (z_{q} - z_{l})^{\bot} \nabla^{\bot} \cdot y_{l} \nonumber \\
&\hspace{4mm} + \Lambda (z_{q+1} - z_{q})^{\bot} \nabla^{\bot} \cdot z_{q+1} + \Lambda (z_{q} - z_{l})^{\bot}\nabla^{\bot} \cdot z_{q+1} \nonumber \\
&\hspace{4mm} + \Lambda z_{l}^{\bot} \nabla^{\bot} \cdot (z_{q+1} - z_{q}) + \Lambda z_{l}^{\bot} \nabla^{\bot} \cdot (z_{q} - z_{l}), \label{est 22e}
\end{align}
\end{subequations}
and verify by \eqref{additive convPDE} that we successfully decomposed $\mathring{R}_{q+1}$ by 
\begin{equation}\label{additive decomposition}
\divergence \mathring{R}_{q+1} = \divergence (R_{T} + R_{N} + R_{L} + R_{O} + R_{\text{Com1}} + R_{\text{Com2}}).
\end{equation} 

\begin{remark}\label{Remark 3.2}
This is the same decomposition of $\mathring{R}_{q+1}$ from \cite[Equations (124)-(125)]{Y23a}, aside from the $R_{L}$ added by 
\begin{equation*}
- \Lambda^{\gamma} z_{l} + \Lambda^{\frac{3}{2} - 2 \sigma} z_{l} + \Lambda^{\gamma} z_{q+1} - \Lambda^{\frac{3}{2} - 2 \sigma} z_{q+1}.
\end{equation*}
\end{remark}

\subsubsection{Perturbative Estimates}
First, for subsequent convenience, we estimate by \eqref{inductive 1c}, \eqref{inductive 1d}, and \eqref{additive stopping time}, 
\begin{align}\label{temp} 
\lVert \partial_{t} y_{q} (t)  \rVert_{C_{x}}  \lesssim   M_{0} \lambda_{q}^{2} \delta_{q} \bar{e} + (M_{0}^{\frac{1}{2}} \lambda_{q} \delta_{q}^{\frac{1}{2}} \bar{e}^{\frac{1}{2}} + \lVert \Lambda z_{l} \rVert_{C_{t,x,q+1}} )M_{0}^{\frac{1}{2}} \lambda_{q} \delta_{q}^{\frac{1}{2}} \bar{e}^{\frac{1}{2}} \lesssim M_{0} \lambda_{q}^{2} \delta_{q} \bar{e} 
\end{align}
and consequently,   
\begin{equation}\label{est 12}
\lVert y_{l} - y_{q} \rVert_{C_{t, x, q+1}} \lesssim l (\lVert y_{q} \rVert_{C_{t, q+1}^{1} C_{x}} +  \lVert y_{q} \rVert_{C_{t, q+1} C_{x}^{1}} )\overset{\eqref{temp} \eqref{inductive 1c}}{\lesssim} l M_{0} \lambda_{q}^{2} \delta_{q} \bar{e}.
\end{equation}
Next, we estimate from \eqref{est 0b}-\eqref{est 0a} 
\begin{align}
\lvert \rho_{j} \rvert \leq& \varepsilon_\gamma^{-1} ( l + \lVert \mathring{R}_{q,j} (\tau_{q+1} j) \rVert_{C_{x}}) \nonumber \\
&+ \left\lVert \frac{1}{4(2\pi)^{2}} [ e(t) (1 - \lambda_{q+2} \delta_{q+2} ) - \lVert (y_{q} + z_{q}) (t) \rVert_{\dot{H}_{x}^{\frac{1}{2}}}^{2}]\right\rVert_{C_{\tau_{q+1} j, q+1}}.  \label{est 5}
\end{align}
From the inductive hypothesis \eqref{inductive 3} at step $q$ and \eqref{est 4}, we have for all $t \in [t_{q}, \mathfrak{t}]$, 
\begin{equation}\label{est 112}
\left\lvert e(t) (1- \lambda_{q+2} \delta_{q+2}) - \lVert (y_{q} + z_{q}) (t) \rVert_{\dot{H}_{x}^{\frac{1}{2}}}^{2} \right\rvert  \leq \frac{5}{4} \lambda_{q+1} \delta_{q+1} \bar{e}. 
\end{equation}
Applying this estimate to \eqref{est 5} now gives us 
\begin{equation}\label{est 28}
\lvert \rho_{j} \rvert \leq \varepsilon_\gamma^{-1} ( l + \lVert \mathring{R}_{q,j} (\tau_{q+1} j) \rVert_{C_{x}}) + \frac{1}{4(2\pi)^{2}} \frac{5}{4} \lambda_{q+1} \delta_{q+1} \bar{e} \overset{\eqref{est 16} \eqref{inductive 2b}}{\leq} \frac{1}{8 \pi^{2}} \lambda_{q+1} \delta_{q+1} \bar{e}.
\end{equation}
For subsequent purposes, it is convenient to compute  for $\beta > \frac{1}{2}$ sufficiently close to $\frac{1}{2}$ and $a_{0}$ sufficiently large, 
\begin{align} 
\tau_{q+1} \lVert D ( \Lambda y_{l} + \Lambda z_{l}) \rVert_{C_{t, x, q+1}} &\overset{\eqref{inductive 1} \eqref{additive stopping time}}{\lesssim}  \tau_{q+1} \lambda_{q}^{2} \delta_{q}^{\frac{1}{2}}   \nonumber \\
\lesssim&  a^{b[ \frac{\beta}{2} - \frac{1}{4} ]} a^{b^{q} [ 2 - \beta + b(-\frac{\alpha}{2} - \frac{1}{2} - \frac{\beta}{2} ) + b^{2} (\beta - \frac{1}{2} )]}\overset{\eqref{additive bound b}}{\ll} 1. \label{est 17} 
\end{align}

\begin{proposition}\label{prop additive amplitudes}
Define $a_{k,j}$ by \eqref{est 3}. Then it satisfies the following bounds: for all $t \in \supp \chi_{j}$ and all $N \in \mathbb{N}$, 
\begin{subequations}\label{est 19} 
\begin{align}
& \lVert a_{k,j}(t) \rVert_{C_{x}} \leq \frac{1}{\sqrt{8} \pi} \bar{e}^{\frac{1}{2}} \delta_{q+1}^{\frac{1}{2}} \sup_{k \in \Gamma_{1} \cup \Gamma_{2}} \lVert \gamma_{k} \rVert_{C(B(\Id, \varepsilon_\gamma ))},  \label{est 10}  \\
& \lVert D^{N} a_{k,j}(t) \rVert_{C_{x}}  \lesssim \delta_{q+1}^{\frac{1}{2}} \bar{e}^{\frac{1}{2}} \lambda_{q}^{N}.  \label{est 19b} 
\end{align}
\end{subequations}
Furthermore, $y_{q+1}$, $w_{q+1}$, and $\mathring{R}_{q+1}$ from \eqref{est 44}, \eqref{est 14a}, and \eqref{additive decomposition} satisfy the hypotheses \eqref{inductive 1a} and \eqref{inductive 2a} at level $q+1$, as well as the support condition \eqref{support wqplus1}.
\end{proposition}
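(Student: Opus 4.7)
The plan is to handle the two amplitude estimates first and then collect the three Fourier-support claims, all of which reduce to bookkeeping once the amplitude bounds are in place.

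For \eqref{est 10}, my strategy is to combine the already-proven pointwise bound \eqref{est 9} on the argument $\Id - \mathring{R}_{q,j}/\rho_{j}$, which keeps it in $B(\Id,\varepsilon_\gamma)$ where $\gamma_{k}$ is controlled, with the upper bound $\rho_{j}\leq\frac{1}{8\pi^{2}}\lambda_{q+1}\delta_{q+1}\bar{e}$ from \eqref{est 28}. Substituting both into \eqref{est 3} and absorbing the prefactor $\lambda_{q+1}^{-1/2}\rho_{j}^{1/2}$ immediately produces the target constant $\frac{1}{\sqrt{8}\pi}\bar{e}^{1/2}\delta_{q+1}^{1/2}\sup_{k\in\Gamma_{1}\cup\Gamma_{2}}\lVert\gamma_{k}\rVert_{C(B(\Id,\varepsilon_\gamma))}$.

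For the derivative bound \eqref{est 19b}, I will first exploit the key structural point that $\rho_{j}$ is a pure constant (it depends only on the discrete index $j$ through evaluations at the single time $\tau_{q+1}j$), so spatial derivatives fall exclusively on $\gamma_{k}(\Id - \mathring{R}_{q,j}/\rho_{j})$. Applying the Fa\`{a} di Bruno formula reduces matters to estimating products of $\gamma_{k}^{(m)}$ (uniformly bounded on $B(\Id,\varepsilon_\gamma)$ by \eqref{est 9}) and $\rho_{j}^{-1}D^{n_{i}}\mathring{R}_{q,j}$ with $\sum_{i} n_{i}=N$. The transport equation \eqref{est 16} governing $\mathring{R}_{q,j}$ lets me invoke Lemma \ref{transport lemma} to transfer derivative bounds from the initial data $\mathring{R}_{l}(\tau_{q+1}j,\cdot)$; since $\mathring{R}_{l}$ inherits a Fourier support in $B(0,4\lambda_{q})$ from the inductive \eqref{inductive 2a} via the space-time mollification in \eqref{additive mollified}, a Bernstein estimate gives $\lVert D^{n_{i}}\mathring{R}_{l}\rVert_{C_{x}}\lesssim\lambda_{q}^{n_{i}}\lVert\mathring{R}_{l}\rVert_{C_{x}}$. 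The CFL-type smallness $\tau_{q+1}\lVert D(\Lambda y_{l}+\Lambda z_{l})\rVert_{C_{t,x,q+1}}\ll 1$ from \eqref{est 17} will ensure that the flow map $\Phi_{j}$ stays uniformly close to the identity on $\supp\chi_{j}$, so these derivative bounds transport to $\lVert D^{n_{i}}\mathring{R}_{q,j}\rVert_{C_{x}}\lesssim\lambda_{q}^{n_{i}}\lVert\mathring{R}_{l}\rVert_{C_{t,x,q+1}}$ up to a harmless constant. Combining with $\lVert\mathring{R}_{l}\rVert_{C_{t,x,q+1}}\lesssim\lambda_{q+1}\delta_{q+1}\bar{e}$ from \eqref{inductive 2b} and the prefactor $\lambda_{q+1}^{-1/2}\rho_{j}^{1/2}\lesssim\delta_{q+1}^{1/2}\bar{e}^{1/2}$ yields the claimed $\lambda_{q}^{N}\delta_{q+1}^{1/2}\bar{e}^{1/2}$.

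For the three Fourier-support statements, I will just trace through the construction. For \eqref{support wqplus1}, each summand in \eqref{est 14a} is built with $\mathbb{P}_{q+1,k}$, whose Fourier support by \eqref{est 66} is the ball of radius $\lambda_{q+1}/8$ around $k\lambda_{q+1}$; since $|k|=1$, this ball sits inside the desired annulus $\{\lambda_{q+1}/2\leq|\xi|\leq 2\lambda_{q+1}\}$. The hypothesis \eqref{inductive 1a} at level $q+1$ then follows because $y_{q+1}=y_{l}+w_{q+1}$ and space-time mollification does not enlarge Fourier support, so $\supp\hat{y}_{l}\subset\supp\hat{y}_{q}\subset B(0,2\lambda_{q})\subset B(0,2\lambda_{q+1})$. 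For \eqref{inductive 2a} on $\mathring{R}_{q+1}$, I will walk through the decomposition \eqref{additive decomposition} term by term: the inverse divergence $\mathcal{B}$ does not enlarge Fourier supports, and each product lives in the Minkowski sum of its factors' supports; the factors are combinations of $w_{q+1}$ (support in $\{|\xi|\leq 2\lambda_{q+1}\}$), of $y_{l}$ and $z_{l}$ (both in $B(0,2\lambda_{q})$), and of low-frequency pieces such as $z_{q+1}-z_{q}$, so the worst case is the quadratic-in-$w_{q+1}$ term inside $R_{O}$, which lands in $B(0,4\lambda_{q+1})$. The hard part will be the derivative estimate \eqref{est 19b}; once it is in place the supports follow purely from Minkowski-sum considerations in Fourier space.
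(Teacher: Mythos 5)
Your argument for \eqref{est 10} is correct and matches the paper's: substitute \eqref{est 28} and \eqref{est 9} into \eqref{est 3}. The three Fourier-support claims are also handled correctly by the Minkowski-sum bookkeeping you describe; the paper's proof of these is shorter but relies on exactly the same observation that $\mathbb{P}_{q+1,k}$ localizes to $\{\frac{7}{8}\lambda_{q+1}\leq|\xi|\leq\frac{9}{8}\lambda_{q+1}\}$ and $\supp\hat{y}_l\subset B(0,2\lambda_q)$.

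The derivative bound \eqref{est 19b} is where your proposal has a genuine gap. You correctly identify that $\rho_j$ is a constant, so the Fa\`{a} di Bruno expansion produces products of terms $\rho_j^{-1}D^{n_i}\mathring{R}_{q,j}$, and that the transport lemma plus Bernstein transfer the $\lambda_q^{n_i}$ derivative count from $\mathring{R}_l(\tau_{q+1}j)$. The missing step is the cancellation
\[
\frac{\lVert\mathring{R}_l(\tau_{q+1}j)\rVert_{C_x}}{\rho_j}\;\leq\;\varepsilon_\gamma,
\]
which holds by the very construction of $\rho_j$ in \eqref{est 0a}: since $\gamma_l(\tau_{q+1}j)\geq 0$ by \eqref{est 27} and $\mathring{R}_{q,j}(\tau_{q+1}j)=\mathring{R}_l(\tau_{q+1}j)$ by \eqref{est 16}, one has $\rho_j\geq\varepsilon_\gamma^{-1}\lVert\mathring{R}_l(\tau_{q+1}j)\rVert_{C_x}$. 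This bound makes each Fa\`{a} di Bruno factor $\rho_j^{-1}D^{n_i}\mathring{R}_{q,j}\lesssim\lambda_q^{n_i}$ with an $O(1)$ constant, after which the prefactor $\lambda_{q+1}^{-1/2}\rho_j^{1/2}\lesssim\delta_{q+1}^{1/2}\bar{e}^{1/2}$ closes the estimate. What you proposed instead is to combine the absolute bound $\lVert\mathring{R}_l\rVert_{C_{t,x,q+1}}\lesssim\lambda_{q+1}\delta_{q+1}\bar{e}$ with that prefactor; setting aside that \eqref{inductive 2b} actually gives the smaller bound $\lambda_{q+2}\delta_{q+2}e(t)$, this route leaves an uncontrolled factor $\rho_j^{-1/2}$: with your stated absolute bound you would need $\rho_j\gtrsim\lambda_{q+1}\delta_{q+1}\bar{e}$, which is the \emph{upper} end of the range and false in general, since $\rho_j$ can be as small as $\varepsilon_\gamma^{-1}l=\varepsilon_\gamma^{-1}\lambda_{q+1}^{-\alpha}$. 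The whole point of the $\rho_j$ normalization is precisely to make $\mathring{R}_{q,j}/\rho_j$ land in $B(\Id,\varepsilon_\gamma)$, so $D^N(\mathring{R}_{q,j}/\rho_j)$ carries a $\lambda_q^N$ derivative count and an $O(1)$ amplitude, and that step should be made explicit.
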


\begin{proof}[Proof of Proposition \ref{prop additive amplitudes}]
We leave this proof in the Appendix Section \ref{Section 7.2} for completeness. 
\end{proof}

\begin{proposition}\label{prop additive perturbation}
Define $D_{t,q}$ by \eqref{est 56}. Then $w_{q+1}$ defined by \eqref{est 14a} satisfies the following bounds over $[t_{q+1}, \mathfrak{t}]$: for $C_{1}$ from \eqref{est 8},  
\begin{subequations}\label{est 11} 
\begin{align}
& \lVert w_{q+1}(t) \rVert_{C_{x}} \leq \frac{C_{1}}{\sqrt{2} \pi} \bar{e}^{\frac{1}{2}} \delta_{q+1}^{\frac{1}{2}} \sup_{k \in \Gamma_{1} \cup \Gamma_{2}} \lVert \gamma_{k} \rVert_{C(B(\Id, \varepsilon_\gamma))},  \label{est 11a}\\
& \lVert D_{t,q} w_{q+1} (t) \rVert_{C_{x}} \lesssim \delta_{q+1}^{\frac{1}{2}} \bar{e}^{\frac{1}{2}} \tau_{q+1}^{-1}\label{est 11b}. 
\end{align}
\end{subequations}
Consequently, the hypotheses \eqref{inductive 1b}, \eqref{inductive 1c}, and \eqref{inductive 1d} at level $q+1$, as well as the Cauchy difference \eqref{cauchy difference} all hold.
\end{proposition}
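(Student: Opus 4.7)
The proof splits into two main pieces: establishing the bounds \eqref{est 11a}-\eqref{est 11b} on the perturbation and its material derivative, and then deducing the four consequences \eqref{cauchy difference}, \eqref{inductive 1b}, \eqref{inductive 1c}, \eqref{inductive 1d} from them.

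For \eqref{est 11a}, the plan is to start from the decomposition $w_{q+1} = \sum_{j,k} \mathbb{P}_{q+1,k}\tilde w_{q+1,j,k}$ in \eqref{est 67a}, pass $\|\cdot\|_{C_x}$ through the finite sum, and apply the operator norm bound $\|\mathbb{P}_{q+1,k}\|_{C_x\to C_x}\leq C_1$ from \eqref{est 8}. Since $|b_k(\lambda_{q+1}\Phi_j)| = |ik^{\perp}| = 1$ for $k\in\mathbb{S}^1$ and $|\chi_j|\leq 1$, each summand is bounded by $C_1\|a_{k,j}\|_{C_x}$, to which \eqref{est 10} applies. The combinatorial input is that at any fixed $t$ only a universally bounded number of $\chi_j$ are nontrivial (by \eqref{est 18}) and $|\Gamma_j|$ is a universal constant from Lemma \ref{geometric lemma}; collecting these factors and invoking the definition \eqref{M0} of $M_0$ gives the stated constant in terms of $\sup_{k}\|\gamma_k\|$.

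For \eqref{est 11b}, the cornerstone observation is two cancellations: by \eqref{def Phi j}, $D_{t,q}b_k(\lambda_{q+1}\Phi_j) = \lambda_{q+1}(D_{t,q}\Phi_j)\cdot\nabla b_k = 0$; and by \eqref{est 16} combined with the definition \eqref{est 3} (noting $\rho_j$ is constant in $t$ by \eqref{est 0a}), $D_{t,q}a_{k,j} = 0$. Consequently
\begin{equation*}
D_{t,q}w_{q+1} = \sum_{j,k}\bigl[(\partial_t\chi_j)\,\mathbb{P}_{q+1,k}(a_{k,j}b_k(\lambda_{q+1}\Phi_j)) + \chi_j\,[D_{t,q},\mathbb{P}_{q+1,k}](a_{k,j}b_k(\lambda_{q+1}\Phi_j))\bigr].
\end{equation*}
The first sum is estimated as in \eqref{est 11a} using $|\partial_t\chi_j|\lesssim\tau_{q+1}^{-1}$. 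For the commutator $[(\Lambda y_l + \Lambda z_l)\cdot\nabla,\mathbb{P}_{q+1,k}]$, the kernel bound \eqref{new 8} yields a gain of $\lambda_{q+1}^{-1}$ paired with $\|D(\Lambda y_l+\Lambda z_l)\|_{C_x}$, which combines with the Bernstein-type loss $\lambda_{q+1}\|a_{k,j}\|_{C_x}$ of $\nabla(a_{k,j}b_k(\lambda_{q+1}\Phi_j))$ to give $\lesssim \|D(\Lambda y_l+\Lambda z_l)\|_{C_x}\delta_{q+1}^{1/2}\bar e^{1/2}$, which \eqref{est 17} dominates by $\tau_{q+1}^{-1}\delta_{q+1}^{1/2}\bar e^{1/2}$.

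For the consequences: \eqref{cauchy difference} follows from writing $y_{q+1}-y_q = w_{q+1} + (y_l-y_q)$ and combining \eqref{est 11a} with \eqref{est 12}, where the mollification error is rendered negligible by \eqref{additive bound b} and \eqref{bound alpha}. The energy bound \eqref{inductive 1b} then follows by telescoping $y_{q+1} = y_0 + \sum_{i=0}^q(y_{i+1}-y_i)$ using \eqref{cauchy difference}. For \eqref{inductive 1c} I would split $y_{q+1} = y_l + w_{q+1}$: the $y_l$ piece is controlled by \eqref{inductive 1c} at level $q$ applied to $y_q$, while for $w_{q+1}$ the frequency localization \eqref{support wqplus1} lets Bernstein's inequality give $\|\nabla w_{q+1}\|_{C_x} + \|\Lambda w_{q+1}\|_{C_x}\lesssim\lambda_{q+1}\|w_{q+1}\|_{C_x}$, dominating the $y_l$ contribution since $\lambda_q\delta_q^{1/2}\ll\lambda_{q+1}\delta_{q+1}^{1/2}$ under \eqref{bound beta} and \eqref{additive bound b}.

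The principal obstacle is \eqref{inductive 1d}. The plan is to rewrite
\begin{equation*}
D_{t,q+1}y_{q+1} = D_{t,q}y_l + D_{t,q}w_{q+1} + \bigl[\Lambda(y_{q+1}\ast_x\phi_{l_{q+2}}\ast_t\varphi_{l_{q+2}}) - \Lambda y_l + \text{analogue for }z\bigr]\cdot\nabla y_{q+1},
\end{equation*}
control $D_{t,q}w_{q+1}$ directly via \eqref{est 11b} (verifying that the choice \eqref{tau} of $\tau_{q+1}$ gives $\tau_{q+1}^{-1}\delta_{q+1}^{1/2}\lesssim M_0\lambda_{q+1}^2\delta_{q+1}\bar e$ under \eqref{additive bound b} and \eqref{bound alpha}), bound $D_{t,q}y_l$ by mollifying the equation \eqref{new4} and using \eqref{inductive 1d} at level $q$ together with standard commutator estimates for mollification, and finally absorb the difference of transport velocities using the frequency localization of $\nabla y_{q+1}$ paired with mollifier-difference bounds. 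The delicate part is verifying, via the parameter relations, that all these contributions fit inside the tight target $M_0\lambda_{q+1}^2\delta_{q+1}\bar e$.
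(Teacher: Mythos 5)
Your proposal follows the paper's proof step for step: \eqref{est 11a} by passing the $C_x$-norm through the sum, applying \eqref{est 8} and then \eqref{est 10} with the two-cutoff overlap; the cancellations $D_{t,q}a_{k,j}=0$ and $D_{t,q}b_k(\lambda_{q+1}\Phi_j)=0$ (the paper's \eqref{est 23}) reducing $D_{t,q}w_{q+1}$ to the Fourier commutator $[D_{t,q},\mathbb{P}_{q+1,k}]$ plus a $\partial_t\chi_j$ piece, estimated exactly as in the paper's \eqref{est 13}; and the consequences \eqref{cauchy difference}, \eqref{inductive 1b}, \eqref{inductive 1c} from the split $y_{q+1}=y_l+w_{q+1}$ together with \eqref{est 12} and the frequency localization \eqref{support wqplus1}. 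The compact algebraic identity you propose for $D_{t,q+1}y_{q+1}$ is a condensed form of the paper's expanded decomposition and is algebraically correct.

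The one place the sketch glosses over something essential is the closing of \eqref{inductive 1d}. Writing the transport-velocity difference as
$\Lambda (y_{q+1}\ast_{l_{q+2}}) - \Lambda y_l = [\Lambda y_l\ast_{l_{q+2}} - \Lambda y_l] + \Lambda w_{q+1}\ast_{l_{q+2}}$,
only the first bracket is a mollifier-difference; $\Lambda w_{q+1}\ast_{l_{q+2}}$ is not small, and after splitting $\nabla y_{q+1}=\nabla y_l+\nabla w_{q+1}$ it produces the quadratic term $\Lambda w_{q+1}\ast_{l_{q+2}}\cdot\nabla w_{q+1}$ of size $4\lambda_{q+1}^2\lVert w_{q+1}\rVert_{C_x}^2\approx \frac{2C_1^2}{\pi^2}\big(\sup_k\lVert\gamma_k\rVert\big)^2\lambda_{q+1}^2\delta_{q+1}\bar e$. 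This is of the \emph{same order} as the target $M_0\lambda_{q+1}^2\delta_{q+1}\bar e$; it cannot be made small by choosing $\alpha,\beta,a,b$ or by mollifier estimates. It fits only because $M_0$ was chosen in \eqref{M0} so that $M_0\geq \frac{16C_1^2}{\pi^2}(\sup_k\lVert\gamma_k\rVert)^2$, leaving room for the genuinely small terms. Your sketch invokes \eqref{M0} in the discussion of \eqref{est 11a}, where $M_0$ does not actually appear, and omits it from the place where it is load-bearing. Once that is corrected, the rest (bounding $D_{t,q}y_l$ via $(D_{t,q}y_q)\ast_l$ plus a mollification commutator, and $D_{t,q}w_{q+1}$ via \eqref{est 11b} with the parameter check $\tau_{q+1}^{-1}\delta_{q+1}^{1/2}\bar e^{1/2}\ll\lambda_{q+1}^2\delta_{q+1}\bar e$) closes as claimed.
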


\begin{proof}[Proof of Proposition \ref{prop additive perturbation}]
First, we verify \eqref{est 11a} starting from  \eqref{est 14a} using the fact that for all $t \in [t_{q+1}, \mathfrak{t}]$ fixed, there exist at most two non-trivial cutoffs, 
\begin{align*}
\lVert w_{q+1} (t) \rVert_{C_{x}} \overset{\eqref{est 8}}{\leq}& C_{1} \sum_{j,k} 1_{\supp \chi_{j}}(t) \lVert a_{k,j} (t) \rVert_{C_{x}}   \overset{\eqref{est 10}}{\leq}  \frac{C_{1}}{\sqrt{2} \pi} \bar{e}^{\frac{1}{2}} \delta_{q+1}^{\frac{1}{2}} \sup_{k \in \Gamma_{1} \cup \Gamma_{2}} \lVert \gamma_{k} \rVert_{C(B(\Id, \varepsilon_\gamma))}. 
\end{align*}
It follows that for $\beta > \frac{1}{2}$ sufficiently close to $\frac{1}{2}$ and $a_{0}$ sufficiently large
\begin{align}
\lVert y_{q+1}(t) - y_{q}(t) \rVert_{C_{x}} \overset{\eqref{est 44}\eqref{est 11a} \eqref{est 12} \eqref{M0}}{\leq} \frac{1}{2\sqrt{2}} M_{0}^{\frac{1}{2}} \bar{e}^{\frac{1}{2}} \delta_{q+1}^{\frac{1}{2}} + C l \lambda_{q}^{2} \delta_{q} \bar{e}  \overset{\eqref{additive bound b}}{\leq} M_{0}^{\frac{1}{2}} \bar{e}^{\frac{1}{2}} \delta_{q+1}^{\frac{1}{2}};  \label{weaker additive M0}
\end{align}
thus, we have verified \eqref{cauchy difference}. Next, using $M_{0}^{\frac{1}{2}} \geq \frac{4C_{1}}{\pi} \sup_{k \in \Gamma_{1} \cup \Gamma_{2}} \lVert \gamma_{k} \rVert_{C(B(\Id, \varepsilon_\gamma ))}$ from \eqref{M0}, we estimate 
\begin{equation*}
\lVert y_{q+1} \rVert_{C_{t,x, q+1}} \overset{\eqref{est 44}}{\leq}  \lVert y_{l} \rVert_{C_{t,x,q+1}} + \lVert w_{q+1} \rVert_{C_{t,x,q+1}}  \overset{\eqref{inductive 1b} \eqref{est 11a} \eqref{M0}}{\leq}  M_{0}^{\frac{1}{2}} \left( 1+ \sum_{1 \leq j \leq q+1} \delta_{j}^{\frac{1}{2}} \right) \bar{e}^{\frac{1}{2}} 
\end{equation*} 
verifying \eqref{inductive 1b} at level $q+1$. Then we deduce \eqref{inductive 1c} at level $q+1$ as follows: for $\beta > \frac{1}{2}$ sufficiently close to $\frac{1}{2}$ and $a_{0}$ sufficiently large 
\begin{align}
&\lVert y_{q+1} \rVert_{C_{t,q+1} C_{x}^{1}} + \lVert \Lambda y_{q+1} \rVert_{C_{t,x,q+1}} 
\overset{\eqref{est 44} \eqref{support wqplus1} \eqref{inductive 1c}}{\leq} 2(2 \lambda_{q+1} ) \lVert w_{q+1} \rVert_{C_{t,x,q+1}}  + CM_{0}^{\frac{1}{2}} \lambda_{q} \delta_{q}^{\frac{1}{2}} \bar{e}^{\frac{1}{2}} \nonumber \\
& \hspace{10mm}  \overset{\eqref{est 11a} \eqref{M0}}{\leq}  \frac{M_{0}^{\frac{1}{2}}}{\sqrt{2}} \lambda_{q+1} \delta_{q+1}^{\frac{1}{2}} \bar{e}^{\frac{1}{2}} + C M_{0}^{\frac{1}{2}} \lambda_{q} \delta_{q}^{\frac{1}{2}} \bar{e}^{\frac{1}{2}} \overset{\eqref{bound beta}}{\leq}  M_{0}^{\frac{1}{2}} \lambda_{q+1} \delta_{q+1}^{\frac{1}{2}} \bar{e}^{\frac{1}{2}}. 
\end{align}
Next, we can find directly from \eqref{est 56}, \eqref{est 3}, \eqref{est 66}, and \eqref{def Phi j}-\eqref{est 16} that
\begin{equation}\label{est 23} 
D_{t,q}a_{k,j} = 0 \hspace{1mm} \text{ and } \hspace{1mm} D_{t,q}b_k(\lambda_{q+1}\Phi_{j}(t,x)) = 0.
\end{equation} 
Thus, using \eqref{est 67a}, \eqref{est 14b}, \eqref{est 56}, and \eqref{est 23}, we can compute simplify $D_{t,q} w_{q+1}$ as 
\begin{equation}\label{new 9}
D_{t,q} w_{q+1} = \sum_{j,k} [ D_{t,q}, \mathbb{P}_{q+1, k} ] \tilde{w}_{q+1, j, k} + \mathbb{P}_{q+1, k} [ (\partial_{t} \chi_{j}) a_{k,j} b_{k} (\lambda_{q+1} \Phi_{j} ) ]. 
\end{equation}
Therefore, we now estimate, with the use of \eqref{new 8} and \cite[Corollary A.8]{BSV19},
\begin{align}
& \lVert D_{t,q} w_{q+1} (t) \rVert_{C_{x}}\label{est 13} \\
\lesssim& \sum_{j,k} ( \lambda_{q} \lVert \Lambda y_{l} (t) \rVert_{C_{x}} + \lVert \nabla \Lambda z_{l} (t) \rVert_{C_{x}}) 1_{\supp \chi_{j}} (t) \lVert a_{k,j} (t) \rVert_{C_{x}} + 1_{\supp \chi_{j}} (t) \tau_{q+1}^{-1} \lVert a_{k,j} (t) \rVert_{C_{x}} \nonumber\\
\lesssim&\bar{e}^{\frac{1}{2}}  \sum_{j,k} 1_{\supp \chi_{j}} (t)  [ M_{0} \lambda_{q}^{2} \delta_{q}^{\frac{1}{2}}  + \lambda_{q} \lVert z_{q} \rVert_{C_{t,q+1} H_{x}^{2+ \sigma}} ]\delta_{q+1}^{\frac{1}{2}} \bar{e}^{\frac{1}{2}} + \tau_{q+1}^{-1} \delta_{q+1}^{\frac{1}{2}} \bar{e}^{\frac{1}{2}} \overset{ \eqref{est 17}}{\lesssim} \delta_{q+1}^{\frac{1}{2}} \bar{e}^{\frac{1}{2}} \tau_{q+1}^{-1}, \nonumber
\end{align}
showing \eqref{est 11b}.
Now identically to \cite[p. 24]{Y23a} we have 
\begin{align*}
D_{t, q+1} &y_{q+1} 
= D_{t,q} w_{q+1} + (\Lambda y_{l} \ast_{t} \phi_{\lambda_{q+2}^{-\alpha}} \ast_{t} \varphi_{\lambda_{q+2}^{-\alpha}} - \Lambda y_{l}) \cdot \nabla w_{q+1} \\
&+ (\Lambda z_{q+1} \ast_{x} \phi_{\lambda_{q+2}^{-\alpha}} \ast_{t} \varphi_{\lambda_{q+2}^{-\alpha}} - \Lambda z_{l}) \cdot \nabla w_{q+1}\nonumber  \\
&+ \Lambda  w_{q+1} \ast_{x} \phi_{\lambda_{q+2}^{-\alpha}} \ast_{t} \varphi_{\lambda_{q+2}^{-\alpha}} \cdot \nabla w_{q+1} + \Lambda w_{q+1} \ast_{x} \phi_{\lambda_{q+2}^{-\alpha}} \ast_{t} \varphi_{\lambda_{q+2}^{-\alpha}} \cdot \nabla y_{l}\nonumber  \\
&+ [D_{t,q} y_{q}]\ast_{x} \phi_{\lambda_{q+1}^{-\alpha}} \ast_{t} \varphi_{\lambda_{q+1}^{-\alpha}} + \Lambda y_{l} \ast_{x} \phi_{\lambda_{q+2}^{-\alpha}} \ast_{t} \varphi_{\lambda_{q+2}^{-\alpha}} \cdot \nabla y_{l}\nonumber \\
& - [\Lambda y_{q} \ast_{x} \phi_{\lambda_{q+1}^{-\alpha}} \ast_{t} \varphi_{\lambda_{q+1}^{-\alpha}} \cdot \nabla y_{q} ] \ast_{x} \phi_{\lambda_{q+1}^{-\alpha}} \ast_{t} \varphi_{\lambda_{q+1}^{-\alpha}}\nonumber  \\
&+ \Lambda z_{q+1} \ast_{x} \phi_{\lambda_{q+2}^{-\alpha}} \ast_{t} \varphi_{\lambda_{q+2}^{-\alpha}} \cdot \nabla y_{l} - [\Lambda z_{q} \ast_{x} \phi_{\lambda_{q+1}^{-\alpha}} \ast_{t} \varphi_{\lambda_{q+1}^{-\alpha}} \cdot \nabla y_{q} ] \ast_{x} \phi_{\lambda_{q+1}^{-\alpha}} \ast_{t} \varphi_{\lambda_{q+1}^{-\alpha}} \nonumber 
\end{align*}
which we can estimate 
\begin{align}\label{additive M0}
&\lVert D_{t, q+1} y_{q+1} \rVert_{C_{t,x,q+1}} \nonumber\\ 
&\leq \lVert D_{t,q} w_{q+1} \rVert_{C_{t,x,q+1}} + 2\lVert \Lambda  y_{l} \rVert_{C_{t,x,q+1}} \lVert \nabla w_{q+1} \rVert_{C_{t,x,q+1}} + 2\lVert \Lambda  z \rVert_{C_{t,x,q+1}} \lVert \nabla w_{q+1} \rVert_{C_{t,x,q+1}} \nonumber\\
&+ \lVert\Lambda  w_{q+1} \rVert_{C_{t,x,q+1}} \lVert \nabla w_{q+1} \rVert_{C_{t,x,q+1}} + \lVert \Lambda  w_{q+1} \rVert_{C_{t,x,q+1}} \lVert \nabla y_{q} \rVert_{C_{t,x,q+1}} + \lVert D_{t,q} y_{q} \rVert_{C_{t,x,q+1}}  \nonumber\\
&+ 2\lVert \Lambda  y_{q} \rVert_{C_{t,x,q+1}} \lVert \nabla y_{q} \rVert_{C_{t,x,q+1}} + 2\lVert \Lambda  z \rVert_{C_{t,x,q+1}} \lVert \nabla y_{q} \rVert_{C_{t,x,q+1}} \nonumber \\
&\overset{\eqref{additive stopping time} \eqref{support wqplus1} }{\leq} 4 \lambda_{q+1}^{2} \lVert w_{q+1} \rVert_{C_{t,x,q+1}}^{2}  \nonumber \\
&+ C (\lVert D_{t,q} w_{q+1} \rVert_{C_{t,x,q+1}} + \lVert \Lambda y_{q} \rVert_{C_{t,x,q+1}} \lambda_{q+1} \lVert w_{q+1} \rVert_{C_{t,x,q+1}} + \lambda_{q+1} \lVert w_{q+1} \rVert_{C_{t,x,q+1}} \nonumber\\
& \hspace{5mm} + \lambda_{q+1} \lVert w_{q+1} \rVert_{C_{t,x,q+1}} \lVert \nabla y_{q} \rVert_{C_{t,x,q+1}} + \lVert D_{t,q} y_{q} \rVert_{C_{t,x,q+1}}  \nonumber \\
& \hspace{5mm} + \lVert \Lambda y_{q} \rVert_{C_{t,x,q+1}} \lVert \nabla y_{q} \rVert_{C_{t,x,q+1}} + \lVert \nabla y_{q} \rVert_{C_{t,x,q+1}}) \overset{\eqref{est 13} \eqref{M0}}{\leq} M_{0} \lambda_{q+1}^{2} \delta_{q+1} \bar{e}.
\end{align}
Hence, \eqref{inductive 1d} is satisfied at level $q+1$, concluding the proof of Proposition \ref{prop additive perturbation}.
\end{proof}

\subsubsection{Stress Decomposition Estimates}
It will first be useful to determine some bounds on the functions $\psi_{q+1, j, k}$ from \eqref{est 14c}.
\begin{proposition}\label{prop additive psi}
Define $\psi_{q+1, j,k}$ by \eqref{est 14c}. Then it satisfies the following bounds: for all $t \in \supp \chi_{j}$ and all $N\in\mathbb{N}$, 
\begin{subequations}\label{psi}
\begin{align}
&  \lVert \psi_{q+1, j, k} (t) \rVert_{C_{x}} = 1, \hspace{18mm}  \lVert D^{N} \psi_{q+1, j,k} (t) \rVert_{C_{x}} \lesssim \lambda_{q+1} \tau_{q+1} \lambda_{q}^{N+1} \delta_{q}^{\frac{1}{2}} \bar{e}^{\frac{1}{2}} M_{0}^{\frac{1}{2}}, \label{psi 1} \\
&  \lVert D (a_{k,j} \psi_{q+1, j,k} ) (t) \rVert_{C_{x}} \lesssim  \lambda_{q} \delta_{q+1}^{\frac{1}{2}} \bar{e}^{\frac{1}{2}}  \hspace{7mm}  \lVert D^{2} (a_{k,j} \psi_{q+1, j, k} ) (t) \rVert_{C_{x}}  \lesssim \lambda_{q}^{2} \delta_{q+1}^{\frac{1}{2}} \bar{e}^{\frac{1}{2}}.\label{psi 2} 
\end{align}
\end{subequations} 
\end{proposition}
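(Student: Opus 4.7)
The plan is to handle the four bounds in increasing order of difficulty, with the spatial dependence of $\psi_{q+1,j,k}$ entirely coming through the flow map $\Phi_j$, so that the transport equation \eqref{def Phi j} together with the inductive hypothesis \eqref{inductive 1c} supplies all the necessary control. The bound $\lVert\psi_{q+1,j,k}\rVert_{C_x}=1$ is immediate from the fact that $\psi_{q+1,j,k}=e^{i\lambda_{q+1}(\Phi_j-x)\cdot k}$ and the exponent is real-valued. The remaining three estimates rest on a single ingredient: good bounds on the spatial derivatives of $\Phi_j-x$ on $\supp\chi_j$.

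For the higher derivative bound on $\psi_{q+1,j,k}$, I would set $\varphi\triangleq \lambda_{q+1}(\Phi_j-x)\cdot k$ and expand $D^N\psi_{q+1,j,k} = D^N e^{i\varphi}$ via the Faà di Bruno formula, producing a sum of terms of the form $(i)^m(D^{n_1}\varphi)\cdots(D^{n_m}\varphi)\psi_{q+1,j,k}$ with $n_1+\cdots+n_m=N$. Differentiating \eqref{def Phi j} and applying the transport/flow lemma from the appendix (iterated for higher derivatives), together with the Bernstein inequality and the fact that $\supp\hat y_l\subset B(0,2\lambda_q)$, one obtains, for $t\in\supp\chi_j$,
\begin{equation*}
\lVert D^{n}(\Phi_j - x)\rVert_{C_x}\lesssim \tau_{q+1}\lVert D^{n}(\Lambda y_l+\Lambda z_l)\rVert_{C_{t,x,q+1}}\lesssim \tau_{q+1}M_0^{1/2}\lambda_q^{n+1}\delta_q^{1/2}\bar e^{1/2},
\end{equation*}
where the second inequality uses \eqref{inductive 1c} for the $y_l$ part and \eqref{additive stopping time} together with Sobolev embedding for the $z_l$ part. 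Multiplying out, the Faà di Bruno terms with $m\geq 2$ carry extra factors of $\lambda_{q+1}\tau_{q+1}\lambda_q^2\delta_q^{1/2}\bar e^{1/2}M_0^{1/2}$ that are $\ll 1$ by \eqref{est 17}, so they are absorbed into the leading $m=1$ term, giving precisely $\lVert D^N\psi_{q+1,j,k}\rVert_{C_x}\lesssim \lambda_{q+1}\tau_{q+1}\lambda_q^{N+1}\delta_q^{1/2}\bar e^{1/2}M_0^{1/2}$.

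For the last two bounds, I would apply the Leibniz rule to $a_{k,j}\psi_{q+1,j,k}$. For the first-order estimate,
\begin{equation*}
\lVert D(a_{k,j}\psi_{q+1,j,k})\rVert_{C_x}\leq \lVert D a_{k,j}\rVert_{C_x}\lVert\psi_{q+1,j,k}\rVert_{C_x}+\lVert a_{k,j}\rVert_{C_x}\lVert D\psi_{q+1,j,k}\rVert_{C_x}
\end{equation*}
and analogously for $D^2$ with a binomial expansion. The leading term $\lVert D^Na_{k,j}\rVert$ is immediately bounded by \eqref{est 19b}, yielding the desired $\lambda_q^N\delta_{q+1}^{1/2}\bar e^{1/2}$. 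For every cross term of the form $\lVert D^{N-n}a_{k,j}\rVert\lVert D^n\psi_{q+1,j,k}\rVert$ with $n\geq 1$, the ratio against the target bound is a power of $\lambda_{q+1}\tau_{q+1}\lambda_q\delta_q^{1/2}\bar e^{1/2}M_0^{1/2}$, which one verifies is $\lesssim 1$: the exponent in $a$ of this quantity is $(\beta-\tfrac12)b^{q+2}+(\tfrac12-\tfrac{\alpha+\beta}{2})b^{q+1}+(1-\beta)b^q+O(b)$, which is negative for $\beta$ taken sufficiently close to $\tfrac12$ relative to the prescribed $b$ from \eqref{additive bound b}, with $a_0$ enlarged to absorb the implicit constants.

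The main obstacle I anticipate is precisely the bookkeeping in the last step: unlike the bare smallness \eqref{est 17}, one needs the strengthened smallness $\lambda_{q+1}\tau_{q+1}\lambda_q\delta_q^{1/2}\lesssim 1$, which costs an extra factor $\lambda_{q+1}/\lambda_q=\lambda_q^{b-1}$ compared to \eqref{est 17}. This does go through, but only because \eqref{additive bound b} was chosen large enough for the gain $(\tfrac12-\tfrac{\alpha}{2})$ from $\tau_{q+1}$ to beat $b-1$, after which one refines the closeness of $\beta$ to $\tfrac12$ to ensure that the positive $(\beta-\tfrac12)b^{q+2}$ growth from $\tau_{q+1}$ is still dominated. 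Once this parameter verification is carried out, the rest of the proof reduces to routine Leibniz and Faà di Bruno manipulations.
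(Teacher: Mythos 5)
Your proposal is correct and follows essentially the same route as the paper's proof. The Faà di Bruno expansion of $e^{i\lambda_{q+1}(\Phi_j-x)\cdot k}$, keeping only the two extreme terms, is exactly what \cite[Equation (130)]{BDIS15} packages and what the paper cites in \eqref{est 78}--\eqref{est 80}; the flow estimates \eqref{est 15a}--\eqref{est 15b} supply your bounds on $D^n(\Phi_j-x)$, and your identification of the strengthened smallness $\lambda_{q+1}\tau_{q+1}\lambda_q\delta_q^{1/2}\ll 1$ (an extra $\lambda_q^{b-1}$ beyond \eqref{est 17}) as the crux matches the paper's \eqref{est 81} verbatim, including the role of \eqref{additive bound b} and of taking $\beta$ close to $\tfrac12$ after $b$ is fixed.
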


\begin{proof}[Proof of Proposition \ref{prop additive psi}]
We leave this proof in the Appendix Section \ref{Section 7.2} for completeness. 
\end{proof}
Now, we may individually estimate the six components of the Reynolds stress $\mathring{R}_{q+1}$ in \eqref{additive decomposition}. First, we bound the transport error component.
 
\begin{proposition}\label{prop RT}
$R_{T}$ from \eqref{est 22a} satisfies for $\beta > \frac{1}{2}$ sufficiently close to $\frac{1}{2}$ and $a_{0}$ sufficiently large 
\begin{equation*}
\lVert R_{T} \rVert_{C_{t,x,q+1}} \ll \lambda_{q+3} \delta_{q+3}. 
\end{equation*}
\end{proposition}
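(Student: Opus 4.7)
The plan is to recognize $R_T = \mathcal{B}(D_{t,q} w_{q+1})$ directly from \eqref{est 22a}, so the task reduces to estimating $\mathcal{B}$ applied to the expression \eqref{new 9} obtained during the proof of Proposition \ref{prop additive perturbation}, namely
\[
D_{t,q} w_{q+1} = \sum_{j,k} [D_{t,q}, \mathbb{P}_{q+1,k}] \tilde{w}_{q+1,j,k} + \sum_{j,k} \mathbb{P}_{q+1,k}\bigl[(\partial_{t} \chi_{j}) a_{k,j} b_{k}(\lambda_{q+1} \Phi_{j})\bigr].
\]
The gain of a derivative through $\mathcal{B}$ will be crucial here: every summand is, after the outermost $\mathbb{P}_{q+1,k}$ or after rewriting via the kernel of $\mathbb{P}_{q+1,k}$, essentially Fourier-localized at frequency $\approx \lambda_{q+1}$, so the inverse divergence $\mathcal{B}$ produces a factor $\lambda_{q+1}^{-1}$ via \eqref{new 8} and \eqref{est 92b}.

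First I would handle the second sum, which is the easier piece. Since $\mathbb{P}_{q+1,k}$ is spatially frequency-localized in the annulus $\{\frac{7}{8}\lambda_{q+1}\leq |\xi|\leq \frac{9}{8}\lambda_{q+1}\}$, the composition $\mathcal{B}\mathbb{P}_{q+1,k}$ acts as an order $-1$ Fourier multiplier; combining this with \eqref{est 10}, the boundedness of $\chi$, and $\lVert \partial_t\chi_j\rVert_{C_t}\lesssim \tau_{q+1}^{-1}$ yields
\[
\Bigl\lVert \mathcal{B}\mathbb{P}_{q+1,k}\bigl[(\partial_{t} \chi_{j}) a_{k,j} b_{k}(\lambda_{q+1} \Phi_{j})\bigr]\Bigr\rVert_{C_x}\lesssim \lambda_{q+1}^{-1}\tau_{q+1}^{-1}\delta_{q+1}^{\frac{1}{2}}\bar{e}^{\frac{1}{2}}.
\]

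Next I would treat the commutator terms. Because $\partial_t$ commutes with the spatial Fourier multiplier $\mathbb{P}_{q+1,k}$, one has $[D_{t,q},\mathbb{P}_{q+1,k}] = [(\Lambda y_{l}+\Lambda z_{l})\cdot\nabla,\mathbb{P}_{q+1,k}]$. Representing this commutator through the kernel $K_{q+1,k}$ and a first-order Taylor expansion of the divergence-free velocity $\Lambda y_{l} + \Lambda z_{l}$, combined with the kernel moment bounds \eqref{new 8} (which provides the crucial $|y|$-factor producing $\lambda_{q+1}^{-1}$), then applying $\mathcal{B}$ and using \eqref{est 92b}, \eqref{psi}, and the bounds on $\nabla\Lambda y_{l}$, $\nabla\Lambda z_{l}$ from \eqref{inductive 1c} and the stopping time \eqref{additive stopping time}, I expect an estimate of the same order $\lambda_{q+1}^{-1}\tau_{q+1}^{-1}\delta_{q+1}^{\frac{1}{2}}\bar{e}^{\frac{1}{2}}$ up to logarithmic losses.

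Finally I would verify the desired inequality $\lVert R_T\rVert_{C_{t,x,q+1}} \ll \lambda_{q+3}\delta_{q+3}$ by inserting the definitions \eqref{define lambda delta}, \eqref{define l}, and \eqref{tau}. A direct calculation shows the bound reduces, after writing $\lambda_{q+1}=\lambda_{q+2}^{1/b}$ and using $\lambda_{q+3}=\lambda_{q+2}^{b}$, to an inequality on exponents of $\lambda_{q+2}$ that as $\beta\downarrow \frac{1}{2}$ becomes $(\alpha-\frac{3}{2})/(2b) < 0$. This is verified by \eqref{bound alpha}, and by first choosing $\beta$ sufficiently close to $\frac{1}{2}$ the inequality holds strictly, and then by \eqref{bound a} with $a_{0}$ taken large enough the implied constants are absorbed. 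The main obstacle I anticipate is the bookkeeping in the commutator term: while the $\lambda_{q+1}^{-1}$ gain from $\mathcal{B}$ is structurally clean in the second sum, the commutator piece forces one to carefully match the gain of regularity from $\mathcal{B}$ against the loss from $\nabla$ landing on the high-frequency factor $b_{k}(\lambda_{q+1}\Phi_{j})$, and it is precisely the condition $\alpha<\frac{3}{2}$ in \eqref{bound alpha} combined with $b$ large per \eqref{additive bound b} that makes the final exponent count favorable.
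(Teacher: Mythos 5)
Your proposal follows essentially the same route as the paper's proof. You correctly reduce $R_T$ to $\mathcal{B}(D_{t,q}w_{q+1})$, reuse the decomposition \eqref{new 9} into a commutator piece $[D_{t,q},\mathbb{P}_{q+1,k}]$ and a cutoff-derivative piece $\mathbb{P}_{q+1,k}[(\partial_t\chi_j)\cdots]$ (which hinges on $D_{t,q}a_{k,j}=0$ from \eqref{est 23}, as you implicitly use), exploit the frequency localization to gain a factor $\lambda_{q+1}^{-1}$ from $\mathcal{B}$, control the commutator via the kernel moment bounds \eqref{new 8}, and finish with the same parameter count showing that $\alpha < \tfrac{3}{2}$ makes the $\lambda_{q+2}$-exponent negative as $\beta\downarrow\tfrac{1}{2}$. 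The one small imprecision is your expectation that the commutator piece lands at the same order $\lambda_{q+1}^{-1}\tau_{q+1}^{-1}\delta_{q+1}^{1/2}$ as the cutoff-derivative piece; in fact the commutator contributes the smaller $\lambda_{q+1}^{-1}\lambda_q^2\delta_q^{1/2}\delta_{q+1}^{1/2}$ (compare \eqref{est 121}), since by \eqref{est 17} one has $\tau_{q+1}\lambda_q^2\delta_q^{1/2}\ll 1$. This over-estimate is harmless for the conclusion, but in the paper the two terms are tracked with distinct exponents rather than collapsed to a single form.
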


\begin{proof}[Proof of Proposition \ref{prop RT}]
We can write via \eqref{est 22a}, \eqref{est 67a}, and \eqref{est 14b} 
\begin{align}
R_{T} =& \sum_{j,k} 1_{\supp \chi_{j}} \mathcal{B} ( [ ( \Lambda y_{l} + \Lambda z_{l}) \cdot \nabla, \mathbb{P}_{q+1, k} ] \tilde{w}_{q+1, j, k} ) \nonumber \\
& \hspace{5mm} + \mathbb{P}_{q+1, k} ( [ \partial_{t} + (\Lambda y_{l} + \Lambda z_{l}) \cdot \nabla] \left( \chi_{j} a_{k,j}  b_{k} (\lambda_{q+1} \Phi_{j} ) \right) \label{new 11}
\end{align}
where \eqref{est 56} and \eqref{est 23} reveal that 
\begin{align}
[ \partial_{t} + (\Lambda y_{l} + \Lambda z_{l} ) \cdot \nabla] \left(\chi_{j} a_{k,j}  b_{k} (\lambda_{q+1} \Phi_{j}) \right)  = \partial_{t} \chi_{j}  a_{k,j}  b_{k} (\lambda_{q+1} \Phi_{j }). \label{new 10}
\end{align}
It is here that the fact $D_{t,q} a_{k,j} =0$ from \eqref{est 23} is essential. Applying \eqref{new 10} to \eqref{new 11} and relying on \eqref{inductive 1a} and \eqref{define f} inform us that $\supp ( \Lambda y_{l} + \Lambda z_{l})\hspace{0.5mm}\hat{}\hspace{1mm} \subset B(0,2 \lambda_{q})$ and 
\begin{align*}
\supp  (\mathbb{P}_{q+1, k} f)\hspace{0.5mm}\hat{}\hspace{1mm} \subset \left\{ \xi: \frac{7}{8} \lambda_{q+1} \leq \lvert \xi \rvert \leq \frac{9}{8} \lambda_{q+1} \right\}, \hspace{1mm} \supp ( \tilde{P}_{\approx \lambda_{q+1}} f)\hspace{0.5mm}\hat{}\hspace{1mm} \subset \left\{ \xi: \frac{\lambda_{q+1}}{4} \leq \lvert \xi \rvert \leq 4 \lambda_{q+1}\right\}
\end{align*}
from Section \ref{Section 2.2}, we can write similarly to \cite[Equation (154)]{Y23a}
\begin{align}
R_{T} =& \sum_{j,k} 1_{\supp \chi_{j}} (t) \mathcal{B} \tilde{P}_{\approx \lambda_{q+1}}  \nonumber \\
& \times \left( [ ( \Lambda y_{l} + \Lambda z_{l}) \cdot \nabla, \mathbb{P}_{q+1, k} ] \tilde{w}_{q+1, j, k} + \mathbb{P}_{q+1, k} ( \partial_{t} \chi_{j} a_{k,j} b_{k} (\lambda_{q+1} \Phi_{j} ))\right). \label{new 12}
\end{align}
This allows us to estimate by \cite[Lemma A.6]{BSV19} for $\beta > \frac{1}{2}$ sufficiently close to $\frac{1}{2}$ and $a_{0}$ sufficiently large 
\begin{align}
& \lVert R_{T} \rVert_{C_{t,x,q+1}} \lesssim \lambda_{q+1}^{-1} [ ( \lVert \nabla \Lambda y_{l} \rVert_{C_{t,x,q+1}} + \lVert \nabla \Lambda z_{l} \rVert_{C_{t,x,q+1}} ) \lVert \tilde{w}_{q+1, j, k} \rVert_{C_{t,x,q+1}} + \tau_{q+1}^{-1} \delta_{q+1}^{\frac{1}{2}} \bar{e}^{\frac{1}{2}} ] \nonumber\\
&\overset{ \eqref{inductive 1a} \eqref{inductive 1c} \eqref{tau} \eqref{additive stopping time} \eqref{define l}}{\lesssim}\lambda_{q+1}^{-1-\beta} \lambda_{q}^{2-\beta} \lambda_{1}^{2\beta -1} \bar{e}  + \lambda_{q+1}^{-\frac{1}{2} + \frac{\alpha}{2} -\frac{\beta}{2}} \lambda_{q+2}^{\frac{1}{2} - \beta} \lambda_{1}^{(2\beta -1) \frac{3}{4}}  \bar{e}^{\frac{1}{2}} \ll  \lambda_{q+3} \delta_{q+3}. \label{est 121} 
\end{align}
\end{proof}

Next, we treat the Nash error.
\begin{proposition}\label{prop RN}
$R_{N}$ from \eqref{est 22b} satisfies for $\beta > \frac{1}{2}$ sufficiently close to $\frac{1}{2}$ and $a_{0}$ sufficiently large 
\begin{equation}\label{est 246a}
\lVert R_{N} \rVert_{C_{t,x,q+1}} \ll \lambda_{q+3} \delta_{q+3}. 
\end{equation}
\end{proposition}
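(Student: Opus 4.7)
The plan is to treat $R_N$ in close parallel with the transport error handled in Proposition \ref{prop RT}. I would rewrite $R_N$ via the decomposition $w_{q+1}=\sum_{j,k}\mathbb{P}_{q+1,k}\tilde w_{q+1,j,k}$ from \eqref{est 67a} and the inverse-divergence operator $\mathcal{B}$. Each of the three terms in \eqref{est 22b} is a genuine low--high interaction: $y_l$ and $z_q$ have Fourier supports inside $B(0,2\lambda_q)$ by \eqref{inductive 1a} and \eqref{define zq}--\eqref{define f}, while $w_{q+1}$ (and hence $\Lambda w_{q+1}$, $\nabla w_{q+1}$) is supported in $\{\lambda_{q+1}/2\le|\xi|\le 2\lambda_{q+1}\}$ by \eqref{support wqplus1}. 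For $a_0$ large enough that $16\lambda_q\le\lambda_{q+1}$, the Fourier support of every product in $\divergence R_N$ lies in $\{3\lambda_{q+1}/8\le|\xi|\le 3\lambda_{q+1}\}$, so the frequency cutoff $\tilde P_{\approx\lambda_{q+1}}$ from Section \ref{Section 2.2} can be inserted without changing the expression. Unlike the case of $R_T$, there is no commutator structure to exploit here; the representation
\[
R_N \;=\; \mathcal{B}\tilde P_{\approx\lambda_{q+1}}\bigl[(\nabla\Lambda(y_l+z_q))^T\cdot w_{q+1} + (\Lambda w_{q+1}\cdot\nabla)y_l - (\nabla y_l)^T\cdot\Lambda w_{q+1}\bigr]
\]
already pairs low and high frequencies, and the required gain comes purely from $\|\mathcal{B}\tilde P_{\approx\lambda_{q+1}}f\|_{C_x}\lesssim \lambda_{q+1}^{-1}\|f\|_{C_x}$ via \cite[Lemma A.6]{BSV19}.

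Next I would estimate each term in $C_{t,x,q+1}$. For the $y_l$ contribution in the first term, Bernstein and \eqref{inductive 1c} yield $\|\nabla\Lambda y_l\|_{C_{t,x,q+1}}\lesssim \lambda_q\|\Lambda y_l\|_{C_{t,x,q+1}}\lesssim M_0^{1/2}\lambda_q^2\delta_q^{1/2}\bar e^{1/2}$; combined with \eqref{est 11a} and the $\lambda_{q+1}^{-1}$ gain, this contributes at most $C\lambda_{q+1}^{-1}\lambda_q^2\delta_q^{1/2}\delta_{q+1}^{1/2}M_0^{1/2}\bar e$. For the $z_q$ contribution, since $z_q$ is bandlimited by \eqref{define zq}--\eqref{define f}, Bernstein gives $\|\nabla\Lambda z_q\|_{C_{t,x,q+1}}\lesssim \lambda_q^2\|z_q\|_{C_{t,x,q+1}}$, and the first inequality of \eqref{est 92a} with the stopping-time bound \eqref{additive stopping time} furnishes $\|z_q\|_{C_{t,x,q+1}}\le C_S\|z\|_{\dot H_x^{1+\sigma}}\lesssim 1$; its contribution is thus of order $\lambda_{q+1}^{-1}\lambda_q^2\delta_{q+1}^{1/2}\bar e^{1/2}$, which dominates the previous one. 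The remaining two terms trade a derivative from $y_l$ onto $w_{q+1}$: Bernstein gives $\|\Lambda w_{q+1}\|_{C_{t,x,q+1}}\lesssim \lambda_{q+1}\delta_{q+1}^{1/2}\bar e^{1/2}$, paired with $\|\nabla y_l\|_{C_{t,x,q+1}}\lesssim M_0^{1/2}\lambda_q\delta_q^{1/2}\bar e^{1/2}$, so after the $\lambda_{q+1}^{-1}$ gain each contributes at most $C\lambda_q\delta_q^{1/2}\delta_{q+1}^{1/2}M_0^{1/2}\bar e$, strictly smaller than the $z_q$ bound.

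Finally, to verify \eqref{est 246a}, I would compare the dominant scale $\lambda_{q+1}^{-1}\lambda_q^2\delta_{q+1}^{1/2}$ with $\lambda_{q+3}\delta_{q+3}$ using \eqref{define lambda delta} and \eqref{bound a}: the exponent of $a$ in the ratio is $b^{q+1}\bigl[(2\beta-1)b^2-(1+\beta)\bigr]$ plus lower-order terms of the form $2b^q$ and $(1/2-\beta)b$. The bracket is strictly negative once $\beta>1/2$ is chosen sufficiently close to $1/2$, because $(2\beta-1)b^2\to 0$ while $1+\beta\to 3/2$; after fixing such a $\beta$, enlarging $a_0$ absorbs the subleading contributions. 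The main obstacle I anticipate is the handling of $\nabla\Lambda z_q$: for small $\sigma$ the stopping-time regularity $\dot H_x^{5/2+2\sigma}$ does not Sobolev-embed into $W^{2,\infty}$ directly, so one must route through the bandlimited structure of $z_q$ via Bernstein rather than attempt to bound $z$ itself in $W^{2,\infty}$.
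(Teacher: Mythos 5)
Your approach is viable but diverges from the paper's in a way that affects both the sharpness of the bounds and the correctness of your internal bookkeeping, though the final conclusion survives.

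The paper does not estimate all three terms of \eqref{est 22b} directly. It splits $R_N = N_1 + N_2$ with $N_1 = \mathcal{B}\bigl((\nabla\Lambda(y_l+z_q))^T\cdot w_{q+1}\bigr)$, but combines the remaining two terms via the identity \eqref{est 120}, $(\Lambda w_{q+1}\cdot\nabla)y_l - (\nabla y_l)^T\cdot\Lambda w_{q+1} = \Lambda w_{q+1}^\perp(\nabla^\perp\cdot y_l)$, and then writes $N_2=\mathcal{B}(\Lambda w_{q+1}^\perp(\nabla^\perp\cdot y_l))$ using the decomposition $w_{q+1}=\sum_{j,k}\mathbb{P}_{q+1,k}(\chi_j a_{k,j}b_k(\lambda_{q+1}\Phi_j))$, $b_k = c_k\cdot ik^\perp$, and $b_k(\lambda_{q+1}\Phi_j)=b_k(\lambda_{q+1}x)\psi_{q+1,j,k}$. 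The point of this maneuver is that it allows the commutator-type estimate \cite[Equation (A.11)]{BSV19} to be applied, which extracts an additional factor of $\lambda_q/\lambda_{q+1}$ beyond the naive $\lambda_{q+1}^{-1}$ gain from $\mathcal{B}\tilde P_{\approx\lambda_{q+1}}$: the paper's $N_2$ bound is of order $\lambda_{q+1}^{-1}\lambda_q^2\delta_q^{1/2}\delta_{q+1}^{1/2}$, while your direct estimate of $(\Lambda w_{q+1}\cdot\nabla)y_l$ and $(\nabla y_l)^T\cdot\Lambda w_{q+1}$ gives only $\lambda_q\delta_q^{1/2}\delta_{q+1}^{1/2}$, a factor $\lambda_{q+1}/\lambda_q$ worse.

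This brings up a concrete error in your comparison of dominant terms. You claim the contribution $\lambda_q\delta_q^{1/2}\delta_{q+1}^{1/2}\bar e$ from the last two terms is \emph{strictly smaller} than your $z_q$ contribution $\lambda_{q+1}^{-1}\lambda_q^2\delta_{q+1}^{1/2}\bar e^{1/2}$, and you then run your exponent calculation against the latter. The ratio is
\begin{equation*}
\frac{\lambda_q\delta_q^{1/2}\delta_{q+1}^{1/2}}{\lambda_{q+1}^{-1}\lambda_q^2\delta_{q+1}^{1/2}}
= \frac{\lambda_{q+1}}{\lambda_q}\,\delta_q^{1/2}
= a^{\,b^q(b-1-\beta)+\frac{b(2\beta-1)}{2}} ,
\end{equation*}
and since $b>6>1+\beta$, the exponent is large and positive: the last two terms are in fact the \emph{dominant} contribution in your scheme, not the $z_q$ term, and your final exponent analysis of $b^{q+1}[(2\beta-1)b^2-(1+\beta)]+\dotsb$ is attached to the wrong quantity. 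Fortunately the proof does not collapse, because $\lambda_q\delta_q^{1/2}\delta_{q+1}^{1/2}/(\lambda_{q+3}\delta_{q+3}) = a^{\,b^q[(1-\beta)-\beta b-(1-2\beta)b^3]}$, whose bracket tends to $(1-b)/2<0$ as $\beta\downarrow 1/2$, so the closure still holds for $\beta-\tfrac12<\frac{b-1}{2(2b^3-b-1)}$; you should replace your displayed exponent analysis with this one. Two smaller points: your Bernstein bound $\lVert\nabla\Lambda z_q\rVert\lesssim\lambda_q^2\lVert z_q\rVert_{C_x}$ is wasteful --- passing through $\lVert z_q\rVert_{\dot H_x^{3+\sigma}}\lesssim\lambda_q^{1/2-\sigma}\lVert z\rVert_{\dot H_x^{5/2+2\sigma}}$ costs only $\lambda_q^{1/2-\sigma}$ and is what the paper implicitly relies on in \eqref{est 243} --- and the paper's sharper $N_2$ estimate is the reason it can afford the relatively loose $\beta<3/4$ in \eqref{bound beta} without further case analysis, whereas your version needs $\beta$ chosen tighter and depending on $b$, which is permitted by the parameter order but should be made explicit.
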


\begin{proof}[Proof of Proposition \ref{prop RN}]
Similarly to \cite[Equations (156)-(157)]{Y23a}, we can further break this component into
\begin{equation}\label{break N}
R_{N} = N_{1} + N_{2},\text{ where }  N_{1} \triangleq \mathcal{B} ( ( \nabla \Lambda (y_{l} + z_{q} ))^{T} \cdot w_{q+1}), \hspace{1mm} N_{2} \triangleq \mathcal{B} ( \Lambda w_{q+1}^{\bot} ( \nabla^{\bot} \cdot y_{l} ) ). 
\end{equation}
Similar reasoning to \eqref{new 12} gives us 
\begin{equation}\label{additive frequency support}
\supp \left( ( \nabla \Lambda (y_{l} + z_{q}))^{T} \cdot w_{q+1} \right)\hspace{1mm}\widehat{}\hspace{1mm} \subset \left\{ \frac{\lambda_{q+1}}{4} \leq \lvert \xi \rvert \leq 4 \lambda_{q+1} \right\}. 
\end{equation}
With these in mind, we are able to compute from \eqref{break N}, for $\beta > \frac{1}{2}$ sufficiently close to $\frac{1}{2}$ and $a_{0}$ sufficiently large, 
\begin{align}
\lVert N_{1} \rVert_{C_{t, x, q+1}} \overset{ \eqref{additive frequency support} \eqref{est 14a}}{\lesssim}& \lambda_{q+1}^{-1} \sum_{j,k} \left( \lVert \nabla \Lambda y_{l} \rVert_{C_{t,x,q+1}} + \lVert \nabla \Lambda z_{q} \rVert_{C_{t,x,q+1}} \right) \lVert 1_{\supp \chi_{j}} a_{k,j} \rVert_{C_{t,x,q+1}}  \nonumber \\
\overset{\eqref{inductive 1a} \eqref{inductive 1c} \eqref{est 10}\eqref{additive stopping time}}{\lesssim}&  \lambda_{q+1}^{-1} \lambda_{q}^{2-\beta} \lambda_{q+1}^{-\beta} \lambda_{1}^{2\beta -1} \bar{e}  \overset{\eqref{additive bound b}}{\ll}  \lambda_{q+3} \delta_{q+3}.  \label{est 243}
\end{align}
Similarly to \cite[Equation (161)]{Y23a}, we can write 
\begin{align*}
N_{2}&(t,x) = \lambda_{q+1}^{-1} \mathcal{B}  \left( \nabla ( \nabla^{\bot} \cdot y_{l} )(t,x) \sum_{j,k} \Lambda \mathbb{P}_{q+1, k} (\chi_{j} (t) a_{k,j} (t,x) c_{k} (\lambda_{q+1} \Phi_{j} (t,x))) \right) \\
&+ \lambda_{q+1}^{-1} \mathcal{B}  \left( ( \nabla^{\bot} \cdot y_{l}) (t,x) \sum_{j,k} \Lambda \mathbb{P}_{q+1, k} ( \chi_{j} (t) \nabla ( a_{k,j} (t,x) \psi_{q+1, j, k} (t,x) ) c_{k} (\lambda_{q+1} x) ) \right)
\end{align*}
and thus we can estimate by \cite[Equation (A.11)]{BSV19} 
\begin{align}
\lVert N_{2} \rVert_{C_{t,x,q+1}} \lesssim& \lambda_{q+1}^{-2} \sum_{j,k} \lVert \nabla ( \nabla^{\bot} \cdot y_{l} ) \rVert_{C_{t,x,q+1}} \lVert \Lambda \mathbb{P}_{q+1, k} \left( \chi_{j} a_{k,j} c_{k} (\lambda_{q+1} \Phi_{j} )  \right) \rVert_{C_{t,x,q+1}} \nonumber \\
&+ \lVert \nabla^{\bot} \cdot y_{l} \rVert_{C_{t,x,q+1}} \lVert \Lambda \mathbb{P}_{q+1, k} ( \chi_{j} \nabla (a_{k,j} \psi_{q+1, j, k}) c_{k} ) \rVert_{C_{t,x,q+1}} \nonumber  \\
\overset{\eqref{inductive 1a} \eqref{inductive 1c} \eqref{est 19} \eqref{psi 2} }{\lesssim}& \lambda_{q+1}^{-1} \lambda_{q}^{2} \delta_{q}^{\frac{1}{2}} \delta_{q+1}^{\frac{1}{2}} \approx   \lambda_{q+1}^{-1-\beta} \lambda_{q}^{2-\beta} \lambda_{1}^{2\beta -1}  \overset{\eqref{additive bound b}}{\ll} \lambda_{q+3} \delta_{q+3} \label{est 244}
\end{align}
for $\beta > \frac{1}{2}$ sufficiently close to $\frac{1}{2}$ and $a_{0}$ sufficiently large. Hence, by applying \eqref{est 243} and \eqref{est 244} to \eqref{break N}, we conclude \eqref{est 246a}.
\end{proof}

Next, we treat the linear error. 
\begin{proposition}\label{prop RL}
$R_{L}$ from \eqref{est 22c} satisfies for $\beta > \frac{1}{2}$ sufficiently close to $\frac{1}{2}$ and $a_{0}$ sufficiently large 
\begin{equation}\label{est 248}
\lVert R_{L} \rVert_{C_{t,x,q+1}} \ll \lambda_{q+3} \delta_{q+3}. 
\end{equation}
\end{proposition}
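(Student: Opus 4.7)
The plan is to decompose $R_L$ as $L_1 + L_2 + L_3$, where $L_1 \triangleq \mathcal{B}(\Lambda^\gamma w_{q+1})$, $L_3 \triangleq \mathcal{B}(\Lambda w_{q+1}^\perp(\nabla^\perp \cdot z_l))$, and the four $z$-only terms in \eqref{est 22c} are regrouped via the algebraic identity
\begin{equation*}
-\Lambda^\gamma z_l + \Lambda^{3/2-2\sigma}z_l + \Lambda^\gamma z_{q+1} - \Lambda^{3/2-2\sigma} z_{q+1} = (\Lambda^\gamma - \Lambda^{3/2-2\sigma})(z_{q+1} - z_l),
\end{equation*}
so that $L_2 \triangleq \mathcal{B}((\Lambda^\gamma - \Lambda^{3/2-2\sigma})(z_{q+1} - z_l))$. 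This regrouping is essential: estimating the four original $z$-terms individually would only yield an $O(1)$ bound from the stopping time, which is not small enough compared with $\lambda_{q+3}\delta_{q+3}$ (a quantity that tends to $0$ in $q$ when $\beta > \frac{1}{2}$).

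For $L_1$, the support property \eqref{support wqplus1} places $w_{q+1}$ in the frequency shell $\{|\xi|\sim \lambda_{q+1}\}$ and $\mathcal{B}\Lambda^\gamma$ is a Fourier multiplier of order $\gamma - 1$. Using a kernel bound analogous to \eqref{new 8} together with \eqref{est 92b} and \eqref{est 11a} gives $\|L_1\|_{C_x} \lesssim \lambda_{q+1}^{\gamma - 1}\|w_{q+1}\|_{C_x} \lesssim \lambda_{q+1}^{\gamma - 1 - \beta}\lambda_1^{2\beta - 1}\bar e^{1/2}$. Comparing with $\lambda_{q+3}\delta_{q+3} = \lambda_1^{2\beta -1}\lambda_{q+3}^{1-2\beta}$, the required inequality $\gamma - 1 - \beta < b^2(1-2\beta)$ holds for $\beta$ sufficiently close to $\frac{1}{2}$ because $\gamma < \frac{3}{2}$.

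For $L_2$, the composite inequality $\|\mathcal{B}f\|_{C_x} \leq C_S \|\mathcal{B}f\|_{\dot H^{1+\sigma}} \leq C_S C_0 \|f\|_{\dot H^\sigma}$ from \eqref{est 92} together with $\gamma + \sigma \leq \frac{3}{2} - \sigma$ (a direct consequence of \eqref{new}) reduces $\|L_2\|_{C_x}$ to a constant times $\|z_{q+1} - z_l\|_{\dot H^{3/2 - \sigma}}$. I decompose
\begin{equation*}
z_{q+1} - z_l = (z_{q+1} - z_q) + (z_q - z_q *_x \phi_l) + (z_q *_x \phi_l - z_l),
\end{equation*}
where the first piece is frequency-supported in the annulus $\{\lambda_q/4 \leq |\xi| \leq \lambda_{q+1}/4\}$ and is bounded by $\lambda_q^{-1-3\sigma}\|z\|_{\dot H^{5/2+2\sigma}}$, the spatial mollification error is $O(l\|z\|_{\dot H^{5/2 - \sigma}})$, and the temporal mollification error is $O(l^{1/2 - 2\delta}\|z\|_{C_t^{1/2 - 2\delta}\dot H^{7/4 + 4\sigma}})$. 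Each piece is controlled by the stopping time \eqref{additive stopping time} and carries a negative power of either $\lambda_q$ or $\lambda_{q+1}^{-\alpha}$, which easily beats $\lambda_{q+3}\delta_{q+3}$ for $a_0$ large.

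For $L_3$, since $\supp\widehat{z_l} \subset B(0,\lambda_q/4)$, the product $\Lambda w_{q+1}^\perp(\nabla^\perp \cdot z_l)$ has Fourier support near $\lambda_{q+1}$; inserting $\tilde P_{\approx \lambda_{q+1}}$ lets $\mathcal{B}$ gain a factor $\lambda_{q+1}^{-1}$ exactly as in Proposition \ref{prop RN}, yielding $\|L_3\|_{C_x} \lesssim \lambda_{q+1}^{-1}\|\Lambda w_{q+1}\|_{C_x}\|\nabla z_l\|_{C_x} \lesssim \|w_{q+1}\|_{C_x}\|z\|_{\dot H^{2+\sigma}} \lesssim \delta_{q+1}^{1/2}\bar e^{1/2}$ via \eqref{est 11a}, \eqref{est 92a}, and \eqref{additive stopping time}. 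An exponent comparison then shows $\|L_3\|_{C_x} \ll \lambda_{q+3}\delta_{q+3}$ for $\beta$ close to $\frac{1}{2}$. The main technical obstacle is $L_2$: one must simultaneously exploit the frequency-cutoff cancellation (through $z_{q+1} - z_q$) and both the spatial and temporal mollification errors (through $z_q - z_l$) inside a single combined difference, and the very availability of the algebraic regrouping $(\Lambda^\gamma - \Lambda^{3/2 - 2\sigma})(z_{q+1} - z_l)$ relies on the modified diffusive exponent $\frac{3}{2} - 2\sigma$ introduced in \eqref{est 42a} in place of $\gamma$, as highlighted in Remark \ref{Remark 1.2}.
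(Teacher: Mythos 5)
Your proposal is correct and follows essentially the same route as the paper: the decomposition $R_L = L_1 + L_2 + L_3$ you use is exactly the paper's (your $L_2 = \mathcal{B}((\Lambda^\gamma - \Lambda^{3/2-2\sigma})(z_{q+1}-z_l))$ is the same as the paper's $-\mathcal{B}(\Lambda^\gamma(z_l - z_{q+1}) - \Lambda^{3/2-2\sigma}(z_l - z_{q+1}))$), and the term-by-term bounds — frequency localization plus \eqref{est 11a} for $L_1$, splitting $z_{q+1}-z_l$ into a frequency-shell piece and mollification errors for $L_2$, and the product rule with $\tilde P_{\approx\lambda_{q+1}}$ for $L_3$ — mirror the paper's estimates \eqref{est 245}--\eqref{est 247}. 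The only cosmetic differences are that you split the mollification error into separate spatial and temporal pieces (the paper keeps them combined) and that you have a harmless typographical slip $\lambda_1^{2\beta-1}$ in the $L_1$ bound where $\delta_{q+1}^{1/2}$ actually contributes $\lambda_1^{\beta-1/2}$, which does not affect the conclusion.
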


\begin{proof}[Proof of Proposition \ref{prop RL}]
We can break the linear error into three as
\begin{equation}\label{new 14}
R_{L} = L_{1} + L_{2} + L_{3}
\end{equation}
where 
\begin{subequations}\label{new 15}
\begin{align}
L_{1} &\triangleq \mathcal{B} \Lambda^{\gamma} w_{q+1},\label{new 15a}\\
L_{2} &\triangleq - \mathcal{B} \left( \Lambda^{\gamma} (z_{l} - z_{q+1}) - \Lambda^{\frac{3}{2} - 2 \sigma} (z_{l} - z_{q+1} ) \right), \label{new 15b}\\
L_{3} &\triangleq \mathcal{B} [ \Lambda w_{q+1}^{\bot} (\nabla^{\bot}  \cdot z_{l} ) ]. \label{new 15c}
\end{align}
\end{subequations}
First, we estimate for $\beta >\frac{1}{2}$ sufficiently close to $\frac{1}{2}$ and $a_{0}$ sufficiently large, 
\begin{align}
\lVert L_{1} \rVert_{C_{t,x,q+1}} \lesssim \lambda_{q+1}^{\gamma - 1} \lVert w_{q+1} \rVert_{C_{t,x,q+1}} \overset{\eqref{est 11a}}{\lesssim} \lambda_{q+1}^{\gamma -1 - \beta}  \lambda_{1}^{\beta - \frac{1}{2}} \bar{e}^{\frac{1}{2}}  \ll \lambda_{q+3} \delta_{q+3}.\label{est 245}
\end{align} 
Second, we rewrite
\begin{align*}
& - \Lambda^{\gamma} ( z_{l} - z_{q+1}) + \Lambda^{\frac{3}{2} - 2 \sigma} (z_{l} - z_{q+1}) \\
=& -\Lambda^{\gamma} \left( (z_{q} - z_{q+1}) \ast_{x} \phi_{l} \ast_{t} \varphi_{l} + z_{q+1} \ast_{x} \phi_{l} \ast_{t} \varphi_{l} - z_{q+1} \right) \\
&+ \Lambda^{\frac{3}{2} - 2 \sigma} \left( ( z_{q} - z_{q+1} ) \ast_{x} \phi_{l} \ast_{t} \varphi_{l} + z_{q+1} \ast_{x} \phi_{l} \ast_{t} \varphi_{l} - z_{q+1} \right) 
\end{align*}
and estimate using $\supp (z_{q} - z_{q+1})\hspace{0.5mm}\hat{}\hspace{1mm} \subset \{ \xi: 4 \lambda_{q} < \lvert \xi \rvert \leq 4 \lambda_{q+1} \}$, for $\beta > \frac{1}{2}$ sufficiently close to $\frac{1}{2}$, $a_{0}$ sufficiently large, and any $\delta \in (0, \frac{1}{4})$, 
\begin{align}
\lVert L_{2} \rVert_{C_{t,x,q+1}} \overset{\eqref{new}}{\lesssim}& \lVert \Lambda^{\frac{1}{2} - 2 \sigma} \left( ( z_{q} - z_{q+1}) \ast_{x} \phi_{l} \ast_{t} \varphi_{l} + z_{q+1}\ast_{x} \phi_{l} \ast_{t} \varphi_{l} - z_{q+1} \right) \rVert_{C_{t,x,q+1}} \nonumber \\
\overset{\eqref{additive stopping time}}{\lesssim}& \lambda_{q}^{-1} + l^{\frac{1}{2} - \delta} + l  \overset{\eqref{define l}}{\lesssim} \lambda_{q}^{-1} + \lambda_{q+1}^{-\alpha(\frac{1}{2} - \delta)}  \ll \lambda_{q+3} \delta_{q+3}. \label{est 246}
\end{align}
Finally, we make use of the fact that $\supp w_{q+1}^{\bot} \subset \{ \xi: \frac{\lambda_{q+1}}{2} \leq \lvert \xi \rvert \leq 2 \lambda_{q+1} \}$ due to \eqref{support wqplus1} while $\supp \hat{z}_{l} \subset \{ \xi: \lvert \xi  \rvert \leq \frac{\lambda_{q}}{4}\}$ due to \eqref{define f} and compute via \cite[Equation (A.11)]{BSV19}, for $\beta > \frac{1}{2}$ sufficiently close to $\frac{1}{2}$ and $a_{0}$ sufficiently large, 
\begin{equation}\label{est 247}
\lVert L_{3} \rVert_{C_{t,x,q+1}} \overset{\eqref{new 15c}}{\lesssim} \lambda_{q+1}^{-1} \lVert \Lambda w_{q+1} \rVert_{C_{t,x,q+1}} \lVert z_{l} \rVert_{C_{t,q+1} C_{x}^{1}}  \overset{\eqref{est 11a}\eqref{additive stopping time}}{\lesssim} \delta_{q+1}^{\frac{1}{2}} \bar{e}^{\frac{1}{2}} \ll  \lambda_{q+3} \delta_{q+3}; 
\end{equation} 
hence, by applying \eqref{est 245}, \eqref{est 246}, and \eqref{est 247} to \eqref{new 15}, we obtain \eqref{est 248}.
\end{proof}

Next, we treat the oscillatory error.
\begin{proposition}\label{prop RO}
$R_{O}$ from \eqref{est 22d} satisfies for $\beta > \frac{1}{2}$ sufficiently close to $\frac{1}{2}$ and $a_{0}$ sufficiently large 
\begin{equation}\label{est 195}
\lVert R_{O} \rVert_{C_{t,x,q+1}} \ll \lambda_{q+3} \delta_{q+3}. 
\end{equation}
\end{proposition}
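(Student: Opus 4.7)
The plan is to rewrite the quadratic part of $R_{O}$ using identity \eqref{est 120} as $\Lambda w_{q+1}^{\perp} ( \nabla^{\perp} \cdot w_{q+1})$, expand it via \eqref{est 14a}, and split the resulting sum over pairs $(j, k, k') \in \mathbf{J} \times \Gamma_{j} \times \Gamma_{j}$ into a resonant part ($k+k'=0$) and a non-resonant part ($k + k' \neq 0$). The resonant part will be arranged to cancel $\divergence \mathring{R}_{l}$ modulo a gradient absorbable into $p_{q+1}$, while the non-resonant part will be controlled by the inverse divergence $\mathcal{B}$ exploiting the fast oscillation $e^{i(k+k')\cdot \lambda_{q+1}\Phi_{j}}$.

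For the resonant contribution, the amplitudes \eqref{est 3} with $\gamma_{k}$ from the geometric lemma (Lemma \ref{geometric lemma}) are designed precisely so that the $k+k'=0$ interactions, after summing over $k \in \Gamma_{j}$, reconstruct $\rho_{j} \Id - \mathring{R}_{q,j}$; the $\rho_{j} \Id$ piece is absorbed into the pressure $p_{q+1}$, and the identity $\mathring{R}_{q,j}(\tau_{q+1} j) = \mathring{R}_{l}(\tau_{q+1} j)$ from \eqref{est 16} yields the cancellation of the resonant contribution against $\mathring{R}_{l}$ at each anchor time. What remains is an approximation error $R_{O,\text{approx}}$ accounting for $\mathring{R}_{l}(t) - \mathring{R}_{q,j}(t)$ on sub-intervals of length $\tau_{q+1}$ about $\tau_{q+1} j$, together with low-order commutator errors from moving $\mathbb{P}_{q+1, k}$, $\Lambda$, and $\nabla^{\perp}$ past the slow factor $a_{k,j} \psi_{q+1, j, k}$. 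The former I would handle via a transport-equation estimate for $\mathring{R}_{q,j} - \mathring{R}_{l}$ (Lemma \ref{transport lemma}) together with \eqref{est 17} and the inductive bound \eqref{inductive 2b}; the latter via the kernel estimate \eqref{new 8} and Proposition \ref{prop additive psi}.

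For the non-resonant contribution, Lemma \ref{geometric lemma} guarantees $\lvert k + k' \rvert \gtrsim 1$ for $k, k' \in \Gamma_{j}$ with $k + k' \neq 0$, so after factoring out the fast oscillation the product has Fourier support at frequency $\approx \lambda_{q+1} \lvert k+k' \rvert \gtrsim \lambda_{q+1}$. Applying $\mathcal{B} \tilde{P}_{\approx \lambda_{q+1} \lvert k+k' \rvert}$ (as in the treatment of $R_{T}$ in \eqref{new 12}) yields a gain of $\lambda_{q+1}^{-1}$; combining this with the amplitude bounds \eqref{est 19} and Proposition \ref{prop additive psi} gives roughly
\begin{equation*}
\lambda_{q+1}^{-1} \cdot ( \lambda_{q+1} \delta_{q+1}^{\frac{1}{2}} \bar{e}^{\frac{1}{2}} )^{2} \lesssim \lambda_{q+1}^{1-2\beta} \lambda_{1}^{2\beta-1} \bar{e},
\end{equation*}
which for $\beta > \frac{1}{2}$ sufficiently close to $\frac{1}{2}$, $a_{0}$ sufficiently large, and $b$ satisfying \eqref{additive bound b} is $\ll \lambda_{q+3} \delta_{q+3}$.

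I expect the main obstacle to be the approximation error $R_{O,\text{approx}}$: its control requires matching $\tau_{q+1} \lVert D_{t,q} \mathring{R}_{l} \rVert_{C_{t,x,q+1}}$ and the mollification scale $l_{q+1} = \lambda_{q+1}^{-\alpha}$ against the target $\lambda_{q+3} \delta_{q+3}$, which is precisely what forces the nonstandard choice \eqref{tau} of $\tau_{q+1}$ flagged in Remark \ref{Remark 2.1} and leaves the tightest margin of all six Reynolds-stress components, thereby motivating the specific lower bound on $b$ in \eqref{additive bound b}.
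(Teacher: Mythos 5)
Your high-level decomposition (approximation error / resonant part / non-resonant part) matches the paper's split $R_{O} = R_{O,\text{approx}} + R_{O,\text{low}} + R_{O,\text{high}}$, and your identification of $R_{O,\text{approx}}$ as the driver for the non-standard choice of $\tau_{q+1}$ is consistent with Remark \ref{Remark 2.1}. However, your bound for the non-resonant part is fatally too crude and does not close the induction. You estimate it as
\[
\lambda_{q+1}^{-1} \bigl(\lambda_{q+1}\delta_{q+1}^{\frac{1}{2}}\bar{e}^{\frac{1}{2}}\bigr)^{2} \approx \lambda_{q+1}\delta_{q+1}\bar{e} = \lambda_{1}^{2\beta-1}\lambda_{q+1}^{1-2\beta}\bar{e},
\]
and claim this is $\ll \lambda_{q+3}\delta_{q+3} = \lambda_{1}^{2\beta-1}\lambda_{q+3}^{1-2\beta}$. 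But for $\beta > \tfrac{1}{2}$ the exponent $1-2\beta$ is negative, so $\lambda_{q+1}^{1-2\beta} > \lambda_{q+3}^{1-2\beta}$; in fact the ratio $\lambda_{q+1}\delta_{q+1}/(\lambda_{q+3}\delta_{q+3}) = a^{(1-2\beta)(b^{q+1}-b^{q+3})}$ is exponentially large in $a$, so the inequality goes the wrong way no matter how close $\beta$ is to $\tfrac{1}{2}$ or how large $a_{0}$ is. This is precisely the ``one derivative worse than Navier--Stokes'' obstruction flagged in the introduction, and your estimate does not overcome it.

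What the paper achieves (and you need) is the bound $\lambda_{q}\delta_{q+1}\bar{e}$, which does satisfy $\ll \lambda_{q+3}\delta_{q+3}$ since the ratio then has exponent $b^{q}(1 - 2\beta b - (1-2\beta)b^{3}) \to b^{q}(1-b) < 0$ as $\beta \to \tfrac{1}{2}^{+}$. Obtaining the extra factor $\lambda_{q}/\lambda_{q+1}$ requires recognizing that the principal bilinear interaction (both $\Lambda$ and $\nabla$ hitting the fast phases) either cancels algebraically or lands in a perfect-gradient contribution absorbed by the pressure, and what remains has the derivative falling only on the slow amplitudes $a_{k,j}\psi_{q+1,j,k}$ (giving $\lambda_{q}$ by Proposition \ref{prop additive psi}), plus commutator terms from $[\mathbb{P}_{q+1,k}, \cdot]$ and $[\mathbb{P}_{q+1,k}\Lambda, \cdot]$ of the same order (cf.\ the splitting into $O_{31}, O_{32}, O_{33}$ and $O_{41}, O_{42}$ in the paper, and \cite[Equation (A.17)]{BSV19}). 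Simply quoting the inverse divergence gain of $\lambda_{q+1}^{-1}$ against raw $C^{0}$ bounds on $\Lambda w_{q+1}$ and $\nabla w_{q+1}$ misses this structural cancellation entirely. A related gap appears in your resonant part: because $\Lambda$ is nonlocal, the diagonal $k+k'=0$ interaction is not literally $\lambda_{q+1}(k^{\perp}\otimes k^{\perp})a_{k,j}^{2}$ but rather $\mathcal{L}_{j,k}$ defined through a bilinear Fourier multiplier $\mathcal{T}^{m}$, whose leading term reproduces the desired $\lambda_{q+1}(k^{\perp}\otimes k^{\perp}-\Id)a_{k,j}^{2}/2$ and whose correction $\tilde{\mathcal{L}}_{j,k}$ must be separately estimated (here it is again bounded by $\lambda_{q}\delta_{q+1}\bar{e}$, not something that can be silently absorbed into the pressure).
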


\begin{proof}[Proof of Proposition \ref{prop RO}]
Much of this proof is similar to \cite[Section 4.1.4]{Y23a}, but the intermediate steps of some estimates differ (e.g. \eqref{new 17} and \eqref{est 31}), so we include these details. To do this, we first define 
\begin{subequations}\label{est 85}
\begin{align}
& \vartheta_{j,k} \triangleq \nabla^{\bot} \cdot \mathbb{P}_{q+1, k} \chi_{j} a_{k,j} b_{k} (\lambda_{q+1} \Phi_{j}), \label{est 85a}\\
& s^{m} (\lambda, \eta) \triangleq \int_{0}^{1} \frac{ i( ( 1-r)^{\eta} - r \lambda )_{m}}{\lvert (1-r) \eta - r \lambda \rvert} dr, \label{est 85b}\\
& ( \mathcal{T}^{m} (f, g))\hspace{0.5mm}\hat{}\hspace{1mm} (\xi) \triangleq \int_{\mathbb{R}^{2}} s^{m} (\xi - \eta, \eta) \hat{f} (\xi - \eta) \hat{g}(\eta) d \eta, \label{est 85c}\\
& \mathcal{L}_{j,k}^{ml} \triangleq \frac{1}{2} \mathcal{T}^{m} ( \Lambda^{-1} \vartheta_{j,k}, \mathcal{R}_{l} \vartheta_{j, -k} ), \label{est 85d}
\end{align}
\end{subequations}
similarly to \cite[Equations (179), (183), (188) and (192)]{Y23a}. This leads to 
\begin{equation}\label{new 33}
R_{O} = R_{O, \text{approx}} + R_{O, \text{low}} + R_{O, \text{high}}
\end{equation}
where  
\begin{subequations}\label{new 16}
\begin{align}
R_{O, \text{approx}} \triangleq& \sum_{j} \chi_{j}^{2} (\mathring{R}_{l} - \mathring{R}_{q,j}), \label{new 16a}\\
R_{O, \text{low}} \triangleq& \sum_{j} \chi_{j}^{2} \mathring{R}_{q,j} + \sum_{j,k} \mathring{\mathcal{L}}_{j,k}, \label{new 16b}\\
R_{O, \text{high}} \triangleq& \mathcal{B} \tilde{P}_{\approx \lambda_{q+1}} \big[ \sum_{j, j', k, k': k + k' \neq 0} ( \Lambda \mathbb{P}_{q+1, k} \tilde{w}_{q+1, j, k}) \cdot \nabla \mathbb{P}_{q+1, k'} \tilde{w}_{q+1, j', k'} \nonumber\\
& \hspace{30mm} - (\nabla \mathbb{P}_{q+1, k} \tilde{w}_{q+1, j, k})^{T} \cdot \Lambda \mathbb{P}_{q+1, k'} \tilde{w}_{q+1, j', k'} \big];\label{new 16c}
\end{align}
\end{subequations}
here, $\mathring{\mathcal{L}}_{j,k}$ in \eqref{new 16b} is the trace-free part of $\mathcal{L}_{j,k}$ defined via \eqref{est 85d} (see \cite[Equations (170), (173), and (174)]{Y23a}). Because $D_{t,q} (\mathring{R}_{l} - \mathring{R}_{q,j}) = D_{t,q} \mathring{R}_{l}$ and $(\mathring{R}_{l} - \mathring{R}_{q,j}) (\tau_{q+1} j) = 0$ due to \eqref{est 16}, \eqref{est 2a} gives us for all $t \in \supp \chi_{j}$, 
\begin{align}
&\lVert ( \mathring{R}_{l} - \mathring{R}_{q,j} )(t) \rVert_{C_{x}} \overset{ \eqref{inductive 2b}}{\lesssim} \tau_{q+1} \left[ l^{-1} \lambda_{q+2} \delta_{q+2} \bar{e} + \left(\lambda_{q} \delta_{q}^{\frac{1}{2}} \bar{e}^{\frac{1}{2}} + \lVert z_{q} \rVert_{C_{t,q+1} H_{x}^{2+ \sigma}} \right) \lambda_{q} \lambda_{q+2} \delta_{q+2} \bar{e} \right]  \nonumber \\
& \hspace{15mm} \overset{\eqref{additive stopping time}}{\lesssim} \tau_{q+1} \left[ l^{-1} \lambda_{q+2} \delta_{q+2} \bar{e} + \lambda_{q}^{2} \lambda_{q+2} \delta_{q}^{\frac{1}{2}} \delta_{q+2} \bar{e}^{\frac{3}{2}} \right] \lesssim \tau_{q+1} l^{-1} \lambda_{q+2} \delta_{q+2} \bar{e}. \label{new 17}
\end{align}
Therefore, we obtain the following estimate of $R_{O, \text{approx}}$ from \eqref{new 16a}:  
\begin{equation}\label{additive Oapprox}
\lVert R_{O,\text{approx}} \rVert_{C_{t,x,q+1}}  \leq \sum_{j} \lVert 1_{\supp \chi_{j}} (\mathring{R}_{l} - \mathring{R}_{q,j} ) \rVert_{C_{t,x,q+1}} \lesssim \tau_{q+1} l^{-1} \lambda_{q+2} \delta_{q+2} \bar{e}. 
\end{equation}
Now, we can rewrite $\mathcal{L}_{j,k}^{ml}$ from \eqref{est 85d}  as 
\begin{equation}\label{est 86}
\mathcal{L}_{j,k}^{ml} (t,x) = \frac{1}{2} \chi_{j}^{2} (t) \lambda_{q+1} (k^{\bot} \otimes k^{\bot} - \Id)^{ml} a_{k,j}^{2} (t,x) + \tilde{\mathcal{L}}_{j,k}^{ml} (t,x) 
\end{equation}
where 
\begin{equation*}
\tilde{\mathcal{L}}_{j,k}^{ml}(t,x) \triangleq (\tilde{\mathcal{L}}_{j,k}^{(1)})^{ml} (t,x) + (\tilde{\mathcal{L}}_{j,k}^{(2)})^{ml} (t,x) 
\end{equation*}
and
\begin{subequations}\label{est 267}
\begin{align}
& ( \tilde{\mathcal{L}}_{j,k}^{(1)})^{ml} (t,x) \triangleq \chi_{j}^{2}(t) \frac{1}{2} \int_{0}^{1} \int_{0}^{1} \int_{\mathbb{R}^{2} \times \mathbb{R}^{2}} ( \mathcal{K}_{k,r,\bar{r}}^{(1)})^{ml} (x-z, z - \tilde{z})\cdot \nabla (a_{k,j} \psi_{q+1, j,k}) (z)  \nonumber \\
& \hspace{22mm}  \times (a_{k,j} \psi_{q+1, j, -k}) (\tilde{z}) dz d \tilde{z} dr d \bar{r}, \label{est 267a}\\
& (\tilde{\mathcal{L}}_{j,k}^{(2)})^{ml} (t,x) \triangleq \chi_{j}^{2}(t) \frac{1}{2} \int_{0}^{1} \int_{0}^{1}  \int_{\mathbb{R}^{2} \times \mathbb{R}^{2}} (\mathcal{K}_{k,r,\bar{r}}^{(2)})^{ml} (x-z, x- \tilde{z}) \cdot (a_{k,j} \psi_{q+1,j,k}) (z)  \nonumber \\
& \hspace{22mm} \times \nabla (a_{k,j} \psi_{q+1,j,-k}) (\tilde{z}) dz d \tilde{z} dr d \bar{r} \label{est 267b}
\end{align}
\end{subequations}
(see \cite[Equations (204), (206), and (208)-(209)]{Y23a}). The specific definition of $( \mathcal{K}_{k,r, \bar{r}}^{(j)})^{ml}, j \in \{1,2\}$, can be found in \cite[Equation (207)]{Y23a}; here, we only need to rely on the result that for all $j \in \{1,2 \}, (z, \tilde{z}) \in \mathbb{R}^{4}$, and $0 \leq \lvert a \rvert, \lvert b \rvert \leq 1$, 
\begin{equation}\label{new 19}
\lVert (z, \tilde{z})^{a} \nabla_{(z, \tilde{z})}^{b} ( \mathcal{K}_{k,r,\bar{r}}^{(j)})^{ml} \rVert_{L_{z, \tilde{z}}^{1} (\mathbb{R}^{2} \times \mathbb{R}^{2})} \leq C_{a,b} \left( \frac{ \lambda_{q+1}}{\bar{r}} \right)^{\lvert b \rvert - \lvert a \rvert},
\end{equation}
uniformly in $r \in (0,1)$ from \cite[Equation (210)]{Y23a}. Thus, defining 
\begin{subequations}\label{est 87}
\begin{align}
& O_{1} \triangleq \sum_{j} \chi_{j}^{2} \mathring{R}_{q,j} + \frac{\lambda_{q+1}}{2} \sum_{j,k} ( k^{\bot} \otimes k^{\bot} - \Id) \chi_{j}^{2} a_{k,j}^{2}, \label{est 87a}\\
& O_{2} \triangleq O_{21} + O_{22} \hspace{1mm} \text{ where } \hspace{1mm} O_{21} \triangleq \sum_{j,k} \tilde{\mathcal{L}}_{j,k}^{(1)} \hspace{1mm} \text{ and } \hspace{1mm}  O_{22} \triangleq \sum_{j,k} \tilde{\mathcal{L}}_{j,k}^{(2)}, \label{est 87b}
\end{align}
\end{subequations}
gives us 
\begin{equation}\label{new 18}
R_{O, \text{low}} = \mathring{O}_{1} + \mathring{O}_{2} 
\end{equation}
(see \cite[Equations (211)-(212)]{Y23a}). Now we make the important observation that due to Lemma \ref{geometric lemma} that  
\begin{align}\label{est 31}
O_{1} \overset{\eqref{est 87a} \eqref{est 3}}{=}& \sum_{j} \chi_{j}^{2} \lambda_{q+1} \left[ \frac{ \mathring{R}_{q,j}}{\lambda_{q+1}} + \frac{1}{2} \sum_{k} (k^{\bot} \otimes k^{\bot}) \lambda_{q+1}^{-1} \rho_{j} \gamma_{k}^{2} \left( \Id - \frac{\mathring{R}_{q,j}}{\rho_{j}} \right) - \frac{1}{2} \sum_{k} \Id a_{k,j}^{2} \right] \nonumber \\
=& \sum_{j}\chi_{j}^{2} \lambda_{q+1} \left[ \frac{ \mathring{R}_{q,j}}{\lambda_{q+1}} + \frac{\rho_{j}}{\lambda_{q+1}} \left( \Id - \frac{\mathring{R}_{q,j}}{\rho_{j}} \right) - \frac{1}{2} \sum_{k} \Id a_{k,j}^{2} \right] \nonumber \\
=& \sum_{j} \chi_{j}^{2} \lambda_{q+1} \Id \left[ \frac{\rho_{j}}{\lambda_{q+1}} - \frac{1}{2} \sum_{k} a_{k,j}^{2} \right], 
\end{align}
so that we can simplify \eqref{new 18}  to 
\begin{equation}\label{est 89}
R_{O, \text{low}} \overset{\eqref{new 18}}{=} \mathring{O}_{1} + \mathring{O}_{2} = \mathring{O}_{2}  \overset{\eqref{est 87b}}{=} \mathring{O}_{21} + \mathring{O}_{22}.
\end{equation}
Next, we estimate from \eqref{est 87b} 
\begin{align}
& \lVert O_{21} \rVert_{C_{t,x,q+1}} \lesssim \sup_{s \in [t_{q+1}, \mathfrak{t}]}  \sum_{j,k}  \chi_{j}^{2} (s)\lVert \nabla (a_{k,j} \psi_{q+1, j, k}) \rVert_{C_{t,x,q+1}} \lVert a_{k,j} \psi_{q+1, j, -k} \rVert_{C_{t,x,q+1} } \nonumber \\
& \hspace{5mm}  \times \sup_{r, \bar{r} \in [0,1]} \lVert \mathcal{K}_{k,r,\bar{r}}^{(1)} \rVert_{L_{z, \bar{z}}^{1} (\mathbb{R}^{2} \times \mathbb{R}^{2})}  \overset{\eqref{est 10} \eqref{psi 2} \eqref{new 19} }{\lesssim} (\lambda_{q} \delta_{q+1}^{\frac{1}{2}} \bar{e}^{\frac{1}{2}}) (\delta_{q+1}^{\frac{1}{2}} \bar{e}^{\frac{1}{2}}) \approx \lambda_{q} \delta_{q+1} \bar{e}. \label{new 20}
\end{align}
An identical bound holds for $O_{22}$; thus, 
\begin{equation}\label{additive Olow}
\lVert R_{O,\text{low}} \rVert_{C_{t,x,q+1}} \overset{\eqref{est 89}}{\leq} \sum_{l=1}^{2} \lVert O_{2l} \rVert_{C_{t,x,q+1}} \overset{\eqref{new 20}}{\lesssim} \lambda_{q} \delta_{q+1} \bar{e}. 
\end{equation}
Next, similarly to\cite[Equations (219)-(220)]{Y23a} we may define
\begin{align*}
&  O_{3} \triangleq \mathcal{B} \tilde{P}_{\approx \lambda_{q+1}} \left( \sum_{j, j', k, k': k + k' \neq 0} (\Lambda \mathbb{P}_{q+1, k} \tilde{w}_{q+1, j, k}) \cdot \nabla \mathbb{P}_{q+1, k'} \tilde{w}_{q+1, j', k'} \right), \\
& O_{4} \triangleq \mathcal{B} \tilde{P}_{\approx \lambda_{q+1}} \left( \sum_{j, j', k, k': k + k' \neq 0}  (\nabla \mathbb{P}_{q+1, k} \tilde{w}_{q+1, j, k})^{T} \cdot \Lambda \mathbb{P}_{q+1, k'} \tilde{w}_{q+1, j', k'} \right) 
\end{align*}
which allow us to write 
\begin{equation}\label{new 30}
R_{O,\text{high}} = O_{3} - O_{4}.
\end{equation}
To estimate $O_{3}$ first, we proceed to define 
\begin{subequations}\label{new 22} 
\begin{align} 
& O_{311}(x) \triangleq  \mathcal{B} \tilde{P}_{\approx \lambda_{q+1}} \frac{\lambda_{q+1}}{2} \sum_{j, j', k, k': k + k' \neq 0} \chi_{j} \chi_{j'} \nabla (a_{k,j} (x) a_{k', j'} (x) \psi_{q+1, j', k'}(x) \psi_{q+1, j, k}(x))  \nonumber \\
& \hspace{35mm} \times (b_{k}(\lambda_{q+1} x) \cdot b_{k'} (\lambda_{q+1} x) - e^{i(k+k') \cdot \lambda_{q+1} x}), \label{new 22a} \\
& O_{312}(x) \triangleq  \mathcal{B} \tilde{P}_{\approx \lambda_{q+1}} \lambda_{q+1} \sum_{j,j',k,k': k + k' \neq 0} \chi_{j} \chi_{j'}  \nonumber\\
& \hspace{10mm} \times b_{k'}(\lambda_{q+1} x) \otimes b_{k} (\lambda_{q+1} x) \nabla (a_{k,j} (x) a_{k', j'} (x) \psi_{q+1, j', k'} (x) \psi_{q+1, j, k} (x)), \label{new 22b}\\
& O_{32}(x) \triangleq \mathcal{B} \tilde{P}_{\approx \lambda_{q+1}} \lambda_{q+1} \sum_{j, j',k, k': k + k'\neq 0}  \nonumber\\
& \hspace{15mm} \times \divergence ( \tilde{w}_{q+1, j, k}(x) \otimes \chi_{j'} [\mathbb{P}_{q+1, k'}, a_{k', j'}(x) \psi_{q+1, j', k'}(x)] b_{k'} (\lambda_{q+1} x) ), \label{new 22c}\\
& O_{33}(x) \triangleq \mathcal{B} \tilde{P}_{\approx \lambda_{q+1}}  \sum_{j, j', k, k': k + k'\neq 0} \nonumber\\
& \hspace{8mm} \times \divergence (\chi_{j} [\mathbb{P}_{q+1,k} \Lambda, a_{k, j}(x) \psi_{q+1, j, k}(x)] b_{k} (\lambda_{q+1} x) \otimes \mathbb{P}_{q+1, k'} \tilde{w}_{q+1, j', k'}(x)) \label{new 22d}
\end{align} 
\end{subequations}
similarly to \cite[Equations (221)-(227)]{Y23a} and split
\begin{equation}\label{new 21}
O_{3} = \sum_{k=1}^{3} O_{3k}, \text{ with } O_{31} \triangleq O_{311} + O_{312}.
\end{equation}
We first estimate from \eqref{new 22a}, \eqref{new 22b} by using \eqref{psi 2}, and \eqref{est 10}, 
\begin{align}
&\lVert O_{31} \rVert_{C_{t,x,q+1}} \label{new 23} \\
\overset{\eqref{new 21}}{\lesssim}& \sum_{j,j', k, k': k + k' \neq 0} \lVert 1_{\supp \chi_{j}} (t) \nabla (a_{k,j} \psi_{q+1, j, k} ) \rVert_{C_{t,x,q+1}} \lVert 1_{\supp \chi_{j'}}(t) a_{k', j'} \psi_{q+1, j', k'} \rVert_{C_{t,x,q+1}}  \nonumber \\
& + \lVert 1_{\supp \chi_{j'}} (t) \nabla (a_{k', j'} \psi_{q+1, j', k'} ) \rVert_{C_{t,x,q+1}} \lVert 1_{\supp \chi_{j}} (t) a_{k,j} \psi_{q+1, j,k} \rVert_{C_{t,x,q+}} \lesssim  \lambda_{q} \delta_{q+1} \bar{e}.  \nonumber 
\end{align}
Next, we estimate from \eqref{new 22c}-\eqref{new 22d} by relying on \cite[Equation (A.17)]{BSV19} 
\begin{align}
& \lVert O_{32} \rVert_{C_{t,x,q+1}} + \lVert O_{33} \rVert_{C_{t,x,q+1}} \label{new 24} \\
\lesssim&  \sum_{j, j', k, k': k + k' \neq 0}  \lVert 1_{\supp \chi_{j}} a_{k,j}  \rVert_{C_{t,x,q+1}}   \lVert 1_{\supp \chi_{j'}} \nabla (a_{k', j'} \psi_{q+1, j', k'} ) \rVert_{C_{t,x,q+1}}   \nonumber \\
&+ \lVert 1_{\supp \chi_{j}} \nabla (a_{k,j} \psi_{q+1, j, k} ) \rVert_{C_{t,x,q+1}}     \lVert 1_{\supp \chi_{j}'}  a_{k',j'} b_{k'} (\lambda_{q+1} \Phi_{j'} ) \rVert_{C_{t,x,q+1}} \overset{\eqref{psi} \eqref{est 10} }{\lesssim}  \lambda_{q} \delta_{q+1} \bar{e}.  \nonumber 
\end{align}
Therefore, we conclude that 
\begin{equation}\label{new 31}
\lVert O_{3} \rVert_{C_{t,x,q+1}}  \overset{\eqref{new 21}}{\leq} \sum_{k=1}^{3} \lVert O_{3k} \rVert_{C_{t,x,q+1}} \overset{\eqref{new 23} \eqref{new 24}}{\lesssim} \lambda_{q} \delta_{q+1} \bar{e}. 
\end{equation}
Similarly to \cite[Equations (231)-(232)]{Y23a}, we can define 
\begin{subequations}\label{new 26}
\begin{align}
&O_{41}(x) \triangleq \sum_{j, j', k, k': k + k' \neq 0}  \mathcal{B} \tilde{P}_{\approx \lambda_{q+1}}  \nonumber \\
&  \times (\chi_{j} \nabla ( [ \mathbb{P}_{q+1, k}, a_{k,j} (x) \psi_{q+1, j, k} (x) ] b_{k} (\lambda_{q+1} x) )^{T} \cdot \lambda_{q+1} \tilde{w}_{q+1, j', k'}(x)), \label{new 26a} \\
&O_{42}(x) \triangleq \sum_{j, j', k, k': k + k' \neq 0}  \mathcal{B} \tilde{P}_{\approx \lambda_{q+1}} \nonumber \\
&\times((\nabla \mathbb{P}_{q+1, k} \tilde{w}_{q+1, j, k} (x))^{T} \cdot \chi_{j'} [\mathbb{P}_{q+1, k'} \Lambda, a_{k', j'}(x) \psi_{q+1, j', k'} (x)] b_{k'} (\lambda_{q+1} x) ) \label{new 26b}
\end{align}
\end{subequations} 
to write 
\begin{equation}\label{new 28}
O_{4} = O_{41} + O_{42}.     
\end{equation}
To estimate $O_{41}$, we rewrite similarly to \cite[Equation (234)]{Y23a}
\begin{align}
& \nabla ( [ \mathbb{P}_{q+1, k}, a_{k,j} (x) \psi_{q+1, j, k} (x) ] b_{k} (\lambda_{q+1} x) ) \label{new 25} \\
=& [ \mathbb{P}_{q+1, k}, \nabla (a_{k,j} \psi_{q+1, j, k})(x) ] b_{k} (\lambda_{q+1} x) + [\mathbb{P}_{q+1, k}, a_{k,j}(x) \psi_{q+1, j, k}(x) ] \nabla b_{k} (\lambda_{q+1} x)  \nonumber 
\end{align}
and compute from \eqref{new 26a} using \cite[Equation (A.17)]{BSV19}, 
\begin{align}
& \lVert O_{41} \rVert_{C_{t,x,q+1}} \label{new 27}\\
&\overset{\eqref{new 25} \eqref{est 14b}}{\lesssim} \sum_{j, j', k, k': k+ k' \neq 0} [ \lambda_{q+1}^{-1} \lVert 1_{\supp \chi_{j}} \nabla^{2} (a_{k,j} \psi_{q+1, j, k}) \rVert_{C_{t,x,q+1}} \lVert b_{k} (\lambda_{q+1} x) \rVert_{C_{t,x,q+1}}  \nonumber \\
& \hspace{5mm} + \lambda_{q+1}^{-1} \lVert 1_{\supp \chi_{j}} \nabla (a_{k,j} \psi_{q+1, j,k}) \rVert_{C_{t,x,q+1}} \lVert \nabla b_{k} (\lambda_{q+1} x) \rVert_{C_{t,x,q+1}} ] \lVert a_{k', j'} \rVert_{C_{t,x,q+1}}  \nonumber \\
& \overset{\eqref{psi 2} \eqref{est 10} }{\lesssim} \sum_{j, j', k, k': k + k' \neq 0} [ \lambda_{q+1}^{-1} (\lambda_{q}^{2} \delta_{q+1}^{\frac{1}{2}} \bar{e}^{\frac{1}{2}} ) + \lambda_{q+1}^{-1} (\lambda_{q} \delta_{q+1}^{\frac{1}{2}} \bar{e}^{\frac{1}{2}} ) \lambda_{q+1} ] \delta_{q+1}^{\frac{1}{2}}\bar{e}^{\frac{1}{2}} \lesssim  \lambda_{q} \delta_{q+1} \bar{e}.  \nonumber 
\end{align}
We can estimate $O_{42}$ from \eqref{new 26b} via \cite[Equation (A.17)]{BSV19} 
\begin{align}
& \lVert O_{42} \rVert_{C_{t,x,q+1}} \lesssim \lambda_{q+1}^{-1} \sum_{j, j', k, k': k + k' \neq 0} \lambda_{q+1} \lVert 1_{\supp \chi_{j}}  \tilde{w}_{q+1, j, k} \rVert_{C_{t,x,q+1}}  \nonumber \\
& \hspace{20mm} \times \lVert 1_{\supp\chi_{j'}}  \nabla (a_{k', j'} \psi_{q+1, j', k'}) \rVert_{C_{t,x,q+1}} \lVert b_{k'} (\lambda_{q+1} x) \rVert_{C_{t,x,q+1}} \nonumber \\
\overset{\eqref{est 14b} \eqref{psi 2} }{\lesssim}& \sum_{j, j', k, k': k + k' \neq 0} \lVert 1_{\supp \chi_{j}}  a_{k,j} \rVert_{C_{t,x,q+1}}  \lambda_{q} \delta_{q+1}^{\frac{1}{2}} \bar{e}^{\frac{1}{2}} \overset{\eqref{est 10}}{\lesssim} \lambda_{q} \delta_{q+1} \bar{e}.  \label{new 29}
\end{align}
Hence, we conclude that 
\begin{equation}\label{new 32}
\lVert O_{4} \rVert_{C_{t,x,q+1}} \overset{\eqref{new 28}}{\leq} \sum_{k=1}^{2} \lVert O_{4k} \rVert_{C_{t,x,q+1}} \overset{\eqref{new 27}\eqref{new 29}}{\lesssim} \lambda_{q}\delta_{q+1} \bar{e}. 
\end{equation}
Therefore, the high-frequency oscillation has bound
\begin{equation}\label{additive Ohigh}
\lVert R_{O,\text{high}} \rVert_{C_{t,x,q+1}} \overset{\eqref{new 30}}{\leq} \lVert O_{3} \rVert_{C_{t,x,q+1}} + \lVert O_{4} \rVert_{C_{t,x,q+1}} \overset{\eqref{new 31} \eqref{new 32}}{\lesssim} \lambda_{q} \delta_{q+1} \bar{e}. 
\end{equation}
Finally, for $\beta > \frac{1}{2}$ sufficiently close to $\frac{1}{2}$ and $a_{0}$ sufficiently large, we arrive at 
\begin{align*}
\lVert R_{O} \rVert_{C_{t,x,q+1}} \overset{\eqref{new 33}}{\leq}& \lVert R_{O,\text{approx}} \rVert_{C_{t,x,q+1}} + \lVert R_{O,\text{low}} \rVert_{C_{t,x,q+1}} + \lVert R_{O,\text{high}} \rVert_{C_{t,x,q+1}}  \nonumber \\
\overset{\eqref{additive Oapprox} \eqref{additive Olow} \eqref{additive Ohigh}}{\lesssim}& \bar{e} \left( \lambda_{q+1}^{\frac{\alpha}{2} - \frac{1}{2} - \frac{\beta}{2}} \lambda_{q+2}^{\frac{1}{2} - \beta} \lambda_{1}^{(2\beta -1) (\frac{3}{4})}  + \lambda_{q} \lambda_{1}^{2\beta -1} \lambda_{q+1}^{-2\beta}  \right) \overset{\eqref{bound alpha} \eqref{additive bound b}}{\ll} \lambda_{q+3} \delta_{q+3}.  
\end{align*}
\end{proof}

Finally, we bound the two commutator errors. 
\begin{proposition}\label{prop Rcom1}
$R_{\text{Com1}}$ from \eqref{Rcom1} satisfies for $\beta > \frac{1}{2}$ sufficiently close to $\frac{1}{2}$ and $a_{0}$ sufficiently large 
\begin{equation*}
\lVert R_{\text{Com1}} \rVert_{C_{t,x,q+1}} \ll \lambda_{q+3} \delta_{q+3}. 
\end{equation*}
\end{proposition}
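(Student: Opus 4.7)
The plan is to exploit the fact that $R_{\text{Com1}}$ is a standard bilinear mollifier commutator. Since the convolutions $\phi_l$ and $\varphi_l$ commute with the Fourier multipliers $\Lambda$ and $\nabla^{\perp}\cdot$, setting $F\triangleq \Lambda(y_q+z_q)^{\perp}$ and $G\triangleq \nabla^{\perp}\cdot(y_q+z_q)$ yields
\begin{equation*}
R_{\text{Com1}}=\mathcal{B}\bigl[F_l\,G_l-(FG)_l\bigr],
\end{equation*}
where subscript $l$ denotes the joint space-time mollification $\ast_x\phi_l\ast_t\varphi_l$. Applying the $C^0_x$-bound for $\mathcal{B}$ from \eqref{est 92b} reduces matters to a $C^0_{t,x}$-estimate on this product commutator.

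Next I would invoke the standard Constantin-E-Titi type identity for mollifier commutators, which yields
\begin{equation*}
\lVert F_l\,G_l-(FG)_l\rVert_{C^0_{t,x}}\lesssim l^{2}\,\lVert F\rVert_{C^1_{t,x,q}}\,\lVert G\rVert_{C^1_{t,x,q}}.
\end{equation*}
The mixed $C^1_{t,x}$-norms of $F$ and $G$ are then estimated as follows: each spatial derivative on $y_q$ costs a factor of $\lambda_q$ via Bernstein's inequality applied to \eqref{inductive 1a}, so \eqref{inductive 1c} gives $\lVert DF\rVert_{C^0_x}+\lVert DG\rVert_{C^0_x}\lesssim M_0^{1/2}\lambda_q^{2}\delta_q^{1/2}\bar{e}^{1/2}$, and analogously \eqref{temp} yields $\lVert\partial_t F\rVert+\lVert\partial_t G\rVert\lesssim M_0\lambda_q^{3}\delta_q\bar{e}$. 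The $z_q$-contributions are controlled uniformly via the stopping-time bound \eqref{additive stopping time} and the embedding $\dot{H}_x^{5/2+2\sigma}\hookrightarrow C^1_x$, and are dominated by the $y_q$-contributions.

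The one subtlety I expect to be the main obstacle is that $z$ is only $C_t^{1/2-2\delta}$, so $\partial_t z$ is not in $C_t^0$ and the naive time-mollification bound $\lesssim l\lVert\partial_t z\rVert$ is unavailable. I would handle this by replacing the offending factor $l$ in the $z$-part of the time-mollification error by $l^{1/2-2\delta}$, using the Hölder-in-time bound in \eqref{additive stopping time} on $\lVert z\rVert_{C_t^{1/2-2\delta}\dot{H}_x^{7/4+4\sigma}}$; this remains harmlessly absorbed by the $y_q$ contribution. Putting these ingredients together yields
\begin{equation*}
\lVert R_{\text{Com1}}\rVert_{C_{t,x,q+1}}\lesssim l^{2}\,M_0^{2}\lambda_q^{6}\delta_q^{2}\bar{e}^{2}\approx M_0^{2}\bar{e}^{2}\,\lambda_1^{2(2\beta-1)}\,\lambda_{q+1}^{-2\alpha}\,\lambda_q^{6-4\beta}.
\end{equation*}

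Finally, comparing with the target $\lambda_{q+3}\delta_{q+3}=\lambda_1^{2\beta-1}\lambda_{q+3}^{1-2\beta}$ and taking $\log_a$, the required inequality reduces to $2\alpha b>(6-4\beta)+b^{3}(2\beta-1)$. For $\beta>\frac{1}{2}$ sufficiently close to $\frac{1}{2}$, the right-hand side is arbitrarily small, so the inequality follows from the hypothesis $b>6/(\alpha-\frac{1}{2})$ in \eqref{additive bound b}, provided $a_0$ is taken sufficiently large. This delivers the claimed bound $\lVert R_{\text{Com1}}\rVert_{C_{t,x,q+1}}\ll\lambda_{q+3}\delta_{q+3}$.
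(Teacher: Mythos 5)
Your overall strategy — treating $R_{\text{Com1}}$ as a bilinear mollifier commutator and applying a Constantin--E--Titi type estimate — is the same as the paper's, which expands $(y+z)^{\perp}\bigl(\nabla^{\perp}\cdot(y+z)\bigr)$ into the four cross-terms $yy$, $yz$, $zy$, $zz$ and estimates each separately via \cite[Lemma 1]{CDS12b}. You also correctly identified the central subtlety: $z$ is only $C_t^{1/2-2\delta}$, so the naive $l\,\lVert\partial_t z\rVert$ time-mollification bound is unavailable and must be replaced by an $l^{1/2-2\delta}$-gain via the stopping time bound.

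However, there is a genuine gap in the next step: you assert that after this replacement the $z$-contribution ``remains harmlessly absorbed by the $y_q$ contribution,'' and you display a final bound $\lesssim l^2 M_0^2 \lambda_q^6\delta_q^2\bar e^2$ involving only the pure-$y$ term. This absorption is false. Following the paper's estimate \eqref{est 249}, the mixed $yz$- and $zy$-terms contribute $l^{1-4\delta}\lambda_q^{9/4-4\delta}\delta_q^{3/4-\delta}$. Comparing exponents in $\log_a$: with $l=\lambda_{q+1}^{-\alpha}$, the pure-$y$ term carries $-2\alpha b^{q+1}+(6-4\beta)b^q+\cdots$ whereas the $z$-coupled term carries roughly $-\alpha b^{q+1}+\tfrac32 b^q+\cdots$; since $b\gg 1$ the $b^{q+1}$ term dominates, so the $z$-coupled term has a strictly \emph{less negative} exponent and is therefore the \emph{larger} of the two. (The same conclusion holds even if one uses the sharper asymmetric-Hölder CET bound $l^{3/2-2\delta}$ on the $yz$ cross-term: the ratio against the pure-$y$ term still scales as $a^{(1/2)\alpha b^{q+1}+\cdots}\gg1$.) Thus your displayed estimate is not an upper bound for $R_{\text{Com1}}$, and the subsequent logarithmic check $2\alpha b > (6-4\beta)+b^3(2\beta-1)$ verifies only the subdominant piece. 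The proof must retain the $z$-coupled term and verify separately that it too is $\ll\lambda_{q+3}\delta_{q+3}$ — which it is, for $\beta$ close to $\tfrac12$ and $b$ large, so the conclusion survives, but your argument as written does not establish it.
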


\begin{proof}[Proof of Proposition \ref{prop Rcom1}]
We split 
\begin{equation}\label{est 264}
\lVert R_{\text{Com1}} \rVert_{C_{t,x,q+1}} \leq \sum_{k=1}^{4} \RomanI_{k}
\end{equation} 
where 
\begin{subequations}\label{est 182} 
\begin{align}
\RomanI_{1} \triangleq& \lVert \mathcal{B} [( \Lambda y_{l})^{\bot} \nabla^{\bot} \cdot y_{l} - [ (\Lambda y_{q})^{\bot} \nabla^{\bot} \cdot y_{q} ] \ast_{x} \phi_{l} \ast_{t} \varphi_{l} ] \rVert_{C_{t,x,q+1}}, \label{est 182a}\\
\RomanI_{2} \triangleq& \lVert \mathcal{B} [( \Lambda y_{l})^{\bot} \nabla^{\bot} \cdot z_{l} - [ (\Lambda y_{q})^{\bot} \nabla^{\bot} \cdot z_{q} ] \ast_{x} \phi_{l} \ast_{t} \varphi_{l} ] \rVert_{C_{t,x,q+1}}, \label{est 182b}\\
\RomanI_{3} \triangleq& \lVert \mathcal{B} [( \Lambda z_{l})^{\bot} \nabla^{\bot} \cdot y_{l} - [ (\Lambda z_{q})^{\bot} \nabla^{\bot} \cdot y_{q} ] \ast_{x} \phi_{l} \ast_{t} \varphi_{l} ] \rVert_{C_{t,x,q+1}}, \label{est 182c}\\
\RomanI_{4} \triangleq& \lVert \mathcal{B} [( \Lambda z_{l})^{\bot} \nabla^{\bot} \cdot z_{l} - [ (\Lambda z_{q})^{\bot} \nabla^{\bot} \cdot z_{q} ] \ast_{x} \phi_{l} \ast_{t} \varphi_{l} ] \rVert_{C_{t,x,q+1}}, \label{est 182d}
\end{align}
\end{subequations} 
similarly to  \cite[Equations (240)-(241)]{Y23a}. We use \eqref{est 8} and the standard mollifier estimates (e.g. \cite[Lemma 1]{CDS12b}) to estimate separately as follows:  for $\delta \in (0, \frac{1}{4})$ 
\begin{subequations}\label{est 249}
\begin{align} 
& \RomanI_{1} \overset{\eqref{inductive 1a}}{\lesssim} l^{2} (\lambda_{q}^{2} \lVert y_{q} \rVert_{C_{t,q+1}^{1} C_{x}}^{2} + \lambda_{q}^{2}  \lVert \Lambda y_{q} \rVert_{C_{t,x,q+1}}^{2}) \overset{\eqref{temp} \eqref{inductive 1c}}{\lesssim} l^{2} \lambda_{q}^{6} \delta_{q}^{2}, \\
& \RomanI_{2} \lesssim l^{1- 4 \delta} \lVert \Lambda y_{q} \rVert_{C_{t,x,q+1}}^{\frac{1}{2} + 2 \delta} \lVert \Lambda y_{q} \rVert_{C_{t,q+1}^{1} C_{x}}^{\frac{1}{2} - 2 \delta} \lambda_{q}^{\frac{1}{4}}  \lVert \nabla^{\bot} z_{q} \rVert_{C_{t,q+1}^{\frac{1}{2} - 2 \delta} H_{x}^{\frac{3}{4}+ 4\sigma}}  + l^{2} (\lambda_{q} \delta_{q}^{\frac{1}{2}} ) \lambda_{q}\lVert z_{q} \rVert_{C_{t,q+1} H_{x}^{2+ \sigma}}  \nonumber \\
& \hspace{25mm} \overset{\eqref{additive stopping time} \eqref{inductive 1c} \eqref{temp} \eqref{inductive 1a}}{\lesssim}  l^{1- 4 \delta} \lambda_{q}^{\frac{9}{4}-4\delta} \delta_{q}^{\frac{3}{4} - \delta} + l^{2} \lambda_{q}^{2} \delta_{q}^{\frac{1}{2}}, \\
&\RomanI_{3} \overset{\eqref{inductive 1}}{\lesssim}  l^{1- 4 \delta} \lambda_{q}^{\frac{1}{4}} \lVert z_{q} \rVert_{C_{t,q+1}^{\frac{1}{2} - 2 \delta} \dot{H}_{x}^{\frac{7}{4}+\sigma}}  \lVert \nabla^{\bot} y_{q} \rVert_{C_{t,q+1} C_{x}}^{\frac{1}{2} + 2 \delta} \lVert \nabla^{\bot} \cdot y_{q} \rVert_{C_{t,q+1}^{1} C_{x}}^{\frac{1}{2} - 2 \delta} + l^{2} \lVert z_{q} \rVert_{C_{t,q+1} H_{x}^{2+ \sigma}}  \lambda_{q}^{2} \delta_{q}^{\frac{1}{2}} \bar{e}^{\frac{1}{2}}  \nonumber\\
& \hspace{25mm} \overset{\eqref{additive stopping time} \eqref{inductive 1c} \eqref{temp} \eqref{inductive 1a}}{\lesssim}  l^{1- 4 \delta} \lambda_{q}^{\frac{9}{4}- 4 \delta} \delta_{q}^{\frac{3}{4} - \delta} + l^{2} \lambda_{q}^{2} \delta_{q}^{\frac{1}{2}},  \\
& \RomanI_{4} \lesssim l^{1- 4\delta} (\lambda_{q}^{\frac{1}{4}} \lVert z_{q} \rVert_{C_{t,q+1}^{\frac{1}{2} - 2 \delta} \dot{H}_{x}^{\frac{7}{4} + \sigma}}) + l^{2}  (\lambda_{q}^{\frac{1}{2}} \lVert z_{q} \rVert_{C_{t,q+1} \dot{H}_{x}^{\frac{5}{2} + 2 \sigma}})^{2}  \overset{\eqref{additive stopping time}}{\lesssim} l^{1- 4 \delta} \lambda_{q}^{\frac{1}{2}}.
\end{align} 
\end{subequations} 
Therefore, we conclude by applying \eqref{est 249} to \eqref{est 264} that for $\delta \in (0, \frac{1}{4})$ sufficiently small, $\beta > \frac{1}{2}$ sufficiently close to $\frac{1}{2}$, and $a_{0}$ sufficiently large, 
\begin{align*}
\lVert R_{\text{Com1}} \rVert_{C_{t,x,q+1}} \overset{\eqref{est 264}}{\leq}& \sum_{k=1}^{4} \RomanI_{k} \lesssim l^{2} \lambda_{q}^{6} \delta_{q}^{2} + l^{1- 4 \delta} \lambda_{q}^{\frac{9}{4}-4\delta} \delta_{q}^{\frac{3}{4} - \delta} + l^{2} \lambda_{q}^{2} \delta_{q}^{\frac{1}{2}} + l^{1-4 \delta} \lambda_{q}^{\frac{1}{2}} \nonumber\\
\lesssim&  l^{2} \lambda_{q}^{6} \delta_{q}^{2} + l^{1- 4 \delta} \lambda_{q}^{\frac{9}{4}-4\delta} \delta_{q}^{\frac{3}{4} - \delta} \ll \lambda_{q+3} \delta_{q+3}.
\end{align*}
\end{proof}

\begin{proposition}\label{prop Rcom2}
$R_{\text{Com2}}$ from \eqref{est 22e} satisfies for $\beta > \frac{1}{2}$ sufficiently close to $\frac{1}{2}$ and $a_{0}$ sufficiently large 
\begin{equation*}
\lVert R_{\text{Com2}} \rVert_{C_{t,x,q+1}} \ll \lambda_{q+3} \delta_{q+3}. 
\end{equation*}
\end{proposition}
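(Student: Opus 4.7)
The plan is to imitate the strategy of Proposition \ref{prop Rcom1}: split $R_{\text{Com2}}$ term by term, apply the inverse divergence $\mathcal{B}$ to each summand in \eqref{est 22e}, and show each piece is $\ll \lambda_{q+3}\delta_{q+3}$ in $C_{t,x,q+1}$. The fourteen summands organize naturally into two families. Family A consists of the terms involving the frequency-localized difference $z_{q+1}-z_q$, which by \eqref{define zq} and \eqref{define f} has Fourier support in the annulus $\frac{\lambda_q}{4} < |\xi| \leq \frac{\lambda_{q+1}}{4}$; for these, extra factors of $\Lambda$ hitting $z_{q+1}-z_q$ cost at most $\lambda_{q+1}$ and the high frequency is absorbed against regularity of $z$ from Proposition \ref{prop solution z} and \eqref{additive stopping time}. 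Family B consists of terms involving the mollifier error $z_q-z_l$, for which I will use the standard bounds $\lVert z_q - z_l\rVert_{C_{t,q+1}H_x^{s}} \lesssim l^{1/2-2\delta}\lVert z\rVert_{C_{t,q+1}^{1/2-2\delta}H_x^{s}} + l\lVert z\rVert_{C_{t,q+1}H_x^{s+1}}$, with the $\dot H^s$ norms on $z$ controlled by \eqref{additive stopping time} and $l=\lambda_{q+1}^{-\alpha}$ from \eqref{define l}.

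For each term I will apply \eqref{est 92b} to replace $\lVert \mathcal{B}(\cdot)\rVert_{C_x}$ by $\lVert\mathcal{B}(\cdot)\rVert_{\dot H^{1+\sigma}}$ and then by $\lVert\cdot\rVert_{\dot H^\sigma}$, followed by the product estimate $\lVert fg\rVert_{\dot H^\sigma}\lesssim \lVert f\rVert_{\dot H^{(1+\sigma)/2}}\lVert g\rVert_{\dot H^{(1+\sigma)/2}}$ from \eqref{est 92a}. The $y_q$, $y_l$, $y_{q+1}$ factors are then estimated by \eqref{inductive 1a}--\eqref{inductive 1c}; the $w_{q+1}$ factor by \eqref{est 11a} combined with its frequency localization \eqref{support wqplus1} (so $\lVert w_{q+1}\rVert_{H_x^{(1+\sigma)/2}} \lesssim \lambda_{q+1}^{(1+\sigma)/2}\delta_{q+1}^{1/2}\bar{e}^{1/2}$); and the $z$-type factors by the stopping-time bounds in \eqref{additive stopping time}, which notably allow me to place $z$ in $\dot H^{5/2+2\sigma}$ and $z\in C_t^{1/2-2\delta}\dot H^{7/4+4\sigma}$ uniformly over $[0,\mathfrak t]$. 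For the particular term $(\nabla\Lambda(z_{q+1}-z_q))^T\cdot w_{q+1}$, whose Fourier support sits in $\{\tfrac{\lambda_{q+1}}{4}\leq |\xi|\leq 4\lambda_{q+1}\}$ once restricted to nontrivial interactions with $w_{q+1}$, I instead pull one derivative out via $\tilde P_{\approx\lambda_{q+1}}$ and gain $\lambda_{q+1}^{-1}$ directly from $\mathcal{B}$, so the net contribution is bounded by $\lambda_{q+1}^{-1}\lVert z\rVert_{\dot H^{5/2+2\sigma}}\lVert w_{q+1}\rVert_{C_x}\lesssim \delta_{q+1}^{1/2}\bar e^{1/2}$.

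The main obstacle is purely bookkeeping: I must verify that every one of the fourteen products yields a bound of the form $\lambda_{q+1}^{-\epsilon}$ (for some $\epsilon>0$) times harmless polynomial factors in $\lambda_q$, $\lambda_{q+1}$, $\delta_q$, $\delta_{q+1}$, which then absorbs into $\lambda_{q+3}\delta_{q+3}=\lambda_1^{2\beta-1}\lambda_{q+1}^{b^2(1-2\beta)}$ by choosing $\beta$ sufficiently close to $\tfrac12$ and invoking \eqref{additive bound b}. The tightest pieces are those of Family B involving $z_q-z_l$ paired with $w_{q+1}$ or $\Lambda y_l$: there the gain is only $l^{1/2-2\delta}=\lambda_{q+1}^{-\alpha(1/2-2\delta)}$, but with $\alpha\in(1,3/2)$ from \eqref{bound alpha} and $\delta\in(0,\tfrac14)$ sufficiently small this still dominates all positive powers of $\lambda_{q+1}$ that appear on the other factors once \eqref{additive bound b} is used, exactly as in \eqref{est 249}.

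Summing over all fourteen contributions and comparing against $\lambda_{q+3}\delta_{q+3}$, the conclusion $\lVert R_{\text{Com2}}\rVert_{C_{t,x,q+1}}\ll\lambda_{q+3}\delta_{q+3}$ will follow for $\beta>\tfrac12$ sufficiently close to $\tfrac12$ and $a_0$ sufficiently large, exactly as in Propositions \ref{prop RT}--\ref{prop Rcom1}.
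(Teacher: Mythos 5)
There is a genuine gap in your default strategy. You propose to estimate every term by \eqref{est 92b} followed by the product estimate \eqref{est 92a}, singling out only $(\nabla\Lambda(z_{q+1}-z_q))^{T}\cdot w_{q+1}$ for the frequency-localized $\tilde P_{\approx\lambda_{q+1}}$ argument. But the $\dot H^{\sigma}$ route diverges for \emph{every} term in which $\Lambda$ or $\nabla$ falls on $w_{q+1}$ and the other factor is a $z$-difference, namely $\Lambda w_{q+1}\cdot\nabla(z_{q+1}-z_q)$, $\Lambda w_{q+1}^{\bot}\nabla^{\bot}\cdot(z_l-z_q)$, $(\nabla(z_q-z_{q+1}))^{T}\cdot\Lambda w_{q+1}$, and $\Lambda(z_q-z_l)\cdot\nabla w_{q+1}$. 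Take the last one: the product estimate gives $\lVert\nabla w_{q+1}\rVert_{\dot H^{(1+\sigma)/2}}\lVert\Lambda(z_q-z_l)\rVert_{\dot H^{(1+\sigma)/2}}\lesssim\lambda_{q+1}^{3/2+\sigma/2}\delta_{q+1}^{1/2}\,l^{1/2-2\delta}\approx\lambda_{q+1}^{1+\sigma/2-\beta-\alpha(1/2-2\delta)}\lambda_1^{\beta-1/2}$, and since $\alpha<3/2$ from \eqref{bound alpha} and $\beta$ is near $\tfrac12$, the exponent on $\lambda_{q+1}$ is at least $\tfrac14$, so the bound diverges in $q$ regardless of how you tune $\beta$ and $a_0$. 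The analogy with \eqref{est 249} does not carry over because in $R_{\text{Com1}}$ every factor has Fourier support $\lesssim\lambda_q$, whereas here $\nabla w_{q+1}$ lives at frequency $\sim\lambda_{q+1}$. What closes the argument is the observation \eqref{new1}: for each of these four terms the \emph{entire product} is Fourier-localized to $\{\xi:\tfrac{\lambda_{q+1}}{4}\leq|\xi|\leq 2\lambda_{q+1}\}$, so that $\mathcal{B}$ gains a genuine factor $\lambda_{q+1}^{-1}$ which exactly cancels the $\lambda_{q+1}$ paid for differentiating $w_{q+1}$. You must deploy the $\mathcal{B}\tilde P_{\approx\lambda_{q+1}}$ (or equivalently \eqref{new1}) argument for all four of these terms, not just one.

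A secondary, non-fatal issue: in your $R_{\text{Com2,4}}$ step you write $\lVert\nabla\Lambda(z_{q+1}-z_q)\rVert_{C_x}\lesssim\lVert z\rVert_{\dot H^{5/2+2\sigma}}$. This is not a valid embedding, since two derivatives landing in $C_x(\mathbb{T}^2)$ requires roughly $\dot H^{3+}$ regularity, and $z$ only lies in $\dot H^{5/2+2\sigma}$. The correct bound again uses the frequency cutoff: since $\supp(z_{q+1}-z_q)\hspace{0.5mm}\hat{}\subset\{|\xi|\leq\tfrac{\lambda_{q+1}}{4}\}$, one has $\lVert\nabla\Lambda(z_{q+1}-z_q)\rVert_{C_x}\lesssim\lambda_{q+1}^{1/2-\sigma}\lVert z\rVert_{\dot H^{5/2+2\sigma}}$, which after the $\lambda_{q+1}^{-1}$ from $\mathcal{B}$ still gives $\lesssim\lambda_{q+1}^{-1/2-\sigma}\delta_{q+1}^{1/2}$ and therefore poses no obstruction.
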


\begin{proof}[Proof of Proposition \ref{prop Rcom2}]
For convenience of estimates we compose 
\begin{equation}\label{new 38}
R_{\text{Com2}} = \sum_{k=1}^{13} R_{\text{Com2,k}}
\end{equation} 
where 
\begin{subequations}\label{new 34} 
\begin{align} 
R_{\text{Com2,1}} \triangleq& \mathcal{B} ( \Lambda y_{q+1} \cdot \nabla (z_{q+1} - z_{q}) + \Lambda (z_{q+1} - z_{q}) \cdot \nabla y_{q+1} ),  \label{new 34a} \\
R_{\text{Com2,2}} \triangleq& \mathcal{B} ( (\nabla y_{l})^{T} \cdot \Lambda (z_{q} - z_{q+1}) ),   \label{new 34b} \\
R_{\text{Com2,3}} \triangleq& \mathcal{B} ( (\nabla (z_{q} - z_{q+1}))^{T} \cdot \Lambda y_{l} ),   \label{new 34c}\\
R_{\text{Com2,4}} \triangleq& \mathcal{B} ( (\nabla \Lambda (z_{q+1} - z_{q}))^{T} \cdot w_{q+1} ),  \label{new 34d}\\
R_{\text{Com2,5}} \triangleq& -\mathcal{B} ( \Lambda w_{q+1}^{\bot} \nabla^{\bot} \cdot (z_{l} - z_{q}) ),  \label{new 34e}\\
R_{\text{Com2,6}} \triangleq& \mathcal{B} ( (\nabla (z_{q} - z_{q+1}))^{T} \cdot \Lambda w_{q+1}),   \label{new 34f}\\
R_{\text{Com2,7}} \triangleq& \mathcal{B} ( \Lambda y_{l}^{\bot} \nabla^{\bot} \cdot (z_{q} - z_{l})),   \label{new 34g}\\
R_{\text{Com2,8}} \triangleq& \mathcal{B} (  \Lambda (z_{q} - z_{l}) \cdot \nabla w_{q+1}),  \label{new 34h}\\
R_{\text{Com2,9}} \triangleq& \mathcal{B} ( \Lambda (z_{q} - z_{l})^{\bot} \nabla^{\bot} \cdot y_{l}),   \label{new 34i}\\
R_{\text{Com2,10}} \triangleq& \mathcal{B} ( \Lambda (z_{q+1} - z_{q})^{\bot} \nabla^{\bot} \cdot z_{q+1}),   \label{new 34j}\\
R_{\text{Com2,11}} \triangleq& \mathcal{B} ( \Lambda (z_{q} - z_{l})^{\bot}\nabla^{\bot} \cdot z_{q+1} ),  \label{new 34k}\\
R_{\text{Com2,12}} \triangleq& \mathcal{B} ( \Lambda z_{l}^{\bot} \nabla^{\bot} \cdot (z_{q+1} - z_{q}) ),   \label{new 34l}\\
R_{\text{Com2,13}} \triangleq& \mathcal{B} ( \Lambda z_{l}^{\bot} \nabla^{\bot} \cdot (z_{q} - z_{l})).  \label{new 34m}
\end{align}
\end{subequations} 
We further split $R_{\text{Com2,1}}$ using \eqref{est 44} as 
\begin{equation*}
R_{\text{Com2,1}} = \sum_{k=1}^{3} R_{\text{Com2,1,k}}
\end{equation*} 
where 
\begin{subequations}\label{new 35} 
\begin{align}
& R_{\text{Com2,1,1}} \triangleq \mathcal{B} \left( \Lambda w_{q+1} \cdot \nabla ( z_{q+1} - z_{q}) \right), \label{new 35a}\\
& R_{\text{Com2,1,2}} \triangleq \mathcal{B} \divergence \left((z_{q+1} - z_{q}) \otimes \Lambda y_{l} \right), \label{new 35b}\\
& R_{\text{Com2,1,3}} \triangleq \mathcal{B} \divergence \left( y_{q+1} \otimes \Lambda (z_{q+1} - z_{q})  \right).  \label{new 35c} 
\end{align}
\end{subequations} 
We will frequently make use of $\supp (z_{q+1} - z_{q})\hspace{0.5mm}\hat{}\hspace{1mm}  \subset \{ \xi: \frac{\lambda_{q}}{4} \leq \lvert \xi \rvert \leq \frac{\lambda_{q+1}}{4} \}$ due to \eqref{define f} and \eqref{define zq}. E.g., this, together with \eqref{support wqplus1}, implies 
\begin{equation}\label{new1}
\supp \left( \Lambda w_{q+1} \cdot \nabla (z_{q+1} -z_{q}) \right)\hspace{0.5mm}\hat{}\hspace{1mm} \subset \left\{ \xi : \frac{\lambda_{q+1}}{4} \leq \lvert \xi \rvert \leq 2 \lambda_{q+1}\right\}; 
\end{equation}
we note that this is one part that required $f(q) <\frac{\lambda_{q}}{2}$. With this in mind, we estimate for $\beta > \frac{1}{2}$ sufficiently close to $\frac{1}{2}$ and $a_{0}$ sufficiently large 
\begin{subequations}
\begin{align}
& \lVert R_{\text{Com2,1,1}} \rVert_{C_{t,x,q+1}} + \lVert R_{\text{Com2,6}} \rVert_{C_{t,x,q+1}} \overset{\eqref{new1}}{\lesssim}  \lambda_{q+1}^{-1} \lVert \Lambda w_{q+1} \rVert_{C_{t,x,q+1}} \lVert \nabla ( z_{q+1} - z_{q}) \rVert_{C_{t,x,q+1}}  \nonumber \\
& \hspace{45mm}  \overset{ \eqref{support wqplus1} \eqref{est 11a}}{\lesssim}  \delta_{q+1}^{\frac{1}{2}} \lVert z \rVert_{C_{t,q+1} H_{x}^{2+ \sigma}} \overset{\eqref{additive stopping time}}{\ll}  \lambda_{q+3} \delta_{q+3}, \\
& \lVert R_{\text{Com2,1,2}} \rVert_{C_{t,x,q+1}} \lesssim \lVert \Lambda y_{l} \rVert_{C_{t,x,q+1}} \lVert z_{q+1} - z_{q} \rVert_{C_{t,x,q+1}}  \overset{\eqref{inductive 1c} \eqref{additive stopping time}}{\lesssim} \delta_{q}^{\frac{1}{2}}  \ll \lambda_{q+3} \delta_{q+3}, 
\\
& \lVert R_{\text{Com2,1,3}} \rVert_{C_{t,x,q+1}}  \lesssim \lVert \Lambda (z_{q+1} - z_{q}) \rVert_{C_{t,x,q+1}} \lVert y_{q+1} \rVert_{C_{t,x,q+1}}  \overset{\eqref{additive stopping time}}{\lesssim} \lambda_{q}^{-\sigma} \ll \lambda_{q+3} \delta_{q+3}, \\
& \lVert R_{\text{Com2,2}} \rVert_{C_{t,x,q+1}} + \lVert R_{\text{Com2,9}}  \rVert_{C_{t,x,q+1}} \nonumber \\
&\overset{\eqref{inductive 1c} \eqref{define f}}{\lesssim} (\lambda_{q} \delta_{q}^{\frac{1}{2}}) [ \lambda_{q}^{-\frac{1}{2} - \sigma} \lVert z_{q} - z_{q+1} \rVert_{C_{t,q+1} C_{x}^{\frac{3}{2} + \sigma}} +  l^{\frac{1}{2}} \lVert z_{q} \rVert_{C_{t,q+1} C_{x}^{\frac{3}{2}}} \nonumber \\
& \hspace{35mm}  + l^{\frac{1}{2} - 2 \delta} \lambda_{q} \lVert z_{q} \rVert_{C_{t,q+1}^{\frac{1}{2} - 2 \delta} C_{x}}] \overset{\eqref{additive stopping time}}{\lesssim}  \lambda_{q}^{\frac{1}{2} - \sigma} \delta_{q}^{\frac{1}{2}}  \ll \lambda_{q+3} \delta_{q+3}, \label{est 35}  \\ 
&  \lVert R_{\text{Com2,3}} \rVert_{C_{t,x,q+1}} + \lVert R_{\text{Com2,7}} \rVert_{C_{t,x,q+1}}  \nonumber \\
&\overset{\eqref{inductive 1c} \eqref{define f}}{\lesssim} \lambda_{q}^{-\frac{1}{2} - \sigma} \lVert z_{q} - z_{q+1} \rVert_{C_{t,q+1} C_{x}^{\frac{3}{2} + \sigma}} (\lambda_{q} \delta_{q}^{\frac{1}{2}}) + \lambda_{q} \delta_{q}^{\frac{1}{2}} [ l^{\frac{1}{2}} \lVert z_{q} \rVert_{C_{t,q+1} C_{x}^{\frac{3}{2}}}  \nonumber \\
& \hspace{35mm} + l^{\frac{1}{2} - 2 \delta} \lambda_{q} \lVert z_{q} \rVert_{C_{t,q+1}^{\frac{1}{2} - 2 \delta} C_{x}} ] \overset{\eqref{define l} \eqref{additive stopping time}}{\lesssim} \delta_{q}^{\frac{1}{2}} \lambda_{q}^{\frac{1}{2} - \sigma} \ll \lambda_{q+3} \delta_{q+3},  \label{est 34}  \\
& \lVert R_{\text{Com2,4}} \rVert_{C_{t,x,q+1}} \overset{\eqref{new1}}{\lesssim}  \lambda_{q+1}^{-1}\lVert \nabla \Lambda (z_{q+1} - z_{q}) \rVert_{C_{t,x,q+1}} \lVert w_{q+1} \rVert_{C_{t,x,q+1}} \nonumber \\
& \hspace{60mm} \overset{\eqref{est 11a}}{\lesssim}  \lambda_{q+1}^{-\frac{1}{2} - \beta} \lambda_{1}^{\beta - \frac{1}{2}} \ll \lambda_{q+3} \delta_{q+3}, \\ 
& \lVert R_{\text{Com2,5}} \rVert_{C_{t,x,q+1}}  \lesssim \lambda_{q+1}^{-1} \lVert \Lambda w_{q+1} \rVert_{C_{t,x,q+1}} \lVert z \rVert_{C_{t,q+1} \dot{H}_{x}^{2+ \sigma}} \overset{\eqref{est 11a}}{\lesssim}  \delta_{q+1}^{\frac{1}{2}} \ll \lambda_{q+3} \delta_{q+3}, \\  
& \lVert R_{\text{Com2,8}} \rVert_{C_{t,x,q+1}}   \lesssim\lambda_{q+1}^{-1} \lVert \Lambda (z_{q} - z_{l}) \rVert_{C_{t,x,q+1}} \lVert \nabla w_{q+1} \rVert_{C_{t,x,q+1}} \nonumber \\
& \hspace{60mm}  \overset{\eqref{support wqplus1} \eqref{additive stopping time} \eqref{est 11a}}{\lesssim} \delta_{q+1}^{\frac{1}{2}} \ll \lambda_{q+3} \delta_{q+3}, \\
&  \sum_{k=10}^{13} \lVert R_{\text{Com2,k}} \rVert_{C_{t,x,q+1}}  \overset{\eqref{define f} \eqref{additive stopping time}}{\lesssim} \lambda_{q}^{-\sigma} \lVert z \rVert_{C_{t,q+1} \dot{H}_{x}^{2+ 2 \sigma}}  \nonumber \\
&  \hspace{20mm} + l^{\frac{1}{2} - 2 \delta} (\lVert z \rVert_{C_{t,q+1} C_{x}^{\frac{3}{2} - 2 \delta}} + \lambda_{q} \lVert z \rVert_{C_{t,q+1}^{\frac{1}{2} - 2 \delta} C_{x}}) \overset{\eqref{additive stopping time}}{\lesssim}  \lambda_{q}^{-\sigma} \ll \lambda_{q+3} \delta_{q+3}. 
\end{align}
\end{subequations} 
Applying these estimates to \eqref{new 38} completes the proof of Proposition \ref{prop Rcom2}. 
\end{proof}

\subsubsection{Control of the Energy}

The last step before we can conclude the proof of Proposition \ref{additive q to qplus1} is to verify that the energy control hypothesis is satisfied.
\begin{proposition}\label{prop additive control energy}
Define 
\begin{equation}\label{est 268}
\delta E(t) \triangleq \lvert e(t) (1- \lambda_{q+2} \delta_{q+2}) - \lVert (y_{q+1} + z_{q+1} ) (t) \rVert_{\dot{H}_{x}^{\frac{1}{2}}}^{2} \rvert. 
\end{equation}
Then, for all $t \in [t_{q+1}, \mathfrak{t}]$ 
\begin{equation}\label{est 20}
\delta E(t) \leq \frac{1}{4} \lambda_{q+2} \delta_{q+2} e(t) 
\end{equation}
so that \eqref{inductive 3} holds at level $q+1$.
\end{proposition}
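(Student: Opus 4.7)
The plan is to expand $\lVert(y_{q+1}+z_{q+1})(t)\rVert_{\dot H^{1/2}_x}^2$ directly, exploiting the frequency separation between the low-frequency piece $y_l+z_{q+1}$ and the high-frequency perturbation $w_{q+1}$, and then to match the leading contribution of $\lVert w_{q+1}\rVert_{\dot H^{1/2}_x}^2$ with $4(2\pi)^2\gamma_q(t)$, which by construction equals $e(t)(1-\lambda_{q+2}\delta_{q+2})-\lVert(y_q+z_q)(t)\rVert_{\dot H^{1/2}_x}^2$. Since $y_q$ has Fourier support in $B(0,2\lambda_q)$ by \eqref{inductive 1a} and $z_{q+1}$ has Fourier support in $B(0,\lambda_{q+1}/4)$ by \eqref{define f} and \eqref{define zq} (and mollification does not enlarge Fourier support), while $\hat w_{q+1}$ is supported in the annulus $\{\lambda_{q+1}/2\leq|\xi|\leq 2\lambda_{q+1}\}$ by \eqref{support wqplus1}, Plancherel yields the exact splitting
\[
\lVert(y_{q+1}+z_{q+1})(t)\rVert_{\dot H^{1/2}_x}^2=\lVert(y_l+z_{q+1})(t)\rVert_{\dot H^{1/2}_x}^2+\lVert w_{q+1}(t)\rVert_{\dot H^{1/2}_x}^2.
\]

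For the main term, I would use the frequency localization of $w_{q+1}$ (via $\tilde P_{\approx\lambda_{q+1}}$ from Section \ref{Section 2.2}) to reduce $\lVert w_{q+1}\rVert_{\dot H^{1/2}_x}^2$ to $\lambda_{q+1}\lVert w_{q+1}\rVert_{L^2_x}^2$ up to acceptable commutator errors. Next, I expand $\lVert w_{q+1}\rVert_{L^2_x}^2$ via \eqref{est 14a}: the off-diagonal $(j,k)\neq(j',-k')$ terms carry a non-degenerate phase $e^{i(k+k')\cdot\lambda_{q+1}x}$ or, when $k'=-k$ and $j\neq j'$, disjoint time cutoffs, and integrating by parts against $\lambda_{q+1}^{-N}$ makes them negligible; this is the identity announced at \eqref{est 43}. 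For the diagonal $k'=-k$, taking the trace in Lemma \ref{geometric lemma} gives $\sum_k\gamma_k^2(\Id-\mathring R_{q,j}/\rho_j)=4$ (since $\mathring R_{q,j}$ is trace-free), so $\sum_k|a_{k,j}|^2=4\lambda_{q+1}^{-1}\rho_j$ pointwise, and integration yields
\[
\lVert w_{q+1}(t)\rVert_{\dot H^{1/2}_x}^2=4(2\pi)^2\sum_j\chi_j^2(t)\rho_j+\mathcal E_{\mathrm{osc}}(t),
\]
with $\mathcal E_{\mathrm{osc}}$ controlled by the same types of stationary-phase and commutator estimates used in Propositions \ref{prop RO}, \ref{prop additive psi}, and hence of size $\lesssim\lambda_q\delta_{q+1}\bar e\ll\lambda_{q+2}\delta_{q+2}e(t)$.

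Next, using \eqref{est 0a} I split $\rho_j=\varepsilon_\gamma^{-1}\sqrt{l^2+\lVert\mathring R_{q,j}(\tau_{q+1}j)\rVert_{C_x}^2}+\gamma_l(\tau_{q+1}j)$. The square-root term, together with the inductive bound \eqref{inductive 2b} and the fact that $\sum_j\chi_j^2\equiv1$, contributes at most
\[
4(2\pi)^2\varepsilon_\gamma^{-1}\bigl(l+\lVert\mathring R_q\rVert_{C_{t,x,q}}\bigr)\leq \tfrac{1}{8}\lambda_{q+2}\delta_{q+2}e(t)+O(l),
\]
which is the dominant error but fits comfortably inside the budget $\tfrac{1}{4}\lambda_{q+2}\delta_{q+2}e(t)$ for $a_0$ large. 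The $\gamma_l(\tau_{q+1}j)$ piece I replace first by $\gamma_l(t)$ (incurring an error $\lesssim\tau_{q+1}\lVert\partial_t\gamma_l\rVert_{C_t}$) and then by $\gamma_q(t)$ (incurring $\lesssim l\,\lVert\partial_t\gamma_q\rVert_{C_t}$); both derivative norms are controlled using \eqref{est 118}, \eqref{inductive 1d}, \eqref{temp}, the stopping time \eqref{additive stopping time}, and the choice of $\tau_{q+1}$ in \eqref{tau}, which is exactly what motivated that choice (see Remark \ref{Remark 2.1}). By the definition \eqref{est 0b},
\[
4(2\pi)^2\gamma_q(t)=e(t)(1-\lambda_{q+2}\delta_{q+2})-\lVert(y_q+z_q)(t)\rVert_{\dot H^{1/2}_x}^2,
\]
so after the low-frequency piece $\lVert y_l+z_{q+1}\rVert_{\dot H^{1/2}_x}^2$ is compared to $\lVert y_q+z_q\rVert_{\dot H^{1/2}_x}^2$ via standard mollifier estimates on $y_q$ (using \eqref{est 12}, \eqref{inductive 1c}) and on $z$ (using \eqref{additive stopping time}), the two $\lVert y_q+z_q\rVert^2$ contributions cancel exactly. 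The remaining terms give $\delta E(t)\leq\tfrac{1}{4}\lambda_{q+2}\delta_{q+2}e(t)$, which yields \eqref{inductive 3} at level $q+1$ as noted after \eqref{est 268}.

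The main obstacle is the tightness of the numerical budget: the $\mathring R_{q,j}$-contribution in $\rho_j$ already consumes half of it, so every other error (mollification in space and time, the time shift $\tau_{q+1}j\to t$, and the discarded oscillatory and commutator terms) must be strictly subleading with respect to $\lambda_{q+2}\delta_{q+2}$. The key quantitative step will therefore be to verify, with $\beta>\tfrac12$ sufficiently close to $\tfrac12$ and $a_0$ sufficiently large, that
\[
\tau_{q+1}\lVert\partial_t\gamma_q\rVert_{C_t}+l\,\lVert\partial_t\gamma_q\rVert_{C_t}+\lambda_q\delta_{q+1}\bar e+\bigl|\lVert y_l+z_{q+1}\rVert_{\dot H^{1/2}_x}^2-\lVert y_q+z_q\rVert_{\dot H^{1/2}_x}^2\bigr|\ll\lambda_{q+2}\delta_{q+2}\ushort e,
\]
exploiting $\lambda_1\delta_1=1$, \eqref{additive bound b}, and the sharp choice \eqref{tau} of $\tau_{q+1}$.
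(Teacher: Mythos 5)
Your proposal follows essentially the same route as the paper: exploit the Fourier-support separation \eqref{new 39} to split the squared $\dot H^{1/2}$-norm, compute the principal contribution of $\lVert w_{q+1}\rVert_{\dot H^{1/2}}^2$ via the trace of Lemma \ref{geometric lemma} to produce $4(2\pi)^2\sum_j\chi_j^2\rho_j$, separate the square-root piece of $\rho_j$ (which burns $\tfrac18\lambda_{q+2}\delta_{q+2}e(t)$ of the $\tfrac14\lambda_{q+2}\delta_{q+2}e(t)$ budget by \eqref{inductive 2b}) from the $\gamma_l(\tau_{q+1}j)$ piece, cancel the latter against $4(2\pi)^2\gamma_q(t)$, and show all remaining mollification, time-shift, and commutator errors are $\ll\lambda_{q+2}\delta_{q+2}\ushort e$. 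This is exactly the paper's decomposition \eqref{term E1}--\eqref{term E5}, and your final error budget matches \eqref{est 101}.

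A few of your intermediate justifications are stated too loosely and would need tightening to become a proof. First, reducing $\lVert w_{q+1}\rVert_{\dot H^{1/2}}^2$ to $\lambda_{q+1}\lVert w_{q+1}\rVert_{L^2}^2$ ``via $\tilde P_{\approx\lambda_{q+1}}$'' is too crude: $\tilde P_{\approx\lambda_{q+1}}$ only localizes to the octave $[\lambda_{q+1}/4,4\lambda_{q+1}]$, so $\lvert\Lambda-\lambda_{q+1}\rvert$ acting on that range is of order $\lambda_{q+1}$ and the resulting error is $\sim\lambda_{q+1}\delta_{q+1}\bar e$, which blows the budget. The sharp step is the commutator identity \eqref{new 40b}, i.e.\ $\Lambda\mathbb P_{q+1,k}\tilde w_{q+1,j,k}=\lambda_{q+1}\tilde w_{q+1,j,k}+\chi_j[\mathbb P_{q+1,k}\Lambda,a_{k,j}\psi_{q+1,j,k}]b_k(\lambda_{q+1}x)$, which replaces $\Lambda$ by $\lambda_{q+1}$ at the cost of a commutator gaining one derivative on $a_{k,j}\psi_{q+1,j,k}$ (so size $\lambda_q$, not $\lambda_{q+1}$); this is precisely what underlies the paper's terms \eqref{term E4} and \eqref{term E5} and the bound \eqref{est 100}. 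Second, the off-diagonal $(j,k)\neq(j',-k')$ terms vanish \emph{exactly} by Fourier-support disjointness (using $|k+k'|\geq\tfrac12$ from Lemma \ref{geometric lemma}(4) and the narrow support of $\hat K_{\approx1}$), not ``negligible after integrating against $\lambda_{q+1}^{-N}$''; and when $j\neq j'$ with $|j-j'|=1$ the time cutoffs are \emph{not} disjoint (they overlap by construction of the partition of unity), the reason those terms drop out is that $\Gamma_j$ and $\Gamma_{j'}$ are disjoint so $k'=-k$ is impossible. Third, the replacement $\gamma_l(t)\to\gamma_q(t)$ cannot be bounded by $l\lVert\partial_t\gamma_q\rVert_{C_t}$ as written, since $\gamma_q$ involves $\lVert z_q(t)\rVert_{\dot H^{1/2}}^2$ and $z$ is only $C^{1/2-2\delta}$ in time; the paper's treatment of $\RomanIV_1$ splits into $\RomanII_1$--$\RomanII_4$ and uses fractional-H\"older mollifier estimates for the $z$-terms. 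None of these are structural gaps — the final estimates you state are correct — but they are the places where a one-line heuristic in the sketch replaces a nontrivial identity or estimate in the actual argument.
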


\begin{proof}[Proof of Proposition \ref{prop additive control energy}]
We first write using \eqref{est 0b} and \eqref{est 44} 
\begin{align}
\delta E(t) =& \Big\lvert \gamma_{q}(t) 4 (2\pi)^{2}  \label{new 41} \\
& + \int_{\mathbb{T}^{2}} \Lambda^{\frac{1}{2}} (y_{q} + z_{q}) (t) \cdot \Lambda^{\frac{1}{2}} (y_{q} + z_{q}) (t) - \lvert \Lambda^{\frac{1}{2}} (y_{l} + w_{q+1} + z_{q+1} ) (t) \rvert^{2} dx \Big\rvert. \nonumber 
\end{align}
Similarly to \eqref{new1} we make the key observation that 
\begin{equation}\label{new 39} 
\supp \hat{y}_{l} \cap \supp \hat{w}_{q+1} = \emptyset \text{ and } \supp \hat{w}_{q+1} \cap \supp\hat{z}_{q+1} =\emptyset 
\end{equation}
due to \eqref{inductive 1a}, \eqref{support wqplus1}, and \eqref{define f}; here, we used the fact that our choice of $f$ in \eqref{define f} permits $f(q+1) = \frac{\lambda_{q+1}}{4} < \frac{\lambda_{q+1}}{2}$ where $\frac{\lambda_{q+1}}{2}$ is the lower bound of the frequency support of $w_{q+1}$, and this is what led to $\supp \hat{w}_{q+1} \cap \supp\hat{z}_{q+1} =\emptyset$. Consequently, we can simplify 
\begin{align}
& \int_{\mathbb{T}^{2}} \lvert \Lambda^{\frac{1}{2}} (y_{l} + w_{q+1} + z_{q+1}) \rvert^{2} dx  \nonumber \\
=& \int_{\mathbb{T}^2} \lvert \Lambda^{\frac{1}{2}} y_{l} \rvert^{2} + \lvert \Lambda^{\frac{1}{2}} w_{q+1} \rvert^{2} + \lvert \Lambda^{\frac{1}{2}} z_{q+1} \rvert^{2} dx +  2 \sum_{m \in \mathbb{Z}^{2} \setminus \{0\}} \lvert m \rvert [ \hat{y}_{l} \cdot \hat{z}_{q+1}](m). \label{new 42} 
\end{align}
The next two identities \eqref{est 95} and\eqref{est 96} follow the reasoning of \cite[p. 1861]{BSV19}. We can write from \eqref{est 67a} and write 
\begin{align}
\int_{\mathbb{T}^{2}} \lvert \Lambda^{\frac{1}{2}} w_{q+1} \rvert^{2} &dx \overset{\eqref{est 67a}}{=} \sum_{j,k} \int_{\mathbb{T}^{2}} \lvert \Lambda^{\frac{1}{2}} \mathbb{P}_{q+1, k} \tilde{w}_{q+1, j,k} \rvert^{2} dx  \nonumber \\
=& \sum_{j,k} \int_{\mathbb{T}^{2}} \mathbb{P}_{q+1, k} ( \chi_{j} a_{k,j} b_{k} (\lambda_{q+1} \Phi_{j} )) \cdot \Lambda \mathbb{P}_{q+1, -k} (\chi_{j} a_{-k,j} b_{-k} (\lambda_{q+1} \Phi_{j} ))dx  \nonumber \\
\overset{\eqref{est 14b}}{=}& \sum_{j,k} \int_{\mathbb{T}^{2}} \mathbb{P}_{q+1, k} \tilde{w}_{q+1, j, k}  \cdot \Lambda \mathbb{P}_{q+1, -k} \tilde{w}_{q+1, j, -k}dx. \label{est 95}
\end{align}
We recall two identities from \cite[Equations (175) and (177)]{Y23a}: 
\begin{subequations}\label{new 40}
\begin{align}
& \mathbb{P}_{q+1, k} \tilde{w}_{q+1, j, k}(x) =\tilde{w}_{q+1, j, k} (x) +\chi_{j} [\mathbb{P}_{q+1, k}, a_{k,j} (x) \psi_{q+1, j, k} (x) ] b_{k} (\lambda_{q+1} x), \label{new 40a}\\
& \Lambda  \mathbb{P}_{q+1, k} \tilde{w}_{q+1, j, k}(x) \nonumber\\
& \hspace{20mm} = \lambda_{q+1} \tilde{w}_{q+1, j, k}(x) + \chi_{j} [\mathbb{P}_{q+1, k} \Lambda, a_{k,j}(x) \psi_{q+1, j, k}(x) ] b_{k} (\lambda_{q+1} x). \label{new 40b} 
\end{align}
\end{subequations} 
We apply them to \eqref{est 95} to obtain 
\begin{align}
\int_{\mathbb{T}^{2}} \lvert \Lambda^{\frac{1}{2}} w_{q+1} \rvert^{2} dx =& \sum_{j,k} \int_{\mathbb{T}^{2}} \lambda_{q+1} \tilde{w}_{q+1,j, k} \cdot  \tilde{w}_{q+1,j, -k} \nonumber \\
&+ \tilde{w}_{q+1, j, k} \chi_{j} \cdot [ \mathbb{P}_{q+1, -k} \Lambda, a_{-k, j} \psi_{q+1, j, -k} ] b_{-k} (\lambda_{q+1} x) \nonumber \\
&+ \chi_{j} [ \mathbb{P}_{q+1, k}, a_{k, j} \psi_{q+1, j, k} ] b_{k} (\lambda_{q+1} x) \cdot \Lambda w_{q+1}dx.   \label{est 96}
\end{align}
Next, we focus on the first term $ \sum_{j,k} \int_{\mathbb{T}^{2}} \lambda_{q+1} \tilde{w}_{q+1,j, k} \cdot  \tilde{w}_{q+1,j, -k}$ of \eqref{est 96}. First, we use Lemma \ref{geometric lemma} to write 
\begin{align}
& \sum_{j,k} \int_{\mathbb{T}^{2}} \lambda_{q+1} \tilde{w}_{q+1, j,k} \cdot \tilde{w}_{q+1, j, -k}dx  \nonumber\\
\overset{\eqref{est 14b} \eqref{est 66}\eqref{est 3} }{=}& \sum_{j,k} \int_{\mathbb{T}^{2}} \lambda_{q+1} \chi_{j}^{2}  \left[ \lambda_{q+1}^{-\frac{1}{2}} \rho_{j}^{\frac{1}{2}} \gamma_{k} \left( \Id - \frac{ \mathring{R}_{q,j}}{\rho_{j}} \right) \right]^{2} \Tr (k^{\bot} \otimes k^{\bot} ) dx\nonumber\\
=& \sum_{j} \int_{\mathbb{T}^{2}} \chi_{j}^{2}  \rho_{j} 2 \Tr \left( \Id - \frac{\mathring{R}_{q,j}}{\rho_{j}} \right)dx.  \label{est 197}
\end{align}
Then we use the fact that $\mathring{R}_{q,j}$ is trace-free and that $\rho_{j}$ is a constant to conclude using \eqref{est 0a} that 
\begin{align}\label{est 43} 
& \sum_{j,k} \int_{\mathbb{T}^{2}} \lambda_{q+1} \tilde{w}_{q+1, j,k} \cdot \tilde{w}_{q+1, j, -k}dx = \sum_{j} 4 \chi_{j}^{2} \rho_{j} (2\pi)^{2} \nonumber \\
=& (2\pi)^{2} 4 \varepsilon_\gamma^{-1} \sum_{j} \chi_{j}^{2} \sqrt{ l^{2} + \lVert \mathring{R}_{q,j} (\tau_{q+1} j) \rVert_{C_{x}}^{2}} + 4 (2\pi)^{2} \sum_{j} \chi_{j}^{2} \gamma_{l} (\tau_{q+1} j).
\end{align}
Therefore, we have
\begin{align}
\delta E(t) &\overset{\eqref{new 41} \eqref{new 42}}{=} \Big\lvert \gamma_{q} (t) 4(2\pi)^{2} + \int_{\mathbb{T}^{2}} \Lambda^{\frac{1}{2}} (y_{q} + z_{q}) (t) \cdot \Lambda^{\frac{1}{2}} (y_{q} + z_{q}) (t)dx \nonumber\\
& - \int_{\mathbb{T}^{2}} \lvert \Lambda^{\frac{1}{2}} y_{l} (t) \rvert^{2} + \lvert \Lambda^{\frac{1}{2}} w_{q+1}(t) \rvert^{2} + \lvert \Lambda^{\frac{1}{2}} z_{q+1} (t) \rvert^{2} dx - 2 \sum_{m \in \mathbb{Z}^{2} \setminus \{0\}} \lvert m \rvert [ \hat{y}_{l} \cdot \hat{z}_{q+1} ](t,m)\Big\rvert \nonumber \\
&\overset{\eqref{est 96} \eqref{est 43}}{=} \lvert \sum_{k=1}^{5}\text{(E-k)} \rvert , 
\end{align}
where 
\begin{subequations}
\begin{align}
& \text{(E-1)} \triangleq   \gamma_{q}(t) 4(2\pi)^{2} - 4(2\pi)^{2} \sum_{j} \chi_{j}^{2}(t) \gamma_{l} (\tau_{q+1} j) \label{term E1} \tag{E-1}, \\
& \text{(E-2)} \triangleq - \int_{\mathbb{T}^2}\lvert \Lambda^{\frac{1}{2}} y_{l} (t) \rvert^{2} + \lvert \Lambda^{\frac{1}{2}} z_{q+1} (t) \rvert^{2}dx - 2 \sum_{m \in \mathbb{Z}^{2} \setminus \{0\}} \lvert m \rvert [ \hat{y}_{l} \cdot \hat{z}_{q+1} ](t,m) \label{term E2} \tag{E-2}, \\
& \text{(E-3)} \triangleq - (2\pi)^{2} 4 \varepsilon_\gamma^{-1} \sum_{j} \chi_{j}^{2}(t) \sqrt{ l^{2} + \lVert \mathring{R}_{q,j} (\tau_{q+1} j) \rVert_{C_{x}}^{2}}  \nonumber \\
& \hspace{20mm} + \int_{\mathbb{T}^{2}} \Lambda^{\frac{1}{2}} (y_{q} + z_{q}) \cdot \Lambda^{\frac{1}{2}} (y_{q}+  z_{q})(t)dx \label{term E3} \tag{E-3}, \\
& \text{(E-4)} \triangleq - \sum_{j,k} \int_{\mathbb{T}^2} \tilde{w}_{q+1, j,k}(t) \chi_{j}(t) [ \mathbb{P}_{q+1, -k} \Lambda, a_{-k, j} \psi_{q+1, j, -k} ] b_{-k} (\lambda_{q+1} x) dx \label{term E4} \tag{E-4}, \\
& \text{(E-5)} \triangleq - \sum_{j,k} \int_{\mathbb{T}^2} \chi_{j}(t) [ \mathbb{P}_{q+1, k}, a_{k,j} \psi_{q+1, j, k} ] b_{k} (\lambda_{q+1} x) \cdot \Lambda w_{q+1}(t) dx.\tag{E-5}\label{term E5}
\end{align}
\end{subequations}

We treat the terms within \eqref{term E1} carefully. First, we use \eqref{est 18} to write 
\begin{align}\label{est 300}
\lvert \gamma_{q} (t) 4(2\pi)^{2} - 4(2\pi)^{2} \sum_{j} \chi_{j}^{2} (t) \gamma_{l} (\tau_{q+1} j) \rvert \leq  4(2\pi)^{2} [ \RomanIV_{1}+  \RomanIV_{2}], 
\end{align}
where we define 
\begin{equation}
\RomanIV_{1} \triangleq  \lvert \gamma_{q}(t) - \gamma_{l} (t) \rvert \hspace{1mm} \text{ and } \hspace{1mm} \RomanIV_{2} \triangleq \sum_{j} \chi_{j}^{2}(t) \lvert \gamma_{l}(t) -\gamma_{l} (\tau_{q+1} j) \rvert. 
\end{equation} 
We further decompose the first term within \eqref{est 300} by using \eqref{est 0b} as 
\begin{equation}\label{new 45}
4( 2\pi)^{2} \RomanIV_{1}  \leq \sum_{k=1}^{4} \RomanII_{k}
\end{equation} 
where 
\begin{subequations}\label{new 43} 
\begin{align}
\RomanII_{1} \triangleq&  (1- \lambda_{q+2} \delta_{q+2}) \lvert e(t) \ast_{t} \varphi_{l} - e(t) \rvert, \label{new 43a} \\
\RomanII_{2} \triangleq&  \left\lvert \int_{\mathbb{T}^{2}} \lvert \Lambda^{\frac{1}{2}} y_{q} \rvert^{2} \ast_{t} \varphi_{l} - \lvert \Lambda^{\frac{1}{2}} y_{q} \rvert^{2} dx \right\rvert, \label{new 43b} \\
\RomanII_{3} \triangleq& \left\lvert \int_{\mathbb{T}^{2}} 2( \Lambda^{\frac{1}{2}} y_{q} \cdot \Lambda^{\frac{1}{2}} z_{q}) \ast_{t} \varphi_{l} - 2 \Lambda^{\frac{1}{2}} y_{q} \cdot \Lambda^{\frac{1}{2}} z_{q}  dx \right\rvert, \label{new 43c} \\ 
\RomanII_{4} \triangleq& \left\lvert \int_{\mathbb{T}^{2}} \lvert \Lambda^{\frac{1}{2}} z_{q} \rvert^{2} \ast_{t} \varphi_{l} - \lvert \Lambda^{\frac{1}{2}} z_{q} \rvert^{2} dx \right\rvert. \label{new 43d} 
\end{align}
\end{subequations}
Bounding $\lvert 1- \lambda_{q+2} \delta_{q+2} \rvert \leq 1$ and relying on standard mollifier estimates give us 
\begin{equation}\label{est 102}
 \RomanII_{1}  \lesssim l \lVert e' \rVert_{C_{t}} \overset{\eqref{est 118}}{\lesssim} l \tilde{e},\hspace{8mm} \RomanII_{2} \lesssim l \lVert \Lambda y_{q} \rVert_{C_{t,x,q+1}} \lVert \partial_{t} y_{q} \rVert_{C_{t,x,q+1}}  \overset{\eqref{inductive 1c} \eqref{temp}}{\lesssim}  l  \lambda_{q}^{3} \delta_{q}^{\frac{3}{2}} \bar{e}^{\frac{3}{2}}. 
\end{equation} 
Next, we rewrite \eqref{new 43c} to a convenient form of 
\begin{align}
\RomanII_{3}  =& 2 \Big\lvert \int_{\mathbb{T}^{2}} (y_{q} \cdot \Lambda z_{q}) \ast_{t} \varphi_{l}  - \left( [ y_{q} \ast_{t} \varphi_{l} ] \cdot \Lambda z_{q} \right) \ast_{t} \varphi_{l} \nonumber \\
& + \left( [ y_{q} \ast_{t} \varphi_{l} ] \cdot  \Lambda z_{q}\right) \ast_{t} \varphi_{l} - [ ( y_{q} \ast_{t} \varphi_{l}) \cdot ( \Lambda z_{q} \ast_{t} \varphi_{l} ) ] \ast_{t} \varphi_{l}  \nonumber \\
&+ [ (y_{q} \ast_{t} \varphi_{l} ) \cdot (\Lambda z_{q} \ast_{t} \varphi_{l} ) ] \ast_{t} \varphi_{l} - (y_{q} \ast_{t} \varphi_{l}) \cdot (\Lambda z_{q} \ast_{t} \varphi_{l})  \nonumber \\
&+ (y_{q} \ast_{t} \varphi_{l}) \cdot (\Lambda z_{q} \ast_{t} \varphi_{l}) - (y_{q} \ast_{t} \varphi_{l}) \cdot \Lambda z_{q} + (y_{q} \ast_{t} \varphi_{l}) \cdot \Lambda z_{q} -  y_{q} \cdot \Lambda z_{q} dx \Big\rvert. \label{new 44}
\end{align}
Standard mollifier estimates now give us for any $\delta \in (0, \frac{1}{4})$ 
\begin{align}
\RomanII_{3} \lesssim& l \lVert y_{q} \rVert_{C_{t,q+1}^{1} C_{x}} \lVert z_{q} \rVert_{C_{t,q+1} \dot{H}_{x}^{1}} + l^{\frac{1}{2} - 2 \delta} \lVert y_{q} \rVert_{C_{t,x,q+1}}  \lVert z_{q} \rVert_{C_{t,q+1}^{\frac{1}{2} - 2 \delta} \dot{H}_{x}^{1}} \nonumber \\
& \overset{\eqref{inductive 1b} \eqref{temp} \eqref{additive stopping time}}{\lesssim} l \lambda_{q}^{2} \delta_{q} \bar{e}  + l^{\frac{1}{2} - 2 \delta}\bar{e}^{\frac{1}{2}}. \label{est 204}
\end{align}
Yet, another mollifier estimate allows us to estimate from \eqref{new 43d} 
\begin{equation}\label{est 205a} 
\RomanII_{4} \lesssim l^{\frac{1}{2} - 2 \delta} \lVert z_{q} \rVert_{C_{t,q+1}^{\frac{1}{2} - 2 \delta} L_{x}^{2}} \lVert \Lambda z_{q} \rVert_{C_{t,q+1}L_{x}^{2}} \overset{\eqref{additive stopping time}}{\lesssim} l^{\frac{1}{2} - 2 \delta}. 
\end{equation} 
Hence, applying these estimates \eqref{est 102}, \eqref{est 204}, and \eqref{est 205a} to \eqref{new 45} gives us now  using the hypothesis that $e(t) \geq \ushort{e}$ due to \eqref{est 118}, for $\beta > \frac{1}{2}$ sufficiently close to $\frac{1}{2}$ and $a_{0}$ sufficiently large, 
\begin{align}
&4 (2\pi)^{2} \RomanIV_{1} \leq \sum_{k=1}^{4} \RomanII_{k}  \label{new 46} \\
\lesssim& l \left(\tilde{e} + \lambda_{q}^{3} \delta_{q}^{\frac{3}{2}} \bar{e}^{\frac{3}{2}} + \lambda_{q}^{2} \delta_{q} \bar{e} \right) + l^{\frac{1}{2} - 2 \delta}  \lesssim \lambda_{q+1}^{-\alpha} \lambda_{q}^{3- 3\beta} \lambda_{1}^{(2\beta -1) \frac{3}{2}} \bar{e}^{\frac{3}{2}} + \lambda_{q+1}^{-\alpha( \frac{1}{2} - 2 \delta)} \ll \lambda_{q+2} \delta_{q+2}  e(t). \nonumber 
\end{align}

Next, to estimate $4(2\pi)^{2} \RomanIV_{2}$ in \eqref{est 300} we observe that
\begin{equation}\label{est 305} 
\lvert \gamma_{l}' (t) \rvert \lesssim l^{-1} \lVert \gamma_{q} \rVert_{C_{t,q+1}}  \overset{\eqref{est 21}}{\approx} l^{-1}\sup_{t}\gamma_{q}(t) \overset{\eqref{est 0b}}{\lesssim} l^{-1} \bar{e} 
\end{equation}
so that we may estimate using the hypothesis that $e(t) \geq \ushort{e}$ due to \eqref{est 118}, for $\beta > \frac{1}{2}$ sufficiently close to $\frac{1}{2}$ and $a_{0}$ sufficiently large 
\begin{align}
4(2\pi)^{2}\RomanIV_{2} \lesssim&  \sum_{j} \chi_{j}^{2} (t) \lvert \tau_{q+1} j - t \rvert \lVert \gamma_{l}' \rVert_{C_{t,q+1}}   \overset{\eqref{est 18} \eqref{est 305}}{\lesssim} \tau_{q+1} l^{-1}  \nonumber \\
& \overset{\eqref{tau} \eqref{define lambda delta} \eqref{define l}}{\approx} a^{b[-\frac{\beta}{2} + \frac{1}{4} ]} a^{b^{q+1} [ \frac{\alpha}{2} - \frac{\beta}{2} - \frac{1}{2} + b[\beta - \frac{1}{2} ]]} \overset{\eqref{bound alpha}}{\ll} \lambda_{q+2} \delta_{q+2} e(t).  \label{est 104} 
\end{align}
Thus, we now apply \eqref{new 46} and \eqref{est 104} to $\eqref{est 300}$ to conclude that \eqref{term E1} satisfies for $\beta > \frac{1}{2}$ sufficiently close to $\frac{1}{2}$ and $a_{0}$ sufficiently large 
\begin{equation}\label{new 49}
4 (2\pi)^{2} \lvert \gamma_{q}(t) - \sum_{j}\chi_{j}^{2}(t) \gamma_{l} (\tau_{q+1} j) \rvert \overset{\eqref{est 300}}{\leq} 4(2\pi)^{2} [ \RomanIV_{1}+  \RomanIV_{2}]  \ll \lambda_{q+2} \delta_{q+2} e(t). 
\end{equation}

Next, for the first term of \eqref{term E3}, we estimate for $\beta > \frac{1}{2}$ sufficiently close to $\frac{1}{2}$ and $a_{0}$ sufficiently large, 
\begin{align}\label{est 101}
& (2\pi)^{2} 4 \varepsilon_\gamma^{-1} \sum_{j} \chi_{j}^{2}(t) \sqrt{ l^{2} + \lVert \mathring{R}_{q,j} (\tau_{q+1} j) \rVert_{C_{x}}^{2}} \nonumber \\
\overset{\eqref{est 16}}{\leq}&  (2\pi)^{2} 4 \varepsilon_\gamma^{-1} \sum_{j} \chi_{j}^{2} (t) ( l + \lVert \mathring{R}_{q} \ast_{t} \varphi_{l} (\tau_{q+1} j) \rVert_{C_{x}} \nonumber \\
\overset{\eqref{est 18} \eqref{inductive 2b}}{\leq}& (2\pi)^{2} 4 \varepsilon_\gamma^{-1} \lambda_{q+1}^{-\alpha} + \frac{1}{8} \lambda_{q+2} \delta_{q+2} e(t) \overset{\eqref{additive bound b}}{\leq} \frac{1}{6} \lambda_{q+2} \delta_{q+2} e(t).
\end{align}

Next, we estimate \eqref{term E4} and \eqref{term E5} for $\beta > \frac{1}{2}$ sufficiently close to $\frac{1}{2}$ and $a_{0}$ sufficiently large, by relying on \cite[Equation (A.17)]{BSV19} and the hypothesis that $e(t) \geq \ushort{e}$ due to \eqref{est 118},  
\begin{align}
& \sum_{j,k} \Big\lvert \int_{\mathbb{T}^2} \tilde{w}_{q+1, j, k}(t)  \chi_{j}(t)  [ \mathbb{P}_{q+1, -k} \Lambda, a_{-k, j} \psi_{q+1, j, -k} ] b_{-k} (\lambda_{q+1} x) dx \Big\rvert \nonumber \\
&\hspace{5mm}+ \Big\lvert \int_{\mathbb{T}^2} \chi_{j}(t)  [ \mathbb{P}_{q+1, k}, a_{k,j} \psi_{q+1,j, k} ] b_{k} (\lambda_{q+1} x) \cdot \Lambda w_{q+1}(t)  dx \Big\rvert  \nonumber \\
\overset{\eqref{support wqplus1}}{\lesssim}& \sum_{j,k} 1_{\supp \chi_{j}} (t) [ \lVert \tilde{w}_{q+1, j, k} (t) \rVert_{C_{x}} \lVert \nabla (a_{-k,j} \psi_{q+1, j, -k} ) \rVert_{C_{x}} \lVert b_{-k} (\lambda_{q+1} x) \rVert_{C_{x}} \nonumber \\
&+ \lambda_{q+1}^{-1} \lVert \nabla (a_{k,j} \psi_{q+1, j, k} ) \rVert_{C_{x}} \lVert b_{k} (\lambda_{q+1} x) \rVert_{C_{x}} \lambda_{q+1} \lVert w_{q+1} (t)  \rVert_{C_{x}}] \nonumber \\
\overset{\eqref{psi 2} \eqref{est 11a} \eqref{est 10}}{\lesssim}&\lambda_{q} \delta_{q+1} \bar{e} \overset{\eqref{additive bound b}}{\ll} \lambda_{q+2} \delta_{q+2} e(t).\label{est 100}
\end{align}
 
We write the rest of the terms, namely \eqref{term E2} and the latter of \eqref{term E3}, as 
\begin{align}
& \int_{\mathbb{T}^{2}} \Lambda^{\frac{1}{2}} (y_{q} + z_{q}) \cdot \Lambda^{\frac{1}{2}} (y_{q} + z_{q}) dx- \int_{\mathbb{T}^{2}} \lvert \Lambda^{\frac{1}{2}} y_{l} \rvert^{2} + \lvert \Lambda^{\frac{1}{2}} z_{q+1} \rvert^{2} dx \nonumber \\
& \hspace{30mm}- 2 \sum_{m \in \mathbb{Z}^{2} \setminus \{0\}} \lvert m \rvert [ \hat{y}_{l} \cdot \hat{z}_{q+1} ](m)  \nonumber \\
=& \sum_{m \in \mathbb{Z}^{2} \setminus \{0\}} \lvert m \rvert \big[ \lvert \hat{y}_{q}  (m) \rvert^{2} - \lvert\hat{y}_{l} (m) \rvert^{2} + \lvert \hat{z}_{q} (m) \rvert^{2} - \lvert \hat{z}_{q+1}(m) \rvert^{2} \nonumber \\
& \hspace{30mm} + 2 [ (\hat{y}_{q} - \hat{y}_{l}) \cdot \hat{z}_{q} + \hat{y}_{l} \cdot ( \hat{z}_{q} - \hat{z}_{q+1})] (m)  \big]; \label{new 48}
\end{align}
here, it is crucial to deduce the last bracket $2 [ (\hat{y}_{q} - \hat{y}_{l}) \cdot \hat{z}_{q} + \hat{y}_{l} \cdot ( \hat{z}_{q} - \hat{z}_{q+1})] (m)$ for the subsequent mollifier estimates and it was possible only due to coupling the \eqref{term E2} and the latter part of  \eqref{term E3}. Having written all in favorable forms, standard mollifier estimates lead us to  
\begin{subequations}\label{new 47}
\begin{align}
&  \Big\lvert \sum_{m \in \mathbb{Z}^{2} \setminus \{0\}} \lvert m \rvert [ \lvert \hat{y}_{q} (t,m) \rvert^{2} - \lvert \hat{y}_{l} (t,m) \rvert^{2} ]  \Big\rvert  \\
& \hspace{10mm} \lesssim l ( \lVert y_{q} \rVert_{C_{t,q+1}C_{x}^{1}} +  \lVert \partial_{t} y_{q} \rVert_{C_{t,x,q+1}} ) \lVert \Lambda y_{q} \rVert_{C_{t,x,q+1}} \overset{\eqref{inductive 1c} \eqref{temp}}{\lesssim}l \lambda_{q}^{3} \delta_{q}^{\frac{3}{2}}\bar{e}^{\frac{3}{2}},  \nonumber \\
&  \Big\lvert  \sum_{m \in \mathbb{Z}^{2} \setminus \{0\}} \lvert m \rvert [ \lvert \hat{z}_{q}(t,m) \rvert^{2} - \lvert \hat{z}_{q+1}(t,m) \rvert^{2} ] \Big\rvert   \nonumber \\
& \hspace{20mm} \lesssim \lVert z_{q} - z_{q+1} \rVert_{C_{t,q+1}L_{x}^{2}} \lVert z\rVert_{C_{t,q+1}\dot{H}_{x}^{1}}  \overset{\eqref{additive stopping time}}{\lesssim} \lambda_{q}^{-1}, \\
& \Big\lvert  2 \sum_{m \in \mathbb{Z}^{2} \setminus \{0\}} \lvert m \rvert \left[ ( \hat{y}_{q} - \hat{y}_{l}) \cdot \hat{z}_{q} + \hat{y}_{l} \cdot ( \hat{z}_{q} - \hat{z}_{q+1} ) \right] (t,m)  \Big\rvert \\
\lesssim& l \lVert y_{q} \rVert_{C_{t,q+1}C_{x}^{1}}  + l \lVert \partial_{t} y_{q} \rVert_{C_{t,q+1}L_{x}^{2}} \lVert z_{q} \rVert_{C_{t,q+1}\dot{H}_{x}^{1}} +  \lambda_{q}^{-1} \overset{\eqref{inductive 1c} \eqref{additive stopping time} \eqref{temp}}{\lesssim} l \lambda_{q} \delta_{q}^{2} \bar{e} + \lambda_{q}^{-1}. \nonumber 
\end{align}
\end{subequations}
Applying \eqref{new 47} to \eqref{new 48}, we obtain by relying on the hypothesis that $e(t) \geq \ushort{e}$ due to \eqref{est 118}, for all $\beta > \frac{1}{2}$ sufficiently close to $\frac{1}{2}$ and $a_{0}$ sufficiently large, that \eqref{term E2} and the latter of \eqref{term E3} satisfy
\begin{align}
& \Big\lvert   \int_{\mathbb{T}^{2}} \Lambda^{\frac{1}{2}} (y_{q} + z_{q})(t) \cdot \Lambda^{\frac{1}{2}} (y_{q} + z_{q})(t) dx- \int_{\mathbb{T}^{2}} \lvert \Lambda^{\frac{1}{2}} y_{l}(t) \rvert^{2} + \lvert \Lambda^{\frac{1}{2}} z_{q+1}(t) \rvert^{2}dx   \label{new2}\\
& \hspace{5mm}  -2 \sum_{m \in \mathbb{Z}^{2} \setminus\{0\}} \lvert m \rvert [\hat{y}_{l} \cdot \hat{z}_{q+1} ](t,m) \Big\rvert   \overset{\eqref{define f} }{\lesssim} \lambda_{q+1}^{-\alpha} \lambda_{q}^{3-3\beta} \lambda_{1}^{(2\beta -1) \frac{3}{2}} \bar{e}^{\frac{1}{2}} + \lambda_{q}^{-1}  \overset{\eqref{additive bound b}}{\ll} \lambda_{q+2} \delta_{q+2} e(t). \nonumber 
\end{align}

Applying \eqref{new 49}, \eqref{est 101}, \eqref{est 100}, and \eqref{new2} to \eqref{term E1}-\eqref{term E5} shows that for any $\iota > 0$, we can bound 
\begin{equation}
\delta E(t) \leq \frac{1}{6} \lambda_{q+2} \delta_{q+2} e(t) + \iota \lambda_{q+2} \delta_{q+2} e(t)
\end{equation} 
so that taking $\iota \leq \frac{1}{12}$ completes the proof of \eqref{est 20}. 
\end{proof}

\subsubsection{Proof of Proposition \ref{additive q to qplus1}}\label{proof additive q to qplus1}
With our given choice of parameters, we have constructed $y_{q+1}$ and $\mathring{R}_{q+1}$ as laid out in Section \ref{additive construction}. From the statement of Proposition \ref{prop additive amplitudes}, this construction satisfies hypotheses \eqref{inductive 1a} and \eqref{inductive 2a} and the support condition \eqref{support wqplus1}; similarly, by Proposition \ref{prop additive perturbation}, hypotheses \eqref{inductive 1b}-\eqref{inductive 1d} and the Cauchy difference \eqref{cauchy difference} are all satisfied. Additionally, Propositions \ref{prop RT}-\ref{prop Rcom2} show that \eqref{inductive 2b} holds at level $q+1$. Lastly, the control of the energy \eqref{inductive 3} has been verified by Proposition \ref{prop additive control energy}; hence, all the necessary bounds have been satisfied.

Finally, it must be argued that the solutions $y_{q+1},\mathring{R}_{q+1}$ are $(\mathcal{F}_t)_{t\geq 0}$-adapted and deterministic on $[t_{q+1},0]$. The proof of these facts is essentially identical to that in previous works (e.g. \cite{HZZ19, HZZ21a, Y23a}) and thus omitted. 

We now conclude the proof of Theorem \ref{Theorem 1.1}. 
\begin{proof}[Proof of Theorem \ref{Theorem 1.1}]
Starting from $(y_0,\mathring{R}_0)=\left(0,\mathcal{B}(z_0^\perp (\nabla^\perp\cdot z_0) + \Lambda^{\gamma} z_{0} - \Lambda^{\frac{3}{2} - 2\sigma} z_{0}\right)$ in Proposition \ref{additive step q0}, by induction based on Proposition \ref{additive q to qplus1} we can obtain a family of solutions $\{(y_q,\mathring{R}_q)\}_{q\in\mathbb{N}_0}$ to \eqref{additive convPDE} that are $(\mathcal{F}_{t})_{t\geq 0}$-adapted and satisfy the inductive hypotheses \eqref{inductive 1}-\eqref{inductive 3} and the Cauchy difference \eqref{cauchy difference}. Then for all $\beta' \in (\frac{1}{2}, \beta)$, 
\begin{align*}
\sum_{q\geq 0} \lVert y_{q+1} (t) - y_{q} (t) \rVert_{C_{x}^{\beta'}} \lesssim& \sum_{q\geq 0} \lVert y_{q+1} (t) - y_{q} (t) \rVert_{C_{x}}^{1- \beta'} \lVert y_{q+1} (t) - y_{q} (t) \rVert_{C_{x}^{1}}^{\beta'}  \\
\overset{\eqref{cauchy difference} \eqref{inductive 1c}}{\lesssim}&  \bar{e}^{\frac{1}{2}} M_{0}^{\frac{1}{2}} a^{b(\beta - \frac{1}{2})} \sum_{q\geq 0} a^{b(q+1) (\beta ' - \beta)} < \infty. 
\end{align*}
We also compute for all $\eta \in [0, \frac{\beta}{2-\beta})$ and all $q \in \mathbb{N}$, using \eqref{temp} and \eqref{cauchy difference}, 
\begin{align*}
\lVert y_{q} - y_{q-1} \rVert_{C_{t}^{\eta} C_{x}} \lesssim \lVert y_{q} - y_{q-1} \rVert_{C_{t,x}}^{1-\eta} \lVert y_{q} - y_{q-1} \rVert_{C_{t}^{1}C_{x}^{0}}^{\eta} \lesssim M_{0}^{\frac{1+\eta}{2}} \bar{e}^{\frac{1+ \eta}{2}} \lambda_{1}^{(\beta - \frac{1}{2})(1+ \eta)} \lambda_{q}^{-\beta (1+ \eta) + 2 \eta},
\end{align*}
which vanishes as $q\nearrow \infty$. Therefore, $\{y_{q}\}_{q\in \mathbb{N}_{0}}$ is Cauchy in $C( [0, \mathfrak{t}]; C_{x}^{\beta'}) \cap C^{\eta} ([0, \mathfrak{t} ]; C_{x})$. Due to \eqref{inductive 2b} we have $\lVert \mathring{R}_{q} \rVert_{C_{t,x,q}} \leq \frac{\varepsilon_{\gamma}}{32 (2\pi)^{2}} \lambda_{q+2} \delta_{q+2} \bar{e} \searrow 0$ as $q \nearrow + \infty$. Moreover, because $z \in C_{T} \dot{H}_{x}^{\frac{5}{2} + 2 \sigma}$ $\mathbf{P}$-a.s. due to Proposition \ref{prop solution z}, we have $z_{q} \to z$ in $L_{T}^{2} \dot{H}_{x}^{1}$ as $q \nearrow + \infty$ which suffices for the purpose of verifying that $(y,z)$ with $y$ defined by 
\begin{equation*}
y \triangleq \lim_{q\to \infty} y_{q}  \in C([0, \mathfrak{t} ]; C_{x}^{\beta'} ) \cap C^{\eta} ( [0, \mathfrak{t} ]; C_{x}) 
\end{equation*}
solves \eqref{additive random PDE} analytically weakly and that there exists a constant $C_{2}<\infty$ such that 
\begin{equation}\label{est 136}
\lVert y \rVert_{C_{\mathfrak{t}} C_{x}^{\beta'}} + \lVert y \rVert_{C_{\mathfrak{t}}^{\eta} C_{x}} \leq C_{2} \hspace{3mm} \forall \hspace{1mm} \beta' \in \left(\frac{1}{2}, \beta \right), \hspace{1mm} \forall \hspace{1mm}\eta \in \left[0, \frac{\beta}{2-\beta}\right). 
\end{equation}
We mention that upon taking limit to verify the weak formulation, it becomes crucial to rely on the Calder$\acute{\mathrm{o}}$n commutator estimate (e.g.\cite[Lemma A.5]{BSV19}, \cite[Lemma 2.2]{M08}, and \cite[Theorem 10.3]{L02}) and refer to \cite{Y23a} for details. Together with Proposition \ref{prop solution z} and \eqref{est 135}, \eqref{est 136} gives us \eqref{est 119}. Finally, taking $q \nearrow \infty$ in \eqref{inductive 3} shows that $e(t) = \lVert (y+z)(t) \rVert_{\dot{H}_{x}^{\frac{1}{2}}}^{2} = \lVert v(t) \rVert_{\dot{H}_{x}^{\frac{1}{2}}}^{2}$ due to \eqref{est 135}.  

The argument concerning non-uniqueness is as follows. If we start with two different energies $e_{1}$ and $e_{2}$ that satisfies the hypothesis and $e_{1} \equiv e_{2}$ on $[0,t \wedge \mathfrak{t}]$ for some $t > 0$, then the perturbation and therefore $\{y_{q}^{1}\}_{q\in\mathbb{N}_{0}}$ and $\{y_{q}^{2}\}_{q\in\ \mathbb{N}_{0}}$ and consequently $\{v_{q}^{1}\}_{q \in \mathbb{N}_{0}}$ and $\{ v_{q}^{2}\}_{q \in \mathbb{N}_{0}}$ corresponding respectively to $e_{1}$ and $e_{2}$ are identical on $[0, t \wedge \mathfrak{t}]$ (recall \eqref{define tq}); as a result, $v_{1}$ and $v_{2}$ corresponding respectively to $e_{1}$ and $e_{2}$ are identical on $[0, t \wedge \mathfrak{t}$]. This completes the proof of Theorem \ref{Theorem 1.1}. 
\end{proof}

\section{Proof of Theorem \ref{Theorem 1.2}}\label{Section 4}
\subsection{Martingale Solution}\label{Section 4.1}
To prove Theorem \ref{Theorem 1.2}, we start with a definition of a martingale solution to \eqref{general msqg} in the additive case $G(v) dB = G dB$, that relies on the definitions of an $(\mathcal{B}_{t}^{0})_{t\geq s}$-supermartingale, as well as regular time and exceptional time from Definition \ref{Definition 3}. 

\begin{define}\label{Definition 6} 
Fix $\iota \in (0,1)$ from Theorem \ref{Theorem 1.1}. Let $s \geq 0$ and $\xi^{\text{in}} \in \dot{H}_{\sigma}^{\frac{1}{2}}$. Then $P \in \mathcal{P} (\Omega_{0})$ is a martingale solution to \eqref{general msqg} where $G(v) dB = dB$, $B$ being a $GG^{\ast}$-Wiener process, with initial data $\xi^{\text{in}}$ at initial time $s$ if 
\begin{enumerate}[wide=0pt]
\item [](M1) $P (\{ \xi(t) = \xi^{\text{in}} \hspace{1mm} \forall \hspace{1mm} t \in [0,s] \}) = 1$ and 
\begin{equation}\label{est 211}
P \left( \{ \xi \in L_{\text{loc}}^{\infty} ([0,\infty); \dot{H}_{\sigma}^{\frac{1}{2}} ) \cap L_{\text{loc}}^{2} ([0,\infty); \dot{H}_{\sigma}^{\frac{1}{2} + \iota} ) \} \right) = 1, 
\end{equation} 
\item [](M2) for all $\psi^{k} \in C^{\infty} (\mathbb{T}^{2}) \cap \dot{H}_{\sigma}^{\frac{1}{2}}$ and $t\geq s$, the process 
\begin{align}
M_{t,s}^{k} \triangleq& \langle \xi(t) - \xi^{\text{in}}, \psi^{k} \rangle \label{est 179}\\
-& \int_{s}^{t} \sum_{i,j=1}^{2} \langle \Lambda \xi_{i}, \partial_{i} \psi_{j}^{k} \xi_{j}\rangle_{\dot{H}_{x}^{-\frac{1}{2}} - \dot{H}_{x}^{\frac{1}{2}}} - \frac{1}{2} \langle \partial_{i} \xi_{j}, [ \Lambda, \psi_{i}^{k} ] \xi_{j} \rangle_{\dot{H}_{x}^{-\frac{1}{2}} -\dot{H}_{x}^{\frac{1}{2}}} - \langle \xi, \Lambda^{\gamma} \psi^{k} \rangle dr \nonumber 
\end{align} 
is a continuous, square-integrable $(\mathcal{B}_{t}^{0})_{t\geq s}$-martingale under $P$ such that $\langle \langle M_{t,s}^{k} \rangle \rangle =  \lVert G^{\ast} \psi^{k} \rVert_{U}^{2} (t-s)$,
\item [](M3) for any $q \in \mathbb{N}$, there exists $C_{q} \geq 1$ such that the process $\{E^{q} (t) - E^{q} (s) \}_{t \geq s}$ with $E^{q}$ defined by 
\begin{align}
E^{q} (t) \triangleq& \lVert \xi(t) \rVert_{\dot{H}_{x}^{\frac{1}{2}}}^{2q} \label{est 124}  \\
& + 2q \int_{0}^{t} \lVert \xi(r) \rVert_{\dot{H}_{x}^{\frac{1}{2}}}^{2q-2} \lVert \xi(r) \rVert_{\dot{H}_{x}^{\frac{1}{2} + \iota}}^{2} dr - C_{q} \lVert G \rVert_{L_{2}(U, \dot{H}_{\sigma}^{\frac{1}{2}})}^{2} \int_{0}^{t} \lVert \xi(r) \rVert_{\dot{H}_{x}^{\frac{1}{2}}}^{2q-2} dr  \nonumber 
\end{align} 
is an a.s. $(\mathcal{B}_{t}^{0})_{t\geq s}$-supermartingale under $P$ and $s$ is a regular time of $E^{q}$. 
\end{enumerate} 
The set of all such martingale solutions with the same constant $C_{q}$ in \eqref{est 124} will be denoted by $\mathcal{C} ( s, \xi^{\text{in}}, C_{q} )$; for brevity we denote $\mathcal{C} (\xi^{\text{in}}, C_{q}) \triangleq \mathcal{C} (s, \xi^{\text{in}}, C_{q}) \rvert_{s=0}$. Given any martingale solution $P$, we define  the exceptional times of $P$ as $T_{P}  \triangleq \cup_{q \in \mathbb{N}} \{\text{exceptional times of } E^{q}  \}$.
\end{define} 

\begin{remark}\label{Remark 4.1} 
We will see in \eqref{est 138} and \eqref{est 140} lower bounds for $C_{q}$, $q \in \mathbb{N}$, that depend on $\lVert G \rVert_{L_{2} (U, \dot{H}_{\sigma}^{\frac{1}{2}})}$ and thereby forbid the case $G = 0$, i.e., the deterministic case. Nevertheless, the non-uniqueness of the deterministic solutions with semiflows was proven for the 3D NS equations in \cite[Corollary 1.4]{HZZ21a} and we can follow the same approach (specifically \cite[Remark 4.6]{HZZ21a}) to see that it is possible to modify the relaxed energy inequality \eqref{est 124} to e.g. for some $\tilde{C}_{q} > 0$
\begin{align}
E^{q} (t) \triangleq& \lVert \xi(t) \rVert_{\dot{H}_{x}^{\frac{1}{2}}}^{2q} \label{est 139}  \\
& + 2q \int_{0}^{t} \lVert \xi(r) \rVert_{\dot{H}_{x}^{\frac{1}{2}}}^{2q-2} \lVert \xi(r) \rVert_{\dot{H}_{x}^{\frac{1}{2} + \iota}}^{2} dr - \left( \tilde{C}_{q} + C_{q} \lVert  G \rVert_{L_{2}(U, \dot{H}_{\sigma}^{\frac{1}{2}})}^{2} \right) \int_{0}^{t} \lVert \xi(r) \rVert_{\dot{H}_{x}^{\frac{1}{2}}}^{2q-2} dr.  \nonumber 
\end{align}   
\end{remark} 

Next, we define a martingale solution up to a stopping time. 
\begin{define}\label{Definition 7}
Fix $\iota \in (0,1)$ from Theorem \ref{Theorem 1.1}. Let $s \geq 0$, $\xi^{\text{in}} \in \dot{H}_{\sigma}^{\frac{1}{2}}$, and $\tau: \Omega_{0} \mapsto [0, \infty]$ be a $(\mathcal{B}_{t})_{t\geq s}$-stopping time. Define the space of trajectories stopped at $\tau$ as 
\begin{equation*}
\Omega_{0,\tau} \triangleq \{\omega( \cdot \wedge \tau(\omega)): \omega \in \Omega_{0}  \} = \{ \omega \in \Omega_{0}: \xi(t,\omega) = \xi(t\wedge \tau(\omega), \omega) \hspace{1mm} \forall \hspace{1mm} t \geq 0 \}, 
\end{equation*}  
which is a Borel subset of $\Omega_{0}$ due to Borel measurability of $\tau$ so that $\mathcal{P} (\Omega_{0,\tau}) \subset \mathcal{P} ( \Omega_{0})$.  Then $P \in \mathcal{P} (\Omega_{0,\tau})$ is a martingale solution to \eqref{general msqg} where $G(v) dB = dB$, $B$ being a $GG^{\ast}$-Wiener process,  on $[s,\tau]$ with initial data $\xi^{\text{in}}$ at initial time $s$ if 
\begin{enumerate}[wide=0pt]
\item [](M1) $P (\{ \xi(t) = \xi^{\text{in}} \hspace{1mm} \forall \hspace{1mm} t \in [0,s] \}) = 1$ and  
\begin{equation}
P \left(  \{ \xi \in L_{\text{loc}}^{\infty} ( [0, \tau]; \dot{H}_{\sigma}^{\frac{1}{2}}) \cap L_{\text{loc}}^{2} ([0,\tau]; \dot{H}_{\sigma}^{\frac{1}{2} + \iota} ) \} \right) = 1, 
\end{equation} 
\item [](M2) for all $\psi^{k} \in C^{\infty} (\mathbb{T}^{2}) \cap \dot{H}_{\sigma}^{\frac{1}{2}}$ and $t\geq s$, the process 
\begin{align*}
M_{t\wedge \tau,s}^{k} \triangleq & \langle \xi(t\wedge \tau) - \xi^{\text{in}}, \psi^{k} \rangle\\
-& \int_{s}^{t\wedge \tau} \sum_{i,j=1}^{2} \langle \Lambda \xi_{i}, \partial_{i} \psi_{j}^{k} \xi_{j}\rangle_{\dot{H}_{x}^{-\frac{1}{2}} - \dot{H}_{x}^{\frac{1}{2}}} - \frac{1}{2} \langle \partial_{i} \xi_{j}, [ \Lambda, \psi_{i}^{k} ] \xi_{j} \rangle_{\dot{H}_{x}^{-\frac{1}{2}} -\dot{H}_{x}^{\frac{1}{2}}} - \langle \xi, \Lambda^{\gamma} \psi^{k} \rangle dr 
\end{align*} 
is a continuous, square-integrable $(\mathcal{B}_{t})_{t\geq s}$-martingale under $P$ such that $\langle \langle M_{t \wedge \tau ,s}^{k} \rangle \rangle =  \lVert G^{\ast} \psi^{k} \rVert_{U}^{2} (t \wedge \tau - s)$, 
\item [](M3) for any $q \in \mathbb{N}$, there exists $C_{q} \geq 1$ such that the process $\{ E^{q} (t \wedge \tau) - E^{q} (s) \}_{t \geq s}$ with $E^{q}$ defined by \eqref{est 124} is an a.s. $(\mathcal{B}_{t})_{t \geq s}$-supermartingale under $P$ and $s$ is a regular time of $E^{q}$. 
\end{enumerate} 
\end{define} 

The first result Proposition \ref{Proposition 4.1} is similar to \cite[Theorem 4.8]{HZZ21a} and \cite[Proposition 4.1]{Y23a} so that its proof is left in Section \ref{Section 7.8} for completeness.  
\begin{proposition}\label{Proposition 4.1} 
\begin{enumerate}[wide=0pt] 
\indent 
\item For every $(s, \xi^{\text{in}}) \in [0,\infty) \times \dot{H}_{\sigma}^{\frac{1}{2}}$, there exists a martingale solution $P \in \mathcal{P} (\Omega_{0})$ to \eqref{general msqg} where $G(v) dB = dB$, $B$ being a $GG^{\ast}$-Wiener process,  with initial data $\xi^{\text{in}}$ at initial time $s$ according to Definition \ref{Definition 6}. 
\item Additionally, if there exists a family $\{ (s_{l}, \xi_{l} ) \}_{l \in \mathbb{N}} \subset [0,\infty) \times \dot{H}_{\sigma}^{\frac{1}{2}}$ that satisfies $\lim_{l\to\infty} \lVert (s_{l}, \xi_{l}) - (s, \xi^{\text{in}}) \rVert_{\mathbb{R} \times \dot{H}_{x}^{\frac{1}{2}}} = 0$ and $P_{l} \in \mathcal{C} ( s_{l}, \xi_{l}, C_{q})$, then there exists a subsequence $\{P_{l_{k}} \}_{k \in \mathbb{N}}$ and $P \in \mathcal{C} ( s, \xi^{\text{in}}, C_{q})$ such that $P_{l_{k}}$ converges weakly to $P$. 
\end{enumerate} 
\end{proposition}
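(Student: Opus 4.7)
My plan is to follow the standard Galerkin-plus-tightness approach for martingale solutions, adapted to the momentum SQG nonlinearity, closely mirroring \cite[Theorem 4.8]{HZZ21a} and \cite[Proposition 4.1]{Y23a}. For part (1), I would fix $(s,\xi^{\text{in}})$ and set $\xi(t)\equiv\xi^{\text{in}}$ on $[0,s]$. On $[s,\infty)$, let $\Pi_{n}$ denote the projection onto the span of the first $n$ divergence-free eigenfunctions of $-\Delta$, and solve the Galerkin SDE obtained by projecting \eqref{additive msqg}, whose global solvability is standard since it is a finite-dimensional SDE with locally Lipschitz drift and linear growth in the relevant norm. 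I would then apply It\^o's formula to $\|\xi_{n}(t)\|_{\dot{H}^{1/2}}^{2q}$; the key cancellation, namely that the momentum SQG nonlinearity vanishes when tested against $\Lambda\xi$, gives an identity of exactly the form \eqref{est 124} with a martingale correction whose expectation vanishes. This yields uniform-in-$n$ bounds on $\mathbb{E}[\sup_{t\le T}\|\xi_{n}\|_{\dot H^{1/2}}^{2q}+\int_{0}^{T}\|\xi_{n}\|_{\dot H^{1/2+\iota}}^{2}\|\xi_{n}\|_{\dot H^{1/2}}^{2q-2}dr]$, provided $C_{q}$ is chosen large enough in terms of $\Tr(GG^{\ast})$.

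Next, I would use the equation to bound $\xi_{n}$ in $W^{\alpha,p}([0,T];(H^{4}_{\sigma})^{\ast})$ for some $\alpha,p$, via the usual splitting into a bounded-variation drift term and a stochastic integral whose H\"older regularity follows from Kolmogorov's criterion using $\Tr(GG^{\ast})<\infty$. Together with the $L^{\infty}_{t}\dot H^{1/2}_{\sigma}$ bound, this gives tightness of the laws $P_{n}\triangleq\mathcal L(\xi_{n})$ on $\Omega_{0}$ thanks to the compact embedding $\dot H^{1/2}_{\sigma}\hookrightarrow (H^{4}_{\sigma})^{\ast}$ and an Aubin-Lions-type argument. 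Passing to a subsequence via Prokhorov and applying Skorokhod's representation on a suitable auxiliary probability space, I can identify a weak limit $P$. Verifying (M1) is immediate from the initial datum, (M2) follows by passing to the limit in the finite-dimensional martingale problem; here one must handle the nonlinearity $\Lambda\xi^{\perp}(\nabla^{\perp}\cdot\xi)$ tested against smooth $\psi^{k}$ using the identity in \eqref{est 120}, the Calder\'on commutator bound (as invoked at the end of the proof of Theorem \ref{Theorem 1.1}), and the uniform $L^{2}_{t}\dot H^{1/2+\iota}_{x}$ estimate. The supermartingale property (M3), with $s$ a regular time, follows because \eqref{est 124} already holds with equality in expectation for the Galerkin sequence and Fatou-type lower semicontinuity is preserved in the limit; regularity of $s$ is automatic since $E^{q}(s)=\|\xi^{\text{in}}\|_{\dot H^{1/2}}^{2q}$ is deterministic.

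For part (2), I would argue as follows. Fix $q\in\mathbb{N}$. The uniform regular-time property of $E^{q}$ at each $s_{l}$, combined with the convergence $\xi_{l}\to\xi^{\text{in}}$ in $\dot H^{1/2}_{\sigma}$ and $s_{l}\to s$, gives uniform bounds $\sup_{l}\mathbb{E}^{P_{l}}[\sup_{t\in[s_{l},T]}\|\xi\|_{\dot H^{1/2}}^{2q}+\int_{s_{l}}^{T}\|\xi\|_{\dot H^{1/2+\iota}}^{2}\|\xi\|_{\dot H^{1/2}}^{2q-2}dr]<\infty$ via the supermartingale inequality applied with $t=T$, and a Gronwall-type argument absorbing the $C_{q}\|G\|_{L_{2}}^{2}\int\|\xi\|^{2q-2}$ term. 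The martingale property (M2), together with the Burkholder-Davis-Gundy inequality, then yields uniform time-regularity bounds on $\xi$ in negative Sobolev spaces. These two ingredients give tightness of $\{P_{l}\}$ on $\Omega_{0}$ in exactly the same way as in part (1); extracting a subsequence $P_{l_{k}}\rightharpoonup P$ and invoking Skorokhod, one verifies (M1)--(M3) for $P$ with the same $C_{q}$ by passing to the limit in each of the defining conditions. The main obstacle I anticipate here is the verification of (M3) for the limit $P$ with $s$ as a regular time: one has the a.s.\ supermartingale inequality \eqref{est 123} for $E^{q}_{l}\triangleq E^{q}_{P_{l}}$ only for a.e.\ time in $[s_{l},\infty)$, and the null set of exceptional times can depend on $l$. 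This is handled exactly as in \cite[Proposition 4.3 and Theorem 4.8]{HZZ21a}: using that $\xi^{\text{in}}$ is deterministic under $P_{l}$ at time $s_{l}$ and lower semicontinuity of $\|\cdot\|_{\dot H^{1/2}}$ under weak limits, one obtains the inequality $\mathbb{E}^{P}[E^{q}(t)\mathbf{1}_{A}]\le \mathbb{E}^{P}[E^{q}(s)\mathbf{1}_{A}]$ first for $A=\Omega_{0}$ and all $t\geq s$, and then extends it to $A\in\mathcal{B}_{s}^{0}$ using the deterministic nature of $\xi(s)$ and a monotone class argument. This forces $s$ itself to be a regular time of $E^{q}$ under $P$, closing the argument.
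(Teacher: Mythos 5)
Your proposal is correct and follows essentially the same route as the paper: Galerkin approximation plus the It\^o energy identity (exploiting the nonlinearity cancellation against $\Lambda\xi$) for part (1), and uniform supermartingale bounds, tightness in $C_t(H^4_\sigma)^\ast \cap L^2_{t,\mathrm{loc}}\dot H^{1/2}_\sigma$, Prokhorov--Skorokhod, and lower semicontinuity to preserve (M3) under weak limits for part (2). The only cosmetic differences are that the paper proves part (2) first and then reuses that machinery for part (1), and that the paper handles the $l$-dependent exceptional-time set by splitting into $s=0$ versus $s>0$ (with $r<s$) and invoking lower semicontinuity of $\int_s^t\|\tilde\xi\|_{\dot H^{1/2+\iota}}^2$ along the shrinking intervals $[s_l,t]$, which is the same idea you describe via \cite[Proposition 4.3]{HZZ21a}.
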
 
The next two results are analogues of \cite[Lemmas 4.2-4.3]{Y23a} which are originally due to \cite[Propositions 3.2 and 3.4]{HZZ19}; the only difference is in the definition of a martingale solution, namely (M3) property.  
\begin{lemma}\label{Lemma 4.2}
Let $\tau$ be a bounded stopping time of $(\mathcal{B}_{t})_{t\geq 0}$. Then for every $\omega \in \Omega_{0}$, there exists $Q_{\omega} \triangleq \delta_{\omega} \otimes_{\tau(\omega)} R_{\tau(\omega), \xi(\tau(\omega), \omega)} \in \mathcal{P} (\Omega_{0})$, where $\delta_{\omega}$ is a point-mass at $\omega$, such that for $\omega \in \{\xi(\tau) \in \dot{H}_{\sigma}^{\frac{1}{2}} \}$, 
\begin{subequations}\label{est 224}
\begin{align} 
& Q_{\omega} ( \{ \omega' \in \Omega_{0}:\hspace{1mm}  \xi(t, \omega') = \omega(t) \hspace{1mm} \forall \hspace{1mm} t \in [0, \tau(\omega) ] \}) = 1,  \label{est 224a} \\
& Q_{\omega}(A) = R_{\tau (\omega), \xi(\tau(\omega), \omega)} (A) \hspace{1mm} \forall \hspace{1mm} A \in \mathcal{B}^{\tau(\omega)}, \label{est 224b}  
\end{align}
\end{subequations} 
where $R_{\tau(\omega), \xi(\tau(\omega), \omega)} \in \mathcal{P}(\Omega_{0})$ is a martingale solution to \eqref{general msqg} where $G(v) dB = dB$, $B$ being a $GG^{\ast}$-Wiener process,  with initial data $\xi(\tau(\omega), \omega)$ at initial time $\tau(\omega)$, and the mapping $\omega \mapsto Q_{\omega}(B)$ is $\mathcal{B}_{\tau}$-measurable for every $B \in \mathcal{B}$.
\end{lemma}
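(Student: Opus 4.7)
The plan is to follow the standard Stroock–Varadhan style concatenation argument, adapted to the stopping time $\tau$ of $(\mathcal{B}_t)_{t\geq 0}$ and to the notion of martingale solution of Definition \ref{Definition 6}. The first step is to produce, for each fixed $\omega \in \Omega_0$ with $\xi(\tau(\omega), \omega) \in \dot{H}_{\sigma}^{\frac{1}{2}}$, a martingale solution $R_{\tau(\omega), \xi(\tau(\omega), \omega)} \in \mathcal{P}(\Omega_0)$ starting at time $\tau(\omega)$ with initial datum $\xi(\tau(\omega), \omega)$; existence of such a solution is guaranteed by Proposition \ref{Proposition 4.1} (1). On the complementary event $\{\xi(\tau) \notin \dot{H}_{\sigma}^{\frac{1}{2}}\}$ (which is a $P$-null set for any martingale solution $P$), we may simply fix an arbitrary measurable selection, for example the martingale solution starting from $0$, so that the map is defined globally on $\Omega_0$.

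The second step is to upgrade this to a \emph{measurable} map $\omega \mapsto R_{\tau(\omega), \xi(\tau(\omega), \omega)}$ from $\Omega_0$ to $\mathcal{P}(\Omega_0)$, with the stronger property that $\omega \mapsto R_{\tau(\omega), \xi(\tau(\omega), \omega)}(A)$ is $\mathcal{B}_\tau$-measurable for every $A \in \mathcal{B}$. For this, I would invoke a measurable selection theorem (e.g. the Kuratowski–Ryll-Nardzewski selection theorem) applied to the set-valued map sending $(s, \xi^{\text{in}}) \mapsto \mathcal{C}(s, \xi^{\text{in}}, C_q)$. The key inputs are that this set is nonempty (Proposition \ref{Proposition 4.1} (1)) and that the graph is closed under weak convergence along sequences $(s_l, \xi_l) \to (s, \xi^{\text{in}})$ in $\mathbb{R} \times \dot{H}^{\frac{1}{2}}_\sigma$, which is precisely the stability statement of Proposition \ref{Proposition 4.1} (2). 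Since $\tau$ is a bounded $(\mathcal{B}_t)$-stopping time and $\xi(\tau(\cdot), \cdot)$ is $\mathcal{B}_\tau$-measurable, composing with the selected measurable function yields the desired $\mathcal{B}_\tau$-measurability.

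The third step is to define $Q_\omega \triangleq \delta_\omega \otimes_{\tau(\omega)} R_{\tau(\omega), \xi(\tau(\omega), \omega)}$ via the reconstruction Lemma \ref{Lemma 2.3}. This yields a probability measure on $\Omega_0$ that coincides with $\delta_\omega$ on $\mathcal{B}_{\tau(\omega)}^0$ and whose conditional distribution given $\mathcal{B}_{\tau(\omega)}^0$ is $R_{\tau(\omega), \xi(\tau(\omega), \omega)}$. Property \eqref{est 224a} is immediate from the $\delta_\omega$ component (since $\delta_\omega$ gives full mass to paths that equal $\omega$ on $[0, \tau(\omega)]$), and \eqref{est 224b} follows from Lemma \ref{Lemma 2.3} applied to the shift-invariant set $A \in \mathcal{B}^{\tau(\omega)}$. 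Measurability of $\omega \mapsto Q_\omega(B)$ for $B \in \mathcal{B}$ follows from the $\mathcal{B}_\tau$-measurability of the selection established in the second step and the general measurability properties of the reconstruction map in Lemma \ref{Lemma 2.3}.

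The main obstacle I anticipate is the measurable selection step, since one needs to handle $\tau$-dependent initial times carefully: the natural filtration $(\mathcal{B}_t)_{t\geq 0}$ is not the canonical filtration starting at a deterministic $s$, and one has to verify that the compactness/closedness machinery underlying Proposition \ref{Proposition 4.1} (2) can be used with $(s_l, \xi_l)$ varying jointly. Once that is in hand, the rest is routine bookkeeping: verification of (M1)--(M3) for $Q_\omega$ is standard under concatenation, with the supermartingale property of $E^q$ in (M3) being preserved because $\omega$ is $P$-a.s.\ a path whose $E^q$-excursion on $[0,\tau(\omega)]$ adds nothing outside a $P$-null set, and the tail part is controlled by the martingale solution property of $R_{\tau(\omega), \xi(\tau(\omega), \omega)}$.
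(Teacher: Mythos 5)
Your proposal is essentially the same argument the paper intends — indeed the paper itself gives no proof of Lemma \ref{Lemma 4.2}, deferring instead to \cite[Proposition 4.10]{HZZ21a} and \cite[Proposition 3.2]{HZZ19}, both of which run exactly the Stroock--Varadhan concatenation you describe: a measurable selection of $(s,\xi^{\text{in}})\mapsto R_{s,\xi^{\text{in}}}$ obtained from the existence and stability statements of Proposition \ref{Proposition 4.1}, composed with the $\mathcal{B}_\tau$-measurable map $\omega\mapsto(\tau(\omega),\xi(\tau(\omega),\omega))$, and glued via the reconstruction Lemma \ref{Lemma 2.3}. The one concern you raised about the $\tau$-dependent initial time is handled in the cited works precisely as you propose, by first producing a jointly Borel selection in $(s,\xi^{\text{in}})$ on the deterministic parameter space and only afterwards composing with the stopping time, so there is no genuine gap.
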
 

\begin{proof}[Proof of Lemma \ref{Lemma 4.2}]
This proof is very similar to that of \cite[Proposition 4.10]{HZZ21a}, which in turn is similar to previous works such as \cite[Proposition 3.2]{HZZ19}, to which we refer interested readers for details (and I leave a detailed proof in the Appendix for Ourselves). 
\end{proof} 

\begin{lemma}\label{Lemma 4.3}
Let $\tau$ be a bounded stopping time of $(\mathcal{B}_{t})_{t\geq 0}$, $\xi^{\text{in}} \in \dot{H}_{\sigma}^{\frac{1}{2}}$, and $P$ be a martingale solution to \eqref{general msqg} where $G(v) dB = dB$, $B$ being a $GG^{\ast}$-Wiener process,  on $[0,\tau]$ with initial data $\xi^{\text{in}}$ at initial time 0 that satisfies Definition \ref{Definition 7}. Suppose that there exists a Borel set $\mathcal{N} \subset \Omega_{0,\tau}$ such that $P(\mathcal{N}) = 0$ and $Q_{\omega}$ from Lemma \ref{Lemma 4.2} satisfies for every $\omega \in \Omega_{0} \setminus \mathcal{N}$ 
\begin{equation}\label{est 226} 
Q_{\omega} (\{\omega' \in \Omega_{0}:\hspace{1mm}  \tau(\omega') = \tau(\omega) \}) = 1. 
\end{equation} 
Then the probability measure $P \otimes_{\tau}R \in \mathcal{P}(\Omega_{0})$ defined by 
\begin{equation}\label{est 225}  
P\otimes_{\tau} R (\cdot) \triangleq \int_{\Omega_{0}} Q_{\omega} (\cdot) P(d\omega) 
\end{equation} 
satisfies $P \otimes_{\tau}R  = P$ on the $\sigma$-algebra $\sigma \{\xi(t\wedge \tau): t \geq 0 \}$ and it is a martingale solution to \eqref{general msqg} where $G(v) dB = dB$, $B$ being a $GG^{\ast}$-Wiener process,  on $[0,\infty)$ with initial data $\xi^{\text{in}}$ at initial time 0. 
\end{lemma}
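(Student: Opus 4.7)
The plan is to follow the by-now standard concatenation/reconstruction procedure (as in \cite[Proposition 3.4]{HZZ19} and \cite[Lemma 4.2]{Y23a}), adapted to the present definition of martingale solution in Definition \ref{Definition 6}. The hypothesis \eqref{est 226} is precisely what guarantees that the gluing at the stopping time $\tau$ is consistent, because under $Q_\omega$ the stopping time equals the deterministic value $\tau(\omega)$ and hence the two ``halves'' of each trajectory are joined at a well-defined moment.

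First I would verify the agreement $P\otimes_\tau R = P$ on $\sigma\{\xi(t\wedge\tau):t\geq 0\}$. By \eqref{est 224a} each $Q_\omega$ is concentrated on trajectories that coincide with $\omega$ on $[0,\tau(\omega)]$, so for any $A\in\sigma\{\xi(t\wedge\tau):t\geq 0\}$ one has $1_A(\omega')=1_A(\omega)$ for $Q_\omega$-a.e.\ $\omega'$; integrating against $P$ then yields the claim. In particular (M1) for $P\otimes_\tau R$ at $s=0$ is inherited from (M1) for $P$ on $[0,\tau]$ since $\{\xi(t)=\xi^{\text{in}}\;\forall t\in[0,0]\}\in\sigma\{\xi(t\wedge\tau):t\geq 0\}$.

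Next I would verify the martingale property (M2). For fixed $\psi^k$ and $s<t$, decompose
\begin{align*}
M^k_{t,0} - M^k_{s,0} = \bigl(M^k_{t\wedge\tau,0} - M^k_{s\wedge\tau,0}\bigr) + \bigl(M^k_{t,s\vee\tau} - M^k_{(s\vee\tau)\wedge t,(s\vee\tau)\wedge t}\bigr),
\end{align*}
so the pre-$\tau$ part is handled by the martingale property of $P$ on $[0,\tau]$ (Definition \ref{Definition 7}(M2)), while the post-$\tau$ part is handled by the Markov kernel: for $P$-a.e.\ $\omega$, by \eqref{est 224b} and \eqref{est 226}, $Q_\omega$ restricted to $\mathcal{B}^{\tau(\omega)}$ equals the martingale solution $R_{\tau(\omega),\xi(\tau(\omega),\omega)}$, which gives exactly the martingale property of $M^k_{t,\tau(\omega)}$ after the deterministic time $\tau(\omega)$. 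Combining these via the definition \eqref{est 225} and conditioning on $\mathcal{B}^0_{s\vee\tau}$ using the r.c.p.d.\ structure from Lemma \ref{Lemma 2.2} yields that $M^k_{t,0}$ is an $(\mathcal{B}^0_t)_{t\geq 0}$-martingale under $P\otimes_\tau R$. The quadratic variation $\langle\langle M^k_{\cdot,0}\rangle\rangle_t = \|G^*\psi^k\|_U^2 t$ is obtained by the same splitting, using $\langle\langle M^k_{\cdot,0}\rangle\rangle_{t\wedge\tau} = \|G^*\psi^k\|_U^2(t\wedge\tau)$ from Definition \ref{Definition 7}(M2) and the analogous identity for the post-$\tau$ kernel. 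The continuity and square-integrability of $M^k_{\cdot,0}$ under $P\otimes_\tau R$ follow analogously.

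Finally I would verify the supermartingale property (M3) of $\{E^q(t)-E^q(0)\}_{t\geq 0}$ under $P\otimes_\tau R$, and that $0$ is a regular time. For $s<t$ and $A\in\mathcal{B}^0_s$, split the expectation as above at $s\vee\tau$ and $t\wedge\tau$: the contribution on $\{s\geq\tau\}$ uses the a.s.\ supermartingale property of $E^q$ under $R_{\tau(\omega),\xi(\tau(\omega),\omega)}$ after $\tau(\omega)$ (together with $\tau(\omega)$ being a regular time of that solution, inherited from Definition \ref{Definition 6}(M3) at initial time $\tau(\omega)$), while the contribution on $\{s<\tau\}$ uses the a.s.\ supermartingale property of $E^q(\cdot\wedge\tau)$ under $P$ from Definition \ref{Definition 7}(M3) and the tower property. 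The main subtlety here, and the step I expect to be the principal obstacle, is precisely the pasting of the two supermartingale inequalities at the random time $\tau$: one needs $\tau(\omega)$ to be a regular time of $E^q$ under $R_{\tau(\omega),\xi(\tau(\omega),\omega)}$ for $P$-a.e.\ $\omega$, which is ensured by \eqref{est 226} together with the definition of regular initial time in Definition \ref{Definition 6}(M3); combined with Lemma \ref{Lemma 2.1} this yields that the a.s.\ supermartingale inequality under $P$ and the one under $Q_\omega$ chain correctly across $\tau$, completing the proof.
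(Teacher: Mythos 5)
Your outline follows the same route as the paper: the paper defers (M1), (M2), and the agreement $P\otimes_\tau R = P$ on $\sigma\{\xi(t\wedge\tau):t\geq 0\}$ to the earlier argument in \cite[Lemma 4.3]{Y23a} (since those parts are unchanged by the new choice of (M3)), and then concentrates entirely on verifying (M3) by splitting the time interval at $\tau$ and using \eqref{est 226}, \eqref{est 224b}, and Lemma \ref{Lemma 2.1}. Your decomposition at $s\vee\tau$ and $t\wedge\tau$ and your observation that $\tau(\omega)$ is automatically a regular time of $R_{\tau(\omega),\xi(\tau(\omega),\omega)}$ (being its initial time) are the right ideas and match the paper's estimates \eqref{est 155}, \eqref{est 132}, and \eqref{est 276}.

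However, there is a genuine gap: before any supermartingale inequality can even be stated for $\{E^q(t)-E^q(0)\}_{t\geq 0}$ under $P\otimes_\tau R$, one must first show that $E^q(t)$ is $P\otimes_\tau R$-integrable, and this is not automatic from the separate integrability under $P$ on $[0,\tau]$ and under each $Q_\omega$ on $[\tau(\omega),\infty)$. The paper devotes a nontrivial inductive argument to this (the chain \eqref{est 227}, \eqref{est 127}, \eqref{est 129}, \eqref{est 130}, \eqref{est 131}), precisely because $E^q$ contains the unbounded term $\int_0^t \|\xi\|_{\dot H_x^{1/2}}^{2q-2}\|\xi\|_{\dot H_x^{1/2+\iota}}^2 dr$ whose post-$\tau$ contribution must be controlled uniformly in $\omega$ via the supermartingale property of $R_{\tau(\omega),\cdot}$ applied iteratively in $q$. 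Your proposal skips this entirely. A second, smaller omission is the handling of the ``a.e.\ $s$'' nature of the a.s.\ supermartingale property: since the inequality from Definition \ref{Definition 3} holds only for regular times $s$, the paper integrates over a measurable set $D\subset[0,t]$ of candidate times $s$ (see \eqref{est 133}) and invokes the argument of \cite[p.\ 441]{FR08} to extract a Lebesgue-null exceptional set $T_t$; your phrase ``chain correctly across $\tau$'' gestures at this but does not actually produce the null set that Definition \ref{Definition 3} requires, and the Fubini-type averaging over $s$ is the mechanism that closes the argument.
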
 

\begin{proof}[Proof of Lemma \ref{Lemma 4.3}]
Due to the only difference from \cite[Lemma 4.3]{Y23a} being the (M3) property in the definition of the solution in Definitions \ref{Definition 6}-\ref{Definition 7}, it suffices to show that $P \otimes_{\tau} R$, as a martingale solution on $[0,\infty)$ with initial data $\xi^{\text{in}}$ at initial time 0, satisfies (M3). For any $q \in \mathbb{N}$, we first write using \eqref{est 226}-\eqref{est 225},
\begin{align}
\mathbb{E}^{P \otimes_{\tau} R} [ \lvert E^{q} (t) \rvert ] \leq&  2 \mathbb{E}^{P} [ \lvert E^{q} (t \wedge \tau) \rvert ]  \nonumber \\
&+\int_{\Omega_{0}} \mathbb{E}^{Q_{\omega}} [ \lvert E^{q} (t) - E^{q} (\tau(\omega)) \rvert 1_{\{\tau(\omega) \leq t\}} ] dP(\omega).\label{est 227}
\end{align} 
Within the second term of \eqref{est 227} we can estimate by using the (M3) property of $Q^{\omega}$ on $\{\tau(\omega) \leq t \}$ thanks to \eqref{est 224b} of Lemma \ref{Lemma 4.2}, 
\begin{align}
& \mathbb{E}^{Q_{\omega}} [ \lvert E^{q}(t) - E^{q} (\tau(\omega)) \rvert 1_{\{\tau(\omega) \leq t \}} ] \nonumber \\
\leq& 2 \mathbb{E}^{Q_{\omega}} \left[\left( \lVert \xi(\tau(\omega)) \rVert_{\dot{H}_{x}^{\frac{1}{2}}}^{2q} + C_{q} \lVert  G \rVert_{L_{2} (U, \dot{H}_{\sigma}^{\frac{1}{2}})}^{2} \int_{\tau(\omega)}^{t} \lVert \xi(r) \rVert_{\dot{H}_{x}^{\frac{1}{2}}}^{2q-2} dr \right) 1_{\{\tau(\omega) \leq t\}} \right] \label{est 127}.
\end{align}
Inductively, \eqref{est 127} leads to, for all $q \in \mathbb{N}$, 
\begin{align}
&\mathbb{E}^{Q_{\omega}} [ \lvert E^{q}(t) - E^{q} (\tau(\omega)) \rvert 1_{\{\tau(\omega) \leq t\}} ] \nonumber \\
 \leq& C \mathbb{E}^{Q_{\omega}} [ \lVert \xi(\tau(\omega)) \rVert_{\dot{H}_{x}^{\frac{1}{2}}}^{2q} 1_{\{\tau(\omega) \leq t\}} ] + C(t, \lVert G \rVert_{L_{2} (U, \dot{H}_{\sigma}^{\frac{1}{2}})}) \label{est 129}
\end{align} 
with $C$ and $C(t, \lVert G \rVert_{L_{2} (U, \dot{H}_{\sigma}^{\frac{1}{2}})}) $ independent of $\omega$. Hence, via \eqref{est 227}, \eqref{est 129}, and \eqref{est 124}, we obtain 
\begin{align}
& \mathbb{E}^{P \otimes_{\tau}R} [ \lvert E^{q} (t) \rvert ]  \leq C  \mathbb{E}^{P} \Big[ \lVert \xi(t \wedge \tau) \rVert_{\dot{H}_{x}^{\frac{1}{2}}}^{2q} + 2q \int_{0}^{t \wedge \tau} \lVert \xi(r) \rVert_{\dot{H}_{x}^{\frac{1}{2}}}^{2q-2} \lVert \xi(r) \rVert_{\dot{H}_{x}^{\frac{1}{2} + \iota}}^{2} dr  \label{est 130}\\
& \hspace{10mm} + C_{q} \lVert G \rVert_{L_{2} (U, \dot{H}_{\sigma}^{\frac{1}{2}})}^{2} \int_{0}^{t \wedge \tau} \lVert \xi(r) \rVert_{\dot{H}_{x}^{\frac{1}{2}}}^{2q-2} dr +  \lVert \xi(\tau) \rVert_{\dot{H}_{x}^{\frac{1}{2}}}^{2q} 1_{\{\tau \leq t\}}  \Big] + C(t, \lVert G \rVert_{L_{2}(U, \dot{H}_{\sigma}^{\frac{1}{2}})}), \nonumber
\end{align}
from which we can inductively conclude that  
\begin{equation}\label{est 131}
\mathbb{E}^{P \otimes_{\tau} R} [ \lvert E^{q} (t) \rvert ] \leq C \lVert \xi(0) \rVert_{\dot{H}_{x}^{\frac{1}{2}}}^{2q} + C(t, \lVert  G \rVert_{L_{2} (U, \dot{H}_{\sigma}^{\frac{1}{2}})}); 
\end{equation} 
i.e., $E^{q}$ is $P\otimes_{\tau} R$-integrable for all $q \in \mathbb{N}$. Next, let $t \geq s, A \in \mathcal{B}_{s}$ for $s \notin T_{Q_{\omega}}$ where $T_{Q_{\omega}}$ represents the exceptional set of $Q_{\omega}$ from Definition \ref{Definition 3}. First, if $t \geq \tau(\omega) \geq s$, then by \eqref{est 224b} and the (M3) property applied on $R_{\tau(\omega), \xi(\tau(\omega), \omega)}$, 
\begin{equation}\label{est 155} 
\mathbb{E}^{Q_{\omega}} [ \left(E^{q} (t) - E^{q} ( ( t \wedge \tau(\omega)) \vee s ) \right) 1_{A} ]  \leq 0. 
\end{equation} 
On the other hand, for $t \geq s \geq \tau(\omega)$, we also have due to \eqref{est 123}, 
\begin{equation}\label{est 132}
 \mathbb{E}^{Q_{\omega}} [ \left( E^{q} (t) - E^{q} ( ( t \wedge \tau(\omega)) \vee s) \right) 1_{A} ] \leq 0. 
\end{equation} 
Next, using \eqref{est 225} we write for any measurable set $D \subset [0, t]$, 
\begin{align}
& \int_{0}^{t} 1_{D} \mathbb{E}^{P\otimes_{\tau} R} [ (E^{q}(t) - E^{q}(s)) 1_{A} ] ds  \nonumber \\
=& \int_{0}^{t} 1_{D} \mathbb{E}^{P \otimes_{\tau} R} [ \left( E^{q}(t) - E^{q} (t\wedge \tau) + E^{q} ( t \wedge \tau) - E^{q} (s) \right) 1_{A \cap \{ \tau > s \}} ] ds  \nonumber \\
& +\int_{0}^{t} 1_{D} \mathbb{E}^{P} \left[ \mathbb{E}^{Q_{\omega}} [ \left( E^{q} (t) - E^{q} (s) \right) 1_{A \cap \{ \tau \leq s \}} ] \right] ds.  \label{est 133}
\end{align}
We will bound the right hand side of \eqref{est 133} from above by zero so that following the argument of \cite[p. 441]{FR08} we can conclude that for any fixed $t > 0$, there exists $T_{t} \subset (0,t)$ of null Lebesgue measure such that for all $s \notin T_{t}$, 
\begin{equation}\label{est 159} 
\mathbb{E}^{P \otimes_{\tau} R} [ E^{q} (t) - E^{q} (s) \lvert \mathcal{B}_{s} ] \leq 0 
\end{equation}
which implies that $\{E^{q}(t)\}_{t\geq 0}$ is an a.s. $(\mathcal{B}_{t})_{t\geq 0}$-supermartingale and hence an a.s. $(\mathcal{B}_{t}^{0})_{t\geq 0}$-supermartingale under under $P \otimes_{\tau}R$ as a consequence of Lemma \ref{Lemma 2.1} and hence concludes the proof of (M3).

Hence, we now focus on bounding \eqref{est 133} from above by 0, which follow from applying these inequalities into \eqref{est 133}: 
\begin{subequations}\label{est 276}
\begin{align}
&  \int_{0}^{t} 1_{D} \mathbb{E}^{P\otimes_{\tau} R} [ \left( E^{q}(t) - E^{q} (t \wedge \tau) \right) 1_{A \cap \{ \tau > s \}} ] ds \overset{\eqref{est 225} \eqref{est 226} \eqref{est 155} }{\leq} 0,   \label{est 276a}\\
& \mathbb{E}^{P \otimes_{\tau} R} [ \left(E^{q} ( t \wedge \tau) - E^{q} (s) \right) 1_{A \cap \{ \tau > s \}} ]  \nonumber \\
& \hspace{10mm} = \mathbb{E}^{P} [ \left(E^{q} (t \wedge \tau) - E^{q} (s) \right) 1_{A \cap \{ \tau > s \} } ] \leq 0, \label{est 276b}\\
&  \int_{0}^{t} 1_{D} \mathbb{E}^{P} \left[ \mathbb{E}^{Q_{\omega}} [ \left( E^{q} (t) - E^{q}(s) \right) 1_{A \cap \{ \tau \leq s \}} ] \right] ds \overset{\eqref{est 226} \eqref{est 132}}{\leq}0,  \label{est 276c}
\end{align}
\end{subequations}
where \eqref{est 276b} relied on Lemma \ref{Lemma 4.3}. This completes the proof of Lemma \ref{Lemma 4.3}. 
\end{proof} 

\subsection{Non-uniqueness in Law Globally-in-Time: Additive Case}\label{Section 4.2}
We combine the general results from Section \ref{Section 4.1} and Theorem \ref{Theorem 1.1} which constructed a solution up to a stopping time via convex integration. We fix a $GG^{\ast}$-Wiener process $B$ defined on $(\Omega, \mathcal{F}, \mathbf{P})$ and denote by $(\mathcal{F}_{t})_{t\geq 0}$ its normal filtration. For every $\omega \in \Omega_{0}$ we define processes (see \cite[Equation (71)]{Y23a}) 
\begin{subequations}\label{est 185}
\begin{align}
& M_{t,0}^{\omega} \triangleq \omega(t) - \omega(0) + \int_{0}^{t} \mathbb{P}  \left( (\Lambda \omega \cdot \nabla) \omega \right) (r) - \mathbb{P} ( ( \nabla \omega)^{T} \cdot \Lambda \omega ) (r) + \Lambda^{\gamma} \omega(r) dr,  \label{est 185a}\\
& Z^{\omega} (t) \triangleq M_{t,0}^{\omega} - \int_{0}^{t}  \Lambda^{\frac{3}{2} - 2 \sigma} e^{- (t-r) \Lambda^{\frac{3}{2} - 2\sigma}} M_{r,0}^{\omega} dr. \label{est 185b}
\end{align}
\end{subequations}
If $P$ is a martingale solution to \eqref{additive msqg}, then the mapping $\omega \mapsto M_{t,0}^{\omega}$ is a $GG^{\ast}$-Wiener process under $P$ and we can show that (cf. \cite[Equation (72)]{Y23a}) 
\begin{equation}\label{new 62}
Z(t) = \int_{0}^{t} e^{-\Lambda^{\frac{3}{2} - 2 \sigma}(t-r)}  dM_{r,0}. 
\end{equation}  
Based on Proposition \ref{prop solution z}, similarly to \eqref{additive stopping time}, we define for $\lambda \in \mathbb{N}$ 
\begin{align}
&\tau^{\lambda} (\omega) \triangleq 1 \wedge \inf \left\{ t \geq 0: C_{S} \lVert Z^{\omega} (t) \rVert_{\dot{H}_{x}^{\frac{5}{2} + 2 \sigma}} \geq 1 - \frac{1}{\lambda} \right\}  \label{est 146} \\
\wedge& \inf\left\{ t \geq 0: C_{S} \lVert Z^{\omega} \rVert_{C_{t}^{\frac{1}{2} - 2 \delta} \dot{H}_{x}^{\frac{7}{4} + 4 \sigma} } \geq 1 - \frac{1}{\lambda} \right\} \nonumber \\
\wedge& \inf\big\{ t \geq 0: C_{0} \left( C_{S} \lVert Z^{\omega}(t) \rVert_{\dot{H}_{x}^{\frac{3}{2}+ \frac{\sigma}{2}}}^{2} + 2 \lVert Z^{\omega}(t) \rVert_{\dot{H}_{x}^{\frac{3}{2} - \sigma}}^{2} \right) \geq \frac{ \varepsilon_\gamma}{C_{S} 32 (2\pi)^{2}} a^{(1- 2 \beta) b (b-1)}  \ushort{e} - \frac{1}{\lambda} \big\}.\nonumber
\end{align}   
It follows that $\{\tau^{\lambda}\}_{\lambda \in \mathbb{N}}$ is non-decreasing, and we can define 
\begin{equation}\label{est 141}
\tau \triangleq \lim_{\lambda \to \infty} \tau^{\lambda}. 
\end{equation} 
By \cite[Lemma 3.5]{HZZ19}, $\tau^{\lambda}$ is a $(\mathcal{B}_{t})_{t\geq 0}$-stopping time and consequently, so is $\tau$. We denote by 
\begin{equation}\label{est 142} 
P \triangleq \mathcal{L}(v) 
\end{equation}  
where $v$ is the solution to \eqref{additive msqg} constructed by Theorem \ref{Theorem 1.1} with a specific choice of 
\begin{equation}\label{est 134} 
e(t) \triangleq d_{0} + d_{1} t, \hspace{3mm} d_{1} > 0, \hspace{3mm} d_{0} > 2d_{1} + 4.
\end{equation}  

\begin{proposition}\label{Proposition 4.4} 
Define $\tau$ by \eqref{est 141}. Then the probability measure $P$ defined by \eqref{est 142} is a martingale solution to \eqref{additive msqg} on $[0,\tau]$ according to Definition \ref{Definition 7}. The corresponding constants $C_{q}$ for $q \in \mathbb{N}$ in (M3) of Definition \ref{Definition 7} depend only on $d_{0}$, $d_{1}$, and $\lVert G \rVert_{L_{2} (U, \dot{H}_{\sigma}^{\frac{1}{2}})}$. 
\end{proposition}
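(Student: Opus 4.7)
The plan is to verify the three conditions (M1)--(M3) of Definition \ref{Definition 7} for $P\triangleq\mathcal{L}(v)$, where $v$ is the solution constructed in Theorem \ref{Theorem 1.1} with the specific energy $e(t)=d_{0}+d_{1}t$ from \eqref{est 134}. The crucial preliminary step is to identify the stopping time $\tau$ on the canonical space, defined by \eqref{est 146}--\eqref{est 141} and evaluated under $P$, with $\mathfrak{t}$ from \eqref{additive stopping time}. Since $v$ solves \eqref{additive msqg} driven by $B$, tracking through \eqref{est 185} shows $M^{v}_{t,0}=B(t)$, so $Z^{v}$ satisfies the Stokes-type equation \eqref{est 42}, yielding $Z^{v}=z$; hence the bounds defining $\tau^{\lambda}$ in \eqref{est 146} become, in the limit $\lambda\to\infty$, precisely those defining $\mathfrak{t}$, so $\tau=\mathfrak{t}$ $P$-a.s.

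Condition (M1) is then immediate from Theorem \ref{Theorem 1.1}: the deterministic initial data $v^{\text{in}}$ yields $P(\xi(0)=v^{\text{in}})=1$, and the uniform bound \eqref{est 119} places the paths in $C([0,\mathfrak{t}];C^{\frac{1}{2}+\iota}_{x})$, which embeds into $L^{\infty}_{\text{loc}}([0,\tau];\dot{H}^{\frac{1}{2}}_{\sigma})\cap L^{2}_{\text{loc}}([0,\tau];\dot{H}^{\frac{1}{2}+\iota'}_{\sigma})$ for any $\iota'<\iota$ (after an infinitesimal relabeling of $\iota$ in Definition \ref{Definition 7}). For (M2), I would start from the weak formulation of \eqref{additive msqg} applied to $v$: using the identity \eqref{est 120} and integrating the nonlinearity by parts against $\psi^{k}\in C^{\infty}(\mathbb{T}^{2})\cap\dot{H}^{\frac{1}{2}}_{\sigma}$ produces exactly the drift appearing in \eqref{est 179}, while the stochastic component becomes $\langle B(t\wedge\mathfrak{t}),\psi^{k}\rangle$. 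Transporting under $\mathcal{L}(v)$, this is precisely $M^{k}_{t\wedge\tau,0}$, a continuous square-integrable $(\mathcal{B}_{t})_{t\ge 0}$-martingale under $P$ with quadratic variation $\lVert G^{\ast}\psi^{k}\rVert_{U}^{2}(t\wedge\tau)$ by the It\^o isometry.

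For (M3), the decisive fact is \eqref{energy}, which gives $\lVert\xi(t)\rVert^{2}_{\dot{H}^{\frac{1}{2}}}=e(t)=d_{0}+d_{1}t$ $P$-a.s.\ on $[0,\tau]$ and hence $\lVert\xi(t)\rVert^{2q}_{\dot{H}^{\frac{1}{2}}}=e(t)^{q}$ deterministically. Meanwhile \eqref{est 119} supplies a uniform bound $\lVert\xi(t)\rVert_{\dot{H}^{\frac{1}{2}+\iota}}\le K$ on $[0,\tau]$, with $K=K(d_{0},d_{1},\lVert G\rVert_{L_{2}(U,\dot{H}^{\frac{1}{2}}_{\sigma})})$, after a harmless Besov embedding from $C^{\frac{1}{2}+\iota}_{x}$. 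Differentiating $E^{q}(t\wedge\tau)$ along paths yields, for $t<\tau$,
\[
\frac{d}{dt}E^{q}(t)=qd_{1}e(t)^{q-1}+2qe(t)^{q-1}\lVert\xi(t)\rVert^{2}_{\dot{H}^{\frac{1}{2}+\iota}}-C_{q}\lVert G\rVert^{2}_{L_{2}(U,\dot{H}^{\frac{1}{2}}_{\sigma})}e(t)^{q-1}.
\]
Choosing $C_{q}\ge(qd_{1}+2qK^{2})\lVert G\rVert^{-2}_{L_{2}(U,\dot{H}^{\frac{1}{2}}_{\sigma})}$ renders the right-hand side $P$-a.s.\ nonpositive, so $t\mapsto E^{q}(t\wedge\tau)$ is $P$-a.s.\ nonincreasing and, a fortiori, an a.s.\ $(\mathcal{B}_{t})_{t\ge 0}$-supermartingale. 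The time $s=0$ is trivially a regular time because every term in \eqref{est 124} is deterministic at $t=0$, and the stated dependence $C_{q}=C_{q}(d_{0},d_{1},\lVert G\rVert_{L_{2}(U,\dot{H}^{\frac{1}{2}}_{\sigma})})$ is manifest from the construction of $K$.

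The hardest step in this plan will be the preliminary identification of $\tau$ with $\mathfrak{t}$ under $P$, which rests on a L\'evy-type characterization to ensure that the canonical process $M^{\omega}_{t,0}$ is a $GG^{\ast}$-Wiener process under $\mathcal{L}(v)$ and hence $Z^{\omega}$ has the same law as $z$; this must be done carefully because the three cut-off functionals in \eqref{est 146} must be matched with their counterparts in \eqref{additive stopping time} at the level of laws. Once this identification is granted, (M1)--(M3) follow straightforwardly from the regularity of $v$ in Theorem \ref{Theorem 1.1}, It\^o's formula applied to the weak formulation, and the prescribed energy \eqref{energy} combined with the uniform $\dot{H}^{\frac{1}{2}+\iota}$-bound, respectively.
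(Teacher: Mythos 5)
Your proposal is correct and follows essentially the same route as the paper: identify $\tau = \mathfrak{t}$ $P$-a.s.\ by matching $Z^{\omega}$ with $z$ in law, defer (M1) and (M2) to the regularity of the convex-integration solution and the weak formulation, and verify (M3) by exploiting the deterministic energy identity $\lVert\xi(t)\rVert_{\dot{H}^{1/2}}^2 = d_0 + d_1 t$ together with a uniform $\dot{H}^{1/2+\iota}$-bound, choosing $C_q \geq q(d_1 + 2d_2)/\lVert G\rVert^2_{L_2(U,\dot{H}^{1/2}_{\sigma})}$ to make the process pathwise nonincreasing. The only cosmetic difference is that you differentiate $E^q$ in $t$ while the paper writes the corresponding finite-difference inequality; these are equivalent since all quantities are deterministic along paths.
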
 

\begin{proof}[Proof of Proposition \ref{Proposition 4.4}]
In accordance with Definition \ref{Definition 7}, we fix the constant $\iota \in (0, 1)$ from Theorem \ref{Theorem 1.1}. An argument that is identical to the proof of \cite[Proposition 4.5]{Y23a} shows that $\tau(v) = \mathfrak{t}$ $\mathbf{P}$-a.s. (see \cite[Equation (293)]{Y23a}) and that $P$ satisfies (M1) and (M2), where $\mathfrak{t}$ was defined in \eqref{additive stopping time}. Thus, we only need to verify (M3) to complete the proof of the first claim. We observe that there exists $d_{2} > 0$ such that for all $t \in [0,\tau]$, under $P$, 
\begin{equation}\label{est 137} 
\lVert \xi(t) \rVert_{\dot{H}_{x}^{\frac{1}{2}}}^{2} \overset{\eqref{energy} \eqref{est 134}}{=} d_{0} + d_{1} t  \hspace{2mm} \text{ and } \hspace{2mm}  \lVert \xi(t) \rVert_{\dot{H}_{x}^{\frac{1}{2} + \iota}}^{2} \overset{\eqref{est 135} \eqref{est 136} \eqref{est 41}}{\leq}d_{2}. 
\end{equation}  
Consequently, we can show that for 
\begin{equation}\label{est 138} 
C_{1} \geq \frac{d_{1} + 2d_{2}}{\lVert  G \rVert_{L_{2} (U, \dot{H}_{\sigma}^{\frac{1}{2}})}^{2}}, 
\end{equation} 
and all $t \geq s \geq 0$, $P$-a.s. 
\begin{align*}
E^{1} (t \wedge \tau) - E^{1} (s\wedge \tau)  \leq \left( d_{1} + 2d_{2} - C_{1} \lVert G \rVert_{L_{2} (U, \dot{H}_{\sigma}^{\frac{1}{2}})}^{2} \right) (t \wedge \tau - s \wedge \tau) \overset{\eqref{est 138}}{\leq} 0; 
\end{align*}
thus, $\{E^{1}(t \wedge \tau) - E^{1}(s \wedge \tau)\}_{t \geq s}$ is a supermartingale under $P$. For general $q \in \mathbb{N}$, if 
\begin{equation}\label{est 140}
C_{q} \geq \frac{ q(d_{1} + 2d_{2})}{ \lVert G \rVert_{L_{2} (U, \dot{H}_{\sigma}^{\frac{1}{2}})}^{2}}, 
\end{equation} 
then we can compute similarly 
 \begin{align*}
& E^{q} (t \wedge \tau) - E^{q} (s \wedge \tau) \\
\overset{\eqref{est 124} \eqref{est 137}}{\leq}& \left( [ d_{0} + d_{1} (t \wedge \tau) ]^{q} - [d_{0} + d_{1} (s \wedge \tau) ]^{q} \right) \left(1 + \frac{ 2qd_{2} - C_{q} \lVert G \rVert_{L_{2} (U, \dot{H}_{\sigma}^{\frac{1}{2}})}^{2}}{qd_{1}} \right) \overset{\eqref{est 140}}{\leq} 0, 
\end{align*}
which shows that $\{E^{q}(t \wedge \tau) - E^{q}(s\wedge \tau) \}_{t \geq s}$ is a supermartingale under $P$. This completes the proof of Proposition \ref{Proposition 4.4} as the second claim of Proposition \ref{Proposition 4.4} follows from \eqref{est 138}  and \eqref{est 140}. 
\end{proof} 

Finally, the following Proposition \ref{Proposition 4.5} proves non-uniqueness in law of the global-in-time solution to \eqref{additive msqg}. 
\begin{proposition}\label{Proposition 4.5} 
\noindent
\begin{enumerate}
\item The probability measure $P \otimes_{\tau} R$ defined by \eqref{est 225} with the $\tau$ defined by \eqref{est 141} is a martingale solution to \eqref{additive msqg} on $[0,\infty)$ according to Definition \ref{Definition 6}. 
\item There exist constant $C_{q} \geq 0$ for $q \in \mathbb{N}$ such that the martingale solutions to \eqref{additive msqg} associated to such $C_{q}$ in (M3) of Definition \ref{Definition 6} are not unique.  
\end{enumerate} 
\end{proposition}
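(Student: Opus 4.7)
The plan splits naturally along the two assertions. For (1), I will apply Lemma \ref{Lemma 4.3} to the martingale solution on $[0,\tau]$ produced by Proposition \ref{Proposition 4.4}, with $\tau$ from \eqref{est 141}. For (2), I will run Theorem \ref{Theorem 1.1} along two distinct energy profiles that agree on $(-\infty,0]$, extend both via (1), verify a common constant $C_{q}$, and distinguish the resulting global measures via the prescribed energies.

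For part (1), every input to Lemma \ref{Lemma 4.3} except the hypothesis \eqref{est 226} is already supplied by Proposition \ref{Proposition 4.4}; checking \eqref{est 226} is the main obstacle. The plan is to note that each of the three thresholds in \eqref{est 146} is a first hitting time for norms of the auxiliary process $Z^{\omega}$ of \eqref{est 185b}, and that by \eqref{est 185} the map $\omega\mapsto Z^{\omega}(t)$ factors through $\omega|_{[0,t]}$. Lemma \ref{Lemma 4.2} then yields $Z^{\omega'}\equiv Z^{\omega}$ on $[0,\tau(\omega)]$ for $Q_{\omega}$-a.e.\ $\omega'$; combined with the time continuity of $t\mapsto Z^{\omega}(t)$ in the relevant norms from Proposition \ref{prop solution z}, this forces the first hitting time to be preserved and gives $\tau(\omega')=\tau(\omega)$ $Q_{\omega}$-a.s. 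The argument is directly parallel to \cite[Lemma 4.4]{Y23a} and \cite[Proposition 4.10]{HZZ21a}, and once completed Lemma \ref{Lemma 4.3} delivers (1).

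For part (2), fix $d_{1}^{1}\neq d_{1}^{2}>0$, $d_{0}>2\max(d_{1}^{1},d_{1}^{2})+4$, and pick $\rho\in C_{b}^{1}(\mathbb{R};[0,\infty))$ with $\rho\equiv 0$ on $(-\infty,0]$ and $\rho>0$ on $(0,\infty)$ (for instance $\rho(t)\triangleq e^{-1/t}1_{\{t>0\}}$). Setting $e^{i}(t)\triangleq d_{0}+d_{1}^{i}\rho(t)$ produces two admissible energies for Theorem \ref{Theorem 1.1} and hence solutions $v^{i}$ with laws $P^{i}\triangleq\mathcal{L}(v^{i})$. Because the mollifier $\varphi_{l}$ in \eqref{additive mollified} is supported in $(\tau_{q+1},2\tau_{q+1}]$, all time-mollifications at a point $t\leq 0$ use values at strictly earlier times, so inductively on $q$ the outputs of Propositions \ref{additive step q0}--\ref{additive q to qplus1} coincide on $[t_{q},0]$ whenever $e^{1}\equiv e^{2}$ on $(-\infty,0]$; passing to the limit gives a common initial datum $\xi^{\text{in}}\triangleq v^{1}(0)=v^{2}(0)\in\dot{H}_{\sigma}^{\frac{1}{2}}$. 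Part (1) then provides $P^{i}\otimes_{\tau}R^{i}$, and the proof of Proposition \ref{Proposition 4.4} adapts verbatim with $d_{1}$ replaced by $\lVert(e^{i})'\rVert_{L_{t}^{\infty}}$ in \eqref{est 138}--\eqref{est 140}, producing constants $C_{q}^{i}$; taking $C_{q}\triangleq\max(C_{q}^{1},C_{q}^{2})$ places both $P^{i}\otimes_{\tau}R^{i}$ in the common class $\mathcal{C}(\xi^{\text{in}},C_{q})$. Finally, \eqref{energy} gives $\lVert\xi(t\wedge\tau)\rVert_{\dot{H}_{x}^{\frac{1}{2}}}^{2}=e^{i}(t\wedge\tau)$ $P^{i}\otimes_{\tau}R^{i}$-a.s.; since $\tau>0$ $\mathbf{P}$-a.s.\ by Proposition \ref{prop solution z} and $e^{1}\neq e^{2}$ on $(0,\infty)$, any path lying in both full-measure events would satisfy $e^{1}(\tau)=e^{2}(\tau)$ with $\tau>0$, a contradiction, so $P^{1}\otimes_{\tau}R^{1}\neq P^{2}\otimes_{\tau}R^{2}$ and non-uniqueness is established.
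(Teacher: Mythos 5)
For part (1) you follow the same route as the paper: reduce to verifying \eqref{est 226} for the stopping time from \eqref{est 141}, then invoke Lemma \ref{Lemma 4.3}. The paper defers to \cite[Proposition 4.6]{Y23a} at exactly this point, as you do; your observation that $Z^{\omega}(t)$ factors through $\omega|_{[0,t]}$ via \eqref{est 185} is the key fact and is correct.

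For part (2) you take a genuinely different route, and the proposal is correct. The paper offers two arguments. Its first keeps the linear profiles $e_{i}(t)=d_{0}+d_{1,i}t$, $d_{1,1}\neq d_{1,2}$, and cites the last sentence of Theorem \ref{Theorem 1.1} to conclude the extended measures share an initial datum while having different energies. Its second compares $P\otimes_{\tau}R$ with the law $Q$ of the Galerkin solution started from the same $\xi(0)$, exploiting $d_{1}>\lVert G\rVert^{2}_{L_{2}(U,\dot{H}_{\sigma}^{1/2})}$ and the fact that $\tau$ is a functional of the noise alone, so $\mathbb{E}^{Q}[\lVert\xi(\tau)\rVert^{2}_{\dot{H}_{x}^{1/2}}] < d_{0}+d_{1}\mathbb{E}^{Q}[\tau] = \mathbb{E}^{P\otimes_{\tau}R}[\lVert\xi(\tau)\rVert^{2}_{\dot{H}_{x}^{1/2}}]$. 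Your argument is in the spirit of the paper's first proof but is cleaner on one delicate point: because $\varphi_{l}$ is supported in $(\tau_{q+1},2\tau_{q+1}]$, the amplitude $a_{k,j}$ at time $0$ already depends on $\gamma_{l}(0)=(\gamma_{q}\ast_{t}\varphi_{l})(0)$ and hence on $e$ over $[-2\tau_{q+1},-\tau_{q+1}]$, so two linear profiles that coincide only at $t=0$ do not manifestly produce the same $v^{\text{in}}$; your $e^{i}=d_{0}+d_{1}^{i}\rho$ with $\rho\equiv 0$ on $(-\infty,0]$ forces equality of the two constructions on each $[t_{q},0]$ and hence of the initial data, and dispenses with the extra hypothesis $d_{1}>\lVert G\rVert^{2}_{L_{2}(U,\dot{H}_{\sigma}^{1/2})}$ that the Galerkin comparison needs. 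Your adaptation of Proposition \ref{Proposition 4.4} is right, since $e(t)^{q}-e(s)^{q}=q\int_{s}^{t}e(r)^{q-1}e'(r)\,dr\leq q\lVert e'\rVert_{L^{\infty}}\int_{s}^{t}e(r)^{q-1}\,dr$, so \eqref{est 138}--\eqref{est 140} hold with $d_{1}$ replaced by $\lVert(e^{i})'\rVert_{L^{\infty}}$; one small addendum is that the common $C_{q}$ should also be taken $\geq 1$ so that the Galerkin solutions entering the extension through Lemmas \ref{Lemma 4.2}--\ref{Lemma 4.3} remain in the class, which your ``$\max$'' of course accommodates. The final separation via the disjoint positive-probability events $\{\tau\geq t,\ \lVert\xi(t)\rVert_{\dot{H}_{x}^{1/2}}^{2}=e^{i}(t)\}$ is sound.
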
 

\begin{proof}[Proof of Proposition \ref{Proposition 4.5}] 
$ $\newline
(1) The fact that $P \otimes_{\tau} R$ is a martingale solution to  \eqref{additive msqg} on $[0,\infty)$ according to Definition \ref{Definition 6} can be proven identically to the proof of \cite[Proposition 4.6]{Y23a}; the difference in the definition of the solution in Definition \ref{Definition 6} and \cite[Definition 4.1]{Y23a}, namely (M3), will not directly play any role here as the proof consists of the verification of \eqref{est 226}. 

\noindent (2) Following the proof of \cite[Proposition 4.13]{HZZ21a} we are able to provide two proofs here. In fact, the first proof is in the spirit of the proof of the non-uniqueness statement of Theorem \ref{Theorem 1.1} and starts by defining two distinct energies 
\begin{align*}
e_{1}(t) \triangleq d_{0} + d_{1,1} t, \hspace{3mm} e_{2}(t) \triangleq d_{0 }+ d_{1,2}t, \hspace{3mm} \text{ such that } d_{1,1} \neq d_{1,2}, 
\end{align*}
where $d_{0}$ and both $d_{1,1}, d_{1,2}$ satisfy the respective roles of $d_{0}$ and $d_{1}$ in \eqref{est 134}. Then we construct the corresponding convex integration solutions $P_{1}$ and $P_{2}$ with the same constants $C_{q}$ for $q \in \mathbb{N}$ in (M3) of Definition \ref{Definition 7} via Theorem \ref{Theorem 1.1} and extend both to $[0,\infty)$ via $P_{1} \otimes_{\tau} R$ and $P_{2} \otimes_{\tau} R$ using Proposition \ref{Proposition 4.5} (1), respectively. It follows that they have same initial data but different energies (recall the last statement of Theorem \ref{Theorem 1.1}). 

In the second proof, we choose 
\begin{equation}\label{est 143} 
e(t) \triangleq d_{0} + d_{1} t \text{ such that } d_{1} > \lVert G \rVert_{L_{2} (U, \dot{H}_{\sigma}^{\frac{1}{2}})}^{2} 
\end{equation} 
where $d_{0}, d_{1}$ satisfy \eqref{est 134}, and construct a solution $P$ on $[0, \tau]$ such that 
\begin{equation}\label{est 144} 
\lVert \xi(t) \rVert_{\dot{H}_{x}^{\frac{1}{2}}}^{2} = e(t) \overset{\eqref{est 143}}{=} d_{0} + d_{1} t \hspace{2mm} P\text{-a.s. for all }t \in [0,\tau] 
\end{equation} 
via Theorem \ref{Theorem 1.1}. We notice that taking $t = 0$ in \eqref{est 144} implies $\lVert \xi(0) \rVert_{\dot{H}_{x}^{\frac{1}{2}}}^{2} =d_{0}$ where we recall that $\xi(0)$ is deterministic due to Theorem \ref{Theorem 1.1}. Then we rely on Proposition \ref{Proposition 4.5} (1) to obtain solution $P \otimes_{\tau} R$ on $[0,\infty)$ so that  
\begin{equation}\label{est 147}
\mathbb{E}^{P \otimes_{\tau} R} [ \lVert \xi(\tau) \rVert_{\dot{H}_{x}^{\frac{1}{2}}}^{2} ] \overset{\eqref{est 144}}{=} \mathbb{E}^{P\otimes_{\tau} R} [ d_{0} + d_{1} \tau ] = d_{0} + d_{1} \mathbb{E}^{P \otimes_{\tau} R} [ \tau], 
\end{equation}
where we used that $P \otimes_{\tau}R  = P$ on the $\sigma$-algebra $\sigma \{\xi(t\wedge \tau): t \geq 0 \}$ from Lemma \ref{Lemma 4.3}. On the other hand, if $Q$ is the law of the classical solution to \eqref{additive msqg} constructed via Galerkin approximation from $\xi(0)$, then we see that due to \eqref{est 143} and \eqref{est 144} 
\begin{equation}\label{est 148}
 \mathbb{E}^{Q} [ \lVert \xi(\tau) \rVert_{\dot{H}_{x}^{\frac{1}{2}}}^{2} ] + 2 \mathbb{E}^{Q} [ \int_{0}^{\tau} \lVert \xi \rVert_{\dot{H}_{x}^{\frac{1}{2} + \frac{\gamma}{2}}}^{2} dr]  < d_{0} + d_{1} \mathbb{E}^{Q} [ \tau].
\end{equation} 
Now $\tau = \lim_{\lambda \to \infty} \tau^{\lambda}$ due to \eqref{est 141} and $\tau^{\lambda}$ in \eqref{est 146} is defined as a function of time-dependent norms of $Z$. Due to \eqref{new 62}, \eqref{est 185a}, and \eqref{est 38}, we see that $Z$ solves a same linear equation as \eqref{est 42} under both $P \otimes_{\tau} R$ and $Q$ so that the laws of $\tau$ under $Q$ and $P \otimes_{\tau}R$ are same.  This allows us to conclude the claimed non-uniqueness in law due to 
\begin{equation}
\mathbb{E}^{P \otimes_{\tau} R} [ \lVert \xi(\tau) \rVert_{\dot{H}_{x}^{\frac{1}{2}}}^{2} ] \overset{\eqref{est 147}}{=} d_{0} + d_{1} \mathbb{E}^{P\otimes_{\tau} R} [\tau] \overset{\eqref{est 148}}{>} \mathbb{E}^{Q} [ \lVert \xi(\tau) \rVert_{\dot{H}_{x}^{\frac{1}{2}}}^{2} ]. 
\end{equation} 
Finally, we can find a common choice of $C_{q}, q \in \mathbb{N}$, namely the maximum, such that all probability measures $P_{1} \otimes_{\tau} R, P_{2} \otimes_{\tau} R, P \otimes_{\tau} R$, and $Q$ are all corresponding martingale solutions to \eqref{additive msqg}. 
This completes the proof of Proposition \ref{Proposition 4.5}. 
\end{proof}

\subsection{Non-uniqueness of a.s. Markov Selection: Additive Case} 

We fix the constants $C_{q}, q \in \mathbb{N}$, of the (M3) property. We gave the definition of an a.s. Markov family in Definition \ref{Definition 5}. We recall the definition of $\mathcal{C} (\xi^{\text{in}}, C_{q})$ from Definition \ref{Definition 6} and leave the definition of an a.s. pre-Markov family in Definition \ref{Definition 10}.  Due to Lemma \ref{Lemma 7.3}, the proof of the existence of an a.s. Markov selection of $\{\mathcal{C}(\xi^{\text{in}}, C_{q})\}_{\xi^{\text{in}} \in \dot{H}_{\sigma}^{\frac{1}{2}}}$ follows once we prove the Disintegration and Reconstruction properties of Definition \ref{Definition 10}, which is the content of next two propositions. 
 
 \begin{proposition}\label{Proposition 4.6}
There exists a Lebesgue null set $\mathcal{T} \subset (0,\infty)$ such that for all $T \not\in \mathcal{T}$, the family $\{ \mathcal{C} ( \xi^{\text{in}}, C_{q}) \}_{\xi^{\text{in}} \in \dot{H}_{\sigma}^{\frac{1}{2}}}$ satisfies the disintegration property in Definition \ref{Definition 10}. 
 \end{proposition}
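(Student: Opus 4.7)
The plan is to adapt the Stroock--Varadhan disintegration scheme, in the form executed in \cite[Theorem 4.8]{HZZ21a}, \cite[Section 4]{GRZ09}, and \cite[Proposition 5.2]{FR08}. Fix an initial datum $\xi^{\text{in}} \in \dot{H}_{\sigma}^{\frac{1}{2}}$ and a martingale solution $P \in \mathcal{C}(\xi^{\text{in}}, C_q)$. By Lemma \ref{Lemma 2.2}, we have access to the r.c.p.d.\ $\omega \mapsto P(\,\cdot\, | \mathcal{B}_{T}^{0})(\omega)$. The candidate disintegrated family is $Q_{\omega} \triangleq [ P(\,\cdot\, | \mathcal{B}_{T}^{0})(\omega) ] \circ \Phi_{T}^{-1}$, the measure on $\Omega_{0}$ obtained by recentering so that the new time $0$ corresponds to the old time $T$. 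The claim is that, outside a Lebesgue null set $\mathcal{T} \subset (0,\infty)$, there exists a $P$-null set $N \in \mathcal{B}_{T}^{0}$ such that $Q_{\omega} \in \mathcal{C}(\omega(T), C_{q})$ for every $\omega \notin N$.

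Properties (M1) and (M2) will follow from the classical arguments. Property (M1) is an immediate consequence of part (3) of Lemma \ref{Lemma 2.2}, which forces $\xi(0,\omega') = \omega(T)$ under $Q_{\omega}$, combined with the regularity \eqref{est 211} inherited from $P$. For (M2), one fixes a countable $C^{\infty}(\mathbb{T}^{2}) \cap \dot{H}_{\sigma}^{\frac{1}{2}}$-dense family $\{\psi^{k}\}_{k \in \mathbb{N}}$; the tower property of conditional expectation applied to the $P$-martingale $M_{t,0}^{k}$ yields that $M_{t,T}^{k}$, viewed on the shifted trajectory, is a continuous square-integrable $(\mathcal{B}_{t}^{0})_{t \geq 0}$-martingale under $Q_{\omega}$ with the prescribed quadratic variation $\lVert G^{\ast} \psi^{k} \rVert_{U}^{2} t$, up to a $P$-null set of $\omega$'s that depends only on the countable family. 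Measurability of $\omega \mapsto Q_{\omega}$ into $\mathcal{P}(\Omega_{0})$ is inherited from the measurability statement in Lemma \ref{Lemma 2.2}(2).

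The principal obstacle, and the origin of the Lebesgue null set $\mathcal{T}$, lies in verifying (M3). For each $q \in \mathbb{N}$, the process $\{E^{q}(t)\}_{t \geq 0}$ is only an \emph{a.s.}\ $(\mathcal{B}_{t}^{0})_{t \geq 0}$-supermartingale under $P$, so there is an exceptional set $T_{P,q} \subset (0,\infty)$ of Lebesgue measure zero on which \eqref{est 123} can fail. Setting $\mathcal{T} \triangleq \bigcup_{q \in \mathbb{N}} T_{P,q}$, still Lebesgue null, guarantees that for every $T \notin \mathcal{T}$ and every $q \in \mathbb{N}$, one has $\mathbb{E}^{P}[E^{q}(t) \mathbf{1}_{A}] \leq \mathbb{E}^{P}[E^{q}(T) \mathbf{1}_{A}]$ for all $t \geq T$ and $A \in \mathcal{B}_{T}^{0}$. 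Conditioning on $\mathcal{B}_{T}^{0}$ and transferring the inequality to the shifted trajectory then yields that $\{E^{q}(t) - E^{q}(0)\}_{t \geq 0}$ is an a.s.\ $(\mathcal{B}_{t}^{0})_{t \geq 0}$-supermartingale under $Q_{\omega}$ with $0$ as a regular time, for $P$-a.e.\ $\omega$. Because the constants $C_{q}$ are prescribed intrinsic constants of the solution class, independent of the initial datum, they persist in the disintegration, so $Q_{\omega} \in \mathcal{C}(\omega(T), C_{q})$ with the \emph{same} $C_{q}$.

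The hardest technical point will be the careful bundling of the countably many $P$-null sets --- one for each $q \in \mathbb{N}$, each $\psi^{k}$ in the countable dense family, and each rational time in the martingale identity of (M2) --- into a single $P$-null set $N \in \mathcal{B}_{T}^{0}$ outside of which all required properties of $Q_{\omega}$ hold simultaneously. A density and continuity argument then extends (M2) from the countable dense family of test functions and rational times to all $\psi^{k} \in C^{\infty}(\mathbb{T}^{2}) \cap \dot{H}_{\sigma}^{\frac{1}{2}}$ and all $t \geq 0$, completing the verification of the disintegration property.
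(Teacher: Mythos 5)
Your proposal follows essentially the same route as the paper: disintegrate via the r.c.p.d.\ from Lemma \ref{Lemma 2.2}, verify (M1)--(M3) for the conditional measure, and locate the Lebesgue null set $\mathcal{T}$ as the union of the exceptional times of the a.s.\ supermartingales $E^{q}$, $q\in\mathbb{N}$. The only substantive gap is that the step you describe as ``conditioning on $\mathcal{B}_T^0$ and transferring the inequality to the shifted trajectory'' for (M2) and (M3) is genuinely delicate --- the paper invokes \cite[Propositions B.1 and B.4]{FR08} precisely to make this transfer rigorous, and (M1) also requires the extra step of showing that the regularity \eqref{est 211} passes to the conditional measure (the paper does this with the sets $S_T$, $S^T$ and Lemma \ref{Lemma 2.2}(4)); a tower-property argument alone does not suffice because the martingale/supermartingale property of a functional of the path must be shown to hold $P$-a.s.\ under the \emph{whole family} of conditional measures simultaneously.
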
 
 
\begin{proof}[Proof of Proposition \ref{Proposition 4.6}]
This proof is similar to those of \cite[Lemma 4.20]{HZZ21a}, \cite[Lemma 6.10]{HZZ20}, and \cite[Lemma 4.4]{FR08}; we sketch it for completeness. 

We fix $\xi^{\text{in}} \in \dot{H}_{\sigma}^{\frac{1}{2}}, P \in \mathcal{C}(\xi^{\text{in}}, C_{q})$, and $T$ as a regular time of $P$. By Lemma \ref{Lemma 2.2} we may let $P(\cdot \lvert \mathcal{B}_{T}^{0})(\omega)$ be a r.c.p.d. of $P$ w.r.t. $\mathcal{B}_{T}^{0}$. According to Definition \ref{Definition 10} (1), we need to find a $P$-null set $N \in \mathcal{B}_{T}^{0}$ such that for all $\omega \notin N$, 
\begin{equation}
\xi(T, \omega) \in \dot{H}_{\sigma}^{\frac{1}{2}} \hspace{2mm} \text{ and } \hspace{2mm} P(\Phi_{T} (\cdot) \lvert \mathcal{B}_{T}^{0}) (\omega) \in \mathcal{C} ( \xi(T, \omega), C_{q}). 
\end{equation} 
The first claim follows from the fact that $T$ is a  regular time of $P$. To verify the second claim, by definition of the shift operator $\Phi_{T}$, it is equivalent to demonstrating that $P(\cdot \lvert \mathcal{B}_{T}^{0})(\omega)$ satisfies (M1)-(M3) with initial data $\xi(T, \omega)$ at initial time $T$.  For (M1), by Lemma \ref{Lemma 2.2} (3) there exists a $P(\cdot \lvert \mathcal{B}_{T}^{0})$-null set $N_{1,1}$ outside which $P(\cdot \lvert \mathcal{B}_{T}^{0})$ has the correct initial data at initial time $T$. Moreover, setting 
\begin{subequations}\label{est 178}
\begin{align}
& S_{T} \triangleq \{ \omega \in \Omega_{0}: \omega \rvert_{[0,T]} \in L^{\infty} ([0,T]; \dot{H}_{\sigma}^{\frac{1}{2}}) \cap L^{2} ([0,T]; \dot{H}_{x}^{\frac{1}{2} + \iota} ) \}, \\
& S^{T} \triangleq \{ \omega \in \Omega_{0}: \omega \rvert_{[T, \infty)} \in L_{\text{loc}}^{\infty} ([T, \infty); \dot{H}_{\sigma}^{\frac{1}{2}}) \cap L_{\text{loc}}^{2} ([T, \infty); \dot{H}_{x}^{\frac{1}{2} + \iota}) \},
\end{align}
\end{subequations} 
we can compute using Lemma \ref{Lemma 2.2} (4) that 
\begin{align*}
1 = \int_{S_{T}} P (S^{T}\lvert \mathcal{B}_{T}^{0}) (\tilde{\omega}) dP(\tilde{\omega})
\end{align*}
so that there exists $P(\cdot \lvert \mathcal{B}_{T}^{0})$-null set $N_{1,2}$ outside which $S^{T}$ occurs. We let $N_{1} \triangleq N_{1,1} \cup N_{1,2}$.  

The property (M2) can be proven by following the same argument on \cite[p. 424]{FR08}. We can rely on the hypothesis that $P \in \mathcal{C} ( \xi^{\text{in}}, C_{q})$ so that (M2) holds for $P$ and apply \cite[Proposition B.1]{FR08} (and also \cite[Lemma B.3]{GRZ09}) to find a $P$-null set $N_{2} \in \mathcal{B}_{T}^{0}$ outside which $P(\Phi_{T} (\cdot) \lvert \mathcal{B}_{T}^{0})(\omega)$ satisfies (M2). 

To verify the property (M3), we define for $q \in \mathbb{N}$
\begin{align*}
\alpha_{t}^{q} \triangleq \lVert \xi(t) \rVert_{\dot{H}_{x}^{\frac{1}{2}}}^{2q} + 2q \int_{0}^{t} \lVert \xi(r) \rVert_{\dot{H}_{x}^{\frac{1}{2}}}^{2q-2} \lVert \xi(r) \rVert_{\dot{H}_{x}^{\frac{1}{2} + \iota}}^{2} dr, \hspace{1mm} \beta_{t}^{q} \triangleq C_{q} \lVert  G \rVert_{L_{2} (U, \dot{H}_{\sigma}^{\frac{1}{2}})}^{2} \int_{0}^{t} \lVert \xi(r) \rVert_{\dot{H}_{x}^{\frac{1}{2}}}^{2q-2} dr
\end{align*} 
and apply \cite[Proposition B.4]{FR08} inductively to find a $P$-null set $N_{3}$ such that for all $\omega \notin N_{3}$, $\{E_{t}^{q}\}_{t \geq T}$ is an a.s. $(\mathcal{B}_{t}^{0})_{t\geq T}$-supermartingale under $P(\cdot\lvert \mathcal{B}_{T}^{0})(\omega)$. The proof is now complete by taking $N = \cup_{j=1}^{3} N_{j}$.  
\end{proof}

\begin{proposition}\label{Proposition 4.7} 
There exists a Lebesgue null set $\mathcal{T} \subset (0,\infty)$ such that for all $T \not\in \mathcal{T}$, the family $\{ \mathcal{C} ( \xi^{\text{in}}, C_{q}) \}_{\xi^{\text{in}} \in \dot{H}_{\sigma}^{\frac{1}{2}}}$ satisfies the reconstruction property of the Definition \ref{Definition 10}.  
\end{proposition}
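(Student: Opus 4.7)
The plan is to fix $\xi^{\text{in}} \in \dot{H}_\sigma^{1/2}$, a martingale solution $P \in \mathcal{C}(\xi^{\text{in}},C_q)$, and a $\mathcal{B}_T^0$-measurable mapping $\omega \mapsto Q_\omega$ from $\Omega_0$ to $\mathcal{P}(\Omega_0)$ such that for $P$-a.e. $\omega$, $Q_\omega$ is concentrated on paths that coincide with $\omega$ on $[0,T]$ and $Q_\omega \in \mathcal{C}(\xi(T,\omega),C_q)$. Applying Lemma \ref{Lemma 2.3} produces a unique $P \otimes_T Q \in \mathcal{P}(\Omega_0)$, and the goal is to show $P \otimes_T Q \in \mathcal{C}(\xi^{\text{in}}, C_q)$ on a common Lebesgue null set $\mathcal{T}$ of $T$'s (which we choose to contain all exceptional times of $E^q$ under $P$ for every $q\in\mathbb{N}$, along with any exceptional Lebesgue null set coming from Proposition \ref{Proposition 4.6}).

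Property (M1) is immediate: since $P \otimes_T Q = P$ on $\mathcal{B}_T^0$, the initial condition $\xi^{\text{in}}$ at time $0$ is inherited from $P$, while the local regularity $\xi \in L_{\text{loc}}^\infty([0,\infty);\dot{H}_\sigma^{1/2}) \cap L_{\text{loc}}^2([0,\infty); \dot{H}_\sigma^{1/2+\iota})$ is obtained by gluing the corresponding property of $P$ on $[0,T]$ and of $Q_\omega$ on $[T,\infty)$, using the definition of $\Omega_t$ in \eqref{est 209}. For (M2), I follow the standard martingale reconstruction argument (cf. \cite[Proposition B.1]{FR08} and the proof of \cite[Proposition 4.6]{Y23a}): for a test function $\psi^k$ and $0\leq s \leq t$, split $M_{t,0}^k - M_{s,0}^k$ into contributions on $[s,s\vee T]$ and $[s\vee T, t]$, condition on $\mathcal{B}_T^0$ using Lemma \ref{Lemma 2.3}, and exploit (M2) for $P$ on $[0,T]$ and for $Q_\omega$ on $[T,\infty)$ together with the continuity of $M^k$ at $T$; the quadratic variation is handled by the same cut-and-paste.

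The main obstacle is verifying (M3), i.e., that $\{E^q(t) - E^q(0)\}_{t\geq 0}$ is an a.s. $(\mathcal{B}_t^0)_{t\geq 0}$-supermartingale under $P \otimes_T Q$ with $0$ as a regular time. Integrability of $E^q(t)$ under $P \otimes_T Q$ follows by the same inductive argument used in the proof of Lemma \ref{Lemma 4.3}, bounding $\mathbb{E}^{P\otimes_T Q}[|E^q(t)|]$ in terms of $\mathbb{E}^P[|E^q(t\wedge T)|]$ and $\int \mathbb{E}^{Q_\omega}[|E^q(t) - E^q(T)|1_{\{T\leq t\}}]\, dP(\omega)$, then using the a priori bound on $\lVert \xi(T)\rVert_{\dot H^{1/2}}^{2q}$ afforded by $T \notin \mathcal{T}$. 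For the supermartingale inequality $\mathbb{E}^{P\otimes_T Q}[(E^q(t) - E^q(s))1_A] \leq 0$ for $A \in \mathcal{B}_s^0$ and $t \geq s$, I will split into three cases: (i) if $t \leq T$, the assertion reduces to the analogous inequality for $P$, which holds since $s$ is regular for $E^q$ under $P$; (ii) if $s \geq T$, conditioning on $\mathcal{B}_T^0$ and Fubini reduce the assertion to $\mathbb{E}^{Q_\omega}[(E^q(t) - E^q(s))1_A] \leq 0$, which holds $P$-a.s. thanks to (M3) of $Q_\omega$; (iii) if $s < T < t$, decompose $E^q(t) - E^q(s) = [E^q(t) - E^q(T)] + [E^q(T) - E^q(s)]$, apply (M3) of $Q_\omega$ to the first bracket (since $T$ is the regular initial time of $Q_\omega$) and (M3) of $P$ to the second (absorbing $s$ into the null exceptional set of $P$). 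Integrating this inequality against $1_D(s)\,ds$ over $D \subset (0,t)$ as in \cite[p.~441]{FR08} and \eqref{est 276} then yields the supermartingale property for $s$ outside a Lebesgue null set $T_t \subset (0,t)$; the assertion then follows by Lemma \ref{Lemma 2.1}. Finally, $0$ is a regular time of $E^q$ under $P \otimes_T Q$ because it is a regular time under $P$ and $P \otimes_T Q = P$ on $\mathcal{B}_T^0$.
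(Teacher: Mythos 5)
Your proposal is correct and follows essentially the same approach as the paper: the paper's own proof of Proposition~\ref{Proposition 4.7} simply defers to the argument of Lemma~\ref{Lemma 4.3} with the stopping time ``$\tau$'' replaced by the deterministic time $T$, and the core of that argument is precisely your three-case split for the (M3) supermartingale inequality together with the Lebesgue-integration-over-$s$ device from \cite[p. 441]{FR08}. Your explicit verification of (M1) and (M2) is the standard gluing/r.c.p.d.\ argument that the paper imports from \cite{Y23a} and \cite{FR08} without restating, so the two proofs have the same content.
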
 

\begin{proof}[Proof of Proposition \ref{Proposition 4.7}]
We fix $\xi^{\text{in}} \in \dot{H}_{\sigma}^{\frac{1}{2}}$ and $P \in \mathcal{C} ( \xi^{\text{in}}, C_{q})$, and let $T$ be a regular time of $P$. We assume that a mapping $\mathcal{E}: \Omega_{0} \mapsto \mathcal{P}_{\dot{H}_{\sigma}^{\frac{1}{2}}} (\Omega_{0})$ defined by $\mathcal{E}(\omega) \triangleq Q_{\omega}$ satisfies the hypothesis of Lemma \ref{Lemma 2.3} and that there exists a $P$-null set $N \in \mathcal{B}_{T}^{0}$ such that for all $\omega \notin N$ we have $\xi(T, \omega) \in \dot{H}_{\sigma}^{\frac{1}{2}}$ and $Q_{\omega} \circ \Phi_{T} \in \mathcal{C} ( \xi(T, \omega), C_{q})$. According to Definition \ref{Definition 10} (2), we only need to show that $P \otimes_{T} Q \in \mathcal{C} (\xi^{\text{in}}, C_{q})$ and this can be shown by the same argument to the proof of Lemma \ref{Lemma 4.3} with ``$\tau$'' therein replaced by $T$.  
\end{proof} 

\begin{proof}[Proof of Theorem \ref{Theorem 1.2}]
$ $\newline
(1) This part is proven thanks to Proposition \ref{Proposition 4.5}. \\ 
\noindent 
(2) As a consequence of Proposition \ref{Proposition 4.5}, we know that there exists an initial data $\xi_{\ast}^{\text{in}} \in \dot{H}_{\sigma}^{\frac{1}{2}}$ that leads to at least two martingale solutions $P$ and $Q$ on $[0,\infty)$ and a functional $f: \dot{H}_{\sigma}^{\frac{1}{2}} \mapsto \mathbb{R}$ that is continuous and bounded such that 
\begin{equation}\label{est 180}
\mathbb{E}^{P} \left[ \int_{0}^{\infty} e^{-\lambda t} f(\xi(t)) dt \right] > \mathbb{E}^{Q} \left[ \int_{0}^{\infty} e^{-\lambda t} f(\xi(t)) dt \right], \hspace{3mm} \lambda > 0, 
\end{equation} 
similarly to the proof of \cite[Theorem 12.2.4]{SV97}. As a consequence of Propositions \ref{Proposition 4.6}-\ref{Proposition 4.7},  $\{ \mathcal{C} (\xi^{\text{in}}, C_{q}) \}_{\xi^{\text{in}} \in \dot{H}_{\sigma}^{\frac{1}{2}}}$ is an a.s. pre-Markov family so that we can apply Lemma \ref{Lemma 7.3}. We use Lemma \ref{Lemma 7.3} (1) with the $f$ from \eqref{est 180} to construct one a.s. Markov selection $\{ P_{\xi^{\text{in}}}^{+} \}_{\xi^{\text{in}} \in\dot{H}_{\sigma}^{\frac{1}{2}}}$ of $\{ \mathcal{C} (\xi^{\text{in}}, C_{q}) \}_{\xi^{\text{in}} \in \dot{H}_{\sigma}^{\frac{1}{2}}}$ and use $-f$ instead to construct another a.s. Markov selection $\{ P_{\xi^{\text{in}}}^{-} \}_{\xi^{\text{in}} \in\dot{H}_{\sigma}^{\frac{1}{2}}}$ of $\{ \mathcal{C} (\xi^{\text{in}}, C_{q}) \}_{\xi^{\text{in}} \in \dot{H}_{\sigma}^{\frac{1}{2}}}$ for which Lemma \ref{Lemma 7.3} (2) justifies the first and third inequalities below: 
\begin{align} 
&\mathbb{E}^{P_{\xi_{\ast}^{\text{in}}}^{-}}\left[ \int_{0}^{\infty} e^{-\lambda t} f(\xi(t)) dt \right] \leq \mathbb{E}^{Q} \left[ \int_{0}^{\infty} e^{-\lambda t} f(\xi(t))dt \right]  \nonumber \\
\overset{\eqref{est 180}}{<}& \mathbb{E}^{P} \left[ \int_{0}^{\infty} e^{-\lambda t} f(\xi(t)) dt \right] \leq \mathbb{E}^{P_{\xi_{\ast}^{\text{in}}}^{+}} \left[ \int_{0}^{\infty} e^{-\lambda t} f(\xi(t)) dt \right].  \label{est 237} 
\end{align}
Therefore, the two a.s. Markov selections $\{ P_{\xi^{\text{in}}}^{+} \}_{\xi^{\text{in}} \in\dot{H}_{\sigma}^{\frac{1}{2}}}$ and $\{ P_{\xi^{\text{in}}}^{-} \}_{\xi^{\text{in}} \in\dot{H}_{\sigma}^{\frac{1}{2}}}$ are different and this completes the proof of Theorem \ref{Theorem 1.2}. 
\end{proof} 

\section{Proof of Theorem \ref{Theorem 1.3}}\label{Section 5}

\subsection{Setup}\label{mult setup}
We begin by fixing the energy $e: \mathbb{R} \to [\ushort{e},\infty)$ that satisfies \eqref{est 118 mult}. 
\subsubsection{\except{toc}{Formation of }Random PDE and Stopping Time}
We fix an $\mathbb{R}$-valued Wiener process $B$ on $(\Omega, \mathcal{F}, \textbf{P})$ with $(\mathcal{F}_{t})_{t\geq 0}$ as its normal filtration. We apply It$\hat{\mathrm{o}}$'s formula to \eqref{mult msqg} with $y(t)$ defined by \eqref{define upsilon} to obtain
\begin{align}
&\partial_{t} y + \frac{1}{2} y + \Upsilon [(\Lambda y \cdot \nabla) y - (\nabla y)^{T} \cdot \Lambda y] \nonumber \\
& \hspace{25mm} + \nabla (\Upsilon^{-1} p) + \Lambda^{\gamma} y = 0, \hspace{3mm} \nabla\cdot y = 0, \hspace{3mm} \text{ for } t > 0. \label{est 265}
\end{align}
Then for all $q \in \mathbb{N}_{0}$ we add our error term $\divergence \mathring{R}_{q}$, where the Reynolds stress $\mathring{R}_{q}$ is again symmetric and trace-free, thus leading us to consider for all $q \in \mathbb{N}_{0}$, 
\begin{align}
\partial_{t} y_{q} + \frac{1}{2} y_{q} + \Upsilon [( \Lambda y_{q} \cdot \nabla) y_{q} - (\nabla y_{q})^{T} \cdot \Lambda y_{q} ]  + \nabla p_{q} + \Lambda^{\gamma} y_{q} = \text{div} \mathring{R}_{q},  \hspace{2mm} \nabla\cdot y_{q} = 0.  \label{mult convPDE}
\end{align}
We now fix $L > 1$ sufficiently large. We consider the same parameters from Section \ref{choice of parameters} while the choice of $b$ now depends on the fixed $L$: we take 
\begin{equation}\label{mult bound b}
b > \max\left\{\frac{6L}{\alpha - \frac{1}{2}}, \frac{4L}{3-2\alpha}, \frac{L}{\frac{3}{2} - \gamma} \right\}.  
\end{equation}
For example, the three lower bounds are necessarily used respectively in \eqref{est 191}, \eqref{est 190}, and \eqref{est 192}. By the same reasoning  of Section \ref{additive setup}, we can assume that \eqref{est 1} holds. For any $\delta\in (0,\frac{1}{4})$, we fix our stopping time 
\begin{equation}\label{mult stopping time}
T_{L} \triangleq L \wedge \inf\{t \geq 0: \hspace{0.5mm}  \lvert B(t) \rvert \geq L^{\frac{1}{4}} \} \wedge \inf\{t \geq 0: \hspace{0.5mm} \lVert B \rVert_{C_{t}^{\frac{1}{2} - 2\delta}} \geq L^{\frac{1}{2}} \}.
\end{equation}
\begin{remark}\label{Remark 5.1}
We point out that this stopping time is significantly different from an analog of the additive case \eqref{additive stopping time}. In the additive case, the nonlinear term at the step $q = 0$ would not vanish even if $y_{0} \equiv 0$ and thus the  need for the smallness of the norm of $z_{0}$. In contrast, the nonlinear term in the linear multiplicative case at step $q = 0$ vanishes when $y_{0} \equiv 0$. 
\end{remark}

In addition to the differences described in Remark \ref{Remark 5.1}, another difference between $\mathfrak{t}$ in \eqref{additive stopping time} and $T_{L}$ of \eqref{mult stopping time} is that 
\begin{equation}\label{limit as l to infinity}
\lim_{L\to\infty}  T_{L} = \infty \hspace{3mm} \mathbf{P}\text{-a.s.}
\end{equation}
while $\mathfrak{t}$ in the additive case was bounded by 1. Additionally, we define a new parameter that depends on $L$:  
\begin{equation}\label{est 73}
m_{L} \triangleq \sqrt{3} L^{\frac{1}{4}} e^{\frac{1}{2} L^{\frac{1}{4}}}.  
\end{equation}
Now, due to how we defined our stopping time, we can determine some bounds on $\Upsilon$ defined in \eqref{define upsilon}.
First, we extend $B$ to $[-4, 0)$ by the value at $t = 0$ so that 
\begin{equation*}
\lVert B \rVert_{C_{t,0}} \leq L^{\frac{1}{4}}  \text{ and } \lVert B \rVert_{C_{t,0}^{\frac{1}{2} - 2 \delta}} \leq L^{\frac{1}{2}} \hspace{3mm} \forall \hspace{1mm} t \in [-2, T_{L}]. 
\end{equation*}
Given this, the following bounds on $\Upsilon$ are available and useful:
\begin{subequations}\label{est 196}
\begin{align}
& \lvert\Upsilon(t)\rvert\leq e^{L^\frac{1}{4}}, \hspace{21mm}  \lvert\Upsilon^{-1}(t)\rvert\leq e^{L^{\frac{1}{4}}}, \hspace{12mm}   \lVert \Upsilon_{l}^{-\frac{1}{2}} \rVert_{C_{t,0}} \leq e^{\frac{1}{2} L^{\frac{1}{4}}} \label{est 196a}\\
&\lVert \Upsilon \rVert_{C_{t,0}^{\frac{1}{2} - 2 \delta}} \leq e^{L^{\frac{1}{4}}}L^{\frac{1}{2}} \leq m_{L}^{2},\hspace{2mm}   \lVert \Upsilon^{-1} \rVert_{C_{t,0}^{\frac{1}{2} - 2 \delta}} \leq e^{L^{\frac{1}{4}}}L^{\frac{1}{2}} \leq m_{L}^{2}.\label{est 196b}
\end{align}
\end{subequations} 

\subsubsection{Mollification}
We take the same mollifiers from the additive case (Section \ref{additive mollification}), and mollify $y_{q}$ and $\mathring{R}_{q}$ identically to \eqref{additive mollified} while additionally $\Upsilon$ from \eqref{define upsilon} in time to obtain 
\begin{equation}\label{mult mollify c}
\Upsilon_{l} \triangleq \Upsilon \ast_{t} \varphi_{l}. 
\end{equation} 
We redefine $D_{t,q}$ from \eqref{est 56} in the linear multiplicative case to instead be
\begin{equation}\label{est 57} 
D_{t,q} \triangleq \partial_{t} + \Upsilon_{l} \Lambda y_{l} \cdot \nabla.
\end{equation} 

\subsection{Inductive Hypothesis}
We require at each step $q$ that for all $t\in[t_q,T_{L}]$, the solution $(y_{q}, \mathring{R}_{q})$ to \eqref{mult convPDE} satisfies 
\begin{subequations}\label{mult inductive 1}
\begin{align}
&\supp \hat{y}_q\subset B(0,2\lambda_q), \label{mult inductive 1a}\\
&||y_q||_{C_{t,x,q}}\leq M_{0}^{\frac{1}{2}} \left(1+\sum_{1\leq j\leq q}\delta_j^{\frac{1}{2}} \right)m_{L}^{4} \bar{e}^{\frac{1}{2}}\leq 3M_{0}^{\frac{1}{2}}m_{L}^{4}\bar{e}^{\frac{1}{2}}, \label{mult inductive 1b}\\
&||y_q||_{C_{t,q}C^1_x}+||\Lambda y_q||_{C_{t,x,q}}\leq M_{0}^{\frac{1}{2}} m_{L}^{4} \lambda_q\delta_q^{\frac{1}{2}}\bar{e}^{\frac{1}{2}}, \label{mult inductive 1c}\\
&|| D_{t,q} y_q||_{C_{t,x,q}}\leq M_{0}\lambda_q^2\delta_qm_{L}^{8}e^{L^{\frac{1}{4}}} \bar{e}, \label{mult inductive 1d}
\end{align}
\end{subequations}
\begin{subequations}\label{mult inductive 2}
\begin{align}
&\supp{\widehat{\mathring{R}}_q}\subset B(0,4\lambda_q), \label{mult inductive 2a}\\
&||\mathring{R}_q||_{C_{t,x,q}}\leq \frac{\varepsilon_\gamma e^{-3 L^{\frac{1}{4}}}}{32(2\pi)^2}\lambda_{q+2}\delta_{q+2} e(t); \label{mult inductive 2b}
\end{align}
\end{subequations}
finally, to control the energy, we impose 
\begin{equation}\label{mult inductive 3}
\frac{3}{4}\lambda_{q+1}\delta_{q+1}e(t) \Upsilon^{-2}(t)\leq e(t) \Upsilon^{-2} (t) -||y_q (t)||^2_{\dot{H}_{x}^{\frac{1}{2}}}\leq \frac{5}{4}\lambda_{q+1}\delta_{q+1}e(t) \Upsilon^{-2}(t),
\end{equation}
where $\varepsilon_\gamma$ is that from the geometric lemma Lemma \ref{geometric lemma} and $D_{t,q}$ is defined in \eqref{est 57}.

\begin{remark}\label{Remark 5.2}
We point out that the energy hypothesis of \eqref{mult inductive 3} is different from \eqref{inductive 3} in terms of multiplication by $\Upsilon^{-2} = (\Upsilon^{-1})^{2}$ and this stems from the definition of $y = \Upsilon^{-1} v$ from \eqref{define upsilon} so that $\lVert y(t) \rVert_{\dot{H}_{x}^{\frac{1}{2}}}^{2} = \Upsilon^{-2} \lVert v(t) \rVert_{\dot{H}_{x}^{\frac{1}{2}}}^{2}$. This difference in \eqref{mult inductive 3} will now create a difference in \eqref{mult inductive 2b} of  ``$e^{-3L^{\frac{1}{4}}}$,'' which is just enough to counter $\Upsilon_{l}(t) \Upsilon^{-2} (t)$, in comparison to $||\mathring{R}_q||_{C_{t,x,q}}\leq \frac{\varepsilon_\gamma}{32(2\pi)^2}\lambda_{q+2}\delta_{q+2}e(t)$ from \eqref{inductive 2b} as follows. First, the function $\bar{\gamma}_{q}(t)$ in the linear multiplicative case must consist of the same proportion as ``$\Upsilon^{2}(t) \lVert y_{q+1}(t) \rVert_{\dot{H}_{x}^{\frac{1}{2}}}^{2}$,'' as can be seen in \eqref{est 98}. The perturbation $w_{q+1}$ will be constructed via its amplitude functions $\bar{a}_{k,j}$ and the amplitude function will be a multiple of the amplitude function from the additive case by $\Upsilon_{l}^{-\frac{1}{2}}$ (see \eqref{est 58b}). This implies that $\lVert y_{q}(t) \rVert_{\dot{H}_{x}^{\frac{1}{2}}}^{2}$ will create an extra factor of $(\Upsilon_{l}^{-\frac{1}{2}})^{2}$ from $\lVert w_{q+1}(t) \rVert_{\dot{H}_{x}^{\frac{1}{2}}}^{2}$ and hence from $\lVert y_{q+1}(t) \rVert_{\dot{H}_{x}^{\frac{1}{2}}}^{2}$. Therefore, $\bar{\gamma_{q}}(t)$, to be defined in \eqref{est 58c}, will consist of $\Upsilon_{l}(t) \Upsilon^{-2}(t)$; this will be elaborated more in detail in Remark \ref{Remark 5.3}. Then the definition of $\bar{\rho}_{j}$ in \eqref{est 58b} shows that the extra factor of $\mathring{R}_{q,j}$ should be same as that of $\bar{\gamma}_{l}$, suggesting  heuristically that the ``$e^{-3L^{\frac{1}{4}}}$'' in \eqref{mult inductive 2b} is the appropriate factor. This factor is also used crucially in \eqref{est 235}. 
\end{remark} 

\subsection{Base Step $q=0$}
\begin{proposition}\label{mult step q0}
Fix $L > 1$ and suppose that $b$ satisfies \eqref{mult bound b}.  Let $y_{0} \equiv 0$. Then together with $\mathring{R}_{0} \equiv 0$, the pair $(y_{0}, \mathring{R}_{0})$ solves \eqref{mult convPDE} and satisfies the inductive hypotheses \eqref{mult inductive 1}-\eqref{mult inductive 3} at level $q= 0$ on $[t_{0}, T_{L}]$. Moreover, $y_{0}(t,x)$ and $\mathring{R}_{0} (t,x)$ are both deterministic over $[t_{0}, 0]$. 
\end{proposition}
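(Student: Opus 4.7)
The plan is to observe that, in stark contrast with the additive case (Proposition \ref{additive step q0}), the absence of any stochastic convolution term $z$ in the random PDE \eqref{est 265} means that the trivial choice $(y_{0}, \mathring{R}_{0}) \equiv (0, 0)$ already solves \eqref{mult convPDE} with $p_{0} \equiv 0$: every term on the left-hand side of \eqref{mult convPDE}, including the nonlinearity $\Upsilon_{l}[(\Lambda y_{0} \cdot \nabla) y_{0} - (\nabla y_{0})^{T} \cdot \Lambda y_{0}]$ (which in the additive case had an analogue $\Lambda z_{0}^{\perp} (\nabla^{\perp} \cdot z_{0})$ that forced a nontrivial $\mathring{R}_{0}$), vanishes identically. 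In particular, one does not need the truncation parameter $f$ or any analogue of \eqref{form r0} at this step. This is the point alluded to in Remark \ref{Remark 5.1}: the noise in \eqref{mult msqg} only modulates the nonlinear term through $\Upsilon$, so it does not obstruct the choice $y_{0} \equiv 0$.

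Once this is noted, the verification of the inductive hypotheses \eqref{mult inductive 1} and \eqref{mult inductive 2} reduces to trivialities: the frequency supports \eqref{mult inductive 1a} and \eqref{mult inductive 2a} are vacuous, while \eqref{mult inductive 1b}--\eqref{mult inductive 1d} and \eqref{mult inductive 2b} all hold with both sides (of the relevant inequalities) or the left-hand side equal to zero; note that $D_{t,0} y_{0} = 0$ is immediate from \eqref{est 57} applied to $y_{0} \equiv 0$, regardless of the size of $\Upsilon_{l}$. The only hypothesis requiring a moment of thought is the energy control \eqref{mult inductive 3}, which reduces to
\begin{equation*}
\tfrac{3}{4} \lambda_{1} \delta_{1} e(t) \Upsilon^{-2}(t) \leq e(t) \Upsilon^{-2}(t) \leq \tfrac{5}{4} \lambda_{1} \delta_{1} e(t) \Upsilon^{-2}(t);
\end{equation*}
because $e(t) \Upsilon^{-2}(t) > 0$, this is equivalent to $\tfrac{3}{4} \leq \lambda_{1} \delta_{1} \leq \tfrac{5}{4}$, and here one invokes the identity $\lambda_{1} \delta_{1} = 1$ that was already flagged in Section \ref{choice of parameters} as crucial to both the additive and multiplicative base steps.

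Finally, determinism on $[t_{0}, 0]$ is immediate since both $y_{0}$ and $\mathring{R}_{0}$ are identically zero, hence in particular independent of $\omega$. There is essentially no obstacle here; the entire content of the proposition is that the convex integration scheme in the linear multiplicative setting can be launched from the zero solution, a structural feature that will be exploited later in handling the stochastic commutator error \eqref{est 55e} via the mollification of $\Upsilon$.
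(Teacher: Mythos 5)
Your proof is correct and takes essentially the same route as the paper's: the paper simply notes that \eqref{mult inductive 1}--\eqref{mult inductive 2} are trivial and that \eqref{mult inductive 3} follows from $\lambda_1\delta_1=1$, which is exactly what you spell out in more detail. Your added observations on the vanishing nonlinearity and determinism are accurate elaborations of the same argument, not a different approach.
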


\begin{proof}[Proof of Proposition \ref{mult step q0}]
The hypotheses \eqref{mult inductive 1}-\eqref{mult inductive 2} are trivially satisfied while the hypothesis \eqref{mult inductive 3} follows due to $\lambda_{1} \delta_{1} \overset{\eqref{define lambda delta}}{=} 1$. 
\end{proof}

\subsection[Main Iteration]{\for{toc}{Main Iteration}\except{toc}{Main Iteration: Step $q\to q+1$}}

\begin{proposition}\label{mult q to qplus1}
Fix $L > 1$ and suppose that $b$ satisfies \eqref{mult bound b}. Then there exists a choice of $a_{0}$ in \eqref{bound a} and $\beta$ in \eqref{bound beta} such that the following holds: If $(y_q,\mathring{R}_q)$ is an $(\mathcal{F}_t)_{t\geq 0}$-adapted solution to \eqref{mult convPDE} on $[t_q,T_{L}]$ satisfying the inductive hypothesis \eqref{mult inductive 1}-\eqref{mult inductive 3} for some $q\in\mathbb{N}_{0}$, then there exists a solution $(y_{q+1},\mathring{R}_{q+1})$ to \eqref{mult convPDE} which is also $(\mathcal{F}_t)_{t\geq 0}$-adapted, satisfying the inductive hypotheses \eqref{mult inductive 1}-\eqref{mult inductive 3} and the Cauchy difference
\begin{equation}\label{lm cauchy}
\lVert y_{q+1}(t)-y_{q}(t)\rVert_{C_x}\leq \Upsilon_{l}^{-\frac{1}{2}}(t) e^{ \frac{3}{2}L^{\frac{1}{4}}}  M_{0}^{\frac{1}{2}}\bar{e}^\frac{1}{2}\delta_{q+1}^\frac{1}{2}
\end{equation}
on $[t_{q+1},T_{L}]$. Moreover, $w_{q+1} \triangleq y_{q+1}-y_l$ satisfies \eqref{support wqplus1}. Finally, if $(y_q,\mathring{R}_q)(t)$ is deterministic on $[t_q,0]$, then so is $(y_{q+1},\mathring{R}_{q+1})$ on $[t_{q+1},0]$.
\end{proposition}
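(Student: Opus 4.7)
The plan is to follow the same scheme as in the additive case (Proposition \ref{additive q to qplus1}), splitting the argument into analogues of Propositions \ref{prop additive amplitudes}--\ref{prop additive control energy}, but with amplitude functions rescaled by $\Upsilon_l^{-1/2}$ in order to absorb the multiplier coming from \eqref{define upsilon}, and with a new stochastic commutator error produced by the gap $\Upsilon - \Upsilon_l$. First, in analogy with \eqref{def Phi j}--\eqref{est 16}, for each $j \in \mathbf{J}$ I would define the flow $\Phi_j$ and the stress $\mathring{R}_{q,j}$ by transporting along $D_{t,q} = \partial_t + \Upsilon_l \Lambda y_l \cdot \nabla$ from \eqref{est 57}, and then set
\begin{subequations}\label{est 58}
\begin{align}
\bar{\gamma}_q(t) &\triangleq \tfrac{1}{4(2\pi)^2}\bigl[ e(t)\Upsilon^{-2}(t)(1-\lambda_{q+2}\delta_{q+2}) - \lVert y_q(t)\rVert_{\dot H_x^{1/2}}^2\bigr], \quad \bar\gamma_l \triangleq \bar\gamma_q \ast_t \varphi_l, \label{est 58c}\\
\bar\rho_j &\triangleq \Upsilon_l^{-1}(\tau_{q+1}j)\bigl[ \varepsilon_\gamma^{-1}\sqrt{l^2 + \lVert \mathring R_{q,j}(\tau_{q+1}j)\rVert_{C_x}^2} + \bar\gamma_l(\tau_{q+1}j)\bigr], \label{est 58a}\\
\bar a_{k,j}(t,x) &\triangleq \lambda_{q+1}^{-1/2}\,\bar\rho_j^{1/2}\,\gamma_k\!\Bigl(\Id - \tfrac{\Upsilon_l(\tau_{q+1}j)\mathring R_{q,j}(t,x)}{\varepsilon_\gamma^{-1}\sqrt{l^2+\lVert\mathring R_{q,j}(\tau_{q+1}j)\rVert_{C_x}^2}+\bar\gamma_l(\tau_{q+1}j)}\Bigr).\label{est 58b}
\end{align}
\end{subequations}
The factor $\Upsilon_l^{-1}$ in $\bar\rho_j$ is precisely what Remark \ref{Remark 5.2} foresees: it yields $\lVert \bar a_{k,j}\rVert_{C_x} \lesssim \Upsilon_l^{-1/2}\delta_{q+1}^{1/2}\bar e^{1/2}$, which together with \eqref{est 196} delivers the Cauchy difference \eqref{lm cauchy}. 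The positivity of $\bar\gamma_q$, needed to make $\bar a_{k,j}$ well defined via Lemma \ref{geometric lemma}, follows as in \eqref{est 21} from \eqref{mult inductive 3} and \eqref{est 1}; the amplitude satisfies an analogue of \eqref{est 9} because $\Upsilon_l/\Upsilon \le 1 + O(m_L^2 l^{1/2-2\delta})$ can be made $\le 2$ and the factor $e^{-3L^{1/4}}$ in \eqref{mult inductive 2b} compensates the extra powers of $e^{L^{1/4}}$ generated by $\Upsilon_l\mathring R_{q,j}/\bar\rho_j$.

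With these definitions in hand, I would define $w_{q+1}$, $\tilde w_{q+1,j,k}$ and $\psi_{q+1,j,k}$ exactly as in \eqref{est 14}--\eqref{est 14c}, set $y_{q+1}\triangleq y_l+w_{q+1}$, and then establish the analogue of Proposition \ref{prop additive amplitudes} (the $\Upsilon_l^{-1/2}$ factor modifies the multiplicative constant but not the powers of $\lambda_q,\delta_{q+1}$), of Proposition \ref{prop additive psi} (same argument since $\psi_{q+1,j,k}$ is built from $\Phi_j$, and transport estimates for $\Phi_j$ only pick up an extra factor $\lVert \Upsilon_l\rVert_{C_{t,0}}\le e^{L^{1/4}}$, which is absorbed into $m_L^4 e^{L^{1/4}}$ in \eqref{mult inductive 1c}--\eqref{mult inductive 1d}), and of Proposition \ref{prop additive perturbation}. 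The verification of \eqref{mult inductive 1b}--\eqref{mult inductive 1d} and \eqref{support wqplus1} proceeds verbatim as in \eqref{weaker additive M0}--\eqref{additive M0} once one is careful that every occurrence of $\bar e^{1/2}$ now carries an extra $\Upsilon_l^{-1/2}\le e^{L^{1/4}/2}$ and every occurrence of $D_{t,q}$ produces an additional $\Upsilon_l\le e^{L^{1/4}}$, both swallowed by the $m_L^4$ and $m_L^8 e^{L^{1/4}}$ factors.

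Next I would decompose $\divergence \mathring R_{q+1}$ into the six standard pieces $R_T,R_N,R_L,R_O,R_{\mathrm{Com1}}$ and crucially a new stochastic commutator error
\begin{equation}\label{new_stoch}
\divergence R_{\mathrm{Com2}} \triangleq (\Upsilon - \Upsilon_l)\bigl[(\Lambda y_{q+1}\cdot\nabla)y_{q+1} - (\nabla y_{q+1})^T\cdot \Lambda y_{q+1}\bigr],
\end{equation}
as flagged in \eqref{est 55e}. The transport, Nash, and oscillation estimates carry over essentially unchanged from Propositions \ref{prop RT}, \ref{prop RN}, and \ref{prop RO}, with the power of $\bar e$ replaced by $e^{CL^{1/4}}\bar e$ and compensated by $e^{-3L^{1/4}}$ in \eqref{mult inductive 2b}; the first three lower bounds on $b$ in \eqref{mult bound b} are designed to beat these factors at the critical estimates \eqref{est 121}, \eqref{additive Oapprox}, and $R_L$. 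The first commutator error $R_{\mathrm{Com1}}$ is simpler since there is no $z$ in the linear multiplicative setup. The hard part will be $R_{\mathrm{Com2}}$ in \eqref{new_stoch}, which is not in divergence form and contains the product $(\Lambda y_{q+1}\cdot\nabla)y_{q+1}$ which is one derivative worse than the $u\otimes u$ in Navier--Stokes. Following the strategy of \cite{Y23b} referenced in the introduction, I would apply $\mathcal B$ only after exploiting the identity $(\Lambda y_{q+1}\cdot\nabla)y_{q+1} - (\nabla y_{q+1})^T\Lambda y_{q+1} = (\Lambda y_{q+1})^\perp(\nabla^\perp\cdot y_{q+1})$ from \eqref{est 120}, split $y_{q+1} = y_l + w_{q+1}$ to separate the high-frequency from the low-frequency part, use \eqref{est 8} and the kernel estimate \eqref{new 8} to gain a derivative on the high-frequency interactions compactly supported away from zero, and control the remaining low-frequency product by the H\"older estimate $\lVert \Upsilon - \Upsilon_l\rVert_{C_{t,0}} \lesssim m_L^2 l^{1/2-2\delta}$. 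This is exactly what the $l^{1/2-2\delta}$-gain combined with \eqref{mult bound b} is designed to absorb.

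Finally, the energy control will follow the template of Proposition \ref{prop additive control energy}, with $\delta E(t)$ now measuring $\bigl|e(t)\Upsilon^{-2}(t)(1-\lambda_{q+2}\delta_{q+2}) - \lVert y_{q+1}(t)\rVert_{\dot H_x^{1/2}}^2\bigr|$. The key cancellation is that \eqref{est 58c} and \eqref{est 58b} are tuned so that $\sum_{j,k}\int_{\mathbb{T}^2}\lambda_{q+1}\tilde w_{q+1,j,k}\cdot\tilde w_{q+1,j,-k}\,dx$ produces, via the geometric identity analogous to \eqref{est 197}--\eqref{est 43}, the term $4(2\pi)^2\sum_j\chi_j^2 \bar\gamma_l(\tau_{q+1}j)$ together with the $l$ and $\mathring R_{q,j}$ pieces, up to an overall factor $\Upsilon_l^{-1}$ from \eqref{est 58a}. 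The regularity of $\Upsilon$ given by \eqref{est 196b} provides the smallness $|\Upsilon^{-2}(t) - \Upsilon_l^{-1}(\tau_{q+1}j)|\lesssim \tau_{q+1}^{1/2-2\delta} m_L^2$ required for the time-discretization error, and the bound $|e'|\le \tilde e$ from \eqref{est 118 mult} handles the mollification error on $e$. Adaptedness and determinism on $[t_{q+1},0]$ are inherited from the inputs exactly as argued in Section \ref{proof additive q to qplus1}. The main obstacle throughout is bookkeeping the powers of $e^{L^{1/4}}$ so that the estimates \eqref{mult inductive 2b} and \eqref{mult inductive 3} close at level $q+1$ uniformly on $[t_{q+1},T_L]$.
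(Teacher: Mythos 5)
The overall architecture (transported $\Phi_j,\mathring R_{q,j}$, amplitude rescaled by a power of $\Upsilon_l^{-1}$, the decomposition $R_T+R_N+R_L+R_O+R_{\mathrm{Com1}}+R_{\mathrm{Com2}}$ with the stochastic commutator \eqref{est 55e}, and an energy argument patterned on Proposition \ref{prop additive control energy}) is the right one, and you correctly identify $R_{\mathrm{Com2}}$ as the new difficulty. But your amplitude functions are not the paper's, and with your choice the energy control does not close. The paper takes $\bar{\gamma}_q(t) = \frac{\Upsilon^{-2}(t)\Upsilon_l(t)}{4(2\pi)^2}\bigl[e(t)(1-\lambda_{q+2}\delta_{q+2}) - \Upsilon^2(t)\|y_q(t)\|_{\dot H_x^{1/2}}^2\bigr]$, i.e.\ your $\bar\gamma_q$ times an extra factor $\Upsilon_l(t)$, and then sets $\bar\rho_j = \varepsilon_\gamma^{-1}\sqrt{l^2+\|\mathring R_{q,j}(\tau_{q+1}j)\|_{C_x}^2}+\bar\gamma_l(\tau_{q+1}j)$ with \emph{no} $\Upsilon_l^{-1}$ prefactor, putting the rescaling on the amplitude itself as $\bar a_{k,j}(t,x)=\Upsilon_l^{-1/2}(t)\,a_{k,j}(t,x)$, evaluated at the running time $t$. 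With these choices, the geometric identity produces $\|w_{q+1}\|_{\dot H_x^{1/2}}^2 = 4(2\pi)^2\sum_j\chi_j^2\,\Upsilon_l^{-1}(t)\,\bar\gamma_l(\tau_{q+1}j)+\dots$, and since $\bar\gamma_l(\tau_{q+1}j)\approx\Upsilon_l(t)\cdot\text{(your }\bar\gamma_q(t))$ because of the $\Upsilon_l$ inside the paper's $\bar\gamma_q$, the two $\Upsilon_l^{\pm1}(t)$ factors cancel exactly and the residual is purely a mollifier/time-discretization error. In your scheme the same computation yields $4(2\pi)^2\sum_j\chi_j^2\,\Upsilon_l^{-1}(\tau_{q+1}j)\bar\gamma_l(\tau_{q+1}j)$, which must match $4(2\pi)^2\bar\gamma_q(t)$; the mismatch is therefore of the order $\bar\gamma_q(t)\bigl(1-\Upsilon_l^{-1}(t)\bigr)\sim\lambda_{q+1}\delta_{q+1}\,e(t)\Upsilon^{-2}(t)\bigl(1-\Upsilon_l^{-1}(t)\bigr)$, an $O(1)$ relative error multiplied by $\lambda_{q+1}\delta_{q+1}\gg\lambda_{q+2}\delta_{q+2}$, so it cannot possibly be absorbed into the tolerance $\frac14\lambda_{q+2}\delta_{q+2}e(t)\Upsilon^{-2}(t)$ required by \eqref{mult inductive 3} at level $q+1$. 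Your claimed smallness $|\Upsilon^{-2}(t)-\Upsilon_l^{-1}(\tau_{q+1}j)|\lesssim\tau_{q+1}^{1/2-2\delta}m_L^2$ is simply false: these are different powers of $\Upsilon$, and their difference is $\Upsilon^{-2}(t)(1-\Upsilon(t))+O(\tau_{q+1}^{1/2-2\delta})$, which is $O(1)$ for a typical Brownian path. This is precisely the failure mode anticipated in Remark~\ref{Remark 5.3}~(2), which is why the paper bakes the $\Upsilon_l$ into $\bar\gamma_q$.

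A secondary but real concern: with your $\bar a_{k,j}$ the argument of $\gamma_k$ is $\Id - \Upsilon_l(\tau_{q+1}j)\mathring R_{q,j}/A_j$ where $A_j=\varepsilon_\gamma^{-1}\sqrt{l^2+\|\mathring R_{q,j}(\tau_{q+1}j)\|^2}+\bar\gamma_l(\tau_{q+1}j)$. The $\varepsilon_\gamma^{-1}$ term alone only controls $\|\mathring R_{q,j}/A_j\|\le\varepsilon_\gamma$, so the extra $\Upsilon_l(\tau_{q+1}j)\le e^{L^{1/4}}$ pushes the argument out of $B(\Id,\varepsilon_\gamma)$ unless you show $\bar\gamma_l(\tau_{q+1}j)\gtrsim(e^{L^{1/4}}-1)\varepsilon_\gamma^{-1}\|\mathring R_l(\tau_{q+1}j)\|$, which is not obvious from the inductive hypotheses alone and which you do not verify. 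The paper's version avoids this entirely: $\|\mathring R_{q,j}/\bar\rho_j\|\le\varepsilon_\gamma$ follows in one line from $\bar\rho_j\ge\varepsilon_\gamma^{-1}\sqrt{l^2+\|\mathring R_{q,j}(\tau_{q+1}j)\|^2}$, independent of $L$. The price the paper pays for its time-varying $\Upsilon_l^{-1/2}(t)$ factor is that $D_{t,q}\bar a_{k,j}\ne 0$ (an extra $\partial_t\Upsilon_l^{-1/2}$ term appears in \eqref{est 76} and has to be estimated in $R_T$ and in Proposition~\ref{prop mult perturbation}); your fixed $\Upsilon_l^{-1/2}(\tau_{q+1}j)$ would make $D_{t,q}\bar a_{k,j}=0$ and save that term, but that saving is worthless if the energy balance cannot be closed.
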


\subsubsection{Construction of Solution at Step $q+1$}
First, similarly to \eqref{def Phi j} and \eqref{est 16} but now with $D_{t,q}$ defined in \eqref{est 57}, we define $\Phi_{j}(t,x)$ for all $j \in \mathbf{J}$ as the solution to 
\begin{equation}\label{est 75}
D_{t,q}\Phi_{j} = 0, \hspace{3mm} \Phi_{j} (\tau_{q+1} j, x) = x, 
\end{equation} 
and $\mathring{R}_{q,j}(t,x)$ as the solution to  
\begin{equation}\label{est 60}
D_{t,q} \mathring{R}_{q,j} = 0, \hspace{3mm} \mathring{R}_{q,j} (\tau_{q+1} j, x) = \mathring{R}_{l} (\tau_{q+1} j, x). 
\end{equation} 
Furthermore, we define
\begin{subequations}\label{est 58}
\begin{align}
&\bar{\gamma}_{q} (t) \triangleq \frac{\Upsilon^{-2}(t) \Upsilon_{l}(t)}{4 (2\pi)^{2}} [ e(t) (1-\lambda_{q+2} \delta_{q+2} ) - \Upsilon^{2}(t) \lVert y_{q} (t) \rVert_{\dot{H}_{x}^{\frac{1}{2}}}^{2}], \label{est 58c}\\
&\bar{\gamma}_{l} (t) \triangleq \bar{\gamma}_{q}\ast_{t} \varphi_{l} (t),\label{est 58d}\\
&\bar{\rho}_{j} \triangleq \varepsilon_\gamma^{-1} \sqrt{ l^{2} + \lVert \mathring{R}_{q,j} (\tau_{q+1} j) \rVert_{C_{x}}^{2}} + \bar{\gamma}_{l} (\tau_{q+1} j), \label{est 58b}
\end{align}
\end{subequations} 
so that we may define again our amplitude functions for all $k \in \Gamma_{j}$  
\begin{equation}\label{est 58a}
a_{k,j}(t,x) \triangleq  \lambda_{q+1}^{-\frac{1}{2}} \bar{\rho}_{j}^{\frac{1}{2}} \gamma_{k} \left( \Id - \frac{\mathring{R}_{q,j} (t,x)}{\bar{\rho}_{j}} \right)  \hspace{2mm} \text{ and } \hspace{2mm} \bar{a}_{k,j}(t,x) \triangleq \Upsilon_{l}^{-\frac{1}{2}}(t) a_{k,j}(t,x).
\end{equation} 
We emphasize that although $a_{k,j}$ in \eqref{est 58a} appears to be same as $a_{k,j}$ in \eqref{est 3}, $a_{k,j}$ in the current case is different  due to the difference of $\bar{\rho}_{j}$ and $\rho_{j}$ in \eqref{est 0a}. 

\begin{remark}\label{Remark 5.3}
\indent
\begin{enumerate}
\item We point out that the analogous equation on \cite[p. 14]{B23} in contrast has  
\begin{align*}
\text{``}\rho(t,x) : = \sqrt{ l^{2} + \lVert \mathring{R}_{l} (t,x) \rVert_{F}^{2}} + \Theta_{l} (t) \eta_{l} (t);\text{''} 
\end{align*} 
where the product ``$\Theta_{l}(t) \eta_{l}(t)$'' would be analogous to us multiplying $\bar{\gamma}_{l} (\tau_{q+1} j)$ in \eqref{est 58b} by $\Upsilon_{l} (t)$. 
\item As previewed in Remark \ref{Remark 5.2}, It is crucial here that we define $\bar{\gamma}_{q}(t)$ as in \eqref{est 58c} rather than the more natural
\begin{equation}\label{est 106} 
\bar{\gamma}_{q}(t) =  \frac{1}{4 (2\pi)^{2}} [ e(t) (1-\lambda_{q+2} \delta_{q+2} ) - \Upsilon^{2}(t) \lVert y_{q} (t) \rVert_{\dot{H}_{x}^{\frac{1}{2}}}^{2}]. 
\end{equation}
To be precise, if we define $\bar{\gamma}_{q}(t)$ by \eqref{est 106}, then subsequently, we will have to try to estimate 
\begin{align*}
\delta E(t) \overset{\eqref{est 93}}{=}& \left\lvert e(t) (1- \lambda_{q+2} \delta_{q+2} ) - \Upsilon^{2}(t) \lVert y_{q+1}(t) \rVert_{\dot{H}_{x}^{\frac{1}{2}}}^{2} \right\rvert \\
\overset{\eqref{est 106}}{=}& \left\lvert \bar{\gamma}_{q}(t) 4 (2\pi)^{2} + \Upsilon^{2}(t) \lVert y_{q}(t) \rVert_{\dot{H}_{x}^{\frac{1}{2}}}^{2} - \Upsilon^{2}(t) \lVert y_{q+1} (t) \rVert_{\dot{H}_{x}^{\frac{1}{2}}}^{2} \right\rvert 
\end{align*}
where decomposing the last term $- \Upsilon^{2}(t) \lVert y_{q+1}(t) \rVert_{\dot{H}_{x}^{\frac{1}{2}}}^{2}$ will produce various terms, one of them being 
\begin{align*}
-\Upsilon^{2}(t) 4 (2\pi)^{2} \sum_{j} \chi_{j}^{2}(t) \Upsilon_{l}^{-1} (t) \bar{\gamma}_{l} (\tau_{q+1} j)
\end{align*}
(see \eqref{est 99}). We were unable to find a good estimate for the difference 
\begin{align*}
\bar{\gamma}_{q}(t) 4 (2\pi)^{2} - \Upsilon^{2}(t) \Upsilon_{l}^{-1}(t) 4 (2\pi)^{2} \sum_{j} \chi_{j}^{2} (t) \bar{\gamma}_{l} (\tau_{q+1} j);
\end{align*}
thus, we chose to define $\bar{\gamma}_{q}$ as we did in \eqref{est 58c}. 
\end{enumerate} 
\end{remark} 
We observe that 
\begin{equation}\label{est 198}
\bar{\gamma}_{q}(t) \geq 0 
\end{equation}
due to \eqref{mult inductive 3} and \eqref{est 1} that led to \eqref{est 4}. Hence, we now see that 
\begin{align}\label{est 261}
\bar{\gamma}_{l} (t)  \geq 0 
\end{align}
so we have
\begin{align}
\bar{\rho}_{j} \overset{\eqref{est 58b}}{=} \varepsilon_\gamma^{-1} \sqrt{ l^{2} + \lVert \mathring{R}_{q,j} (\tau_{q+1} j) \rVert_{C_{x}}^{2}} + \bar{\gamma}_{l} (\tau_{q+1} j) \geq \varepsilon_\gamma^{-1} \sqrt{ l^{2} + \lVert \mathring{R}_{q,j} (\tau_{q+1} j) \rVert_{C_{x}}^{2}}; \label{est 59}
\end{align}
consequently, $\gamma_{k} \left( \Id - \frac{\mathring{R}_{q,j} (t,x)}{\bar{\rho}_{j}} \right)$ in \eqref{est 58a} can be shown to be well-defined using \eqref{est 2a}. This now allows us to define our perturbation: with $b_{k}$ from \eqref{est 66} and $\chi_{j}$ from \eqref{est 18},
\begin{subequations}\label{est 69}
\begin{align}
w_{q+1}(t,x) \triangleq& \sum_{j,k} \chi_{j}(t) \mathbb{P}_{q+1,k} \left( \bar{a}_{k,j} (t,x) b_{k} (\lambda_{q+1} \Phi_{j} (t,x)) \right), \label{est 69a}\\
\tilde{w}_{q+1, j, k} (t,x) \triangleq& \chi_{j} (t) \bar{a}_{k,j} (t,x) b_{k} (\lambda_{q+1} \Phi_{j} (t,x)).  \label{est 69b}
\end{align}
\end{subequations} 
We keep the same notations of $\psi_{q+1, j, k}$ in \eqref{est 14c}. Hence, we have identically to \eqref{est 67}:
\begin{equation}\label{est 68} 
w_{q+1} = \sum_{j,k} \mathbb{P}_{q+1,k} \tilde{w}_{q+1,j,k}, \hspace{3mm} b_{k} (\lambda_{q+1} \Phi_{j}(x)) = b_{k} (\lambda_{q+1} x) \psi_{q+1, j, k} (x). 
\end{equation}
With this perturbation we define $y_{q+1}\triangleq y_{l}+w_{q+1}$, identically to \eqref{est 44}. Now, similarly to \cite[Equations (16)-(17)]{Y23b}, we define the first commutator part of our error as
\begin{equation}\label{est 53}
R_{\text{Com1}} \triangleq \mathcal{B} \left[ \Upsilon_{l} [ \Lambda y_{l}^{\bot} (\nabla^{\bot} \cdot y_{l})] -  (\Upsilon [  \Lambda y_{q}^{\bot} (\nabla^{\bot} \cdot y_{q} ) ]) \ast_{x} \phi_{l} \ast_{t} \varphi_{l} \right],
\end{equation} 
mollify our pressure as $p_{l} \triangleq p_{q} \ast_{x} \phi_{l} \ast_{t} \varphi_{l}$ so that the mollified pair $(y_{l}, \mathring{R}_{l})$ satisfies 
\begin{equation}\label{est 52}  
\partial_{t} y_{l} + \frac{1}{2} y_{l} + \Upsilon_{l} [ ( \Lambda y_{l} \cdot\nabla) y_{l} - (\nabla y_{l} )^{T} \cdot \Lambda y_{l} ] + \nabla p_{l} + \Lambda^{\gamma} y_{l} = \divergence\left( \mathring{R}_{l} +  R_{\text{Com1}} \right).
\end{equation} 
Similarly to \cite[Equations (88)-(89)]{Y23b}, we then decompose our error into the following components: 
\begin{subequations}
\begin{align} 
\divergence R_{T} \triangleq& \partial_{t} w_{q+1} + \Upsilon_{l} (\Lambda y_{l} \cdot \nabla) w_{q+1},\label{est 55a} \\
\divergence R_{N} \triangleq& \Upsilon_{l} [ (\nabla \Lambda y_{l})^{T} \cdot w_{q+1} + (\Lambda w_{q+1} \cdot \nabla) y_{l} - (\nabla y_{l})^{T} \cdot \Lambda w_{q+1} ], \label{est 55b}\\
\divergence R_{L} \triangleq& \Lambda^{\gamma} w_{q+1} +  \frac{1}{2} w_{q+1}, \label{est 55c} \\
\divergence R_{O} \triangleq& \divergence \mathring{R}_{l} + \Upsilon_{l} [ (\Lambda w_{q+1} \cdot \nabla) w_{q+1} - (\nabla w_{q+1})^{T} \cdot \Lambda w_{q+1} ], \label{est 55d}
\end{align}
\end{subequations} 
the transport error, Nash error, linear error, and oscillation error, respectively. Together with the second commutator error component that was previewed in \eqref{est 55e}, we can define $\mathring{R}_{q+1}$ identically to \eqref{additive decomposition} as 
\begin{equation}\label{est 54} 
\divergence \mathring{R}_{q+1} = \divergence (R_{T} + R_{N} + R_{L} + R_{O} + R_{\text{Com1}} + R_{\text{Com2}}).
\end{equation} 

\subsubsection{Perturbative Estimates}
We estimate using \eqref{mult inductive 1b} and \eqref{mult inductive 1c}, 
\begin{equation}\label{est 47} 
\lVert \partial_{t} y_{q} (t) \rVert_{C_{x}} \leq \lVert D_{t,q} y_{q}(t) \rVert_{C_{x}} + \lVert \Upsilon_{l} \Lambda y_{l}(t) \cdot \nabla y_{q}(t) \rVert_{C_{x}}  \lesssim M_{0} \lambda_{q}^{2} \delta_{q} m_{L}^{8} e^{L^{\frac{1}{4}}} \bar{e}.
\end{equation} 
This estimate \eqref{est 47} leads to 
\begin{equation}\label{est 71}
\lVert y_{l} - y_{q} \rVert_{C_{t,x,q+1}}  \lesssim l \lVert y_{q} \rVert_{C_{t,q+1}^{1}C_{x}} +  l \lVert y_{q} \rVert_{C_{t,q+1}C_{x}^{1}}\overset{\eqref{est 47} \eqref{mult inductive 1c}}{\lesssim} l M_{0} m_{L}^{8} \lambda_{q}^{2} \delta_{q} \bar{e} e^{L^{\frac{1}{4}}}.
\end{equation} 
Next, we estimate directly from definitions in \eqref{est 58b} and \eqref{est 58c}  
\begin{align}
\lvert \bar{\rho}_{j} \rvert \leq&  \varepsilon_\gamma^{-1} ( l + \lVert \mathring{R}_{q,j} (\tau_{q+1} j) \rVert_{C_{x}})  \nonumber \\
& +  \left\lVert  \frac{\Upsilon_{l}}{4 (2\pi)^{2}} [ e \Upsilon^{-2}(1-\lambda_{q+2} \delta_{q+2} ) -  \lVert y_{q}  \rVert_{\dot{H}_{x}^{\frac{1}{2}}}^{2}] \right\rVert_{C_{\tau_{q+1} j,q+1}}. \label{est 266} 
\end{align}
Now by \eqref{mult inductive 3} at step $q=0$ and \eqref{est 4} we have for all $t \in [t_{q}, T_{L}]$,  
\begin{equation*}
\Upsilon_{l}(t) \left\lvert e(t) \Upsilon^{-2}(t) (1- \lambda_{q+2} \delta_{q+2}) - \lVert y_{q} (t) \rVert_{\dot{H}_{x}^{\frac{1}{2}}}^{2} \right\rvert \leq \frac{5}{4} \lambda_{q+1} \delta_{q+1} \bar{e}e^{3L^{\frac{1}{4}}};  
\end{equation*}
therefore, applying this estimate to \eqref{est 266} gives us 
\begin{equation}\label{est 64}
\lvert \bar{\rho}_{j} \rvert \leq \varepsilon_\gamma^{-1} (l + \lVert \mathring{R}_{q,j} (\tau_{q+1} j) \rVert_{C_{x}}) + \frac{1}{4 ( 2\pi)^{2}} \frac{5}{4} \lambda_{q+1} \delta_{q+1} \bar{e} e^{3L^{\frac{1}{4}}} \leq  \frac{1}{8\pi^{2}} \lambda_{q+1} \delta_{q+1} \bar{e}e^{3L^{\frac{1}{4}}}. 
\end{equation}
For subsequent convenience, we compute by using $\supp \hat{y}_{q} \subset B(0, 2 \lambda_{q})$, for $\beta > \frac{1}{2}$ sufficiently close to $\frac{1}{2}$ and $a_{0}$ sufficiently large, 
\begin{equation}\label{est 79} 
\tau_{q+1} \lVert D( \Upsilon_{l} \Lambda y_{l})  \rVert_{C_{t,x,q+1}} \overset{\eqref{tau}}{\lesssim} l^{\frac{1}{2}} \lambda_{q+2}^{-\frac{1}{2}} \delta_{q+2}^{-\frac{1}{2}} \lambda_{q+1}^{-\frac{1}{2}} \delta_{q+1}^{\frac{1}{4}} e^{L^{\frac{1}{4}}} \lambda_{q} \lVert \Lambda y_{q} \rVert_{C_{t,x,q+1}}  \overset{\eqref{mult inductive 1c}}{\ll} 1.
\end{equation} 

Now we may begin our key estimates with an analog of Proposition \ref{prop additive amplitudes}.
\begin{proposition}\label{prop mult amplitudes}
Define $a_{k,j}$ and $\bar{a}_{k,j}$ by \eqref{est 58a}. Then they satisfy the following bounds: for all $t \in \supp \chi_{j}$ and all $N \in \mathbb{N}$, 
\begin{subequations}\label{est 63}
\begin{align}
& \lVert a_{k,j}(t) \rVert_{C_{x}} \leq \frac{1}{\sqrt{8} \pi }  e^{ \frac{3}{2} L^{\frac{1}{4}}} \bar{e}^{\frac{1}{2}} \delta_{q+1}^{\frac{1}{2}} \sup_{k \in \Gamma_{1} \cup \Gamma_{2}} \lVert \gamma_{k} \rVert_{C(B(\Id, \varepsilon_\gamma ))},  \label{est 63a}  \\
& \lVert \bar{a}_{k,j}(t) \rVert_{C_{x}} \leq \frac{1}{\sqrt{8} \pi} \Upsilon_{l}^{-\frac{1}{2}}(t) e^{\frac{3}{2} L^{\frac{1}{4}}}  \bar{e}^{\frac{1}{2}} \delta_{q+1}^{\frac{1}{2}} \sup_{k \in \Gamma_{1} \cup \Gamma_{2}} \lVert \gamma_{k} \rVert_{C(B(\Id, \varepsilon_\gamma ))},  \label{est 63b}  \\
& \lVert D^{N} a_{k,j}(t) \rVert_{C_{x}}  \lesssim e^{\frac{3}{2} L^{\frac{1}{4}}}  \delta_{q+1}^{\frac{1}{2}} \bar{e}^{\frac{1}{2}} \lambda_{q}^{N},  \label{est 63c}\\
& \lVert D^{N} \bar{a}_{k,j}(t) \rVert_{C_{x}}  \lesssim \Upsilon_{l}^{-\frac{1}{2}}(t) e^{\frac{3}{2} L^{\frac{1}{4}}}  \delta_{q+1}^{\frac{1}{2}} \bar{e}^{\frac{1}{2}} \lambda_{q}^{N}.  \label{est 63d}
\end{align}
\end{subequations}
Furthermore, $y_{q+1}, w_{q+1}$, and $\mathring{R}_{q+1}$ from \eqref{est 44}, \eqref{est 69}, and \eqref{est 54} satisfy hypotheses \eqref{mult inductive 1a} and \eqref{mult inductive 2a} at level $q+1$, as well as the support condition \eqref{support wqplus1}.
\end{proposition}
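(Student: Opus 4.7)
The plan is to handle the four amplitude estimates first, reducing everything to the single bound \eqref{est 64} on $\bar{\rho}_j$ combined with transport estimates for $\mathring{R}_{q,j}$, and then to verify the three frequency-support statements directly from the construction. Since $\bar{a}_{k,j} = \Upsilon_l^{-\frac{1}{2}} a_{k,j}$, the estimates \eqref{est 63b} and \eqref{est 63d} follow immediately from \eqref{est 63a} and \eqref{est 63c}; thus I concentrate on the unbarred bounds.

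For \eqref{est 63a}, I would write directly from \eqref{est 58a}
\begin{equation*}
\lVert a_{k,j}(t) \rVert_{C_x} \leq \lambda_{q+1}^{-\frac{1}{2}} \bar{\rho}_j^{\frac{1}{2}} \sup_{k \in \Gamma_1 \cup \Gamma_2} \lVert \gamma_k \rVert_{C(B(\Id, \varepsilon_\gamma))},
\end{equation*}
which is legitimate because \eqref{est 9} carries over verbatim using \eqref{est 59} in place of its additive analogue, so $\Id - \mathring{R}_{q,j}/\bar{\rho}_j \in B(\Id, \varepsilon_\gamma)$. Applying \eqref{est 64} to extract $\bar{\rho}_j^{1/2} \leq \frac{1}{\sqrt{8}\pi} \lambda_{q+1}^{1/2}\delta_{q+1}^{1/2} \bar{e}^{1/2} e^{\frac{3}{2}L^{1/4}}$ yields exactly \eqref{est 63a}. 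The factor $e^{\frac{3}{2}L^{1/4}}$ is precisely what the modified cutoff on $\Upsilon$ contributed via \eqref{mult inductive 3}, and tracking it here is exactly the motivation discussed in Remarks \ref{Remark 5.2}--\ref{Remark 5.3}.

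The derivative bound \eqref{est 63c} is the main technical step. Since $\bar{\rho}_j$ is a constant in $(t,x)$, the Faà di Bruno formula applied to $x \mapsto \gamma_k(\Id - \mathring{R}_{q,j}(t,x)/\bar{\rho}_j)$ reduces the task to estimating $\lVert D^n \mathring{R}_{q,j}(t) \rVert_{C_x}$ for $1 \leq n \leq N$ together with the sup bound $\bar{\rho}_j^{-n/2}$ that enters through the quotient. These spatial derivative bounds on $\mathring{R}_{q,j}$ are obtained from the transport lemma Lemma \ref{transport lemma} applied to \eqref{est 60}, whose drift $\Upsilon_l \Lambda y_l$ satisfies $\tau_{q+1}\lVert D(\Upsilon_l \Lambda y_l)\rVert_{C_{t,x,q+1}} \ll 1$ by \eqref{est 79}; this gives $\lVert D^n \mathring{R}_{q,j}(t)\rVert_{C_x} \lesssim \lVert D^n \mathring{R}_l(\tau_{q+1}j)\rVert_{C_x}$, and by mollification and \eqref{mult inductive 2b} the latter is controlled by $l^{-n+1} \lambda_{q+2}\delta_{q+2} \bar{e}$. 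Combining with \eqref{est 64} for the $\bar{\rho}_j^{-n/2}$ factors, choosing $\beta > \frac{1}{2}$ close enough to $\frac{1}{2}$ and $a_0$ large enough so that the $\lambda_q^N$ power dominates, produces the claimed bound \eqref{est 63c}; this is essentially the same careful accounting as in the additive case, where the hardest part is balancing the $\bar{\rho}_j^{-n/2}$ singularities against the $l^{-n+1}$ growth, and is where the anticipated obstacle lies.

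The support conclusions are comparatively routine. For \eqref{support wqplus1}, each summand in \eqref{est 69a} passes through $\mathbb{P}_{q+1,k}$, whose Fourier support is contained in $\{\xi: \frac{7}{8}\lambda_{q+1} \leq |\xi| \leq \frac{9}{8}\lambda_{q+1}\}$; unioning over $k$ gives the annulus in \eqref{support wqplus1}. Since $\supp \hat{y}_l \subset B(0, 2\lambda_q) \subset B(0, 2\lambda_{q+1})$ by \eqref{mult inductive 1a} at step $q$, adding $w_{q+1}$ produces $\supp \hat{y}_{q+1} \subset B(0, 2\lambda_{q+1})$, which is \eqref{mult inductive 1a} at level $q+1$. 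Finally, for \eqref{mult inductive 2a}, I would inspect each of the six terms in \eqref{est 54}: after applying $\mathcal{B}$, the inverse-divergence operator preserves frequency support, and each of $R_T, R_N, R_L, R_O, R_{\text{Com1}}, R_{\text{Com2}}$ is built from products of functions whose Fourier supports lie in $B(0, 4\lambda_{q+1})$ (for $R_T, R_L$ via the $\tilde{P}_{\approx \lambda_{q+1}}$ localization inherited from the argument in Propositions \ref{prop RT}--\ref{prop RO}, and for the others by direct convolution support), so standard convolution support arithmetic yields $\supp \widehat{\mathring{R}}_{q+1} \subset B(0, 4\lambda_{q+1})$.
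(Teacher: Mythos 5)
Your treatment of \eqref{est 63a}--\eqref{est 63b} is correct and matches the paper's (the paper happens to prove the barred bound first and deduce the unbarred, you go the other way; both are immediate since $\bar{a}_{k,j} = \Upsilon_l^{-\frac{1}{2}} a_{k,j}$). The support conclusions at the end are also essentially the paper's argument, just spelled out in more words.

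There is, however, a genuine gap in your derivation of \eqref{est 63c}. You propose to bound $\lVert D^{n} \mathring{R}_{l}(\tau_{q+1}j) \rVert_{C_x}$ by ``mollification,'' producing $l^{-n+1}\lambda_{q+2}\delta_{q+2}\bar{e}$. Two things go wrong. First, this mollification bound is far too lossy: since $l = \lambda_{q+1}^{-\alpha}$ with $\alpha > 1$ and $b$ large, one has $l^{-1} = \lambda_{q+1}^{\alpha} = \lambda_q^{\alpha b} \gg \lambda_q$, so $l^{-N}$ (or even $l^{-N+1}$) is astronomically larger than the target $\lambda_q^N$, and no choice of $\beta$ close to $\frac{1}{2}$ or of large $a_0$ can rescue this --- in fact taking $\beta \to \frac{1}{2}^+$ makes the comparison worse, since the would-be-helpful factor $\lambda_{q+2}\delta_{q+2}$ tends to $1$ in exponent scale. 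Second, and this is the idea you are missing, the paper does not use mollification here at all: it invokes the \emph{frequency localization} hypothesis \eqref{mult inductive 2a}, namely $\supp \widehat{\mathring{R}}_q \subset B(0,4\lambda_q)$, which is inherited by $\mathring{R}_l$ (mollifiers do not enlarge Fourier support) and gives via Bernstein $\lVert D^n \mathring{R}_l \rVert_{C_x} \lesssim \lambda_q^n \lVert \mathring{R}_l \rVert_{C_x}$. The same frequency-support bound controls the drift term $\lVert D^N(\Upsilon_l\Lambda y_l)\rVert$ coming from \eqref{est 7}. With that in hand, the factor $\lVert \mathring{R}_l(\tau_{q+1}j)\rVert_{C_x}$ in the numerator is cancelled exactly by the lower bound $\bar{\rho}_j \geq \varepsilon_\gamma^{-1}\lVert \mathring{R}_{q,j}(\tau_{q+1}j)\rVert_{C_x} = \varepsilon_\gamma^{-1}\lVert \mathring{R}_l(\tau_{q+1}j)\rVert_{C_x}$ from \eqref{est 58b} and \eqref{est 60}, leaving exactly $\varepsilon_\gamma\lambda_q^N$ (up to the small correction controlled by \eqref{est 79}). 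This cancellation is the whole point of defining $\bar{\rho}_j$ with the $\sqrt{l^2 + \lVert\mathring{R}_{q,j}\rVert^2}$ term, and without it the quotient $D^N\mathring{R}_{q,j}/\bar{\rho}_j$ cannot be tamed. So you should replace the mollification step by an appeal to \eqref{mult inductive 2a} and make the cancellation explicit; everything else in your sketch then goes through.
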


\begin{proof}[Proof of Proposition \ref{prop mult amplitudes}]
We leave this proof in the Appendix Section \ref{Section 7.2} for completeness. 
\end{proof}

\begin{proposition}\label{prop mult perturbation}
Define $D_{t,q}$ by \eqref{est 57}. Then $w_{q+1}$ defined in \eqref{est 69a} satisfies the following bounds over $[t_{q+1}, T_{L}]$:  for $C_{1}$ from \eqref{est 8}, 
\begin{subequations}\label{est 70} 
\begin{align}
& \lVert w_{q+1}(t) \rVert_{C_{x}} \leq \frac{C_{1} \Upsilon_{l}^{-\frac{1}{2}} (t) e^{ \frac{3}{2} L^{\frac{1}{4}}} }{\sqrt{2} \pi} \bar{e}^{\frac{1}{2}} \delta_{q+1}^{\frac{1}{2}} \sup_{k \in \Gamma_{1} \cup \Gamma_{2}} \lVert \gamma_{k} \rVert_{C(B(\Id, \varepsilon_\gamma))},  \label{est 70a}\\
& \lVert D_{t,q} w_{q+1} (t) \rVert_{C_{x}} \lesssim \tau_{q+1}^{-1} \Upsilon_{l}^{-\frac{1}{2}} (t) e^{\frac{3}{2} L^{\frac{1}{4}}} \delta_{q+1}^{\frac{1}{2}} \bar{e}^{\frac{1}{2}}. 
\end{align}
\end{subequations}  
Consequently, the hypotheses \eqref{mult inductive 1b}, \eqref{mult inductive 1c}, and \eqref{mult inductive 1d} at level $q+1$, as well as the Cauchy difference \eqref{lm cauchy} all hold.
\end{proposition}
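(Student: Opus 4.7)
The plan is to mirror the strategy of Proposition \ref{prop additive perturbation}, tracking throughout the additional factor $\Upsilon_{l}^{-\frac{1}{2}}(t)$ that enters $\bar a_{k,j}$ via \eqref{est 58a}. First, the inequality \eqref{est 70a} will follow immediately from the decomposition \eqref{est 68}, the bound $\lVert \mathbb{P}_{q+1,k}\rVert_{C_x\to C_x}\leq C_1$ of \eqref{est 8}, the amplitude bound \eqref{est 63b}, and the observation that there are at most two nontrivial cutoffs $\chi_j$ at any given $t$. Writing $y_{q+1}-y_q = w_{q+1}+(y_l-y_q)$, combining \eqref{est 70a} with \eqref{M0} produces a principal term of order $\tfrac{1}{4\sqrt{2}}M_0^{\frac{1}{2}}\Upsilon_l^{-\frac{1}{2}}e^{\frac{3}{2}L^{\frac{1}{4}}}\bar e^{\frac{1}{2}}\delta_{q+1}^{\frac{1}{2}}$, while \eqref{est 71} together with \eqref{mult bound b} and $a_0$ sufficiently large absorbs $\lVert y_l-y_q\rVert_{C_x}$ into the target Cauchy difference \eqref{lm cauchy}.

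For the inductive hypothesis \eqref{mult inductive 1b} at level $q+1$, I would apply the triangle inequality to $y_{q+1}=y_l+w_{q+1}$, using the $q$-level hypothesis, \eqref{est 70a}, the elementary inequality $\Upsilon_l^{-\frac{1}{2}}(t)e^{\frac{3}{2}L^{\frac{1}{4}}}\leq e^{2L^{\frac{1}{4}}}\leq m_L^4$ coming from \eqref{est 196a} and \eqref{est 73}, and \eqref{M0} to close the induction via the new $\delta_{q+1}^{\frac{1}{2}}$ term in the partial sum. The proof of \eqref{mult inductive 1c} at level $q+1$ is then identical to the additive case: use the frequency support \eqref{support wqplus1} of $w_{q+1}$ to bound $\lVert \Lambda w_{q+1}\rVert_{C_x}\lesssim \lambda_{q+1}\lVert w_{q+1}\rVert_{C_x}$, then combine with the $q$-level bound on $y_l$.

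The principal technical novelty lies in the second bound of \eqref{est 70}. By \eqref{est 75}--\eqref{est 60} one still has $D_{t,q}a_{k,j}=0$ and $D_{t,q}b_k(\lambda_{q+1}\Phi_j)=0$, but because $\bar a_{k,j}=\Upsilon_l^{-\frac{1}{2}}a_{k,j}$ and $\Upsilon_l^{-\frac{1}{2}}$ depends on $t$, a new identity appears, namely $D_{t,q}\bar a_{k,j} = (\partial_t\Upsilon_l^{-\frac{1}{2}})a_{k,j}$. This yields the decomposition
\begin{equation*}
D_{t,q}w_{q+1} = \sum_{j,k}[D_{t,q},\mathbb{P}_{q+1,k}]\tilde w_{q+1,j,k} + \mathbb{P}_{q+1,k}\bigl[(\partial_t\chi_j)\bar a_{k,j} + \chi_j(\partial_t\Upsilon_l^{-\frac{1}{2}})a_{k,j}\bigr]b_k(\lambda_{q+1}\Phi_j).
\end{equation*}
The first two summands will be handled exactly as in the additive case via \eqref{new 8}, \cite[Corollary A.8]{BSV19}, \eqref{est 79}, and \eqref{est 63}. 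For the new term, a standard mollifier estimate applied to $\Upsilon_l = \Upsilon\ast_t \varphi_l$ with $\lVert\Upsilon\rVert_{C_t^{\frac{1}{2}-2\delta}}\leq m_L^2$ from \eqref{est 196b} gives $\lvert\partial_t\Upsilon_l\rvert \lesssim l^{-\frac{1}{2}-2\delta}m_L^2$, and hence $\lvert\partial_t\Upsilon_l^{-\frac{1}{2}}\rvert \lesssim l^{-\frac{1}{2}-2\delta}m_L^2\Upsilon_l^{-\frac{3}{2}}$. Splitting $\Upsilon_l^{-\frac{3}{2}}=\Upsilon_l^{-\frac{1}{2}}\cdot\Upsilon_l^{-1}$ and using $\Upsilon_l^{-1}\leq e^{L^{\frac{1}{4}}}$ from \eqref{est 196a} reduces the task to verifying $l^{-\frac{1}{2}-2\delta}\ll \tau_{q+1}^{-1}$ up to a constant depending only on $L$, which holds by the choice \eqref{tau} for $\delta>0$ small and $a_0$ large under \eqref{mult bound b}.

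Finally, \eqref{mult inductive 1d} at level $q+1$ will follow by expanding $D_{t,q+1}y_{q+1}$ in the spirit of \eqref{additive M0}, replacing $(\Lambda y_l+\Lambda z_l)\cdot\nabla$ by $\Upsilon_l \Lambda y_l\cdot\nabla$ throughout and tracking the factor $e^{L^{\frac{1}{4}}}$ arising from $\Upsilon_l$ via \eqref{est 196a}; the lower bound \eqref{M0} then yields the stated bound with the extra factor $m_L^8 e^{L^{\frac{1}{4}}}$ built into the inductive hypothesis. The hard part of this proof will be precisely the splitting of $\Upsilon_l^{-\frac{3}{2}}$ just described: one factor $\Upsilon_l^{-\frac{1}{2}}$ must be carried cleanly to the target while the remaining $\Upsilon_l^{-1}$ together with the mollifier loss $l^{-\frac{1}{2}-2\delta}$ must be absorbed into $\tau_{q+1}^{-1}$ with only the headroom provided by $e^{\frac{3}{2}L^{\frac{1}{4}}}$, forcing use of the sharp parameter constraints \eqref{tau}, \eqref{define l}, and \eqref{mult bound b}.
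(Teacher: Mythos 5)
Your proposal is correct and mirrors the paper's proof almost exactly: the same identity $D_{t,q}\bar a_{k,j}=(\partial_t\Upsilon_l^{-\frac12})a_{k,j}$ from \eqref{est 76}, the same decomposition of $D_{t,q}w_{q+1}$ into the commutator term, the $(\partial_t\chi_j)$ term, and the new $(\partial_t\Upsilon_l^{-\frac12})$ term (which is precisely \eqref{est 188}), and the same splitting $\Upsilon_l^{-\frac32}=\Upsilon_l^{-\frac12}\cdot\Upsilon_l^{-1}$ so that one factor is carried into the target and the other is absorbed via \eqref{est 196a}. The only place you deviate is in bounding $\lvert\partial_t\Upsilon_l\rvert$: you invoke the H\"older seminorm $\lVert\Upsilon\rVert_{C_t^{\frac12-2\delta}}\leq m_L^2$ from \eqref{est 196b} to get $\lvert\partial_t\Upsilon_l\rvert\lesssim l^{-\frac12-2\delta}m_L^2$, whereas the paper (see the $e^{2L^{1/4}}l^{-1}$ bracket in \eqref{est 77}) uses the cruder estimate $\lvert\partial_t\Upsilon_l\rvert\lesssim l^{-1}\lVert\Upsilon\rVert_{C_t}\leq l^{-1}e^{L^{1/4}}$ from just the sup bound in \eqref{est 196a}. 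Your version asks for $l^{-\frac12-2\delta}m_L^2 e^{L^{1/4}}\lesssim\tau_{q+1}^{-1}$, which is strictly weaker than the paper's requirement $l^{-1}e^{2L^{1/4}}\lesssim\tau_{q+1}^{-1}$ (both hold under \eqref{tau} and \eqref{mult bound b}, with your version leaving more slack); this is a small but genuine simplification of the parameter bookkeeping. The remaining claims --- the Cauchy difference via $\Upsilon_l^{-\frac12}e^{\frac32 L^{1/4}}\leq m_L^4$, the closure of \eqref{mult inductive 1b}--\eqref{mult inductive 1c} by triangle inequality and the frequency support \eqref{support wqplus1}, and the expansion of $D_{t,q+1}y_{q+1}$ for \eqref{mult inductive 1d} --- all match the paper's \eqref{est 191}, the subsequent displays, and \eqref{mult M0}, respectively.
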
 

\begin{proof}[Proof of Proposition \ref{prop mult perturbation}]
First, we estimate by relying on \eqref{est 69a}, \eqref{est 66}, and \eqref{est 8}
\begin{align*}
\lVert w_{q+1}(t) \rVert_{C_{x}}  \leq C_{1} \sum_{j,k} 1_{\supp \chi_{j}}(t) \lVert \bar{a}_{k,j} (t) \rVert_{C_{x}}, 
\end{align*}
to which applying \eqref{est 63b} gives \eqref{est 70a}. As $M_{0}^{\frac{1}{2}} \geq \frac{4C_{1}}{\pi} \sup_{k \in \Gamma_{1} \cup \Gamma_{2}} \lVert \gamma_{k} \rVert_{C(B(\Id, \varepsilon_\gamma))}$ due to \eqref{M0}, we can verify \eqref{lm cauchy} as
\begin{align}
&\lVert y_{q+1}(t) - y_{q}(t) \rVert_{C_{x}} \overset{\eqref{est 71} \eqref{est 70a} \eqref{M0}}{\leq} \frac{M_{0}^{\frac{1}{2}}}{4 \sqrt{2}} \Upsilon_{l}^{-\frac{1}{2}} (t) e^{\frac{3}{2} L^{\frac{1}{4}}} \bar{e}^{\frac{1}{2}} \delta_{q+1}^{\frac{1}{2}} + C l M_{0} m_{L}^{8} \lambda_{q}^{2} \delta_{q} \bar{e} e^{L^{\frac{1}{4}}}  \nonumber \\ 
&\overset{\eqref{define l} \eqref{est 73}\eqref{bound a}}{\leq} M_{0}^{\frac{1}{2}} \Upsilon_{l}^{-\frac{1}{2}} (t) e^{\frac{3}{2} L^{\frac{1}{4}}} \bar{e}^{\frac{1}{2}} \delta_{q+1}^{\frac{1}{2}} \left[ \frac{1}{4 \sqrt{2}} + C M_{0}^{\frac{1}{2}}  \bar{e}^{\frac{1}{2}} a^{L^{\frac{1}{4}}}a^{b( \beta - \frac{1}{2})} a^{b^{q} [ 2 - 2 \beta + b (\beta - \alpha) ]}\right]  \nonumber \\
& \hspace{30mm} \leq M_{0}^{\frac{1}{2}} \Upsilon_{l}^{-\frac{1}{2}}(t) e^{ \frac{3}{2} L^{\frac{1}{4}}} \bar{e}^{\frac{1}{2}} \delta_{q+1}^{\frac{1}{2}}, \label{est 191}
\end{align} 
where the last inequality used the fact that $b > \frac{ L^{\frac{1}{4}} + 1}{\alpha - \frac{1}{2}} < b$ due to \eqref{mult bound b}. Additionally, we are ready to prove \eqref{mult inductive 1b} at level $q+1$ as follows: 
\begin{align}
\lVert y_{q+1} \rVert_{C_{t,x,q+1}}\overset{ \eqref{est 44} \eqref{mult inductive 1b} \eqref{est 70a} \eqref{M0}}{\leq}& M_{0}^{\frac{1}{2}} \left(1+ \sum_{1 \leq j \leq q} \delta_{j}^{\frac{1}{2}} \right) m_{L}^{4} \bar{e}^{\frac{1}{2}} + \frac{M_{0}^{\frac{1}{2}}}{4 \sqrt{2}} \Upsilon_{l}^{-\frac{1}{2}}(t) e^{\frac{3}{2}L^{\frac{1}{4}}} \bar{e}^{\frac{1}{2}} \delta_{q+1}^{\frac{1}{2}} \nonumber  \\
\leq& M_{0}^{\frac{1}{2}} ( 1+ \sum_{1 \leq j \leq q} \delta_{j}^{\frac{1}{2}}) m_{L}^{4} \bar{e}^{\frac{1}{2}}. 
\end{align} 
Next, we verify \eqref{mult inductive 1c} as follows: 
\begin{align}
& \lVert y_{q+1} \rVert_{C_{t,q+1} C_{x}^{1}} + \lVert \Lambda y_{q+1} \rVert_{C_{t,x,q+1}} \\
\overset{\eqref{est 68}  \eqref{mult inductive 1c} \eqref{est 70a}\eqref{M0}}{\leq}& m_{L}^{4} \bar{e}^{\frac{1}{2}} M_{0}^{\frac{1}{2}}  \lambda_{q+1} \delta_{q+1}^{\frac{1}{2}} [ \frac{1}{\sqrt{2}} +  a^{b^{q} [ (1-\beta) + b(\beta -1)]} ]  \overset{\eqref{mult bound b}}{\leq} M_{0}^{\frac{1}{2}} m_{L}^{4} \bar{e}^{\frac{1}{2}} \lambda_{q+1} \delta_{q+1}^{\frac{1}{2}}. \nonumber 
\end{align}
Then due to \eqref{est 60} and \eqref{est 75}, we find
\begin{equation}\label{est 76}
D_{t,q} \bar{a}_{k,j}  (t) =  \partial_{t} \Upsilon_{l}^{-\frac{1}{2}} (t) \lambda_{q+1}^{-\frac{1}{2}} \bar{\rho}_{j}^{\frac{1}{2}} \gamma_{k} \left( \Id - \frac{\mathring{R}_{q,j} (t) }{\bar{\rho}_{j}} \right), \hspace{5mm} D_{t,q} b_{q} (\lambda_{q+1} \Phi_{j} (t)) = 0.
\end{equation} 
Thus, we can simplify
\begin{align}
D_{t,q} w_{q+1} &= \sum_{j,k} [D_{t,q}, \mathbb{P}_{q+1, k} ] \tilde{w}_{q+1, j, k} + \mathbb{P}_{q+1, k} \Big[ (\partial_{t} \chi_{j}) \bar{a}_{k,j} b_{k} (\lambda_{q+1} \Phi_{j} )  \nonumber \\
&+  \chi_{j} b_{k} (\lambda_{q+1} \Phi_{j}) \partial_{t} \Upsilon_{l}^{-\frac{1}{2}}  \lambda_{q+1}^{-\frac{1}{2}} \bar{\rho}_{j}^{\frac{1}{2}} \gamma_{k} \left( \Id - \frac{\mathring{R}_{q,j} (t,x)}{\bar{\rho}_{j}} \right) \Big]. \label{est 188}
\end{align}
Thus, we are ready to estimate, referencing \cite[Corollary A.8]{BSV19},
\begin{align}
&\lVert D_{t,q} w_{q+1}(t) \rVert_{C_{x}} \lesssim \sum_{j,k} \lVert \nabla (\Upsilon_{l} \Lambda y_{l} )(t) \rVert_{C_{x}} \lVert \tilde{w}_{q+1, j, k}(t) \rVert_{C_{x}} \nonumber  \\
&+ \lVert (\partial_{t} \chi_{j}) \bar{a}_{k,j} b_{k} (\lambda_{q+1} \Phi_{j} )(t) \rVert_{C_{x}} + \lVert \chi_{j}(t) \Upsilon_{l}^{-\frac{3}{2}} \partial_{t} \Upsilon_{l} \lambda_{q+1}^{-\frac{1}{2}} \bar{\rho}_{j}^{\frac{1}{2}} \gamma_{k} \left( \Id - \frac{ \mathring{R}_{q,j} (\tau_{q+1} j)}{\bar{\rho}_{j}} \right) \rVert_{C_{x}} \nonumber\\
&\overset{\eqref{mult inductive 1b} \eqref{est 64} }{\lesssim} \sum_{j,k} 1_{\supp \chi_{j}}(t) \left[ e^{L^{\frac{1}{4}}} M_{0}^{\frac{1}{2}} m_{L}^{4} \lambda_{q}^{2} \delta_{q}^{\frac{1}{2}} \bar{e}^{\frac{1}{2}} + \tau_{q+1}^{-1} + e^{2L^{\frac{1}{4}}} l^{-1} \right] \Upsilon_{l}^{-\frac{1}{2}} (t) e^{\frac{3}{2} L^{\frac{1}{4}}}  \bar{e}^{\frac{1}{2}} \delta_{q+1}^{\frac{1}{2}}  \nonumber \\
\lesssim& \tau_{q+1}^{-1} \Upsilon_{l}^{-\frac{1}{2}}(t) e^{\frac{3}{2} L^{\frac{1}{4}}}  \bar{e}^{\frac{1}{2}} \delta_{q+1}^{\frac{1}{2}}. \label{est 77} 
\end{align}
Similarly to \cite[Equation (106)]{Y23b}, we may now write 
\begin{align*}
D_{t, q+1} &y_{q+1} = D_{t,q} w_{q+1} + [\Upsilon_{\lambda_{q+2}^{-\alpha}} \Lambda y_{l} \ast_{x} \phi_{\lambda_{q+2}^{-\alpha}} \ast_{t} \varphi_{\lambda_{q+2}^{-\alpha}} - \Upsilon_{\lambda_{q+1}^{-\alpha}} \Lambda y_{l} ] \cdot \nabla w_{q+1}  \nonumber \\
&\hspace{5mm}+  (D_{t,q} y_{q})\ast_{x} \phi_{\lambda_{q+1}^{-\alpha}} \ast_{t} \varphi_{\lambda_{q+1}^{-\alpha}}  + \Upsilon_{\lambda_{q+2}^{-\alpha}} \Lambda y_{l} \ast_{x} \phi_{\lambda_{q+2}^{-\alpha}} \ast_{t} \varphi_{\lambda_{q+2}^{-\alpha}} \cdot \nabla y_{l}  \nonumber \\
&\hspace{5mm}- [\Upsilon_{\lambda_{q+1}^{-\alpha}} \Lambda y_{l}  \cdot \nabla y_{q} ] \ast_{x} \phi_{\lambda_{q+1}^{-\alpha}} \ast_{t} \varphi_{\lambda_{q+1}^{-\alpha}} + \Upsilon_{\lambda_{q+2}^{-\alpha}} \Lambda w_{q+1} \ast_{x} \phi_{\lambda_{q+2}^{-\alpha}} \ast_{t} \varphi_{\lambda_{q+2}^{-\alpha}} \cdot\nabla w_{q+1} \nonumber\\
&\hspace{5mm}+ \Upsilon_{\lambda_{q+2}^{-\alpha}} \Lambda w_{q+1}\ast_{x} \phi_{\lambda_{q+2}^{-\alpha}} \ast_{t} \varphi_{\lambda_{q+2}^{-\alpha}} \cdot \nabla y_{l},
\end{align*}
which we can estimate as
\begin{align}
& \lVert D_{t,q+1} y_{q+1} \rVert_{C_{t,x,q+1}} \overset{\eqref{est 68}}{\leq} \lVert D_{t,q} w_{q+1} \rVert_{C_{t,x,q+1}} + 6e^{L^{\frac{1}{4}}} \lVert \Lambda y_{q} \rVert_{C_{t,x,q+1}} \lambda_{q+1} \lVert w_{q+1} \rVert_{C_{t,x,q+1}}  \nonumber\\
&\hspace{10mm} + \lVert D_{t,q} y_{q} \rVert_{C_{t,x,q+1}} + 2e^{L^{\frac{1}{4}}} \lVert \Lambda y_{q} \rVert_{C_{t,x,q+1}}^{2} + 4e^{L^{\frac{1}{4}}} \lambda_{q+1}^{2} \lVert w_{q+1} \rVert_{C_{t,x,q+1}}^{2}  \nonumber\\
&\overset{\eqref{est 77} \eqref{est 70a}}{\leq} 4 e^{L^{\frac{1}{4}}} \lambda_{q+1}^{2} \left( \frac{C_{1} e^{2 L^{\frac{1}{4}}}}{\sqrt{2} \pi} \bar{e}^{\frac{1}{2}} \delta_{q+1}^{\frac{1}{2}} \sup_{k \in\Gamma_{1} \cup \Gamma_{2}} \lVert \gamma_{k} \rVert_{C(B( \Id, \varepsilon_{\gamma}))} \right)^{2}  \nonumber\\
& \hspace{4mm} + C \big( \tau_{q+1}^{-1} e^{2L^{\frac{1}{4}}} \bar{e}^{\frac{1}{2}} \delta_{q+1}^{\frac{1}{2}}   + e^{L^{\frac{1}{4}}} ( M_{0}^{\frac{1}{2}} m_{L}^{4}\lambda_{q} \delta_{q}^{\frac{1}{2}} \bar{e}^{\frac{1}{2}}) \lambda_{q+1} (e^{2 L^{\frac{1}{4}}} \bar{e}^{\frac{1}{2}} \delta_{q+1}^{\frac{1}{2}}) + M_{0} \lambda_{q}^{2} \delta_{q} \bar{e} m_{L}^{8} e^{L^{\frac{1}{4}}} \big)  \nonumber\\
& \hspace{70mm} \leq M_{0} m_{L}^{8} e^{L^{\frac{1}{4}}} \bar{e} \lambda_{q+1}^{2} \delta_{q+1}.\label{mult M0} 
\end{align}
Hence, \eqref{mult inductive 1d} is also satisfied at level $q+1$, concluding this proof.
\end{proof}

\subsubsection{Stress Decomposition Estimates}
First we show some useful bounds on the functions $\psi_{q+1, j, k}$.
\begin{proposition}\label{prop mult psi}
Define $\psi_{q+1, j, k}$ by \eqref{est 14c} with $\Phi_{j}$ therein by \eqref{est 75}. Then it satisfies the following bounds: for all $t \in \supp \chi_{j}$ and all $N\in\mathbb{N}$,
\begin{subequations}\label{est 82} 
\begin{align}
& \lVert \psi_{q+1, j, k} (t) \rVert_{C_{x}} = 1,  \hspace{3mm} \lVert D^{N} \psi_{q+1, j, k} (t) \rVert_{C_{x}} \lesssim \bar{e}^{\frac{1}{2}} M_{0}^{\frac{1}{2}} m_{L}^{4} e^{L^{\frac{1}{4}}} \lambda_{q+1} \tau_{q+1} \lambda_{q}^{N+1} \delta_{q}^{\frac{1}{2}}, \label{est 82a} \\
& \lVert D ( \bar{a}_{k,j} \psi_{q+1, j, k} ) (t) \rVert_{C_{x}} \lesssim \Upsilon_{l}^{-\frac{1}{2}} (t) e^{\frac{3}{2}L^{\frac{1}{4}}}  \delta_{q+1}^{\frac{1}{2}} \bar{e}^{\frac{1}{2}} \lambda_{q}, \label{est 82b} \\
& \lVert D^{2} ( \bar{a}_{k,j} \psi_{q+1, j, k} ) (t) \rVert_{C_{x}} \lesssim \Upsilon_{l}^{-\frac{1}{2}} (t) e^{\frac{3}{2}L^{\frac{1}{4}}} \delta_{q+1}^{\frac{1}{2}} \bar{e}^{\frac{1}{2}} \lambda_{q}^{2}, \label{est 82c} \\
& \lVert D ( a_{k,j} \psi_{q+1, j, k} ) (t) \rVert_{C_{x}} \lesssim  e^{\frac{3}{2}L^{\frac{1}{4}}} \delta_{q+1}^{\frac{1}{2}} \bar{e}^{\frac{1}{2}} \lambda_{q},\label{est 82d}  \\
& \lVert D^{2} (a_{k,j} \psi_{q+1, j, k} ) (t) \rVert_{C_{x}} \lesssim  e^{\frac{3}{2}L^{\frac{1}{4}}}  \delta_{q+1}^{\frac{1}{2}} \bar{e}^{\frac{1}{2}} \lambda_{q}^{2}.\label{est 82e} 
\end{align}
\end{subequations} 
\end{proposition}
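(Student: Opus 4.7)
The plan is to begin with pointwise transport–type estimates on the flow map $\Phi_{j}$, then translate these into estimates on $\psi_{q+1,j,k}=\exp(i\lambda_{q+1}(\Phi_{j}-x)\cdot k)$, and finally combine with the amplitude bounds from Proposition \ref{prop mult amplitudes} via the Leibniz rule. Writing $\Psi_{j}\triangleq\Phi_{j}-x$, observe that $\Psi_{j}$ solves
\begin{equation*}
D_{t,q}\Psi_{j}=-\Upsilon_{l}\Lambda y_{l},\qquad \Psi_{j}(\tau_{q+1}j,x)=0.
\end{equation*}
Since $\tau_{q+1}\lVert D(\Upsilon_{l}\Lambda y_{l})\rVert_{C_{t,x,q+1}}\ll 1$ by \eqref{est 79}, the transport lemma (Lemma \ref{transport lemma}) yields, for any $t\in\supp\chi_{j}$ and any $N\geq 1$,
\begin{equation*}
\lVert D^{N}\Psi_{j}(t)\rVert_{C_{x}}\lesssim \tau_{q+1}\lVert D^{N}(\Upsilon_{l}\Lambda y_{l})\rVert_{C_{t,x,q+1}}\lesssim \tau_{q+1}\,e^{L^{1/4}}M_{0}^{\frac{1}{2}}m_{L}^{4}\lambda_{q}^{N+1}\delta_{q}^{\frac{1}{2}}\bar{e}^{\frac{1}{2}},
\end{equation*}
where the last bound uses Bernstein on $y_{l}$ together with \eqref{mult inductive 1c} and \eqref{est 196a}.

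The first half of \eqref{est 82a} is immediate from $\lvert e^{i\theta}\rvert=1$. For the $D^{N}$ bound on $\psi_{q+1,j,k}$, apply the Faà di Bruno formula to $\psi_{q+1,j,k}=\exp(i\lambda_{q+1}k\cdot\Psi_{j})$: each term is a product of factors $\lambda_{q+1}D^{a_{i}}\Psi_{j}$ with $\sum a_{i}=N$ times $\psi_{q+1,j,k}$, all multiplied by a combinatorial constant. Since $\lambda_{q+1}\lVert D^{a_{i}}\Psi_{j}\rVert_{C_{x}}\lesssim \lambda_{q+1}\tau_{q+1}e^{L^{1/4}}M_{0}^{\frac{1}{2}}m_{L}^{4}\lambda_{q}^{a_{i}+1}\delta_{q}^{\frac{1}{2}}\bar{e}^{\frac{1}{2}}$ and the small factor $\lambda_{q+1}\tau_{q+1}\lambda_{q}\delta_{q}^{1/2}M_{0}^{1/2}m_{L}^{4}e^{L^{1/4}}\bar{e}^{1/2}$ is $\ll 1$ by the analogue of \eqref{est 79} (using \eqref{tau}, \eqref{define l}, \eqref{est 73}, and \eqref{mult bound b}), products of several such factors are dominated by the single dominant factor, giving exactly the bound asserted in \eqref{est 82a}.

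The four remaining estimates \eqref{est 82b}–\eqref{est 82e} will follow from the product rule, writing
\begin{equation*}
D^{M}(\bar{a}_{k,j}\psi_{q+1,j,k})=\sum_{M_{1}+M_{2}=M}\binom{M}{M_{1}}D^{M_{1}}\bar{a}_{k,j}\,D^{M_{2}}\psi_{q+1,j,k},\qquad M\in\{1,2\},
\end{equation*}
and analogously for $a_{k,j}$. Inserting \eqref{est 63c}–\eqref{est 63d} for the amplitude factors and \eqref{est 82a} for the phase factors, every summand is bounded by $\Upsilon_{l}^{-1/2}e^{3L^{1/4}/2}\delta_{q+1}^{1/2}\bar{e}^{1/2}\lambda_{q}^{M}$ (respectively without $\Upsilon_{l}^{-1/2}$ for $a_{k,j}$), after using the smallness $\lambda_{q+1}\tau_{q+1}\lambda_{q}\delta_{q}^{1/2}M_{0}^{1/2}m_{L}^{4}e^{L^{1/4}}\bar{e}^{1/2}\lesssim 1$ to compare the pure-phase terms against the pure-amplitude ones.

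The main obstacle is verifying that the Faà di Bruno expansion for $D^{N}\psi_{q+1,j,k}$ is dominated by the single-factor term $\lambda_{q+1}\lVert D^{N}\Psi_{j}\rVert$ rather than by products involving more than one power of $\lambda_{q+1}$; this reduces to the bookkeeping inequality $\lambda_{q+1}\tau_{q+1}\lambda_{q}\delta_{q}^{1/2}M_{0}^{1/2}m_{L}^{4}e^{L^{1/4}}\bar{e}^{1/2}\ll 1$, which must be tracked against the enlarged factors $m_{L}^{4}$ and $e^{L^{1/4}}$ arising from the linear multiplicative noise. The relevant parameter inequality follows from \eqref{tau}, \eqref{define l}, \eqref{bound alpha}, and \eqref{mult bound b} upon taking $\beta>\frac{1}{2}$ close enough to $\frac{1}{2}$ and $a_{0}$ large enough in a manner that may depend on $L$ through $m_{L}$, which is permissible since $L$ is fixed in Proposition \ref{mult q to qplus1}.
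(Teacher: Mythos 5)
Your proposal is correct and follows essentially the same route as the paper's proof: transport estimates for the flow map, a chain-rule expansion of the exponential phase, and the Leibniz rule combined with the smallness of $\lambda_{q+1}\tau_{q+1}\lambda_{q}\delta_{q}^{1/2}M_{0}^{1/2}m_{L}^{4}e^{L^{1/4}}\bar{e}^{1/2}$ (which is exactly \eqref{est 189} in the paper). The only presentational difference is that you derive the derivative bounds for $\Phi_{j}$ directly from the inhomogeneous transport equation satisfied by $\Psi_{j}=\Phi_{j}-x$ via \eqref{est 2b}/\eqref{est 7} and use Fa\`a di Bruno explicitly, whereas the paper cites the pre-packaged flow-map estimates \eqref{est 15a}--\eqref{est 15b} together with \cite[Equation (130)]{BDIS15}; these are equivalent.
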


\begin{proof}[Proof of Proposition \ref{prop mult psi}]
We leave this proof in the Appendix Section \ref{Section 7.2} for completeness. 
\end{proof}

We now begin our estimates of $\mathring{R}_{q+1}$, starting with the transport component:
\begin{proposition}\label{prop mRT}
$R_{T}$ from \eqref{est 55a} satisfies for $\beta > \frac{1}{2}$ sufficiently close to $\frac{1}{2}$ and $a_{0}$ sufficiently large 
\begin{equation*}
\lVert R_{T} \rVert_{C_{t,x,q+1}}  \ll e^{-3 L^{\frac{1}{4}}} \lambda_{q+3} \delta_{q+3}. 
\end{equation*}  
\end{proposition}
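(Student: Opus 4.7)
The plan is to mimic the decomposition of the transport error from the additive case (Proposition \ref{prop RT}), but to handle the new complication that $D_{t,q}\bar{a}_{k,j}$ no longer vanishes. Indeed, by \eqref{est 76} the $x$-independent factor $\Upsilon_l^{-1/2}(t)$ picks up a nonzero time derivative under $D_{t,q}$, so the natural starting point is the identity \eqref{est 188}, namely
\begin{align*}
D_{t,q} w_{q+1}
= \sum_{j,k} [D_{t,q}, \mathbb{P}_{q+1,k}]\tilde{w}_{q+1,j,k}
+ \mathbb{P}_{q+1,k}\bigl[(\partial_t\chi_j)\bar{a}_{k,j} b_k(\lambda_{q+1}\Phi_j)\bigr]
+ \mathbb{P}_{q+1,k}\bigl[\chi_j(\partial_t\Upsilon_l^{-1/2}) a_{k,j} b_k(\lambda_{q+1}\Phi_j)\bigr],
\end{align*}
together with \eqref{est 55a}. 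Since $\supp (\Upsilon_l\Lambda y_l)\hat{\,}\subset B(0,2\lambda_q)$ by \eqref{mult inductive 1a} and $\supp(\mathbb{P}_{q+1,k}f)\hat{\,}$ sits in the annulus $\{7\lambda_{q+1}/8\le|\xi|\le 9\lambda_{q+1}/8\}$, I will insert a harmless $\tilde P_{\approx\lambda_{q+1}}$ factor as in \eqref{new 12}, so that
\begin{align*}
R_T = \sum_{j,k} 1_{\supp\chi_j}\,\mathcal{B}\tilde P_{\approx\lambda_{q+1}}\Bigl(
[\Upsilon_l\Lambda y_l\cdot\nabla,\mathbb{P}_{q+1,k}]\tilde{w}_{q+1,j,k}
+ \mathbb{P}_{q+1,k}\bigl[(\partial_t\chi_j)\bar{a}_{k,j}b_k(\lambda_{q+1}\Phi_j)\bigr]
+ \mathbb{P}_{q+1,k}\bigl[\chi_j(\partial_t\Upsilon_l^{-1/2}) a_{k,j} b_k(\lambda_{q+1}\Phi_j)\bigr]\Bigr).
\end{align*}

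Next, I bound the three pieces individually, each producing a $\lambda_{q+1}^{-1}$ from $\mathcal{B}\tilde P_{\approx\lambda_{q+1}}$ (cf.\ \cite[Lemma A.6]{BSV19}). For the commutator piece I use $\|[\Upsilon_l\Lambda y_l\cdot\nabla,\mathbb{P}_{q+1,k}]f\|_{C_x}\lesssim\|\nabla(\Upsilon_l\Lambda y_l)\|_{C_x}\|f\|_{C_x}$, combined with \eqref{mult inductive 1c}, \eqref{est 196a}, and \eqref{est 70a}, yielding a gain of type $\lambda_{q+1}^{-1}\,e^{L^{1/4}}\,m_L^4\lambda_q\delta_q^{1/2}\,\Upsilon_l^{-1/2}e^{(3/2)L^{1/4}}\delta_{q+1}^{1/2}\bar e$. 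The $\partial_t\chi_j$ piece is estimated by $\tau_{q+1}^{-1}\|\bar a_{k,j}\|_{C_x}$ using \eqref{est 63b}, giving $\lambda_{q+1}^{-1}\tau_{q+1}^{-1}e^{(3/2)L^{1/4}}\delta_{q+1}^{1/2}\bar e^{1/2}$ with $\tau_{q+1}$ as in \eqref{tau}. The genuinely new third piece, $\partial_t\Upsilon_l^{-1/2}\cdot a_{k,j}$, is controlled by the mollifier/Hölder bound $\|\partial_t\Upsilon_l^{-1/2}\|_{C_t}\lesssim e^{(3/2)L^{1/4}}l^{-1/2-2\delta}m_L^2$, which follows from $-\tfrac12\Upsilon_l^{-3/2}\partial_t\Upsilon_l$ together with \eqref{est 196a}--\eqref{est 196b} and the standard mollifier identity $\partial_t(\Upsilon\ast_t\varphi_l)=\Upsilon\ast_t\partial_t\varphi_l$ applied to the Hölder norm in \eqref{est 196b}; this, with \eqref{est 63a}, produces $\lambda_{q+1}^{-1}l^{-1/2-2\delta}m_L^2 e^{3L^{1/4}}\delta_{q+1}^{1/2}\bar e^{1/2}$.

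Finally, inserting \eqref{define lambda delta}, \eqref{define l}, and \eqref{tau} converts each of the three upper bounds into a negative power of $a$ in $q$ times a fixed $L$-dependent constant of size $e^{cL^{1/4}}m_L^c$, while $e^{-3L^{1/4}}\lambda_{q+3}\delta_{q+3}=e^{-3L^{1/4}}\lambda_1^{2\beta-1}\lambda_{q+3}^{1-2\beta}$. Each bound then has the form $a^{E}\cdot(L\text{-constant})$ with $E$ strictly less than the exponent of the target, provided $b$ is larger than a threshold set by \eqref{mult bound b} (specifically the commutator piece drives the requirement $b>6L/(\alpha-\tfrac12)$ after one also tracks the exponents coming from $\tau_{q+1}$, $l$, and $m_L$), and $\beta$ is sufficiently close to $\tfrac12$. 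Taking $a_0$ sufficiently large — which is allowed, since $a_0$ may depend on $L$ — absorbs the $L$-dependent constants $e^{cL^{1/4}}m_L^c$ into the remaining negative power of $a$ in $q$, yielding $\|R_T\|_{C_{t,x,q+1}}\ll e^{-3L^{1/4}}\lambda_{q+3}\delta_{q+3}$. The main obstacle I anticipate is the third (genuinely new) piece, because $\partial_t\Upsilon_l^{-1/2}$ gives up $l^{-1/2-2\delta}=\lambda_{q+1}^{\alpha(1/2+2\delta)}$ — one needs to verify that the bound \eqref{mult bound b} is tight enough in its $L$-dependence to swallow both $e^{3L^{1/4}}$ and this $l$-loss simultaneously; tracking the exponents of $a$ carefully, and choosing $\delta\in(0,\tfrac14)$ small, this works precisely because $\alpha<3/2$ from \eqref{bound alpha} is encoded into the second lower bound $b>4L/(3-2\alpha)$ in \eqref{mult bound b}.
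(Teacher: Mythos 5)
Your proposal follows the same route as the paper: expand $D_{t,q}(\chi_j\bar{a}_{k,j}b_k(\lambda_{q+1}\Phi_j))$ as in \eqref{est 76} and \eqref{est 273} to extract the genuinely new $\partial_t\Upsilon_l^{-1/2}$ contribution, insert $\tilde P_{\approx\lambda_{q+1}}$ before $\mathcal{B}$ to gain $\lambda_{q+1}^{-1}$, and bound the commutator, $\partial_t\chi_j$, and $\partial_t\Upsilon_l^{-1/2}$ pieces against $e^{-3L^{1/4}}\lambda_{q+3}\delta_{q+3}$ using \eqref{mult inductive 1c}, \eqref{est 63a}--\eqref{est 63b}, \eqref{est 64}, and \eqref{tau}. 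Two minor remarks: you estimate $\|\partial_t\Upsilon_l\|_{C_t}\lesssim m_L^2 l^{-1/2-2\delta}$ via the Hölder seminorm in \eqref{est 196b}, whereas the paper uses the cruder $\|\partial_t\Upsilon_l\|\lesssim e^{L^{1/4}}l^{-1}$ — both suffice, yours with extra room; and the parenthetical tagging $b>6L/(\alpha-\tfrac12)$ to the commutator piece is off — the paper assigns that bound to the Cauchy-difference estimate \eqref{est 191}, while the $R_T$ estimate is the one that uses $b>4L/(3-2\alpha)$, precisely through the $l^{-1}$ (or $l^{-1/2-2\delta}$) loss from $\partial_t\Upsilon_l^{-1/2}$ that you correctly flag as the anticipated obstacle.
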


\begin{proof}[Proof of Proposition \ref{prop mRT}]
We can compute using \eqref{est 76} 
\begin{align}
& [\partial_{t} + \Upsilon_{l} \Lambda y_{l} \cdot \nabla ] ( \chi_{j} \bar{a}_{k,j}  b_{k} (\lambda_{q+1} \Phi_{j}) ) \nonumber \\
=& \partial_{t} \chi_{j} \bar{a}_{k,j}  b_{k} (\lambda_{q+1} \Phi_{j} ) + \chi_{j} \partial_{t} \Upsilon_{l}^{-\frac{1}{2}}  \lambda_{q+1}^{-\frac{1}{2}} \bar{\rho}_{j}^{\frac{1}{2}} \gamma_{k} \left( \Id - \frac{\mathring{R}_{q,j} }{\bar{\rho}_{j}} \right)   b_{k} (\lambda_{q+1} \Phi_{j}). \label{est 273}
\end{align}
Similarly to \eqref{new 12}, considering Fourier frequency support allows us write   
\begin{align*}
R_{T} =& \sum_{j,k} 1_{\supp \chi_{j}} \mathcal{B} \tilde{P}_{\approx \lambda_{q+1}} \Big[ [ \Upsilon_{l} \Lambda y_{l} \cdot \nabla, \mathbb{P}_{q+1, k} ] \tilde{w}_{q+1, j, k} + \mathbb{P}_{q+1, k} \left( \partial_{t} \chi_{j} \bar{a}_{k,j} b_{k} (\lambda_{q+1} \Phi_{j} ) \right) \\
& \hspace{20mm} + \mathbb{P}_{q+1, k} \left( \chi_{j}\partial_{t} \Upsilon_{l}^{-\frac{1}{2}} \left( \lambda_{q+1}^{-\frac{1}{2}} \bar{\rho}_{j}^{\frac{1}{2}} \gamma_{k} \left( \Id - \frac{\mathring{R}_{q,j}}{\bar{\rho}_{j}} \right) \right)  b_{k} (\lambda_{q+1} \Phi_{j} ) \right) \Big].
\end{align*}
We estimate by relying on \cite[Lemma A.6]{BSV19} for $\beta > \frac{1}{2}$ sufficiently close to $\frac{1}{2}$ and $a_{0}$ sufficiently large 
\begin{align}
\lVert R_{T} \rVert_{C_{t,x,q+1}} &\overset{\eqref{est 63b}}{\lesssim} \lambda_{q+1}^{-1} \big[ \sum_{j,k} \lVert \Upsilon_{l} \nabla \Lambda y_{l} \rVert_{C_{t,x,q+1}} \lVert 1_{\supp \chi_{j}} \tilde{w}_{q+1, j, k} \rVert_{C_{t,x,q+1}}  \label{est 190}\\
& \hspace{10mm} + \tau_{q+1}^{-1} ( e^{\frac{3}{2} L^{\frac{1}{4}}} \bar{e}^{\frac{1}{2}} \delta_{q+1}^{\frac{1}{2}}) + \lambda_{q+1}^{-\frac{1}{2}} \bar{\rho}_{j}^{\frac{1}{2}} e^{\frac{5}{2}L^{\frac{1}{4}}} l^{-1} \big] \nonumber \\
&\overset{\eqref{mult inductive 1c} \eqref{est 64}}{\lesssim}  \lambda_{q+1}^{-1} \Big[  e^{3 L^{\frac{1}{4}}} M_{0}^{\frac{1}{2}} m_{L}^{4} \bar{e} \lambda_{q}^{2} \delta_{q}^{\frac{1}{2}} \delta_{q+1}^{\frac{1}{2}} \nonumber \\
& \hspace{12mm} + \tau_{q+1}^{-1} e^{\frac{3}{2} L^{\frac{1}{4}}} \bar{e}^{\frac{1}{2}} \delta_{q+1}^{\frac{1}{2}} +   \lambda_{q+1}^{\alpha} \delta_{q+1}^{\frac{1}{2}} \bar{e}^{\frac{1}{2}} e^{4L^{\frac{1}{2}}}  \Big] \ll e^{-3 L^{\frac{1}{4}}}  \lambda_{q+3} \delta_{q+3}. \nonumber 
\end{align} 
\end{proof}

Next, we estimate the Nash error. 
\begin{proposition}\label{prop mRN}
$R_{N}$ from \eqref{est 55b} satisfies for $\beta > \frac{1}{2}$ sufficiently close to $\frac{1}{2}$ and $a_{0}$ sufficiently large 
\begin{equation}\label{est 252}
\lVert R_{N} \rVert_{C_{t,x,q+1}}  \ll e^{-3 L^{\frac{1}{4}}} \lambda_{q+3} \delta_{q+3}. 
\end{equation}  
\end{proposition}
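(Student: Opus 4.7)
The plan is to mirror the structure of Proposition \ref{prop RN} in the additive case, adapted to the linear multiplicative setting. Using the identity \eqref{est 120} with $u = \Lambda v$, we can rewrite the Nash term so that
\begin{equation*}
\divergence R_{N} = \Upsilon_{l}\left[ (\nabla \Lambda y_{l})^{T} \cdot w_{q+1} - \Lambda w_{q+1}^{\bot}(\nabla^{\bot}\cdot y_{l})\right],
\end{equation*}
and accordingly split $R_{N} = N_{1} + N_{2}$ with
\begin{equation*}
N_{1} \triangleq \mathcal{B}\bigl(\Upsilon_{l}(\nabla \Lambda y_{l})^{T}\cdot w_{q+1}\bigr), \qquad N_{2} \triangleq -\mathcal{B}\bigl(\Upsilon_{l}\Lambda w_{q+1}^{\bot}(\nabla^{\bot}\cdot y_{l})\bigr).
\end{equation*}
Since $\supp \hat y_{l}\subset B(0,2\lambda_{q})$ and $\supp \hat w_{q+1}\subset\{\lambda_{q+1}/2\leq |\xi|\leq 2\lambda_{q+1}\}$, the products above have Fourier support in the annulus $\{\lambda_{q+1}/4\leq |\xi|\leq 4\lambda_{q+1}\}$, so we may freely insert $\tilde P_{\approx \lambda_{q+1}}$ and thereby gain a factor $\lambda_{q+1}^{-1}$ when applying $\mathcal{B}$.

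For $N_{1}$, I will use the deterministic pointwise bound $|\Upsilon_{l}|\leq e^{L^{1/4}}$ from \eqref{est 196a}, \eqref{mult inductive 1c} to control $\nabla \Lambda y_{l}$, and \eqref{est 70a} for $w_{q+1}$, yielding
\begin{equation*}
\lVert N_{1}\rVert_{C_{t,x,q+1}} \lesssim \lambda_{q+1}^{-1}\, e^{L^{1/4}}\bigl(M_{0}^{\frac{1}{2}} m_{L}^{4}\lambda_{q}^{2}\delta_{q}^{\frac{1}{2}}\bar{e}^{\frac{1}{2}}\bigr) \bigl(e^{\frac{3}{2}L^{1/4}}\bar{e}^{\frac{1}{2}}\delta_{q+1}^{\frac{1}{2}}\bigr),
\end{equation*}
where I have absorbed $\Upsilon_{l}^{-1/2}\leq e^{\frac{1}{2}L^{1/4}}$ from \eqref{est 196a} into the exponential prefactor. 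After substituting $\delta_{q}^{1/2}\delta_{q+1}^{1/2}\approx \lambda_{1}^{2\beta-1}\lambda_{q}^{-\beta}\lambda_{q+1}^{-\beta}$, this reduces to a bound of the form $e^{CL^{1/4}} m_{L}^{4}\bar{e}\,\lambda_{q}^{2-\beta}\lambda_{q+1}^{-1-\beta}\lambda_{1}^{2\beta-1}$, which is $\ll e^{-3L^{1/4}}\lambda_{q+3}\delta_{q+3}$ provided $\beta>\tfrac{1}{2}$ is sufficiently close to $\tfrac{1}{2}$ and $a_{0}$ is sufficiently large; the choice \eqref{mult bound b}, in particular the bound $b>6L/(\alpha-\tfrac{1}{2})$, gives enough room to absorb all the exponential and $m_{L}$ prefactors.

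For $N_{2}$, I will follow the decomposition used in the additive proof: write $w_{q+1}$ via \eqref{est 68} and rewrite $N_{2}$ as $\lambda_{q+1}^{-1}\mathcal{B}$ applied to two pieces, one involving $\nabla(\nabla^{\bot}\cdot y_{l})\cdot\Lambda \mathbb{P}_{q+1,k}\bigl(\chi_{j}\bar a_{k,j} c_{k}(\lambda_{q+1}\Phi_{j})\bigr)$ and the other involving $(\nabla^{\bot}\cdot y_{l})\cdot \Lambda \mathbb{P}_{q+1,k}\bigl(\chi_{j}\nabla(\bar a_{k,j}\psi_{q+1,j,k})c_{k}(\lambda_{q+1}x)\bigr)$. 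Estimating by \cite[Equation (A.11)]{BSV19} and invoking \eqref{mult inductive 1c}, \eqref{est 63b}, and \eqref{est 82b}--\eqref{est 82c} produces a bound of the same order as for $N_{1}$, again with only an additional bounded power of $e^{L^{1/4}}$ and $m_{L}$, hence again $\ll e^{-3L^{1/4}}\lambda_{q+3}\delta_{q+3}$.

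The main technical obstacle is not the analysis itself, which is parallel to Proposition \ref{prop RN}, but the bookkeeping of the $\Upsilon_{l}$ and $\Upsilon_{l}^{-1/2}$ factors combined with $m_{L}$ and the various $e^{cL^{1/4}}$ terms: one must verify that all of these are dominated by the single $e^{-3L^{1/4}}$ improvement demanded by \eqref{est 252}. This is exactly what the choice of $b$ in \eqref{mult bound b} (together with the flexibility in $\beta\to \tfrac{1}{2}^{+}$ and $a_{0}\to\infty$) is designed to accommodate, since each factor $a^{b(\alpha-\tfrac{1}{2})}$ from the ratio $\lambda_{q+3}\delta_{q+3}/(\lambda_{q}\delta_{q+1})$ can be made arbitrarily large relative to $e^{L^{1/4}}$ and $m_{L}$.
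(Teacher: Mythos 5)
Your proposal follows essentially the same route as the paper: split $R_N=N_1+N_2$ via the identity \eqref{est 120}, insert $\tilde P_{\approx\lambda_{q+1}}$ using the frequency supports of $y_l$ and $w_{q+1}$ to gain the factor $\lambda_{q+1}^{-1}$ from $\mathcal{B}$, bound $N_1$ with \eqref{est 196a}, \eqref{mult inductive 1c}, and \eqref{est 70a}, and bound $N_2$ by the same two-piece rewrite used in the additive case together with \cite[Equation (A.11)]{BSV19}, \eqref{est 63b}, and \eqref{est 82b}. The only slip is a sign: \eqref{est 120} gives $(\Lambda w_{q+1}\cdot\nabla)y_l-(\nabla y_l)^T\cdot\Lambda w_{q+1}=+\Lambda w_{q+1}^\perp(\nabla^\perp\cdot y_l)$, not $-\Lambda w_{q+1}^\perp(\nabla^\perp\cdot y_l)$, but since you immediately take $C_{t,x,q+1}$-norms this is harmless.
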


\begin{proof}[Proof of Proposition \ref{prop mRN}]
Using the identity \eqref{est 120}  we can break apart
\begin{equation}\label{est 253}
R_{N} = N_{1} + N_{2} 
\end{equation} 
where 
\begin{equation}
N_{1} \triangleq \Upsilon_{l} \mathcal{B} [ (\nabla \Lambda y_{l})^{T} \cdot w_{q+1} ] \hspace{2mm} \text{ and } \hspace{2mm} N_{2} \triangleq \Upsilon_{l} \mathcal{B} [\Lambda w_{q+1}^{\bot} (\nabla^{\bot} \cdot y_{l})]. 
\end{equation} 
Then similarly to \eqref{additive frequency support}, considering the frequency support of $y_{l}$ and $w_{q+1}$ leads to   
\begin{equation}
N_{1} = \Upsilon_{l} \sum_{j,k} \mathcal{B} \tilde{P}_{\approx \lambda_{q+1}} \left[ ( \nabla \Lambda y_{l})^{T} \cdot \mathbb{P}_{q+1, k} \left( \chi_{j} \bar{a}_{k,j}  b_{k} (\lambda_{q+1} \Phi_{j} ) \right) \right].
\end{equation}
This lets us compute for $\beta > \frac{1}{2}$ sufficiently close to $\frac{1}{2}$ and $a_{0}$ sufficiently large 
\begin{align}
\lVert N_{1} \rVert_{C_{t,x,q+1}} \overset{\eqref{est 63b} \eqref{mult inductive 1c}}{\lesssim} M_{0}^{\frac{1}{2}} \bar{e} e^{5L^{\frac{1}{4}}} L  \lambda_{1}^{2\beta -1} \lambda_{q}^{2-\beta} \lambda_{q+1}^{-1-\beta} \overset{\eqref{mult bound b}}{\ll} e^{- 3L^{\frac{1}{4}}} \lambda_{q+3} \delta_{q+3}.\label{est 250}
\end{align}
Similarly to \cite[Equation (125)]{Y23b}, we can also rewrite 
\begin{align}
N_{2}(x) =&\Upsilon_{l} \lambda_{q+1}^{-1}  \mathcal{B} \Big( \nabla ( \nabla^{\bot} \cdot y_{l})(x)  \sum_{j,k} \Lambda \mathbb{P}_{q+1, k} \big(\chi_{j} \bar{a}_{k,j} (x) c_{k} (\lambda_{q+1} \Phi_{j} (x))\big)\Big)  \\
&+  \Upsilon_{l} \lambda_{q+1}^{-1} \mathcal{B} \Big(( \nabla^{\bot} \cdot y_{l})(x) \sum_{j,k} \Lambda \mathbb{P}_{q+1, k} \big(\chi_{j} \nabla (\bar{a}_{k,j} (x) \psi_{q+1, j, k} (x)) c_{k} (\lambda_{q+1} x)\big)\Big). \nonumber 
\end{align} 
Thus, we estimate, using \cite[Equation (A.11)]{BSV19}, for $\beta > \frac{1}{2}$ sufficiently close to $\frac{1}{2}$ and $a_{0}$ sufficiently large 
\begin{align}
&\lVert N_{2} \rVert_{C_{t,x,q+1}} \overset{\eqref{est 82b} }{\lesssim} e^{L^{\frac{1}{4}}} \lambda_{q+1}^{-1} \big[ \lambda_{q} (M_{0}^{\frac{1}{2}} m_{L}^{4} \lambda_{q} \delta_{q}^{\frac{1}{2}} \bar{e}^{\frac{1}{2}}) ( e^{2 L^{\frac{1}{4}}} \bar{e}^{\frac{1}{2}} \delta_{q+1}^{\frac{1}{2}})  \label{est 251} \\
& \hspace{10mm} + (M_{0}^{\frac{1}{2}} m_{L}^{4} \lambda_{q} \delta_{q}^{\frac{1}{2}} \bar{e}^{\frac{1}{2}}) ( e^{2 L^{\frac{1}{4}}} \bar{e}^{\frac{1}{2}} \delta_{q+1}^{\frac{1}{2}} \lambda_{q} ) \big] \lesssim a^{\frac{L}{2}} \lambda_{1}^{2\beta -1} \lambda_{q}^{2-\beta} \lambda_{q+1}^{-1-\beta}  \ll e^{-3 L^{\frac{1}{4}}} \lambda_{q+3} \delta_{q+3}. \nonumber 
\end{align}
Applying \eqref{est 250} and \eqref{est 251}  to \eqref{est 253} gives us \eqref{est 252}.  
\end{proof}

Next, we estimate the linear error. 
\begin{proposition}\label{prop mRL}
$R_{L}$ from \eqref{est 55c} satisfies for $\beta > \frac{1}{2}$ sufficiently close to $\frac{1}{2}$ and $a_{0}$ sufficiently large 
\begin{equation}\label{est 255}
\lVert R_{L} \rVert_{C_{t,x,q+1}}  \ll e^{-3 L^{\frac{1}{4}}} \lambda_{q+3} \delta_{q+3}. 
\end{equation}  
\end{proposition}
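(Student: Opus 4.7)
The plan is to exploit the frequency localization of $w_{q+1}$ in a crucial way. Unlike the additive case (where $R_L$ had to accommodate the difference $\Lambda^\gamma z - \Lambda^{3/2-2\sigma}z$ and the noise interaction term $\Lambda w_{q+1}^\bot(\nabla^\bot\cdot z_l)$), in the linear multiplicative setting the definition \eqref{est 55c} reduces to $\divergence R_L = \Lambda^\gamma w_{q+1} + \tfrac{1}{2}w_{q+1}$, which is purely a linear combination of frequency-localized pieces of $w_{q+1}$. Both pieces will be treated by the same strategy: apply $\mathcal{B}$ to recover a factor $\lambda_{q+1}^{-1}$ coming from frequency support, and combine with the supremum bound on $w_{q+1}$ from \eqref{est 70a}.

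First I would write
\begin{equation*}
R_L = L_1 + L_2, \qquad L_1 \triangleq \mathcal{B}\Lambda^\gamma w_{q+1}, \qquad L_2 \triangleq \tfrac{1}{2}\mathcal{B} w_{q+1},
\end{equation*}
and, using \eqref{support wqplus1} together with the kernel bound on $\mathcal{B}$ analogous to \eqref{new 8} applied to frequency-localized inputs (or equivalently \eqref{est 92b} together with Bernstein-type inequalities in the band $\{\lambda_{q+1}/2\le|\xi|\le 2\lambda_{q+1}\}$), deduce
\begin{equation*}
\lVert L_1\rVert_{C_{t,x,q+1}} \lesssim \lambda_{q+1}^{\gamma-1}\lVert w_{q+1}\rVert_{C_{t,x,q+1}}, \qquad \lVert L_2\rVert_{C_{t,x,q+1}} \lesssim \lambda_{q+1}^{-1}\lVert w_{q+1}\rVert_{C_{t,x,q+1}}.
\end{equation*}
Since $\gamma \in (0,3/2)$ we have $\lambda_{q+1}^{-1} \le \lambda_{q+1}^{\gamma-1}$ for $a_0$ large, so $L_1$ controls both contributions.

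Next I would insert the perturbation bound \eqref{est 70a} and the pointwise bound $\Upsilon_l^{-1/2}(t) \le e^{L^{1/4}/2}$ from \eqref{est 196a}, together with $\delta_{q+1}^{1/2} = \lambda_1^{\beta-1/2}\lambda_{q+1}^{-\beta}$ from \eqref{define lambda delta}, to obtain
\begin{equation*}
\lVert R_L\rVert_{C_{t,x,q+1}} \lesssim \lambda_{q+1}^{\gamma-1-\beta}\lambda_1^{\beta-\frac{1}{2}}\bar{e}^{\frac{1}{2}}e^{2L^{1/4}}.
\end{equation*}
The goal $\lVert R_L\rVert_{C_{t,x,q+1}} \ll e^{-3L^{1/4}}\lambda_{q+3}\delta_{q+3} = e^{-3L^{1/4}}\lambda_1^{2\beta-1}\lambda_{q+3}^{1-2\beta}$ therefore reduces, after comparing powers of $a$, to requiring
\begin{equation*}
a^{b^{q+1}(\gamma-1-\beta)-b^{q+3}(1-2\beta)}\cdot \bar{e}^{\frac{1}{2}}\lambda_1^{\frac{1}{2}-\beta}e^{5L^{1/4}}\ll 1.
\end{equation*}

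The main obstacle (and the reason for the third bound $b > L/(3/2-\gamma)$ in \eqref{mult bound b}) is absorbing the exponential prefactor $e^{5L^{1/4}}$ into the $\lambda_{q+1}$-gain. Taking $\beta > 1/2$ sufficiently close to $1/2$ makes the exponent of $a$ essentially $b^{q+1}(\gamma-3/2)<0$, while $b > L/(3/2-\gamma)$ together with $a \ge a_0\ge e^{16}$ in \eqref{bound a} gives
\begin{equation*}
\lambda_{q+1}^{3/2-\gamma}\ge a_0^{b(3/2-\gamma)} \ge e^{16L}\gg e^{5L^{1/4}}
\end{equation*}
for every $L\ge 1$, which defeats the prefactor with room to spare and delivers \eqref{est 255}. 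The rest (absorbing $\lambda_1^{1/2-\beta}\bar{e}^{1/2}$ and the implicit constant) is standard upon taking $a_0$ sufficiently large and $\beta$ sufficiently close to $1/2$.
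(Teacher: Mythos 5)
Your proposal is correct and follows essentially the same route as the paper: the same split $R_L = L_1 + L_2$, the same gain of $\lambda_{q+1}^{\gamma-1}$ (resp.\ $\lambda_{q+1}^{-1}$) from $\mathcal{B}$ acting on frequency-localized inputs, the same insertion of \eqref{est 70a} and \eqref{est 196a}, and the same reliance on the $b > L/(\tfrac{3}{2}-\gamma)$ lower bound of \eqref{mult bound b} to absorb the exponential prefactor. The only cosmetic difference is that you fold $L_2$ into $L_1$ via $\lambda_{q+1}^{-1}\le\lambda_{q+1}^{\gamma-1}$ while the paper estimates both terms separately.
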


\begin{proof}[Proof of Proposition \ref{prop mRL}]
We can rewrite
\begin{equation}\label{est 256}
R_{L} = L_{1} + L_{2} \text{ where } L_{1} \triangleq \mathcal{B} \Lambda^{\gamma} w_{q+1} \text{ and } L_{2} \triangleq \frac{1}{2} \mathcal{B} w_{q+1}. 
\end{equation}
First, we estimate for $\beta > \frac{1}{2}$ sufficiently close to $\frac{1}{2}$ and $a_{0}$ sufficiently large, 
\begin{align}
\lVert L_{1} \rVert_{C_{t,x,q+1}} \overset{\eqref{est 70a}}{\lesssim} \lambda_{q+1}^{\gamma -1} \left( e^{2 L^{\frac{1}{4}}} \bar{e}^{\frac{1}{2}} \delta_{q+1}^{\frac{1}{2}} \right) \lesssim \lambda_{q+1}^{\gamma -1} a^{L}  \lambda_{1}^{\beta -\frac{1}{2}} \lambda_{q+1}^{-\beta}\ll e^{-3 L^{\frac{1}{4}}} \lambda_{q+3} \delta_{q+3}.\label{est 192}
\end{align}
Second, we similarly find for $\beta > \frac{1}{2}$ sufficiently close to $\frac{1}{2}$ and $a_{0}$ sufficiently large 
\begin{align}
\lVert L_{2} \rVert_{C_{t,x,q+1}} \overset{\eqref{est 70a}}{\lesssim} \lambda_{q+1}^{-1} e^{2L^{\frac{1}{4}}} \bar{e}^{\frac{1}{2}} \delta_{q+1}^{\frac{1}{2}} \lesssim a^{\frac{L}{2}} \lambda_{1}^{\beta - \frac{1}{2}} \lambda_{q+1}^{-1-\beta} \ll e^{-3 L^{\frac{1}{4}}} \lambda_{q+3} \delta_{q+3}. \label{est 254} 
\end{align}
Applying \eqref{est 192} and \eqref{est 254} to \eqref{est 256} gives us \eqref{est 255}. 
\end{proof}

Next, we treat the oscillation error.
\begin{proposition}\label{prop mRO}
$R_{O}$ from \eqref{est 55d} satisfies  for $\beta > \frac{1}{2}$ sufficiently close to $\frac{1}{2}$ and $a_{0}$ sufficiently large 
\begin{equation}\label{est 84} 
e^{2L^{\frac{1}{4}}}\lVert R_{O} \rVert_{C_{t,x,q+1}}  \ll e^{-3 L^{\frac{1}{4}}} \lambda_{q+3} \delta_{q+3}. 
\end{equation}  
\end{proposition}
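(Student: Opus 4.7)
The plan is to mirror the decomposition used in the additive case (Proposition \ref{prop RO}), tracking carefully the additional factors of $\Upsilon_l$ that appear because the nonlinear block in \eqref{est 55d} is prefixed by $\Upsilon_l$, and because the amplitudes now satisfy $\bar a_{k,j}=\Upsilon_l^{-1/2}a_{k,j}$ by \eqref{est 58a}. Using the identity \eqref{est 120} one rewrites the quadratic part of \eqref{est 55d} as $\Upsilon_l \divergence(w_{q+1}\otimes \Lambda w_{q+1})$ modulo a pressure, and then applies $\mathcal{B}\tilde P_{\approx\lambda_{q+1}}$ to the full expression. Introducing the analogues of \eqref{est 85} and \eqref{est 87} with $a_{k,j}$ replaced by $\bar a_{k,j}$ and an overall factor of $\Upsilon_l$ inserted, I split
\[
R_O = R_{O,\mathrm{approx}} + R_{O,\mathrm{low}} + R_{O,\mathrm{high}},
\]
where $R_{O,\mathrm{approx}}=\sum_j\chi_j^2(\mathring R_l-\mathring R_{q,j})$, $R_{O,\mathrm{low}}$ is the low-frequency remainder after applying the geometric lemma, and $R_{O,\mathrm{high}}$ collects the $k+k'\ne 0$ interactions.

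For the approximation piece I use the analogue of \eqref{new 17}: since $D_{t,q}(\mathring R_l-\mathring R_{q,j})=D_{t,q}\mathring R_l$ and \eqref{est 60} gives vanishing initial data at $\tau_{q+1}j$, the transport estimate \eqref{est 2a} together with \eqref{mult inductive 2b} and \eqref{mult inductive 1c} yields
\[
\lVert R_{O,\mathrm{approx}}\rVert_{C_{t,x,q+1}}\lesssim \tau_{q+1} l^{-1}\lambda_{q+2}\delta_{q+2}\,\bar e\, e^{-3L^{1/4}}\,(\text{polynomial in }m_L,e^{L^{1/4}}),
\]
where the gain $\tau_{q+1}l^{-1}$ is the crucial small factor (tuned in \eqref{tau} exactly for this purpose, as pointed out in Remark \ref{Remark 2.1}). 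For the low-frequency part, the key cancellation is algebraic: using $\bar a_{k,j}^2 = \Upsilon_l^{-1}a_{k,j}^2$ together with the geometric lemma, the Fourier-localized product $\lambda_{q+1}\sum_k \bar a_{k,j}^2(k^\perp\otimes k^\perp)$ produces a factor of $\Upsilon_l^{-1}\bar\rho_j(\Id - \mathring R_{q,j}/\bar\rho_j)$, and the prefactor $\Upsilon_l$ from \eqref{est 55d} cancels the $\Upsilon_l^{-1}$, so that the identity \eqref{est 31} goes through with $\bar\rho_j$ in place of $\rho_j$ and $\Id\mathrm{-}$term absorbed into the pressure. The residual pieces $\tilde{\mathcal L}_{j,k}^{(1)},\tilde{\mathcal L}_{j,k}^{(2)}$ are estimated as in \eqref{new 20} using \eqref{est 82b}--\eqref{est 82c}, giving
\[
\lVert R_{O,\mathrm{low}}\rVert_{C_{t,x,q+1}}\lesssim e^{L^{1/4}}\cdot e^{3L^{1/4}}\lambda_q \delta_{q+1}\bar e.
\]

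For the high-frequency part I follow the pattern of \eqref{new 21}--\eqref{new 32}: split each of the two quadratic terms into an $\mathbb P_{q+1,k}$-commutator-free piece where $b_k\cdot b_{k'}$ or $b_{k'}\otimes b_k$ appears and can be absorbed into the pressure via $\mathcal B\tilde P_{\approx\lambda_{q+1}}$, plus commutator remainders estimated by \cite[Equation (A.17)]{BSV19} and the amplitude bounds in Proposition \ref{prop mult psi}. Each contribution is of order $e^{3L^{1/4}}\lambda_q\delta_{q+1}\bar e$, leading to
\[
e^{2L^{1/4}}\lVert R_O\rVert_{C_{t,x,q+1}}\lesssim e^{2L^{1/4}}\!\left[\tau_{q+1}l^{-1}\lambda_{q+2}\delta_{q+2}\bar e\,(m_L,e^{L^{1/4}})^{O(1)} + e^{4L^{1/4}}\lambda_q\delta_{q+1}\bar e\right].
\]

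The main obstacle is bookkeeping of the $e^{L^{1/4}}$ and $m_L$ factors coming from the amplitude bounds \eqref{est 63}, the $\Upsilon_l$ prefactor, and the material-derivative bound \eqref{mult inductive 1d}, because \emph{both} sides of the desired inequality carry weights exponential in $L^{1/4}$. Substituting \eqref{define lambda delta}, \eqref{define l}, \eqref{tau}, and using $\bar e\geq\ushort e>4$, the dominant terms reduce to powers of $a$ with exponents of the form $b^{q+1}[\cdots]+O(L^{1/4})$; the choice \eqref{mult bound b} of $b$ was designed precisely so that the leading $b^{q}$-exponent is strictly smaller than that of $\lambda_{q+3}\delta_{q+3}=a^{b^{q+3}(1-2\beta)+(2\beta-1)b}$ with room to absorb the $O(L^{1/4})$ correction once $a_0$ is taken sufficiently large and $\beta>\tfrac12$ sufficiently close to $\tfrac12$. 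This yields the claimed bound \eqref{est 84}.
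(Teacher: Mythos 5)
Your proposal follows the same approach as the paper: the same tripartite split of $R_O$ into approximate, low-frequency, and high-frequency parts, the same algebraic observation that the $\Upsilon_l$ prefactor in \eqref{est 55d} cancels the $\Upsilon_l^{-1}$ hiding in $\bar{a}_{k,j}^2$ so that the additive-case computations go through verbatim with $\bar\rho_j$ in place of $\rho_j$, and the same use of the amplitude bounds \eqref{est 63a}, \eqref{est 82d}--\eqref{est 82e} together with the $L$-dependent choice of $b$ in \eqref{mult bound b} to absorb the $e^{O(L^{1/4})}$ factors. The only blemishes are minor bookkeeping slips (your $e^{L^{1/4}}\cdot e^{3L^{1/4}}$ versus $e^{4L^{1/4}}$ are inconsistent, you omit the favorable $e^{-3L^{1/4}}$ that \eqref{mult inductive 2b} contributes to $R_{O,\mathrm{approx}}$, and the claim that the nonlinearity is $\Upsilon_l\,\mathrm{div}(w_{q+1}\otimes\Lambda w_{q+1})$ modulo a pressure is not quite exact), none of which affects the conclusion because \eqref{mult bound b} gives room to absorb any fixed power of $e^{L^{1/4}}$.
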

\noindent We will see in \eqref{est 193} the benefit of the extra factor $e^{2L^{\frac{1}{4}}}$ on the left side of \eqref{est 84}.
\begin{proof}[Proof of Proposition \ref{prop mRO}]
From the definition in \eqref{est 55d}, along with \eqref{est 69a} and \eqref{est 58a}, we can rewrite
\begin{align*}
&\divergence R_{O} = \divergence \mathring{R}_{l} + \Upsilon_{l} [ (\Lambda w_{q+1} \cdot \nabla) w_{q+1} - (\nabla w_{q+1})^{T} \cdot \Lambda w_{q+1} ] \\ 
& \hspace{5mm} =\divergence \mathring{R}_{l} + \Big[ \big( \Lambda  \sum_{j,k} \chi_{j} \mathbb{P}_{q+1, k} ( a_{k,j} b_{k} (\lambda_{q+1} \Phi_{j} )) \cdot \nabla\big) \sum_{j',k'}\chi_{j'} \mathbb{P}_{q+1, k'} \big(a_{k', j'} b_{k'} (\lambda_{q+1} \Phi_{j'})\big) \\
& \hspace{5mm} - \big(\nabla  \sum_{j,k} \chi_{j} \mathbb{P}_{q+1, k} (a_{k,j} b_{k} (\lambda_{q+1} \Phi_{j} )))^{T} \cdot \Lambda \sum_{j',k'}\chi_{j'} \mathbb{P}_{q+1, k'} (a_{k', j'} b_{k'} (\lambda_{q+1} \Phi_{j'} )\big)\Big]. 
\end{align*}
This structure is identical to the oscillation error in the additive case -- the only difference is that we have $\bar{\rho}_{j}$ now rather than $\rho_{j}$. By utilizing the estimates \eqref{est 63a}, \eqref{est 82d}-\eqref{est 82e} for $a_{k,j}$, we can prove \eqref{est 84} in a similar manner to \eqref{est 195}; we leave the details in Section \ref{mult oscillation details} for completeness. 
\end{proof}

Finally, we bound the two commutator errors, beginning with $R_{\text{Com1}}$.
\begin{proposition}\label{prop mRcom1}
$R_{\text{Com1}}$ from \eqref{est 53} satisfies for $\beta > \frac{1}{2}$ sufficiently close to $\frac{1}{2}$ and $a_{0}$ sufficiently large 
\begin{equation*}
\lVert R_{\text{Com1}} \rVert_{C_{t,x,q+1}}  \ll e^{-3 L^{\frac{1}{4}}} \lambda_{q+3} \delta_{q+3}. 
\end{equation*}
\end{proposition}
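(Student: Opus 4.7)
The plan is to follow the strategy of Proposition \ref{prop Rcom1} in the additive case, carefully tracking the $\Upsilon$--dependent factors. Denote $\mathcal{N}(f,g) \triangleq \Lambda f^{\bot}(\nabla^{\bot}\cdot g)$ and $[\cdot]_{l} \triangleq (\cdot)\ast_{x}\phi_{l}\ast_{t}\varphi_{l}$. I would first decompose
\begin{equation*}
R_{\text{Com1}} = \mathcal{B}\bigl[\Upsilon_{l}\{\mathcal{N}(y_{l},y_{l}) - [\mathcal{N}(y_{q},y_{q})]_{l}\}\bigr] + \mathcal{B}\bigl[\Upsilon_{l}[\mathcal{N}(y_{q},y_{q})]_{l} - [\Upsilon\, \mathcal{N}(y_{q},y_{q})]_{l}\bigr] \triangleq \RomanI + \RomanII,
\end{equation*}
so that $\RomanI$ is purely a mollification commutator of the bilinear form $\mathcal{N}$ with an outer $\Upsilon_{l}$ factor, while $\RomanII$ is the ``time-commutator'' between multiplication by $\Upsilon$ and the temporal mollifier $\ast_{t}\varphi_{l}$.

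For $\RomanI$, I would pull out $\lVert \Upsilon_{l}\rVert_{C_{t,0}} \leq e^{L^{1/4}}$ from \eqref{est 196a} and then repeat the four-term split \eqref{est 182} (now with only $y_{l}$ and $y_{q}$, and no $z$) and apply \eqref{est 8} together with the standard bilinear mollification lemma (e.g.\ \cite[Lemma 1]{CDS12b}): the key bound will be
\begin{equation*}
\lVert \RomanI\rVert_{C_{t,x,q+1}} \lesssim e^{L^{1/4}} l^{2}\bigl(\lVert y_{q}\rVert_{C_{t,q+1}^{1}C_{x}}^{2} + \lVert \Lambda y_{q}\rVert_{C_{t,x,q+1}}^{2}\bigr),
\end{equation*}
and using the new pointwise bound \eqref{est 47} for $\partial_{t}y_{q}$ together with \eqref{mult inductive 1c}, this produces a factor $l^{2}\lambda_{q}^{6}\delta_{q}^{2} m_{L}^{16} e^{3L^{1/4}}$. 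Because $l=\lambda_{q+1}^{-\alpha}$ and $b$ satisfies \eqref{mult bound b}, the negative power of $\lambda_{q+1}$ dominates the $L$-dependent $m_{L}^{16} e^{3L^{1/4}}$ factor for $a_{0}$ large and $\beta>\tfrac{1}{2}$ close to $\tfrac{1}{2}$, giving $\RomanI \ll e^{-3L^{1/4}}\lambda_{q+3}\delta_{q+3}$.

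For $\RomanII$, I would spatial-mollify first (which commutes with multiplication by $\Upsilon(t)$) and then estimate the time commutator
\begin{equation*}
\Upsilon_{l}(t)\,[\mathcal{N}(y_{q},y_{q})]_{l}(t,x) - [\Upsilon\,\mathcal{N}(y_{q},y_{q})]_{l}(t,x)
\end{equation*}
by the standard device: write it as $\int\varphi_{l}(s)[\Upsilon(t)-\Upsilon(t-s)]\,\mathcal{N}(y_{q},y_{q})(t-s,\cdot)\ast_{x}\phi_{l}\,ds + \int\int\varphi_{l}(s)\varphi_{l}(s')[\Upsilon(t-s')-\Upsilon(t-s)]\dots$, so that H\"older continuity of $\Upsilon$ is the key ingredient. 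Invoking $\lVert \Upsilon\rVert_{C_{t,0}^{1/2-2\delta}}\leq m_{L}^{2}$ from \eqref{est 196b} together with \eqref{est 8} and \eqref{mult inductive 1c} yields
\begin{equation*}
\lVert \RomanII\rVert_{C_{t,x,q+1}} \lesssim l^{1/2-2\delta}\,m_{L}^{2}\,\lVert \Lambda y_{q}\rVert_{C_{t,x,q+1}}\lVert \nabla y_{q}\rVert_{C_{t,x,q+1}} \lesssim l^{1/2-2\delta}\,m_{L}^{2}\,M_{0}m_{L}^{8}\lambda_{q}^{2}\delta_{q}\bar{e},
\end{equation*}
and again this is $\ll e^{-3L^{1/4}}\lambda_{q+3}\delta_{q+3}$ by \eqref{mult bound b} after choosing $\delta\in(0,\tfrac{1}{4})$ small, $\beta>\tfrac{1}{2}$ close to $\tfrac{1}{2}$, and $a_{0}$ large.

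The main obstacle will be verifying that the $L$--dependent prefactors $e^{cL^{1/4}}$ and $m_{L}^{k}$ that appear on the right-hand side (both from pulling $\Upsilon_{l}$ out in $\RomanI$ and from the H\"older seminorm $\lVert \Upsilon\rVert_{C^{1/2-2\delta}_{t,0}}$ in $\RomanII$) are actually absorbed by the smallness of $l = \lambda_{q+1}^{-\alpha}$ and the target smallness $e^{-3L^{1/4}}\lambda_{q+3}\delta_{q+3} = e^{-3L^{1/4}}\lambda_{1}^{2\beta-1}a^{b^{q+3}(1-2\beta)}$. This is the reason the first lower bound $b > 6L/(\alpha-\tfrac{1}{2})$ was imposed in \eqref{mult bound b}: it gives enough room for the negative powers of $\lambda_{q+1}$ coming from $l^{2}$ and $l^{1/2-2\delta}$ to beat both the $e^{3L^{1/4}}$ factor on the left and the $a^{cL^{1/4}}$ factor arising from $m_{L}$, uniformly in $q \geq 0$.
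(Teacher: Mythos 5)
Your decomposition is sound and lands essentially where the paper does, but your split and estimates differ enough to be worth noting. The paper performs a three-way telescoping: (i) the spatial bilinear mollification commutator of $\mathcal{N}(y_q,y_q)$ with $\Upsilon$ pulled through $\ast_x\phi_l$, (ii) the $\Upsilon$-vs-$\ast_t\varphi_l$ commutator treated via the \emph{bilinear} mollification lemma \cite[Equation~(5)]{CDS12b}, producing an $l^{1-4\delta}$ rate at the cost of a $C^{1/2-2\delta}_t$ H\"older seminorm of the product which is then controlled by interpolation, and (iii) the temporal bilinear commutator with $\lVert\Upsilon\rVert_{C_{t,q+1}}\leq e^{L^{1/4}}$ pulled out. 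Your $\RomanI$ groups (i) and (iii) into a single spacetime bilinear commutator, and your $\RomanII$ is the paper's (ii) but estimated by the weaker one-sided H\"older bound $|\Upsilon(t-s)-\Upsilon(t-s')|\lesssim l^{1/2-2\delta}\lVert\Upsilon\rVert_{C^{1/2-2\delta}}$ times the $C^0$ norm of the product, which gives only $l^{1/2-2\delta}$ decay instead of the paper's $l^{1-4\delta}$. The weaker rate is still absorbable because \eqref{mult bound b} forces $b$ large enough, so your argument closes; the paper's bilinear version simply gives more room. One small imprecision: your displayed intermediate bound for $\RomanI$, namely $e^{L^{1/4}}l^2(\lVert y_q\rVert_{C^1_{t,q+1}C_x}^2+\lVert\Lambda y_q\rVert_{C_{t,x,q+1}}^2)$, under-counts the derivative loss coming from the $\Lambda$ and $\nabla^\perp$ inside $\mathcal{N}$; the CDS bilinear commutator actually requires $\lVert\Lambda y_q^\perp\rVert_{C^1_{t,x}}\lVert\nabla^\perp\!\cdot y_q\rVert_{C^1_{t,x}}$, which carries an additional factor $\lambda_q^2$. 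Your subsequent claim of the factor $l^2\lambda_q^6\delta_q^2 m_L^{16}e^{3L^{1/4}}$ is the correct outcome (and matches the paper's worst term), so this is a typo in the intermediate step rather than a gap.
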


\begin{proof}[Proof of Proposition \ref{prop mRcom1}]
Relying on \eqref{est 92} and \cite[Equation (5)]{CDS12b}, we compute
\begin{align*}
& \lVert R_{\text{Com1}} \rVert_{C_{t,x,q+1}} \\
&\lesssim \lVert ( \Lambda y_{q}^{\bot} \Upsilon^{\frac{1}{2}} \nabla^{\bot} \cdot y_{q} \Upsilon^{\frac{1}{2}} ) \ast_{x} \phi_{l} - [ \Lambda y_{q}^{\bot} \Upsilon^{\frac{1}{2}} \ast_{x} \phi_{l} \nabla^{\bot} \cdot y_{q} \Upsilon^{\frac{1}{2}} \ast_{x} \phi_{l} ] \rVert_{C_{t,x,q+1}} \\
&+ \lVert ( \Upsilon [ \Lambda y_{q}^{\bot} \ast_{x} \phi_{l} ( \nabla^{\bot} \cdot y_{q} \ast_{x} \phi_{l}) ] ) \ast_{t} \varphi_{l} - \Upsilon_{l} ( [\Lambda y_{q}^{\bot} \ast_{x} \phi_{l} ( \nabla^{\bot} \cdot y_{q} \ast_{x} \phi_{l} ) ] \ast_{t} \varphi_{l} ) \rVert_{C_{t,x,q+1}} \\
&+ \lVert \Upsilon \rVert_{C_{t,q+1}} \lVert [ \Lambda y_{q}^{\bot} \ast_{x} \phi_{l} ( \nabla^{\bot} \cdot y_{q} \ast_{x} \phi_{l} ) ] \ast_{t} \varphi_{l} - \Lambda y_{q}^{\bot} \ast_{x} \phi_{l} \ast_{t} \varphi_{l} ( \nabla^{\bot} \cdot y_{q} \ast_{x} \phi_{l} \ast_{t} \varphi_{l} ) \rVert_{C_{t,x,q+1}}  \\
&\overset{\eqref{est 196} \eqref{est 47}}{\lesssim} l^{2} m_{L}^{8} e^{L^{\frac{1}{4}}} \lambda_{q}^{4} \delta_{q} + l^{1- 4 \delta} m_{L}^{12 - 8 \delta} e^{L^{\frac{1}{4}} (\frac{1}{2} - 2 \delta)} \lambda_{q}^{3- 4 \delta} \delta_{q}^{\frac{5}{4} - \delta} + e^{3L^{\frac{1}{4}}} m_{L}^{16} l^{2} \lambda_{q}^{6} \delta_{q}^{2}
\end{align*}
for any $\delta \in (0, \frac{1}{18})$. The first term can be seen to be dominated by the third using that $\beta \in (\frac{1}{2}, \frac{3}{4})$ from \eqref{bound beta}. For those latter two terms, we have for $\beta > \frac{1}{2}$ sufficiently close to $\frac{1}{2}$ and $a_{0}$ sufficiently large,  
\begin{align*}
& l^{1- 4 \delta} m_{L}^{12 - 8 \delta} e^{L^{\frac{1}{4}} ( \frac{1}{2} - 2 \delta)} \lambda_{q}^{3- 4 \delta}\delta_{q}^{\frac{5}{4} - \delta} + e^{3L^{\frac{1}{4}}} m_{L}^{16} l^{2} \lambda_{q}^{6} \delta_{q}^{2}  \\
\lesssim& a^{\frac{L}{2}} a^{b^{q} [ - \frac{b\alpha}{2} + 3 - 2 \beta]} a^{b(2\beta -1)} + a^{\frac{L}{2}} a^{b^{q} [ -2\alpha b + 6 - 4 \beta]} a^{b2(2\beta -1)} \ll e^{-3L^{\frac{1}{4}}}  \lambda_{q+3} \delta_{q+3}.
\end{align*}
Hence, by these bounds it follows that $R_{\text{Com1}}\ll e^{-3L^{\frac{1}{4}}}  \lambda_{q+3} \delta_{q+3}$ for $\beta > \frac{1}{2}$ sufficiently close to $\frac{1}{2}$ and $a_{0}$ sufficiently large , as required.
\end{proof}

We conclude these estimates with the second commutator error. 
\begin{proposition}\label{prop mRcom2}
$R_{\text{Com2}}$ from \eqref{est 55e} satisfies for $\beta > \frac{1}{2}$ sufficiently close to $\frac{1}{2}$ and $a_{0}$ sufficiently large 
\begin{equation}\label{est 301}
\lVert R_{\text{Com2}} \rVert_{C_{t,x,q+1}}  \ll e^{-3 L^{\frac{1}{4}}} \lambda_{q+3} \delta_{q+3}. 
\end{equation}  
\end{proposition}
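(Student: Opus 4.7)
The plan is to apply the algebraic identity \eqref{est 120} to rewrite the nonlinearity as $(\Lambda y_{q+1})^{\bot}(\nabla^{\bot}\cdot y_{q+1})$ and then decompose via $y_{q+1} = y_l + w_{q+1}$ into four bilinear pieces, each of which I will handle separately. Since $\Upsilon - \Upsilon_l$ depends only on $t$ it commutes with the spatial operator $\mathcal{B}$, and a standard mollifier estimate combined with \eqref{est 196b} yields
\begin{equation*}
\lVert \Upsilon - \Upsilon_l \rVert_{C_{t,q+1}} \lesssim l^{\frac{1}{2}-2\delta} \lVert \Upsilon \rVert_{C_{t,0}^{\frac{1}{2}-2\delta}} \leq l^{\frac{1}{2}-2\delta} m_L^{2},
\end{equation*}
so it suffices to show that each of the four spatial factors, when hit by $\mathcal{B}$, is $\ll l^{-\frac{1}{2}+2\delta}m_L^{-2} e^{-3L^{1/4}}\lambda_{q+3}\delta_{q+3}$ in $C_{t,x,q+1}$.

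For the $y_l$-$y_l$ piece $\mathcal{B}[(\Lambda y_l)^{\bot}(\nabla^{\bot}\cdot y_l)]$, the product has Fourier support in $B(0, 4\lambda_q)$ by \eqref{mult inductive 1a}, so a frequency-localized version of $\mathcal{B}$ effectively gains $\lambda_q^{-1}$, and \eqref{mult inductive 1c} produces a contribution negligible compared with $e^{-3L^{1/4}}\lambda_{q+3}\delta_{q+3}$ after multiplication by $(\Upsilon-\Upsilon_l)$, provided $\beta$ is close to $\tfrac{1}{2}$ and $a_0$ is large, thanks to \eqref{mult bound b}. For the two mixed pieces the product spectra concentrate in the shell $\{\lambda_{q+1}/4 \leq |\xi| \leq 4\lambda_{q+1}\}$ (since $\supp \hat{y}_l \subset B(0, 2\lambda_q)$ while $\supp \hat{w}_{q+1} \subset \{\lambda_{q+1}/2 \leq |\xi| \leq 2\lambda_{q+1}\}$ by \eqref{support wqplus1}), so composing with $\tilde P_{\approx \lambda_{q+1}}$ lets $\mathcal{B}$ gain $\lambda_{q+1}^{-1}$; combining \eqref{mult inductive 1c} with \eqref{est 70a} gives the required smallness.

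The main obstacle is the $w_{q+1}$-$w_{q+1}$ piece $(\Upsilon - \Upsilon_l)\mathcal{B}[(\Lambda w_{q+1})^{\bot}(\nabla^{\bot}\cdot w_{q+1})]$: the product of two $w_{q+1}$ factors populates all frequencies, so $\mathcal{B}$ cannot directly absorb $\lambda_{q+1}^{-1}$, and the naive estimate via \eqref{est 70a} produces $\lambda_{q+1}^{2}\delta_{q+1}$, which is much too large. My plan is to mimic the oscillation decomposition from Proposition \ref{prop mRO}: expand $w_{q+1}$ via \eqref{est 69a} and split into $k+k'=0$ and $k+k'\neq 0$ contributions. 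Because $\bar{a}_{k,j}^{2} = \Upsilon_l^{-1} a_{k,j}^{2}$ by \eqref{est 58a}, Lemma \ref{geometric lemma} applied as in \eqref{est 31} converts the $k+k'=0$ leading term (after passing to the trace-free part) into something proportional to $\Upsilon_l^{-1}\sum_j \chi_j^2 \mathring{R}_{q,j}$; multiplied by $(\Upsilon - \Upsilon_l)$ and using \eqref{est 16}, \eqref{mult inductive 2b}, and $\Upsilon_l^{-1} \leq e^{L^{1/4}}$,
\begin{equation*}
\bigl\lVert (\Upsilon-\Upsilon_l)\Upsilon_l^{-1}\textstyle\sum_{j}\chi_j^{2}\mathring{R}_{q,j}\bigr\rVert_{C_{t,x,q+1}} \lesssim l^{\frac{1}{2}-2\delta} m_L^{2} e^{-2L^{1/4}} \lambda_{q+2}\delta_{q+2}\bar{e}.
\end{equation*}
The remaining $k+k'\neq 0$ terms and commutator remainders are estimated by the same routine as in Proposition \ref{prop mRO} and its appendix extension, producing contributions of order $\Upsilon_l^{-1}\lambda_q\delta_{q+1}\bar{e}$, which after multiplication by $(\Upsilon - \Upsilon_l)$ are also negligible. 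The key exponent budget $b(b-1)(2\beta - 1) < \alpha(\tfrac{1}{2}-2\delta)$, achieved by choosing $\beta > \frac{1}{2}$ sufficiently close to $\frac{1}{2}$ after fixing $b$ via \eqref{mult bound b}, then guarantees that each contribution is strictly less than $e^{-3L^{1/4}}\lambda_{q+3}\delta_{q+3}$, which proves \eqref{est 301}.
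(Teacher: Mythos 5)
Your decomposition is correct, but the paper takes a much more economical route. Instead of expanding $y_{q+1} = y_l + w_{q+1}$ into four bilinear blocks and re-deriving the oscillation cancellation for the $w_{q+1}$-$w_{q+1}$ term from scratch, the paper observes that \eqref{est 55d} yields the exact algebraic identity
\begin{equation*}
\Lambda w_{q+1}^{\bot}(\nabla^{\bot}\cdot w_{q+1}) = \Upsilon_l^{-1}\,\divergence(R_O - \mathring{R}_l),
\end{equation*}
so that, since $\Upsilon - \Upsilon_l$ and $\Upsilon_l^{-1}$ are scalar functions of $t$ alone, applying $\mathcal{B}$ produces directly the three-term decomposition $R_{\text{Com2}} = -(\Upsilon-\Upsilon_l)\Upsilon_l^{-1}\mathring{R}_l + (\Upsilon-\Upsilon_l)\Upsilon_l^{-1}R_O + (\Upsilon-\Upsilon_l)\mathcal{B}[\Lambda w_{q+1}^{\bot}\nabla^{\bot}\cdot y_l + \Lambda y_l^{\bot}\nabla^{\bot}\cdot w_{q+1} + \Lambda y_l^{\bot}\nabla^{\bot}\cdot y_l]$. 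The first term is handled by \eqref{mult inductive 2b}, the second is absorbed outright by the already-proven bound $e^{2L^{1/4}}\lVert R_O\rVert_{C_{t,x,q+1}} \ll e^{-3L^{1/4}}\lambda_{q+3}\delta_{q+3}$ from Proposition \ref{prop mRO} (which was deliberately phrased with the extra $e^{2L^{1/4}}$ so it can dominate the $(\Upsilon-\Upsilon_l)\Upsilon_l^{-1}$ prefactor), and the third is a direct product estimate exactly as you treat the mixed and $y_l$-$y_l$ pieces. Your re-derivation of the geometric-lemma cancellation for the $k+k'=0$ part and the commutator remainders is sound and lands on essentially the same bounds, just less cheaply. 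What the paper's route buys is that the heaviest part of the analysis---the full oscillation machinery---is not repeated: the whole content of the $w$-$w$ block is already contained in $R_O - \mathring{R}_l$.

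One small imprecision worth flagging: for the $y_l$-$y_l$ piece you assert that because the product has Fourier support in $B(0,4\lambda_q)$, the inverse divergence $\mathcal{B}$ "effectively gains $\lambda_q^{-1}$." That is not accurate: the support includes frequencies of order one, and the only uniform bound available is $\lVert\mathcal{B}f\rVert_{C_x}\leq C_0\lVert f\rVert_{C_x}$ from \eqref{est 92b}. The paper's estimate \eqref{est 259} accordingly uses no such gain, and the term $\lambda_1^{2\beta-1}\lambda_q^{2-2\beta}$ there reflects precisely that. The conclusion is unaffected---the mollifier factor $l^{1/2-2\delta}$ from $\Upsilon-\Upsilon_l$ together with \eqref{mult bound b} already suffices to dominate $\lambda_q^2\delta_q$---but the stated reason is wrong and should be corrected.
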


\begin{proof}[Proof of Proposition \ref{prop mRcom2}]
Similarly to \cite[Equations (41)-(42)]{Y23b}, we can write 
\begin{equation}\label{est 260} 
R_{\text{Com2}} = \sum_{k=1}^{3}  R_{\text{Com2,k}} 
\end{equation} 
where
\begin{subequations}\label{est 194}
\begin{align}
& R_{\text{Com2,1}} \triangleq - (\Upsilon - \Upsilon_{l}) \Upsilon_{l}^{-1}  \mathring{R}_{l}, \label{est 194a}\\
& R_{\text{Com2,2}} \triangleq (\Upsilon - \Upsilon_{l}) \Upsilon_{l}^{-1}  R_{O}, \label{est 194b}\\
& R_{\text{Com2,3}} \triangleq  (\Upsilon - \Upsilon_{l})  \mathcal{B}[\Lambda w_{q+1}^{\bot} \nabla^{\bot} \cdot y_{l} + \Lambda y_{l}^{\bot} \nabla^{\bot} \cdot w_{q+1} + \Lambda y_{l}^{\bot} \nabla^{\bot} \cdot y_{l} ].\label{est 194c}
\end{align}
\end{subequations} 
We first estimate for $\delta \in (0, \frac{1}{8})$, $\beta > \frac{1}{2}$ sufficiently close to $\frac{1}{2}$, and $a_{0}$ sufficiently large 
\begin{align}\label{est 258}
\lVert R_{\text{Com2,1}} \rVert_{C_{t,x,q+1}} \overset{\eqref{est 196}}{\lesssim} \lambda_{q+1}^{-\alpha ( \frac{1}{2} - 2 \delta)} m_{L}^{2} e^{L^{\frac{1}{4}}} e^{-3L^{\frac{1}{4}}} \lambda_{q+2} \delta_{q+2} \bar{e} \ll e^{-3L^{\frac{1}{4}}} \lambda_{q+3} \delta_{q+3}.
\end{align}

Second, by consequence of our recent bound on the oscillation $R_{O}$ from Proposition \ref{prop mRO}, we can bound for $\beta > \frac{1}{2}$ sufficiently close to $\frac{1}{2}$ and $a_{0}$ sufficiently large 
\begin{equation}\label{est 193}
\lVert R_{\text{Com2,2}} \rVert_{C_{t,x,q+1}} \overset{\eqref{est 194b}}{\leq} e^{2L^{\frac{1}{4}}} \lVert R_{O} \rVert_{C_{t,x,q+1}}\overset{\eqref{est 84}}{\ll} e^{-3L^{\frac{1}{4}}} \lambda_{q+3} \delta_{q+3}. 
\end{equation}

Third, we estimate for $\delta\in (0,\frac{1}{8})$, $\beta>\frac{1}{2}$ sufficiently small, and $a_{0}$ sufficiently large
\begin{align}
& \lVert R_{\text{Com2,3}} \rVert_{C_{t,x,q+1}}  \nonumber \\
&\lesssim \lVert \Upsilon - \Upsilon_{l} \rVert_{C_{t,x,q+1}} \big\lVert \mathcal{B} \tilde{P}_{\approx \lambda_{q+1}} [ \Lambda w_{q+1}^{\bot} \nabla^{\bot} \cdot y_{l} + \Lambda y_{l}^{\bot} \nabla^{\bot} \cdot w_{q+1} ] + \mathcal{B} ( \Lambda y_{l} \nabla^{\bot} \cdot y_{l}) \big\rVert_{C_{t,x,q+1}} \nonumber \\
&\overset{\eqref{mult inductive 1c} \eqref{est 70a}}{\lesssim}\lambda_{q+1}^{-\alpha(\frac{1}{2} - 2 \delta)} a^{\frac{1}{2} L} \left[ \lambda_{q}^{1-\beta} \lambda_{q+1}^{-\beta} \lambda_{1}^{2\beta -1} + \lambda_{1}^{2\beta -1} \lambda_{q}^{2-2\beta} \right] \ll e^{-3L^{\frac{1}{4}}} \lambda_{q+3} \delta_{q+3}. \label{est 259} 
\end{align}
Applying \eqref{est 258}, \eqref{est 193}, and \eqref{est 259} to \eqref{est 260} gives us \eqref{est 301}. 
\end{proof}

\subsubsection{Control of the Energy}
Finally, we verify the energy control hypothesis \eqref{mult inductive 3}.
\begin{proposition}\label{prop mult control energy}
Define 
\begin{equation}\label{est 93}
\delta E(t) \triangleq \lvert e(t) (1- \lambda_{q+2} \delta_{q+2}) - \Upsilon^{2}(t) \lVert y_{q+1}(t) \rVert_{\dot{H}_{x}^{\frac{1}{2}}}^{2} \rvert. 
\end{equation}
Then, for all $t \in [t_{q+1}, T_{L}]$, 
\begin{equation}\label{mult energy goal}
\delta E(t) \leq \frac{1}{4} \lambda_{q+2} \delta_{q+2} e(t) 
\end{equation}
so that \eqref{mult inductive 3} holds at level $q+1$.
\end{proposition}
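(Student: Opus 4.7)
The strategy mirrors the additive case (Proposition \ref{prop additive control energy}), but now every bookkeeping step is twisted by the factors $\Upsilon$, $\Upsilon^{-1}$, and $\Upsilon_{l}^{\pm 1/2}$ that appear through \eqref{define upsilon} and the definition \eqref{est 58a} of $\bar{a}_{k,j}=\Upsilon_{l}^{-1/2}a_{k,j}$. First, I will write $y_{q+1}=y_{l}+w_{q+1}$ and use the fact that, by the inductive support property \eqref{mult inductive 1a} together with the support condition \eqref{support wqplus1}, one has $\supp\hat{y}_{l}\cap\supp\hat{w}_{q+1}=\emptyset$; this kills the cross term and yields
\begin{equation*}
\Upsilon^{2}(t)\lVert y_{q+1}(t)\rVert^{2}_{\dot{H}^{1/2}_{x}} = \Upsilon^{2}(t)\lVert y_{l}(t)\rVert^{2}_{\dot{H}^{1/2}_{x}}+\Upsilon^{2}(t)\lVert w_{q+1}(t)\rVert^{2}_{\dot{H}^{1/2}_{x}}.
\end{equation*}
Using the definition of $\bar{\gamma}_{q}$ in \eqref{est 58c} I can then substitute
\begin{equation*}
e(t)(1-\lambda_{q+2}\delta_{q+2})=\Upsilon^{2}(t)\lVert y_{q}(t)\rVert^{2}_{\dot{H}^{1/2}_{x}}+4(2\pi)^{2}\Upsilon^{2}(t)\Upsilon_{l}^{-1}(t)\bar{\gamma}_{q}(t),
\end{equation*}
which leaves a main balance between $4(2\pi)^{2}\Upsilon^{2}\Upsilon_{l}^{-1}\bar{\gamma}_{q}$ and the low-frequency part of $\Upsilon^{2}\lVert w_{q+1}\rVert^{2}_{\dot{H}^{1/2}_{x}}$, plus mollification differences and commutator remainders.

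Next, to compute $\Upsilon^{2}\lVert w_{q+1}\rVert^{2}_{\dot{H}^{1/2}_{x}}$ I will replay the argument from \eqref{est 95}--\eqref{est 43}, but with $\tilde{w}_{q+1,j,k}$ defined through $\bar{a}_{k,j}=\Upsilon_{l}^{-1/2}a_{k,j}$. The extra $\Upsilon_{l}^{-1}$ comes outside the Fourier sum, and after applying the geometric identity from Lemma \ref{geometric lemma} together with the trace-free property of $\mathring{R}_{q,j}$ exactly as in \eqref{est 197}--\eqref{est 43}, the leading term reads
\begin{equation*}
\sum_{j,k}\int_{\mathbb{T}^{2}}\lambda_{q+1}\tilde{w}_{q+1,j,k}\cdot\tilde{w}_{q+1,j,-k}\,dx=4(2\pi)^{2}\Upsilon_{l}^{-1}(t)\sum_{j}\chi_{j}^{2}(t)\bar{\rho}_{j},
\end{equation*}
so that multiplying by $\Upsilon^{2}$ exactly produces the $4(2\pi)^{2}\Upsilon^{2}\Upsilon_{l}^{-1}\bar{\gamma}_{l}(\tau_{q+1}j)$ term (from the $\bar{\gamma}_{l}$ part of $\bar{\rho}_{j}$) matching the main $\bar{\gamma}_{q}$ contribution. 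The $\varepsilon_{\gamma}^{-1}\sqrt{l^{2}+\lVert\mathring{R}_{q,j}(\tau_{q+1}j)\rVert^{2}_{C_{x}}}$ part of $\bar{\rho}_{j}$ will be absorbed into the allowable error $\tfrac{1}{4}\lambda_{q+2}\delta_{q+2}e(t)$ precisely because \eqref{mult inductive 2b} carries the factor $e^{-3L^{1/4}}$, which compensates $\Upsilon^{2}\Upsilon_{l}^{-1}\lesssim e^{3L^{1/4}}$; this is the punchline explained in Remark \ref{Remark 5.2} and the reason behind the unusual exponent in the inductive bound on $\mathring{R}_{q}$.

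The rest of the proof is a decomposition $\delta E(t)=|\sum_{k=1}^{5}(\text{E-}k)|$ analogous to \eqref{term E1}--\eqref{term E5}, where: (E-1) is $4(2\pi)^{2}\Upsilon^{2}\Upsilon_{l}^{-1}[\bar{\gamma}_{q}(t)-\sum_{j}\chi_{j}^{2}(t)\bar{\gamma}_{l}(\tau_{q+1}j)]$, which I will split into a pure mollifier-in-time piece and a cutoff-time piece using $\lvert\bar{\gamma}_{l}'(t)\rvert\lesssim l^{-1}\lVert\bar{\gamma}_{q}\rVert_{C_{t,q+1}}$ together with $\tau_{q+1}l^{-1}\ll\lambda_{q+2}\delta_{q+2}e(t)$ as in \eqref{est 104}, and mollifier estimates on $e$, $\lvert\Lambda^{1/2}y_{q}\rvert^{2}$, and on the product $\Upsilon^{2}\Upsilon_{l}^{-1}$ using \eqref{est 196}; (E-2) is $\Upsilon^{2}[\lVert y_{l}\rVert^{2}_{\dot{H}^{1/2}_{x}}-\lVert y_{q}\rVert^{2}_{\dot{H}^{1/2}_{x}}]$, bounded via \eqref{est 47} and \eqref{mult inductive 1c} just as in \eqref{new 47}; (E-3) is the $\sqrt{l^{2}+\lVert\mathring{R}_{q,j}\rVert^{2}_{C_{x}}}$ contribution controlled by $\Upsilon^{2}\Upsilon_{l}^{-1}\cdot e^{-3L^{1/4}}\lambda_{q+2}\delta_{q+2}e(t)$ as in \eqref{est 101}, which gives the critical $\tfrac{1}{6}\lambda_{q+2}\delta_{q+2}e(t)$; (E-4) and (E-5) are the Leray-commutator contributions inherited from \eqref{new 40}, estimated by \cite[Equation (A.17)]{BSV19} together with \eqref{est 82b}, \eqref{est 63b}, and \eqref{est 70a}, all of which acquire only polynomial $m_{L}$ or $e^{L^{1/4}}$ factors that are killed by $\lambda_{q}^{-\beta}$ at the cost of choosing $a_{0}$ large (as in \eqref{est 100}).

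The main obstacle is step (E-1): because $\bar{\gamma}_{q}$ in \eqref{est 58c} carries the stochastic weight $\Upsilon^{-2}\Upsilon_{l}$ (the crucial choice explained in Remark \ref{Remark 5.3}(2)), one must handle the difference $\bar{\gamma}_{q}-\bar{\gamma}_{l}$ together with the time-frozen evaluation $\bar{\gamma}_{l}(\tau_{q+1}j)$ using a careful decomposition that isolates the mollifier action on $\Upsilon^{-2}\Upsilon_{l}$ from that on $e-\Upsilon^{2}\lVert y_{q}\rVert^{2}$; the relevant H\"older estimates on $\Upsilon$ from \eqref{est 196b} are the critical input, and the product structure forces one to split $\bar{\gamma}_{q}-\bar{\gamma}_{l}$ into four pieces analogous to \eqref{new 43a}--\eqref{new 43d} before applying $\lVert\Upsilon\rVert_{C^{1/2-2\delta}_{t,0}}\leq m_{L}^{2}$. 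Once this piece is pinned down, an argument identical to the end of the additive case combines all five contributions into $\delta E(t)\leq\tfrac{1}{6}\lambda_{q+2}\delta_{q+2}e(t)+\iota\lambda_{q+2}\delta_{q+2}e(t)$ for arbitrarily small $\iota$ (taking $a_{0}$ large and $\beta$ close to $\tfrac{1}{2}$), and choosing $\iota\leq\tfrac{1}{12}$ yields \eqref{mult energy goal}.
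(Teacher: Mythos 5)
Your proposal is correct and takes essentially the same approach as the paper: decompose $\delta E$ using \eqref{est 58c}, exploit the disjoint Fourier supports to drop the $y_l$--$w_{q+1}$ cross term, compute $\Upsilon^2\lVert w_{q+1}\rVert^2_{\dot{H}^{1/2}_x}$ via the geometric identity and trace-free property of $\mathring{R}_{q,j}$ to recover $4(2\pi)^2\Upsilon_l^{-1}\sum_j\chi_j^2\bar{\rho}_j$, then estimate five pieces of which the $\bar{\gamma}_q$ mollifier difference is the delicate one. Your telescoping of $\bar{\gamma}_q-\bar{\gamma}_l$ into "four pieces" slightly undercounts what the extra $\Upsilon^{-2}\Upsilon_l$ weight actually produces --- the paper's \eqref{est 107} expands into seven terms to isolate the action of $\ast_t\varphi_l$ on each factor of the triple product $\Upsilon^{-2}\Upsilon_l e$ and separately on $\Upsilon_l\lVert y_q\rVert^2$ --- but you correctly identify that this is the obstacle and that \eqref{est 196b} and the commutator mollifier lemma of \cite[Equation (5)]{CDS12b} are the required inputs, so the gap is one of bookkeeping rather than of substance.
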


\begin{proof}[Proof of Proposition \ref{prop mult control energy}]
We first write using \eqref{est 58c} 
\begin{align}
&\delta E(t) = \left\lvert \bar{\gamma}_{q}(t) 4 (2\pi)^{2} \Upsilon^{2}(t) \Upsilon_{l}^{-1}(t) + \Upsilon^{2}(t) \lVert y_{q}(t) \rVert_{\dot{H}_{x}^{\frac{1}{2}}}^{2} - \Upsilon^{2}(t) \lVert y_{q+1}(t) \rVert_{\dot{H}_{x}^{\frac{1}{2}}}^{2} \right\rvert \label{est 98} \\ 
&= \left\lvert \bar{\gamma}_{q}(t) 4 (2\pi)^{2}  \Upsilon^{2}(t) \Upsilon_{l}^{-1}(t) + \Upsilon^{2}(t)  \int_{\mathbb{T}^{2}} \lvert\Lambda^{\frac{1}{2}} y_{q}(t) \rvert^{2} -  \lvert\Lambda^{\frac{1}{2}} y_{l}(t) \rvert^{2}  - \lvert \Lambda^{\frac{1}{2}} w_{q+1}(t) \rvert^{2} dx \right\rvert \nonumber 
\end{align}
where the last equality used the fact that $\supp \hat{y}_{l} \cap \supp \hat{w}_{q+1} = \emptyset$. Similar reasoning to \eqref{est 95}-\eqref{est 96} gives us 
\begin{align}
&\int_{\mathbb{T}^{2}} \lvert \Lambda^{\frac{1}{2}} w_{q+1} \rvert^{2} dx \label{est 97} \\
=& \sum_{j,k} \int_{\mathbb{T}^{2}} \lambda_{q+1} \tilde{w}_{q+1,j, k} \cdot \tilde{w}_{q+1,j, -k} + \tilde{w}_{q+1, j, k} \chi_{j} \cdot [ \mathbb{P}_{q+1, -k} \Lambda, \bar{a}_{-k, j} \psi_{q+1, j, -k} ] b_{-k} (\lambda_{q+1} x) \nonumber \\
& \hspace{10mm} + \chi_{j} [ \mathbb{P}_{q+1, k}, \bar{a}_{k, j} \psi_{q+1, j, k} ] b_{k} (\lambda_{q+1} x) \cdot \Lambda w_{q+1}dx.   \nonumber 
\end{align}
We additionally can rewrite similarly to \eqref{est 197}
\begin{align*}
& \sum_{j,k} \int_{\mathbb{T}^{2}} \lambda_{q+1} \tilde{w}_{q+1, j, k} \cdot \tilde{w}_{q+1, j, -k} dx  \\
\overset{\eqref{est 69b} \eqref{est 66} \eqref{est 58a}}{=}& \sum_{j,k} \int_{\mathbb{T}^{2}} \lambda_{q+1}\chi_{j}^{2}(t) \left[ \Upsilon_{l}^{-\frac{1}{2}} \lambda_{q+1}^{-\frac{1}{2}} \bar{\rho}_{j}^{\frac{1}{2}} \gamma_{k} \left( \Id - \frac{ \mathring{R}_{q,j}(t,x)}{\bar{\rho}_{j}} \right) \right]^{2}  \Tr (k^{\bot} \otimes k^{\bot}) dx  \\
=& \sum_{j} \int_{\mathbb{T}^{2}} \chi_{j}^{2}(t) \Upsilon_{l}^{-1} \bar{\rho}_{j} 2 \Tr \left( \Id - \frac{\mathring{R}_{q,j}}{\bar{\rho}_{j}} \right) dx 
\end{align*}
and then conclude similarly to \eqref{est 43} that 
\begin{align}
& \sum_{j,k} \int_{\mathbb{T}^{2}} \lambda_{q+1} \tilde{w}_{q+1, j, k} \cdot \tilde{w}_{q+1, j, -k} dx \nonumber \\
\overset{\eqref{est 58b}}{=}&  (2\pi)^{2} 4 \varepsilon_\gamma^{-1} \Upsilon_{l}^{-1} \sum_{j} \chi_{j}^{2} \sqrt{l^{2} + \lVert \mathring{R}_{q,j} (\tau_{q+1} j) \rVert_{C_{x}}^{2}} + 4 (2\pi)^{2} \sum_{j} \chi_{j}^{2} \Upsilon_{l}^{-1} \bar{\gamma}_{l} (\tau_{q+1} j). \label{est 99}
\end{align}
Therefore, combining \eqref{est 98}, \eqref{est 97}, and \eqref{est 99}, we obtain 
\begin{align}
\delta E&(t)  =  \Big\lvert \bar{\gamma}_{q}(t) 4 (2\pi)^{2}\Upsilon^{2}(t) \Upsilon_{l}^{-1}(t) - \Upsilon^{2} (t) 4 (2\pi)^{2} \sum_{j} \chi_{j}^{2}(t) \Upsilon_{l}^{-1} (t) \bar{\gamma}_{l} (\tau_{q+1} j) \label{term F1} \tag{F-1} \\
& + \Upsilon^{2} (t) \int_{\mathbb{T}^{2}} \lvert \Lambda^{\frac{1}{2}} y_{q}(t) \rvert^{2} - \lvert \Lambda^{\frac{1}{2}} y_{l} (t) \rvert^{2} dx  \label{term F2} \tag{F-2} \\
& - \Upsilon^{2}(t) (2\pi)^{2} 4 \varepsilon_\gamma^{-1}  \Upsilon_{l}^{-1}(t)\sum_{j} \chi_{j}^{2} (t) \sqrt{ l^{2} + \lVert \mathring{R}_{q,j} (\tau_{q+1} j) \rVert_{C_{x}}^{2}} \label{term F3} \tag{F-3}\\
& - \Upsilon^{2}(t) \sum_{j,k} \int_{\mathbb{T}^{2}} \tilde{w}_{q+1, j, k}(t) \chi_{j}(t)  \cdot [ \mathbb{P}_{q+1, -k} \Lambda, \bar{a}_{-k,j}\psi_{q+1, j, -k} ] b_{-k} (\lambda_{q+1} x) dx \label{term F4} \tag{F-4}\\
& - \Upsilon^{2}(t) \sum_{j,k} \int_{\mathbb{T}^{2}} \chi_{j}(t) [ \mathbb{P}_{q+1, k}, \bar{a}_{k,j} \psi_{q+1, j, k} ] b_{k} (\lambda_{q+1} x) \cdot \Lambda w_{q+1}(t) dx \Big\rvert. \label{term F5} \tag{F-5}
\end{align}
We begin with the difficult term, which is \eqref{term F1}. We compute using \eqref{est 18} 
\begin{align}
& \lvert \bar{\gamma}_{q}(t) 4(2\pi)^{2} \Upsilon^{2}(t) \Upsilon_{l}^{-1}(t) - \Upsilon^{2}(t) 4(2\pi)^{2} \sum_{j} \chi_{j}^{2}(t) \Upsilon_{l}^{-1}(t) \bar{\gamma}_{l} (\tau_{q+1} j) \rvert \label{est 111} \\
=& 4 (2\pi)^{2} \Upsilon^{2}(t) \Upsilon_{l}^{-1}(t) \left\lvert \bar{\gamma}_{q} (t) - \bar{\gamma}_{l} (t) + \sum_{j} \chi_{j}^{2} (t) [ \bar{\gamma}_{l} (t) - \bar{\gamma}_{l} (\tau_{q+1} j) ] \right\rvert \leq 4(2\pi)^{2}  ( \RomanIII_{1} + \RomanIII_{2}), \nonumber 
\end{align}  
defining 
\begin{equation}\label{est 108}
\RomanIII_{1} \triangleq e^{3L^{\frac{1}{4}}}\lvert \bar{\gamma}_{q}(t) - \bar{\gamma}_{l} (t) \rvert \hspace{1mm} \text{ and } \hspace{1mm} \RomanIII_{2} \triangleq e^{3L^{\frac{1}{4}}}\sum_{j} \chi_{j}^{2}(t) \lvert \bar{\gamma}_{l}(t) -\bar{\gamma}_{l} (\tau_{q+1} j) \rvert. 
\end{equation} 
We will estimate the two $\RomanIII_{1}$ and $\RomanIII_{2}$ separately. We rewrite $\RomanIII_{1}$ using \eqref{est 58}: 
\begin{align}
\RomanIII_{1} =& \frac{e^{3L^{\frac{1}{4}}}}{4(2\pi)^{2}} \Big[ (1- \lambda_{q+2} \delta_{q+2}) [ ( \Upsilon^{-2} - \Upsilon^{-2} \ast_{t} \varphi_{l}) (t) \Upsilon_{l}(t) e(t) \nonumber \\
& \hspace{40mm} + (\Upsilon^{-2} \ast_{t} \varphi_{l} )(t) ( \Upsilon_{l} - (\Upsilon_{l} \ast_{t} \varphi_{l} )) (t) e(t) \nonumber \\
& \hspace{40mm} + (\Upsilon^{-2} \ast_{t} \varphi_{l} )(t) (\Upsilon_{l} \ast_{t} \varphi_{l})(t) ( e - e\ast_{t} \varphi_{l}) (t) \nonumber \\
& \hspace{40mm} + ( \Upsilon^{-2} \ast_{t}\varphi_{l}) (\Upsilon_{l} \ast_{t} \varphi_{l} ) (e \ast_{t} \varphi_{l}) (t) - (\Upsilon^{-2} \Upsilon_{l} e) \ast_{t} \varphi_{l} ] \nonumber \\  
& \hspace{5mm} - (\Upsilon_{l} - \Upsilon_{l} \ast_{t} \varphi_{l})(t) \lVert y_{q} (t) \rVert_{\dot{H}_{x}^{\frac{1}{2}}}^{2} \nonumber \\
& \hspace{5mm} - (\Upsilon_{l} \ast_{t} \varphi_{l}) \left( \int_{\mathbb{T}^{2}} \lvert \Lambda^{\frac{1}{2}} y_{q} (t) \rvert^{2} dx - \int_{\mathbb{T}^{2}} \lvert \Lambda^{\frac{1}{2}} y_{q} \rvert^{2} \ast_{t} \varphi_{l} (t) dx \right)  \nonumber \\
& \hspace{5mm} - \left( ( \Upsilon_{l} \ast_{t} \varphi_{l}) \left( \int_{\mathbb{T}^{2}} \lvert \Lambda^{\frac{1}{2}} y_{q} \rvert^{2} \ast_{t} \varphi_{l} dx \right) - \left( \Upsilon_{l} \lVert y_{q} \rVert_{\dot{H}_{x}^{\frac{1}{2}}}^{2} \right)\ast_{t} \varphi_{l} \right) \Big]. \label{est 107}
\end{align}
We can estimate each part of this separately. First, we simply bound $\lvert 1- \lambda_{q+2} \delta_{q+2}  \rvert \leq 1$. Second, we bound for any $\delta \in (0, \frac{1}{8})$, 
\begin{subequations}\label{est 269}
\begin{align}
&\lvert ( \Upsilon^{-2} - \Upsilon^{-2} \ast_{t} \varphi_{l}) (t) \Upsilon_{l}(t) e(t) \rvert \overset{\eqref{est 196}}{\lesssim} l^{\frac{1}{2} - 2 \delta} m_{L}^{2} e^{2L^{\frac{1}{4}}} \bar{e}, \label{est 269a}\\
& \lvert ( \Upsilon^{-2} \ast_{t} \varphi_{l})(t) ( \Upsilon_{l} - ( \Upsilon_{l} \ast_{t} \varphi_{l})) (t) e(t) \rvert \lesssim e^{2L^{\frac{1}{4}}} l^{\frac{1}{2} - 2 \delta} m_{L}^{2}\bar{e}, \label{est 269b}\\
& \lvert ( \Upsilon^{-2} \ast_{t} \varphi_{l}) (\Upsilon_{l} \ast_{t} \varphi_{l}) (t) (e - e \ast_{t} \varphi_{l} )(t) \rvert \lesssim e^{3L^{\frac{1}{4}}} l \lVert e' \rVert_{C_{x}} \overset{\eqref{est 118 mult}}{\lesssim} e^{3L^{\frac{1}{4}}} l \tilde{e}, \label{est 269c}
\end{align}
\end{subequations} 
where we used the fact that $t \in [t_{q+1}, T_{L}] \subset [-2, L]$ and \eqref{est 118 mult}. Third, using \cite[Equation (5) on p. 88]{CDS12b} we can estimate 
\begin{align*}
&\lvert ( \Upsilon^{-2} \ast_{t} \varphi_{l}) ( \Upsilon_{l} \ast_{t} \varphi_{l}) ( e \ast_{t} \varphi_{l}) (t) - ( \Upsilon^{-2} \Upsilon_{l} e) \ast_{t} \varphi_{l} (t) \rvert \\
\leq& \left\lvert [ ( \Upsilon^{-2} \ast_{t} \varphi_{l}) (\Upsilon_{l} \ast_{t} \varphi_{l}) - (\Upsilon^{-2} \Upsilon_{l}) \ast_{t} \varphi_{l}] (e \ast_{t} \varphi_{l})  \right\rvert  \\
&+ \left\lvert ( ( \Upsilon^{-2} \Upsilon_{l}) \ast_{t} \varphi_{l}) (e \ast_{t} \varphi_{l}) - ( \Upsilon^{-2} \Upsilon_{l} e) \ast_{t} \varphi_{l} \right\rvert \\
\lesssim& l^{2(\frac{1}{2} - 2 \delta)} \left(\lVert \Upsilon^{-2} \rVert_{C_{t}^{\frac{1}{2} - 2 \delta}} \lVert \Upsilon_{l} \rVert_{C_{t}^{\frac{1}{2} - 2 \delta}}  \bar{e}+  \lVert \Upsilon^{-2} \Upsilon_{l} \rVert_{C_{t}^{\frac{1}{2} - 2 \delta}} \lVert e \rVert_{C_{t}^{\frac{1}{2} - 2 \delta}} \right) \lesssim l^{1 - 4 \delta}  m_{L}^{4} e^{L^{\frac{1}{4}}} (\bar{e} + \tilde{e}).
\end{align*}
Fourth, 
\begin{equation}
\lvert (\Upsilon_{l} - \Upsilon_{l} \ast_{t} \varphi_{l}) \lVert y_{q} (t) \rVert_{\dot{H}_{x}^{\frac{1}{2}}}^{2} \lesssim l^{\frac{1}{2} - 2 \delta} \lVert \Upsilon_{l} \rVert_{C_{t}^{\frac{1}{2} - 2 \delta}} \lambda_{q} \lVert y_{q} \rVert_{C_{t,x,q+1}}^{2}\lesssim l^{\frac{1}{2} - 2 \delta} m_{L}^{10} \lambda_{q} \bar{e}. 
\end{equation} 
Fifth, similarly to \eqref{est 102}, 
\begin{align*}
& \left\lvert ( \Upsilon_{l} \ast_{t} \varphi_{l})(t) \left\lvert \int_{\mathbb{T}^{2}} \lvert \Lambda^{\frac{1}{2}} y_{q}(t) \rvert^{2} dx - \int_{\mathbb{T}^{2}} \lvert \Lambda^{\frac{1}{2}} y_{q} \rvert^{2} \ast_{t} \varphi_{l}(t) dx  \right\rvert  \right\rvert  \\
\lesssim& e^{L^{\frac{1}{4}}} l \lVert \Lambda y_{q} \rVert_{C_{t,x,q+1}} \lVert \partial_{t} y_{q} \rVert_{C_{t,x,q+1}} \overset{\eqref{mult inductive 1c} \eqref{est 47}}{\lesssim}l e^{2L^{\frac{1}{4}}} m_{L}^{12} \lambda_{q}^{3} \delta_{q}^{\frac{3}{2}} \bar{e}^{\frac{3}{2}}.
\end{align*}
Sixth, by \cite[Equation (5)]{CDS12b}, \eqref{mult inductive 1}, and \eqref{est 47} 
\begin{align}
&\left\lvert ( \Upsilon_{l} \ast_{t} \varphi_{l}) \left( \int_{\mathbb{T}^{2}} \lvert \Lambda^{\frac{1}{2}} y_{q} \rvert^{2} \ast_{t} \varphi_{l} dx \right) - \left( \Upsilon_{l} \lVert y_{q} \rVert_{\dot{H}_{x}^{\frac{1}{2}}}^{2} \right) \ast_{t} \varphi_{l} \right\rvert \\
\overset{\eqref{est 196}}{\lesssim}& l^{1 - 4 \delta} m_{L}^{2} \lambda_{q}\left( \lVert y_{q} \rVert_{C_{t,x,q+1}}^{\frac{1}{2} + 2 \delta} \lVert y_{q} \rVert_{C_{t,q+1}^{1}C_{x}}^{\frac{1}{2} - 2 \delta} \right)^{2} \lesssim l^{1- 4 \delta} m_{L}^{14 - 6 \delta} \lambda_{q}^{3- 8 \delta}\delta_{q}^{1 - 4\delta}  \bar{e}^{\frac{3}{2} - 2\delta} e^{L^{\frac{1}{4}} (1- 4 \delta)}. \nonumber 
\end{align}
Applying these estimates to \eqref{est 107} and making use of the fact that $e(t) \geq \ushort{e}$ lead us to, for $\delta \in (0, \frac{1}{8})$, $\beta > \frac{1}{2}$ sufficiently close to $\frac{1}{2}$ and $a_{0}$ sufficiently large, 
\begin{align}
\RomanIII_{1} \lesssim a^{L} \Big[ \lambda_{q+1}^{-\frac{\alpha}{4}} \lambda_{q} +  \lambda_{q+1}^{-\alpha}   \lambda_{q}^{3-3\beta} \lambda_{1}^{(2\beta -1) \frac{3}{2}} + \lambda_{q+1}^{-\frac{\alpha}{2}} \lambda_{q}^{3-\beta} \lambda_{1}^{(2\beta -1)\frac{1}{2}} \Big] \overset{\eqref{mult bound b}}{\ll}\lambda_{q+2} \delta_{q+2} e(t). \label{est 109}
\end{align}
Furthermore, we quickly see that just like \eqref{est 104} that we can estimate from \eqref{est 108} 
\begin{align}
\RomanIII_{2}  \overset{\eqref{est 198} }{\lesssim} e^{6L^{\frac{1}{4}}} \tau_{q+1} l^{-1} \bar{e} \overset{\eqref{tau} \eqref{bound a}}{\lesssim}  a^{L} \lambda_{q+1}^{\frac{\alpha}{2} - \frac{1}{2} - \frac{\beta}{2}} \lambda_{q+2}^{-\frac{1}{2} + \beta} \lambda_{1}^{(2\beta -1) (-\frac{1}{4})}  \ll \lambda_{q+2} \delta_{q+2} e(t). \label{est 110}
\end{align}
Applying \eqref{est 109} and \eqref{est 110} thus allows us to conclude for our estimate \eqref{est 111} of term \eqref{term F1} that for $\beta > \frac{1}{2}$ sufficiently close to $\frac{1}{2}$ and $a_{0}$ sufficiently large 
\begin{align}
&\lvert \bar{\gamma}_{q}(t) 4(2\pi)^{2} \Upsilon^{2}(t) \Upsilon_{l}^{-1}(t)   \nonumber \\
& \hspace{20mm} - \Upsilon^{2}(t) 4(2\pi)^{2} \sum_{j} \chi_{j}^{2}(t) \Upsilon_{l}^{-1}(t) \bar{\gamma}_{l} (\tau_{q+1} j) \rvert  \ll \lambda_{q+2}\delta_{q+2} e(t).  \label{est 233}
\end{align} 

Now, for the term \eqref{term F2}, we have for $\beta > \frac{1}{2}$ sufficiently close to $\frac{1}{2}$ and $a_{0}$ sufficiently large, making use of the fact that $e(t) \geq \ushort{e}$, 
\begin{align}
& \Upsilon^{2}(t) \left\lvert \int_{\mathbb{T}^{2}} \lvert \Lambda^{\frac{1}{2}} y_{q}(t) \rvert^{2} - \lvert \Lambda^{\frac{1}{2}} y_{l} (t) \rvert^{2} dx \right\rvert  \nonumber \\
\lesssim& e^{2L^{\frac{1}{4}}} \lVert \Lambda^{\frac{1}{2}} y_{q}(t) \rVert_{L_{x}^{2}} \big[ l \lVert \Lambda^{\frac{1}{2}} y_{q} \rVert_{C_{t,q+1} C_{x}^{1}} + l \lVert \Lambda^{\frac{1}{2}} y_{q} \rVert_{C_{t,q+1}^{1}C_{x}} \big] \nonumber \\
\overset{\eqref{mult inductive 1c}}{\lesssim}&  e^{2L^{\frac{1}{4}}}  \lambda_{q}^{\frac{1}{2}} m_{L}^{4} \bar{e}^{\frac{1}{2}} l \left[ \lambda_{q}^{2} \delta_{q} m_{L}^{8} e^{L^{\frac{1}{4}}} \bar{e} \right] \lesssim a^{L}  \lambda_{q+1}^{-\alpha} \lambda_{1}^{2\beta -1} \lambda_{q}^{3-2\beta} \overset{\eqref{mult bound b}}{\ll} \lambda_{q+2}\delta_{q+2} e(t). \label{est 234}
\end{align}

Next, we estimate \eqref{term F3} for $\beta > \frac{1}{2}$ sufficiently close to $\frac{1}{2}$ and $a_{0}$ sufficiently large, 
\begin{align}
& \Upsilon^{2} (t) (2\pi)^{2} 4 \varepsilon_\gamma^{-1} \Upsilon_{l}^{-1}(t)  \sum_{j} \chi_{j}^{2} (t) \sqrt{ l^{2} + \lVert \mathring{R}_{q,j} (\tau_{q+1} j) \rVert_{C_{x}}^{2}}  \nonumber \\
\overset{\eqref{est 60}}{\leq} & \Upsilon^{2}(t) (2\pi)^{2} 4 \varepsilon_\gamma^{-1}\Upsilon_{l}^{-1}(t) \sum_{j} \chi_{j}^{2}(t) \sqrt{l^{2} + \lVert  \mathring{R}_{q} \ast_{t} \varphi_{l} ( \tau_{q+1} j)\rVert_{C_{x}}^{2}}  \nonumber \\ 
\overset{\eqref{est 18}  \eqref{mult inductive 2b}}{\leq}& e^{3L^{\frac{1}{4}}} (2\pi)^{2} 4 \varepsilon_\gamma^{-1} \lambda_{q+1}^{-\alpha} + \frac{1}{8} \lambda_{q+2} \delta_{q+2} e(t)  \overset{\eqref{mult bound b}}{\leq} \frac{1}{6} \lambda_{q+2}\delta_{q+2} e(t). \label{est 235}
\end{align}

Finally, for the last terms \eqref{term F4} and \eqref{term F5} we estimate by relying on \cite[Equation (A.17)]{BSV19} for $\beta > \frac{1}{2}$ sufficiently close to $\frac{1}{2}$ and $a_{0}$ sufficiently large,  
\begin{align}
& \Upsilon^{2} (t) \sum_{j,k} \Big[\big\lvert \int_{\mathbb{T}^{2}} \tilde{w}_{q+1, j,k} \chi_{j} (t) \cdot [\mathbb{P}_{q+1, -k} \Lambda, \bar{a}_{-k,j} \psi_{q+1, j, -k} ] b_{-k} (\lambda_{q+1} x) dx \big\rvert  \label{est 236}\\
& \hspace{20mm} + \big\lvert \int_{\mathbb{T}^{2}} \chi_{j}(t) [ \mathbb{P}_{q+1, k} \bar{a}_{k,j} \psi_{q+1, j, k} ] b_{k} (\lambda_{q+1} x) \cdot \Lambda w_{q+1} (t) dx \big\rvert \Big] \nonumber \\
&\overset{\eqref{est 69b} \eqref{est 70a} \eqref{est 82d}}{\lesssim} \Upsilon^{2}(t) \sum_{j,k} 1_{\supp \chi_{j}}(t) \Big[ \lVert \bar{a}_{k,j} \rVert_{C_{t,x,q+1}} \big( e^{2L^{\frac{1}{4}}} \delta_{q+1}^{\frac{1}{2}} \bar{e}^{\frac{1}{2}} \lambda_{q} \big)  \nonumber \\
& \hspace{20mm} + (e^{2L^{\frac{1}{4}}} \delta_{q+1}^{\frac{1}{2}} \bar{e}^{\frac{1}{2}} \lambda_{q}) ( e^{2 L^{\frac{1}{4}}} \bar{e}^{\frac{1}{2}} \delta_{q+1}^{\frac{1}{2}} ) \Big] \overset{\eqref{est 63b}}{\lesssim} a^{L} \lambda_{q} \delta_{q+1} \bar{e} \overset{\eqref{mult bound b}}{\ll}   \lambda_{q+2} \delta_{q+2} e(t). \nonumber 
\end{align}
Applying \eqref{est 233}, \eqref{est 234}, \eqref{est 235}, and \eqref{est 236} to \eqref{term F1}-\eqref{term F5} shows that for any $\iota > 0$, 
\begin{equation*} 
\delta E(t) \leq \frac{1}{6} \lambda_{q+2} \delta_{q+2} e(t) + \iota \lambda_{q+2} \delta_{q+2} e(t).
\end{equation*} 
Taking $\iota \leq \frac{1}{12}$ verifies \eqref{mult energy goal} and completes the proof of Proposition \ref{prop mult control energy}.
\end{proof}

\subsubsection{Proof of Proposition \ref{mult q to qplus1}}\label{proof mult q to qplus1}
\begin{proof}
All the necessary inductive hypotheses \eqref{mult inductive 1}, \eqref{mult inductive 2}, and \eqref{mult inductive 3} are satisfied from the statements of Propositions \ref{prop mult amplitudes}-\ref{prop mult control energy}. The details are identical to Section \ref{proof additive q to qplus1} in the additive case and previous works (e.g. \cite{HZZ19, HZZ21a, Y23a}) and hence omitted. 
\end{proof}

We are now ready to conclude the proof of Theorem \ref{Theorem 1.3} 
\begin{proof}[Proof of Theorem \ref{Theorem 1.3}]
For any fixed $T > 0$ and $\kappa \in (0,1)$, we take $L > 1$ sufficiently large and define $\mathfrak{t} \triangleq T_{L}$ to deduce \eqref{est 50} thanks to \eqref{limit as l to infinity}.  With such an $L > 1$ fixed, we take an arbitrary $e\in C_b^1(\mathbb{R};[\ushort{e},\infty))$ that satisfies \eqref{est 118 mult}. Then using such $e$, starting from $(y_{0}, \mathring{R}_{0}) = (0,0)$ as in Proposition \ref{mult step q0}, by Proposition \ref{mult q to qplus1}, we can obtain a family of solutions $\{ y_{q}, \mathring{R}_{q}\}_{q \in\mathbb{N}_{0}}$ to \eqref{mult convPDE} that satisfy the inductive hypotheses \eqref{mult inductive 1}-\eqref{mult inductive 3} and the Cauchy difference \eqref{lm cauchy}. Therefore, for all $\beta' \in (\frac{1}{2}, \beta)$, 
\begin{align*}
\sum_{q \geq 0} \lVert y_{q+1} (t) - y_{q} (t) \rVert_{C_{x}^{\beta'}} \lesssim& \sum_{q\geq 0} \lVert y_{q+1} (t) - y_{q}(t) \rVert_{C_{x}}^{1-\beta'} \lVert y_{q+1}(t) - y_{q}(t) \rVert_{C_{x}^{1}}^{\beta'} \nonumber \\
\overset{\eqref{lm cauchy} \eqref{mult inductive 1c} }{\lesssim}& e^{2L^{\frac{1}{4}} (1-\beta')} M_{0}^{\frac{1}{2}} \bar{e}^{\frac{1}{2}} m_{L}^{4\beta'} a^{b(\beta - \frac{1}{2})} \sum_{q\geq 0} a^{b(q+1) (\beta' - \beta)} < \infty.
\end{align*}
Similarly, we compute the temporal regularity: for all $\eta \in [0, \frac{\beta}{2-\beta})$ and all $q \in \mathbb{N}$, 
\begin{align*}
\lVert y_{q} - y_{q-1} \rVert_{C_{t}^{\eta} C_{x}} \lesssim& \lVert y_{q} - y_{q-1} \rVert_{C_{t,x}}^{1-\eta} \lVert y_{q} - y_{q-1} \rVert_{C_{t}^{1}C_{x}^{0}}^{\eta} \\
\overset{\eqref{lm cauchy} \eqref{est 47}}{\lesssim}& M_{0}^{\frac{1+ \eta}{2}} \bar{e}^{\frac{1+\eta}{2}} \lambda_{1}^{(2\beta -1) (\frac{1+\eta}{2})} \lambda_{q}^{-\beta (1+ \eta) + 2\eta}  e^{(2-\eta) L^{\frac{1}{4}}} m_{L}^{8\eta} 
\end{align*}
which vanishes as $q\nearrow \infty$; therefore, $\{y_{q}\}_{q \in \mathbb{N}_{0}}$ is Cauchy in $C( [ 0, \mathfrak{t}]; C_{x}^{\beta'}) \cap C^{\eta} ([0, \mathfrak{t}]; C_{x})$. It follows that we get a weak solution 
\begin{equation*}
y \triangleq \lim_{q\to\infty} y_{q} \in C([0,\mathfrak{t}]; C_{x}^{\beta'}) \cap C^{\eta} ( [0, \mathfrak{t}]; C_{x}) 
\end{equation*} 
to \eqref{est 265} for which there exists a deterministic constant $C_{L} < \infty$ such that 
\begin{equation}\label{est 163}
\lVert y \rVert_{C_{T_{L}} C_{x}^{\beta'} } + \lVert y \rVert_{C_{T_{L}}^{\eta} C_{x}} \leq C_{L} \hspace{3mm} \forall  \hspace{1mm} \beta' \in \left(\frac{1}{2}, \beta \right), \hspace{1mm} \forall \hspace{1mm} \eta \in \left[0, \frac{\beta}{2-\beta} \right).
\end{equation} 
Taking $q \nearrow \infty$ in \eqref{mult inductive 2b} shows that $\lVert \mathring{R}_{q} \rVert_{C_{t,x,q}} \to 0$ so that $v=\Upsilon y$ is a weak solution to \eqref{mult msqg} that satisfies \eqref{est 119}. Additionally, taking $q \nearrow \infty$ in \eqref{mult inductive 3} gives us the energy identity \eqref{energy}. Finally, the proof of non-uniqueness is identical to that of Theorem \ref{Theorem 1.1}. This completes the proof of Theorem \ref{Theorem 1.3}. 
\end{proof}

\section{Proof of Theorem \ref{Theorem 1.4}}\label{Section 6}
\subsection{Probabilistically Weak Solution}\label{Section 6.1}
The proof of Theorem \ref{Theorem 1.4} has some similarities to the proof of Theorem \ref{Theorem 1.2}; we sketch the proof elaborating on important differences. We start with a definition of a probabilistically weak solution to \eqref{general msqg} with linear multiplicative noise so that $G(v) dB = v dB$ that relies on the definitions of an $(\bar{\mathcal{B}}_{t}^{0})_{t\geq s}$-supermartingale, regular and exceptional time from Definition \ref{Definition 3}.   
\begin{define}\label{Definition 8}  
Fix $\iota \in (0,1)$ from Theorem \ref{Theorem 1.3}. Let $s \geq 0$, $\xi^{\text{in}} \in \dot{H}_{\sigma}^{\frac{1}{2}}$, and $\theta^{\text{in}} \in U_{1}$. Then $P \in \mathcal{P} (\bar{\Omega}_{0})$ is a probabilistically weak solution to \eqref{general msqg} where $G(v) dB = v dB$ with initial data $(\xi^{\text{in}}, \theta^{\text{in}})$ at initial time $s$ if 
\begin{enumerate}[wide=0pt] 
\item [](M1) $P (\{ \xi(t) = \xi^{\text{in}}, \theta(t) = \theta^{\text{in}} \hspace{1mm} \forall \hspace{1mm} t \in [0,s] \}) = 1$ and 
\begin{equation}
P \left( \{ (\xi,\theta) \in L_{\text{loc}}^{\infty} ([0,\infty); \dot{H}_{\sigma}^{\frac{1}{2}} \times U_{1} ) \cap L_{\text{loc}}^{2} ([0,\infty); \dot{H}_{\sigma}^{\frac{1}{2} + \iota} \times U_{1} ) \} \right) = 1, 
\end{equation} 
\item [](M2) under $P$, $\theta$ is a cylindrical $(\bar{\mathcal{B}}_{t})_{t\geq s}$-Wiener process on $U$ starting from initial data $\theta^{\text{in}}$ at initial time $s$ and for all $\psi^{k} \in C^{\infty} (\mathbb{T}^{2}) \cap \dot{H}_{\sigma}^{\frac{1}{2}}$ and $t\geq s$,  
\begin{align}
&\langle \xi(t) - \xi(s), \psi^{k} \rangle - \int_{s}^{t} \sum_{i,j=1}^{2} \langle \Lambda \xi_{i}, \partial_{i} \psi_{j}^{k} \xi_{j} \rangle_{\dot{H}_{x}^{-\frac{1}{2}} -\dot{H}_{x}^{\frac{1}{2}}} \nonumber \\
& \hspace{10mm} - \frac{1}{2} \langle \partial_{i} \xi_{j}, [\Lambda,  \psi_{i}^{k}] \xi_{j} \rangle_{\dot{H}_{x}^{-\frac{1}{2}}-\dot{H}_{x}^{\frac{1}{2}}} - \langle \xi, \Lambda^{\gamma} \psi^{k} \rangle dr = \int_{s}^{t} \langle \psi^{k}, G(\xi(r)) d\theta(r) \rangle, 
\end{align}
\item [](M3) for any $q \in \mathbb{N}$, there exists $C_{q} \geq 1$ such that the process $\{E^{q} (t) - E^{q} (s) \}_{t \geq s}$ with $E^{q}$ defined by    
\begin{align}\label{est 125} 
E^{q} (t) \triangleq \lVert \xi(t) \rVert_{\dot{H}_{x}^{\frac{1}{2}}}^{2q} e^{-C_{q} t} + \int_{0}^{t} 2q e^{-C_{q} r} \lVert \xi(r) \rVert_{\dot{H}_{x}^{\frac{1}{2}}}^{2q-2} \lVert \xi(r) \rVert_{\dot{H}_{x}^{\frac{1}{2} + \iota}}^{2} dr 
\end{align} 
is an a.s. $(\bar{\mathcal{B}}_{t}^{0})_{t\geq s}$-supermartingale under $P$ and $s$ is a regular time of $E^{q}$. 
\end{enumerate} 
The set of all such martingale solutions with the same constant $C_{q}$ in \eqref{est 125} will be denoted by $\mathcal{W} ( s, \xi^{\text{in}}, \theta^{\text{in}}, C_{q} )$; for brevity we write $\mathcal{W} (\xi^{\text{in}}, \theta^{\text{in}}, C_{q} ) \triangleq \mathcal{W} (\xi^{\text{in}}, \theta^{\text{in}}, C_{q} )\rvert_{s=0}$.  Given a probabilistically weak solution $P$, we define the exceptional times of $P$ as $T_{P}  \triangleq \cup_{q \in \mathbb{N}} \{\text{exceptional times of } E^{q}  \}$. 
\end{define} 

Next, we define a probabilistically weak solution up to a stopping time. 
\begin{define}\label{Definition 9} 
Fix $\iota \in (0,1)$ from Theorem \ref{Theorem 1.3}. Let $s \geq 0$, $\xi^{\text{in}} \in \dot{H}_{\sigma}^{\frac{1}{2}}$, and $\theta^{\text{in}} \in U_{1}$. Let $\tau: \bar{\Omega}_{0} \mapsto [0,\infty]$ be a $(\bar{\mathcal{B}}_{t})_{t\geq s}$-stopping time and set 
\begin{equation*}
\bar{\Omega}_{0,\tau} \triangleq \{\omega( \cdot \wedge \tau(\omega)): \omega \in \bar{\Omega}_{0} \} = \{ \omega \in \bar{\Omega}_{0}: (\xi, \theta) (t,\omega) = (\xi, \theta) (t\wedge \tau(\omega), \omega)  \hspace{1mm} \forall \hspace{1mm} t \geq 0\}, 
\end{equation*} 
which is a Borel subset of $\bar{\Omega}_{0}$ due to Borel measurability of $\tau$ so that $\mathcal{P} (\bar{\Omega}_{0,\tau}) \subset \mathcal{P} ( \bar{\Omega}_{0})$. Then $P \in \mathcal{P} (\bar{\Omega}_{0,\tau})$ is a probabilistically weak solution to \eqref{general msqg} where $G(v) dB = v dB$ on $[s,\tau]$ with initial data $(\xi^{\text{in}}, \theta^{\text{in}})$ at initial time $s$ if 
\begin{enumerate}[wide=0pt]
\item [](M1) $P (\{ \xi(t) = \xi^{\text{in}}, \theta(t) = \theta^{\text{in}} \hspace{1mm} \forall \hspace{1mm} t \in [0,s] \}) = 1$ and 
\begin{equation*}
P \left( \{  (\xi, \theta) \in L_{\text{loc}}^{\infty} ([0, \tau]; \dot{H}_{\sigma}^{\frac{1}{2}} \times U_{1}) \cap L_{\text{loc}}^{2} ([0,\tau]; \dot{H}_{\sigma}^{\frac{1}{2} + \iota} \times U_{1} ) \} \right) = 1, 
\end{equation*} 
\item [](M2) under $P$, $\langle \theta (\cdot \wedge \tau), l^{i} \rangle_{U}$ where $\{l^{i}\}_{i\in\mathbb{N}}$ is an orthonormal basis of $U$, is a continuous, square-integrable $(\bar{\mathcal{B}}_{t})_{t\geq s}$-martingale with initial data $\langle \theta^{\text{in}}, l^{i}  \rangle$ at initial time $s$ with a quadratic variation process given by $(t\wedge \tau - s) \lVert l^{i} \rVert_{U}^{2}$ and for every $\psi^{k} \in C^{\infty} (\mathbb{T}^{2}) \cap \dot{H}_{\sigma}^{\frac{1}{2}}$ and $t \geq s$,   
\begin{align*}
&\langle \xi(t \wedge \tau) - \xi(s), \psi^{k} \rangle - \int_{s}^{t\wedge \tau} \sum_{i,j=1}^{2} \langle \Lambda \xi_{i}, \partial_{i} \psi_{j}^{k} \xi_{j} \rangle_{\dot{H}_{x}^{-\frac{1}{2}} -\dot{H}_{x}^{\frac{1}{2}}}  \\
& \hspace{12mm} - \frac{1}{2} \langle \partial_{i} \xi_{j}, [\Lambda,  \psi_{i}^{k}] \xi_{j} \rangle_{\dot{H}_{x}^{-\frac{1}{2}}-\dot{H}_{x}^{\frac{1}{2}}} - \langle \xi, \Lambda^{\gamma} \psi^{k} \rangle dr  = \int_{s}^{t \wedge \tau} \langle \psi^{k}, \xi(r) \rangle d\theta(r), \nonumber
\end{align*} 
\item [](M3) for any $q \in \mathbb{N}$, there exists $C_{q} \geq 1$ such that the process $\{E^{q} ( t\wedge \tau) - E^{q}(s) \}_{t\geq s}$ with $E^{q}$ defined by \eqref{est 125} is an $(\bar{\mathcal{B}}_{t})_{t \geq s}$-supermartingale under $P$ and $s$ is a regular time of $E^{q}$.  
\end{enumerate} 
\end{define} 

The first result Proposition \ref{Proposition 6.1} can be proven by adapting Proposition \ref{Proposition 4.1} in the additive case; we leave details in Section \ref{Section 7.8} for completeness. 

\begin{proposition}\label{Proposition 6.1}  
\begin{enumerate}[wide=0pt] 
\indent 
\item For every $(s, \xi^{\text{in}}, \theta^{\text{in}}) \in [0,\infty) \times \dot{H}_{\sigma}^{\frac{1}{2}} \times U_{1}$, there exists a probabilistically weak solution $P \in \mathcal{P} (\bar{\Omega}_{0})$ to \eqref{general msqg} where $G(v) dB = v dB$ with initial data $(\xi^{\text{in}}, \theta^{\text{in}})$ at initial time $s$ according to Definition \ref{Definition 8}. 
\item Additionally, if there exists a family $\{ (s_{l}, \xi_{l}, \theta_{l} ) \}_{l \in \mathbb{N}} \subset [0,\infty) \times \dot{H}_{\sigma}^{\frac{1}{2}} \times U_{1}$ such that $\lim_{l\to\infty} \lVert (s_{l}, \xi_{l}, \theta_{l}) - (s, \xi^{\text{in}}, \theta^{\text{in}}) \rVert_{\mathbb{R} \times \dot{H}_{x}^{\frac{1}{2}} \times U_{1}} = 0$ and $P_{l} \in \mathcal{W} ( s_{l}, \xi_{l}, \theta_{l}, C_{q})$, then there exists a subsequence $\{P_{l_{k}} \}_{k \in \mathbb{N}}$ and $P \in \mathcal{W} ( s, \xi^{\text{in}}, \theta^{\text{in}}, C_{q})$ such that $P_{l_{k}}$ converges weakly to $P$. 
\end{enumerate} 
\end{proposition}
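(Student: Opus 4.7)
The proof parallels Proposition \ref{Proposition 4.1}, with three structural adjustments dictated by the linear multiplicative noise: the canonical sample space is the enlarged $\bar{\Omega}_0$ carrying both the velocity $\xi$ and the noise path $\theta$; the a priori energy estimate must absorb the It\^{o} correction produced by $v\,dB$; and the relaxed energy inequality \eqref{est 125} employs the exponential weight $e^{-C_q t}$ in place of the additive Doob compensator of \eqref{est 124}.

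For part (1), I would run a Faedo--Galerkin scheme on a filtered probability space carrying a cylindrical Wiener process on $U$. Applying It\^{o}'s formula to $\lVert v_n(t) \rVert_{\dot{H}^{\frac{1}{2}}}^{2q}$ annihilates the antisymmetric nonlinear pairing, produces the coercive dissipative contribution $-2q\lVert v_n \rVert^{2q-2}\lVert v_n \rVert^{2}_{\dot{H}^{\frac{1}{2}+\frac{\gamma}{2}}}$, a local martingale from $v_n\,dB$, and an It\^{o} correction bounded by $C_q\lVert v_n \rVert^{2q}$ for $C_q$ chosen large enough. Multiplication by $e^{-C_q t}$ and taking expectations yields a uniform bound in $L^r(\Omega;L^\infty_t\dot{H}_x^{\frac{1}{2}}\cap L^2_t\dot{H}_x^{\frac{1}{2}+\iota})$ for every $r<\infty$ and every $\iota\in(0,\frac{\gamma}{2}]$. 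Tightness of the joint laws on $\bar{\Omega}_0$ follows by an Aubin--Lions--Simon argument; Jakubowski--Skorokhod then yields a.s.\ convergent copies to a limit $(\tilde{\xi},\tilde{\theta})$. Standard identification arguments (continuity of the bilinear term on the relevant trajectory class, together with the martingale representation theorem used to show that $\tilde{\theta}$ is a cylindrical Wiener process under the limit measure) verify (M1)--(M2), while (M3) is obtained via Fatou for conditional expectations applied to the Galerkin supermartingales.

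For part (2), since the sequence $\{P_l\}$ shares a common constant $C_q$ in (M3), the uniform energy bounds together with the martingale equation (M2) give equicontinuity of $\xi$ with values in $(H_{\sigma}^{4})^{\ast}$ and hence tightness on $\bar{\Omega}_0$. After extracting a Jakubowski--Skorokhod subsequence converging a.s.\ to some $(\xi,\theta)$ with law $P$, (M1) follows from $(s_l,\xi_l,\theta_l)\to(s,\xi^{\text{in}},\theta^{\text{in}})$ and continuity of evaluation on $\bar{\Omega}_0$; (M2) transfers exactly as in part (1); and the exponential weight in \eqref{est 125} makes the integrand pointwise nonnegative, so that weak convergence preserves the supermartingale inequality outside an exceptional Lebesgue-null set of times.

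The main obstacle will be verifying that $s$ is a regular time for $E^q$ under the limit law $P$, namely the supermartingale inequality at $t=s$ without an exceptional null set. I would adapt \cite[Proposition B.4]{FR08}: supermartingality at every Lebesgue continuity point of $r\mapsto \mathbb{E}^{P}\lVert \xi(r) \rVert_{\dot{H}_x^{\frac{1}{2}}}^{2q}$ is immediate from weak convergence combined with lower semicontinuity, and the specific regularity at $s$ then follows from strong convergence $\xi_l\to\xi^{\text{in}}$ in $\dot{H}_{\sigma}^{\frac{1}{2}}$ together with right-continuity of $\bar{\mathcal{B}}_t$ at $t=s$. The remainder of the argument is a direct transcription of the corresponding steps in the proof of Proposition \ref{Proposition 4.1}.
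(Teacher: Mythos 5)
Your proposal is substantially correct and the overall architecture (Galerkin bounds, tightness on $\bar{\Omega}_0$, Jakubowski--Skorokhod, verification of (M1)--(M3) with lower-semicontinuity for the supermartingale inequality) matches the paper's strategy for part (2), which in turn mirrors Proposition \ref{Proposition 4.1}. For part (1) you take a slightly different route: you construct a probabilistically weak solution directly on $\bar{\Omega}_0$ by running Galerkin with a fixed Wiener process and tracking the joint law $(\xi,\theta)$, whereas the paper's one-sentence proof of Proposition \ref{Proposition 6.1} (1) first invokes the existence of a martingale solution on $\Omega_0$ and then lifts it to $\bar{\Omega}_0$ via the martingale representation theorem, as in \cite{HZZ19} and \cite{Y23b}. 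Both routes are standard and equivalent in outcome; yours is more self-contained, the paper's is more compressed. One small point of confusion in your write-up: when you run the joint Galerkin/Skorokhod scheme as you describe, the limit $\tilde{\theta}$ inherits the Wiener-process property from convergence of the couple $(\tilde{\xi}_l,\tilde{\theta}_l)$ and stability of the quadratic variation; the martingale representation theorem is not actually needed at that step (it is needed precisely in the paper's lift-from-$\Omega_0$ route, where $\theta$ is not given a priori). You also correctly flag the one technically delicate point, namely verifying that $s$ is a regular time under the limit law, which in the paper reduces to the lower-semicontinuity estimate for $\int_{s}^{t}\lVert \tilde{\xi}(r)\rVert_{\dot{H}^{\frac12+\iota}}^{2}\,dr$ against the shifted limits $\int_{s_l}^{t}$, handled as in \eqref{est 216} of the additive case.
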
 

The next two results are analogues of \cite[Lemmas 4.2-4.3]{Y23b} which are originally due to \cite[Propositions 5.2-5.3]{HZZ19}. 

\begin{lemma}\label{Lemma 6.2}
Let $\tau$ be a bounded $(\bar{\mathcal{B}}_{t})_{t \geq 0}$-stopping time. Then for every $\omega \in \bar{\Omega}_{0}$, there exists $Q_{\omega} \in \mathcal{P}(\bar{\Omega}_{0})$ such that for $\omega \in \{\xi(\tau) \in \dot{H}_{\sigma}^{\frac{1}{2}} \}$ 
\begin{subequations}\label{est 156}
\begin{align}
& Q_{\omega} ( \{ \omega' \in \bar{\Omega}_{0}: \hspace{0.5mm} \hspace{1mm}  ( \xi, \theta) (t, \omega') = (\xi, \theta) (t,\omega) \hspace{1mm} \forall \hspace{1mm} t \in [0, \tau(\omega)] \}) = 1, \label{est 156a} \\
& Q_{\omega} (A) = R_{\tau(\omega), \xi(\tau(\omega), \omega), \theta(\tau(\omega), \omega)} (A) \hspace{1mm} \forall \hspace{1mm} A \in \bar{\mathcal{B}}^{\tau(\omega)},  \label{est 156b}
\end{align} 
\end{subequations}
where $R_{\tau(\omega), \xi(\tau(\omega), \omega), \theta(\tau(\omega), \omega)} \in \mathcal{P} (\bar{\Omega}_{0})$ is a probabilistically weak solution to \eqref{general msqg} where $G(v) dB = v dB$ with initial data $(\xi(\tau(\omega), \omega), \theta(\tau(\omega), \omega))$ at initial time $\tau(\omega)$. Moreover, for every $A \in \bar{\mathcal{B}}$, the mapping $\omega \mapsto Q_{\omega}(A)$ is $\bar{\mathcal{B}}_{\tau}$-measurable.  
\end{lemma}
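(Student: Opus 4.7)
The plan is to mirror the construction used for the additive-noise analogue (Lemma \ref{Lemma 4.2}) and its antecedents in \cite[Proposition 5.2]{HZZ19} and \cite{HZZ21a}, with the modifications needed to carry the additional component $\theta$ (the driving cylindrical Wiener process) through the stopping/restart procedure. The end product will be $Q_\omega \triangleq \delta_\omega \otimes_{\tau(\omega)} R_{\tau(\omega),\xi(\tau(\omega),\omega),\theta(\tau(\omega),\omega)}$, assembled via the reconstruction theorem (Lemma \ref{Lemma 2.3}), once a $\bar{\mathcal{B}}_\tau$-measurable selection of post-$\tau$ solutions has been obtained.

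First I would use Proposition \ref{Proposition 6.1} to establish two structural facts about the family $\{\mathcal{W}(s,\xi^{\text{in}},\theta^{\text{in}},C_q)\}$. Part (1) gives non-emptiness for every triple $(s,\xi^{\text{in}},\theta^{\text{in}}) \in [0,\infty)\times \dot{H}_\sigma^{1/2}\times U_1$, while part (2) (stability under convergence of initial data and initial time) yields closed graph in $[0,\infty)\times \dot{H}_\sigma^{1/2}\times U_1 \times \mathcal{P}(\bar{\Omega}_0)$; combined with the fact that $\mathcal{W}(s,\xi^{\text{in}},\theta^{\text{in}},C_q)$ is closed in $\mathcal{P}(\bar{\Omega}_0)$, this makes the multifunction $(s,\xi^{\text{in}},\theta^{\text{in}})\mapsto \mathcal{W}(s,\xi^{\text{in}},\theta^{\text{in}},C_q)$ an upper semicontinuous set-valued map with non-empty closed values in a Polish space. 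The Kuratowski--Ryll-Nardzewski selection theorem then produces a Borel-measurable selector
\begin{equation*}
(s,\xi^{\text{in}},\theta^{\text{in}}) \longmapsto R_{s,\xi^{\text{in}},\theta^{\text{in}}} \in \mathcal{W}(s,\xi^{\text{in}},\theta^{\text{in}},C_q).
\end{equation*}
Next, since $\tau$ is a $(\bar{\mathcal{B}}_t)_{t\geq 0}$-stopping time and $(\xi,\theta)$ is $(\bar{\mathcal{B}}_t)$-adapted with continuous paths in the appropriate sense, the map $\omega \mapsto (\tau(\omega),\xi(\tau(\omega),\omega),\theta(\tau(\omega),\omega))$ is $\bar{\mathcal{B}}_\tau$-measurable; composition with the above selector yields the desired $\bar{\mathcal{B}}_\tau$-measurable family $\omega\mapsto R_{\tau(\omega),\xi(\tau(\omega),\omega),\theta(\tau(\omega),\omega)}$.

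I would then invoke Lemma \ref{Lemma 2.3} with the random time $T = \tau(\omega)$ and the point-mass base $P = \delta_\omega$. Concretely, for each $\omega$ such that $\xi(\tau(\omega),\omega) \in \dot{H}_\sigma^{1/2}$, apply the reconstruction lemma with the measurable kernel $\tilde{\omega}\mapsto R_{\tau(\tilde\omega),\xi(\tau(\tilde\omega),\tilde\omega),\theta(\tau(\tilde\omega),\tilde\omega))}$ (the consistency hypothesis on initial values at time $\tau(\omega)$ is immediate by (M1) for $R$). The resulting measure is exactly $Q_\omega$. By construction $Q_\omega(\{\omega':(\xi,\theta)(t,\omega')=(\xi,\theta)(t,\omega)\hspace{1mm}\forall\hspace{1mm}t\in[0,\tau(\omega)]\})=1$ (this is \eqref{est 156a}), and its restriction to $\bar{\mathcal{B}}^{\tau(\omega)}$ coincides with $R_{\tau(\omega),\xi(\tau(\omega),\omega),\theta(\tau(\omega),\omega)}$ by the second clause of Lemma \ref{Lemma 2.3}, giving \eqref{est 156b}. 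The $\bar{\mathcal{B}}_\tau$-measurability of $\omega\mapsto Q_\omega(A)$ for every $A\in\bar{\mathcal{B}}$ follows by a monotone class argument: it is clear on the generating family of cylinder sets split at time $\tau(\omega)$, where the pre-$\tau$ part depends only on $\omega$ (via $\delta_\omega$) and the post-$\tau$ part is handled by measurability of the kernel $R$, and then extends to all of $\bar{\mathcal{B}}$.

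The principal technical hurdle, as in the additive case, is the measurable selection step, and it is complicated in two ways specific to the linear multiplicative setting. First, the product space $\bar{\Omega}_0$ now carries the additional component $\theta \in U_1$, so Proposition \ref{Proposition 6.1} (2) must be applied with joint convergence of $(s_l,\xi_l,\theta_l)$; the fact that the noise in \eqref{general msqg} is state-dependent (of the form $v\,dB$) means one must also track that $\theta$ remains a cylindrical $(\bar{\mathcal{B}}_t)$-Wiener process under the concatenated measure, which is checked by computing quadratic variations of the martingale problem \emph{under} $Q_\omega$ using Lemma \ref{Lemma 2.3} (this part proceeds exactly as in \cite[Proposition 5.2]{HZZ19}). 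Second, the dependence on the initial time $s$ (not only the initial state) is essential because $\tau$ is genuinely random; the continuity of solutions in $s$ is exactly what Proposition \ref{Proposition 6.1} (2) was designed to deliver, and once this is in hand, the rest of the argument is routine. I would therefore relegate the detailed verification of measurability and of the continuity assertion to Section \ref{Section 7.8}, as the authors do in the additive case, and in the main text only sketch the construction and indicate the points where the multiplicative structure enters.
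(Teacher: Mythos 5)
Your proposal is correct and takes essentially the same route as the paper, which itself defers to the proof of Lemma \ref{Lemma 4.2} adapted to $\bar{\Omega}_0$ and $\mathcal{P}(\bar{\Omega}_0)$, and ultimately to \cite[Proposition 5.2]{HZZ19}: non-emptiness and stability from Proposition \ref{Proposition 6.1} feed a measurable-selection argument, and then a pointwise application of the reconstruction theorem (Lemma \ref{Lemma 2.3}) with $P=\delta_\omega$ and $T=\tau(\omega)$ assembles $Q_\omega$. The only cosmetic difference is that you invoke the Kuratowski--Ryll-Nardzewski theorem by name to obtain the measurable selector, while the cited works reach the same conclusion via the closedness/compactness and stability of the solution set from the analogue of Proposition \ref{Proposition 6.1} (2); these are interchangeable here.
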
 

\begin{proof}[Proof of Lemma \ref{Lemma 6.2}]
This is essentially identical to the proof of Lemma \ref{Lemma 4.2} applied to $\bar{\Omega}_{0}$ instead of $\Omega_{0}$ and relying on Proposition \ref{Proposition 6.1} instead of Proposition \ref{Proposition 4.1}. 
\end{proof}

\begin{lemma}\label{Lemma 6.3} 
Let $\tau$ be a bounded $(\bar{\mathcal{B}}_{t})_{t\geq 0}$-stopping time, $\xi^{\text{in}}  \in \dot{H}_{\sigma}^{\frac{1}{2}}$, and $P\in \mathcal{P} (\bar{\Omega}_{0})$ be a probabilistically weak solution to \eqref{general msqg} where $G(v) dB = v dB$ on $[0,\tau]$ with initial data $(\xi^{\text{in}}, 0)$ at initial time $0$ according to Definition \ref{Definition 9}. Suppose that there exists a Borel set $\mathcal{N} \subset \bar{\Omega}_{\tau}$ such that $P(\mathcal{N}) = 0$ and that $Q_{\omega}$ from Lemma \ref{Lemma 6.2} satisfies for every $\omega \in \bar{\Omega}_{\tau} \setminus \mathcal{N}$ 
\begin{equation}\label{est 151} 
Q_{\omega} (\{ \omega' \in \bar{\Omega}: \hspace{1mm}  \tau(\omega') = \tau(\omega) \}) = 1. 
\end{equation} 
Then the probability measure $P\otimes_{\tau}R \in \mathcal{P} (\bar{\Omega}_{0})$ defined by 
\begin{equation}\label{est 150}   
P \otimes_{\tau} R (\cdot) \triangleq \int_{\bar{\Omega}_{0}} Q_{\omega} (\cdot) P(d \omega)
\end{equation} 
satisfies $P\otimes_{\tau}R = P$ on the $\sigma$-algebra $\sigma \{ (\xi,\theta)(t\wedge \tau), t \geq 0 \}$ and it is a probabilistically weak solution to \eqref{general msqg} where $G(v) dB = v dB$ on $[0,\infty)$ with initial data $(\xi^{\text{in}}, 0)$ at initial time $0$. 
\end{lemma}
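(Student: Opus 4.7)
The plan is to follow the overall strategy of Lemma \ref{Lemma 4.3} in the additive case, with the structural difference being that the enlarged sample space $\bar{\Omega}_0$ tracks the driving noise $\theta$ as an additional canonical coordinate, and the relaxed energy functional $E^q$ in (M3) takes the exponential form \eqref{est 125} rather than \eqref{est 124}. The identity $P\otimes_\tau R = P$ on $\sigma\{(\xi,\theta)(t\wedge\tau):t\geq 0\}$ follows directly from the defining formula \eqref{est 150} together with \eqref{est 156a} of Lemma \ref{Lemma 6.2}, which forces $Q_\omega$ to coincide with the point mass at $\omega$ on the pre-$\tau$ part of the trajectory. The verification of (M1) is then a routine consequence of this identity: the initial condition $(\xi(0),\theta(0))=(\xi^{\text{in}},0)$ and the regularity assertions $(\xi,\theta)\in L^\infty_{\text{loc}}\dot{H}_\sigma^{1/2}\times U_1 \cap L^2_{\text{loc}}\dot{H}_\sigma^{1/2+\iota}\times U_1$ hold for $P\otimes_\tau R$ by combining the corresponding assertions for $P$ up to $\tau$ with those for each $Q_\omega$ beyond $\tau$, the latter being a probabilistically weak solution by \eqref{est 156b}.

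For (M2), the plan is to verify it separately on $[0,\tau]$ and on $[\tau,\infty)$ and then glue. On $[0,\tau]$ it is inherited from $P$ via $P\otimes_\tau R=P$ on $\sigma\{(\xi,\theta)(t\wedge\tau)\}$. On the complementary piece, \eqref{est 151} lets us translate $Q_\omega$ by $\tau(\omega)$ and apply the definition of $R_{\tau(\omega),\xi(\tau(\omega),\omega),\theta(\tau(\omega),\omega)}$ as a probabilistically weak solution starting at time $\tau(\omega)$. One then concatenates the two martingale problems: the cylindrical Wiener property of $\theta$ and the stochastic integral identity in (M2) extend across $\tau$ because $\tau$ is a stopping time and $Q_\omega$ matches the deterministic history of $\omega$ at time $\tau(\omega)$. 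These steps are essentially the ones carried out in \cite[Lemma 4.3]{Y23b} and \cite[Proposition 5.3]{HZZ19}, so I expect to simply invoke them.

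The only genuinely new computation is (M3). The plan is to mimic the bookkeeping of Lemma \ref{Lemma 4.3}, but with the estimates adjusted to the form \eqref{est 125}. The integrability step is actually easier than in the additive case: because of the factor $e^{-C_q t}$, bounding $\mathbb{E}^{P\otimes_\tau R}[\lvert E^q(t)\rvert]$ reduces to estimating
\[
\mathbb{E}^{P}[\lVert \xi(t\wedge\tau)\rVert_{\dot{H}^{1/2}}^{2q}e^{-C_q(t\wedge\tau)}]\;+\;\int_{\bar\Omega_0}\mathbb{E}^{Q_\omega}\bigl[\lvert E^q(t)-E^q(\tau(\omega))\rvert 1_{\{\tau(\omega)\leq t\}}\bigr]\,dP(\omega),
\]
and on the set $\{\tau(\omega)\leq t\}$ the supermartingale property of $E^q$ under $R_{\tau(\omega),\xi(\tau(\omega),\omega),\theta(\tau(\omega),\omega)}$ combined with the $e^{-C_q r}$ damping controls the inner expectation by $\lVert \xi(\tau(\omega),\omega)\rVert_{\dot{H}^{1/2}}^{2q} e^{-C_q\tau(\omega)}$, which is $P$-integrable by the (M3) bound on $P$. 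To establish the supermartingale inequality itself, I would fix $t\geq s$ and $A\in\bar{\mathcal{B}}_s$ for $s$ outside the (Lebesgue-null) exceptional sets of $P$ and of $Q_\omega$, decompose
\[
\mathbb{E}^{P\otimes_\tau R}[(E^q(t)-E^q(s))1_A] = \mathbb{E}^{P\otimes_\tau R}[(E^q(t)-E^q(t\wedge\tau))1_{A\cap\{\tau>s\}}] + \mathbb{E}^{P\otimes_\tau R}[(E^q(t\wedge\tau)-E^q(s))1_{A\cap\{\tau>s\}}] + \mathbb{E}^{P}\bigl[\mathbb{E}^{Q_\omega}[(E^q(t)-E^q(s))1_{A\cap\{\tau\leq s\}}]\bigr],
\]
and bound each piece by zero exactly as in \eqref{est 276a}--\eqref{est 276c}. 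The first and third terms are $\leq 0$ by the supermartingale property of $E^q$ under $Q_\omega$ (valid past $\tau(\omega)$ thanks to \eqref{est 156b} and \eqref{est 151}), while the middle term is $\leq 0$ by the supermartingale property on $[0,\tau]$ for $P$. Finally, Fubini in $s$ together with the standard selection of regular times and Lemma \ref{Lemma 2.1} upgrade this to an a.s.\ $(\bar{\mathcal{B}}_t^0)_{t\geq 0}$-supermartingale.

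The main obstacle I anticipate is a purely technical one: showing that the exceptional time sets of $E^q$ under the family $\{Q_\omega\}$ can be chosen in a measurable and jointly null fashion so that the Fubini step above is rigorous. This is resolved in the additive setting by \cite[Proposition B.4]{FR08}, and I expect the same measurable selection to apply once \eqref{est 125} is rewritten in the form ``martingale plus non-decreasing adapted process'' using It\^o's formula against $\xi \,d\theta$; the $e^{-C_q t}$ weight here plays the role of the compensator $C_q\lVert G\rVert^2_{L_2}\int \lVert\xi\rVert^{2q-2}$ in the additive case, and the sign of $-C_q\lVert\xi\rVert^{2q}e^{-C_q t}$ provides the supermartingale decrement that absorbs the It\^o correction from $\xi\,d\theta$.
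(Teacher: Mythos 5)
Your proposal is correct and follows essentially the same route as the paper. The paper's own proof is terse (it defers (M1), (M2), and the $P\otimes_\tau R = P$ identity to \cite{Y23b} and \cite{HZZ19}, noting that only (M3) differs from the prior definitions), but for (M3) it carries out exactly your decomposition: the integrability bound via the splitting at $\tau(\omega)$, the application of the (M3) property of $Q_\omega$ to control $\mathbb{E}^{Q_\omega}[\lvert E^q(t)-E^q(\tau(\omega))\rvert 1_{\{\tau(\omega)\leq t\}}]$ by $2\mathbb{E}^{Q_\omega}[\lVert\xi(\tau(\omega))\rVert_{\dot H_x^{1/2}}^{2q}e^{-C_q\tau(\omega)}1_{\{\tau(\omega)\leq t\}}]$, and then the three-term splitting of $\mathbb{E}^{P\otimes_\tau R}[(E^q(t)-E^q(s))1_A]$ bounded by zero exactly as in \eqref{est 276}, followed by Fubini in $s$ and Lemma \ref{Lemma 2.1}. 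Your observation that the $e^{-C_q t}$ damping makes the integrability bound cleaner (no additional $C(t,\lVert G\rVert)$ constant is needed, and $\mathbb{E}^{P\otimes_\tau R}[\lvert E^q(t)\rvert]\leq C\lVert\xi(0)\rVert^{2q}_{\dot H^{1/2}_x}$ directly) matches what the paper records in \eqref{est 154}, and your closing remark about the $e^{-C_q t}$ weight playing the role of the additive-case compensator correctly identifies why the (M3) supermartingale property is preserved under the gluing.
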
 

\begin{proof}[Proof of Lemma \ref{Lemma 6.3}]
This proof is similar to the proof of Lemma \ref{Lemma 4.3}; we sketch it for completeness.  Again, the only difference in the definition of a solution from \cite[Definitions 4.1-4.2]{Y23b} and Definitions \ref{Definition 8}-\ref{Definition 9} is (M3). Thus, it suffices to show that $P \otimes_{\tau}R$, as a probabilistically weak solution on $[0,\infty)$ with initial data $(\xi^{\text{in}}, 0)$ at initial time 0, satisfies (M3). Analogously to \eqref{est 227}, \eqref{est 151}-\eqref{est 150} give us for any $q \in \mathbb{N}$
\begin{align}
& \mathbb{E}^{P \otimes_{\tau} R} [ \lvert E^{q} (t) \rvert ]   \nonumber \\
\leq& 2 \mathbb{E}^{P} [  \lvert E^{q}(t \wedge \tau) \rvert ] + \int_{\bar{\Omega}_{0}} \mathbb{E}^{Q_{\omega}} [ \lvert E^{q} (t) - E^{q} (\tau(\omega)) \rvert 1_{\{\tau(\omega) \leq t}\} ] dP(\omega). \label{est 275}
\end{align}
Analogously to \eqref{est 127}, within the second term of \eqref{est 275} we can estimate by the (M3) property of $Q^{\omega}$ on $\{\tau(\omega) \leq t \}$ due to Lemma \ref{Lemma 6.2},  
\begin{equation}\label{est 152} 
\mathbb{E}^{Q_{\omega}} [ \lvert E^{q} (t) - E^{q} (\tau(\omega)) \rvert 1_{\{\tau(\omega) \leq t\}} ] \leq 2 \mathbb{E}^{Q_{\omega}} [ \lVert \xi(\tau(\omega)) \rVert_{\dot{H}_{x}^{\frac{1}{2}}}^{2q} e^{-C_{q} \tau(\omega)} 1_{\{\tau(\omega) \leq t\}} ]. 
\end{equation} 
Following further analogous steps that led to \eqref{est 131} shows that 
\begin{align}\label{est 154}
\mathbb{E}^{P\otimes_{\tau} R} [  \lvert E^{q} (t) \rvert ] \leq C \lVert\xi(0) \rVert_{\dot{H}_{x}^{\frac{1}{2}}}^{2q} < \infty
\end{align}
so that $E^{q}$ is $P \otimes_{\tau} R$-integrable for all $q \in \mathbb{N}$. Next, analogous arguments that led to \eqref{est 155} and \eqref{est 132} lead us to, for $A \in \bar{\mathcal{B}}_{s}$, 
\begin{equation}\label{est 157}
\mathbb{E}^{Q_{\omega}} [ \left( E^{q} (t) - E^{q} \left( ( t \wedge \tau(\omega) ) \vee s \right) \right) 1_{A} ] \leq 0 \text{ if } t \geq \tau(\omega), t \geq s,  \text{ and } s \notin T_{Q_{\omega}}, 
\end{equation} 
where $T_{Q_{\omega}}$ is the exceptional set of $Q_{\omega}$ from Definition \ref{Definition 3}. Using \eqref{est 150}, for any measurable set $D \subset [0, t]$ and $A \in \bar{\mathcal{B}}_{s}$, we can obtain the same identity as \eqref{est 133}. Then, we can bound the right hand side of \eqref{est 133} identically to \eqref{est 276} utilizing \eqref{est 151}, \eqref{est 150}, \eqref{est 157}, and Lemma \ref{Lemma 6.3} and thereby deduce 
\begin{equation}\label{est 242} 
\int_{0}^{t} 1_{D} \mathbb{E}^{P\otimes_{\tau} R} [ (E^{q}(t) - E^{q}(s)) 1_{A} ] ds \leq 0 
\end{equation} 
which allows us to conclude that $E^{q}$ is an a.s. $(\bar{\mathcal{B}}_{t}^{0})$-supermartingale, completing the proof of (M3) and hence the proof of Lemma \ref{Lemma 6.3}. 
\end{proof} 

\subsection{Non-uniqueness in Law Globally-in-Time: Linear Multiplicative Case}
Similarly to Section \ref{Section 4.2}, here we combine the general results from Section \ref{Section 6.1} and Theorem \ref{Theorem 1.3}. We restrict ourselves to \eqref{mult msqg} where the driving nose $B$ is a $\mathbb{R}$-valued Wiener process on $(\Omega, \mathcal{F}, \mathbf{P})$ and consequently $U = U_{1} = \mathbb{R}$. We assume that an arbitrary $T > 0$ and $\kappa \in (0,1)$ have been fixed so we can find $L > 1$ that leads to \eqref{est 50} thanks to \eqref{limit as l to infinity}. Then for $\lambda \in \mathbb{N} \setminus \{1\}, \delta \in (0, \frac{1}{4})$ we define 
\begin{align}
\tau_{L}^{\lambda} (\omega) \triangleq& L \wedge \inf\left\{ t \geq 0: \lvert \theta(t) \vert \geq \left( L - \frac{1}{\lambda} \right)^{\frac{1}{4}} \right\} \nonumber \\
& \wedge \inf\left\{ t \geq 0: \lVert \theta \rVert_{C_{t}^{\frac{1}{2} - 2 \delta}} \geq \left( L - \frac{1}{\lambda} \right)^{\frac{1}{2}} \right\}. \label{est 160} 
\end{align}
It follows that $\{\tau_{L}^{\lambda}\}_{\lambda \in \mathbb{N}}$ is non-decreasing. By \cite[Lemma 3.5]{HZZ19} we see that $\tau_{L}^{\lambda}$ is a $(\bar{\mathcal{B}}_{t})_{t\geq 0}$-stopping time and consequently so is 
\begin{equation}\label{est 167}
\tau_{L} \triangleq \lim_{\lambda \to \infty} \tau_{L}^{\lambda}. 
\end{equation} 
We assume that $v$ is the solution constructed by Theorem \ref{Theorem 1.3} with the prescribed energy 
\begin{equation}\label{est 162} 
e(t) \triangleq d_{0} e^{d_{1} t}, \hspace{3mm} d_{1} > 0, d_{0} > 4 e^{2d_{1}}
\end{equation}  
(cf. \eqref{est 134} in the additive case) and define 
\begin{equation}\label{est 161}
P \triangleq \mathcal{L} ( (v,B)). 
\end{equation} 

\begin{proposition}\label{Proposition 6.4} 
Define $\tau_{L}$ by \eqref{est 167}. Then the probability measure $P$ defined by \eqref{est 161} is a probabilistically weak solution to \eqref{mult msqg} on $[0,\tau_{L}]$ according to Definition \ref{Definition 9}. The corresponding constants $C_{q}$ for $q \in \mathbb{N}$ in (M3) of Definition \ref{Definition 9} depend only on $d_{0}$ and $d_{1}$. 
\end{proposition}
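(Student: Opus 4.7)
The plan is to follow the template of the additive case (Proposition \ref{Proposition 4.4}) very closely, replacing the additive stochastic integral with the scalar multiplicative one and exploiting the prescribed-energy identity \eqref{energy} to kill the noise term in the supermartingale computation. Throughout, fix $\iota \in (0,1)$ from Theorem \ref{Theorem 1.3}, and take $P \triangleq \mathcal{L}((v,B))$ with initial data $(\xi^{\text{in}}, \theta^{\text{in}}) = (v^{\text{in}}, 0)$.

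First I would verify (M1) and (M2) of Definition \ref{Definition 9}. Because the stopping time $\tau_L^\lambda$ in \eqref{est 160} depends only on the $\theta$-coordinate and uses the same threshold functions as \eqref{mult stopping time}, one checks path-by-path that $\tau_L^\lambda((v,B))$ coincides with the variant of \eqref{mult stopping time} in which $L^{1/4}$ and $L^{1/2}$ are replaced by $(L - 1/\lambda)^{1/4}$ and $(L - 1/\lambda)^{1/2}$. Passing to the limit $\lambda \to \infty$ gives $\tau_L((v,B)) = T_L$ $\mathbf{P}$-a.s. Then (M1) and (M2) follow by pushing forward under $(v,B)$: the analytically weak formulation of \eqref{mult msqg} for $v$ on $[0, T_L]$ from Theorem \ref{Theorem 1.3} becomes the martingale identity on $[0, \tau_L]$ under $P$, the deterministic initial condition of $v$ gives the required (M1), and the quadratic variation $(t\wedge \tau_L - s)\|l^i\|_U^2$ with $U = U_1 = \mathbb{R}$ is the one of a standard scalar Wiener process, which $B$ is under $\mathbf{P}$. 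This is essentially the argument already used in the linear multiplicative case of prior work, e.g.\ \cite{Y23b}.

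For (M3), the combination of \eqref{energy} and \eqref{est 162} gives, $P$-a.s.\ and for all $t \in [0, \tau_L]$,
\begin{equation*}
\lVert \xi(t) \rVert_{\dot{H}_{x}^{\frac{1}{2}}}^{2} = e(t) = d_{0} e^{d_{1} t},
\end{equation*}
while \eqref{est 119} together with \eqref{est 163} from the proof of Theorem \ref{Theorem 1.3} produces a deterministic constant $d_{2} > 0$ with $\lVert \xi(t) \rVert_{\dot{H}_{x}^{\frac{1}{2}+\iota}}^{2} \leq d_{2}$, $P$-a.s. A direct differentiation of \eqref{est 125} using the energy identity yields
\begin{equation*}
\tfrac{d}{dt} E^{q}(t) = e(t)^{q-1} e^{-C_{q} t}\bigl[(q d_{1} - C_{q}) e(t) + 2q \lVert \xi(t) \rVert_{\dot{H}_{x}^{\frac{1}{2}+\iota}}^{2}\bigr],
\end{equation*}
so that any choice of $C_{q} \geq q d_{1} + \frac{2q d_{2}}{\ushort{e}}$ forces the integrand to be nonpositive and hence $E^{q}(t \wedge \tau_{L}) - E^{q}(s)$ to be $P$-a.s.\ non-increasing, hence a supermartingale. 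Continuity of the paths of $\xi$ in $\dot{H}_{x}^{\frac{1}{2}}$ on $[0,\tau_{L}]$ (a consequence of \eqref{est 119}) makes $E^{q}$ $P$-a.s.\ continuous, so every $s \geq 0$ is a regular time of $E^{q}$; integrability is immediate from the pointwise bounds above.

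The only delicate point, and what I would flag as the main obstacle, is confirming that the $C_{q}$'s depend only on $d_{0}$ and $d_{1}$. The constant $d_{2}$ arises from the bound \eqref{est 163} coming from the convex integration scheme, and depends on the a priori fixed parameters ($L$, $\bar{e}$, $\tilde{e}$, $\ushort{e}$, together with $d_{0}$, $d_{1}$ via \eqref{est 118 mult} and \eqref{est 162}); once $d_{0}$ and $d_{1}$ are chosen inside the admissible range dictated by \eqref{est 162}, these parameters are fixed, and the explicit lower bound above on $C_{q}$ is a function of $d_{0}$ and $d_{1}$ alone, matching the statement of the proposition.
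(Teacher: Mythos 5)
Your proposal is correct and follows essentially the same approach as the paper: verify (M1)--(M2) by pushforward and citation to prior work, then use the energy identity \eqref{energy}--\eqref{est 162} together with the $\dot H_{x}^{\frac{1}{2}+\iota}$ bound from \eqref{est 163} to obtain the supermartingale property for $C_q$ large. One small refinement: since $e(t)=d_0 e^{d_1 t}\geq d_0$ for $t\geq 0$, the paper uses $C_q\geq qd_1+\frac{2q d_2}{d_0}$ rather than your $C_q\geq qd_1+\frac{2qd_2}{\ushort{e}}$; both are sufficient (as $d_0=e(0)\geq\ushort{e}$), but the former makes the stated dependence on $d_0$ explicit.
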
 

\begin{proof}[Proof of Proposition \ref{Proposition 4.4}] 
Similarly to previous works (e.g. \cite[Propositions 5.4]{HZZ19} and \cite[Proposition 4.4]{Y23b}), we can readily verify that $P$ satisfies (M1) and (M2), where $T_{L}$ was defined in \eqref{mult stopping time}. To verify (M3) we make two observations: for all $t \in [0,\tau_{L}]$, under $P$, for some $d_{2} > 0$, 
\begin{equation}\label{est 164}
\lVert \xi(t) \rVert_{\dot{H}_{x}^{\frac{1}{2}}}^{2}  \overset{\eqref{energy} \eqref{est 162}}{=} d_{0} e^{d_{1} t} \hspace{2mm} \text{ and } \hspace{2mm} \lVert \xi(t) \rVert_{\dot{H}_{x}^{\frac{1}{2} + \iota}}^{2} \overset{\eqref{define upsilon} \eqref{est 163} \eqref{mult stopping time}}{\leq} d_{2}.
\end{equation}   
It follows that as long as 
\begin{equation}\label{est 165}
C_{1} \geq d_{1} + \frac{2d_{2}}{d_{0}}, 
\end{equation} 
for all $t \geq s \geq 0$ we have 
\begin{align*}
& E^{1} (t \wedge \tau_{L}) - E^{1} (s \wedge \tau_{L})   \nonumber \\
\overset{\eqref{est 125} \eqref{est 164}}{\leq}& d_{0} e^{(d_{1} - C_{1}) (t \wedge \tau_{L})} - d_{0} e^{(d_{1} - C_{1}) (s \wedge \tau_{L})} + 2 \int_{s \wedge \tau_{L}}^{t \wedge \tau_{L}} e^{-C_{1} r} d_{2} dr    \nonumber \\
\leq& \left( d_{0} + \frac{ 2d_{2}}{d_{1} - C_{1}} \right) [ e^{(d_{1} - C_{1}) (t \wedge \tau_{L}) } - e^{(d_{1} - C_{1}) (s \wedge \tau_{L} )} ]  \leq 0 
\end{align*}
and hence $E^{1} (t \wedge \tau_{L})$ is a supermartingale under $P$. For general $q \in \mathbb{N}$, it follows similarly that as long as 
\begin{equation}\label{est 166}
C_{q} \geq d_{1} q + \frac{2q d_{2}}{d_{0}}, 
\end{equation} 
we have for all $t \geq s \geq 0$ 
\begin{align*}
& E^{q} (t \wedge \tau_{L}) - E^{q} ( s \wedge \tau_{L})  \nonumber \\
\overset{\eqref{est 125} \eqref{est 164}}{\leq}& d_{0}^{q} [ e^{d_{1} (t \wedge \tau_{L}) q - C_{q} (t \wedge \tau_{L} )} - e^{d_{1} ( s \wedge \tau_{L}) q - C_{q}(s \wedge \tau_{L})} ]+ 2q d_{2} d_{0}^{q-1} \int_{s \wedge \tau_{L}}^{t \wedge \tau_{L}} e^{r [d_{1} (q-1) - C_{q}]} dr    \nonumber \\
\leq& \left( d_{0}^{q} + \frac{2q d_{2} d_{0}^{q-1}}{d_{1} q - C_{q}} \right) [ e^{(d_{1} q - C_{q}) (t \wedge \tau_{L})} - e^{(d_{1} q - C_{q}) (s\wedge \tau_{L} )} ] \leq 0,  
\end{align*}
and hence $E^{q} (t \wedge \tau_{L})$ is a supermartingale under $P$ for all $q \in \mathbb{N}$. This completes the proof of Proposition \ref{Proposition 6.4}. 
\end{proof} 

Finally, the following Proposition \ref{Proposition 6.5} proves non-uniqueness in law of the global-in-time solution to \eqref{mult msqg}. 
\begin{proposition}\label{Proposition 6.5} 
\noindent
\begin{enumerate}
\item The probability measure $P \otimes_{\tau_{L}} R$ defined by \eqref{est 150}  with the $\tau_{L}$ defined by \eqref{est 167} is a probabilistically weak solution to \eqref{mult msqg} on $[0,\infty)$ according to Definition \ref{Definition 8}. 
\item There exist constant $C_{q} \geq 0$ for $q \in \mathbb{N}$ such that the probabilistically weak solutions to \eqref{mult msqg} associated to such $C_{q}$ in (M3) of Definition \ref{Definition 8} are not unique.  
\end{enumerate} 
\end{proposition}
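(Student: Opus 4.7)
\smallskip

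\noindent \textbf{Proof proposal.} For part (1), the plan is to invoke Lemma \ref{Lemma 6.3} directly. The (M1), (M2) properties of $P$ on $[0,\tau_L]$ together with the construction of $Q_\omega$ in Lemma \ref{Lemma 6.2} yield, via the reconstruction identity \eqref{est 150}, a measure $P\otimes_{\tau_L} R \in \mathcal{P}(\bar{\Omega}_0)$ which, provided \eqref{est 151} holds, is a probabilistically weak solution on $[0,\infty)$ in the sense of Definition \ref{Definition 8}. Thus the essential verification reduces to showing that there is a Borel $\mathcal{N}\subset \bar{\Omega}_{\tau_L}$ with $P(\mathcal{N})=0$ and $Q_\omega(\{\omega': \tau_L(\omega')=\tau_L(\omega)\})=1$ for every $\omega\notin \mathcal{N}$. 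I would argue this pathwise: by \eqref{est 160}--\eqref{est 167}, $\tau_L$ is measurable with respect to $\sigma\{\theta(s): s\leq \tau_L\}$, i.e.\ $\tau_L(\omega)$ is determined by $\theta|_{[0,\tau_L(\omega)]}$, and by \eqref{est 156a} the measure $Q_\omega$ is concentrated on paths agreeing with $\omega$ on $[0,\tau_L(\omega)]$; combining these yields $\tau_L = \tau_L(\omega)$ $Q_\omega$-almost surely, exactly the template of the analogous step in the proof of Proposition \ref{Proposition 4.5} (1).

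For part (2), I propose two different-energy solutions that match the structure of the proof of Proposition \ref{Proposition 4.5} (2), first proof. Fix $d_0 > 4 e^{2 \max(d_{1,1},d_{1,2})}$ and choose two distinct growth rates $d_{1,1}\neq d_{1,2}$; set $e_k(t) \triangleq d_0 e^{d_{1,k} t}$ for $k=1,2$. By Theorem \ref{Theorem 1.3} (applied with a common, sufficiently large $L$ and the same deterministic initial data $v^{\text{in}}$), there exist probabilistically strong analytically weak solutions $v_k$ on $[0,\mathfrak{t}]=[0,T_L]$ with $\|v_k(t)\|_{\dot H^{1/2}}^2 = e_k(t)$. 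Let $P_k \triangleq \mathcal{L}((v_k,B))$. By Proposition \ref{Proposition 6.4}, each $P_k$ is a probabilistically weak solution on $[0,\tau_L]$ (Definition \ref{Definition 9}) with associated constants $C_q^{(k)}$ depending only on $(d_0, d_{1,k})$. Applying part (1) to each $P_k$ produces global-in-time solutions $P_k \otimes_{\tau_L} R$ sharing the same deterministic initial condition $(\xi^{\text{in}},0)$, yet with distinct laws, as witnessed for instance by
\begin{equation*}
\mathbb{E}^{P_k \otimes_{\tau_L} R}\bigl[\|\xi(t \wedge \tau_L)\|_{\dot H^{1/2}}^{2}\bigr] = d_0\,\mathbb{E}[e^{d_{1,k}(t\wedge\tau_L)}],
\end{equation*}
which are different for $d_{1,1}\neq d_{1,2}$ once $\mathbf{P}(\tau_L > 0)=1$. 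To fit both into a single class, set $C_q \triangleq \max\{C_q^{(1)},C_q^{(2)}\}$; the (M3) supermartingale condition in \eqref{est 125} is preserved under increasing $C_q$ (larger $C_q$ makes $E^q$ decay faster in $t$), so $P_k \otimes_{\tau_L} R \in \mathcal{W}(\xi^{\text{in}},0,C_q)$ for both $k$, yielding non-uniqueness.

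The main obstacle I anticipate is the careful justification of \eqref{est 151} in part (1): although morally obvious, it requires a delicate measurability argument showing that the map $\omega \mapsto \tau_L(\omega)$ factors through $\theta\rvert_{[0,\tau_L(\omega)]}$ and combining this with the concentration property \eqref{est 156a} of $Q_\omega$. The rest of the argument is structural, closely mirroring the additive case (Section \ref{Section 4.2}), with the minor bookkeeping of ensuring a common constant $C_q$ at the end. Notably, the argument avoids the more intricate Galerkin-comparison route used in the second proof of Proposition \ref{Proposition 4.5} (2), which would here be complicated by the linear multiplicative It\^o correction in the energy balance and the fact that, in contrast to the additive stopping time \eqref{additive stopping time}, the multiplicative stopping time \eqref{est 160} can be taken large with prescribable probability (cf.\ Remark \ref{Remark 1.3}).
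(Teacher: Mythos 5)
Your approach to part (1) coincides with the paper's: verify \eqref{est 151} and invoke Lemmas \ref{Lemma 6.2}--\ref{Lemma 6.3}, which the paper also does, citing \cite[Proposition 4.5]{Y23b} and \cite[Proposition 5.5]{HZZ19} for the pathwise argument that $\tau_L$ is determined by the sample path on $[0,\tau_L]$.

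For part (2), you present the first of the two proofs the paper gives (the two-energy construction), and the observation that (M3) in \eqref{est 125} is preserved under increasing $C_q$ is correct and mirrors the paper's ``take the maximum'' step. Two caveats. First, your phrase ``applied with \ldots the same deterministic initial data $v^{\text{in}}$'' treats the initial data as if it were a free input to Theorem \ref{Theorem 1.3}; in fact $v^{\text{in}}$ is produced by the construction, and the claim that it agrees for $e_1$ and $e_2$ rests on the last statement of that theorem with $t=0$. But the perturbations at $t=0$ involve $\bar{\gamma}_l(\tau_{q+1}j)$, and $\varphi_l$ is supported in $(\tau_{q+1},2\tau_{q+1}]$, so the mollification reads $e$ at strictly negative times; the two functions $e_k(t)=d_0 e^{d_{1,k}t}$ coincide only at the single point $t=0$. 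The paper makes the same choice, but a safer variant — still within the bounds \eqref{est 118 mult} — is to take $e_1,e_2$ coinciding on $[-2,0]$ and diverging smoothly only on $(0,L]$, which unambiguously fixes the same initial data. Second, your closing remark reverses the situation with the Galerkin-comparison route: the paper does carry it out as its second proof (via \eqref{est 175}--\eqref{est 177}), and the fact that the multiplicative stopping time can be made large with prescribable probability is precisely what that argument exploits, not an obstruction; the It\^o correction is absorbed by the hypothesis $d_1>2$ and the inequality $\kappa e^{d_1 T}\geq e^T$ in \eqref{est 176}. That second proof is in fact more robust on exactly the point above, since it compares against the Galerkin solution started from the very same $\xi(0)$ produced by the convex integration construction, so no ``same initial data from two energies'' claim is needed.
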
 

\begin{proof}[Proof of Proposition \ref{Proposition 6.5}]
$ $\newline 
(1) This part can be proven by verifying \eqref{est 151} and relying on Lemmas \ref{Lemma 6.2}-\ref{Lemma 6.3} identically to previous works such as  \cite[Proposition 4.5]{Y23b} and \cite[Proposition 5.5]{HZZ19}. 

\noindent (2) Once again, we are able to provide two proofs. For the first proof, we choose two distinct energies 
\begin{equation*}
e_{1}(t) \triangleq d_{0} e^{d_{1,1} t}, \hspace{3mm} e_{2} (t) \triangleq d_{0} e^{d_{1,2} t}, \hspace{3mm} d_{1,1} \neq d_{1,2}, 
\end{equation*} 
where $d_{0}, d_{1,1}$, and $d_{1,2}$ satisfy \eqref{est 162}, construct the corresponding convex integration solutions $P_{1}, P_{2}$ with the same constants $C_{q}$ for $q \in \mathbb{N}$ in (M3) of Definition \ref{Definition 9}, and extend both to $[0,\infty)$ via $P_{1} \otimes_{\tau_{L}} R$ and $P_{2}\otimes_{\tau_{L}} R$ by Proposition \ref{Proposition 6.5} (1). They have same initial data but different energies. 

For the second proof, we actually deviate from the proof in the additive case in which the construction o the solution in Theorem \ref{Theorem 1.1} was only up to $\tau \leq 1$. We fix $d_{1} > 2$ for simplicity. Then we fix an arbitrary $T > 0$ and $\kappa \in (0,1)$; without loss of generality, we add a lower bound on $\kappa$ as follows: 
\begin{equation}\label{est 176}
\kappa \in (e^{-T},1) \text{ so that } \kappa e^{d_{1} T} \geq e^{T}. 
\end{equation} 
Then for such fixed $T > 0$ and $\kappa \in (e^{-T}, 1)$, we find $L > 1$ sufficiently large so that \eqref{est 50} holds. Having fixed such $L > 1$,  we then  choose 
\begin{equation}\label{est 168}
e(t) \triangleq d_{0} e^{d_{1} t}, \hspace{3mm} d_{0} > 4e^{2d_{1}},
\end{equation} 
so that $d_{0}$ and $d_{1}$ satisfy \eqref{est 162}, and construct a solution $P$ on $[0, \tau_{L}]$ such that 
\begin{equation}\label{est 169} 
\lVert \xi(t) \rVert_{\dot{H}_{x}^{\frac{1}{2}}}^{2} = e(t)  \overset{\eqref{est 168}}{=} d_{0} e^{d_{1} t} \hspace{3mm} P\text{-a.s. for all } t \in [0, \tau_{L} ] 
\end{equation} 
via Theorem \ref{Theorem 1.3}. Taking $t = 0$ in \eqref{est 169} implies 
\begin{equation}\label{est 277} 
d_{0} = e(0) = \lVert \xi(0) \rVert_{\dot{H}_{x}^{\frac{1}{2}}}^{2}. 
\end{equation} 
Then we rely on Proposition \ref{Proposition 6.5} (1) to obtain a probabilistically weak solution $P \otimes_{\tau_{L}} R$ to \eqref{mult msqg} on $[0,\infty)$. Identically to the \cite[Proof of Theorem 2.2]{Y23b} (see also \cite[Proof of Theorem 1.4]{HZZ19}), because $P \otimes_{\tau_{L}} R ( \{ \tau_{L} \geq T \}) = \mathbf{P} ( \{ T_{L} \geq T \})$, we can obtain 
\begin{equation}\label{est 175}
P \otimes_{\tau_{L}} R ( \{ \tau_{L} \geq T \}) > \kappa
\end{equation} 
using \eqref{est 50} that we already secured. It follows that 
\begin{equation}\label{est 177}
\mathbb{E}^{P \otimes_{\tau_{L}} R} [ \lVert \xi(T) \rVert_{\dot{H}_{x}^{\frac{1}{2}}}^{2} ]  \overset{\eqref{est 175} \eqref{est 169}}{>} \kappa d_{0} e^{d_{1} T} \overset{\eqref{est 176}}{\geq} d_{0} e^{T}. 
\end{equation} 
If $Q$ is the law of the classical solution constructed via Galerkin approximation from $\xi(0)$, then \eqref{est 277} gives us 
\begin{equation}\label{est 174}
\mathbb{E}^{Q} [ \lVert \xi(T) \rVert_{\dot{H}_{x}^{\frac{1}{2}}}^{2} ] \leq d_{0} e^{T} 
\end{equation} 
so that comparing \eqref{est 177} and \eqref{est 174} reveals that the two solutions are different. This implies a lack of joint uniqueness in law and consequently non-uniqueness in law due to Cherny's theorem from \cite{C03} and \cite[Lemma C.1]{HZZ19}. 

Finally, identical argument to the additive case shows that there exist constants $C_{q} \geq 0, q \in \mathbb{N}$ such that the probabilistically weak solutions to \eqref{mult msqg} associated with $C_{q} \geq 0, q \in \mathbb{N}$, are not unique. 
\end{proof}

\subsection{Non-uniqueness of a.s. Markov Selection: Linear Multiplicative Case} 
We fix the constants $C_{q}, q \in \mathbb{N}$. We recall the definitions of an a.s. Markov family from Definition \ref{Definition 5} and an a.s. pre-Markov family in Definition \ref{Definition 9}. 

\begin{proposition}\label{Proposition 6.6}
There exists a Lebesgue null set $\mathcal{T} \subset (0,\infty)$ such that for all $T \not\in \mathcal{T}$, the family $\{ \mathcal{W} ( \xi^{\text{in}}, \theta^{\text{in}}, C_{q}) \}_{( \xi^{\text{in}}, \theta^{\text{in}}) \in \dot{H}_{\sigma}^{\frac{1}{2}} \times \mathbb{R}}$ satisfies the disintegration property in Definition \ref{Definition 10}. 
\end{proposition}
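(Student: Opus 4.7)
The plan is to adapt the proof of Proposition~\ref{Proposition 4.6} to the enlarged path space $\bar{\Omega}_0$ carrying the canonical process $(\xi, \theta)$, the filtrations $\bar{\mathcal{B}}_t, \bar{\mathcal{B}}_t^0$ from \eqref{est 241}, and the relaxed energy functional $E^q$ of \eqref{est 125}. First I would fix $(\xi^{\text{in}}, \theta^{\text{in}}) \in \dot{H}_\sigma^{\frac{1}{2}} \times \mathbb{R}$ and $P \in \mathcal{W}(\xi^{\text{in}}, \theta^{\text{in}}, C_q)$, take $\mathcal{T} \subset (0, \infty)$ to be the union over $q \in \mathbb{N}$ of the Lebesgue-null exceptional-time sets guaranteed by Definition~\ref{Definition 8} (M3), and for any $T \notin \mathcal{T}$ produce the r.c.p.d.\ $P(\cdot | \bar{\mathcal{B}}_T^0)(\omega)$ via Lemma~\ref{Lemma 2.2}. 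The goal reduces to exhibiting a $P$-null set $N \in \bar{\mathcal{B}}_T^0$ such that for every $\omega \notin N$ we have $\xi(T, \omega) \in \dot{H}_\sigma^{\frac{1}{2}}$ and $P(\Phi_T(\cdot) | \bar{\mathcal{B}}_T^0)(\omega) \in \mathcal{W}(\xi(T, \omega), \theta(T, \omega), C_q)$.

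The verifications of (M1) and (M2) would proceed essentially verbatim as in Proposition~\ref{Proposition 4.6}: for (M1) Lemma~\ref{Lemma 2.2}~(3) produces a null set outside which the initial data of the conditioned measure at time $T$ equals $(\xi(T, \omega), \theta(T, \omega))$, while Lemma~\ref{Lemma 2.2}~(4) applied to the bar-analogues of $S_T$ and $S^T$ in \eqref{est 178} yields the required $L_{\text{loc}}^\infty \cap L_{\text{loc}}^2$ trajectory class on $[T, \infty)$; for (M2) \cite[Proposition B.1]{FR08} together with \cite[Lemma B.3]{GRZ09} transfers both the cylindrical Wiener property of $\theta$ restarted from $\theta(T,\omega)$ and the nonlinear weak formulation to $P(\Phi_T(\cdot) | \bar{\mathcal{B}}_T^0)(\omega)$ outside a further $P$-null set.

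The main obstacle is (M3), because \eqref{est 125} couples the trajectory to a deterministic exponential weight $e^{-C_q t}$ and therefore does not decompose as a simple difference $\alpha_t^q - \beta_t^q$ as in the additive case. The key identity that unlocks the transfer of the supermartingale property to the shifted-conditioned measure is
\begin{equation*}
\tilde{E}^q(s) - \tilde{E}^q(0) = e^{C_q T} \bigl[ E^q(s + T) - E^q(T) \bigr], \qquad s \geq 0,
\end{equation*}
where $\tilde{E}^q$ denotes the functional \eqref{est 125} evaluated on the shifted trajectory $\tilde{\xi}(s) = \xi(s+T)$ under $\Phi_T$; this follows from a direct change of variables $u = r + T$ in the integral term together with factoring out $e^{C_q T}$. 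Since $e^{C_q T} > 0$ is a deterministic constant, the a.s.\ $(\bar{\mathcal{B}}_t^0)_{t \geq T}$-supermartingale property of $\{E^q(t) - E^q(T)\}_{t \geq T}$ under $P$ is equivalent to the a.s.\ $(\bar{\mathcal{B}}_s^0)_{s \geq 0}$-supermartingale property of $\{\tilde{E}^q(s) - \tilde{E}^q(0)\}_{s \geq 0}$ under $P(\Phi_T(\cdot) | \bar{\mathcal{B}}_T^0)(\omega)$. Iterating \cite[Proposition B.4]{FR08} in $q \in \mathbb{N}$ then yields a $P$-null set outside which the conditioned-and-shifted measure satisfies (M3), and unioning this with the null sets from (M1) and (M2) gives the required $N$, completing the disintegration.
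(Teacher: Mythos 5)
Your proposal is correct and follows the same route as the paper, which simply refers back to Proposition~\ref{Proposition 4.6} and to \cite[Propositions~B.1 and B.4]{FR08} for the $\bar{\Omega}_0$ setting. The one thing you add that the paper leaves implicit is exactly the useful clarification: because $E^q$ in \eqref{est 125} carries the deterministic weight $e^{-C_q t}$, the shift by $T$ does not leave $E^q$ invariant, and the change-of-variables identity
\begin{equation*}
\tilde{E}^q(s) - \tilde{E}^q(0) = e^{C_q T}\bigl[E^q(s+T) - E^q(T)\bigr]
\end{equation*}
(which is verified by a direct substitution $u = r+T$) is precisely what shows that the a.s.\ supermartingale property of $\{E^q(t) - E^q(T)\}_{t\geq T}$ under the r.c.p.d.\ $P(\cdot\,|\,\bar{\mathcal{B}}_T^0)(\omega)$---obtained from \cite[Proposition~B.4]{FR08}---passes, after pushing forward by $\Phi_T^{-1}$, to the a.s.\ supermartingale property required by (M3) at initial time $0$ for the conditioned measure. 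One minor wording slip: in the penultimate sentence you write ``under $P$'' where it should read ``under $P(\cdot\,|\,\bar{\mathcal{B}}_T^0)(\omega)$''; the surrounding sentences make clear you mean the conditional measure, but for precision the supermartingale property under $P$ (regular time) is the input, the supermartingale property under the r.c.p.d.\ is the output of \cite[Proposition~B.4]{FR08}, and your identity converts that into (M3) for the shifted measure.
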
 

\begin{proof}[Proof of Proposition \ref{Proposition 6.6}]
We fix $(\xi^{\text{in}}, \theta^{\text{in}}) \in \dot{H}_{\sigma}^{\frac{1}{2}} \times \mathbb{R}, P \in \mathcal{W} ( \xi^{\text{in}}, \theta^{\text{in}}, C_{q})$, and $T$ a regular time of $P$. Let $P ( \cdot \lvert \bar{\mathcal{B}}_{T}^{0}) (\omega)$ be a r.c.p.d. of $P$ w.r.t. $\bar{\mathcal{B}}_{T}^{0}$. According to Definition \ref{Definition 10} (1), we need to find a $P$-null set $N \in \bar{\mathcal{B}}_{T}^{0}$ such that for all $\omega \notin N$, 
\begin{align*}
( \xi, \theta) (T, \omega) \in \dot{H}_{\sigma}^{\frac{1}{2}} \times \mathbb{R}, \hspace{3mm} P (\Phi_{T} (\cdot) \lvert \bar{\mathcal{B}}_{T}^{0})(\omega) \in \mathcal{W} ( \xi(T,\omega), \theta(T, \omega), C_{q}).
\end{align*}
The first claim follows by the assumption that $T$ is a regular time of $P$ and the latter claim can also be proven by relying on Lemma \ref{Lemma 2.2} and \cite[Propositions B.1 and B.4]{FR08} similarly to the proof of Proposition \ref{Proposition 4.6}. 
\end{proof} 

\begin{proposition}\label{Proposition 6.7} 
There exists a Lebesgue null set $\mathcal{T} \subset (0,\infty)$ such that for all $T \not\in \mathcal{T}$, the family $\{ \mathcal{W} (\xi^{\text{in}}, \theta^{\text{in}}, C_{q}) \}_{(\xi^{\text{in}},\theta^{\text{in}}) \in \dot{H}_{\sigma}^{\frac{1}{2}} \times \mathbb{R}}$ satisfies the reconstruction property of Definition \ref{Definition 10}. 
\end{proposition}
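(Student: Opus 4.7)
The plan is to mirror the proof of Proposition \ref{Proposition 4.7} in the additive case, making the obvious adjustments for the enlarged path space $\bar{\Omega}_0$ and the (M3) supermartingale associated with \eqref{est 125}. Fix $(\xi^{\text{in}},\theta^{\text{in}}) \in \dot{H}_\sigma^{1/2} \times \mathbb{R}$, a probabilistically weak solution $P \in \mathcal{W}(\xi^{\text{in}},\theta^{\text{in}},C_q)$, and a regular time $T$ of $P$. Assume that the mapping $\mathcal{E}:\bar{\Omega}_0 \to \mathcal{P}_{\dot{H}_\sigma^{1/2}\times\mathbb{R}}(\bar{\Omega}_0)$ defined by $\mathcal{E}(\omega) \triangleq Q_\omega$ satisfies the hypotheses of Lemma \ref{Lemma 2.3}, and that there exists a $P$-null set $N \in \bar{\mathcal{B}}_T^0$ such that $(\xi,\theta)(T,\omega) \in \dot{H}_\sigma^{1/2}\times\mathbb{R}$ and $Q_\omega \circ \Phi_T \in \mathcal{W}(\xi(T,\omega),\theta(T,\omega),C_q)$ for every $\omega \notin N$. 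By Lemma \ref{Lemma 2.3}, $P \otimes_T Q \in \mathcal{P}(\bar{\Omega}_0)$ exists, and by Definition \ref{Definition 10} (2) the proposition reduces to verifying that $P \otimes_T Q \in \mathcal{W}(\xi^{\text{in}},\theta^{\text{in}},C_q)$.

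Properties (M1) and (M2) of Definition \ref{Definition 8} are inherited in the standard way: before time $T$ the measure $P \otimes_T Q$ agrees with $P$ by construction of Lemma \ref{Lemma 2.3}, while after time $T$ each fiber $Q_\omega \circ \Phi_T$ is itself a probabilistically weak solution on $[T,\infty)$ issuing from $(\xi(T,\omega),\theta(T,\omega))$, so the continuity of the $\theta$-component as a Wiener process and the weak momentum-SQG identity assemble across $T$ by conditioning. The only substantive content is (M3), and this is where the argument of Lemma \ref{Lemma 6.3} can be reused essentially verbatim, with the bounded stopping time $\tau$ replaced by the deterministic constant $T$. Specifically, one checks $P\otimes_T Q$-integrability of $E^q$ by splitting $\mathbb{E}^{P\otimes_T Q}[|E^q(t)|]$ across $\{t\le T\}$ and $\{t>T\}$ as in \eqref{est 275}; on $\{t \le T\}$ it reduces to $\mathbb{E}^P[|E^q(t)|]$, while on $\{t>T\}$ the (M3) property of each $Q_\omega \circ \Phi_T$ together with the exponential damping factor $e^{-C_q r}$ in \eqref{est 125} furnishes the analogue of \eqref{est 152} and hence of \eqref{est 154}.

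For the supermartingale inequality, fix $t \ge s \ge 0$ and $A \in \bar{\mathcal{B}}_s$. Proceeding as in \eqref{est 133}, decompose
\begin{align*}
\int_0^t 1_D \,\mathbb{E}^{P\otimes_T Q}[(E^q(t)-E^q(s))1_A]\,ds
&= \int_0^t 1_D \,\mathbb{E}^{P\otimes_T Q}[(E^q(t)-E^q(t\wedge T)+E^q(t\wedge T)-E^q(s))1_{A\cap\{T>s\}}]\,ds\\
&\quad + \int_0^t 1_D \,\mathbb{E}^P\!\left[\mathbb{E}^{Q_\omega}[(E^q(t)-E^q(s))1_{A\cap\{T\le s\}}]\right]ds,
\end{align*}
for any measurable $D \subset [0,t]$. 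The first piece is nonpositive by the (M3) property of the fiber solutions $Q_\omega \circ \Phi_T$ applied with constant stopping time $T$ (replacing \eqref{est 157}), the middle piece is nonpositive because $P$ already satisfies (M3) on $[0,T]$ as a supermartingale increment, and the last piece is nonpositive by the (M3) property of each $Q_\omega$ on $\{T \le s\}$, using that $T$ is $P$-regular so that exceptional-time issues are confined to a Lebesgue-null set in $s$. Concluding as in \eqref{est 242} and invoking Lemma \ref{Lemma 2.1} upgrades this to the a.s. $(\bar{\mathcal{B}}_t^0)_{t\ge s}$-supermartingale property and shows that $s$ is a regular time of $E^q$ outside a null set, which completes (M3).

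The main obstacle, as in Lemma \ref{Lemma 6.3}, is the clean bookkeeping of the $P$-null sets and the Lebesgue-null sets of exceptional times across the three regimes $\{T\le s\}$, $\{s<T\le t\}$, and $\{T>t\}$; once this is organized as above, the exponential factor $e^{-C_q t}$ in \eqref{est 125} (which played the role of absorption in the definition of $\mathcal{W}$) makes the analogues of \eqref{est 152}--\eqref{est 154} cleaner than in the additive case, because no explicit dependence on $\|G\|_{L_2(U,\dot{H}_\sigma^{1/2})}^2$ needs to be tracked. Hence the constants $C_q$ do not need to be enlarged when reconstructing, and $P\otimes_T Q$ lies in the same class $\mathcal{W}(\xi^{\text{in}},\theta^{\text{in}},C_q)$, completing the proof.
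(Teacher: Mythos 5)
Your proposal is correct and follows essentially the same route as the paper: the paper's own proof of Proposition \ref{Proposition 6.7} simply sets up the same framework and then defers to the proof of Lemma \ref{Lemma 6.3} with ``$\tau$'' replaced by the deterministic time $T$, which is precisely what you carry out in detail. Your additional observations — that the analogue of \eqref{est 151} is automatic for constant $T$, and that the exponential factor $e^{-C_q t}$ in \eqref{est 125} eliminates the need to track a $\|G\|^2$-dependent correction term present in the additive case — are accurate and consistent with the paper's computations in the proof of Lemma \ref{Lemma 6.3} versus Lemma \ref{Lemma 4.3}.
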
 

\begin{proof}[Proof of Proposition \ref{Proposition 6.7}]
We fix $(\xi^{\text{in}}, \theta^{\text{in}}) \in \dot{H}_{\sigma}^{\frac{1}{2}} \times \mathbb{R}, P \in \mathcal{W} ( \xi^{\text{in}},\theta^{\text{in}}, C_{q})$, and let $T$ be a regular time of $P$. We assume that a mapping $\mathcal{E}: \bar{\Omega}_{0} \mapsto \mathcal{P}_{\dot{H}_{\sigma}^{\frac{1}{2}} \times \mathbb{R}} ( \bar{\Omega}_{0})$ defined by $\mathcal{E}(\omega) \triangleq Q_{\omega}$ satisfies the hypothesis of Lemma \ref{Lemma 2.3} (specifically, for any $A \in \bar{\mathcal{B}}$, $\omega \mapsto Q_{\omega}(A)$ is $(\bar{\mathcal{B}}_{T}^{0})$-measurable and for any $\omega \in \bar{\Omega}_{0}, Q_{\omega} ( \{ \tilde{\omega} \in \bar{\Omega}_{0}: (\xi,\theta) (T, \tilde{\omega}) = (\xi,\theta) (T,\omega) \}) = 1$) and that there exists a $P$-null set $N \in \bar{\mathcal{B}}_{T}^{0}$ such that for all $\omega \notin N$, $(\xi,\theta) (T, \omega) \in \dot{H}_{\sigma}^{\frac{1}{2}} \times U_{1}, Q_{\omega} \circ \Phi_{T} \in \mathcal{W} ( \xi(T,\omega), \theta(T,\omega), C_{q})$. According to Definition \ref{Definition 10} we only need to show that $P \otimes_{T} Q \in \mathcal{W}(\xi^{\text{in}},\theta^{\text{in}}, C_{q})$ and this can be shown identically to the proof of Lemma \ref{Lemma 6.3} with ``$\tau$'' therein replaced by $T$. 
\end{proof} 

\begin{proof}[Proof of Theorem \ref{Theorem 1.4}] 
$ $\newline
(1) This part was already proven thanks to Proposition \ref{Proposition 6.5}. \\
\noindent (2) As a consequence of  Proposition \ref{Proposition 6.5} we know that there exists an initial data $\xi^{\text{in}} \in \dot{H}_{\sigma}^{\frac{1}{2}}$ that leads to at least two probabilistically weak solutions $P, Q$ to \eqref{mult msqg} on $[0,\infty)$ and a functional $f: \dot{H}_{\sigma}^{\frac{1}{2}} \mapsto \mathbb{R}$ that is continuous and bounded that satisfies \eqref{est 180}. With Propositions \ref{Proposition 6.6}-\ref{Proposition 6.7} in hand, we can apply Lemma \ref{Lemma 7.3} to prove non-uniqueness of a.s. Markov selections of $\{ \mathcal{W} ( \xi^{\text{in}}, \theta^{\text{in}}, C_{q})\}_{(\xi^{\text{in}}, \theta^{\text{in}}) \in \dot{H}_{\sigma}^{\frac{1}{2}} \times \mathbb{R}}$ identically to \eqref{est 237} in proof of the additive case. 
\end{proof}

\section{Appendix}\label{Appendix}
\subsection{Further Preliminaries}\label{Section 7.1}
First is an important geometric lemma.
\begin{lemma}\rm{(\hspace{1sp}\cite[Lemma 6.6]{BV19b})}\label{geometric lemma}
Let $B(\Id,\varepsilon)$ denote the ball of symmetric 2x2 matrices centered on the identity of radius $\varepsilon>0$. There exists $\varepsilon_{\gamma}>0$ for which there exist disjoint finite subsets $\Gamma_j\subset \mathbb{S}^{1}$ for $j\in\{1,2\}$ and smooth positive functions $\gamma_{k}\in C^{\infty}(B(\Id,\varepsilon_{\gamma})),k\in\Gamma_j$ such that
\begin{align*}
&(1)~5\Gamma_j\subset \mathbb{Z}^2,\\
&(2)~\mathrm{If~} k\in\Gamma_j,\mathrm{~then~} -k\in\Gamma_j \mathrm{~and~} \gamma_k=\gamma_{-k},\\
&(3)~\frac{1}{2}\sum_{k\in\Gamma_j}(\gamma_k(R))^2(k^{\perp}\otimes k^{\perp})=R~ \hspace{1mm}\forall \hspace{1mm} R\in B(\Id,\varepsilon_{\gamma}),\\
\text{and }&(4)~|k+k'|\geq\frac{1}{2}~\hspace{1mm}\forall \hspace{1mm}k,k'\in\Gamma_j\mathrm{~such~ that~}k+k'\neq 0.
\end{align*}
\end{lemma}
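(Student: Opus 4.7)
The plan is to construct $\Gamma_{1}$ and $\Gamma_{2}$ explicitly as $\pm$-symmetric subsets of the sparse set $\mathbb{S}^{1}\cap\frac{1}{5}\mathbb{Z}^{2}$ drawn from Pythagorean identities, and then to produce the smooth positive weights $\gamma_{k}$ via the implicit function theorem applied to the quadratic decomposition map. The underlying observation is that the space of symmetric $2\times 2$ matrices is three-dimensional, so each $\Gamma_{j}$ must contain enough pairwise non-parallel directions for the rank-one matrices $k^{\perp}\otimes k^{\perp}$ to span this space; once spanning is arranged, positivity of the weights at $R=\Id$ propagates to a neighborhood by continuity.

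The natural source of rational points on $\mathbb{S}^{1}$ with $5k\in\mathbb{Z}^{2}$ is the Pythagorean triple $(3,4,5)$, supplying the eight unit vectors $\{(\pm 3/5,\pm 4/5),(\pm 4/5,\pm 3/5)\}$, together with the axis-aligned directions $\{(\pm 1,0),(0,\pm 1)\}\subset\mathbb{S}^{1}\cap\mathbb{Z}^{2}$. I would partition these into two disjoint, $\pm$-symmetric subsets $\Gamma_{1},\Gamma_{2}$, each rich enough that the associated rank-one matrices span all symmetric $2\times 2$ matrices. For unit vectors one has $|k+k'|^{2}=2+2\langle k,k'\rangle$, so condition $(4)$ reduces to $\langle k,k'\rangle\geq -7/8$ whenever $k+k'\neq 0$; an explicit check shows the minimal inner product among the candidate directions is bounded well away from $-7/8$, so that $(4)$ is automatic and conditions $(1)$ and the first half of $(2)$ are satisfied by construction.

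Once $\Gamma_{j}$ is fixed, the second half of $(2)$ together with $(3)$ is handled as follows. Write $\Id$ as an explicit positive combination $\Id=\frac{1}{2}\sum_{k\in\Gamma_{j}}(g_{k}^{0})^{2}(k^{\perp}\otimes k^{\perp})$ with $g_{k}^{0}=g_{-k}^{0}>0$, and consider the smooth map $F(g)\triangleq\frac{1}{2}\sum_{k\in\Gamma_{j}}g_{k}^{2}(k^{\perp}\otimes k^{\perp})$ on the space of $\pm$-symmetric families $g$. The differential $DF$ at $g^{0}$ maps onto the space of symmetric $2\times 2$ matrices precisely because the rank-one matrices span it, so the implicit function theorem produces smooth $\gamma_{k}(R)$ solving $F(\gamma(R))=R$ on a ball $B(\Id,\varepsilon_{\gamma})$. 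Shrinking $\varepsilon_{\gamma}$ preserves the positivity $\gamma_{k}(R)>0$ and the symmetry $\gamma_{k}=\gamma_{-k}$ that was built into the ansatz.

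The main obstacle is the simultaneous realization of all four conditions: two disjoint subsets of the rather sparse lattice $\mathbb{S}^{1}\cap\frac{1}{5}\mathbb{Z}^{2}$, each closed under negation, each spanning all symmetric $2\times 2$ matrices through its rank-one tensor squares, and with pairwise sums bounded away from zero. The elegant resolution of \cite{BV19b} shows there is just enough room among the $(3,4,5)$ directions, enlarged by axis-aligned ones, to accommodate everything; I would follow the same template and verify the combinatorial constraints by direct enumeration rather than any sophisticated argument.
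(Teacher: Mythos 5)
The paper does not prove this lemma; it cites it directly from \cite[Lemma 6.6]{BV19b}, so there is no internal proof to compare against. Your blind reconstruction follows the same standard construction that underlies that reference: Pythagorean directions together with the axis directions furnish the needed lattice points on $\mathbb{S}^{1}$, and the implicit function theorem converts an explicit positive decomposition of $\Id$ into smooth positive weights on a neighborhood. That part of the argument is sound, including the reduction of condition (4) to $\langle k, k' \rangle \geq -7/8$ for unit vectors.

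There is, however, one imprecision you should fix. Condition (4) is \emph{not} automatic among the twelve candidate directions: for example $(3/5,4/5)\cdot(-4/5,-3/5) = -24/25 < -7/8$, and $(3/5,4/5)+(-4/5,-3/5)=(-1/5,1/5)\neq 0$ has norm $\sqrt{2}/5 < 1/2$. So a careless $\pm$-symmetric partition can violate (4). You must choose the partition so that each $\Gamma_j$ keeps the two Pythagorean families apart; for instance $\Gamma_1 = \{(\pm 3/5,\pm 4/5)\}\cup\{(\pm 1,0)\}$ and $\Gamma_2 = \{(\pm 4/5,\pm 3/5)\}\cup\{(0,\pm 1)\}$ work, since within either set the inner products are $\pm 7/25, \pm 3/5, -1 \geq -7/8$ (with $-1$ occurring only for $k+k'=0$, which is excluded). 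With that choice, one should also actually exhibit the positive decomposition of $\Id$ before invoking the implicit function theorem: for $\Gamma_1$ the equal-weight sum over the four Pythagorean vectors is $\operatorname{diag}(32/25, 18/25)$ and the axis pair contributes $\operatorname{diag}(0,2)$, so $a^2 = 25/16$ on the Pythagorean directions and $b^2 = 7/16$ on the axis pair gives $\frac{1}{2}\sum (g_k^0)^2 k^\perp\otimes k^\perp = \Id$ with all weights strictly positive, and the three distinct matrices $k^\perp\otimes k^\perp$ are linearly independent so $DF$ at $g^0$ is invertible (since the $g_k^0$ are nonzero). Once these two concrete verifications are in place, your argument is complete and coincides with the construction the paper cites.
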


\begin{lemma}\label{transport lemma}\rm{(\hspace{1sp}\cite[Lemma A.2]{BSV19}, \cite[Proposition D.1]{BDIS15})}
Within this lemma, we denote $D\triangleq \partial_{t} + u \cdot \nabla$. Given a smooth vector field $u(t,x)$, as well as $f_{0}$ and $g$, consider a smooth solution $f$ to
\begin{align*}
\partial_{t} f + (u\cdot\nabla) f = g, \hspace{5mm}  f(t_{0}, x) = f_{0}(x), 
\end{align*}
where $u(t,x)$ is a given smooth vector field. Let $\Phi$ be the inverse of the flux $X$ of $u$ starting at time $t_{0}$ as the identity (i.e., $\partial_{t} X = u(X, t), X(t_{0}, x) = x$) so that $f(t,x) = f_{0} (\Phi(t,x))$ in case $g \equiv 0$. It follows that for $t > t_{0}$,
\begin{subequations}\label{est 2} 
\begin{align}
& \lVert f (t) \rVert_{C_{x}} \leq \lVert f_{0} \rVert_{C_{x}} + \int_{t_{0}}^{t} \lVert g(\tau) \rVert_{C_{x}} d\tau, \label{est 2a} \\
& \lVert Df (t) \rVert_{C_{x}} \leq \lVert D f_{0} \rVert_{C_{x}} e^{( t - t_{0}) \lVert Du \rVert_{C_{t, x}}} + \int_{t_{0}}^{t} e^{(t- \tau)  \lVert D u \rVert_{C_{t, x}}} \lVert Dg(\tau) \rVert_{C_{x}} d\tau. \label{est 2b} 
\end{align}
\end{subequations}  
More generally, for any $N \in \mathbb{N} \setminus \{1\}$, there exists a constant $C = C(N)$ such that 
\begin{align}
\lVert D^{N} f(t) &\rVert_{C_{x}}  \leq ( \lVert D^{N} f_{0} \rVert_{C_{x}} + C(t-t_{0})  \lVert D^{N} u \rVert_{C_{t,x}} \lVert Df_{0} \rVert_{C_{x}} ) e^{C(t-t_{0})  \lVert D u \rVert_{C_{t,x}}} \label{est 7}\\
&+ \int_{t_{0}}^{t} e^{C(t-\tau)  \lVert Du \rVert_{C_{t,x}}} (  \lVert D^{N} g(\tau) \rVert_{C_{x}} + C(t-\tau)  \lVert D^{N} u \rVert_{C_{t,x}} \lVert Dg(\tau) \rVert_{C_{x}} )d\tau.    \nonumber 
\end{align} 
Additionally, 
\begin{subequations}\label{est 15}
\begin{align}
&\lVert D\Phi(t) - \Id \rVert_{C_{x}} \leq e^{(t-t_{0}) \lVert Du \rVert_{C_{t,x}}} - 1 \leq (t- t_{0}) \lVert Du \rVert_{C_{t,x}} e^{(t- t_{0}) \lVert Du \rVert_{C_{t,x}}}, \label{est 15a} \\
& \lVert D^{N} \Phi(t) \rVert_{C_{x}} \leq C(t-t_{0})  \lVert D^{N} u \rVert_{C_{t,x}} e^{C(t-t_{0})  \lVert Du \rVert_{C_{t,x}}} \hspace{3mm} \forall \hspace{1mm} N \in \mathbb{N}\setminus \{1\}. \label{est 15b} 
\end{align}
\end{subequations}
\end{lemma}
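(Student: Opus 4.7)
The plan is to use the method of characteristics, differentiate the transport equation to obtain cascading evolution equations for the spatial derivatives of $f$, and then close the estimates via Gr\"onwall's inequality.

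First I would set up the flow map $X(t,y)$ solving $\partial_t X = u(t, X)$, $X(t_0, y) = y$, whose inverse is $\Phi$. Along a characteristic, the operator $D = \partial_t + u\cdot\nabla$ reduces to $\frac{d}{dt}$, so integrating $Df = g$ from $t_0$ to $t$ gives
\[
f(t, X(t,y)) = f_0(y) + \int_{t_0}^{t} g(\tau, X(\tau,y))\,d\tau,
\]
from which \eqref{est 2a} follows by taking the $C_x$-norm and using that $y\mapsto X(t,y)$ is a diffeomorphism. For \eqref{est 2b}, I would commute $D$ with a single spatial derivative to produce the transport-type identity
\[
D(\partial_i f) = \partial_i g - (\partial_i u)\cdot \nabla f,
\]
which along characteristics becomes an integral inequality in $\lVert Df(\tau)\rVert_{C_x}$ with forcing $\lVert Dg(\tau)\rVert_{C_x}$ and linear coefficient $\lVert Du\rVert_{C_{t,x}}$; Gr\"onwall then delivers \eqref{est 2b}.

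Next, for the higher-order estimate \eqref{est 7}, I would proceed by induction on $N$. Applying $D^N$ to the equation and commuting $D$ past each spatial derivative produces terms of the schematic form $D^N g$, $(D^N u)\cdot\nabla f$, and mixed products $(D^k u)(D^{N-k+1} f)$ with $2\le k\le N-1$. Along characteristics, one obtains an integral inequality for $\lVert D^N f(\tau)\rVert_{C_x}$ whose forcing is controlled by $\lVert D^N g(\tau)\rVert_{C_x}$ together with $\lVert D^N u\rVert_{C_{t,x}}\lVert Df(\tau)\rVert_{C_x}$ plus lower-order terms already controlled by the inductive hypothesis and the $C^0$ estimate on $f$. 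A second Gr\"onwall application with the same linear coefficient $\lVert Du\rVert_{C_{t,x}}$ then yields the claimed bound; the mixed intermediate terms $(D^k u)(D^{N-k+1}f)$ are absorbed by a standard interpolation/telescoping between the bounds on $\lVert Df\rVert_{C_x}$ and $\lVert D^N f\rVert_{C_x}$, and by accepting the universal constant $C(N)$ on the right-hand side.

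Finally, the estimates \eqref{est 15} on $\Phi$ itself come from observing that $\Phi$ solves its own transport equation. Indeed, since $f(t,x) = f_0(\Phi(t,x))$ whenever $g\equiv 0$, choosing $f_0(y)=y$ gives $D\Phi = 0$ with $\Phi(t_0,x) = x$, so $D\Phi = 0$ with initial data the identity. Applying \eqref{est 2b} to each component of $D\Phi - \Id$ (viewed as a solution of $D(D\Phi - \Id) = -(Du)\cdot \nabla\Phi$ with zero initial data, then iterating) produces \eqref{est 15a} via the elementary inequality $e^x - 1 \le x e^x$. The bound \eqref{est 15b} on $D^N\Phi$ is then obtained by applying the higher-order estimate \eqref{est 7} to $\Phi$, whose source term after commutation involves $D^N u$ multiplied by lower derivatives of $\Phi$ already controlled by \eqref{est 15a} and the inductive step. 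The main technical care is bookkeeping of the combinatorial constants produced by commutators, but no conceptual obstacle arises; the crucial point throughout is that every Gr\"onwall factor is driven by $\lVert Du\rVert_{C_{t,x}}$, which reproduces the exponential factor $e^{C(t-t_0)\lVert Du\rVert_{C_{t,x}}}$ in all three estimates.
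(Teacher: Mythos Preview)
The paper does not supply its own proof of this lemma; it is quoted verbatim from \cite[Lemma A.2]{BSV19} and \cite[Proposition D.1]{BDIS15} and used as a black box. Your sketch via characteristics, commutation of spatial derivatives with the transport operator, and Gr\"onwall is exactly the argument in those references, and is correct in outline. One point worth tightening: in the step for \eqref{est 7} you appeal to ``interpolation/telescoping'' to dispose of the intermediate commutator terms $(D^{k}u)(D^{N-k+1}f)$ for $2\le k\le N-1$. The precise mechanism in \cite{BDIS15} is the $C^{0}$-interpolation inequality $\lVert D^{k}h\rVert_{C_x}\lesssim \lVert Dh\rVert_{C_x}^{1-\theta}\lVert D^{N}h\rVert_{C_x}^{\theta}$ applied to both $u$ and $f$, so that every mixed term is controlled by products of the endpoint norms only; without this the right-hand side of \eqref{est 7} would in general contain all intermediate $\lVert D^{k}u\rVert_{C_{t,x}}$, which the stated form does not. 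Also be aware of the notational overload in the lemma as written: the opening line defines $D=\partial_{t}+u\cdot\nabla$, but in the displayed estimates $D$ denotes the spatial gradient; your write-up should keep the two straight.
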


\begin{define}\rm{(\hspace{1sp}\cite[Definition 2.5]{FR08}, \cite[Definition 2.6]{GRZ09})}\label{Definition 10}
Let the mapping $\mathcal{D}: \dot{H}_{\sigma}^{\frac{1}{2}} \mapsto \text{Comp} ( \mathcal{P}_{\dot{H}_{\sigma}^{\frac{1}{2}}} (\Omega_{0}))$ defined by $\mathcal{D} (\xi^{\text{in}}) = \mathcal{C} (\xi^{\text{in}}, C_{q})$ (resp. $\mathcal{D}: \dot{H}_{\sigma}^{\frac{1}{2}} \times U_{1} \mapsto \text{Comp} ( \mathcal{P}_{\dot{H}_{\sigma}^{\frac{1}{2}} \times U_{1}} (\bar{\Omega}_{0}))$ defined by $\mathcal{D} (\xi^{\text{in}}, \theta^{\text{in}}) = \mathcal{W} (\xi^{\text{in}}, \theta^{\text{in}}, C_{q})$) be Borel measurable.  We say that a family $\{ \mathcal{C} ( \xi^{\text{in}}, C_{q}) \}_{\xi^{\text{in}} \in \dot{H}_{\sigma}^{\frac{1}{2}}}$ (resp. $\{ \mathcal{W} ( \xi^{\text{in}}, \theta^{\text{in}}, C_{q}) \}_{(\xi^{\text{in}}, \theta^{\text{in}}) \in \dot{H}_{\sigma}^{\frac{1}{2} \times U_{1}}}$) is a.s. pre-Markov if for all $\xi^{\text{in}} \in \dot{H}_{\sigma}^{\frac{1}{2}}$ and all $P \in \mathcal{C} (\xi^{\text{in}}, C_{q})$ (resp. $(\xi^{\text{in}}, \theta^{\text{in}}) \in \dot{H}_{\sigma}^{\frac{1}{2}} \times U_{1}$ and all $P \in \mathcal{W} (\xi^{\text{in}}, \theta^{\text{in}}, C_{q})$), there exists a Lebesgue null set $\mathcal{T} \subset (0,\infty)$ such that the following holds for all $T \notin \mathcal{T}$. 
\begin{enumerate}
\item (Disintegration) There exists a $P$-null set $N \in \mathcal{B}_{T}^{0}$ such that for all $\omega \notin N$, 
\begin{align*}
&\xi(T, \omega) \in \dot{H}_{\sigma}^{\frac{1}{2}}, \hspace{3mm} P(\Phi_{T} (\cdot) \lvert \mathcal{B}_{T}^{0}) (\omega) \in \mathcal{C} ( \xi(T, \omega), C_{q})\\
& (\text{resp. } (\xi, \theta)(T, \omega) \in \dot{H}_{\sigma}^{\frac{1}{2}} \times U_{1}, \hspace{3mm} P(\Phi_{T} (\cdot) \lvert \bar{\mathcal{B}}_{T}^{0}) (\omega) \in \mathcal{W} ( \xi(T, \omega), \theta(T, \omega), C_{q})). 
\end{align*} 
\item (Reconstruction) If a mapping $\mathcal{E}: \Omega_{0} \mapsto \mathcal{P}_{\dot{H}_{\sigma}^{\frac{1}{2}}} (\Omega_{0})$ defined by $\mathcal{E} (\omega) = Q_{\omega}$ (resp. $\mathcal{E}: \bar{\Omega}_{0} \mapsto \mathcal{P}_{\dot{H}_{\sigma}^{\frac{1}{2}} \times U_{1}} (\bar{\Omega}_{0})$ defined by $\mathcal{E} (\omega) = Q_{\omega}$) satisfies the hypothesis of Lemma \ref{Lemma 2.3} and there exists a $P$-null set $N \in \mathcal{B}_{T}^{0}$ (resp. $N \in \bar{\mathcal{B}}_{T}^{0}$) such that for all $\omega \notin N$, 
\begin{align*}
&\xi(T, \omega) \in \dot{H}_{\sigma}^{\frac{1}{2}}, \hspace{3mm} Q_{\omega} \circ \Phi_{T} \in \mathcal{C} (\xi(T, \omega), C_{q}) \\
&  (\text{resp. } (\xi,\theta) (T, \omega) \in \dot{H}_{\sigma}^{\frac{1}{2}} \times U_{1}, \hspace{3mm} Q_{\omega} \circ \Phi_{T} \in \mathcal{W} (\xi(T, \omega), \theta(T, \omega), C_{q})), 
\end{align*} 
then $P \otimes_{T} Q \in \mathcal{C} (\xi^{\text{in}}, C_{q})$ (resp. $P \otimes_{T} Q \in \mathcal{W} (\xi^{\text{in}}, \theta^{\text{in}}, C_{q})$). 
\end{enumerate} 
\end{define} 

The following is the abstract Markov selection theorem (cf. \cite[Theorem 2.8]{FR08}, \cite[Theorem 2.7]{GRZ09}, and \cite[Theorem 6.7]{HZZ20}) with the addition of the maximization of a given functional that can be achieved by choosing this functional in the selection procedure as the first functional to be maximized, as explained in \cite{HZZ21a, HZZ20}. 

\begin{lemma}\rm{(\hspace{1sp}\cite[Theorem 4.19]{HZZ21a})}\label{Lemma 7.3}
\indent
Suppose that $\{ \mathcal{C} (\xi^{\text{in}}, C_{q}) \}_{\xi^{\text{in}} \in \dot{H}_{\sigma}^{\frac{1}{2}}}$ (resp.  \\$\{ \mathcal{W} (\xi^{\text{in}}, \theta^{\text{in}}, C_{q}) \}_{(\xi^{\text{in}}, \theta^{\text{in}}) \in \dot{H}_{\sigma}^{\frac{1}{2}} \times U_{1}}$) be an a.s. pre-Markov family and that $\mathcal{C} (\xi^{\text{in}}, C_{q})$ for each $\xi^{\text{in}}  \in \dot{H}_{\sigma}^{\frac{1}{2}}$ (resp. $\mathcal{W} (\xi^{\text{in}}, \theta^{\text{in}}, C_{q})$ for each $(\xi^{\text{in}}, \theta^{\text{in}}) \in  \dot{H}_{\sigma}^{\frac{1}{2}} \times U_{1}$) is non-empty and convex. 
\begin{enumerate}
\item Then there exist a measurable selection 
\begin{align*}
\dot{H}_{\sigma}^{\frac{1}{2}} \mapsto \mathcal{P}_{\dot{H}_{\sigma}^{\frac{1}{2}}} (\Omega_{0}), \xi^{\text{in}} \mapsto P_{\xi^{\text{in}}} \hspace{3mm} (\text{resp. } \dot{H}_{\sigma}^{\frac{1}{2}} \times U_{1} \mapsto \mathcal{P}_{\dot{H}_{\sigma}^{\frac{1}{2}} \times U_{1}} (\bar{\Omega}_{0}), (\xi^{\text{in}}, \theta^{\text{in}}) \mapsto P_{\xi^{\text{in}}, \theta^{\text{in}}} )
\end{align*} 
such that $P_{\xi^{\text{in}}} \in \mathcal{C}(\xi^{\text{in}}, C_{q})$ for all $\xi^{\text{in}} \in \dot{H}_{\sigma}^{\frac{1}{2}}$ (resp. $P_{\xi^{\text{in}}, \theta^{\text{in}}} \in \mathcal{W}(\xi^{\text{in}}, \theta^{\text{in}}, C_{q})$ for all $(\xi^{\text{in}}, \theta^{\text{in}}) \in \dot{H}_{\sigma}^{\frac{1}{2}} \times U_{1}$) and $\{ P_{\xi^{\text{in}}} \}_{\xi^{\text{in}} \in \dot{H}_{\sigma}^{\frac{1}{2}}}$ (resp. $\{ P_{\xi^{\text{in}}, \theta^{\text{in}}} \}_{(\xi^{\text{in}},\theta^{\text{in}}) \in \dot{H}_{\sigma}^{\frac{1}{2}} \times U_{1} }$) is an a.s. Markov family. Such $\{ P_{\xi^{\text{in}}} \}_{\xi^{\text{in}} \in\dot{H}_{\sigma}^{\frac{1}{2}}}$ (resp. $\{ P_{\xi^{\text{in}}, \theta^{\text{in}}} \}_{(\xi^{\text{in}},\theta^{\text{in}}) \in \dot{H}_{\sigma}^{\frac{1}{2}} \times U_{1} }$) is called an a.s. Markov selection of $\{ \mathcal{C} (\xi^{\text{in}}, C_{q}) \}_{\xi^{\text{in}} \in \dot{H}_{\sigma}^{\frac{1}{2}}}$ (resp. a.s. Markov selection of of $\{ \mathcal{W} (\xi^{\text{in}}, \theta^{\text{in}}, C_{q}) \}_{(\xi^{\text{in}}, \theta^{\text{in}}) \in \dot{H}_{\sigma}^{\frac{1}{2}} \times U_{1}}$). 
\item Finally, if $\delta: \dot{H}_{\sigma}^{\frac{1}{2}} \mapsto \mathbb{R}$  (resp. $\delta: \dot{H}_{\sigma}^{\frac{1}{2}} \times U_{1} \mapsto \mathbb{R}$) is a bounded continuous functional and $\lambda > 0$, then the selection can be chosen for every $\xi^{\text{in}} \in \dot{H}_{\sigma}^{\frac{1}{2}}$ (resp. $(\xi^{\text{in}}, \theta^{\text{in}}) \in \dot{H}_{\sigma}^{\frac{1}{2}} \times U_{1}$) to maximize 
\begin{align*}
\mathbb{E}^{P} \left[ \int_{0}^{\infty} e^{-\lambda s} \delta (\xi(s)) ds \right] \hspace{3mm} \left(\text{resp. } \mathbb{E}^{P} \left[ \int_{0}^{\infty} e^{-\lambda s} \delta (\xi(s), \theta(s)) ds \right] \right)
\end{align*}
among all the martingale solutions $P$ with the initial data $\xi^{\text{in}}$ (resp. probabilistically weak solutions $P$ with the initial data $(\xi^{\text{in}}, \theta^{\text{in}})$). 
\end{enumerate} 
\end{lemma}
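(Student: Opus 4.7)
The plan is to follow the abstract Krylov selection procedure in the form adapted by Flandoli--Romito and by Goldys--R\"ockner--Zhang to stochastic PDEs with distributional phase spaces, making simultaneous arguments in both the additive setting on $\Omega_0$ and the multiplicative setting on $\bar\Omega_0$. First I would introduce the class of test functionals $J_{\lambda,f}: P \mapsto \mathbb{E}^P\bigl[\int_0^\infty e^{-\lambda t} f(\xi(t))\, dt\bigr]$ (resp.\ with $f$ depending also on $\theta(t)$), indexed by $\lambda\in \mathbb{Q}_{>0}$ and $f$ in a countable dense subset $\{f_n\}$ of $C_b(\dot H_\sigma^{1/2})$ (resp.\ $C_b(\dot H_\sigma^{1/2}\times U_1)$). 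Because $\mathcal{D}(\xi^{\mathrm{in}})$ is a compact, convex, non-empty subset of $\mathcal{P}_{\dot H_\sigma^{1/2}}(\Omega_0)$ (compactness is inherited from Proposition~\ref{Proposition 4.1} or Proposition~\ref{Proposition 6.1} via the Borel-measurability hypothesis and the closedness of the solution set), each such functional attains its supremum on $\mathcal{D}(\xi^{\mathrm{in}})$.

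Next I would define the refinement operation: for an a.s.\ pre-Markov family $\{\mathcal{D}(\xi^{\mathrm{in}})\}$ and a functional $J=J_{\lambda,f}$, set
\begin{equation*}
\mathcal{D}^{J}(\xi^{\mathrm{in}}) \triangleq \bigl\{P\in \mathcal{D}(\xi^{\mathrm{in}}) : J(P)= \sup_{Q\in \mathcal{D}(\xi^{\mathrm{in}})} J(Q)\bigr\}.
\end{equation*}
The heart of the argument is to show that $\{\mathcal{D}^J(\xi^{\mathrm{in}})\}$ is again a.s.\ pre-Markov, non-empty, convex, and compact-valued, with the map $\xi^{\mathrm{in}}\mapsto \mathcal{D}^{J}(\xi^{\mathrm{in}})$ Borel-measurable. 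Compactness and convexity of $\mathcal{D}^J(\xi^{\mathrm{in}})$ follow from linearity and continuity of $J$ on $\mathcal{P}(\Omega_0)$. Borel-measurability of $\xi^{\mathrm{in}}\mapsto \sup_{Q\in \mathcal{D}(\xi^{\mathrm{in}})} J(Q)$ follows from the measurable maximum theorem applied to the Borel selection given in Definition~\ref{Definition 10}. The propagation of the disintegration property will use the decomposition
\begin{equation*}
J(P) = \mathbb{E}^{P}\Bigl[\int_0^{T} e^{-\lambda t} f(\xi(t))\,dt\Bigr] + e^{-\lambda T}\,\mathbb{E}^{P}\bigl[\, J(\,P(\Phi_T(\cdot)\,|\,\mathcal{B}_T^0)\,)\,\bigr],
\end{equation*}
so that if $P\in \mathcal{D}^J(\xi^{\mathrm{in}})$, then for $P$-a.e.\ $\omega$ its conditional law must realize the supremum $\sup_{Q\in \mathcal{D}(\xi(T,\omega))} J(Q)$; otherwise the reconstruction property would produce an element of $\mathcal{D}(\xi^{\mathrm{in}})$ with a strictly larger $J$-value, contradicting maximality. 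The reconstruction property for $\mathcal{D}^J$ follows the symmetric argument: if $Q_\omega\in \mathcal{D}^J(\xi(T,\omega))$ a.s., then $J(P\otimes_T Q) = \mathbb{E}^P[\int_0^T e^{-\lambda t}f(\xi(t))dt] + e^{-\lambda T}\mathbb{E}^P[\sup J]$, which together with the maximality of $P$ on $[0,T]$ yields $P\otimes_T Q\in \mathcal{D}^{J}(\xi^{\mathrm{in}})$.

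Given this refinement lemma, I would enumerate the countable family $\{J_n\}_{n\in\mathbb{N}}$ and iterate, setting $\mathcal{D}_0\triangleq \mathcal{D}$ and $\mathcal{D}_{n+1}\triangleq (\mathcal{D}_n)^{J_n}$. The intersection $\mathcal{D}_\infty(\xi^{\mathrm{in}})\triangleq \bigcap_{n\ge 0}\mathcal{D}_n(\xi^{\mathrm{in}})$ is a non-empty compact convex set (nested compacts), and because $\{J_n\}$ separates points of $\mathcal{P}(\Omega_0)$ by the Stone--Weierstrass-type density of $\{e^{-\lambda \cdot}f_n\}$, the set $\mathcal{D}_\infty(\xi^{\mathrm{in}})$ is a singleton $\{P_{\xi^{\mathrm{in}}}\}$. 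Measurability is preserved at every countable step, so $\xi^{\mathrm{in}}\mapsto P_{\xi^{\mathrm{in}}}$ is Borel. The a.s.\ Markov property then follows from the singleton-valued disintegration property: for a.e.\ $T$ and $P_{\xi^{\mathrm{in}}}$-a.e.\ $\omega$, both $P_{\xi^{\mathrm{in}}}(\Phi_T(\cdot)\,|\,\mathcal{B}_T^0)(\omega)$ and $P_{\omega(T)}$ belong to the singleton $\mathcal{D}_\infty(\omega(T))$, hence coincide. For part~(2), I would simply place the prescribed functional $P\mapsto \mathbb{E}^P[\int_0^\infty e^{-\lambda t}\delta(\xi(t))dt]$ at the head of the enumeration, so that $P_{\xi^{\mathrm{in}}}$ automatically maximizes it among all elements of $\mathcal{D}(\xi^{\mathrm{in}})$.

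The main obstacle I anticipate is the careful verification that the refinement operation preserves the reconstruction property under the weak formulation here, since the energy-inequality component (M3) in Definitions~\ref{Definition 6} and \ref{Definition 8} enters the definition of $\mathcal{D}(\xi^{\mathrm{in}})$ and must be checked when gluing $P$ on $[0,T]$ to $\{Q_\omega\}$ on $[T,\infty)$; this is precisely what was handled in Lemmas~\ref{Lemma 4.3} and \ref{Lemma 6.3}, which are the reason those lemmas were proven in this generality. Beyond that, the selection procedure is classical and the measurability issues are routine once the compactness and continuity of $J_{\lambda,f}$ on $\mathcal{P}_{\dot H_\sigma^{1/2}}(\Omega_0)$ are established.
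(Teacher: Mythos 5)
The paper does not prove this lemma itself: it is cited wholesale from \cite[Theorem 4.19]{HZZ21a}, which in turn rests on \cite[Theorem 2.8]{FR08} and \cite[Theorem 2.7]{GRZ09}, and the text only remarks that part~(2) follows by placing the given functional at the head of the selection procedure. Your proposal therefore does more than the paper asks: it reconstructs the underlying Krylov--Stroock--Varadhan selection argument. The reconstruction is correct and faithful to the route taken in those references. You correctly identify the three key ingredients --- (i) the refinement operation $\mathcal{D}\mapsto\mathcal{D}^{J}$ and the decomposition
\begin{equation*}
J(P) = \mathbb{E}^{P}\Bigl[\int_0^{T} e^{-\lambda t} f(\xi(t))\,dt\Bigr] + e^{-\lambda T}\,\mathbb{E}^{P}\bigl[\, J\bigl(\,P(\Phi_T(\cdot)\,|\,\mathcal{B}_T^0)\,\bigr)\,\bigr],
\end{equation*}
from which preservation of both disintegration and reconstruction under refinement follows by gluing a measurable family of maximizers and invoking strict monotonicity; (ii) iteration over a countable family $\{J_{\lambda_n, f_n}\}$ whose separating power forces $\mathcal{D}_\infty(\xi^{\mathrm{in}})$ to collapse to a singleton; and (iii) the identification of the conditional law and the shifted selection inside that singleton, which yields the a.s.\ Markov property. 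You also correctly flag that the nontrivial model-dependent input is the reconstruction (gluing) step in the presence of the relaxed energy inequality (M3), which is indeed the content of Lemmas~\ref{Lemma 4.3} and~\ref{Lemma 6.3} in the paper.

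Two points of precision are worth making. First, $C_b(\dot H_\sigma^{1/2})$ (or $C_b(\dot H_\sigma^{1/2}\times U_1)$) is not separable in the sup norm, so one cannot literally take "a countable dense subset of $C_b$." What the cited references do, and what is actually needed, is a countable \emph{measure-separating} (or convergence-determining) family --- e.g.\ built from cylinder functions through a Schauder basis --- so that the countable intersection $\mathcal{D}_\infty(\xi^{\mathrm{in}})$ is indeed a singleton; this is a cosmetic but real repair to your statement. Second, the "a.s.'' qualifier in the a.s.\ pre-Markov and a.s.\ Markov definitions means disintegration/reconstruction hold only off a Lebesgue-null set $\mathcal{T}$ of times; your iteration takes a countable union of such exceptional sets, which is again Lebesgue-null, but this accounting should be stated explicitly because the a.s.\ weakening is precisely what distinguishes the present setting from the classical Stroock--Varadhan selection where the Markov property holds for all $T$. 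With those two refinements your outline is a complete and correct proof of the cited theorem.
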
 

\subsection{Proofs of Propositions \ref{prop solution z}, \ref{prop additive amplitudes}, \ref{prop additive psi}, \ref{prop mult amplitudes}, and \ref{prop mult psi}}\label{Section 7.2}
\begin{proof}[Proof of Proposition  \ref{prop solution z}]
We can directly solve \eqref{est 42} to obtain
\begin{align}\label{est 38} 
z(t) = \int_{0}^{t} e^{- \Lambda^{\frac{3}{2} - 2 \sigma} (t-s)} \mathbb{P} dB(s). 
\end{align}
We define 
\begin{equation}\label{est 37} 
Y(s) \triangleq \frac{ \sin(\pi \alpha)}{\pi} \int_{0}^{s} e^{- \Lambda^{\frac{3}{2} - 2 \sigma} (s-r)} (s-r)^{-\alpha} \mathbb{P} dB(r), 
\end{equation}  
where $\alpha \in (0,1)$ (see \cite[p. 131]{DZ14}). For our first result, we choose 
\begin{equation}\label{est 39}
\epsilon \triangleq  \frac{\sigma}{(\frac{3}{2} - 2 \sigma) } \text{ and } \alpha \in (0, \epsilon). 
\end{equation} 
It follows by \cite[p. 131]{DZ14}, \eqref{est 38}-\eqref{est 37} that 
\begin{equation}\label{new3}
\int_{0}^{t} (t-s)^{\alpha -1} e^{-\Lambda^{\frac{3}{2} - 2 \sigma} (t-s)} Y(s) ds = z(t). 
\end{equation} 
We estimate using Gaussian hypercontractivity theorem (e.g. \cite[Theorem 3.50]{J97}), \eqref{est 37} and \eqref{est 39}, for all $l \in \mathbb{N}$, 
\begin{align}
\mathbb{E}^{\mathbf{P}} [ \lVert (-\Delta)^{\frac{5}{4} + \sigma} Y(s) \rVert_{L_{x}^{2}}^{2l} ]   \overset{\eqref{est 37} \eqref{regularity of noise}}{\lesssim}_{l}  \left( \int_{0}^{s} (s-r)^{-2\alpha-1 + 2\epsilon} dr \right)^{l} \overset{\eqref{est 39}}{\lesssim}_{l} 1. \label{est 40}
\end{align}
It follows from \eqref{est 40} that for all $T > 0$, 
\begin{equation}\label{est 262} 
\mathbb{E}^{\mathbf{P}} [ \int_{0}^{T} \lVert (-\Delta)^{\frac{5}{4} + \sigma} Y(s)  \rVert_{L_{x}^{2}}^{2l}ds ] \lesssim_{l,T} 1. 
\end{equation} 
Thus, for all $l > \frac{1}{2\alpha}$, 
\begin{equation*}
\mathbb{E}^{\mathbf{P}} [ \sup_{t \in [0,T]} \lVert (-\Delta)^{\frac{5}{4} + \sigma} z(t) \rVert_{L_{x}^{2}}^{2l} ] \overset{\eqref{new3}}{\lesssim_{l}} \mathbb{E}^{\mathbf{P}} \left[ \lVert (-\Delta)^{\frac{5}{4} + \sigma} Y \rVert_{L_{T}^{2l} L_{x}^{2}}^{2l} \right]   \overset{\eqref{est 262}}{\lesssim}_{l,T} 1. 
\end{equation*} 
This implies the first result in \eqref{est 41}. Next, differently from $\epsilon$ and $\alpha$ in \eqref{est 39}, we work with an arbitrary $\alpha \in (0, \frac{1}{2})$ and estimate by Gaussian hypercontractivity theorem, It$\hat{\mathrm{o}}$'s isometry, and \eqref{regularity of noise} for any $l \in \mathbb{N}$, 
\begin{equation*}
\mathbb{E}^{\mathbf{P}} [ \lVert (-\Delta)^{\frac{7}{8} + 2 \sigma} Y(s) \rVert_{L_{x}^{2}}^{2l} ] \overset{\eqref{est 37}}{\lesssim} \left(\int_{0}^{s} (s-r)^{-2\alpha}  \Tr \left( ( -\Delta)^{\frac{7}{4} + 4 \sigma} GG^{\ast} \right) dr \right)^{l}    \lesssim_{l,s} 1.
\end{equation*} 
Integrating over $[0,T]$ and applying Fubini's theorem show again that 
\begin{align}\label{est 263}
\mathbb{E}^{\mathbf{P}}\left[ \int_{0}^{T} \lVert Y(s) \rVert_{\dot{H}_{x}^{\frac{7}{4} + 4 \sigma} }^{2l} ds\right] \lesssim_{l,T} 1. 
\end{align}
Now applications of \cite[Proposition A.1.1]{DZ92} and Kolmogorov's test (e.g. \cite[Theorem 3.3]{DZ14}), using \eqref{new3}, complete the proof of Proposition \ref{prop solution z}. 
\end{proof}

\begin{proof}[Proof of Proposition \ref{prop additive amplitudes}]
We begin with the proof of \eqref{est 10} as follows: 
\begin{align*}
\lVert a_{k,j}(t) \rVert_{C_{x}} \overset{\eqref{est 3}}{\leq}& \lambda_{q+1}^{-\frac{1}{2}} \lvert \rho_{j} \rvert^{\frac{1}{2}} \left\lVert \gamma_{k} \left( \Id - \frac{ \mathring{R}_{q,j} (t,x)}{\rho_{j}} \right) \right\rVert_{C_{x}} \nonumber \\
\overset{\eqref{est 28} \eqref{est 9} }{\leq}&  \frac{1}{\sqrt{8} \pi} \bar{e}^{\frac{1}{2}} \delta_{q+1}^{\frac{1}{2}} \sup_{k \in \Gamma_{1} \cup \Gamma_{2}} \lVert \gamma_{k} \rVert_{C(B(\Id, \varepsilon_\gamma ))}.
\end{align*}
Next, for any $N \in \mathbb{N}$, we compute from \eqref{est 3} using \eqref{est 28} and \cite[Equation (130)]{BDIS15} 
\begin{align}\label{new5}
\lVert D^{N} a_{k,j}(t) \rVert_{C_{x}} \lesssim \delta_{q+1}^{\frac{1}{2}} \bar{e}^{\frac{1}{2}} \left[ \left\lVert \frac{ D^{N} \mathring{R}_{q,j} (t)}{\rho_{j}} \right\rVert_{C_{x}} + \left\lVert \frac{ D \mathring{R}_{q,j} (t)}{\rho_{j}} \right\rVert_{C_{x}}^{N} \right].
\end{align}
We can now rely on \eqref{est 7} to compute 
\begin{align}
& \left\lVert \frac{ D^{N} \mathring{R}_{q,j} (t) }{\rho_{j}} \right\rVert_{C_{x}}  \nonumber \\
&\overset{\eqref{inductive 2a} \eqref{inductive 1a} \eqref{est 17}  }{\lesssim} \frac{ ( \lambda_{q}^{N}  \lVert \mathring{R}_{l} (\tau_{q+1} j) \rVert_{C_{x}} + \tau_{q+1} \lambda_{q}^{N} \lVert \Lambda y_{l} + \Lambda z_{l} \rVert_{C_{t,x,q+1}} \lambda_{q} \lVert \mathring{R}_{l} (\tau_{q+1} j) \rVert_{C_{x}})}{\varepsilon_\gamma^{-1} \sqrt{ l^{2} + \lVert \mathring{R}_{q,j} (\tau_{q+1} j) \rVert_{C_{x}}^{2}} + \gamma_{l} (\tau_{q+1} j)}  \nonumber \\
&\overset{\eqref{est 27} \eqref{est 16} \eqref{inductive 1c} \eqref{additive stopping time} }{\lesssim}  \lambda_{q}^{N} +  \tau_{q+1} \lambda_{q}^{N+2} \delta_{q}^{\frac{1}{2}} \bar{e}^{\frac{1}{2}}.  \label{new 6}
\end{align}
Second, we compute using \eqref{est 2b} and \eqref{est 17} 
\begin{align}\label{new 7}
\left\lVert \frac{ D \mathring{R}_{q,j} (t)}{\rho_{j}} \right\rVert_{C_{x}}^{N} \overset{\eqref{inductive 2a}}{\lesssim} \left( \frac{ \lambda_{q} \lVert \mathring{R}_{l} (\tau_{q+1} j) \rVert_{C_{x}}}{ \varepsilon_\gamma^{-1} \sqrt{ l^{2} + \lVert \mathring{R}_{q,j} (\tau_{q+1} j) \rVert_{C_{x}}^{2}} + \gamma_{l} (\tau_{q+1} j)} \right)^{N}\overset{\eqref{est 16} \eqref{est 27} }{\lesssim} \lambda_{q}^{N}.  \
\end{align}
Therefore, we conclude now by applying \eqref{new 6} and \eqref{new 7} to \eqref{new5} that 
\begin{align*}
\lVert D^{N} a_{k,j} \rVert_{C_{t, x, q+1}}  \lesssim \delta_{q+1}^{\frac{1}{2}} \bar{e}^{\frac{1}{2}} [ \lambda_{q}^{N} + \tau_{q+1} \lambda_{q}^{N+2} \delta_{q}^{\frac{1}{2}} \bar{e}^{\frac{1}{2}} ] \overset{\eqref{est 17}}{\lesssim} \delta_{q+1}^{\frac{1}{2}} \bar{e}^{\frac{1}{2}} \lambda_{q}^{N}. 
\end{align*}
Finally, thanks to $\mathbb{P}_{q+1, k}$ in \eqref{est 67a} that restricts the frequency support to $\{ \xi: \frac{7}{8} \lambda_{q+1} \leq \lvert \xi \rvert \leq \frac{9}{8} \lambda_{q+1}\}$, we see that \eqref{support wqplus1} holds. As $y_{q+1} = y_{l} + w_{q+1}$ from \eqref{est 44} and $\supp \hat{y}_{l} \subset B(0, 2 \lambda_{q})$ due to \eqref{inductive 1a} at level $q$, we see that \eqref{inductive 1a} at level $q+1$ follows. It also follows from this and \eqref{additive decomposition} that \eqref{inductive 2a} at level $q+1$ holds. This completes the proof of Proposition \ref{prop additive amplitudes}.
\end{proof}

\begin{proof}[Proof of Proposition \ref{prop additive psi}]
The first claim in \eqref{psi 1} is immediate by definition from \eqref{est 14c}. Next, by \cite[Equation (130)]{BDIS15}, for the case $N=1$ we have
\begin{align}
&\lVert D \psi_{q+1, j, k} (t) \rVert_{C_{x}}  \overset{\eqref{est 14c}}{\lesssim} \lambda_{q+1} \lVert D \Phi_{j} (t) - \Id \rVert_{C_{x}}   \label{est 78}\\
&\overset{\eqref{est 15a}\eqref{inductive 1a} \eqref{est 17}}{\lesssim} \lambda_{q+1} \tau_{q+1} \left( \lambda_{q} \lVert \Lambda y_{l} \rVert_{C_{t,x,q+1}} + f(q) \lVert  \Lambda z_{l} \rVert_{C_{t,x,q+1}} \right)  \overset{\eqref{inductive 1c} }{\lesssim} \lambda_{q+1} \tau_{q+1} \lambda_{q}^{2} \delta_{q}^{\frac{1}{2}} \bar{e}^{\frac{1}{2}} M_{0}^{\frac{1}{2}}.\nonumber 
\end{align}
Next, for $N \in \mathbb{N} \setminus \{1\}$, we rely on \cite[Equation (130)]{BDIS15} to compute 
\begin{align} 
&\lVert D^{N} \psi_{q+1, j,k} (t) \rVert_{C_{x}} \overset{\eqref{est 14c}}{\lesssim} \lambda_{q+1} \lVert \Phi_{j} (t) \rVert_{C_{x}^{N}} + \lambda_{q+1}^{N} \lVert \nabla \Phi_{j}(t) - \Id \rVert_{C_{x}}^{N}  \label{est 80}  \\
\overset{\eqref{est 15b}  \eqref{est 17}}{\lesssim}&  \lambda_{q+1} \tau_{q+1} \lambda_{q}^{N+1} M_{0}^{\frac{1}{2}} \delta_{q}^{\frac{1}{2}} \bar{e}^{\frac{1}{2}} + ( \lambda_{q+1} \tau_{q+1} \lambda_{q}^{2} M_{0}^{\frac{1}{2}} \delta_{q}^{\frac{1}{2}} \bar{e}^{\frac{1}{2}})^{N}  \lesssim \lambda_{q+1} \tau_{q+1} \lambda_{q}^{N+1} \delta_{q}^{\frac{1}{2}} \bar{e}^{\frac{1}{2}} M_{0}^{\frac{1}{2}},   \nonumber 
\end{align}
giving us the second claim in \eqref{psi 1} as well; here, the last inequality of \eqref{est 80} used the fact that for $\beta > \frac{1}{2}$ sufficiently close to $\frac{1}{2}$ and $a_{0}$ sufficiently large 
\begin{equation}\label{est 81} 
\lambda_{q+1} \tau_{q+1} \lambda_{q} \delta_{q}^{\frac{1}{2}} \overset{\eqref{tau} \eqref{define lambda delta} \eqref{define l}}{=} a^{b ( \frac{\beta}{2} - \frac{1}{4})} a^{b^{q} [ 1- \beta + b( \frac{1}{2} - \frac{\alpha}{2} - \frac{\beta}{2}) + b^{2} (-\frac{1}{2} + \beta) ]} \overset{\eqref{additive bound b}}{\ll} 1.
\end{equation}
Next, we can compute 
\begin{align*}
&\lVert D (a_{k,j} \psi_{q+1, j,k} ) (t) \rVert_{C_{x}} \overset{\eqref{est 19} \eqref{psi 1}}{\lesssim} \delta_{q+1}^{\frac{1}{2}} \bar{e}^{\frac{1}{2}} \lambda_{q} + (\delta_{q+1}^{\frac{1}{2}} \bar{e}^{\frac{1}{2}}) ( \lambda_{q+1} \tau_{q+1} \lambda_{q}^{2} \delta_{q}^{\frac{1}{2}} \bar{e}^{\frac{1}{2}}) \overset{\eqref{est 81}}{\lesssim} \lambda_{q} \delta_{q+1}^{\frac{1}{2}} \bar{e}^{\frac{1}{2}},  \\
&\lVert D^{2} (a_{k,j} \psi_{q+1, j, k} ) (t) \rVert_{C_{x}} \overset{\eqref{est 19} \eqref{psi 1}}{\lesssim}  \delta_{q+1}^{\frac{1}{2}} \bar{e}^{\frac{1}{2}} \lambda_{q}^{2}  + ( \delta_{q+1}^{\frac{1}{2}} \bar{e}^{\frac{1}{2}}) (\lambda_{q+1} \tau_{q+1} \lambda_{q}^{3} \delta_{q}^{\frac{1}{2}} \bar{e}^{\frac{1}{2}}) \overset{\eqref{est 81}}{\lesssim} \lambda_{q}^{2} \delta_{q+1}^{\frac{1}{2}} \bar{e}^{\frac{1}{2}},
\end{align*}
so we have \eqref{psi 2} and the proof of Proposition \ref{prop additive psi} is complete.
\end{proof}

\begin{proof}[Proof of Proposition \ref{prop mult amplitudes}]
First, \eqref{est 63b} follows from definition in \eqref{est 58a} and an application of \eqref{est 64}; the inequality in \eqref{est 63a} follows from the same computation. Next, for any $N \in \mathbb{N}$, using \cite[Equation (130)]{BDIS15} we compute 
\begin{align}
\lVert D^{N} \bar{a}_{k,j} (t) \rVert_{C_{x}} \lesssim&  \Upsilon_{l}^{-\frac{1}{2}} (t) e^{\frac{3}{2}L^{\frac{1}{4}}} \delta_{q+1}^{\frac{1}{2}} \bar{e}^{\frac{1}{2}} \nonumber \\
& \times \left[ \lVert D^{N} \left( \frac{ \mathring{R}_{q,j} (t,x)}{\bar{\rho}_{j}} \right) \rVert_{C_{x}} + \lVert \frac{ D \mathring{R}_{q,j} (t,x)}{\bar{\rho}_{j}} \rVert_{C_{x}}^{N} \right]. \label{est 65}
\end{align}
Next, we estimate 
\begin{align}
& \lVert \frac{D^{N} \mathring{R}_{q,j} (t,x)}{\bar{\rho}_{j}} \rVert_{C_{x}}  \nonumber \\
\overset{\eqref{est 7} \eqref{est 58b} \eqref{mult inductive 2a} \eqref{est 79} }{\lesssim}& \frac{ \lambda_{q}^{N} \lVert \mathring{R}_{l} (\tau_{q+1} j) \rVert_{C_{x}} + \tau_{q+1} \lambda_{q}^{N} e^{L^{\frac{1}{4}}} \lVert \Lambda y_{l} (t) \rVert_{C_{x}} \lambda_{q} \lVert \mathring{R}_{l} (\tau_{q+1} j) \rVert_{C_{x}}}{ \varepsilon_\gamma^{-1} \sqrt{ l^{2} + \lVert \mathring{R}_{q,j} (\tau_{q+1} j) \rVert_{C_{x}}^{2}} + \bar{\gamma}_{l} (\tau_{q+1} j)}  \nonumber  \\
\overset{\eqref{est 261} \eqref{est 60} \eqref{mult inductive 1c}}{\lesssim}&\lambda_{q}^{N} [ 1 +  \tau_{q+1} \lambda_{q}^{2} \delta_{q}^{\frac{1}{2}} e^{L^{\frac{1}{4}}} M_{0}^{\frac{1}{2}} m_{L}^{4} \bar{e}^{\frac{1}{2}}]. \label{est 186} 
\end{align}
Second, relying on \eqref{est 58b}, \eqref{est 2b}, \eqref{est 261} and \eqref{est 79} gives us 
\begin{equation}
\lVert \frac{ D \mathring{R}_{q,j}(t,x)}{\bar{\rho}_{j}} \rVert_{C_{x}}^{N}  \lesssim \left(  \frac{  \lambda_{q} \lVert \mathring{R}_{l} (\tau_{q+1} j) \rVert_{C_{x}} }{\varepsilon_\gamma^{-1} \sqrt{ l^{2} + \lVert \mathring{R}_{q,j} (\tau_{q+1} j) \rVert_{C_{x}}^{2}} + \bar{\gamma}_{l} (\tau_{q+1} j)}\right)^{N} \lesssim \lambda_{q}^{N}.  \label{est 187}
\end{equation} 
Therefore, we conclude by applying \eqref{est 186} and \eqref{est 187} to \eqref{est 65} that 
\begin{align*}
\lVert D^{N} \bar{a}_{k,j} (t) \rVert_{C_{x}}  \lesssim \Upsilon_{l}^{-\frac{1}{2}} (t) e^{\frac{3}{2} L^{\frac{1}{4}}} \delta_{q+1}^{\frac{1}{2}} \bar{e}^{\frac{1}{2}} \lambda_{q}^{N} [ 1 + \tau_{q+1} \lambda_{q}^{2} \delta_{q}^{\frac{1}{2}} e^{L^{\frac{1}{4}}} M_{0}^{\frac{1}{2}} m_{L}^{4} \bar{e}^{\frac{1}{2}}],
\end{align*} 
so that as $\tau_{q+1} \lambda_{q}^{2} \delta_{q}^{\frac{1}{2} } e^{L^{\frac{1}{4}}}m_{L}^{4} \ll 1$ can be verified immediately from \eqref{est 17}, we may deduce the desired result \eqref{est 63d}. Additionally, \eqref{est 63c} follows from \eqref{est 63d}. Identically to the additive case, due to the operator $\mathbb{P}_{q+1,k}$ in \eqref{est 68} we see that \eqref{support wqplus1} is satisfied. This also leads to $\supp \hat{y}_{q+1} \subset B(0,2\lambda_{q+1})$ and $\supp \hat{\mathring{R}}_{q+1} \subset B(0, 4 \lambda_{q+1})$ due to \eqref{est 44} and \eqref{est 54} as claimed in \eqref{mult inductive 1a} and \eqref{mult inductive 2a} respectively. This completes the proof of Proposition \ref{prop mult amplitudes}. 
\end{proof}

\begin{proof}[Proof of Proposition \ref{prop mult psi}]
The first inequality of  \eqref{est 82a} is immediate by definition from \eqref{est 14c}. Next, by \cite[Equation (130)]{BDIS15}, we can proceed similarly to \eqref{est 78} as 
\begin{align*}
\lVert D \psi_{q+1, j, k} (t) \rVert_{C_{x}} \lesssim&  \lambda_{q+1} \lVert D \Phi_{j} (t) - \Id \rVert_{C_{x}} \overset{\eqref{est 15a}\eqref{est 79}\eqref{mult inductive 1c}}{\lesssim}  \lambda_{q+1} \tau_{q+1} \lambda_{q}^{2} \delta_{q}^{\frac{1}{2}} \bar{e}^{\frac{1}{2}} M_{0}^{\frac{1}{2}} m_{L}^{4} e^{L^{\frac{1}{4}}}. 
\end{align*}
For $N \in \mathbb{N}\setminus \{1\}$, we then rely on \cite[Equation (130)]{BDIS15} to deduce similarly to \eqref{est 80}, 
\begin{align*}
& \lVert D^{N} \psi_{q+1, j, k} (t) \rVert_{C_{x}} \\  
\overset{\eqref{est 15} \eqref{est 79} \eqref{mult inductive 1c}}{\lesssim}& \lambda_{q+1} \tau_{q+1} \lambda_{q}^{N} e^{L^{\frac{1}{4}}} ( M_{0}^{\frac{1}{2}} m_{L}^{4} \lambda_{q} \delta_{q}^{\frac{1}{2}} \bar{e}^{\frac{1}{2}})  \\
&+ \lambda_{q+1}^{N} \left( \tau_{q+1} \lambda_{q} e^{L^{\frac{1}{4}}} ( M_{0}^{\frac{1}{2}} m_{L}^{4} \lambda_{q} \delta_{q}^{\frac{1}{2}} \bar{e}^{\frac{1}{2}}) \right)^{N} \lesssim  \lambda_{q+1} \tau_{q+1} \lambda_{q}^{N+1} \delta_{q}^{\frac{1}{2}} \bar{e}^{\frac{1}{2}} M_{0}^{\frac{1}{2}} m_{L}^{4} e^{L^{\frac{1}{4}}},
\end{align*}
where the last inequality relied on the fact that for $\beta > \frac{1}{2}$ sufficiently close to $\frac{1}{2}$ and $a_{0}$ sufficiently large, 
\begin{equation}\label{est 189} 
\lambda_{q+1} \tau_{q+1} \lambda_{q} e^{L^{\frac{1}{4}}} m_{L}^{4} \delta_{q}^{\frac{1}{2}} \bar{e}^{\frac{1}{2}} \lesssim a^{b^{q} [b^{2} (-\frac{1}{2} + \beta) + b ( \frac{1- \alpha- \beta}{2})+ 1- \beta]}a^{b(2\beta -1) \frac{1}{4}} a^{L^{\frac{1}{4}}} \overset{\eqref{mult bound b}}{\ll} 1. 
\end{equation}
Next, we are able to compute
\begin{align*}
&\lVert D ( \bar{a}_{k,j} \psi_{q+1, j, k} )(t) \rVert_{C_{x}} \overset{\eqref{est 63b} \eqref{est 63d} \eqref{est 82a}}{\lesssim}  \Upsilon_{l}^{-\frac{1}{2}} (t) e^{\frac{3}{2} L^{\frac{1}{4}}}  \delta_{q+1}^{\frac{1}{2}} \bar{e}^{\frac{1}{2}} \lambda_{q} \\
& \hspace{28mm} \times [ 1 + \bar{e}^{\frac{1}{2}} M_{0}^{\frac{1}{2}} m_{L}^{4} e^{L^{\frac{1}{4}}} \lambda_{q+1} \tau_{q+1} \lambda_{q} \delta_{q}^{\frac{1}{2}} ]  \overset{\eqref{est 189}}{\lesssim}  \Upsilon_{l}^{-\frac{1}{2}} (t) e^{\frac{3}{2} L^{\frac{1}{4}}}  \delta_{q+1}^{\frac{1}{2}} \bar{e}^{\frac{1}{2}} \lambda_{q},
\end{align*}
and similarly,
\begin{align*}
&\lVert D^{2} ( \bar{a}_{k,j} \psi_{q+1, j, k} ) (t) \rVert_{C_{x}}\overset{\eqref{est 63b} \eqref{est 63d} \eqref{est 82a}}{\lesssim}  \Upsilon_{l}^{-\frac{1}{2}} (t) e^{\frac{3}{2} L^{\frac{1}{4}}}  \delta_{q+1}^{\frac{1}{2}} \bar{e}^{\frac{1}{2}} \lambda_{q}^{2}  \\
& \hspace{28mm} \times [ 1 + \bar{e}^{\frac{1}{2}} M_{0}^{\frac{1}{2}} m_{L}^{4} e^{L^{\frac{1}{4}}} \lambda_{q+1} \tau_{q+1} \lambda_{q} \delta_{q}^{\frac{1}{2}} ] \overset{\eqref{est 189}}{\lesssim} \Upsilon_{l}^{-\frac{1}{2}} (t) e^{\frac{3}{2} L^{\frac{1}{4}}}  \delta_{q+1}^{\frac{1}{2}} \bar{e}^{\frac{1}{2}} \lambda_{q}^{2}.
\end{align*}
Thus, we verified \eqref{est 82b} and \eqref{est 82c}. The analogous results \eqref{est 82d} and \eqref{est 82e} for $a_{k,j}$ follow from definition in \eqref{est 58a}, concluding this proof of Proposition \ref{prop mult psi}.
\end{proof}

\subsection{Details of the Proof of \eqref{est 84} \except{toc}{to Bound $R_O$}}\label{mult oscillation details}
Being mindful of the difference in the definition of $a_{k,j}$ from the additive case, we can define $\vartheta_{j,k}, s^{m}, \mathcal{T}^{m}$, and $\mathcal{L}_{j,k}^{ml}$ identically to \eqref{est 85} from the additive case. We can then break apart
\begin{equation}\label{est 230}
R_{O} = R_{O, \text{approx}} + R_{O, \text{low}} + R_{O, \text{high}}
\end{equation}
where 
\begin{subequations}\label{est 90} 
\begin{align}
R_{O, \text{approx}} \triangleq& \sum_{j} \chi_{j}^{2} (\mathring{R}_{l} - \mathring{R}_{q,j}), \label{est 90a} \\
R_{O, \text{low}} \triangleq& \sum_{j} \chi_{j}^{2} \mathring{R}_{q,j} + \sum_{j,k} \mathring{\mathcal{L}}_{j,k},  \label{est 90b}\\
R_{O, \text{high}} \triangleq& \mathcal{B} \tilde{P}_{\approx \lambda_{q+1}} \sum_{j, j', k, k': k + k' \neq 0}  \label{est 90c}  \\
& \times \Big[\left( \Lambda \mathbb{P}_{q+1, k} ( \chi_{j} a_{k,j}  b_{k} ( \lambda_{q+1} \Phi_{j})) \right)  \cdot \nabla \mathbb{P}_{q+1, k'} ( \chi_{j'} a_{k',j'} b_{k'} ( \lambda_{q+1} \Phi_{j})) \nonumber\\
& \hspace{5mm} - \left(\nabla \mathbb{P}_{q+1, k} ( \chi_{j} a_{k,j} b_{k} ( \lambda_{q+1} \Phi_{j})) \right)^{T} \cdot \Lambda \mathbb{P}_{q+1, k'} \big( \chi_{j'} a_{k',j'} b_{k'} ( \lambda_{q+1} \Phi_{j'})\big) \Big]; \nonumber 
\end{align}
\end{subequations} 
we carefully point out that \eqref{est 90c} is not identical to \eqref{new 16c} because e.g. we replaced ``$\tilde{w}_{q+1, j, k}$'' by $\chi_{j} a_{k,j} b_{k} (\lambda_{q+1} \Phi_{j})$ and they are not the same (recall \eqref{est 69b}).   

All the computations from \eqref{est 86}-\eqref{new 19} go through ($\rho_{j}$ never appears therein). We define $O_{1}$ and $O_{2}$ identically to \eqref{est 87} to retain the identity \eqref{new 18}: $R_{O, \text{low}} = \mathring{O}_{1} + \mathring{O}_{2}$. We also retain the key computation \eqref{est 31} with $\rho_{j}$ replaced by $\bar{\rho}_{j}$; i.e., 
\begin{equation}\label{est 88}
O_{1} = \sum_{j} \chi_{j}^{2} \lambda_{q+1} \Id \left[ \frac{\bar{\rho}_{j}}{\lambda_{q+1}} - \frac{1}{2} \sum_{k} a_{k,j}^{2} \right]. 
\end{equation} 
Therefore, we retain 
\begin{equation}\label{est 274}
R_{O,\text{low}} = \mathring{O}_{1} + \mathring{O}_{2} = \mathring{O}_{2} = \mathring{O}_{21} + \mathring{O}_{22}
\end{equation} 
from \eqref{new 18} where $O_{21}$ and $O_{22}$ were defined identically to \eqref{est 87}. Due to $D_{t,q} (\mathring{R}_{l} -\mathring{R}_{q,j}) = D_{t,q} \mathring{R}_{l}$ and $(\mathring{R}_{l} - \mathring{R}_{q,j}) (\tau_{q+1} j) = 0$ from \eqref{est 60}, we compute for $\beta > \frac{1}{2}$ sufficiently close to $\frac{1}{2}$ and $a_{0}$ sufficiently large 
\begin{align*}
&\lVert ( \mathring{R}_{l} - \mathring{R}_{q,j} ) (t) \rVert_{C_{x}} \overset{\eqref{est 2a}}{\lesssim}  \tau_{q+1}\sup_{s \in [\tau_{q+1} j, t]} \ \lVert ( \partial_{s} + \Upsilon_{l} \Lambda y_{l} \cdot \nabla) \mathring{R}_{l} (s)  \rVert_{C_{x}}  \\
\overset{\eqref{mult inductive 1} \eqref{mult inductive 2}}{\lesssim}& \tau_{q+1} l^{-1} \lambda_{q+2} \delta_{q+2} e^{-3L^{\frac{1}{4}}} \bar{e} \left( 1+ a^{L}  a^{b( \beta - \frac{1}{2})} a^{b^{q} [ 2 - \beta - b \alpha]} \right) \lesssim \tau_{q+1} l^{-1} \lambda_{q+2} \delta_{q+2} e^{-3L^{\frac{1}{4}}} \bar{e}.
\end{align*}
Applying this estimate to \eqref{est 90a} gives us now 
\begin{align}\label{est 231}
\lVert R_{O, \text{approx}} \rVert_{C_{t,x,q+1}} \leq \sum_{j} \lVert 1_{\supp \chi_{j}} (\mathring{R}_{l} - \mathring{R}_{q,j}) \rVert_{C_{t,x,q+1}} \lesssim \tau_{q+1} l^{-1} \lambda_{q+2} \delta_{q+2} e^{-3L^{\frac{1}{4}}} \bar{e}.  
\end{align}
Then we estimate $O_{21}$ defined in \eqref{est 87b}, being mindful that $a_{k,j}$ therein is defined by \eqref{est 58a} that has $\bar{\rho}_{j}$ rather than $\rho_{j}$, 
\begin{align*}
\lVert O_{21} &\rVert_{C_{t,x,q+1}} \lesssim \sup_{s \in [t_{q+1}, T_{L}]} \sum_{j,k} \chi_{j}^{2}(s) \lVert \nabla (a_{k,j} \psi_{q+1, j, k} ) \rVert_{C_{t,x,q+1}} \\
& \times  \lVert a_{k,j} \psi_{q+1, j, -k} \rVert_{C_{t,x,q+1}} \sup_{r, \bar{r} \in [0,1]} \lVert \mathcal{K}_{k, r, \bar{r}}^{(1)} \rVert_{L^{1} (\mathbb{R}^{2} \times \mathbb{R}^{2})} \overset{ \eqref{new 19} \eqref{est 82d} \eqref{est 63a}}{\lesssim} e^{3L^{\frac{1}{4}}} \lambda_{q} \delta_{q+1} \bar{e}. 
\end{align*}
As an identical computation also holds for $O_{22}$ in \eqref{est 87b}, we conclude 
\begin{equation}\label{est 232}
\lVert R_{O, \text{low}} \rVert_{C_{t,x,q+1}} \overset{\eqref{est 274}}{\leq} \sum_{l=1}^{2} \lVert O_{2l} \rVert_{C_{t,x,q+1}}  \lesssim e^{3L^{\frac{1}{4}}} \lambda_{q} \delta_{q+1} \bar{e}. 
\end{equation} 
Now, similarly to \cite[Equations (184)-(185)]{Y23b} we can write 
\begin{equation}\label{est 228} 
R_{O, \text{high}} = O_{3} - O_{4}
\end{equation} 
where
\begin{align*}
O_{3} \triangleq& \mathcal{B} \tilde{P}_{\approx \lambda_{q+1}} \times  \Big( \sum_{j, j', k, k': k + k' \neq 0} (\Lambda \mathbb{P}_{q+1, k} [\chi_{j} a_{k,j} b_{k} (\lambda_{q+1} \Phi_{j})]) \nonumber\\
&\hspace{35mm} \cdot \nabla \mathbb{P}_{q+1, k'} [\chi_{j'} a_{k',j'} b_{k'} (\lambda_{q+1} \Phi_{j'})] \Big),  \\
O_{4} \triangleq& \mathcal{B} \tilde{P}_{\approx \lambda_{q+1}} \times  \Big( \sum_{j, j', k, k': k + k' \neq 0}  (\nabla \mathbb{P}_{q+1, k} [\chi_{j} a_{k,j} b_{k} (\lambda_{q+1} \Phi_{j})])^{T}\nonumber \\
&\hspace{35mm} \cdot \Lambda \mathbb{P}_{q+1, k'} [\chi_{j'} a_{k',j'} b_{k'} (\lambda_{q+1} \Phi_{j'})] \Big).  
\end{align*}
We can further split 
\begin{equation}\label{est 199}
O_{3} = \sum_{k=1}^{3} O_{3k} \text{ and } O_{31} = O_{311} + O_{312} 
\end{equation} 
where
\begin{subequations}\label{est 200} 
\begin{align}
&O_{311}(x)  \triangleq \frac{1}{2} \mathcal{B} \tilde{P}_{\approx \lambda_{q+1}} \lambda_{q+1} \sum_{j, j', k, k': k + k' \neq 0} \chi_{j} \chi_{j'} \nabla (a_{k,j} (x) a_{k', j'} (x) \psi_{q+1, j', k'}(x) \psi_{q+1, j, k}(x))  \nonumber \\
& \hspace{35mm} \times (b_{k}(\lambda_{q+1} x) \cdot b_{k'} (\lambda_{q+1} x) - e^{i(k+k') \cdot \lambda_{q+1} x}),    \\
& O_{312}(x) \triangleq  \mathcal{B} \tilde{P}_{\approx \lambda_{q+1}} \lambda_{q+1} \sum_{j,j',k,k': k + k' \neq 0} \chi_{j} \chi_{j'} \times b_{k'}(\lambda_{q+1} x) \nonumber\\
& \hspace{25mm}  \otimes b_{k} (\lambda_{q+1} x) \nabla (a_{k,j} (x) a_{k', j'} (x) \psi_{q+1, j', k'} (x) \psi_{q+1, j, k} (x)), 
\end{align}
\end{subequations}
\begin{subequations}\label{est 201}
\begin{align}
O_{32}(x) \triangleq& \mathcal{B} \tilde{P}_{\approx \lambda_{q+1}} \lambda_{q+1} \sum_{j, j',k, k': k + k'\neq 0} \divergence ( [\chi_{j} a_{k,j} b_{k} (\lambda_{q+1} \Phi_{j})](x) \nonumber \\
&\hspace{25mm} \otimes \chi_{j'} [\mathbb{P}_{q+1, k'}, a_{k', j'}(x) \psi_{q+1, j', k'}(x)] b_{k'} (\lambda_{q+1} x) ), \\
O_{33}(x) \triangleq& \mathcal{B} \tilde{P}_{\approx \lambda_{q+1}}  \sum_{j, j', k, k': k + k'\neq 0} \times \divergence (\chi_{j} [\mathbb{P}_{q+1} \Lambda, a_{k, j}(x) \psi_{q+1, j, k}(x)] b_{k} (\lambda_{q+1} x) \nonumber \\
&\hspace{35mm} \otimes \mathbb{P}_{q+1, k'} [\chi_{j'} a_{k',j'} b_{k'} (\lambda_{q+1} \Phi_{j'})](x)). 
\end{align}
\end{subequations} 
We estimate using \eqref{est 82d} and \eqref{est 63a} as follows:
\begin{align*}
& \lVert O_{31} \rVert_{C_{t,x,q+1}} \\
\lesssim&\sum_{j, j', k, k': k + k' \neq 0} \lVert 1_{\supp \chi_{j}}  \nabla (a_{k,j} \psi_{q+1, j, k}) \rVert_{C_{t,x,q+1}} \lVert 1_{\supp \chi_{j'}} a_{k', j'} \psi_{q+1, j', k'} \rVert_{C_{t,x,q+1}} \\
&+ \lVert 1_{\supp \chi_{j'}}  \nabla (a_{k', j'} \psi_{q+1, j', k'}) \rVert_{C_{t,x,q+1}} \lVert 1_{\supp \chi_{j}} a_{k,j} \psi_{q+1, j, k} \rVert_{C_{t,x,q+1}} \lesssim  e^{3L^{\frac{1}{4}}} \lambda_{q} \delta_{q+1} \bar{e}; 
\end{align*}
additionally relying on \cite[Equation (A.17)]{BSV19} allows us to deduce 
\begin{align*}
& \lVert O_{32} \rVert_{C_{t,x,q+1}} + \lVert O_{33} \rVert_{C_{t,x,q+1}} \\
\lesssim& \sum_{j, j', k, k': k + k' \neq 0}  \lVert 1_{\supp \chi_{j}} a_{k,j} \rVert_{C_{t,x,q+1}}   \lVert 1_{\supp \chi_{j'}} \nabla (a_{k', j'} \psi_{q+1, j', k'}) \rVert_{C_{t,x,q+1}}  \\
&+ \lVert 1_{\supp \chi_{j}} \nabla (a_{k,j} \psi_{q+1, j, k}) \rVert_{C_{t,x,q+1}}  \lVert 1_{\supp \chi_{j'}} a_{k', j'} b_{k'} (\lambda_{q+1} \Phi_{j'}) \rVert_{C_{t,x,q+1}} \lesssim  e^{3L^{\frac{1}{4}}} \lambda_{q} \delta_{q+1} \bar{e}. 
\end{align*}
Thus, we conclude from \eqref{est 199} that 
\begin{equation}\label{est 229}
\lVert O_{3} \rVert_{C_{t,x,q+1}} \lesssim e^{3L^{\frac{1}{4}}} \lambda_{q} \delta_{q+1} \bar{e}. 
\end{equation} 
Next, similarly to \cite[Equations (191)-(192)]{Y23b}
we can write 
\begin{equation}\label{est 203}
O_{4} = O_{41} + O_{42}
\end{equation} 
where
\begin{subequations}\label{est 202} 
\begin{align}
&O_{41}(x) \triangleq \sum_{j, j', k, k': k + k' \neq 0}  \mathcal{B} \tilde{P}_{\approx \lambda_{q+1}} \times  \Big(\chi_{j} \nabla ( [ \mathbb{P}_{q+1, k}, a_{k,j} (x) \psi_{q+1, j, k} (x) ] b_{k} (\lambda_{q+1} x) )^{T} \nonumber \\
&\hspace{35mm}   \cdot \lambda_{q+1} [\chi_{j'} a_{k',j'} b_{k} (\lambda_{q+1} \Phi_{j'})](x)\Big), \label{est 202a}  \\
&O_{42}(x) \triangleq \sum_{j, j', k, k': k + k' \neq 0}  \mathcal{B} \tilde{P}_{\approx \lambda_{q+1}} \times \Big((\nabla \mathbb{P}_{q+1, k} [\chi_{j} a_{k,j} b_{k} (\lambda_{q+1} \Phi_{j})] (x))^{T} \nonumber \\
& \hspace{35mm}  \cdot \chi_{j'} [\mathbb{P}_{q+1, k'} \Lambda, a_{k', j'}(x) \psi_{q+1, j', k'} (x)] b_{k'} (\lambda_{q+1} x) \Big).  \label{est 202b} 
\end{align}
\end{subequations} 
After writing 
\begin{align*}
& \nabla ( [ \mathbb{P}_{q+1, k}, a_{k,j} (x) \psi_{q+1, j, k} (x) ] b_{k} (\lambda_{q+1} x) )  \\
& \hspace{10mm} = [ \mathbb{P}_{q+1, k}, \nabla (a_{k,j} \psi_{q+1, j, k}) ] b_{k} (\lambda_{q+1} x) + [\mathbb{P}_{q+1, k}, a_{k,j} \psi_{q+1, j, k} ] \nabla b_{k} (\lambda_{q+1} x), 
\end{align*}
we can estimate using \cite[Equation (A.17)]{BSV19} 
\begin{align*}
& \lVert O_{41} \rVert_{C_{t,x,q+1}} \lesssim  \sum_{j, j', k, k': k + k' \neq 0} [ \lambda_{q+1}^{-1} \lVert 1_{\supp \chi_{j}} \nabla^{2} (a_{k,j} \psi_{q+1, j, k}) \rVert_{C_{t,x,q+1}} \lVert b_{k} (\lambda_{q+1} x) \rVert_{C_{t,x,q+1}} \\
& \hspace{3mm} + \lambda_{q+1}^{-1} \lVert 1_{\supp \chi_{j}} \nabla (a_{k,j} \psi_{q+1, j, k}) \rVert_{C_{t,x,q+1}} \lVert \nabla b_{k} (\lambda_{q+1} x) \rVert_{C_{t,x,q+1}} ] \lVert 1_{\supp \chi_{j'}} a_{k', j'} \rVert_{C_{t,x,q+1}} \\
&\hspace{3mm}\overset{\eqref{est 82d} \eqref{est 82e}}{\lesssim} e^{3L^{\frac{1}{4}}} \delta_{q+1} \lambda_{q} \bar{e},
\end{align*}
and similarly,
\begin{align*} 
& \lVert O_{42} \rVert_{C_{t,x,q+1}} \lesssim   \sum_{j, j', k, k': k + k' \neq 0}  \lVert 1_{\supp \chi_{j}} [  a_{k,j} b_{k} (\lambda_{q+1} \Phi_{j} ) ] \rVert_{C_{t,x,q+1}} \\
&\hspace{5mm} \times \lVert 1_{\supp \chi_{j'}} \nabla (a_{k', j'} \psi_{q+1, j', k'}) \rVert_{C_{t,x,q+1}} \lVert b_{k'} (\lambda_{q+1} x) \rVert_{C_{t,x,q+1}} \overset{\eqref{est 63a} \eqref{est 82d}}{\lesssim} e^{3L^{\frac{1}{4}}} \bar{e} \lambda_{q} \delta_{q+1}. 
\end{align*}
Hence, we have
\begin{align}\label{est 204a}
\lVert O_{4} \rVert_{C_{t,x,q+1}} \overset{\eqref{est 203}}{\leq} \sum_{k=1}^{2} \lVert O_{4k} \rVert_{C_{t,x,q+1}} \lesssim e^{3L^{\frac{1}{4}}} \lambda_{q} \delta_{q+1} \bar{e}. 
\end{align}
Therefore, 
\begin{align}\label{est 205} 
\lVert R_{O, \text{high}} \rVert_{C_{t,x,q+1}} \overset{\eqref{est 228}}{\leq} \lVert O_{3} \rVert_{C_{t,x,q+1}} + \lVert O_{4} \rVert_{C_{t,x,q+1}} \overset{\eqref{est 229} \eqref{est 204a}}{\lesssim}  e^{3L^{\frac{1}{4}}} \lambda_{q} \delta_{q+1} \bar{e};
\end{align}
this allows us to conclude for $\beta > \frac{1}{2}$ sufficiently close to $\frac{1}{2}$ and $a_{0}$ sufficiently large 
\begin{align*}
e^{2L^{\frac{1}{4}}} \lVert R_{O}& \rVert_{C_{t,x,q+1}} \overset{\eqref{est 230}}{\leq} e^{2L^{\frac{1}{4}}} [ \lVert R_{O,\text{approx}}  +   R_{O,\text{low}}   +  R_{O,\text{high}} \rVert_{C_{t,x,q+1}} ] \\
\overset{\eqref{est 231} \eqref{est 232} \eqref{est 205}}{\lesssim}& e^{2L^{\frac{1}{4}}} [ e^{-3L^{\frac{1}{4}}} \tau_{q+1} l^{-1} \lambda_{q+2} \delta_{q+2}\bar{e} + e^{3L^{\frac{1}{4}}} \lambda_{q} \delta_{q+1} \bar{e} ]  \overset{\eqref{mult bound b}}{\ll} e^{-3L^{\frac{1}{4}}} \lambda_{q+3} \delta_{q+3}. 
\end{align*}

\subsection{Proofs of Propositions \ref{Proposition 4.1} and \ref{Proposition 6.1}}\label{Section 7.8} 

\begin{proof}[Proposition \ref{Proposition 4.1}]
We prove part (2) first. By the hypothesis that $P_{l} \in \mathcal{C} ( s_{l}, \xi_{l}, C_{q})$ and Definition \ref{Definition 6}, an inductive argument partially following the approach of \cite[p. 441]{FR08} (cf. \cite[Equation (A.3)]{FR08} and \cite[Equations (75)--(76)]{Y19}), and an application of \cite[Corollary B.3]{FR08}, we obtain for any $T> 0$, 
\begin{equation}\label{est 210} 
\mathbb{E}^{P_{l}} [ \sup_{t \in [0,T]} \lVert \xi(t) \rVert_{\dot{H}_{x}^{\frac{1}{2}}}^{2} + \int_{s_{l}}^{T} \lVert \xi(r) \rVert_{\dot{H}_{x}^{\frac{1}{2} + \iota}}^{2} dr] \leq C, 
\end{equation}  
where the bound is uniform in $l$. This, together with the same reasonings in \cite[Equations (266)-(271)]{Y23a} hat lead to \cite[Equation (257)]{Y23a} and an application of Kolmogorov's test (see the argument in the rest of the proof of \cite[Proposition 4.1]{Y23a} in \cite[Section 5.1]{Y23a}) deduces that the family $\{P_{l}\}_{l \in \mathbb{N}}$ is tight in 
\begin{equation}\label{new 51}
\mathbb{S} \triangleq C( [0,\infty); (H_{\sigma}^{4})^{\ast}) \cap L_{\text{loc}}^{2} ([0,\infty); \dot{H}_{\sigma}^{\frac{1}{2}})
\end{equation} 
(see \cite[Equation (258)]{Y23a}). By Prokhorov's and Skorokhod's theorems, relabeling subsequence if necessary, we deduce that $P_{l}$ converges weakly to some probability measure $P \in \mathcal{P} (\Omega_{0})$ and that there exists a probability space $(\tilde{\Omega}, \tilde{\mathcal{F}}, \tilde{P})$ and $\mathbb{S}$-valued random variables $\tilde{\xi}_{l}$ and $\tilde{\xi}$ such that 
\begin{equation}\label{est 213} 
\mathcal{L} (\tilde{\xi}_{l}) = P_{l} \hspace{1mm} \forall \hspace{1mm} l \in \mathbb{N}, \hspace{1mm} \tilde{\xi}_{l} \to \tilde{\xi} \text{ in } \mathbb{S} \hspace{1mm} \tilde{P}\text{-a.s., and } \mathcal{L} ( \tilde{\xi}) = P. 
\end{equation} 
The remaining task is to show that $P \in \mathcal{C} ( s, \xi^{\text{in}}, C_{q})$; i.e., the properties (M1)-(M3).  Concerning (M1) of Definition \ref{Definition 6}, the first part $P (\{ \xi(t) = \xi^{\text{in}} \hspace{1mm} \forall \hspace{1mm} t \in [0,s] \}) = 1$ is same as \cite[Definition 4.1 (M1)]{Y23a} and the proof of \eqref{est 211} follows from application of \cite[Corollary B.3]{FR08} identically to \cite[p. 441]{FR08}. The verification of (M2) also follows from \cite{Y23a} because it is same as \cite[Definition 4.1 (M2)]{Y23a}. Thus, we now verify (M3) mostly following \cite[p. 441]{FR08}. We focus on verifying the case $q = 1$ as the other cases follow similarly. First, the integrability of $E^{1}$ w.r.t. $P$ follows from \eqref{est 213} and \eqref{est 210} taking advantage of the lower semi-continuity property of the norm therein. Thus, by Definition \ref{Definition 3}, to conclude that $E^{1}$ is an a.s. $(\mathcal{B}_{t}^{0})_{t\geq s}$-supermartingale under $P$, we only need to show \eqref{est 123}. Following the same approach of approximation on \cite[p. 441]{FR08}, we can consider a positive $\mathbb{R}$-valued $\mathcal{B}_{r}^{0}$-measurable continuous function $g$ on $\mathbb{S}$; thus, given any $t > 0$, it suffices to find a Lebesgue null set $T_{t} \subset (0,t)$ such that for all $r \notin T_{t}$, 
\begin{align}\label{est 214} 
\mathbb{E}^{P} [ g(\xi) ( E^{1} (t) - E^{1} (s)) ] \leq \mathbb{E}^{P} [ g(\xi) (E^{1} (r) - E^{1} (s)) ]
\end{align}
so that $\mathbb{E}^{P} [ E^{1}(t) - E^{1}(r) \lvert \mathcal{B}_{r}^{0} ] \leq 0$, after which we can define $T \triangleq \cup_{t \in D} T_{t}$ for any countable dense set $D \subset [0,\infty)$ as the set of exception times. Now, by hypothesis we have $s \in [0,\infty)$. If $s = 0$ so that we only have to consider $r > s$, then it is a typical situation and the proof in \cite[Lemma A.3]{FR08} applies to verify \eqref{est 214}. If $s > 0$ and $r < s$, then we only have to verify 
\begin{equation}\label{est 215}
\mathbb{E}^{P} [ g(\xi) \left(E^{1}(t) - E^{1} (s) \right) ] \leq 0.   
\end{equation} 
By hypothesis we have $P_{l} \in \mathcal{C} (s_{l}, \xi_{l}, C_{q})$ which implies that $E^{1}(t) - E^{1}(s_{l}) $ is an a.s. $(\mathcal{B}_{t}^{0})_{t \geq s_{l}}$-supermartingale under $P_{l}$. Therefore, the desired \eqref{est 215} can be verified using \eqref{est 213} once we show 
\begin{equation}\label{est 216} 
\int_{s}^{t} \lVert \tilde{\xi}(r) \rVert_{\dot{H}_{x}^{\frac{1}{2} + \iota}}^{2} dr \leq \liminf_{l\to\infty} \int_{s_{l}}^{t} \lVert \tilde{\xi}_{l} (r) \rVert_{\dot{H}_{x}^{\frac{1}{2} + \iota}}^{2} dr \hspace{3mm} \tilde{P}\text{-a.s.}
\end{equation} 
To verify \eqref{est 216}, we observe that due to \eqref{new 51} and \eqref{est 213}
\begin{align*}
& \lVert \tilde{\xi}_{l} 1_{[s_{l}, t]} - \tilde{\xi} 1_{[s, t]} \rVert_{L^{2} (0, T; \dot{H}_{\sigma}^{\frac{1}{2}})}^{2} \\
\leq& \int_{0}^{T} \lVert ( \tilde{\xi}_{l} - \tilde{\xi}) 1_{[s_{l}, t]} (r) \rVert_{\dot{H}_{x}^{\frac{1}{2}}}^{2} dr + \int_{0}^{T} \lVert \tilde{\xi} \left( 1_{[s_{l}, t]} - 1_{[s, t]} \right) (r) \rVert_{\dot{H}_{x}^{\frac{1}{2}}}^{2} dr  \to 0 
\end{align*}
as $l\nearrow + \infty$ $\tilde{P}$-a.s. From this, \eqref{est 216} follows by lower semi-continuity. Therefore, we conclude that $E^{1}$ is an a.s. $(\mathcal{B}_{t}^{0})$-supermartingale under $P$ and thus the verification of (M3). 

We now prove part (1) of Proposition \ref{Proposition 4.1}. This follows from Galerkin approximation as in \cite{FR08, GRZ09}; the only difference is (M3) that we verify. We consider a Galerkin approximation of 
\begin{subequations}\label{est 218} 
\begin{align}
& d v_{l} + \Pi_{l} \mathbb{P} [ u_{l}^{\bot} ( \nabla^{\bot} \cdot v_{l}) ] dt + \Lambda^{\gamma} v_{l} dt = \Pi_{l} \mathbb{P} dB, \hspace{3mm} u_{l} = \Lambda v_{l},   \hspace{3mm} \nabla \cdot v_{l} = 0,\\
& v_{l}(t) = \Pi_{l} \xi^{\text{in}} \text{ for all } t \in [0, s], 
\end{align}
\end{subequations} 
where $\Pi_{l}$ is the standard Galerkin projection operator of Fourier modes up to$l$. It follows that the solution exists in some probability space $(\Omega, \mathcal{F}, \mathbf{P})$, along with a $GG^{\ast}$-Wiener process $B$ (e.g. \cite[Section 5.1]{Y23a} and \cite[Theorem 4.3.2]{Z12b}). Following the proof of \cite[Proposition 4.8]{HZZ21a}, for notational simplicity and w.l.o.g., we assume that the probability space and the Wiener processes do not depend on $l$ in the following computations as our goal here is to show that the law of $v_{l}$, denoted by $P_{l}$, satisfies (M3). First, we apply It$\hat{\mathrm{o}}$'s formula on \eqref{est 218} to deduce that 
\begin{align} 
M_{l, v_{l}}^{E} (t) \triangleq& \lVert v_{l} (t) \rVert_{\dot{H}_{x}^{\frac{1}{2}}}^{2} - \lVert v_{l} (s) \rVert_{\dot{H}_{x}^{\frac{1}{2}}}^{2} + 2 \int_{s} ^{t} \lVert v_{l} (r) \rVert_{\dot{H}_{x}^{\frac{1}{2} + \frac{\gamma}{2}}}^{2} dr \nonumber \\
& \hspace{10mm}  - \lVert \Pi_{l} G \rVert_{L_{2} (U, \dot{H}_{\sigma}^{\frac{1}{2}})}^{2} (t-s)  \label{est 221}
\end{align} 
is a martingale, where we especially relied on the following identity: as $\Lambda v_{l} = u_{l}$, 
\begin{align*}
\int_{\mathbb{T}^{2}} \Lambda^{\frac{1}{2}} v_{l} \cdot \Lambda^{\frac{1}{2}} \Pi_{l} \mathbb{P} [ u_{l}^{\bot} ( \nabla^{\bot} \cdot v_{l}) ] dx = \int_{\mathbb{T}^{2}} \Lambda v_{l} \cdot u_{l}^{\bot} ( \nabla^{\bot} \cdot v_{l} ) dx = 0.
\end{align*}
Now for all $t \geq r \geq s$, as $\iota \leq \frac{\gamma}{2}$, $P_{l}$-a.s., 
\begin{align}
& \lVert v_{l} (t) \rVert_{\dot{H}_{x}^{\frac{1}{2}}}^{2} - \lVert v_{l} (r) \rVert_{\dot{H}_{x}^{\frac{1}{2}}}^{2} + 2 \int_{r}^{t} \lVert v_{l} (m) \rVert_{\dot{H}_{x}^{\frac{1}{2} + \iota}}^{2} dm - \lVert  \Pi_{l} G \rVert_{L_{2} (U, \dot{H}_{\sigma}^{\frac{1}{2}})}^{2} (t-r)   \nonumber \\
\leq&M_{l,v_{l}}^{E} (t) - M_{l, v_{l}}^{E} (r)  \label{est 220} 
\end{align}
so that because $C_{1} \geq 1$ by Definition \ref{Definition 6}, 
\begin{equation}\label{est 219}
E^{1} (t) - E^{1 }(r) \overset{\eqref{est 220}}{\leq} M_{l, v_{l}}^{E} (t) -  M_{l, v_{l}}^{E} (r).
\end{equation} 
It follows that $\{E^{1}(t) - E^{1}(s) \}_{t \geq s}$ is a $(\mathcal{B}_{t}^{0})_{t \geq s}$-supermartingale and the proof of Proposition \ref{Proposition 4.1} is complete.  
\end{proof}

\begin{proof}[Proof of Proposition \ref{Proposition 6.1}]
We prove part (2) first similarly to the proof of Proposition \ref{Proposition 4.1}. By hypothesis, we have $P_{l} \in \mathcal{W} (s_{l}, \xi_{l}, \theta_{l}, C_{q})$ and thus  $E^{q}(t)$ defined by \eqref{est 125} is an $(\bar{\mathcal{B}}_{t}^{0})_{t\geq s_{l}}$-supermartingale under $P_{l}$. Thus, a similar approach to the additive case leads us to the identical inequality in \eqref{est 210}. It follows from the same arguments in \cite[Section 6.2]{Y23b} that $\{P_{l}\}_{l \in \mathbb{N}}$ is tight in 
\begin{equation*}
\bar{\mathbb{S}} \triangleq C ( [0,\infty); (H_{\sigma}^{4})^{\ast} \times U_{1}) \cap L_{\text{loc}}^{2} ([0,\infty); \dot{H}_{\sigma}^{\frac{1}{2}} \times U_{1})
\end{equation*} 
(see \cite[Equation (150)]{Y23b}). By Prokhorov's and Skorokhod's theorems, relabeling subsequence if necessary, we deduce that $P_{l}$ converges weakly to some probability measure $P \in \mathcal{P} (\bar{\Omega}_{0})$ and that there exists some probability space $(\tilde{\Omega}, \tilde{\mathcal{F}}, \tilde{P})$ and $\bar{\mathbb{S}}$-valued random variable ($\tilde{\xi}_{l},\tilde{\theta}_{l})$ and $(\tilde{\xi}, \tilde{\theta})$ such that 
\begin{equation}\label{est 238} 
\mathcal{L}(\tilde{\xi}_{l}, \tilde{\theta}_{l}) = P_{l} \hspace{1mm} \forall \hspace{1mm} l \in \mathbb{N}, \hspace{2mm} (\tilde{\xi}_{l}, \tilde{\theta}_{l}) \to (\tilde{\xi},\tilde{\theta}) \text{ in } \bar{\mathbb{S}} \hspace{1mm} \tilde{P}\text{-a.s.}, \hspace{2mm} \text{ and } \mathcal{L} ( \tilde{\xi},\tilde{\theta}) = P. 
\end{equation}  
We denote by $(\tilde{\mathcal{F}}_{t})_{t\geq 0}$ the $\tilde{P}$-augmented canonical filtration of $(\tilde{\xi},\tilde{\theta})$, and consequently, $\tilde{\theta}$ is a cylindrical Wiener process on $U$ w.r.t. $(\tilde{\mathcal{F}}_{t})_{t\geq 0}$.  The remaining tasks are to verify the properties (M1)-(M3). The verifications of (M1) and (M3) is similar to the additive case while (M3) can be demonstrated similarly to the proof of \cite[Proposition 4.1]{Y23b}.  

Concerning Proposition \ref{Proposition 6.1} (1), as described in the proof of \cite[Theorem 5.1]{HZZ19} and \cite[Proposition 4.1]{Y23b}, the existence of a probabilistically weak solution follows from the existence of a martingale solution via martingale representation theorem. This completes the proof of Proposition \ref{Proposition 6.1}. 
\end{proof}

\end{document}